\DeclareMathAlphabet{\mathpzc}{OT1}{pzc}{m}{it}
\numberwithin{equation}{section}
\tikzset{
>=stealth',
help lines/.style={dashed, thick},
axis/.style={<->},
important line/.style={thick},
connection/.style={thick, dotted},
punkt/.style={
rectangle,
rounded corners,
draw=black, thick,
text width=4.5em,
minimum height=2em,
text centered,
},
pil/.style={
->,
thick,
gray,
shorten <=2pt,
shorten >=2pt,}
}
\newtheorem{proposition}{Proposition}[section]
\newtheorem{lemma}[proposition]{Lemma}
\newtheorem{corollary}[proposition]{Corollary}
\newtheorem{theorem}[proposition]{Theorem}
\newtheorem*{theorem*}{Theorem}
\newtheorem{conjecture}[proposition]{Conjecture}
\theoremstyle{definition}
\newtheorem{definition}[proposition]{Definition}
\newtheorem{remark}[proposition]{Remark}
\newtheorem*{remark*}{Remark}
\newtheorem{example}[proposition]{Example}
\newcommand{\rightcornertext}{\pmb{\lrcorner}}
\newcommand{\leftwalltext}{\bm{\vdash}}
\newcommand{\rightwalltext}{\bm{\dashv}}
\newcommand{\bulktext}{\bm{+}}
\newcommand{\rightcorner}{%
\tikz[]{\draw[thick] (0,0) -- (.1,0) -- ( .1,.1 );}%
}   
\newcommand{\rightwall}{%
\tikz[]{\draw[thick] (0,0.05) -- (.1,0.05); \draw[thick] (.1, 0) -- (.1,.1);}%
}
\newcommand{\bulk}{%
	\tikz[baseline=-.6,scale=1.2]{\draw[thick] (0,0.05) -- (.1,0.05); \draw[thick] (0.05, 0) --++ (0,.1);}%
}
\newcommand{\leftwall}{%
\tikz[]{\draw[thick] (0,0.05) -- (.1,0.05); \draw[thick] (0, 0) -- (0,.1);}%
}
\newcommand{\Ufwd}{%
\mathsf{U}^{\mathrm{fwd}}%
}
\newcommand{\Ubwd}{%
\mathsf{U}^{\mathrm{bwd}}%
}
\newcommand{\floor}[1]{\left\lfloor #1 \right\rfloor}
\newcommand{\doubleunderline}[1]{\underline{\underline{#1}}}
\newcommand{\smallO}{{\mathsf{o}}}
\newcommand{\bigO}{{\mathsf{O}}}
\begin{document}
\title{Spin $q$-Whittaker polynomials and deformed quantum Toda}

\author[M. Mucciconi]{Matteo Mucciconi}\address{M. Mucciconi, 
Department of Physics,
Tokyo Institute of Technology, Tokyo, 152-8551 Japan}\email{matteomucciconi@gmail.com}

\author[L. Petrov]{Leonid Petrov}\address{L. Petrov, Department of Mathematics, University of Virginia, Charlottesville,
VA, 22904 USA,
	and
	Institute for Information Transmission
	Problems, Moscow, 117279 Russia}\email{lenia.petrov@gmail.com}

\date{}

\begin{abstract}
	Spin $q$-Whittaker symmetric polynomials 
	labeled by partitions $\lambda$ 
	were recently
	introduced by Borodin and Wheeler
	\cite{BorodinWheelerSpinq}
	in the context of integrable $\mathfrak{sl}_2$
	vertex models.
	They are 
	a one-parameter deformation of
	the $t=0$ Macdonald polynomials.
	We present a new more convenient modification of 
	spin $q$-Whittaker polynomials
	and find two Macdonald type $q$-difference operators
	acting diagonally in these polynomials
	with
	eigenvalues,
	respectively,
	$q^{-\lambda_1}$
	and $q^{\lambda_N}$ (where $\lambda$ is the polynomial's label).
	We study probability measures 
	on interlacing arrays
	based on spin $q$-Whittaker polynomials, 
	and
	match their observables
	with known stochastic particle systems
	such as the $q$-Hahn TASEP.
	
	In a scaling limit as $q\nearrow 1$, 
	spin $q$-Whittaker polynomials
	turn into a new one-parameter deformation
	of the $\mathfrak{gl}_n$ Whittaker functions.
	The rescaled Pieri type rule 
	gives rise to a one-parameter deformation 
	of the
	quantum Toda Hamiltonian. The deformed Hamiltonian acts diagonally on
	our new spin Whittaker functions.
	On the stochastic side, as $q\nearrow 1$
	we discover a
	multilevel extension of the beta polymer
	model of Barraquand and Corwin \cite{CorwinBarraquand2015Beta},
	and relate it to spin Whittaker functions.
\end{abstract}

\maketitle

\setcounter{tocdepth}{1}
\tableofcontents
\setcounter{tocdepth}{3}

\section{Introduction}
\label{sec:introduction}

\subsection{Overview}

This paper deals with new classes of symmetric
functions inspired by 
the $U_q(\widehat{\mathfrak{sl}_2})$ Yang--Baxter equation 
and applications to integrable stochastic
interacting particle systems and random polymer models.

\medskip

Symmetric functions have been very useful in 
studying integrable
stochastic systems
in the past two decades,
starting from the works on asymptotic fluctuations
in longest increasing subsequences of random permutations
\cite{baik1999distribution} and
the TASEP (totally asymmetric simple exclusion process)
\cite{johansson2000shape},
and continuing through the frameworks of Schur processes
\cite{okounkov2001infinite}, \cite{okounkov2003correlation} and
Macdonald processes \cite{BorodinCorwin2011Macdonald}.
Here and below by a \emph{process} associated with a family of symmetric
functions (like Schur or Macdonald)
we mean a 
probability measure
on sequences of partitions 
with probability weights expressed through these 
functions in a certain way (cf. 
\Cref{def:F_G_process} in the text).
See the scheme of symmetric functions in \Cref{fig:scheme}.

More recently, quantum integrability 
(in the form of the Yang--Baxter equation /
Bethe ansatz \cite{baxter2007exactly}) has brought new structures 
allowing to extend the range of exactly solvable stochastic systems
to 
the ASEP (partially asymmetric simple exclusion process)
\cite{TW_ASEP1}, \cite{TW_ASEP2}
and stochastic vertex models 
\cite{BCG6V}, \cite{CorwinPetrov2015},
\cite{CorwinTsai2015KPZ},
\cite{lin2020kpz},
and discover new asymptotic phenomena
around the Kardar-Parisi-Zhang universality class
\cite{Corwin2016Notices}.
In the process of
exploring quantum integrability from these perspectives,
two new families of symmetric functions were discovered:
\begin{itemize}
	\item Spin Hall--Littlewood symmetric
		functions \cite{Borodin2014vertex}. They are a one-parameter
		generalization of the classical Hall--Littlewood polynomials
		\cite[Ch. III]{Macdonald1995}, 
		and are Bethe Ansatz 
		eigenfunctions of a number of integrable stochastic systems,
		including ASEP (under a certain choice of parameters).
		These functions retain many properties
		of Hall--Littlewood polynomials including
		Cauchy type summation identities, Pieri type rules, torus scalar product orthogonality,
		and the presence of difference operators acting on them diagonally
		\cite{BCPS2014_arXiv_v4}, \cite{BorodinPetrov2016inhom},
		\cite[Section 8]{BufetovMucciconiPetrov2018}.
	\item Spin $q$-Whittaker polynomials
		\cite{BorodinWheelerSpinq}.
		They form a one-parameter 
		generalization of the 
		$q$-deformed $\mathfrak{gl}_n$ Whittaker functions
		\cite{GLO2010}, and 
		also possess Cauchy type summation identities, Pieri type rules,
		and certain first-order difference operators acting on them diagonally
		\cite[Section 8]{BufetovMucciconiPetrov2018}.
		Notably, torus orthogonality relation for 
		spin $q$-Whittaker polynomials  is not known.
\end{itemize}

Marginals of 
spin Hall--Littlewood
and
spin $q$-Whittaker processes
are matched in distribution to various
$U_q(\widehat{\mathfrak{sl}_2})$ 
stochastic vertex models, including
the stochastic six vertex model
\cite{GwaSpohn1992},
\cite{BCG6V} (with its ``dynamic'' extension \cite{BufetovPetrovYB2017});
the stochastic higher spin six vertex model
\cite{CorwinPetrov2015},
\cite{BorodinPetrov2016inhom};
and the more recent
$_4\phi_3$ stochastic vertex model
\cite{BufetovMucciconiPetrov2018}
(which is close to the
$q$-Hahn PushTASEP of \cite{CMP_qHahn_Push}).

\begin{figure}[t]
	\centering
	\includegraphics[width=\textwidth]{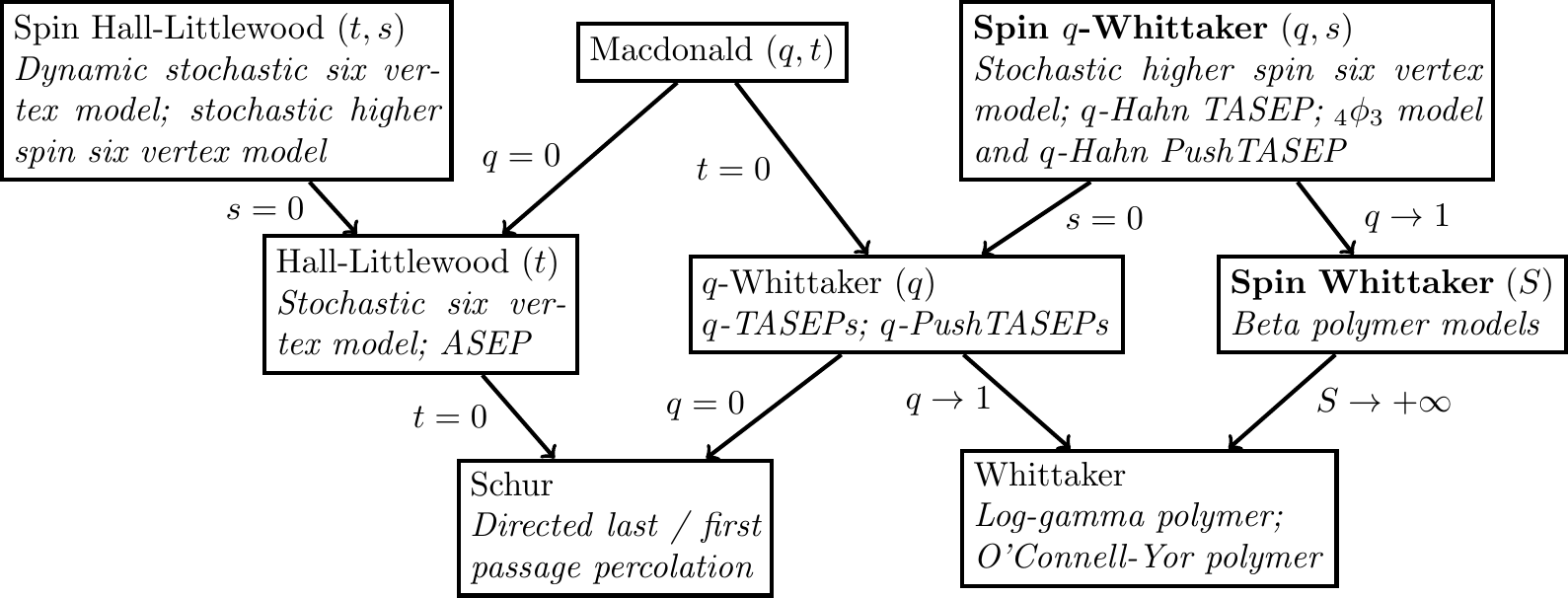}
	\caption{A scheme of various families of symmetric
	functions together with stochastic systems based on them.
	Arrows indicate degenerations or scaling limits.
	The two families which are our main focus are indicated in bold.}
	\label{fig:scheme}
\end{figure}

\medskip

The (undeformed) $q$-Whittaker polynomials
admit a nontrivial scaling limit as $q\to 1$.
In this limit
\cite{GerasimovLebedevOblezin2011}, \cite[Theorem 4.1.7]{BorodinCorwin2011Macdonald}, 
the $q$-Whittaker polynomials become 
the 
$\mathfrak{gl}_N$ Whittaker functions.
The latter play an important role in representation theory and 
integrable systems
\cite{Kostant1977Whitt}, \cite{givental1996stationary},
\cite{Etingof1999}. 
In particular, the Whittaker functions 
$\psi_{\underline \lambda}(\underline{u}_N)$, 
$\underline{\lambda}=(\lambda_1,\ldots,\lambda_N )\in \mathbb{C}^N$, 
$\underline{u}_N=(u_1,\ldots,u_N )\in \mathbb{R}^N$,
are eigenfunctions of the 
quantum $\mathfrak{gl}_N$ Toda lattice Hamiltonian
\begin{equation}
	\label{eq:intro_Whit_eigenrelations}
	\mathscr{H}_2^{\mathrm{Toda}}=
	-\frac{1}{2}\sum_{i=1}^{N}\frac{\partial^2}{\partial u_i^2}
	+\sum_{i=1}^{N-1}e^{u_{i+1}-u_i},\qquad 
	\mathscr{H}_2^{\mathrm{Toda}}
	\psi_{\underline \lambda}(\underline{u}_N)=
	\left( -\frac12\sum_{i=1}^{N}\lambda_i^2 \right)\psi_{\underline\lambda}(\underline{u}_N).
\end{equation}

Probability measures based on Whittaker functions 
describe distributions of
integrable models of directed random polymers:
the semi-discrete Brownian polymer 
\cite{Oconnell2009_Toda},
and fully 
discrete polymers in random environments with independent log-gamma distributed
weights
\cite{COSZ2011}, \cite{OSZ2012},
\cite{OConnellOrtmann2014}, \cite{CorwinSeppalainenShen2014}.

\medskip

The contribution of this paper is two-fold. 
First, we present a new version of spin $q$-Whittaker polynomials
which generalizes the ones from \cite{BorodinWheelerSpinq}
and strengthen their properties.
Second, in a $q\to1$ limit,
we discover a nontrivial one-parameter deformation of the $\mathfrak{gl}_N$
Whittaker functions.
The new spin Whittaker functions are eigenfunctions of a deformed quantum Toda Hamiltonian, 
and are also related to random polymers with beta distributed weights.
Let us briefly describe our main results.

\subsection{A new version of spin $q$-Whittaker polynomials}

First, we introduce \emph{modified versions of the spin $q$-Whittaker
symmetric polynomials} $\mathbb{F}_\lambda(x_1,\ldots,x_n )$,
where $\lambda=(\lambda_1\ge \ldots\ge \lambda_n\ge0 )$, 
$\lambda_i\in \mathbb{Z}$, are (nonnegative) signatures.
Our polynomials are more general than the 
Borodin--Wheeler's version $\mathbb{F}_\lambda^{BW}$
\cite{BorodinWheelerSpinq}. More precisely, we have
\begin{equation*}
	\mathbb{F}_\lambda(x_1,x_2,\ldots,x_n )\big\vert_{x_1=0}=
	\mathbb{F}^{BW}_\lambda(x_2,\ldots,x_n ).
\end{equation*}
Under the degeneration $s=0$, 
both families $\mathbb{F}_\lambda$ and $\mathbb{F}_\lambda^{BW}$
coincide and turn into the usual $q$-Whittaker
polynomials.

The new spin $q$-Whittaker polynomials $\mathbb{F}_\lambda$
share all the properties known for the $\mathbb{F}_\lambda^{BW}$'s, including symmetry,
Cauchy summation identities, and Pieri type rules.
Moreover, we strengthen other 
known properties of the spin $q$-Whittaker polynomials:
\begin{itemize}
	\item (\Cref{sub:sqW_eigenrelations})
		We present $q$-difference operators $\mathfrak{D}_1$, $\overline{\mathfrak{D}}_1$
		which act on our new spin $q$-Whittaker polynomials diagonally
		as
		$\mathfrak{D}_1 \mathbb{F}_\lambda=q^{\lambda_N}\mathbb{F}_\lambda$
		and
		$\overline{\mathfrak{D}}_1 \mathbb{F}_\lambda=q^{-\lambda_1}\mathbb{F}_\lambda$.
		The operator $\overline{\mathfrak{D}}_1$ 
		reduces, as $x_1\to0$,
		to the known eigenoperator $\mathfrak{E}$ \cite{BufetovMucciconiPetrov2018}
		acting on $\mathbb{F}_\lambda^{BW}$ with the same eigenvalue $q^{-\lambda_1}$.
		The operator $\overline{\mathfrak{D}}_1$, and the fact that the other eigenvalue
		$q^{\lambda_N}$ can be extracted from spin $q$-Whittaker polynomials, are new.
	\item (\Cref{sub:qW_conjugation}) 
		We observe that the operators 
		$\mathfrak{D}_1$, $\overline{\mathfrak{D}}_1$
		can be represented as conjugations of the 
		first $q$-Whittaker $q$-difference operators (these are $t=0$ degenerations
		of the Macdonald operators from \cite{Macdonald1995}).
		From higher $q$-Whittaker operators we thus get higher order $q$-difference operators
		commuting with either 
		$\mathfrak{D}_1$ or $\overline{\mathfrak{D}}_1$
		(the conjugations leading to 
		$\mathfrak{D}_1$ and $\overline{\mathfrak{D}}_1$ are different,
		even though
		these operators commute).
		The higher order operators
		coming from the $q$-Whittaker operators
		are not diagonal in the 
		spin $q$-Whittaker polynomials.
	\item (\Cref{sec:marginals_sqW_processes}) 
		For spin $q$-Whittaker processes
		on interlacing arrays of signatures,
		we construct sampling algorithms 
		(``Yang--Baxter fields'')
		based on bijectivizations
		of the Yang--Baxter equation, using ideas 
		of the previous works \cite{BufetovPetrovYB2017},
		\cite{BufetovMucciconiPetrov2018}.
		We consider several Yang--Baxter fields, 
		and by the very construction
		each of them possesses a 
		marginally Markovian projection to a one-dimensional
		system: stochastic higher spin six vertex model; 
		$_4\phi_3$ stochastic vertex model / $q$-Hahn PushTASEP;
		or the $q$-Hahn TASEP from 
		\cite{Povolotsky2013},
		\cite{Corwin2014qmunu}. 
		The former two connections already appeared in \cite{BufetovMucciconiPetrov2018} (for 
		processes based on $\mathbb{F}_\lambda^{BW}$),
		while the matching to the $q$-Hahn TASEP is new.
	\item (\Cref{sec:rsk_from_YB})
		Setting $q=s=0$, 
		we see how the Yang--Baxter equation
		reduces to the classical Robinson--Schensted--Knuth
		row insertion algorithm \cite{Knuth1970}, \cite{fulton1997young}, \cite{Stanley1999}.
	\item (\Cref{sub:sqw_sqw_last_continuous})
		In a simplified ``Plancherel'' (or ``Poisson-type'') 
		continuous time limit 
		we construct a Markov dynamics on interlacing arrays 
		under which the last rows marginally evolve as a continuous time version of
		the $q$-Hahn TASEP (appeared in \cite{barraquand2015q}).
		Our new two-dimensional continuous time dynamics 
		is a one-parameter deformation of the 
		$q$-Whittaker 2d-growth model introduced in
		\cite[Definition 3.3.3]{BorodinCorwin2011Macdonald}.
		The latter growth model has continuous time $q$-TASEP as the 
		last row marginal dynamics.
\end{itemize}

Our modification of the spin $q$-Whittaker polynomials originates
from computer experiments 
informed by the existing definition from
\cite{BorodinWheelerSpinq}
combined with the desire to have $q$-difference eigenoperators
(a particular case of one of the eigenoperators
appeared earlier in \cite{BufetovMucciconiPetrov2018}).
The new spin $q$-Whittaker polynomials
can be formulated as partition functions of up-right path 
ensembles (cf. \Cref{fig:paths_examples}, left), 
where paths must stay above the diagonal, and 
the vertex weights at the diagonal are special. 
These special \emph{corner vertex weights} turn out to
satisfy a version of the Yang--Baxter equation
(given in \Cref{prop:YBE_corner_W}
in Appendix). Combined with the Yang--Baxter equation
for the spin $q$-Whittaker \emph{bulk vertex weights} 
written down in \cite{BorodinWheelerSpinq} (which is a fusion of the most basic
Yang--Baxter equation for the six vertex model),
this brings most of the desired properties of the new polynomials,
including their symmetry and Cauchy summation identities. 
We also note that for $s=0$, corner and bulk vertex weights coincide, 
so the effect of the new corner weights is present only at the $s\ne 0$ level.

\medskip
It would be very interesting to connect our 
corner vertex weights and the corresponding
Yang--Baxter equation with known integrable vertex 
model constructions.

\subsection{Spin Whittaker functions, random polymers, and deformed quantum Toda}

Our second series of results deals with the 
$q\to1$ scaling limit of the spin $q$-Whittaker polynomials.
Stochastic systems which we have associated with
the spin $q$-Whittaker polynomials
already known to possess such limits:
\begin{itemize}
	\item 
		The $q$-Hahn TASEP becomes the strict-weak 
		directed polymer model in an environment built from 
		independent random variables with beta distribution \cite{CorwinBarraquand2015Beta}. 
		We recall it in \Cref{def:strict_weak_beta}. 
	\item 
		The $q$-Hahn PushTASEP scales \cite{CMP_qHahn_Push} to another beta polymer 
		type model --- a rather complicated system 
		determined by a random recursion with 
		negative beta binomial random weights.
		We recall (a slight generalization of)
		this model in \Cref{def:polymer_like}. 
\end{itemize}

Introduce the scaling
\begin{equation*}
	q\to1, \qquad 
	x_i=q^{X_i},\qquad 
	s=-q^S,\qquad 
	q^{-\lambda_i}=L_i,
\end{equation*}
where $S>0$, $|X_i|<S$, and 
$1\le L_N\le \ldots\le L_1 $ are fixed real numbers.
We show (\Cref{thm:sqW_to_sW})
that under this scaling, the spin $q$-Whittaker polynomial
$\mathbb{F}_\lambda(x_1,\ldots,x_N )$
converges to a new object --- the \emph{spin Whittaker function}
$\mathfrak{f}_{X_1,\ldots,X_N }(L_1,\ldots,L_N)$ (which also depends on $S$).

\medskip

The functions 
$\mathfrak{f}_{X_1,\ldots,X_N }(L_1,\ldots,L_N)$
may be defined via a recursive Givental-type integral representation.
Let $\underline{L}'_{N-1}=(L_{N-1}',\ldots,L_1' )$
and $\underline{L}_N=(L_N,\ldots,L_1 )$
be interlacing sequences:
\begin{equation*}
	1\le L_N\le L_{N-1}'\le L_{N-1}\le \ldots\le L_1'\le L_1 
\end{equation*}
(notation: $\underline{L}_{N-1}'\prec \underline{L}_N$).
Define
\begin{multline*}
	\mathfrak{f}_X(\underline{L}_{N-1}';\underline{L}_N)\coloneqq
	\frac{1}{(\mathrm{B}(S+X,S-X))^{N-1}}
	\left( \frac{L_{N}\cdots L_1}{L_{N-1}'\cdots L_1' } \right)^{-X}
	\\\times\prod_{j=1}^{N-1}
	\biggl( 1-\frac{L_j'}{L_j}			\biggr)^{S-X-1}
	\biggl( 1-\frac{L_{j+1}}{L_j'}	\biggr)^{S+X-1}
	\biggl( 1-\frac{L_{j+1}}{L_j}		\biggr)^{1-2S},
\end{multline*}
where $\mathrm{B}(S+X,S-X)$ is the Beta function. 
Set
$\mathfrak{f}_{X_1}(L_1)\coloneqq L_1^{-X_1}$, and, inductively, 
\begin{equation}
	\label{eq:intro_sW_recursive}
	\mathfrak{f}_{X_1,\ldots,X_N }(\underline{L}_N)
	\coloneqq
	\int_{\underline{L}'_{N-1}\colon
	\underline{L}'_{N-1}\prec \underline{L}_N}
	\mathfrak{f}_{X_1,\dots,X_{N-1}}(\underline{L}_{N-1}')
	\,
	\mathfrak{f}_{X_N}(\underline{L}_{N-1}' ; \underline{L}_N) 
	\,
	\frac{d \underline{L}_{N-1}}{\underline{L}_{N-1}'}.
\end{equation} 

\begin{example}
	In the simplest nontrivial case $N=2$ we have
	\begin{equation*}
			\mathfrak{f}_{X,Y}(u,z)
			=
			(z/u)^S 
			u^{-X-Y}
			{}_2F_1 
			\left(\begin{minipage}{2.15cm}
			\center{$S+X \, , \, S+Y$}
			\\
			\center{$2S$}
	\end{minipage} \Big\vert\, 1-\frac{z}{u}\right),
	\qquad 1\le u\le z,
	\end{equation*}
	where $_2F_1$ is the Gauss hypergeometric function \eqref{eq:2F1}.
\end{example}

\begin{remark}
	Observe that in contrast with the usual
	Whittaker functions, the spin deformations depend on 
	\emph{ordered} tuples $\underline{L}_N$.
	This also corresponds to the fact that 
	the integration in \eqref{eq:intro_sW_recursive}
	is over sequences $\underline{L}_{N-1}'$
	interlacing with $\underline{L}_N$.
\end{remark}

The
spin Whittaker functions 
$f_{X_1,\ldots,X_N }(\underline{L}_N)$ 
defined by the recursion 
\eqref{eq:intro_sW_recursive}
are \emph{symmetric} in 
the $X_i$'s. This fact is far from being obvious
from this recursive representation,
and follows from the symmetry of the spin $q$-Whittaker polynomials
(which ultimately is a consequence of the Yang--Baxter equation).

\medskip

We show (\Cref{thm:deformed_Toda_eigen}) 
that the functions $\mathfrak{f}_{X_1,\ldots,X_N }(\underline{L}_N)$
are eigenfunctions of a deformation of the $\mathfrak{gl}_N$ quantum
Toda Hamiltonian
\begin{equation}
	\label{eq:intro_deformed_Toda}
	\mathscr{H}_2\coloneqq
	-\frac{1}{2} \sum_{i=1}^N \partial^2_{u_i} 
	+
	\sum_{1\le i<j \le N} 
	S^{-2(j-i)} e^{u_j - u_i} 
	( 
		S - \partial_{u_i} 
	) 
	( 
		S+ \partial_{u_j} 
	).
\end{equation}
Introduce a change of variables
$L_j=S^{N+1-2j}e^{u_j}$. Then in these variables we have
\begin{equation*}
	\mathscr{H}_2
	\mathfrak{f}_{X_1,\ldots,X_N }
	=
	\left( -\frac12\sum_{j=1}^{N}X_j^2 \right)
	\mathfrak{f}_{X_1,\ldots,X_N }.
\end{equation*}

\medskip

The functions $\mathfrak{f}_{X_1,\ldots,X_N }(\underline{L}_N)$
satisfy a version of the Cauchy type identity
with integration over $1\le L_N\le \ldots\le L_1$:
\begin{equation}
	\label{eq:intro_sW_Cauchy}
	\int
	\mathfrak{f}_{X_1,\dots,X_N}(\underline{L}_N)\,
	\mathfrak{g}_{Y_1,\dots,Y_M}(\underline{L}_N)\,
	\frac{d \underline{L}_N}{\underline{L}_N}
	=
	\prod_{j=1}^M \frac{\Gamma(X_1+Y_j)}{\Gamma(S+X_1)} 
	\Bigg(\prod_{i=2}^N \frac{\Gamma(X_i+Y_j) \Gamma(2S)}{\Gamma(S+X_i) \Gamma(S+Y_j)}\Bigg)
	.
\end{equation}
Here $\mathfrak{g}_{Y_1,\dots,Y_M}(\underline{L}_N)$
are certain dual spin Whittaker functions, 
see \Cref{sub:dual_sw}.
For the usual Whittaker functions, 
first Cauchy type identity with $M=N$
is due to Bump and Stade 
\cite{Bump1989},
\cite{Stade2002},
\cite{gerasimov2008baxter},
and was later generalized in
\cite[(1.2)]{COSZ2011}, \cite[Section 4.2.1]{BorodinCorwin2011Macdonald}.

\medskip

We also define \emph{spin Whittaker processes}.
These are probability measures on 
interlacing sequences of reals
$\underline{L}_1\prec \underline{L}_2\prec \ldots \prec \underline{L}_N $,
$\underline{L}_k=(L_{k,k}\le L_{k,k-1}\le \ldots\le L_{k,1} )$,
with probability weights expressed through 
the spin Whittaker functions.
Cauchy type identity \eqref{eq:intro_sW_Cauchy}
provides an explicit normalizing constant for the spin Whittaker
process.
We match the distribution of the marginal process
$L_{k,k}^{-1}$ to the strict-weak beta polymer
model of \cite{CorwinBarraquand2015Beta} (\Cref{thm:strict_weak_beta_polymer}),
and the distribution of the other marginal process
$L_{k,t}$ to 
the other beta polymer like model which appeared
in \cite{CMP_qHahn_Push} (\Cref{thm:sW_process_first_row}).

\medskip

As $S\to+\infty$ and under the scaling
$L_j=S^{N+1-2j}e^{u_j}$, $X_j=-\mathrm{i}\lambda_j$,
the spin Whittaker functions $\mathfrak{f}_{X_1,\ldots,X_N }(\underline{L}_N)$
formally reduce to the 
usual Whittaker functions $\psi_{\underline{\lambda}}(\underline{u}_N)$.
A similar reduction brings spin Whittaker processes
to Whittaker processes from
\cite{Oconnell2009_Toda}, 
\cite{COSZ2011},
\cite{BorodinCorwin2011Macdonald}.
We do not fully justify these limit transitions, as this requires a 
much finer analysis and justification of the interchange of the 
$S\to+\infty$ limit with Givental-type representations, which is outside the 
scope of this paper.
However, we note that at the level of marginals, 
the strict-weak beta polymer becomes the strict-weak log-gamma polymer
\cite[Remark 1.5]{CorwinBarraquand2015Beta}.
We also show (\Cref{prop:log_gamma_reduction}) that the other beta polymer type model
turns into the log-gamma polymer from \cite{Seppalainen2012}.

\medskip

Finally, we note that at the level of quantum Toda Hamiltonians
the limit 
$\lim\limits_{S\to +\infty} \mathscr{H}_2=\mathscr{H}_2^{\mathrm{Toda}}$ 
is quite straightforward. 
Indeed, the only terms surviving this limit have $j=i+1$, and 
then $S^{-2}(S-\partial_{u_i})(S+\partial_{u_{i+1}})\to 1$
leads to \eqref{eq:intro_Whit_eigenrelations}

\begin{remark}
	In representation theory, the term ``Whittaker functions'' 
	refers to special matrix elements of certain
	(infinite-dimensional) representations.
	The name ``spin Whittaker'' functions for our 
	new objects 
	$\mathfrak{f}_{X_1,\ldots,X_N }(\underline{L}_N)$,
	$\mathfrak{g}_{Y_1,\dots,Y_M}(\underline{L}_N)$
	is naturally suggested by their place in the hierarchy
	of symmetric functions in \Cref{fig:scheme}.
	At this point \emph{we do not claim}
	the existence of a
	matrix element interpretation of these
	functions (that would make them ``Whittaker'' in a representation-theoretic sense).
	Finding such an interpretation
	is an interesting challenge which falls out of the scope of the present paper.
\end{remark}

\subsection{Outline}

The paper has two main parts. 
The \emph{discrete}
part, \Cref{sec:symm_functions,sec:difference_operators,sec:dynamics_on_arrays,sec:marginals_sqW_processes,sec:rsk_from_YB},
discusses spin Hall--Littlewood functions and 
our new variant of $q$-Whittaker functions, Cauchy type summation identities via Yang--Baxter equations,
difference operators diagonalized by these functions, and related
integrable stochastic models.
The \emph{continuous} part,
\Cref{sec:sW_functions,sec:sW_processes,sec:Toda},
deals with continuously labeled spin Whittaker functions and their properties.
These include
Givental-type integral representations for spin Whittaker functions,
Cauchy type identities, 
connections to random polymer models with beta weights,
and a new deformation of the quantum Toda Hamiltonian.

In \Cref{sec:concluding_remarks} we discuss a number of 
further directions, and formulate conjectures about torus scalar product 
orthogonality of spin $q$-Whittaker and spin Whittaker functions.

\Cref{app:special} collects notation relevant to special functions
used in the paper. In \Cref{app:YBE} we list Yang--Baxter equations
used throughout the discrete part.
\Cref{app:proof_prop,app:triangular_sums} contain certain technical proofs
used in the main text (in \Cref{sec:sW_functions,sec:Toda}, respectively).

\subsection{Notation}

We use the $q$-Pochhammer symbol notation
\begin{equation}
	\label{eq:q_Pochhammer}
	(a;q)_k \coloneqq (1-a)(1-aq)\ldots(1-aq^{k-1}),
	\qquad 
	(a;q)_0\coloneqq 1.
\end{equation}
Occasionally we will need multiple $q$-Pochhammer
symbols
$(a_{1}, \ldots, a_{m}; q)_{k} \coloneqq (a_1;q)_k\ldots (a_m;q)_k $.
Certain special functions
such as $q$-hypergeometric and hypergeometric functions,
as well as useful probability distributions
based on them are described in \Cref{app:special}.

Throughout the paper,
$\mathbf{1}_{A}$ denotes the indicator of an event $A$.

\subsection{Acknowledgments}
MM is grateful to Alexander Garbali for useful discussions.
LP was partially supported by the NSF grant DMS-1664617.

\section{\texorpdfstring{Spin $q$-Whittaker and spin Hall--Littlewood functions}{Spin q-Whittaker and spin Hall--Littlewood functions}}
\label{sec:symm_functions}

Here we introduce symmetric functions we use
throughout the paper which are variants of 
the spin Hall--Littlewood and spin $q$-Whittaker 
functions of
\cite{Borodin2014vertex}, \cite{BorodinWheelerSpinq}.

\subsection{Signatures}
\label{sub:signatures}

Our symmetric functions are indexed by nonnegative signatures
(i.e., partitions with a specified number of parts $N$).
We will drop the word ``nonnegative'',
and refer to them simply as ``signatures''.
Signatures
form a set
\begin{equation*}
	\mathrm{Sign}_N \coloneqq \{ \lambda = (\lambda_1 \ge \cdots \ge \lambda_N \ge 0) \colon \lambda_i \in \mathbb{Z}_{\ge 0} \},
	\qquad N\in \mathbb{Z}_{\ge0}.
\end{equation*}
By agreement, $\mathrm{Sign}_0=\{\varnothing\}$.
The number of positive parts of a signature $\lambda$ is denoted by 
\begin{equation*}
    \ell(\lambda) = \# \{i\colon \lambda_i >0 \}.
\end{equation*}
When $\lambda$ is a partition (and not a signature), the
quantity $\ell(\lambda)$ takes the name of \emph{length}. We \emph{will
not} use such terminology as it creates confusion with the number $N$
of coordinates of the signature $\lambda$.

For notational convenience, we will also label certain symmetric
functions with the transpose of a signature. 
To define the transposition in the context of signatures,
introduce the set of \emph{boxed signatures}
\begin{equation*}
	\mathrm{Sign}_M^{\le N} \coloneqq \{ \lambda = (\lambda_1 \ge \cdots \ge \lambda_M \ge 0)\colon 0\le \lambda_i\le N  \}\subset \mathrm{Sign}_M.
\end{equation*}
Clearly, these signatures can be represented as belonging to 
the box
$\mathrm{Box}(N,M)$, where
\begin{equation*}
	\mathrm{Box}(N,M)=\{1,\dots,N\} \times \{1,\dots,M\}.
\end{equation*}
Let
$\lambda \in \mathrm{Sign}_M^{\le N}$.
By the \emph{transposed signature} 
$\lambda'$ we mean
\begin{equation*}
	\lambda_i'\coloneqq\#\{j\colon  \lambda_j\ge i \},\qquad i=1,\ldots,N.
\end{equation*}
Clearly, 
$\lambda' \in \mathrm{Sign}_N^{\le M}$. 
See \Cref{fig:boxed_transposition} for an illustration.

\begin{figure}[htpb]
	\centering
	\includegraphics[width=.3\textwidth]{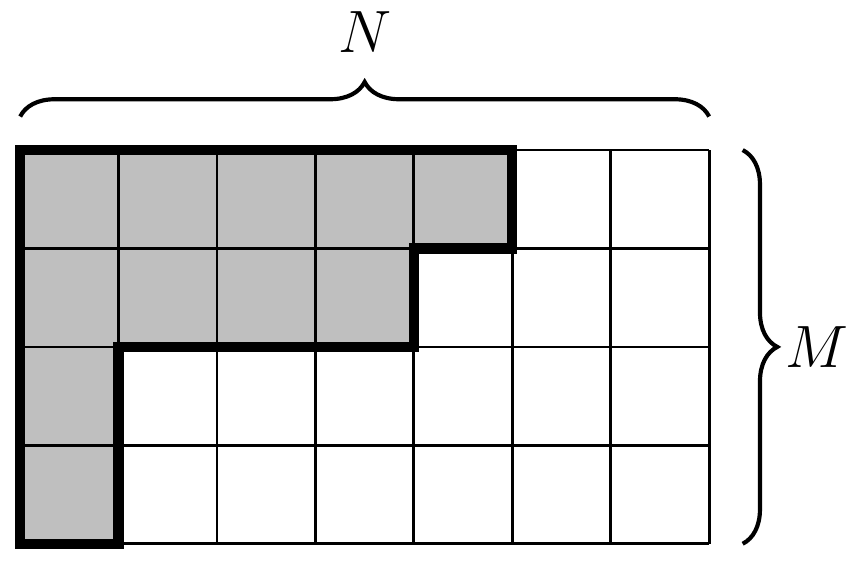}
	\caption{An example of a signature $\lambda=(5,4,1,1)\in \mathrm{Sign}_4^{\le 7}$.
	Its transposed signature is $\lambda'=(4,2,2,2,1,0,0)\in \mathrm{Sign}_7^{\le 4}$.}
	\label{fig:boxed_transposition}
\end{figure}

We will also use multiplicative notation for signatures:
\begin{equation*}
    \lambda = 1^{m_1(\lambda)}2^{m_2(\lambda)}\ldots , \qquad \text{where} \qquad m_i(\lambda)=\#\{ j \colon \lambda_j = i \}.
\end{equation*}

Given two signatures $\mu \in \mathrm{Sign}_{k}$ and $\lambda \in \mathrm{Sign}_{k+1}$ we say that they \emph{interlace} if
\begin{equation}\label{eq:interlacing}
	\lambda_1 \ge \mu_1 \ge \lambda_2 \ge \mu_2 \ge \cdots \ge \mu_{k} \ge \lambda_{k+1}.
\end{equation}
We will use notation
$\mu \prec \lambda$ for interlacing.
Interlacing
also extends to the case when $\lambda$ and $\mu$
have the same number of elements 
by dropping the last inequality in
\eqref{eq:interlacing}. We will use the same 
notation $\mu\prec \lambda$ in this case.
When $\lambda$ and $\mu$ are such that
$\mu' \prec \lambda'$, we say that they are \emph{transposed interlacing}, and
use the notation $\mu \prec' \lambda$.

\subsection{Directed path vertex models}
\label{sub:directed_paths}

Symmetric functions introduced in this section are constructed
through a
vertex model formalism.
That is, we define symmetric functions as partition 
functions (=~sum of weights of allowed configurations)
of ensembles of paths flowing
through a planar lattice, where
the \emph{global weight} of each path configuration
is the product of Boltzmann weights of local configurations around
each vertex. We need 
two separate classes of ensembles:
up-right and down-right.

\begin{definition}[Up-right paths]
	\label{def:up_right_paths}
	We consider up-right directed paths living in the
	half-quadrant
	$\{ (i,j) \in \mathbb{Z}_{\ge 0} \times \mathbb{Z}_{\ge 1}\colon j \ge i \}$.
	We divide its vertices into three categories: 
	\begin{itemize}
			\item left boundary vertices
				$\leftwalltext$
				at $(0,j)$, for $j\ge 1$;
			\item bulk vertices
				$\bulktext$
				at $(i,j)$, for $1 \le i < j$;
			\item right corner vertices
				$\rightcornertext$
			at $(i,i)$, for $i \ge 1$.
	\end{itemize}
	Paths we consider emanate from left boundary vertices and proceed in
	the up-right direction in the bulk of the lattice.
	Multiple paths are allowed to go along one
	horizontal or vertical
	edge.
	When a path 
	meets the diagonal it gets reflected in the upward direction.
	The reason why we distinguish the nature of the vertices is
	that we will use different weighting systems for each of them.
	See \Cref{fig:paths_examples}, left, for an illustration.
\end{definition}

For a configuration of up-right paths, define for each $k\in \mathbb{Z}_{\ge1}$
the signature $\lambda^k \in \mathrm{Sign}_k$ by
\begin{equation*}
	\lambda_i^k - \lambda_{i+1}^k = \#\{\text{paths occupying the edge } (i,k) \to (i,k+1)\},
\end{equation*}
where $i=1, \dots, k-1$. Let also $\lambda^k_k$ will be the number of paths reflected 
at the right corner $(k,k)$. 
In this way the up-right path ensemble is bijectively encoded by a sequence 
\begin{equation*}
	\lambda^1 \subseteq \lambda^2 \subseteq \cdots,
	\qquad \lambda^k\in\mathrm{Sign}_k.
\end{equation*}
Here the relation $\lambda^k \subseteq \lambda^{k+1}$
means inclusion of the respective Young diagrams.
For example, for the up-right path ensemble in \Cref{fig:paths_examples}, left,
we have $\lambda^2=(1,0)$ and $\lambda^3=(3,3,2)$.

\begin{definition}[Down-right paths]
	\label{def:down_right_paths}
	Down-right paths live inside the finite rectangle
	$\{ 0, \dots , N\} \times \{ 1, \dots , M\}$, where $N,M$ are
	fixed positive integers. We divide its vertices into three
	categories:
	\begin{itemize}
			\item left boundary vertices  
				$\leftwalltext$
				at $(0,j)$, for $1 \le j \le M$;
			\item bulk vertices 
				$\bulktext$
				at $(i,j)$, for $1 \le i < N$, $ 1 \le j \le M$;
			\item right boundary vertices
				$\rightwalltext$
				at $(N,j)$, for $ 1 \le j \le M$.
	\end{itemize}
	Down-right directed paths we consider originate at left boundary
	vertices and terminate at the lower base of the
	rectangle. Once paths hit the right boundary they are automatically
	sent all the way down.
	See \Cref{fig:paths_examples}, right, for an illustration.
\end{definition}

\begin{figure}[htpb]
	\centering
	\includegraphics{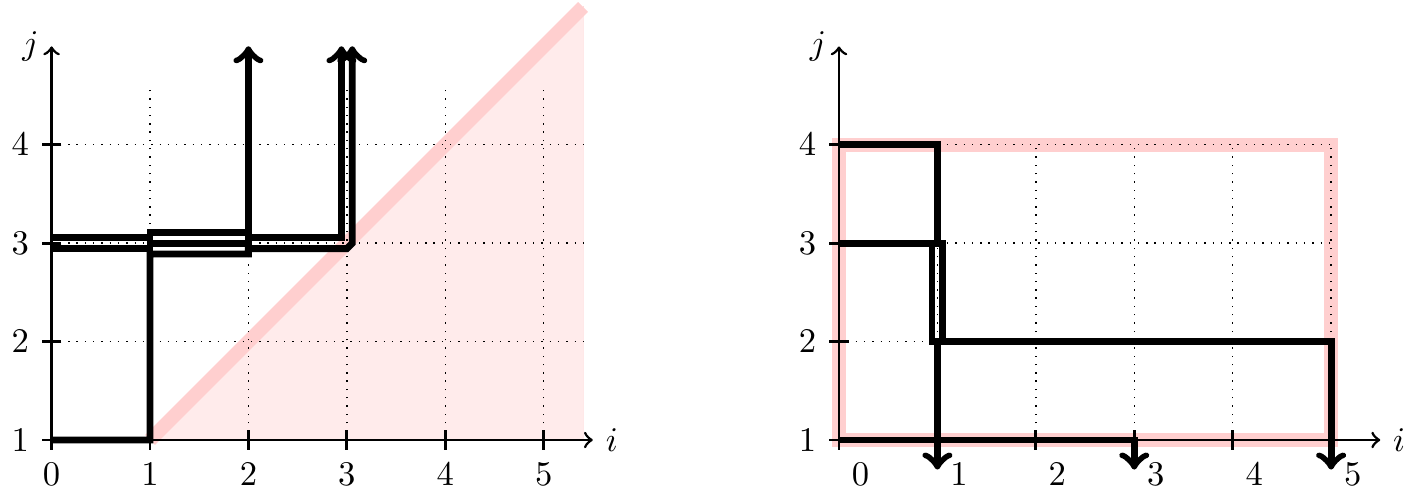}
	\caption{Left: Example of an up-right path ensemble. 
	All paths must be above the main diagonal.
	Right: Example of a down-right path ensemble with 
	$N=5$ and $M=4$. All paths must
	be inside the rectangle.}
	\label{fig:paths_examples}
\end{figure}

To each configuration of down-right paths we can also associate a sequence of growing signatures 
\begin{equation*}
	\lambda^1 \subseteq \lambda^2 \subseteq \cdots \subseteq \lambda^M,
	\qquad \lambda^k\in \mathrm{Sign}_N,
\end{equation*}
where 
\begin{equation*}
	\lambda_i^k - \lambda_{i+1}^k = \#\{\text{paths occupying the edge } (i,M-k+1) \to (i,M-k)\},
\end{equation*}
with $\lambda^k_{N+1}=0$, by agreement. 
For example, for the path ensemble in \Cref{fig:paths_examples}, right, 
we have
$\lambda^1=(1,0,0,0,0)$,
$\lambda^2=(2,0,0,0,0)$,
$\lambda^3=(2,1,1,1,1)$,
and $\lambda^4=(3,2,2,1,1)$.

\subsection{\texorpdfstring{Spin $q$-Whittaker polynomials}{Spin q-Whittaker polynomials}}
\label{sub:sqw_poly_def}

The spin $q$-Whittaker polynomials are partition functions of up-right path ensembles.
Assign the following weights to the left boundary, bulk, and right corner vertices 
(here and below we use the $q$-Pochhammer notation
\eqref{eq:q_Pochhammer}):
\begin{align}
  W^{\leftwall}_{x,s} (j) &\coloneqq
	x^j\,\frac{(-s/x;q)_j}{(q;q)_j} ;
	\label{eq:W_left_boundary}
	\\
	W^{\bulk}_{x,s}(i_1,j_1;i_2, j_2) 
	&\coloneqq 
	\mathbf{1}_{i_1 + j_1 = i_2 + j_2} \, \mathbf{1}_{i_1 \geq j_2}\,  x^{j_2}\, 
	\frac{(- s/x;q)_{j_2} (- s x ; q )_{i_1 - j_2} (q;q)_{i_2} }{(q;q)_{j_2} (q;q)_{i_1 - j_2} (s^2;q)_{i_2} };
	\label{eq:Whit_W}
	\\
	W^{\rightcorner}_{x,s}(j) &\coloneqq \,  \frac{(q;q)_j}{(-s/x;q)_j}.
	\label{eq:W_corner}
\end{align}
Here in \eqref{eq:W_left_boundary} and \eqref{eq:W_corner},
$j\in \mathbb{Z}_{\ge0}$ denotes
the number of paths going through the vertex,
and in \eqref{eq:Whit_W} the numbers 
$i_1,j_1,i_2,j_2\in \mathbb{Z}_{\ge0}$
denote, respectively, the numbers of entering vertical, entering horizontal,
exiting vertical, and exiting horizontal paths to/from the vertex.

The weights \eqref{eq:W_left_boundary}--\eqref{eq:W_corner}
depend on the main \emph{quantization parameter} $q$, on a
\emph{spectral parameter} $x$, a \emph{spin parameter} $s$.
While $q,s$ are assumed fixed, the spectral
parameter will depend on the vertical lattice coordinate.
 
\begin{remark}
	One can readily check that the condition $i_1\ge j_2$ 
	in \eqref{eq:Whit_W}
	implies that up-right
	path configurations with nonzero global weight are those associated
	with sequences of interlacing signatures $\lambda^1\prec
	\lambda^2 \prec \ldots$. In particular, the configuration in
	\Cref{fig:paths_examples}, left, has global weight zero.
\end{remark}

\begin{remark}
	When $s=0$, the bulk 
	and the corner weights
	\eqref{eq:Whit_W}-\eqref{eq:W_corner} coincide. More precisely, we have
	$W^{\rightcorner}_{x,s}(j)=W^{\bulk}_{x,s}(0,j;j,0)=(q;q)_j$.
\end{remark}

\begin{definition}[Spin $q$-Whittaker polynomials]
For given interlacing signatures $\mu \prec \lambda$ with $\mu \in
\mathrm{Sign}_{k}$ and $\lambda \in \mathrm{Sign}_{k+1}$, the
\emph{skew spin $q$-Whittaker polynomial} in one variable
is the weight of the
unique path configuration between $\mu$ and $\lambda$ at the 
$k$-th slice.
It is given by 
\begin{equation} \label{eq:inc_sqW}
	\mathbb{F}_{\lambda/\mu} (x) 
	\coloneqq 
	x^{|\lambda|-|\mu|} 
	\prod_{i=1}^{k} 
	\frac{(-s/x;q)_{\lambda_i - \mu_i} (-s x;q)_{\mu_i - \lambda_{i+1}} (q;q)_{\lambda_i - \lambda_{i+1}  }  }
	{(q;q)_{\lambda_i - \mu_i} (q;q)_{\mu_i - \lambda_{i+i} } (s^2;q)_{\lambda_i - \lambda_{i+i}}  }.
\end{equation}
This is clearly a polynomial in $x$, even though the right corner weight
\eqref{eq:W_corner} is not polynomial.
We will often abbreviate the name ``spin $q$-Whittaker'' as \emph{sqW}.
\end{definition}

For $\mu\in \mathrm{Sign}_k$ and $\nu\in \mathrm{Sign}_{k+n}$, we also define $n$-variable 
polynomials in a standard way via \emph{branching}:
\begin{equation}
	\label{eq:sqW_many_variables_branching}
	\mathbb{F}_{\nu/\mu} (x_1,\dots,x_n) = \sum_{ \varkappa  }  
	\mathbb{F}_{\varkappa / \mu}(x_1, \dots, x_{n-1})\,
	\mathbb{F}_{\nu/\varkappa}(x_n).
\end{equation}
The polynomials $\mathbb{F}_{\nu/\mu}(x_1,\ldots,x_n)$ 
are partition functions of 
up-right path ensembles as in \Cref{fig:paths_examples}, left,
in a domain with the bottom and the top boundary
conditions determined by $\mu\in \mathrm{Sign}_k$ and $\nu\in \mathrm{Sign}_{k+n}$, 
respectively.

We will use the shorthand notation $\mathbb{F}_\lambda(x_1,\ldots,x_n )\equiv
\mathbb{F}_{\lambda/\varnothing}(x_1,\ldots,x_n )$,
where $\lambda\in \mathrm{Sign}_n$.
\begin{remark}
	\label{rmk:sqW_fixed_number_of_variables}
	It is important to 
	notice that the number of variables in a sqW polynomial
	$\mathbb{F}_{\nu/\mu}$
	is determined by the signatures $\nu,\mu$. If $\nu\in \mathrm{Sign}_{n+k}$ and 
	$\mu\in \mathrm{Sign}_k$, then we can only evaluate $\mathbb{F}_{\nu/\mu}$
	at $n$ variables.
\end{remark}

\subsection{\texorpdfstring{Comparison with Borodin--Wheeler's spin $q$-Whittaker polynomials}{Comparison with Borodin--Wheeler's spin q-Whittaker polynomials}}
\label{sub:compare_BW_sqW}

It is important to note that our version of the spin $q$-Whittaker polynomials is
\emph{different} from the original definition
of Borodin and Wheeler \cite{BorodinWheelerSpinq}.
Namely, the one-variable skew polynomials in 
\cite{BorodinWheelerSpinq}
have the form 
\begin{equation} \label{eq:sqW_BW}
	\mathbb{F}^{BW}_{\lambda/\mu} (x) 
	=
	x^{|\lambda|-|\mu|} 
	\prod_{i\ge1}
	\frac{(-s/x;q)_{\lambda_i - \mu_i} (-s x;q)_{\mu_i - \lambda_{i+1}} (q;q)_{\lambda_i - \lambda_{i+1}  }  }
	{(q;q)_{\lambda_i - \mu_i} (q;q)_{\mu_i - \lambda_{i+i} } (s^2;q)_{\lambda_i - \lambda_{i+i}}  },
\end{equation}
where 
$\mu\in \mathrm{Sign}_k$, 
$\lambda\in \mathrm{Sign}_{k+1}$, 
and the product over $i$ extends to $i=k+1$ with the
agreement that $\lambda_{k+2}=\mu_{k+1}=0$.
That is, 
our one-variable functions differ from 
\eqref{eq:sqW_BW} as
\begin{equation} \label{eq:comparison_BW_sqW}
	\mathbb{F}^{BW}_{\lambda/\mu} (x) 
	=
	\frac{(-s/x;q)_{\lambda_{k+1}}}{(s^2;q)_{\lambda_{k+1}}}
	\,\mathbb{F}_{\lambda/\mu} (x).
\end{equation}

The $n$-variable polynomials $\mathbb{F}^{BW}_{\nu/\mu}(x_1,\ldots,x_n) $
are defined from $\mathbb{F}^{BW}_{\lambda/\mu}(x)$
by branching as in \eqref{eq:sqW_many_variables_branching}.
They admit a lattice path construction similarly to $\mathbb{F}_{\nu/\mu}$,
but with the right corner weights $W_{x,s}^{\rightcorner}$
replaced by the bulk weights $W_{x,s}^{\bulk}$. 

The Borodin--Wheeler's spin $q$-Whittaker polynomials 
arise from our $\mathbb{F}_\lambda$ as a particular case:
\begin{proposition}
	\label{prop:reduction_of_ours_to_BW}
	For all $\lambda\in \mathrm{Sign}_{n}$ we have 
	\begin{equation}
		\label{eq:reduction_to_BW}
		\mathbb{F}_{\lambda}(0,x_2,\ldots,x_n )
		=
		\mathbb{F}^{BW}_{\lambda}(x_2,\ldots,x_{n} ).
	\end{equation}
\end{proposition}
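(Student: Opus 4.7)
The plan is to expand the left-hand side by the branching rule \eqref{eq:sqW_many_variables_branching} and then match the surviving summands slice-by-slice to the branching expansion of $\mathbb{F}^{BW}_\lambda$, using the one-variable identity \eqref{eq:comparison_BW_sqW}.

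First I would specialize at $x_1=0$ in the branching expansion
\begin{equation*}
    \mathbb{F}_\lambda(0,x_2,\ldots,x_n) = \sum_{\varnothing=\lambda^0\prec\lambda^1\prec\cdots\prec\lambda^n=\lambda} \mathbb{F}_{\lambda^1/\varnothing}(0) \prod_{k=2}^n \mathbb{F}_{\lambda^k/\lambda^{k-1}}(x_k),
\end{equation*}
where $\lambda^k\in\mathrm{Sign}_k$. The empty product in \eqref{eq:inc_sqW} with $\mu=\varnothing$ gives $\mathbb{F}_{\lambda^1/\varnothing}(x)=x^{\lambda^1_1}$, which vanishes at $0$ unless $\lambda^1=(0)$. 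The interlacing $\lambda^{k-1}\prec\lambda^k$ then propagates $\lambda^k_k=0$ along the whole chain, so in particular both sides vanish unless $\lambda_n=0$, which I assume from now on.

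Next I would set up the bijection $(\lambda^2,\ldots,\lambda^n)\leftrightarrow(\mu^1,\ldots,\mu^{n-1})$ given by $\mu^{k-1}\coloneqq(\lambda^k_1,\ldots,\lambda^k_{k-1})\in\mathrm{Sign}_{k-1}$ for $k=2,\ldots,n$. Since $\lambda^k_k=\lambda^{k-1}_{k-1}=0$, the interlacing $\lambda^{k-1}\prec\lambda^k$ is equivalent to $\mu^{k-2}\prec\mu^{k-1}$ (the last inequality $0\geq 0$ being trivial). Under this bijection the right-hand side of the claim is exactly the branching expansion of $\mathbb{F}^{BW}_{(\lambda_1,\ldots,\lambda_{n-1})}(x_2,\ldots,x_n)$, where the last coordinate $\lambda_n=0$ is dropped.

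The heart of the matter is the slice identity $\mathbb{F}_{\lambda^k/\lambda^{k-1}}(x_k)=\mathbb{F}^{BW}_{\mu^{k-1}/\mu^{k-2}}(x_k)$. By \eqref{eq:comparison_BW_sqW} applied at the BW side, the right-hand side equals $\frac{(-s/x_k;q)_{\mu^{k-1}_{k-1}}}{(s^2;q)_{\mu^{k-1}_{k-1}}}\,\mathbb{F}_{\mu^{k-1}/\mu^{k-2}}(x_k)$ with $\mu^{k-1}_{k-1}=\lambda^k_{k-1}$. On the other hand, the product in \eqref{eq:inc_sqW} for $\mathbb{F}_{\lambda^k/\lambda^{k-1}}(x_k)$ runs over $i=1,\ldots,k-1$; the first $k-2$ factors and the overall prefactor $x_k^{|\lambda^k|-|\lambda^{k-1}|}=x_k^{|\mu^{k-1}|-|\mu^{k-2}|}$ match those of $\mathbb{F}_{\mu^{k-1}/\mu^{k-2}}(x_k)$ under the bijection, while the extra $i=k-1$ factor, evaluated using $\lambda^{k-1}_{k-1}=\lambda^k_k=0$ and simplified via $(-sx_k;q)_0=(q;q)_0=1$, collapses to $\frac{(-s/x_k;q)_{\lambda^k_{k-1}}}{(s^2;q)_{\lambda^k_{k-1}}}$, which is precisely the correction factor above. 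The argument is entirely bookkeeping; the only delicate step is aligning the product ranges and observing that the forced-zero diagonal entries turn the last diagonal factor of \eqref{eq:inc_sqW} into exactly the correction factor of \eqref{eq:comparison_BW_sqW}.
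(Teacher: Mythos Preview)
Your proof is correct. You take a different route from the paper: the paper argues directly at the level of the lattice path partition function, noting that $W^{\leftwall}_{x_1,s}(j)W^{\rightcorner}_{x_1,s}(j)=x_1^{j}$ forces no paths at height~$1$ when $x_1=0$, and that the condition $\mathbf{1}_{i_1\ge j_2}$ in the bulk weight then prevents any path from a higher row from reaching the diagonal, so the corner weights never contribute and the partition function reduces to that of $\mathbb{F}^{BW}_\lambda$. Your argument instead works purely from the branching expansion and the explicit one-variable formula \eqref{eq:inc_sqW}: you force $\lambda^1=(0)$ at $x_1=0$, propagate $\lambda^k_k=0$ along the chain via interlacing, strip off the zero diagonal to obtain a chain $\mu^{k-1}\in\mathrm{Sign}_{k-1}$, and then verify that the extra $i=k-1$ factor in $\mathbb{F}_{\lambda^k/\lambda^{k-1}}(x_k)$ collapses to exactly the correction factor in \eqref{eq:comparison_BW_sqW}. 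The paper's proof is shorter and more conceptual (it explains \emph{why} the corner weights become invisible), whereas yours is entirely self-contained and does not invoke the vertex model picture at all; it is essentially the algebraic translation of the same mechanism.
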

\begin{proof}
	The up-right paths that start at the left boundary
	at height $1$ must immediately turn up
	at the right corner at $(1,1)$.
	If there are $j$ such paths, their
	contribution to the global weight is
	$W^{\leftwall}_{x_1,s}(j)W^{\rightcorner}_{x_1,s}(j)=x_1^j$.
	For $x_1=0$, this forces no paths to start at height $1$.
	Next, due to the presence of $\mathbf{1}_{i_1\ge j_2}$ in the 
	bulk weight $W^{\bulk}_{x,s}$, we see that paths started at 
	height $2,\ldots,n $
	cannot reach the diagonal with the special right corner
	weights. Therefore, 
	the partition function for 
	$\mathbb{F}_{\lambda}(x_1,x_2,\ldots,x_n )$
	with
	$x_1=0$
	involves only 
	left boundary and bulk weights, and is thus the same as 
	the partition function for 
	$\mathbb{F}^{BW}_{\lambda}(x_2,\ldots,x_{n} )$.
\end{proof}

Note that $\mathbb{F}^{BW}_\lambda(x_2,\ldots,x_n )$
is well-defined for any $\lambda$,
and vanishes if $\ell(\lambda)$, the number of nonzero parts in $\lambda$,
exceeds $n-1$. 
If $\ell(\lambda)\le n-1$, then we can treat $\lambda$
as an element of $\mathrm{Sign}_n$ with $\lambda_n=0$, 
and then \eqref{eq:reduction_to_BW} holds. 
Moreover, one readily sees that both sides of
\eqref{eq:reduction_to_BW} vanish if $\lambda_n>0$.
Therefore, any polynomial 
$\mathbb{F}^{BW}_\lambda$ can be obtained from 
our polynomial $\mathbb{F}_\lambda$ by specializing 
one of the variables to zero. (By symmetry, see \Cref{sub:sqW_properties} below,
we can specialize to zero any variable, and not necessarily the first one.)

\subsection{\texorpdfstring{Properties of the spin $q$-Whittaker polynomials}{Properties of the spin q-Whittaker polynomials}}
\label{sub:sqW_properties}

The fact that the Borodin--Wheeler's sqW polynomials
are symmetric in their variables follows 
from the Yang--Baxter equation
which we reproduce in 
\Cref{app:YBE} as \Cref{prop:YBE_bulk_W}.
By looking at \eqref{eq:comparison_BW_sqW},
it is not immediately clear why our version of the sqW polynomials
should also be symmetric.
We prove this next.

\begin{proposition} \label{prop:sqW_are_symmetric}
	For any $\mu\in \mathrm{Sign}_k$, $\nu\in \mathrm{Sign}_{n+k}$
	the polynomial
	$\mathbb{F}_{\nu/\mu}(x_1,\ldots,x_n )$ is 
	symmetric
	with respect to permutations of its variables $x_i$.
\end{proposition}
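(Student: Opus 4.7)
The plan is to adapt the classical train argument of Yang--Baxter integrable vertex models, with the new ingredient being a Yang--Baxter equation for the corner vertex weights at the diagonal. Iterating the branching rule \eqref{eq:sqW_many_variables_branching}, the polynomial $\mathbb{F}_{\nu/\mu}(x_1,\ldots,x_n)$ expands as a sum over chains $\mu=\lambda^0\subset\lambda^1\subset\cdots\subset\lambda^n=\nu$ of products $\prod_j\mathbb{F}_{\lambda^j/\lambda^{j-1}}(x_j)$. To establish full symmetry it suffices to exchange two adjacent variables $x_i\leftrightarrow x_{i+1}$, and fixing all $\lambda^j$ for $j\neq i$ in this expansion reduces the claim to the two-row identity
\begin{equation*}
\sum_\varkappa \mathbb{F}_{\varkappa/\kappa}(x_i)\,\mathbb{F}_{\lambda/\varkappa}(x_{i+1})
=
\sum_\varkappa \mathbb{F}_{\varkappa/\kappa}(x_{i+1})\,\mathbb{F}_{\lambda/\varkappa}(x_i),
\end{equation*}
for all $\kappa\in\mathrm{Sign}_{k+i-1}$, $\lambda\in\mathrm{Sign}_{k+i+1}$, with $\varkappa\in\mathrm{Sign}_{k+i}$.

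In the vertex model language each side of this identity is a partition function over the two-row strip at heights $k+i$ and $k+i+1$, carrying spectral parameters $x_i$ (bottom) and $x_{i+1}$ (top), with boundary states dictated by $\kappa$ and $\lambda$. I would introduce an auxiliary R-matrix (cross vertex) depending on $x_i, x_{i+1}$ and insert it at the left end of the strip, choosing its external legs so that the insertion is compatible with the left boundary weights $W^{\leftwall}$ and contributes a factor of $1$. Using the bulk Yang--Baxter equation of \Cref{prop:YBE_bulk_W}, the cross is then slid rightward column by column through the two-row strip; each step applies an $RLL=LLR$ relation that has the effect of interchanging the roles of $x_i$ and $x_{i+1}$ in the bulk weights to the left of the cross.

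The sliding cannot terminate with the bulk YBE alone, because the top row extends one column further than the bottom row: at the rightmost column of the bottom row the strip contains a corner vertex with weight $W^{\rightcorner}$, and the subsequent (rightmost) column of the top row similarly ends with a corner weight. At this staircase, the corner Yang--Baxter equation of \Cref{prop:YBE_corner_W} takes over and absorbs the R-matrix, producing exactly the right-hand side of the two-row identity with the spectral parameters now swapped.

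The main obstacle is this corner absorption step. The bulk portion of the argument parallels the Borodin--Wheeler symmetry proof for $\mathbb{F}^{BW}$ essentially verbatim; what is genuinely new, and what justifies the particular choice of the corner weights \eqref{eq:W_corner}, is that these weights fit into a Yang--Baxter-type identity at the diagonal. Verifying that the combinatorial configuration produced by the sliding train (namely, the cross meeting the two-step staircase of corner vertices) is precisely the one whose exchange is governed by \Cref{prop:YBE_corner_W}, and checking the matching of auxiliary edges at the top of the strip and at the left boundary, constitutes the principal technical content of the proof.
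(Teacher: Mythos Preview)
Your proposal is correct and follows essentially the same strategy as the paper: reduce to two adjacent rows via branching, then use the train/cross-dragging argument with the bulk Yang--Baxter equation (\Cref{prop:YBE_bulk_W}) in the interior and the corner Yang--Baxter equation (\Cref{prop:YBE_corner_W}) at the diagonal staircase. The only difference is the direction of the drag: the paper starts at the right, using the corner relation \eqref{eq:YBE_RWW_wall} to \emph{create} the cross to the left of the two corner vertices, then pushes it leftward through the bulk, and finally removes it at the left boundary; you instead insert the cross at the left and push rightward, absorbing it at the corners. Since the YBE identities are symmetric, both orderings work.

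One point you leave implicit is how the cross interacts with the left boundary column. The paper makes this step explicit via the observation \eqref{eq:boundary_W_infinite_lines} that $W^{\leftwall}_{x,s}(j)$ is, up to a $j$-independent factor, the bulk weight $W^{\bulk}_{x,s}(\infty,l;\infty,j)$; hence the bulk YBE \eqref{eq:YBE_RWW} applies verbatim at column $0$, and the cross enters/exits with trivial external legs. Your phrase ``choosing its external legs so that the insertion is compatible with the left boundary weights'' is pointing at exactly this, but in a fully written proof you would need to invoke this identification.
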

\begin{proof}
	We use the Yang--Baxter equations
	of \Cref{prop:YBE_bulk_W,prop:YBE_corner_W}
	and employ the standard ``cross dragging'' / commuting transfer matrices
	argument, cf. \cite[Theorem 3.6]{Borodin2014vertex}.
	Using branching, it suffices to consider the
	two-variable case.
	The two-variable polynomial
	$\mathbb{F}_{\lambda/\mu}(x,y)$
	is a partition function of up-right paths 
	on two consecutive levels, with 
	parameters $x$, $y$ at the bottom and at the top, respectively,
	and boundary conditions determined by $\lambda,\mu$.

	First we use the new relation \eqref{eq:YBE_RWW_wall} that, as shown in 
	\Cref{fig:YBE_W_W}\,(b), implies that 
	swapping the spectral parameters $x\leftrightarrow y$
	at the right corners 
	makes a cross appear at their left. 
	Then we sequentially move the
	cross to the left while swapping the spectral parameters using
	the bulk Yang--Baxter equation
	\eqref{eq:YBE_RWW}, as shown by \Cref{fig:YBE_W_W}\,(a).
	We proceed till the left boundary of the domain.
	
	At the left boundary, we can 
	swap the last two spectral parameters
	by noticing that
	\begin{equation} 
		\label{eq:boundary_W_infinite_lines}
		W^{\leftwall}_{x,s}(j) = \frac{(s^2;q)_\infty}{(-s x;q)_{\infty}}\,
		W^{\bulk}_{x,s} (\infty,l;\infty,j), \qquad \text{for any } l\in\mathbb{Z}_{\ge0}.
	\end{equation}
	This means that the left boundary weights 
	$W^{\leftwall}$ also satisfy the Yang--Baxter equation \eqref{eq:YBE_RWW},
	and so we can take the cross out of the lattice.
	This completes the proof.
\end{proof}

Our sqW polynomials also satisfy an index shifting property which
is the same as for the classical homogeneous Macdonald polynomials
$P_\lambda(\cdot;q,t)$ \cite[VI(4.17)]{Macdonald1995}:
\begin{proposition}
	\label{prop:sqW_shifting}
For any signature $\lambda \in \mathrm{Sign}_N$ with $\lambda_N>0$, we have
\begin{equation*}
	\mathbb{F}_\lambda (x_1,\dots, x_N) = x_1 \cdots x_N \,
	\mathbb{F}_{\lambda-1^N} (x_1,\dots, x_N),
	\qquad 
	\lambda - 1^N = (\lambda_1-1,  \ldots , \lambda_N -1).
\end{equation*}
\end{proposition}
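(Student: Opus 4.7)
The plan is to prove the identity directly from the branching/path-ensemble description, turning it into a one-variable skew identity and then lifting by summation.

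First, I would iterate the branching relation \eqref{eq:sqW_many_variables_branching} with $\mu=\varnothing$ to expand
\[
\mathbb{F}_\lambda(x_1,\dots,x_N) = \sum_{\varnothing=\lambda^{0}\prec\lambda^{1}\prec\cdots\prec\lambda^{N}=\lambda} \prod_{k=1}^{N}\mathbb{F}_{\lambda^{k}/\lambda^{k-1}}(x_k),\qquad \lambda^{k}\in\mathrm{Sign}_k.
\]
The hypothesis $\lambda_N>0$, together with the interlacing inequalities $\lambda^{k}_{k}\ge \lambda^{k+1}_{k+1}$ (which propagate from the top downward), forces $\lambda^{k}_{k}\ge \lambda^{N}_{N}>0$ for every $k=1,\dots,N$. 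Therefore the map $\lambda^{k}\mapsto \widetilde{\lambda}^{k}\coloneqq\lambda^{k}-1^{k}$ sends each $\lambda^{k}$ to a bona fide element of $\mathrm{Sign}_k$, and preserves interlacing. This yields a bijection between the sequences indexing the sum above and those indexing the branching expansion of $\mathbb{F}_{\lambda-1^{N}}(x_1,\dots,x_N)$.

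Second, I would verify how a single skew factor $\mathbb{F}_{\nu/\mu}(x)$ with $\nu\in\mathrm{Sign}_{k+1}$, $\mu\in\mathrm{Sign}_{k}$, $\nu_{k+1}>0$ behaves under the simultaneous shift $\nu\mapsto\nu-1^{k+1}$, $\mu\mapsto\mu-1^{k}$. Inspecting \eqref{eq:inc_sqW}, each of the differences $\nu_i-\mu_i$, $\mu_i-\nu_{i+1}$, and $\nu_i-\nu_{i+1}$ appearing as exponents in the $q$-Pochhammer symbols is invariant under this shift, so the entire product is unchanged. The only effect is in the monomial prefactor: $|\nu|$ drops by $k+1$ while $|\mu|$ drops by $k$, so $x^{|\nu|-|\mu|}$ loses a single factor of $x$. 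Hence
\[
\mathbb{F}_{\nu/\mu}(x) = x\,\mathbb{F}_{(\nu-1^{k+1})/(\mu-1^{k})}(x).
\]
This is the key one-variable identity; the $k=0$ case with $\mu=\varnothing$ is consistent since $\mathbb{F}_{(\nu_1)/\varnothing}(x)=x^{\nu_1}$ by the empty-product convention.

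Applying this identity to every factor in the product over $k=1,\dots,N$ pulls out a common factor $x_1 x_2\cdots x_N$, and what remains is exactly the branching expansion of $\mathbb{F}_{\lambda-1^{N}}(x_1,\dots,x_N)$, yielding the claim. There is no substantial obstacle here: the proof is a bookkeeping argument that exploits the fact that all Pochhammer arguments in \eqref{eq:inc_sqW} are translation-invariant under a uniform shift of the signature parts, and that $\lambda_N>0$ guarantees the shift remains within the sector of interlacing signatures throughout the branching sum.
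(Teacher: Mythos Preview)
Your proof is correct and follows essentially the same approach as the paper: establish the one-variable skew shifting identity $\mathbb{F}_{\nu/\mu}(x)=x\,\mathbb{F}_{(\nu-1^{k+1})/(\mu-1^{k})}(x)$ from the explicit formula \eqref{eq:inc_sqW}, then apply it term-by-term in the full branching expansion. You give a bit more detail than the paper does (the invariance of the Pochhammer arguments under the shift, and the propagation $\lambda^{k}_{k}\ge\lambda^{k+1}_{k+1}$ ensuring the bijection stays in $\mathrm{Sign}_k$), but the argument is the same.
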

\begin{proof}
	First,
	note that \eqref{eq:inc_sqW}
	implies that
	the one-variable skew polynomials
	satisfy the shifting property as
	\begin{equation} 
		\label{eq:sqW_shifting_one_variable}
		\mathbb{F}_{\nu/\mu}(x) = x \, \mathbb{F}_{(\nu-1^{k+1})/(\mu-1^k)}(x),  
	\end{equation}
	for any $\nu\in \mathrm{Sign}_{k+1}$ and $\mu\in \mathrm{Sign}_{k}$ with $\nu_{k+1}>0$ 
	(this also implies $\mu_k>0$, since $\mu_k \ge \nu_{k+1}$). 
	Next, we use the expansion 
	\begin{equation} 
		\label{eq:branching_GT_full_expansion_for_sqW}
		\mathbb{F}_{\lambda} (x_1,\dots, x_N) 
		= 
		\sum_{\lambda^1 \prec \cdots \prec \lambda^{N-1}\prec\lambda} 
		\mathbb{F}_{\lambda^1}(x_1) \,\mathbb{F}_{\lambda^2/\lambda^1}(x_2) 
		\ldots 
		\mathbb{F}_{\lambda^{N-1}/\lambda^{N-2}}(x_{N-1})\,
		\mathbb{F}_{\lambda/\lambda^{N-1}}(x_N)
	\end{equation}
	coming from iterating the branching rule,
	and apply the shifting property
	\eqref{eq:sqW_shifting_one_variable}
	to each of the terms to get the desired result.
\end{proof}

\begin{remark}
	The polynomials $\mathbb{F}_\lambda^{BW}$ \emph{do
	not} satisfy the index shifting property of
	\Cref{prop:sqW_shifting}, which can be seen from
	\eqref{eq:comparison_BW_sqW}.
    
	On the other hand, 
	the polynomials $\mathbb{F}_\lambda^{BW}$ satisfy the stability property 
	$$
	\mathbb{F}_\lambda^{BW}(x_1,\dots,x_{N-1},-s)=\mathbb{F}_\lambda^{BW}(x_1,\dots,x_{N-1}),
	$$
	whereas the polynomials $\mathbb{F}_{\lambda}$ \emph{do not}. More precisely, we have
    \begin{equation*}
        \mathbb{F}_\lambda(x_1,\dots,x_{N-1},-s)= (-s)^{\lambda_N} \mathbb{F}_{\widetilde{\lambda}}(x_1,\dots,x_{N-1}),
    \end{equation*}
    where $\widetilde{\lambda}=(\lambda_1\ge \cdots \ge \lambda_{N-1})$ and this is easily proven since the 
		branching coefficient 
		\eqref{eq:inc_sqW}
		evaluates as $\mathbb{F}_{\lambda/\mu}(-s)=(-s)^{\lambda_N} \prod_{i=1}^{N-1} \mathbf{1}_{\lambda_i=\mu_i}$.
\end{remark}
In the following proposition we use the coefficient
\begin{equation*}
    \mathsf{c}_{\lambda} = \prod_{i=1}^{N-1} \frac{(s^2;q)_{\lambda_i - \lambda_{i+1} } }{ (q;q)_{\lambda_i - \lambda_{i+1} } },
		\qquad \lambda\in \mathrm{Sign}_N.
\end{equation*}

\begin{proposition}\label{prop:quasi_Cauchy_identity}
    Let $|s x_i|<1$ for $i=1,\dots,N$. Then we have
    \begin{equation} \label{eq:quasi_Cauchy_identity}
        \sum_{\substack{\lambda \in \mathrm{Sign}_N \\ \lambda_N=0}} \mathsf{c}_{\lambda} (-s)^{|\lambda|} \mathbb{F}_\lambda (x_1,\dots,x_N)= 
			\frac{((-s)^N x_1\ldots x_N ;q)_\infty (s^2;q)_\infty^{N-1}}
			{(-sx_1;q)_{\infty}\dots(-sx_N ;q)_\infty}.
    \end{equation}
\end{proposition}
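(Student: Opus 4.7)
The plan is to prove the identity by induction on $N$, using the branching rule \eqref{eq:sqW_many_variables_branching} to strip off the last variable $x_N$. The base case $N=1$ is immediate: the only summand has $\lambda=(0)$, with $\mathbb{F}_{(0)}(x_1)=\mathsf{c}_{(0)}=1$, so both sides equal $1$.

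For the inductive step, denote the left-hand side of \eqref{eq:quasi_Cauchy_identity} by $S_N(x_1,\ldots,x_N)$. Expanding $\mathbb{F}_\lambda$ via branching and exchanging the order of summation, I would fix $\mu\in\mathrm{Sign}_{N-1}$ and first evaluate the inner sum over $\lambda$ with $\mu\prec\lambda$ and $\lambda_N=0$. Writing $a_i=\lambda_i-\mu_i$ and $b_i=\mu_i-\lambda_{i+1}$, the constraints reduce to $a_{i+1}+b_i=\mu_i-\mu_{i+1}$ for $i=1,\ldots,N-2$ together with $b_{N-1}=\mu_{N-1}$; only $a_1$ is unconstrained. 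The product structure of $\mathsf{c}_\lambda\mathbb{F}_{\lambda/\mu}(x_N)(-s)^{|\lambda|}$ factorizes accordingly. The $a_1$-sum collapses to $(s^2;q)_\infty/(-sx_N;q)_\infty$ by the $q$-binomial theorem; each constrained convolution
\[
\sum_{a+b=\mu_i-\mu_{i+1}}(-sx_N)^a\,\frac{(-s/x_N;q)_a(-sx_N;q)_b}{(q;q)_a(q;q)_b}=\frac{(s^2;q)_{\mu_i-\mu_{i+1}}}{(q;q)_{\mu_i-\mu_{i+1}}}
\]
is recognized as the $z^{\mu_i-\mu_{i+1}}$-coefficient of $(s^2 z;q)_\infty/(z;q)_\infty$ (a $q$-Chu--Vandermonde variant). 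Gathering the factors, the inner sum equals $(-s)^{|\mu|}\mathsf{c}_\mu\cdot\frac{(s^2;q)_\infty}{(-sx_N;q)_\infty}\cdot\frac{(-sx_N;q)_{\mu_{N-1}}}{(q;q)_{\mu_{N-1}}}$.

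The main obstacle is the residual factor $(-sx_N;q)_{\mu_{N-1}}/(q;q)_{\mu_{N-1}}$: when $\mu_{N-1}\ne 0$, this prevents a direct reduction to $S_{N-1}$. I would resolve this via the shifting property of \Cref{prop:sqW_shifting}: writing $\mu=\tilde\mu+(k,\ldots,k)$ with $k=\mu_{N-1}$ and $\tilde\mu_{N-1}=0$ gives $\mathbb{F}_\mu=(x_1\cdots x_{N-1})^k\mathbb{F}_{\tilde\mu}$, $\mathsf{c}_\mu=\mathsf{c}_{\tilde\mu}$, and $|\mu|=|\tilde\mu|+(N-1)k$. The outer sum then splits as a product: the $\tilde\mu$-sum reproduces $S_{N-1}(x_1,\ldots,x_{N-1})$, and the $k$-sum evaluates by the $q$-binomial theorem to $((-s)^N x_1\cdots x_N;q)_\infty/((-s)^{N-1}x_1\cdots x_{N-1};q)_\infty$. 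Substituting the induction hypothesis and cancelling the intermediate Pochhammer symbol yields exactly the claimed right-hand side.

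The delicate point is recognizing that the shifting property---a \emph{new} feature of $\mathbb{F}_\lambda$ not shared by $\mathbb{F}_\lambda^{BW}$---is precisely what absorbs the $\mu_{N-1}$-dependent residual and closes the induction.
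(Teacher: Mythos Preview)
Your proof is correct, and the first half coincides exactly with the paper's argument: both strip off $x_N$ via branching, use the $q$-binomial theorem for the free $a_1$-sum, and $q$-Chu--Vandermonde for the constrained convolutions, arriving at the same intermediate expression
\[
\frac{(s^2;q)_\infty}{(-sx_N;q)_\infty}\sum_{\mu\in\mathrm{Sign}_{N-1}}(-s)^{|\mu|}\,\mathsf{c}_\mu\,\frac{(-sx_N;q)_{\mu_{N-1}}}{(q;q)_{\mu_{N-1}}}\,\mathbb{F}_\mu(x_1,\ldots,x_{N-1}).
\]

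The two approaches diverge in how they handle the residual factor $(-sx_N;q)_{\mu_{N-1}}/(q;q)_{\mu_{N-1}}$. The paper does \emph{not} invoke the shifting property; instead it repeats the same stripping procedure on $x_{N-1}$, now summing over all of $\mathrm{Sign}_{N-1}$ without the constraint $\mu_{N-1}=0$. The residual factor then feeds into the bottommost Chu--Vandermonde convolution at the next step, producing $((-s)^2x_{N-1}x_N;q)_{\lambda^{N-2}_{N-2}}/(q;q)_{\lambda^{N-2}_{N-2}}$, and so on, with an accumulating product $(-s)^k x_{N-k+1}\cdots x_N$ carried through the iteration until the final step exhausts everything. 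Your approach instead uses Proposition~\ref{prop:sqW_shifting} to split the $\mu$-sum as a product of a free $k$-sum and $S_{N-1}$, closing a clean induction on $N$. Your route is more structural---it makes explicit that the shifting property is exactly what is needed, and gives a genuine induction rather than an unwinding with bookkeeping---while the paper's route is more self-contained, needing only the same two $q$-series identities throughout and no auxiliary proposition.
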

\begin{proof}
    We will use the identity     
		\begin{equation} \label{eq:pfaff_saalschutz}
		\sum_{k=0}^n a^k \frac{(b;q)_k}{(q;q)_k} \frac{(a;q)_{n-k}}{(q;q)_{n-k}} = \frac{(ab;q)_n}{(q;q)_n},
    \end{equation}
    that follows from the $q$-Chu--Vandermonde identity 
		(e.g., see \cite[(II.6)]{GasperRahman}).
		Expand the left-hand side of \eqref{eq:quasi_Cauchy_identity} as:
    \begin{equation*}
        \begin{split}
            \sum_{\substack{\lambda=\lambda^N\in \mathrm{Sign}_N \\ \lambda^N_N=0}} \sum_{\lambda^{N-1} \in \mathrm{Sign}_{N-1}}
            &
            (-s x_N)^{\lambda^N_1-\lambda^{N-1}_1} \frac{(-s/x_N;q)_{\lambda^N_1-\lambda^{N-1}_1} }
						{ (q;q)_{\lambda^N_1-\lambda^{N-1}_1} } 
            \\
            &
            \times
            \prod_{k = 2}^{N-1} \left( (-s x_N)^{ \lambda^N_{k}-\lambda^{N-1}_k } \frac{ (-s x_N;q )_{\lambda^{N-1}_{k-1} -\lambda^N_k }  }{ (q;q )_{\lambda^{N-1}_{k-1} -\lambda^N_k } } 
            \frac{ (-s/x_N;q )_{\lambda^{N}_{k} -\lambda^{N-1}_k }  }{ (q;q )_{ \lambda^{N}_{k} -\lambda^{N-1}_k } } \right)
            \\
            &
           \times
           \frac{ (-s x_N;q)_{\lambda^{N-1}_{N-1} } }{ (q;q)_{\lambda^{N-1}_{N-1} } }
           \times
           (-s)^{|\lambda^{N-1}|}
           \mathbb{F}_{\lambda^{N-1}}(x_1, \dots, x_{N-1}).
        \end{split}
    \end{equation*}
    Summing over $\lambda^N_1$ by means of the 
		$q$-binomial theorem
		gives us the factor $(s^2;q)_\infty / (-sx_N;q)_\infty$. We then sum sequentially over indices $\lambda^N_2, \dots, \lambda^N_{N-1}$ and using \eqref{eq:pfaff_saalschutz} we are left with
    \begin{equation*}
        \frac{(s^2;q)_\infty}{(-s x_N;q)_\infty}
				\sum_{\lambda^{N-1} \in \mathrm{Sign}_{N-1}} \frac{ (-s x_N;q)_{\lambda^{N-1}_{N-1} } }{ (q;q)_{\lambda^{N-1}_{N-1} } }\,
        \mathsf{c}_{\lambda^{N-1}}
				(-s)^{|\lambda^{N-1}|} \mathbb{F}_{\lambda^{N-1}}(x_1, \dots, x_{N-1}),
    \end{equation*}
		where $\mathsf{c}_{\lambda^{N-1}}$
		is the result of applying \eqref{eq:pfaff_saalschutz}.
    Repeating the same procedure we can reduce the previous expression to
    \begin{multline*}
				\frac{(s^2;q)_\infty^2}{(-s x_N;q)_{\infty}(-sx_{N-1};q)_\infty}
        \\\times
        \sum_{\lambda^{N-2} \in \mathrm{Sign}_{N-2}} 
				\frac{ ((-s)^2 x_{N-1}x_N;q)_{\lambda^{N-2}_{N-2} } 
				}{ (q;q)_{\lambda^{N-2}_{N-2} } }
        \mathsf{c}_{\lambda^{N-2}}
				(-s)^{|\lambda^{N-2}|} \mathbb{F}_{\lambda^{N-2}}(x_1, \dots, x_{N-2}).
    \end{multline*}
		Here the reason for the appearance of the product
		$(-sx_N)(-sx_{N-1})$ in the $q$-Pochhammer
		symbol is again \eqref{eq:pfaff_saalschutz},
		where we also used that $\lambda^{N-1}_{N-1}$
		is not necessarily zero (in contrast with the 
		first summation over $\lambda^N_N$).
		Continuing 
		inductively, we exhaust all the summations down to the bottom
		one over $\lambda^1_1$, from which we recover the factor
		$((-sx_1)\cdots(-sx_N);q)_\infty/(-sx_1;q)_\infty$. This completes
		the proof.
\end{proof}

\subsection{Dual spin Hall--Littlewood rational functions}
\label{sub:sHL_func_def}

Along with the sqW polynomials $\mathbb{F}_\lambda$ we 
will define two families of 
\emph{dual} functions, with which the $\mathbb{F}_\lambda$'s
satisfy Cauchy-type summation identities.
The first are the dual spin Hall--Littlewood rational functions.
For them we use down-right path ensembles
as in \Cref{fig:paths_examples}, right,
and define the weights by
\begin{align}
		\label{eq:w_sHL_leftwall}
    w_{v}^{*,\leftwall}(j) &\coloneqq v^j;
    \\
		\label{eq:w_sHL_bulk}
		w_{v,s}^{*,\bulk}& \coloneqq
		\textnormal{see \Cref{fig:table_w_star}};
		\\
		W^{*,\rightwall}(i_1,j_1;i_2) &\coloneqq \mathbf{1}_{i_1 = j_1 + i_2 }.
		\label{eq:W_rightwall}
\end{align}
These weights depend on the main parameters $s,q$, and 
on the spectral parameter $v$.
It is easy to see that with this choice of vertex weights the only
allowed configurations of down-right paths in the rectangular grid
$\{0,\dots,N \} \times \{0,\dots, M\}$ are those associated with sequences of
transposed interlacing signatures $\lambda^1 \prec' \cdots \prec'
\lambda^M$.

\begin{figure}[htbp]
    \centering
    \includegraphics{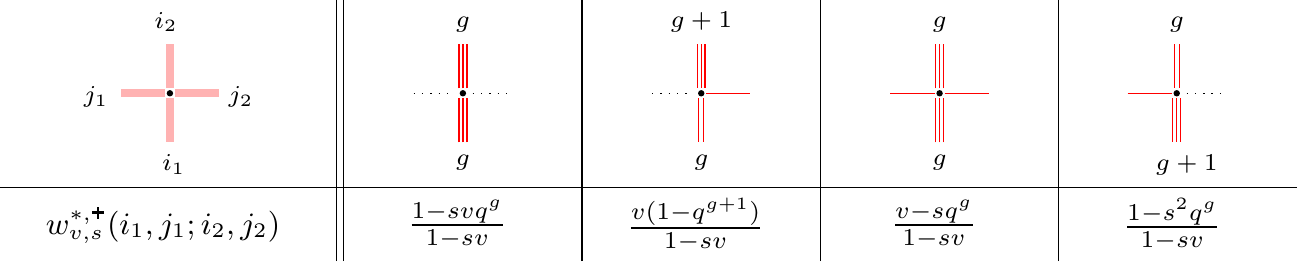}
    \caption{Bulk vertex weights used in the construction of the dual spin
			Hall--Littlewood functions. 
			Vertex configurations not listed are assigned weight zero.
			Note that the weights vanish unless $i_1+j_2=j_1+i_2$.}
    \label{fig:table_w_star}
\end{figure}

\begin{definition}
	\label{def:sHL_func}
	For given $\mu,\lambda \in \mathrm{Sign}_N^{\le M}$
	with $\mu\prec'\lambda$,
	the
	\emph{skew dual spin Hall--Littlewood function} in one
	variable $\mathsf{F}_{\lambda'/ \mu'}^{*}(v)$ is the
	weight of the unique down-right path configuration between $\mu$ and
	$\lambda$ (at the top and at the bottom, respectively)
	at a single 
	row of vertices, where we take weights
	\eqref{eq:w_sHL_leftwall}--\eqref{eq:W_rightwall} with the spectral parameter
	$v$. More explicitly, we have
	\begin{equation*}
			\mathsf{F}_{\lambda'/ \mu'}^{*}(v) 
			\coloneqq 
			\sum_{j_0,\dots,j_{N-1} \in \{0,1\} } 
			v^{j_0} 
			W^{*, \rightwall}(l_N',j_{N-1}; m_N')
			\prod_{r=1}^{N-1} w^{*,\bulk}_{v,s}(l'_r, j_{r-1}; m'_r,j_r) 
			,
	\end{equation*}
	where $\lambda' = 1^{l_1'} \cdots N^{l_N'}$ and $\mu' = 1^{m_1'} \cdots N^{m_N'}$
	belong to $\mathrm{Sign}_{M}^{\le N}$.

	Multi-variable extensions
	$\mathsf{F}^*_{\nu/\varkappa}(v_1,\ldots,v_k )$,
	where $\nu,\varkappa\in \mathrm{Sign}_M^{\le N}$ and 
	$k$ is arbitrary, are defined using the 
	branching rule in the same way as in \eqref{eq:sqW_many_variables_branching}.
	The single-index (non-skew) functions are defined by
	$\mathsf{F}^*_{\nu}(v_1,\ldots,v_k )
	=
	\mathsf{F}^*_{\nu/0^M}(v_1,\ldots,v_k )$, where
	$\nu\in \mathrm{Sign}_{M}^{\le N}$,
	and $0^M$ is the signature from $\mathrm{Sign}_{M}^{\le N}$
	with all parts equal to zero.
\end{definition}

\begin{remark}
	\label{rmk:sHL_any_number_of_variables}
	The sHL functions $\mathsf{F}^*_{\nu/\varkappa}(v_1,\ldots,v_k )$,
	are defined for any number of variables $k$, regardless of the 
	signatures $\nu$ and $\varkappa$.
	This should be contrasted with the sqW polynomials, 
	cf.~\Cref{rmk:sqW_fixed_number_of_variables}.
\end{remark}

The functions 
$\mathsf{F}^*_{\nu/\varkappa}$ are \emph{stable} in the sense that
\begin{equation*}
	\mathsf{F}^*_{\nu/\varkappa}(v_1,\ldots,v_k,0 )=
	\mathsf{F}^*_{\nu/\varkappa}(v_1,\ldots,v_k).
\end{equation*}
Indeed, this readily follows from the 
vertex weights 
\eqref{eq:w_sHL_leftwall}--\eqref{eq:W_rightwall}.

The version of the spin Hall--Littlewood functions
of \Cref{def:sHL_func}
is essentially a particular case of the inhomogeneous 
spin Hall--Littlewood functions from
\cite{BorodinPetrov2016inhom},
where the $N$-th spin parameter $s_N$ is set to zero.
This allows to derive a lot of their properties 
by specializing the corresponding results of
\cite{BorodinPetrov2016inhom}.
As the functions $\mathsf{F}_{\nu/\varkappa}^*$ from 
\Cref{def:sHL_func}
are central to our discussion and we do not use other versions
in the present paper, 
we simply refer to the $\mathsf{F}^*_{\nu/\varkappa}$'s 
as (dual) spin Hall--Littlewood functions.
For convenience, we will omit the dependence on $N$ in their notation.
We will often abbreviate the name ``spin Hall--Littlewood'' as \emph{sHL}.

The sHL functions $\mathsf{F}^*_\lambda$
admit an explicit symmetrization formula:
\begin{proposition}
	\label{prop:dual_sHL_symmetric_sum}
	Let $\lambda \in \mathrm{Sign}_M^{\le N}$, then
	for all $k\ge M$ we have
	\begin{equation} \label{eq:dual_sHL_symmetric_sum}
		\mathsf{F}_\lambda^{*}(v_1, \dots, v_k) 
		=
		\mathscr{C}(\lambda)
		\sum_{\sigma \in \mathfrak{S}_k} 
		\sigma \Biggl\{ 
			\prod_{1\le i < j\le k} \frac{v_i - q v_j}{v_i - v_j} 
			\prod_{i=1}^{\ell(\lambda)} v_i \left( \frac{v_i - s}{1 - s v_i} \right)^{\lambda_i-1} 
			\left(\frac{1}{1 - s v_i}\right)^{\mathbf{1}_{\lambda_i<N}}
		\Biggr\},
\end{equation}
where the symmetric group $\mathfrak{S}_k$
acts on the variables $v_i$ but not on elements of the signature $\lambda_i$,
and the constant prefactor has the form
\begin{equation}
	\label{eq:c_tilde_constant_dual_sHL}
	\mathscr{C}(\lambda) = \frac{(1-q)^k}{(q;q)_{k - \ell(\lambda)}} \prod_{i=1}^N \frac{(s^2;q)_{m_i(\lambda)}}{(q;q)_{m_i(\lambda)}}.
\end{equation}
\end{proposition}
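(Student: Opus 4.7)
I would prove this symmetrization identity by specializing the corresponding formula for the fully inhomogeneous dual spin Hall--Littlewood functions of \cite{BorodinPetrov2016inhom}. The paragraph immediately preceding the proposition observes that our $\mathsf{F}^*_\lambda$ is the restriction of the $N$-column inhomogeneous sHL function to column spins $s_1=\cdots=s_{N-1}=s$ and $s_N=0$, with all row rapidities equal to $1$. The statement then follows by mechanically performing this substitution in the inhomogeneous symmetrization identity.

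\textbf{Step 1 (master formula).} First I would import the fully inhomogeneous symmetrization identity for the dual sHL function from \cite{BorodinPetrov2016inhom}. In the convention of the present paper it takes the form
\begin{equation*}
\mathsf{F}^*_\lambda(v_1,\ldots,v_k) = \frac{(1-q)^k}{(q;q)_{k-\ell(\lambda)}}\prod_{i \geq 1}\frac{(s_i^2;q)_{m_i(\lambda)}}{(q;q)_{m_i(\lambda)}}\sum_{\sigma\in\mathfrak{S}_k}\sigma\Biggl\{\prod_{i<j}\frac{v_i - q v_j}{v_i - v_j}\prod_{i=1}^{\ell(\lambda)}v_i\prod_{r=1}^{\lambda_i-1}\frac{v_i - s_r}{1 - s_r v_i}\cdot\frac{1}{1 - s_{\lambda_i} v_i}\Biggr\},
\end{equation*}
where the $s_i$ are the column spins and one may quote this directly since we are ``derive[ing] a lot of [the] properties by specializing the corresponding results of \cite{BorodinPetrov2016inhom}''.

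\textbf{Step 2 (collapse of the integrand).} Substituting $s_r=s$ for $r<N$ and $s_N=0$, the internal product $\prod_{r=1}^{\lambda_i-1}(v_i-s_r)/(1-s_r v_i)$ collapses uniformly to $\bigl((v_i-s)/(1-sv_i)\bigr)^{\lambda_i-1}$, since the index $r$ ranges only over values strictly less than $N$. The tail factor $1/(1-s_{\lambda_i}v_i)$ equals $1/(1-sv_i)$ when $\lambda_i<N$ and equals $1$ when $\lambda_i=N$. Together these produce precisely the exponent $\mathbf{1}_{\lambda_i<N}$ on $1/(1-sv_i)$ in the claim.

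\textbf{Step 3 (collapse of the prefactor).} The boxed condition $\lambda\in\mathrm{Sign}_M^{\le N}$ implies $m_i(\lambda)=0$ for $i>N$, so the infinite product over $i\ge 1$ truncates to $\prod_{i=1}^N$. Combining this with the standard normalization factor that relates the BP $s_N=0$ column to the right-wall vertex $W^{*,\rightwall}$ of Definition~\ref{def:sHL_func} yields exactly $\mathscr{C}(\lambda)$. The one substantive task here, and the main obstacle of the proof, is the careful bookkeeping of this prefactor: one must verify that the $(s^2;q)_{m_N(\lambda)}$ arising at the top part size matches the normalization implicit in Definition~\ref{def:sHL_func}. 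A sanity check on simple boxed signatures (e.g., $\lambda = 1^M$ or $\lambda = N^M$) pins down this normalization unambiguously, after which the proposition follows by direct substitution.
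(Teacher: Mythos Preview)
Your approach is the same as the paper's: specialize the inhomogeneous symmetrization formula of \cite{BorodinPetrov2016inhom}. The execution, however, skips exactly the steps that constitute the actual work.

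The formula you write down in Step~1 does not appear in \cite{BorodinPetrov2016inhom} in that form. What is available there (Theorem~4.14, part~1) is a symmetrization formula for the \emph{non-dual, non-stabilized} functions $\mathsf{F}_\lambda^{\text{non-stab, non-dual}}$, with inhomogeneities $\xi_j$ and $s_j$ for $j\ge 0$. Passing from that object to the present $\mathsf{F}^*_\lambda$ requires three separate operations, and your ``master formula'' has already absorbed all of them:
\begin{itemize}
\item[(i)] the \emph{stable limit}, setting $s_0=0$ and appending $k-\ell(\lambda)$ zeros to $\lambda$; this is where the denominator $(q;q)_{k-\ell(\lambda)}$ actually comes from (the relevant identity is recorded in \cite[(3.7)]{BufetovMucciconiPetrov2018}, not in \cite{BorodinPetrov2016inhom});
\item[(ii)] the passage from non-dual weights $w_{u,s}$ to the dual weights $w^{*,\bulk}_{v,s}$ used in Definition~\ref{def:sHL_func}, which differ by the conjugation factor $(s^2;q)_i/(q;q)_i$ at column height $i$ (cf.\ \cite[(2.2)]{BorodinPetrov2016inhom}); this, not your Step~3 bookkeeping at $s_N$, is the true source of the prefactor $\prod_i (s^2;q)_{m_i(\lambda)}/(q;q)_{m_i(\lambda)}$;
\item[(iii)] the boundary specialization $s_N=0$ and the homogenization $s_1=\cdots=s_{N-1}=s$, which is your Step~2.
\end{itemize}
Your Step~3 locates the prefactor discrepancy at the right wall, but that is not where it lives: the $(s^2;q)_{m_i}/(q;q)_{m_i}$ factors arise uniformly across all columns from the dual conjugation, and the right-wall weight $W^{*,\rightwall}$ carries no extra normalization beyond $s_N=0$. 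So the ``substantive task'' you flag is real, but you have misidentified its mechanism.
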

\begin{proof}
	This formula follows from 
	\cite[Theorem 4.14, part 1]{BorodinPetrov2016inhom} via several 
	specializations. 
	The latter result is a symmetrization
	formula for a more general vertex model partition function
	$\mathsf{F}_{\lambda}^{\textnormal{non-stab, non-dual}}$
	which involves inhomogeneity
	parameters $\xi_j$ and $s_j$ depending on the horizontal
	lattice coordinate $j\in \mathbb{Z}_{\ge0}$. 
	Let us describe the necessary specializations.
	In the first step, we set all the parameters $\xi_j$ to $1$.

	For the second step, we take a \emph{stable limit} described in, e.g.,
	\cite[Section 3.3]{BufetovMucciconiPetrov2018} 
	(the second of those limits). Namely, put $s_0=0$, then by 
	\cite[(3.7)]{BufetovMucciconiPetrov2018}
	we have
	\begin{equation}
		\label{eq:dual_sHL_symmetric_sum_proof}
		\mathsf{F}^{\text{stab, non-dual}}_{\lambda}
		(v_1,\ldots,v_k )=
		\frac{1}{(q;q)_{k-\ell(\lambda)}}
		\mathsf{F}^{\text{non-stab, non-dual}}
		_{\lambda\cup 0^{k-\ell(\lambda)}}(v_1,\ldots,v_k )
		\big\vert_{s_0=0}.
	\end{equation}
	Here $\lambda\cup 0^{k-\ell(\lambda)}\in \mathrm{Sign}_k$
	is obtained by appending the partition $\lambda=(\lambda_1,\ldots,\lambda_{\ell(\lambda)})$
	by $k-\ell(\lambda)$ zeroes.
	Then \eqref{eq:dual_sHL_symmetric_sum_proof} 
	is given by the symmetrization formula
	\cite[(4.23)]{BorodinPetrov2016inhom}
	\begin{equation*}
		\frac{(1-q)^k}{(q;q)_{k-\ell(\lambda)}}
		\sum_{\sigma\in \mathfrak{S}_k}
		\sigma
		\Biggl\{ 
			\prod_{1\le i < j\le k} \frac{v_i - q v_j}{v_i - v_j} 
			\prod_{j=1}^{\ell(\lambda)}
			\frac{v_j}{1-s_{\lambda_j}v_j}
			\prod_{i=1}^{\lambda_j-1}\frac{v_i-s_j}{1-s_jv_i}
		\Biggr\},
	\end{equation*}
	where we used the fact that $s_0=0$ to 
	pass from the product over $1\le j\le k$ to 
	$1\le j\le \ell(\lambda)$.
	
	For the third step, we use the fact that the 
	weights $w^{*,\bulk}_{v,s}$ we use differ from the 
	$w_{u,s}$'s for 
	$\mathsf{F}_{\lambda}^{\textnormal{non-stab, non-dual}}$
	by a conjugation factor
	$(s^2;q)_i / (q;q)_i$ \cite[(2.2)]{BorodinPetrov2016inhom},
	which brings the product over $i$ in 
	$\mathscr{C}(\lambda)$
	\eqref{eq:c_tilde_constant_dual_sHL}
	involved in our function $\mathsf{F}^*_{\lambda}$.

	For the fourth step, 
	we add the right boundary at $N$ to our vertex model
	by setting $s_N=0$ (recall that $\lambda_1\le N$). This turns the 
	factor $\frac{1}{1-s_{\lambda_j}v_j}$ into 
	$\bigl( \frac{1}{1-s_{\lambda_j}v_j} \bigr)^{\mathbf{1}_{\lambda_j<N}}$.

	Finally, 
	we set $s_1=\ldots=s_{N-1}=s$ to recover the homogeneous
	parameter $s$, and arrive at the desired symmetrization formula.
\end{proof}

The Yang--Baxter equations of 
\Cref{prop:sHL_sqW_YBE_bulk,prop:sHL_sqW_YBE_border}
translate into Cauchy identities 
for the functions $\mathbb{F}$ and $\mathsf{F}^{\mathrm{*}}$.

\begin{proposition}
	\label{prop:skew_Cauchy_ID_sqW_sHL}
	Fix $M\ge1$.
	For $N>0$, let $\mu \in \mathrm{Sign}_{N}^{\le M}$ and $\lambda \in 
	\mathrm{Sign}_{N+1}^{\le M}$. 
	Then, we have
	\begin{equation} \label{eq:skew_Cauchy_ID_sqW_sHL}
		\sum_{\nu\in \mathrm{Sign}_{N+1}^{\le M}} 
		\mathbb{F}_{\nu/\mu}(x) \, \mathsf{F}^{*}_{\nu'/\lambda'}(v) 
		=
		 \frac{1+v x}{1-sv} 
		 \sum_{\varkappa \in \mathrm{Sign}_{N}^{\le M}} 
		\mathbb{F}_{\lambda/\varkappa}(x) \, \mathsf{F}^{*}_{\mu'/\varkappa'}(v).
	\end{equation}
For $N=0$, we have
\begin{equation} \label{eq:skew_Cauchy_ID_sqW_sHL_first_row}
	\sum_{\nu \in \mathrm{Sign}_{1}^{\le M} } 
	\mathbb{F}_{\nu}(x) \, \mathsf{F}^{*}_{\nu'/\lambda'}(v) 
	=
	 (1+v x) 
    \,
    \mathbb{F}_{\lambda}(x).
\end{equation}
\end{proposition}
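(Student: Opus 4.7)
My plan is to derive \eqref{eq:skew_Cauchy_ID_sqW_sHL} from the sqW--sHL Yang--Baxter equations of \Cref{prop:sHL_sqW_YBE_bulk} (bulk) and \Cref{prop:sHL_sqW_YBE_border} (boundary) via the commuting row transfer matrix / cross-dragging argument already employed in the proof of \Cref{prop:sqW_are_symmetric}. I first realize both sides of \eqref{eq:skew_Cauchy_ID_sqW_sHL} as partition functions on a two-row strip. On the LHS the lower row is the sqW slice between $\mu$ and $\nu$ (left wall, $N$ bulk vertices, right corner, spectral parameter $x$) and the upper row is a width-$(N+1)$ sHL row between $\nu$ and $\lambda$ (left wall, $N$ bulk vertices, right wall, spectral parameter $v$), with the intermediate signature $\nu \in \mathrm{Sign}_{N+1}^{\le M}$ as summation index. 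On the RHS the two rows are interchanged, and the sHL row has width $N$.

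The first step is to use \Cref{prop:sHL_sqW_YBE_border} to insert an auxiliary $R$-vertex on the left edge of the strip carrying only the trivial occupation on its hidden horizontal edge, so that the partition function is unchanged. The second step is to iterate the bulk Yang--Baxter equation of \Cref{prop:sHL_sqW_YBE_bulk} to drag this $R$-vertex rightward, one bulk column at a time. Each local swap preserves the summed weight and reverses the vertical order of the sqW and sHL strands at the affected column. After $N$ applications, the $R$-vertex sits immediately adjacent to the rightmost column of the strip, where the sqW corner weight $W^{\rightcorner}_{x,s}$ is stacked against the sHL right wall weight $W^{*,\rightwall}$.

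The decisive step -- which I expect to be the main obstacle -- is absorbing the $R$-vertex into this rightmost column. Here two things must happen simultaneously: one sHL strand has to be absorbed into the sqW corner so that the remaining sHL row has width $N$ rather than $N+1$, and the scalar prefactor $\frac{1+vx}{1-sv}$ must emerge from the local summation. This local right-boundary identity is exactly what \Cref{prop:sHL_sqW_YBE_border} is designed to encode, but the check is delicate: the corner weight \eqref{eq:W_corner} is a rational function of $x$ with an infinite $q$-Pochhammer structure, while the right-wall weight \eqref{eq:W_rightwall} imposes the rigid conservation $i_1 = j_1 + i_2$. Once this local identity is verified, the strip rearranges into precisely the two-row partition function representing the RHS of \eqref{eq:skew_Cauchy_ID_sqW_sHL}.

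The base case $N=0$ in \eqref{eq:skew_Cauchy_ID_sqW_sHL_first_row} I would handle by direct computation rather than via YBE. The shifting property \Cref{prop:sqW_shifting} together with $\mathbb{F}_{(0)}(x)=1$ yields $\mathbb{F}_{(\nu_1)}(x)=x^{\nu_1}$, and the width-$1$ case of \Cref{def:sHL_func} reduces to $\mathsf{F}^*_{\nu'/\lambda'}(v)=\mathbf{1}_{\nu_1=\lambda_1}+v\,\mathbf{1}_{\nu_1=\lambda_1+1}$, since only $\nu_1\in\{\lambda_1,\lambda_1+1\}$ are compatible with the right-wall weight. Summing produces $x^{\lambda_1}(1+vx)$, matching \eqref{eq:skew_Cauchy_ID_sqW_sHL_first_row}.
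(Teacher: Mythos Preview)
Your overall strategy—cross-dragging via \Cref{prop:sHL_sqW_YBE_bulk,prop:sHL_sqW_YBE_border}—is the paper's approach, and your direct verification of the $N=0$ case is correct. But you have the mechanics of the drag reversed in a way that would block the argument as you wrote it.

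\Cref{prop:sHL_sqW_YBE_border} is a \emph{right}-boundary identity (it involves $W^{\rightcorner}_{x,s}$ and $W^{*,\rightwall}$) and carries \emph{no} scalar prefactor. In the paper's proof it is invoked first, at the rightmost pair of columns: swapping the sqW and sHL rows there makes the cross $\mathcal{R}_{x,v,s}$ appear to their left (this is \Cref{fig:YBE_W_Wstar}\,(b)). The cross is then dragged \emph{leftward} through the bulk via \Cref{prop:sHL_sqW_YBE_bulk}. At the end the cross exits through the \emph{left} wall, and it is there—not at the right—that the factor $\tfrac{1+vx}{1-sv}$ is produced: one rewrites the left-wall weights as bulk weights with infinitely many vertical arrows, $W^{\leftwall}_{x,s}(j)=\tfrac{(s^2;q)_\infty}{(-sx;q)_\infty}W^{\bulk}_{x,s}(\infty,l;\infty,j)$ and $w^{*,\leftwall}_v(j)=(1-sv)\,w^{*,\bulk}_{v,s}(\infty,l;\infty,j)$, so that the bulk YBE applies at the leftmost column too, and then the two admissible exiting cross configurations sum to $\tfrac{1+vx}{1-sv}$.

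Your plan inverts all of this: you invoke the border proposition to insert the cross at the left, and you expect the scalar to emerge when absorbing at the right. Neither holds. In particular, inserting a cross at the left with external arms $(0,0)$ is not a weight-$1$ move—that is exactly where the factor lives. Once you flip the drag direction and swap the roles of the two boundaries, your outline becomes the paper's proof.
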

Note that all the sums in this proposition
are over finite sets of signatures, so there are no 
convergence issues.
\begin{proof}[Proof of \Cref{prop:skew_Cauchy_ID_sqW_sHL}]
	The proof of \eqref{eq:skew_Cauchy_ID_sqW_sHL} is similar to that of 
	\Cref{prop:sqW_are_symmetric} as it also uses a ``cross dragging'' argument. 
	The summation in the left-hand side of
	\eqref{eq:skew_Cauchy_ID_sqW_sHL} is the partition function of path
	configurations across two rows of vertices glued together:
	\begin{itemize}
		\item 
			the lower
			row has weights $W_{x,s}^{\leftwall},W_{x,s}^{\bulk},W^{\rightcorner}$
			and
			boundary condition $\mu$ at the bottom and $\nu$ at the top;
		\item the upper one has weights 
			$w_{v,s}^{*,\,
			\leftwall},w^{*,\bulk}_{v,s},W^{*,\rightwall}$,
			and
			boundary condition $\nu$ at the bottom and $\lambda$ at the top.
	\end{itemize}
	Recall that the encoding of arrow configurations by 
	signatures is described in detail in \Cref{sub:directed_paths}.

	The Yang--Baxter equation
	\eqref{eq:YBE_RWw_star_wall} implies that the action of swapping
	weights at the rightmost pair of columns, makes a cross weight
	appear at their left, as shown in \Cref{fig:YBE_W_Wstar}\,(b).
	We then push the cross to the left
	one vertical step at a time, 
	each time swapping the vertex weights
	and using the 
	Yang--Baxter equation
	\eqref{eq:YBE_RWw_star} as in \Cref{fig:YBE_W_Wstar}\,(a). 
	This
	procedure sequentially turns the left-hand side of
	\eqref{eq:skew_Cauchy_ID_sqW_sHL} into the right-hand side. 

	At the final step,
	we 
	push the cross out of the lattice at the leftmost site.
	Using 
	\eqref{eq:boundary_W_infinite_lines} and 
	\begin{equation*}
		w^{*,\leftwall}_{v}(j)= (1-sv)\, w^{*}_{v,s}(\infty,l;\infty,j), \qquad \text{for }l=0,1,
	\end{equation*}
	we obtain the combined
	contribution of the cross vertex weights 
	$\mathcal{R}_{x,v,s}$ (defined in \Cref{fig:table_R_cal} in the Appendix) 
	corresponding to the two cross configurations
	\begin{tikzpicture}[scale=0.6,baseline=0.5mm, thick]
			\draw[densely dotted] (0,0) -- (0.5,.5);
			\draw[densely dotted] (0,0.5) -- (.5,0);
	\end{tikzpicture} 
	and
	\begin{tikzpicture}[scale=0.6,baseline=0.5mm, thick]
			\draw[densely dotted] (0,0) -- (0.25,.25);
			\draw[densely dotted] (0,0.5) -- (.25,0.25);
			\draw[line width = .4mm,blue!50] (0.25,0.25) -- (0.5,0.5);
			\draw[line width = .4mm,red!50] (0.25,0.25) -- (.5,0);
	\end{tikzpicture}
	.
	Their sum gives the factor $(1+vx)/(1-sv)$ in the right-hand side of
	\eqref{eq:skew_Cauchy_ID_sqW_sHL}, as desired. 
	
	The second 
	identity \eqref{eq:skew_Cauchy_ID_sqW_sHL_first_row} can be verified
	by simply using definition of functions.
\end{proof}

Combining the skew Cauchy identities of Proposition
\ref{prop:skew_Cauchy_ID_sqW_sHL}, we come to the following corollary
for several variables:

\begin{corollary}
	\label{cor:sHL_sqW_Cauchy_2}
	For any positive integers $N,M,m$ we have
	\begin{equation} \label{eq:Cauchy_Id_sqW_sHL}
		\sum_{\lambda\in \mathrm{Sign}_M^{\le N}} 
		\mathbb{F}_\lambda(x_1,\dots, x_N)\,
		\mathsf{F}_{\lambda'}^{*}(v_1,\dots, v_m) 
		=
		\prod_{j=1}^m \left(\frac{1}{1-s v_j}\right)^{N-1} 
		\prod_{i=1}^N
		\prod_{j=1}^m (1 + v_j x_i).
	\end{equation}
\end{corollary}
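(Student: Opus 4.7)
The plan is to reduce the identity to an iteration of the skew Cauchy identities from \Cref{prop:skew_Cauchy_ID_sqW_sHL}, proceeding by induction on $N$. The first move is to promote the single-$v$ identities \eqref{eq:skew_Cauchy_ID_sqW_sHL} and \eqref{eq:skew_Cauchy_ID_sqW_sHL_first_row} to multi-$v$ versions. Since the dual sHL functions accept arbitrarily many arguments through branching (\Cref{rmk:sHL_any_number_of_variables}), an induction on $m$ based on the decomposition $\mathsf{F}^*_{\nu'/\lambda'}(v_1,\ldots,v_m)=\sum_{\eta}\mathsf{F}^*_{\nu'/\eta}(v_1)\mathsf{F}^*_{\eta/\lambda'}(v_2,\ldots,v_m)$ yields
\begin{equation*}
	\sum_\nu \mathbb{F}_{\nu/\mu}(x)\,\mathsf{F}^*_{\nu'/\lambda'}(v_1,\ldots,v_m)
	=\prod_{j=1}^m \frac{1+v_j x}{1-s v_j}\sum_\varkappa \mathbb{F}_{\lambda/\varkappa}(x)\,\mathsf{F}^*_{\mu'/\varkappa'}(v_1,\ldots,v_m),
\end{equation*}
together with the analogous $N=0$ version in which the denominators disappear and only the product $\prod_j(1+v_j x)$ survives.

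For the base case $N=1$ I would take the trivial (non-skew) reduction of the multi-$v$ extension of \eqref{eq:skew_Cauchy_ID_sqW_sHL_first_row}, choosing the skew parameter $\lambda$ to be the zero signature so that the right-hand side collapses to $\prod_j(1+v_j x_1)$. This is exactly \eqref{eq:Cauchy_Id_sqW_sHL} in the case $N=1$, where the prefactor $(1-sv_j)^{-(N-1)}$ is trivial. For the inductive step $N-1 \to N$, I would branch the sqW polynomial as $\mathbb{F}_\lambda(x_1,\ldots,x_N)=\sum_\mu \mathbb{F}_\mu(x_1,\ldots,x_{N-1})\,\mathbb{F}_{\lambda/\mu}(x_N)$, exchange the order of summation, and then apply the multi-$v$ skew Cauchy of Step~1 to the inner sum $\sum_\lambda \mathbb{F}_{\lambda/\mu}(x_N)\,\mathsf{F}^*_{\lambda'}(v_1,\ldots,v_m)$. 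Choosing the ``right'' skew parameter in that identity to be the zero signature forces the complementary $\sum_\varkappa$ on the right-hand side to collapse to the single term $\mathsf{F}^*_{\mu'}(v_1,\ldots,v_m)$, producing
\begin{equation*}
	\sum_\lambda \mathbb{F}_{\lambda/\mu}(x_N)\,\mathsf{F}^*_{\lambda'}(v_1,\ldots,v_m)
	=\prod_{j=1}^m\frac{1+v_j x_N}{1-s v_j}\,\mathsf{F}^*_{\mu'}(v_1,\ldots,v_m).
\end{equation*}
Plugging this back and invoking the inductive hypothesis gives the desired right-hand side; note that each inductive step contributes one factor of $(1-sv_j)^{-1}$ per $j$ while the base case contributes none, so the total exponent comes out to exactly $N-1$, matching \eqref{eq:Cauchy_Id_sqW_sHL}.

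The main technical point I expect to take care with is the collapse used in Step~3: namely, verifying that when one specializes the ``target'' signature in the multi-$v$ skew Cauchy to the trivial signature of the appropriate size, the interlacing conditions built into the skew branching coefficient \eqref{eq:inc_sqW} force $\mathbb{F}_{\lambda/\varkappa}(x)$ to reduce to an indicator that uniquely determines $\varkappa$, so that the iterated sum reduces cleanly to $\mathsf{F}^*_{\mu'}(v_1,\ldots,v_m)$. This has to be done while keeping track of the transposition between $\mathrm{Sign}_N^{\le M}$ and $\mathrm{Sign}_M^{\le N}$ that relates the sqW and sHL indexing conventions; once this bookkeeping is in place, the rest of the argument is a direct telescoping of Cauchy factors through the inductive procedure.
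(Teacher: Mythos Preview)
Your proposal is correct and follows essentially the same approach as the paper: both reduce the full Cauchy identity to an iterated application of the single-variable skew Cauchy identities \eqref{eq:skew_Cauchy_ID_sqW_sHL}--\eqref{eq:skew_Cauchy_ID_sqW_sHL_first_row} via branching in the $x$ and $v$ variables. The paper's proof is terse (``use the branching expansion of $\mathbb{F}_\lambda,\mathsf{F}^*_{\lambda'}$ and apply the single-variable skew Cauchy identities''), whereas you spell out the two inductions explicitly and correctly identify the only nontrivial bookkeeping step, namely the collapse $\mathbb{F}_{0^{N}/\varkappa}(x)=\mathbf{1}_{\varkappa=0^{N-1}}$ forced by interlacing.
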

\begin{proof}
	We use the branching expansion of functions
	$\mathbb{F}_{\lambda},\mathsf{F}^{*}_{\lambda'}$
	and then apply the single-variable skew
	Cauchy identities
	\eqref{eq:skew_Cauchy_ID_sqW_sHL} and
	\eqref{eq:skew_Cauchy_ID_sqW_sHL_first_row}.
\end{proof}

\begin{proposition}
	\label{prop:sHL_orthogonality}
	Let $0<q<1$ and $-1<s<0$.
	For any 
	$\lambda,\mu \in \mathrm{Sign}^{\le N}_M$
	and $k\ge M$,
	we have 
	\begin{equation} \label{eq:orthog_FF}
		\frac{1}{k!}
		\oint_{\gamma} 
		\frac{dz_1}{ 2 \pi \mathrm{i} z_1}
		\cdots 
		\oint_{\gamma} 
		\frac{dz_k}{ 2 \pi \mathrm{i} z_k}
		\prod_{1\le i \neq j\le k} \frac{z_i - z_j}{z_i - q z_j}\,
		\mathsf{F}^{*}_\lambda(z_1, \dots,z_k)
		\mathsf{F}^{*}_\mu(1/z_1, \dots,1/z_k)
		= \mathscr{C}(\lambda)
		\mathbf{1}_{\lambda = \mu},  
	\end{equation}
	where $\gamma$ is a positively oriented contour encircling $0$, $q^{j}
	s$ for all $j\geq 0$, and the contour $q\gamma$, but not the point $s^{-1}$.
\end{proposition}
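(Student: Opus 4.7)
The plan is to deduce this orthogonality by specialization from the torus scalar product orthogonality for the inhomogeneous spin Hall--Littlewood functions established in \cite{BorodinPetrov2016inhom}, following the same chain of specializations used in the proof of \Cref{prop:dual_sHL_symmetric_sum} to obtain the symmetrization formula.

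The steps would be as follows. First, invoke the orthogonality for the non-stable, non-dual inhomogeneous sHL functions $\mathsf{F}^{\textnormal{non-stab, non-dual}}_\lambda$ with spectral inhomogeneities $\xi_j$ and spin parameters $s_j$ from \cite{BorodinPetrov2016inhom}, which is proved there by Bethe ansatz techniques. Second, set $\xi_j \equiv 1$ and take the stable limit at $s_0 = 0$; identity \eqref{eq:dual_sHL_symmetric_sum_proof} then converts the non-stable function $\mathsf{F}^{\textnormal{non-stab, non-dual}}_{\lambda\cup 0^{k-\ell(\lambda)}}$ into its stable counterpart and contributes the factor $1/(q;q)_{k-\ell(\lambda)}$ to the normalization. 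Third, conjugate the weights $w_{u,s}$ used in \cite{BorodinPetrov2016inhom} into our dual weights $w^{*,\bulk}_{v,s}$ by the factor $(s^2;q)_i/(q;q)_i$; this contributes $\prod_{i=1}^N (s^2;q)_{m_i(\lambda)}/(q;q)_{m_i(\lambda)}$ to the normalization. Fourth, set $s_N = 0$ to install the right boundary weight $W^{*,\rightwall}$, and finally set $s_1 = \cdots = s_{N-1} = s$ to homogenize. The residual $(1-q)^k$ factor in $\mathscr{C}(\lambda)$ comes from the Cauchy kernel / symmetrization normalization already present in the underlying result of \cite{BorodinPetrov2016inhom}, so all four contributions assemble precisely into $\mathscr{C}(\lambda)$ as defined in \eqref{eq:c_tilde_constant_dual_sHL}.

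The contour $\gamma$ is inherited from \cite{BorodinPetrov2016inhom}. The nesting $\gamma \supset q\gamma$ places all Cauchy-kernel poles $z_i = q z_j$ on the correct side of the contour, and the restriction to enclose $0$ and $q^j s$ for $j \ge 0$ but avoid $s^{-1}$ separates the ``direct'' poles (inherited from $\mathsf{F}^*_\lambda(z_1,\ldots,z_k)$, the family at $q^j s$ entering through the full sHL weight structure before specialization) from the ``dual'' poles at $s^{-1}$ introduced by the substitution $z_i \to z_i^{-1}$ inside $\mathsf{F}^*_\mu$. The hypothesis $-1 < s < 0$ ensures $|s| < 1 < |s^{-1}|$, so such a separating contour exists.

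The main obstacle is the careful bookkeeping of the normalization constant $\mathscr{C}(\lambda)$ across the four specializations, and in particular verifying that the $s_N = 0$ specialization (which removes the weight tail at level $N$ and turns the factor $1/(1-s_{\lambda_j}v_j)$ into $(1/(1-s_{\lambda_j}v_j))^{\mathbf{1}_{\lambda_j < N}}$, exactly as in the proof of \Cref{prop:dual_sHL_symmetric_sum}) affects only the normalization in the controlled way recorded in \eqref{eq:c_tilde_constant_dual_sHL}. As an alternative route bypassing \cite{BorodinPetrov2016inhom}, one could substitute the symmetrization formula \eqref{eq:dual_sHL_symmetric_sum} into the left-hand side, use the standard symmetric-integrand trick to reduce the double sum over $\mathfrak{S}_k \times \mathfrak{S}_k$ to a single $k!$-fold sum, and then evaluate the resulting integral by iteratively taking residues at $z_i = q z_j$ and $z_i = s$; this approach does not avoid the normalization bookkeeping but does make the vanishing $\mathbf{1}_{\lambda \neq \mu}$ manifest at each residue step.
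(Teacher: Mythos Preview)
Your proposal is correct and follows essentially the same approach as the paper: the authors simply note that the result follows from \cite[Corollary 7.5]{BorodinPetrov2016inhom} after applying the same chain of specializations (set $\xi_j\equiv 1$, stabilize at $s_0=0$, conjugate to dual weights, set $s_N=0$, homogenize $s_1=\cdots=s_{N-1}=s$) used in the proof of \Cref{prop:dual_sHL_symmetric_sum}. Your additional discussion of the contour and the normalization bookkeeping is more detailed than the paper's one-line proof, but the underlying argument is the same.
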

\begin{proof}
	This follows from \cite[Corollary 7.5]{BorodinPetrov2016inhom} after
	specializing the inhomogeneous spin Hall--Littlewood functions 
	$\mathsf{F}_{\lambda}^{\textnormal{non-stab, non-dual}}$
	as described
	in the proof of \Cref{prop:dual_sHL_symmetric_sum}.
\end{proof}

\subsection{\texorpdfstring{Dual spin $q$-Whittaker polynomials}{Dual spin q-Whittaker polynomials}}
\label{sub:sqw_dual_def}

Let us also define the dual versions of the sqW weights.
These dual weights 
correspond to down-right lattice paths,
and are given by (we use the notation \eqref{eq:W_left_boundary}--\eqref{eq:Whit_W}):
\begin{align}
	W^{*,\leftwall}_{y,s}(j)&\coloneqq 
	W^{\leftwall}_{y,s}(j);
	\label{eq:Wstar_leftwall}
	\\
	W^{*,\bulk}_{y,s}(i_1,j_1;i_2,j_2) 
	&\coloneqq 
	\frac{(s^2;q)_{i_1}}{(q;q)_{i_1}} 
	\frac{(q;q)_{i_2}}{(s^2;q)_{i_2}}
	\,  
	W^{\bulk}_{y,s}(i_2,j_1;i_1,j_2).
	\label{eq:Wstar_bulk}
\end{align}
We will also use the right boundary weights
$W^{*,\rightwall}(i_1,j_1;i_2)$ as in \eqref{eq:W_rightwall}. 

This choice of
vertex weights
implies that nonzero global weights are assigned
to 
configurations of down-right paths in the grid $\{
0,\dots, N\} \times \{ 0, \dots ,k\}$ which are encoded by sequences of interlacing
signatures $\lambda^1 \prec \cdots \prec \lambda^k$.
(Compare this with the transposed interlacing property for the sHL functions.)

\begin{definition}
	For given interlacing signatures 
	$\lambda,\mu \in \mathrm{Sign}_N$, 
	the \emph{skew dual spin $q$-Whittaker 
	polynomial} in one variable $\mathbb{F}^{*}_{\lambda / \mu}(y)$ 
	is the weight of the unique down-right path configuration between $\mu$ and $\lambda$ 
	at a single row of vertices, with the
	weights \eqref{eq:Wstar_bulk}, \eqref{eq:Wstar_leftwall}
	and \eqref{eq:W_rightwall}. 
	Recall that the encoding of arrow configurations
	by signatures is described in \Cref{sub:directed_paths}.

	An explicit expression for the skew dual sqW polynomial is 
  \begin{equation}
		\label{eq:dual_sqw_single_variable_expression}
        \mathbb{F}^{*}_{\lambda / \mu}(y) 
        \coloneqq
		y^{|\lambda|-|\mu|} 
		\,\frac{(-s/y;q)_{\lambda_N - \mu_N}}{(q;q)_{\lambda_N - \mu_N}}
		\prod_{i=1}^{N-1} \frac{(-s/y;q)_{\lambda_i - \mu_i} (-s y;q)_{\mu_i - \lambda_{i+1}} 
		(q;q)_{\mu_i - \mu_{i+1}  }  }
		{(q;q)_{\lambda_i - \mu_i} (q;q)_{\mu_i - \lambda_{i+i} } (s^2;q)_{\mu_i - \mu_{i+i}}  }.
	\end{equation}
	Observe that $\mathbb{F}^{*}_{\lambda / \mu}(y)$ is a polynomial
	in $y$.

	Multi-variable extensions 
	$\mathbb{F}^{*}_{\lambda / \mu}(y_1,\ldots,y_k )$,
	where $\lambda,\mu\in \mathrm{Sign}_N$ are arbitrary,
	are defined via branching in the same say as in \eqref{eq:sqW_many_variables_branching}.
	The non-skew functions are 
	$\mathbb{F}_{\nu}^*\equiv \mathbb{F}^*_{\nu/0^N}$, where $\nu\in \mathrm{Sign}_N$,
	and $0^N\in \mathrm{Sign}_N$ is the signature with all parts equal to zero.
\end{definition}

\begin{remark}
	\label{rmk:dual_sqW_any_number_of_variables}
	Like the dual sHL functions (cf.~\Cref{rmk:sHL_any_number_of_variables})
	and unlike the usual sqW polynomials
	(cf.~\Cref{rmk:sqW_fixed_number_of_variables}), 
	the dual sqW polynomials 
	$\mathbb{F}^{*}_{\lambda / \mu}(y_1,\ldots,y_k )$ make 
	sense for any number of variables $k$, regardless of the signatures
	$\lambda,\mu$.
\end{remark}

\begin{proposition}
    The 
		polynomials $\mathbb{F}^{*}_{\lambda / \mu}(y_1,\dots, y_k)$ are symmetric.
\end{proposition}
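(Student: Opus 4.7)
The plan is to mirror the ``cross dragging'' argument used in the proof of \Cref{prop:sqW_are_symmetric}, adapted to down-right path ensembles with the dual weights \eqref{eq:Wstar_leftwall}--\eqref{eq:Wstar_bulk} together with the right boundary weight \eqref{eq:W_rightwall}. By the branching definition of $\mathbb{F}^*_{\lambda/\mu}(y_1,\ldots,y_k)$ it suffices to prove invariance under the swap $y_r\leftrightarrow y_{r+1}$, and for this we look at just two consecutive rows of the down-right lattice carrying spectral parameters $y_r$ and $y_{r+1}$, with boundary conditions determined by the signatures fixed above and below these two rows.

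First I would identify the relevant Yang--Baxter equation for the dual bulk weights. Because of the conjugation identity
\begin{equation*}
W^{*,\bulk}_{y,s}(i_1,j_1;i_2,j_2)
=
\frac{(s^2;q)_{i_1}}{(q;q)_{i_1}}\,
\frac{(q;q)_{i_2}}{(s^2;q)_{i_2}}\,
W^{\bulk}_{y,s}(i_2,j_1;i_1,j_2),
\end{equation*}
the YBE for $W^{*,\bulk}_{y,s}$ is obtained from the bulk YBE of \Cref{prop:YBE_bulk_W} by reversing the vertical arrow directions and conjugating by the factor $(s^2;q)_i/(q;q)_i$ column by column; the conjugation factors telescope across the two rows and the cross, so they cancel. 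This provides a bulk Yang--Baxter equation that, applied at an interior pair of vertices, swaps the spectral parameters $y_r,y_{r+1}$ while moving a cross $\mathcal{R}_{y_r,y_{r+1},s}$ from its right to its left, exactly as in \Cref{fig:YBE_W_W}\,(a).

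Second, I would handle the two boundaries. At the right boundary, the weight $W^{*,\rightwall}(i_1,j_1;i_2)=\mathbf{1}_{i_1=j_1+i_2}$ is simply arrow conservation and in particular does not depend on the spectral parameter; therefore one checks directly that inserting a cross $\mathcal{R}_{y_r,y_{r+1},s}$ to the right of the rightmost column of two stacked $W^{*,\rightwall}$ vertices produces the same partition function as stacking them with parameters swapped (this is an initial condition/boundary YBE; the argument is a one-line case check over the values of the arrow at the single outgoing edge). This is what lets us introduce the cross on the right-hand side of the two-row lattice. At the left boundary I would use the analog of \eqref{eq:boundary_W_infinite_lines}, namely that $W^{*,\leftwall}_{y,s}=W^{\leftwall}_{y,s}$ is a limit of $W^{*,\bulk}_{y,s}$ with infinitely many incoming vertical arrows up to a scalar independent of the spectral parameter; consequently the boundary weights also satisfy the dual bulk YBE, and the cross can be pushed out through the left boundary with an overall factor that is symmetric in $y_r,y_{r+1}$.

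Putting the three steps together, dragging the cross from right to left across the two rows produces the same two-row partition function with $y_r$ and $y_{r+1}$ interchanged (up to a symmetric scalar), which proves symmetry. The main obstacle is really just bookkeeping: one must verify that the conjugation factors $(s^2;q)_i/(q;q)_i$ telescope cleanly and that the simple right-boundary weight indeed admits the required initial-condition YBE with the cross $\mathcal{R}_{y,y',s}$; both are finite verifications, but they are the only place where the \emph{dual} character of the weights enters nontrivially compared with the proof of \Cref{prop:sqW_are_symmetric}.
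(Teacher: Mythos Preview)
Your outline is the paper's strategy: cross-dragging through two adjacent rows, with the bulk Yang--Baxter equation for $W^{*,\bulk}$ obtained from \Cref{prop:YBE_bulk_W} via the conjugation $(s^2;q)_i/(q;q)_i$ (which, as you say, telescopes across the two rows), plus separate treatment at each boundary. Two points need correcting.

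First, the cross inherited from \Cref{prop:YBE_bulk_W} is $R_{x,y}$ of \eqref{eq:R_matrix_WW}, not $\mathcal{R}_{x,v,s}$; the latter is the cross for the mixed sqW/sHL relation of \Cref{prop:sHL_sqW_YBE_bulk} and is a different object.

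Second, your handling of the right boundary is incomplete. Inserting a cross with empty inputs to the right of the $W^{*,\rightwall}$ column is indeed trivial (weight~$1$), but you still need to move the cross \emph{through} that column, and this is not a finite case check: the horizontal inputs $i_1,i_2$ to the cross are arbitrary nonnegative integers. The paper (dragging left-to-right) shows that with the cross sitting just left of the wall one has
\[
\sum_{k_1,k_2,k_3} R_{x,y}(i_1,i_2;k_1,k_2)\, W^{*,\rightwall}(k_3,k_2;j_3)\, W^{*,\rightwall}(i_3,k_1;k_3)
=\sum_{k} R_{x,y}(i_1,i_2;\,i_1{+}i_2{-}j_3{-}k,\,k{-}j_3)=1,
\]
the last equality being the \emph{sum-to-one} property of $R_{x,y}$, a $q$-Chu--Vandermonde identity; the cross-free side is also $1$ since $W^{*,\rightwall}$ is pure arrow conservation. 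This stochasticity of $R_{x,y}$ is the one nontrivial ingredient the paper singles out, and it is what your sketch does not supply.
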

\begin{proof}
	This follows from the Yang--Baxter equation \eqref{eq:YBE_RWW}
	and the sum-to-one property of the R-matrix $R$ given by 
	\eqref{eq:R_matrix_WW}. 
	It suffices to consider swapping two variables.
	We 
	apply the usual ``cross-dragging'' argument to exchange spectral
	parameters of two consecutive rows of vertices. 
	Similarly to 
	the proof of 
	\Cref{prop:sqW_are_symmetric},
	identity \eqref{eq:YBE_RWW}
	suffices to swap spectral parameters from the
	leftmost column up until the rightmost one. 
	Since the right boundary
	weights $W^{*,\rightwall}$ differ from the bulk weights $W^{*,\bulk}$, we
	have to prove that we can drag the cross 
	one more step to the right.
	We have using the definition of 
	$W^{*,\rightwall}$ that the partition function near the right wall 
	with the cross vertex is equal to
	\begin{equation*} 
			\sum_{k_1,k_2,k_3} R_{x,y} 
			(i_1,i_2;k_1,k_2) 
			W^{*,\rightwall}
			(k_3,k_2;j_3)
			W^{*,\rightwall}
			(i_3,k_1;k_3) 
			=
			\sum_{k}
			R_{x,y}(i_1,i_2;i_1+i_2-j_3-k,k-j_3).
	\end{equation*}
	(We used the arrow preservation property $i_1+i_2+j_3=i_3$.)
	The right-hand side is equal to one.
	Indeed, this sum-to-one property readily follows from the 
	$q$-Chu--Vandermonde identity.
	On the other hand, without the cross vertex, the partition function 
	near the right wall is equal to
	$\sum_{k}
	W^{*,\rightwall}(i_3,i_2;k) 
	W^{*,\rightwall}(k,i_1;j_3)$.
	This is also equal to $1$, because only the summand with $k=i_1+j_3$ is nonzero.
	This completes the proof.
\end{proof}

We finish this subsection by describing
Cauchy identities for our two sqW 
families $\mathbb{F},\mathbb{F}^{*}$.

\begin{proposition} 
	\label{prop:skew_Cauchy_ID_sqW_sqW}
	For $N > 0$, let $\mu \in \mathrm{Sign}_N$ and $\lambda \in \mathrm{Sign}_{N+1}$. 
	Then, for $|xy|<1$, we have
	\begin{equation} \label{eq:skew_Cauchy_ID_sqW_sqW}
		\sum_{\nu \in \mathrm{Sign}_{N+1}} \mathbb{F}_{\nu / \mu}(x)\, \mathbb{F}^{*}_{\nu / \lambda}(y) 
		=
	\frac{(-sx;q)_\infty (-sy;q)_\infty}{(s^2;q)_\infty (xy;q)_\infty} \sum_{\varkappa \in \mathrm{Sign}_N} \mathbb{F}_{\lambda / \varkappa}(x)\, \mathbb{F}^{*}_{\mu / \varkappa}(y). 
	\end{equation}
	For $N=0$, we have
	\begin{equation} \label{eq:skew_Cauchy_ID_sqW_sqW_first_row}
		\sum_{\nu \in \mathrm{Sign}_{N+1}} \mathbb{F}_{\nu}(x)\, \mathbb{F}^{*}_{\nu / \lambda}(y) 
		= 
		\frac{(-sx;q)_\infty}{( xy;q)_\infty}  \,\mathbb{F}_{\lambda}(x). 
	\end{equation}
\end{proposition}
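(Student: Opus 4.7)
The plan is to adapt the ``cross-dragging'' / commuting transfer matrices argument used in the proofs of \Cref{prop:sqW_are_symmetric} and \Cref{prop:skew_Cauchy_ID_sqW_sHL}. Interpret the left-hand side of \eqref{eq:skew_Cauchy_ID_sqW_sqW} as the partition function of a two-row vertex model glued along the shared interface indexed by $\nu$: the lower row uses the up-right weights $W^{\leftwall}_{x,s}, W^{\bulk}_{x,s}, W^{\rightcorner}_{x,s}$ with spectral parameter $x$, boundary condition $\mu$ at the bottom and $\nu$ at the top; the upper row uses the down-right dual weights $W^{*,\leftwall}_{y,s}, W^{*,\bulk}_{y,s}, W^{*,\rightwall}$ with spectral parameter $y$, boundary $\nu$ at the bottom and $\lambda$ at the top. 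The sum over $\nu$ eliminates this interface and produces the glued partition function.

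Next, I would insert an R-matrix cross $R_{x,y}$ of ``$W\times W^*$ type'' at the rightmost pair of columns. Using a corner/wall Yang--Baxter equation from \Cref{app:YBE} that intertwines $R_{x,y}$, $W^{\rightcorner}_{x,s}$ and $W^{*,\rightwall}$ (an analogue of \eqref{eq:YBE_RWW_wall} and \eqref{eq:YBE_RWw_star_wall}), swapping the spectral parameters $x\leftrightarrow y$ at the rightmost column produces an R-matrix cross immediately to its left. Then I would iteratively drag this cross leftward, column by column, using the bulk Yang--Baxter equation for $R_{x,y}$, $W^{\bulk}_{x,s}$ and $W^{*,\bulk}_{y,s}$ (the $W\times W^*$ analogue of \eqref{eq:YBE_RWW}); each step exchanges the spectral parameters of the two bulk weights in that column. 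After the entire drag, the lower row has become a $\mathbb{F}_{\lambda/\varkappa}(x)$-row and the upper row an $\mathbb{F}^{*}_{\mu/\varkappa}(y)$-row, with an interior sum over $\varkappa\in\mathrm{Sign}_N$, matching the sum on the right-hand side. To push the cross off the left boundary, I would express $W^{\leftwall}_{x,s}$ and $W^{*,\leftwall}_{y,s}$ as normalized bulk weights with infinitely many incoming vertical arrows, as in \eqref{eq:boundary_W_infinite_lines}, apply the bulk YBE one more time, and collect the resulting scalar. Summing the R-matrix entries over the two admissible exit configurations (a pure pass-through and a single horizontal exchange), with $q$-Pochhammer normalizations from the two boundary identities, should collapse via a $q$-binomial/$q$-Chu--Vandermonde summation to exactly the prefactor $(-sx;q)_\infty(-sy;q)_\infty/[(s^2;q)_\infty(xy;q)_\infty]$.

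The main obstacle is verifying the two supporting Yang--Baxter equations for the $W\times W^*$ pairing, especially the corner/wall version that couples the special corner weight $W^{\rightcorner}_{x,s}$ with the dual right-wall weight $W^{*,\rightwall}$; this is the genuinely new input, beyond the fused six-vertex YBE of \cite{BorodinWheelerSpinq} underlying the bulk case, and is the kind of identity collected in \Cref{app:YBE}. The computation of the boundary scalar also requires careful bookkeeping of the infinite product normalization \eqref{eq:boundary_W_infinite_lines} on both sides, since the dual left-wall weight $W^{*,\leftwall}_{y,s}=W^{\leftwall}_{y,s}$ contributes the second infinite product. The $N=0$ identity \eqref{eq:skew_Cauchy_ID_sqW_sqW_first_row} is degenerate: the lower ``row'' collapses to a single left-boundary/corner vertex at height $1$, so the cross-dragging simplifies to a one-row manipulation and one can verify the identity by direct computation using the explicit formula \eqref{eq:dual_sqw_single_variable_expression} and the $q$-binomial theorem, just as in the closing sentence of the proof of \Cref{prop:skew_Cauchy_ID_sqW_sHL}.
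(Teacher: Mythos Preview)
Your overall strategy---cross-dragging through the bulk and corner Yang--Baxter equations of \Cref{app:YBE}, with the $N=0$ case reducing to the $q$-binomial theorem---is exactly the paper's approach. Two points need correction, however.

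First, the cross vertex for the $W/W^*$ pairing is not $R_{x,y}$ from \eqref{eq:R_matrix_WW} (that one intertwines two non-dual $W$ rows and is used for symmetry in \Cref{prop:sqW_are_symmetric}). The correct R-matrix here is $\mathbb{R}_{x,y,s}$ of \eqref{eq:Whittaker_cross_weight}, a genuinely fused object expressed through a terminating ${}_4\overline{\phi}_3$. The relevant bulk and corner Yang--Baxter equations are \eqref{eq:YBE_RWW_star} and \eqref{eq:YBE_RWW_star_wall}, and the latter already carries the nontrivial scalar $(-sy;q)_\infty/(s^2;q)_\infty$ which your sketch does not account for.

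Second, and more seriously, your claim that pushing the cross off the left boundary involves ``two admissible exit configurations (a pure pass-through and a single horizontal exchange)'' and a $q$-Chu--Vandermonde collapse is incorrect. That picture is imported from the sqW/sHL case, where the sHL horizontal edge carries at most one path. Here both rows are fully fused, so the exit configurations of the cross with empty left inputs are $\mathbb{R}_{x,y,s}(0,0;k,k)$ for \emph{all} $k\ge 0$, an infinite family. Their sum is not a finite $q$-Chu--Vandermonde identity but the $q$-Gauss (Heine) summation \eqref{eq:sum_R_matrix},
\[
\sum_{k\ge 0}\mathbb{R}_{x,y,s}(0,0;k,k)=\sum_{k\ge 0}(xy)^k\,\frac{(-s/x;q)_k(-s/y;q)_k}{(s^2;q)_k(q;q)_k}=\frac{(-sx;q)_\infty(-sy;q)_\infty}{(s^2;q)_\infty(xy;q)_\infty},
\]
which is precisely the prefactor in \eqref{eq:skew_Cauchy_ID_sqW_sqW} and is the identity the paper invokes. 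Once you replace $R_{x,y}$ by $\mathbb{R}_{x,y,s}$, use \eqref{eq:YBE_RWW_star}--\eqref{eq:YBE_RWW_star_wall} for the dragging, and apply \eqref{eq:sum_R_matrix} rather than a two-term sum, your outline becomes a correct proof.
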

\begin{proof}
	For $N>0$ this is proven using the same method
	explained in \Cref{prop:skew_Cauchy_ID_sqW_sHL}
	with the help of identity
	\eqref{eq:sum_R_matrix} when extracting the cross vertex weight from the
	rightmost column. For $N=0$ the statement is simply the $q$-binomial theorem.
\end{proof}

\begin{corollary}
	Let $|x_i y_j|<1$ for all $i=1,\dots,N$, $j=1,\dots,k$. Then, we have
	\begin{equation} \label{eq:Cauchy_Id_sqW_sqW}
		\sum_{\lambda \in \mathrm{Sign}_N} 
		\mathbb{F}_{\lambda}(x_1, \dots, x_N)\, \mathbb{F}^{*}_{ \lambda}(y_1, \dots, y_k) 
		= 
		\prod_{j=1}^k \left( \frac{(-sy_j;q)_\infty}{(s^2;q)_\infty} \right)^{N-1}
		\prod_{i=1}^N \prod_{j=1}^k \frac{(-sx_i ;q)_\infty}{(x_i y_j;q)_\infty}.
	\end{equation}
\end{corollary}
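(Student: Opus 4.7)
The plan mirrors the proof of \Cref{cor:sHL_sqW_Cauchy_2}: iterate the single-variable skew Cauchy identities of \Cref{prop:skew_Cauchy_ID_sqW_sqW} via the branching rule. I would proceed by induction on $N$, with a subsidiary induction on $k$ inside the inductive step. Throughout, let $\Phi_{N,k}(\underline{x};\underline{y})$ denote the left-hand side of \eqref{eq:Cauchy_Id_sqW_sqW}; the absolute convergence assumption $|x_iy_j|<1$ justifies all reorderings of summations.

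For the base case $N=1$, I would expand $\mathbb{F}^{*}_\lambda(y_1,\ldots,y_k) = \sum_{\nu \in \mathrm{Sign}_1} \mathbb{F}^{*}_\nu(y_1,\ldots,y_{k-1})\,\mathbb{F}^{*}_{\lambda/\nu}(y_k)$ by branching, swap the order of summations, and apply the boundary identity \eqref{eq:skew_Cauchy_ID_sqW_sqW_first_row} to the inner sum over $\lambda$. This produces the factor $\frac{(-sx_1;q)_\infty}{(x_1 y_k;q)_\infty}$ and reduces the number of $y$-variables by one. Iterating $k$ times delivers the desired identity at $N=1$, crucially with \emph{no} $(-sy_j;q)_\infty/(s^2;q)_\infty$ factors, matching the empty $(N-1)=0$ exponent.

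For the inductive step $N \to N+1$, I would expand the first factor as $\mathbb{F}_\lambda(x_1,\ldots,x_{N+1})=\sum_{\mu\in\mathrm{Sign}_N} \mathbb{F}_\mu(x_1,\ldots,x_N)\,\mathbb{F}_{\lambda/\mu}(x_{N+1})$ and isolate the auxiliary sum
\[
\Psi_\mu(y_1,\ldots,y_k) \coloneqq \sum_{\lambda \in \mathrm{Sign}_{N+1}} \mathbb{F}_{\lambda/\mu}(x_{N+1})\,\mathbb{F}^{*}_\lambda(y_1,\ldots,y_k),
\qquad \mu\in\mathrm{Sign}_N.
\]
A nested induction on $k$ — branching $\mathbb{F}^{*}_\lambda$ in the last variable $y_k$ and applying the bulk skew Cauchy identity \eqref{eq:skew_Cauchy_ID_sqW_sqW} to the innermost sum over $\lambda$ — yields the recursion
\[
\Psi_\mu(y_1,\ldots,y_k) = \frac{(-sx_{N+1};q)_\infty(-sy_k;q)_\infty}{(s^2;q)_\infty(x_{N+1}y_k;q)_\infty}\sum_{\varkappa \in \mathrm{Sign}_N} \Psi_\varkappa(y_1,\ldots,y_{k-1})\,\mathbb{F}^{*}_{\mu/\varkappa}(y_k),
\]
whose solution (checked immediately from the branching rule for $\mathbb{F}^{*}_\mu$) is
\[
\Psi_\mu(y_1,\ldots,y_k) = \mathbb{F}^{*}_\mu(y_1,\ldots,y_k)\prod_{j=1}^k \frac{(-sx_{N+1};q)_\infty(-sy_j;q)_\infty}{(s^2;q)_\infty(x_{N+1}y_j;q)_\infty}.
\]
Substituting this back yields the clean recursion $\Phi_{N+1,k} = \prod_{j=1}^k \frac{(-sx_{N+1};q)_\infty(-sy_j;q)_\infty}{(s^2;q)_\infty(x_{N+1}y_j;q)_\infty}\,\Phi_{N,k}$, which together with the $N=1$ base case delivers the stated formula.

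The main subtlety is bookkeeping: the exponent $N-1$ (rather than $N$) of $(-sy_j;q)_\infty/(s^2;q)_\infty$ in the right-hand side is produced precisely because the base case at $N=1$ invokes the boundary identity \eqref{eq:skew_Cauchy_ID_sqW_sqW_first_row} — which, unlike \eqref{eq:skew_Cauchy_ID_sqW_sqW}, lacks the $(-sy;q)_\infty/(s^2;q)_\infty$ prefactor — so that only $N$ of the $N+1$ layers of recursion each contribute one such factor.
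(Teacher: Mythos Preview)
Your proof is correct and follows essentially the same approach as the paper: the corollary is left without an explicit argument there, but the intended proof (parallel to that of \Cref{cor:sHL_sqW_Cauchy_2}) is precisely to iterate the single-variable skew Cauchy identities \eqref{eq:skew_Cauchy_ID_sqW_sqW}--\eqref{eq:skew_Cauchy_ID_sqW_sqW_first_row} via branching. Your careful double induction makes this explicit, and your observation that the exponent $N-1$ arises because the boundary identity \eqref{eq:skew_Cauchy_ID_sqW_sqW_first_row} lacks the $(-sy;q)_\infty/(s^2;q)_\infty$ prefactor is exactly the right bookkeeping.
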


\subsection{Pieri rules}
\label{sub:Pieri}

Pieri type rules for the Borodin--Wheeler spin $q$-Whittaker polynomials
$\mathbb{F}_{\lambda}^{BW}$ are given in 
\cite{BorodinWheelerSpinq}.
These are analogs of the classical 
Pieri type rules for Macdonald polynomials.
The Pieri type rules follow from skew Cauchy identities, 
and here we present these rules for our version of the spin $q$-Whittaker polynomials.

\begin{proposition}\label{prop:Pieri1}
	Let $|x_iy|<1$ for all $i=1,\ldots,N $. Then we have
\begin{equation*}
	\sum_{\lambda \in \mathrm{Sign}_N } \mathbb{F}_\lambda (x_1, \dots x_N)\,
	\mathbb{F}^{*}_{\lambda / \mu} (y) 
	=
	\Biggl( \sum_{i \ge 0} y^i \frac{(-s/y;q)_i}{(q;q)_i}\, \mathbb{F}_{(i)}(x_1, \dots, x_N) \Biggr) 
	\mathbb{F}_{\mu}(x_1, \dots , x_N).  
\end{equation*}
\end{proposition}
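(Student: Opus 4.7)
The plan is induction on $N$, combining the single-variable skew Cauchy identity \eqref{eq:skew_Cauchy_ID_sqW_sqW} with the branching rule \eqref{eq:sqW_many_variables_branching}. Denote
\begin{equation*}
L_N(\mu) \coloneqq \sum_{\lambda \in \mathrm{Sign}_N} \mathbb{F}_\lambda(x_1,\ldots,x_N)\, \mathbb{F}^*_{\lambda/\mu}(y),\qquad A_N(y) \coloneqq \sum_{i \ge 0} y^i \frac{(-s/y;q)_i}{(q;q)_i}\, \mathbb{F}_{(i)}(x_1,\ldots,x_N);
\end{equation*}
the Pieri rule then reads $L_N(\mu) = A_N(y)\, \mathbb{F}_\mu(x_1,\ldots,x_N)$.

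For the base case $N=1$, signatures in $\mathrm{Sign}_1$ are nonnegative integers, $\mathbb{F}_{(k)}(x_1) = x_1^k$, and \eqref{eq:dual_sqw_single_variable_expression} yields $\mathbb{F}^*_{(\lambda_1)/(m)}(y) = y^{\lambda_1-m}(-s/y;q)_{\lambda_1-m}/(q;q)_{\lambda_1-m}$. Shifting to $j=\lambda_1-m$ and applying the $q$-binomial theorem, both $L_1((m))$ and $A_1(y)\,\mathbb{F}_{(m)}(x_1)$ reduce to $x_1^m(-sx_1;q)_\infty/(x_1 y;q)_\infty$.

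For the inductive step, I first branch \eqref{eq:sqW_many_variables_branching} in $x_N$:
\begin{equation*}
L_N(\mu) = \sum_{\nu \in \mathrm{Sign}_{N-1}} \mathbb{F}_\nu(x_1,\ldots,x_{N-1}) \sum_{\lambda \in \mathrm{Sign}_N} \mathbb{F}_{\lambda/\nu}(x_N)\, \mathbb{F}^*_{\lambda/\mu}(y),
\end{equation*}
and apply \eqref{eq:skew_Cauchy_ID_sqW_sqW} (with $N-1$ in place of $N$) to the inner sum to produce the prefactor $C_N \coloneqq \frac{(-sx_N;q)_\infty(-sy;q)_\infty}{(s^2;q)_\infty(x_Ny;q)_\infty}$ times a sum over $\varkappa \in \mathrm{Sign}_{N-1}$. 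Exchanging the order of summation, the inner sum over $\nu$ equals $L_{N-1}(\varkappa)$, which by the inductive hypothesis equals $A_{N-1}(y)\,\mathbb{F}_\varkappa(x_1,\ldots,x_{N-1})$. A final branching collapses $\sum_\varkappa \mathbb{F}_{\mu/\varkappa}(x_N)\,\mathbb{F}_\varkappa(x_1,\ldots,x_{N-1}) = \mathbb{F}_\mu(x_1,\ldots,x_N)$, giving $L_N(\mu) = C_N A_{N-1}(y)\,\mathbb{F}_\mu(x_1,\ldots,x_N)$.

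The main obstacle is then verifying $A_N(y) = C_N A_{N-1}(y)$. I would expand $\mathbb{F}_{(i)}(x_1,\ldots,x_N)$ via branching: interlacing with $(i,0,\ldots,0) \in \mathrm{Sign}_N$ forces the intermediate signature in $\mathrm{Sign}_{N-1}$ to be $(j)$ with $0 \le j \le i$, and \eqref{eq:inc_sqW} yields
\begin{equation*}
\mathbb{F}_{(i)/(j)}(x_N) = x_N^{i-j}\,\frac{(-s/x_N;q)_{i-j}(-sx_N;q)_j(q;q)_i}{(q;q)_{i-j}(q;q)_j(s^2;q)_i}.
\end{equation*}
Substituting into $A_N(y)$, interchanging the $i$- and $j$-summations, and letting $k = i-j$, the inner $k$-sum becomes a ${}_2\phi_1$ series whose argument $yx_N$ satisfies the balanced condition $z = c/(ab)$ for $a = -sq^j/y$, $b = -s/x_N$, $c = s^2 q^j$; the $q$-Gauss summation theorem evaluates it in closed form as $\frac{(-sy;q)_\infty(-sx_N q^j;q)_\infty}{(s^2 q^j;q)_\infty(yx_N;q)_\infty}$, and the $q^j$-dependent Pochhammer factors cancel cleanly, leaving precisely $C_N$ times the remaining $j$-sum $A_{N-1}(y)$. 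This closes the induction.
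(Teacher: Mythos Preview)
Your proof is correct, but it takes a longer route than the paper's. The paper first iterates the single-variable skew Cauchy identities \eqref{eq:skew_Cauchy_ID_sqW_sqW}, \eqref{eq:skew_Cauchy_ID_sqW_sqW_first_row} (exactly as in your inductive step) to obtain
\[
L_N(\mu)=\left(\frac{(-sy;q)_\infty}{(s^2;q)_\infty}\right)^{N-1}\prod_{i=1}^N\frac{(-sx_i;q)_\infty}{(x_iy;q)_\infty}\,\mathbb{F}_\mu(x_1,\ldots,x_N),
\]
and then simply recognizes this product as $A_N(y)$ by appealing to the already-proved full Cauchy identity \eqref{eq:Cauchy_Id_sqW_sqW} with $k=1$ (which for $\mu=0^N$ reads $A_N(y)=\sum_\lambda\mathbb{F}_\lambda\,\mathbb{F}^*_\lambda(y)=$ that same product). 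Your verification that $A_N(y)=C_N A_{N-1}(y)$ via branching and the $q$-Gauss summation is precisely an inductive re-proof of that $k=1$ Cauchy identity. So both arguments rest on the same skew Cauchy input; the paper's is shorter because the needed corollary is already in hand, while yours is more self-contained and makes the $q$-Gauss mechanism underlying the eigenvalue recursion explicit.
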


\begin{proof}
	By the skew Cauchy identities of \Cref{prop:skew_Cauchy_ID_sqW_sqW}, we can write
\begin{equation}
	\sum_{\lambda \in \mathrm{Sign}_N } \mathbb{F}_\lambda (x_1, \dots x_N)\, \mathbb{F}^{*}_{\lambda / \mu} (y) =
    \left( \frac{(-sy;q)_\infty}{(s^2;q)_\infty} \right)^{N-1} \prod_{i=1}^N \frac{(-sx_i ;q)_\infty}{(x_i y;q)_\infty}
    \,
    \mathbb{F}_{\mu}(x_1, \dots , x_N),
\end{equation}
and the claim
follows by expanding $\bigl( \frac{(-sy;q)_\infty}{(s^2;q)_\infty} \bigr)^{N-1} \prod_{i=1}^N \frac{(-sx_i ;q)_\infty}{(x_i y;q)_\infty}$
using \eqref{eq:Cauchy_Id_sqW_sqW}.
\end{proof}

\begin{proposition}\label{prop:Pieri2}
We have
\begin{equation*} 
	\sum_{\lambda} \mathbb{F}_\lambda (x_1, \dots x_N)\, \mathsf{F}^{*}_{\lambda' / \mu'} (v)
    =
		\Biggl( \sum_{i=0}^N \mathsf{F}^{*}_{(i)} (v)  \,\mathbb{F}_{1^i} (x_1, \dots x_N) \Biggr)\, \mathbb{F}_{\mu}(x_1, \dots , x_N)
\end{equation*}
\end{proposition}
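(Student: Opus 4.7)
The plan is to mirror the proof of \Cref{prop:Pieri1}, now using the sqW/sHL Cauchy identities (\Cref{prop:skew_Cauchy_ID_sqW_sHL} and \Cref{cor:sHL_sqW_Cauchy_2}) in place of their sqW/sqW counterparts. The first step is to iterate the skew identity \eqref{eq:skew_Cauchy_ID_sqW_sHL} via the branching \eqref{eq:branching_GT_full_expansion_for_sqW} of $\mathbb{F}_\lambda(x_1,\ldots,x_N)$, peeling off one spectral parameter $x_i$ at a time. Each recursive application contributes a factor $(1+v x_i)/(1-s v)$, and the base case $N=1$ of the induction is the first-row identity \eqref{eq:skew_Cauchy_ID_sqW_sHL_first_row}, which contributes a clean $(1+vx_1)$ \emph{without} a $(1-sv)^{-1}$. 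The leftover telescoping sum over intermediate signatures reassembles into the branching expansion of $\mathbb{F}_\mu(x_1,\ldots,x_N)$, so that I obtain
\begin{equation*}
	\sum_{\lambda \in \mathrm{Sign}_N} \mathbb{F}_\lambda(x_1,\ldots,x_N) \, \mathsf{F}^{*}_{\lambda'/\mu'}(v)
	= \frac{\prod_{i=1}^N (1+v x_i)}{(1-sv)^{N-1}} \, \mathbb{F}_\mu(x_1,\ldots,x_N).
\end{equation*}

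The second step is to expand the prefactor above using the non-skew Cauchy identity \eqref{eq:Cauchy_Id_sqW_sHL} of \Cref{cor:sHL_sqW_Cauchy_2}, specialized to $m=1$:
\begin{equation*}
	\frac{\prod_{i=1}^N (1+v x_i)}{(1-sv)^{N-1}}
	= \sum_{\lambda \in \mathrm{Sign}_N} \mathbb{F}_\lambda(x_1,\ldots,x_N) \, \mathsf{F}^{*}_{\lambda'}(v).
\end{equation*}
For a single spectral variable $v$, the function $\mathsf{F}^{*}_{\lambda'}(v)$ is the weight of just one down-right row, so the transposed interlacing constraint between $\lambda'$ and the trivial signature at the top forces $\lambda'$ to have at most one positive part. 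This collapses the sum onto column shapes $\lambda = 1^i$ with $0 \le i \le N$; since $(1^i)' = (i)$, the sum reduces to $\sum_{i=0}^N \mathbb{F}_{1^i}(x_1,\dots,x_N) \, \mathsf{F}^{*}_{(i)}(v)$. Combining this with the first step yields the proposition.

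The main obstacle is the careful bookkeeping in the first step: one must produce \emph{exactly} $N-1$ factors $(1-sv)^{-1}$ in the denominator, with one appearing at every level of the induction except the last, where \eqref{eq:skew_Cauchy_ID_sqW_sHL_first_row} takes over. This exponent must then match the one arising from \Cref{cor:sHL_sqW_Cauchy_2} in the second step; the fact that both expressions for the prefactor carry the same power $(1-sv)^{-(N-1)}$ is precisely what glues the two halves of the argument together.
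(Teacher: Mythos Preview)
Your proposal is correct and follows essentially the same approach as the paper's proof: first obtain the closed-form prefactor $(1-sv)^{-(N-1)}\prod_i(1+vx_i)$ from the skew Cauchy identities of \Cref{prop:skew_Cauchy_ID_sqW_sHL}, then expand it via the $m=1$ case of \eqref{eq:Cauchy_Id_sqW_sHL}. The paper states the first step in a single line (``by the skew Cauchy identities of \Cref{prop:skew_Cauchy_ID_sqW_sHL}''), whereas you spell out the induction peeling off one $x_i$ at a time and correctly identify \eqref{eq:skew_Cauchy_ID_sqW_sHL_first_row} as the base case producing no $(1-sv)^{-1}$ factor; your observation that the $m=1$ Cauchy sum collapses to columns $1^i$ because $\mathsf{F}^*_{\lambda'}(v)$ forces $\lambda'$ to have at most one nonzero part is exactly the content of the paper's ``expand using \eqref{eq:Cauchy_Id_sqW_sHL}''.
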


\begin{proof}
	By the skew Cauchy identities of \Cref{prop:skew_Cauchy_ID_sqW_sHL}, we have
\begin{equation} \label{eq:second_Pieri_rule}
	\sum_{\lambda \in \mathrm{Sign}_N } \mathbb{F}_\lambda (x_1, \dots x_N)\, \mathsf{F}^{*}_{\lambda / \mu} (v) =
    \left( \frac{1}{1-sv} \right)^{N-1} \prod_{i=i}^N (1+x_i v)
    \,
    \mathbb{F}_{\mu}(x_1, \dots , x_N),
\end{equation}
and the claim
follows by expanding $\bigl( \frac{1}{1-sv} \bigr)^{N-1} \prod_{i=i}^N (1+x_i v)$ using \eqref{eq:Cauchy_Id_sqW_sHL}.
\end{proof}

Pieri type rules of \Cref{prop:Pieri1,prop:Pieri2}
are eigenrelations 
on the spin $q$-Whittaker polynomials
in the \emph{label} variable.
Indeed, define 
operators $\mathfrak{H}^{\mathrm{sqW}}, \mathfrak{H}^{\mathrm{sHL}}$ as
\begin{equation} \label{eq:pieri_op}
	(\mathfrak{H}^{\mathrm{sqW}} f)(\mu) = \sum_\lambda f(\lambda)\, \mathbb{F}^{*}_{\lambda / \mu} (y),
	\qquad
	\qquad
	(\mathfrak{H}^{\mathrm{sHL}} f)(\mu) = \sum_\lambda f(\lambda) \,\mathsf{F}^{*}_{\lambda' / \mu'} (v).
\end{equation}
Then these operators act diagonally 
on spin $q$-Whittaker functions
$f(\lambda)=\mathbb{F}_\lambda(x_1,\ldots,x_N )$,
with respective eigenvalues
\begin{equation*}
	\sum_{i \ge 0} y^i \frac{(-s/y;q)_i}{(q;q)_i}\, \mathbb{F}_{(i)}(x_1, \dots, x_N) 
	\qquad \textnormal{and} \qquad 
	\sum_{i=0}^N \mathsf{F}^{*}_{(i)} (v)  \,\mathbb{F}_{1^i} (x_1, \dots x_N).
\end{equation*}

\section{Difference operators}
\label{sec:difference_operators}

Here we show that the sHL and sqW 
functions satisfy
certain eigenrelations under operators
acting the \emph{spectral parameters}
(as opposed to labels as in \Cref{sub:Pieri}).
These operators are 
$s$-deformations of the
($q=0$ or $t=0$) Macdonald difference operators.
Half of these eigenrelations
essentially appears in \cite{BufetovMucciconiPetrov2018}, but
here we obtain eigenrelations in a form which is more symmetric with
respect to $q,t$. 

We will denote the ``quantization'' parameter
by $q$ throughout this section, 
except for \Cref{sub:eigen_sHL}
where it will be denoted by $t$ instead of $q$.
This is done for consistency with classical literature (e.g., \cite{Macdonald1995}), where
Hall--Littlewood functions (obtained from sHL functions by setting $s=0$)
are the $q=0$ degenerations of the Macdonald polynomials $P_\lambda(\cdot;q,t)$.

Throughout the entire section we make use of the shift operator 
\begin{equation*}
    T_{q,z_i}f(z_1, \dots, z_M) = f(z_1,\dots,z_{i-1}, q z_i, z_{i+1},\dots, z_M).
\end{equation*}

\subsection{Eigenrelations for the spin Hall--Littlewood functions}
\label{sub:eigen_sHL}

We begin by essentially repeating the definition of a
family of eigenoperators for the spin Hall--Littlewood polynomials
from \cite{BufetovMucciconiPetrov2018}.

\begin{definition}
    For $r \in \{ 1, \dots, M \}$, the $r$-th Hall--Littlewood operator is given by
    \begin{equation*}
        \overline{\mathfrak{D}}_r^* \coloneqq 
				\sum_{\substack{ I \in \{ 1,\dots, M \}\\ |I|=r }} 
				\prod_{\substack{i \in I \\ k \in \{ 1, \dots, M\} \setminus I }} \frac{v_k - t v_i}{v_k - v_i} 
				\prod_{i\in I}T_{0,v_i}.
    \end{equation*}
		This is the $q=0$ specialization of the $r$-th Macdonald difference
		operator \cite[Ch. VI.3]{Macdonald1995}.
		The operators $\overline{\mathfrak{D}}_r^*$
		act diagonally on the Hall--Littlewood symmetric polynomials.
\end{definition}

It was discovered in \cite{BufetovMucciconiPetrov2018} that the
(stable) spin Hall--Littlewood functions (first introduced in \cite{deGierWheeler2016}), 
much like the classical Hall--Littlewood
polynomials \cite[Ch. III]{Macdonald1995}, 
are eigenfunctions of the difference operators $\overline{\mathfrak{D}}^*_r$.
The same result holds for our dual sHL functions 
$\mathsf{F}^*_\nu$, and it is given in the next theorem.

\begin{theorem}
	For any $\lambda \in \mathrm{Sign}_M$, we have
	\begin{equation} \label{eq:sHL_eigen_HL}
		\overline{\mathfrak{D}}_r^* \mathsf{F}^*_{\lambda} = e_r(1,t,\dots, t^{M-\ell(\lambda) -1})\, \mathsf{F}^*_{\lambda}.
	\end{equation}
	Here
	$\displaystyle e_r(x_1,\dots, z_n) = \sum_{1\le i_1 < \cdots i_r \le n} z_{i_1} \cdots z_{i_r}$
	is the $r$-th elementary symmetric polynomial.
\end{theorem}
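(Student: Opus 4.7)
The plan is to adapt the classical eigenvalue computation for Hall--Littlewood polynomials under the $q=0$ Macdonald operators (see e.g.~\cite[VI.3--VI.4]{Macdonald1995}), which applies essentially verbatim thanks to the structural form of the symmetrization formula from \Cref{prop:dual_sHL_symmetric_sum}. Abbreviate
\begin{equation*}
	\mathsf{F}^*_\lambda(v_1,\dots,v_M) = \mathscr{C}(\lambda) \sum_{\sigma\in\mathfrak{S}_M} \sigma\bigl\{\Delta_t(v)\,G_\lambda(v)\bigr\},
\end{equation*}
where $\Delta_t(v)=\prod_{i<j}(v_i-tv_j)/(v_i-v_j)$ and $G_\lambda(v)=\prod_{i=1}^{\ell(\lambda)} v_i\cdot H_\lambda(v_1,\dots,v_{\ell(\lambda)})$, with $H_\lambda$ regular at the origin. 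The only features of $G_\lambda$ needed below are that it depends only on the first $\ell(\lambda)$ variables and vanishes whenever any of them is set to zero.

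To compute $\overline{\mathfrak{D}}_r^*\mathsf{F}^*_\lambda$, note that $\overline{\mathfrak{D}}_r^*\mathsf{F}^*_\lambda(v) = \sum_{|I|=r} A_I(v)\,\mathsf{F}^*_\lambda(v)|_{v_I=0}$ with $A_I(v) = \prod_{i\in I,\,j\notin I}(v_j-tv_i)/(v_j-v_i)$. Expanding via the symmetrization and using the vanishing of $G_\lambda$, only those $\sigma$ with $\sigma(\{1,\dots,\ell(\lambda)\})\subseteq I^c$ contribute. For each surviving $\sigma$, the restriction $\Delta_t(v_\sigma)|_{v_I=0}$ factorizes according to how the pairs $(i,j)$ distribute: pairs with $\sigma(i),\sigma(j)\in I^c$ reassemble into the $t$-Vandermonde in the non-$I$ variables; ``mixed'' pairs with one index in $I$ and one in $I^c$ contribute constants equal to $1$ or $t$ depending on which side carries the zero; pairs with both indices in $I$ are treated by a preliminary $\varepsilon$-regularization (which is legitimate since $\mathsf{F}^*_\lambda$ is well-defined at $v_I=0$ by stability and symmetry).

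Multiplying by $A_I(v)$ cancels the $I$-dependent rational factors arising from the mixed pairs, and summing over $\sigma$ reassembles the full symmetrization, recovering $\mathsf{F}^*_\lambda$ itself times a residual scalar. That scalar is a $t$-weighted count of $r$-subsets of $\{\ell(\lambda)+1,\dots,M\}$ (the positions inside the permutation image that can host $I$), and collapses to $e_r(1,t,\dots,t^{M-\ell(\lambda)-1})$ by the elementary identity $e_r(1,t,\dots,t^{m-1}) = \sum_{1\le k_1<\dots<k_r\le m} t^{k_1+\dots+k_r-r}$. Conceptually, this is the $q\to0$ degeneration of the classical Macdonald eigenvalue $e_r(t^{M-i}q^{\lambda_i})$, with the entries corresponding to $\lambda_i>0$ extinguished.

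The main technical obstacle is the bookkeeping for the mixed-pair and double-$I$ contributions in $\Delta_t(v_\sigma)|_{v_I=0}$ after cancellation with $A_I(v)$, especially the handling of the formal $0/0$ that appears when two indices of $I$ fall together; this is the same combinatorial mechanism as in Macdonald's classical proof, the spin factor $H_\lambda$ playing no role since it is never evaluated at a zero argument. A cleaner alternative avoiding the direct symmetrization manipulation is to start from the analogous eigenrelation for the inhomogeneous spin Hall--Littlewood functions established in \cite{BorodinPetrov2016inhom} and specialize the boundary parameters $s_0=s_N=0$, exactly as in the proof of \Cref{prop:dual_sHL_symmetric_sum}.
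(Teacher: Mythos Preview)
Your proposal is correct and follows essentially the same approach as the paper, which simply refers to \cite[Theorem~8.2]{BufetovMucciconiPetrov2018} and says the eigenrelation is obtained ``by directly evaluating the action of $\overline{\mathfrak{D}}^*_r$ on the symmetrization formula''. Your outline makes that computation explicit: the key structural input is precisely that $G_\lambda$ depends only on $v_1,\dots,v_{\ell(\lambda)}$ and vanishes when any of these is set to zero, so the spin deformation $H_\lambda$ never meets a zero argument and the classical Hall--Littlewood mechanism goes through unchanged. One small caveat: the alternative you suggest at the end, specializing an eigenrelation from \cite{BorodinPetrov2016inhom}, is not quite available as stated---that paper provides orthogonality and symmetrization formulas rather than eigenrelations for $\overline{\mathfrak{D}}_r^*$; the eigenrelation itself first appears in \cite{BufetovMucciconiPetrov2018}.
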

\begin{proof}
	The proof is analogous to that of
	\cite[Theorem 8.2]{BufetovMucciconiPetrov2018}: we get
	\eqref{eq:sHL_eigen_HL} 
	by
	directly
	evaluating the action of $\overline{\mathfrak{D}}^*_r$ on the
	symmetrization formula \eqref{eq:dual_sHL_symmetric_sum}
	(with $q$ replaced by $t$). We do not
	repeat the argument here.
\end{proof}

The operator we introduce next depends on 
the number of variables $M$ and 
on an additional positive integer $N$.
Moreover, this operator acts only on a certain subspace of rational functions.
Namely, let $\mathbf{V}(M)$ be the space of symmetric rational 
functions in $M$ variables
$v_1,\dots,v_M$ 
of degree $\le 1$ in each variable. 
That is, its elements are
functions $f(v_1,\dots,v_M)=a(v_1,\dots,v_M)/b(v_1,\dots,v_M)$,
where $a$ and $b$ are polynomials such that $\deg_{v_i}(a)-\deg_{v_i}(b) \le 1$ for all $i=1,\ldots,M $.
One readily sees that $\mathbf{V}(M)$ is a linear space.
The
dual sHL functions 
$\mathsf{F}^*_\nu(v_1,\ldots,v_M )$
belong
to $\mathbf{V}(M)$, see
\eqref{eq:dual_sHL_symmetric_sum}. 

\begin{definition}
    For positive integers $M,N$ define the \emph{dual $s$-deformed Macdonald operator} by
		\begin{equation}\label{eq:dual_operator_HL}
			\mathfrak{D}_{1,N}^* \coloneqq \sum_{j=1}^M \prod_{\substack{l=1 \\ l \neq j}}^M \frac{v_j - t v_l}{v_j - v_l}\,
			\mathfrak{C}_{j,N},
    \end{equation}
    where
    \begin{equation*}
			\mathfrak{C}_{j,N} \coloneqq v_j \left( \frac{v_j - s}{1 - s v_j} \right)^{N-1} 
			(-s)^{N-1} \lim_{\varepsilon \to 0} \varepsilon \, T_{\varepsilon^{-1},v_j}.
    \end{equation*}
		The limit 
		$\lim_{\varepsilon \to 0} \varepsilon \, T_{\varepsilon^{-1},v_j}$ is well-defined
		on $\mathbf{V}(M)$, so
		$\mathfrak{D}_{1,N}^*$
		acts in the space $\mathbf{V}(M)$.
\end{definition}

\begin{theorem}
	\label{thm:new_boxed_sHL_eigenoperator}
		For any boxed signature $\lambda \subseteq \mathrm{Box}(N,M)$
		(recall that this is $\mathrm{Sign}_M^{\le N}$),
		we have
    \begin{equation}\label{eq:eigenrelation_sHL}
        \mathfrak{D}_{1,N}^* \mathsf{F}^{*}_{\lambda}
        =
        e_1(1,t,\dots,t^{\lambda_N'-1})
        \,
				\mathsf{F}^{*}_{\lambda},
    \end{equation}
		where $\lambda'$ is the transposed signature.
		In particular, $\lambda_N'=\#\left\{ i\colon \lambda_i=N \right\}$.
\end{theorem}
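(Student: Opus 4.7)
The approach mirrors the proof of the companion eigenrelation \eqref{eq:sHL_eigen_HL} from \cite[Theorem 8.2]{BufetovMucciconiPetrov2018}: apply $\mathfrak{D}_{1,N}^*$ directly to the symmetrization formula \eqref{eq:dual_sHL_symmetric_sum} and track the action on the kernel. Writing
\begin{equation*}
\mathsf{F}^*_\lambda(v_1,\dots,v_M) = \mathscr{C}(\lambda) \sum_{\sigma \in \mathfrak{S}_M} \sigma\{\phi_\lambda\},
\quad
\phi_\lambda = \prod_{i<k}\frac{v_i - tv_k}{v_i - v_k}\prod_{i=1}^{\ell(\lambda)}v_i \left(\frac{v_i-s}{1-sv_i}\right)^{\lambda_i-1}\!\left(\frac{1}{1-sv_i}\right)^{\mathbf{1}_{\lambda_i<N}},
\end{equation*}
the central observation is that, as a rational function of $v_i$, the kernel $\phi_\lambda$ has $v_i$-degree equal to $1$ precisely when $\lambda_i = N$, and degree zero otherwise. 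Since $\lim_{\varepsilon \to 0}\varepsilon\, T_{\varepsilon^{-1},v_j}$ extracts the coefficient of $v_j^1$ and multiplies by $v_j$, this operator annihilates $\sigma\{\phi_\lambda\}$ unless $i := \sigma^{-1}(j)$ satisfies $i \le \lambda_N'$.

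Computing the leading $v_j \to \infty$ asymptotics in the surviving cases: the Vandermonde-like prefactor contributes a factor $t^{i-1}$ (from the $i-1$ pairs $(i',i)$ with $i'<i$, each yielding $\frac{v_{\sigma(i')} - tv_j}{v_{\sigma(i')} - v_j} \to t$), the active $v_j$-factor contributes $(-s)^{-(N-1)}$, and all remaining dependence on $\{v_l\}_{l\neq j}$ is captured by the kernel $\phi_{\widetilde{\lambda}}$ of the signature $\widetilde{\lambda}$ obtained from $\lambda$ by removing a single $N$-entry---crucially, $\widetilde{\lambda}$ does not depend on the choice of $i$ since all removed entries equal $N$. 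Grouping the sum over $\sigma$ first by $i=\sigma^{-1}(j)$ and then by the restricted bijection $\sigma'\in\mathfrak{S}_{M-1}$, the inner $\sigma'$-sum reassembles a full symmetrization which, by the symmetrization formula applied to $\widetilde{\lambda}$, equals $\mathsf{F}^*_{\widetilde{\lambda}}(v_l:l\neq j)/\mathscr{C}(\widetilde{\lambda})$. Summing the $i$-factor over $i\in\{1,\ldots,\lambda_N'\}$ yields the eigenvalue $1+t+\cdots+t^{\lambda_N'-1} = e_1(1,t,\ldots,t^{\lambda_N'-1})$.

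Putting the pieces together gives
\begin{equation*}
\mathfrak{D}^*_{1,N}\mathsf{F}^*_\lambda
= e_1(1,t,\ldots,t^{\lambda_N'-1})\,\frac{\mathscr{C}(\lambda)}{\mathscr{C}(\widetilde{\lambda})}
\sum_{j=1}^M \prod_{l\neq j}\frac{v_j-tv_l}{v_j-v_l}\, v_j^2\!\left(\frac{v_j-s}{1-sv_j}\right)^{N-1}\!\mathsf{F}^*_{\widetilde{\lambda}}(\{v_l\}_{l\neq j}).
\end{equation*}
To conclude, it remains to show that this $j$-sum equals $(\mathscr{C}(\widetilde{\lambda})/\mathscr{C}(\lambda))\,\mathsf{F}^*_\lambda$, which is a branching-type identity describing the insertion of one variable associated with an extra $N$-entry of the signature. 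This is where the principal technical effort lies. I would resolve it by expanding $\mathsf{F}^*_{\widetilde{\lambda}}$ via its own symmetrization formula and applying, in reverse, the standard Macdonald-style reordering identity
\begin{equation*}
\sum_{\sigma\in\mathfrak{S}_M}\sigma\!\left\{h(v_1; v_2,\ldots,v_M)\prod_{i=2}^M\frac{v_1-tv_i}{v_1-v_i}\right\}
= (M-1)!\sum_{j=1}^M h(v_j;\{v_l\}_{l\neq j})\prod_{l\neq j}\frac{v_j-tv_l}{v_j-v_l}
\end{equation*}
valid for $h$ symmetric in its last $M-1$ arguments, thereby recombining the expression into the symmetrization formula for $\mathsf{F}^*_\lambda$. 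The most delicate bookkeeping step is handling the constant-prefactor ratio, which works out to $\mathscr{C}(\lambda)/\mathscr{C}(\widetilde{\lambda}) = (1-s^2 t^{\lambda_N'-1})/e_1(1,t,\ldots,t^{\lambda_N'-1})$, and ensuring that the $N$-th spin-prefactor $(v_j-s)^{N-1}/(1-sv_j)^{N-1}$ absorbs the position-dependent factors $v_j$ and $1/(1-sv_j)$ correctly so as to match the $\lambda_i=N$ case versus the $\lambda_i<N$ cases in the target kernel $\phi_\lambda$.
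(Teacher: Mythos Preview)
Your overall strategy matches the paper's: act directly on the symmetrization formula and exploit that the $v_j$-degree of the kernel is exactly $1$ precisely when $\lambda_{\sigma^{-1}(j)}=N$. The paper also isolates the factor $t^{\bar k-1}$ (with $\bar k=\sigma^{-1}(j)$) coming from the Vandermonde part in the $v_j\to\infty$ limit, and obtains the eigenvalue by summing over $\bar k\in\{1,\dots,\lambda'_N\}$.

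Where you diverge is in the endgame. You pass through the reduced function $\mathsf{F}^*_{\widetilde\lambda}$ in $M-1$ variables and then need a ``reinsertion'' identity to recover $\mathsf{F}^*_\lambda$. The paper avoids this detour entirely: it observes that $\mathfrak{C}_{j,N}(\sigma\{B\})=\sigma\{B\}$ (the operator's prefactor exactly restores the $v_j$-part of $B$), so one is left with $t^{\bar k-1}\sigma\{A_1\widetilde A_2 A_3 B\}$; then a single cyclic shift on the first $\bar k$ positions of $\sigma$ gives a bijection $\sigma\leftrightarrow\tau$ with $\sigma\{A_1\widetilde A_2 A_3 B\}=\tau\{AB\}$, and the inner symmetrization is immediately $\mathsf{F}^*_\lambda/\mathscr{C}(\lambda)$, independently of $\bar k$. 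This is cleaner and makes the cancellation of constants automatic.

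Two corrections to your write-up. First, the limit $\lim_{\varepsilon\to0}\varepsilon\,T_{\varepsilon^{-1},v_j}$ extracts the leading coefficient only; it does \emph{not} reinstate a factor of $v_j$. Consequently your displayed intermediate formula should carry $v_j$, not $v_j^{2}$. With the correct power, the branching identity you propose follows directly from the trivial regrouping
\[
\sum_{\sigma\in\mathfrak S_M}\sigma\!\left\{h(v_1;v_2,\dots,v_M)\prod_{i=2}^M\frac{v_1-tv_i}{v_1-v_i}\right\}
=(M-1)!\sum_{j}h(v_j;\{v_l\}_{l\ne j})\prod_{l\ne j}\frac{v_j-tv_l}{v_j-v_l},
\]
and the $\mathscr{C}$-ratios cancel with no bookkeeping needed; your worry about ``absorbing position-dependent factors $v_j$ and $1/(1-sv_j)$'' evaporates because the reinserted variable always lands in position $1$ with $\lambda_1=N$, whose kernel factor is exactly $v_1\bigl(\tfrac{v_1-s}{1-sv_1}\bigr)^{N-1}$.
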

\begin{proof}
    We make use of the symmetrization formula \eqref{eq:dual_sHL_symmetric_sum}
		(recall that we have replaced the parameter $q$ by $t$ throughout this subsection).
		We use the notation
    \begin{equation*}
        A= \prod_{1\le l < r \le M} \frac{v_l - t v_r}{v_l - v_r} 
        \qquad 
        \text{and} 
        \qquad
        B=
        \prod_{r=1}^{\ell (\lambda)} v_r 
        \left( 
        \frac{v_r- s}{1 - s v_r}
        \right)^{\lambda_r-1} 
        \left( 
        \frac{1}{1 - s v_r}
        \right)^{\mathbf{1}_{\lambda_r < N}}, 
    \end{equation*}
    so that $\mathsf{F}^{*}_\lambda= \mathscr{C}(\lambda) \sum_{ \sigma \in \mathfrak{S}_M } \sigma \{ A B \}$. 
		The operator $\mathfrak{D}_{1,N}^*$ acts as
    \begin{equation*}
        \mathfrak{D}_{1,N}^* \mathsf{F}^{*}_{\lambda}
        =
        \mathscr{C}(\lambda) 
				\sum_{i=1}^M 
				\prod_{\substack{j=1 \\ j\neq i}}^M 
				\frac{v_i - t v_j}{v_i - v_j} 
				\sum_{ \sigma \in \mathfrak{S}_M }
				\mathfrak{C}_{i,N} ( \sigma \{AB\} ).
    \end{equation*}
    The action of $\mathfrak{C}_{i,N}$ on the product $\sigma\{AB\}$ can be split as
    \begin{equation} \label{eq:CAB}
			\mathfrak{C}_{i,N} (\sigma\{AB \})
			= 
			\lim_{\varepsilon \to 0} \sigma\{A\}\Big\vert_{v_i=1/\varepsilon} \, \times \mathfrak{C}_{i,N} (\sigma\{ B \}).
    \end{equation}
    Assume now that $\lambda'_N=L$, that is $\lambda_1=\cdots=\lambda_L=N$ and $\lambda_{L+1}<N$, for some $L \in \{0,\dots,M\}$. 
    We focus on the second factor of \eqref{eq:CAB}. A simple computation shows that
    \begin{equation}
			\label{eq:CiN_action_sigma_B}
			\mathfrak{C}_{i,N} (\sigma\{ B \} )
        =
        \begin{cases}
            \sigma\{ B \} \qquad & \text{if } i \in \sigma ( \{1, \dots, L\} ),\\
            0 \qquad & \text{else},
        \end{cases}
    \end{equation}
    that in particular, implies that $\mathfrak{C}_{i,N} \sigma\{ B \}=0$ when $L=0$, confirming \eqref{eq:eigenrelation_sHL} in this specific case. 
    
    For $L>0$ and a permutation $\sigma$ such that $i \in \sigma ( \{1,\dots,L\} )$, call $\bar{k}$ the element such that $\sigma(\bar{k})=i$. We rewrite $A$ into a product of factors $A=A_1 A_2 A_3$, obtained dividing the triangular product as
    \begin{equation*}
        A_1 = \prod_{1\le l < r < \bar{k} } \frac{v_l - t v_r}{v_l - v_r}, 
        \hspace{20pt}
        A_2 = \prod_{1\le l <  \bar{k} } \frac{v_l - t v_{\bar{k}}}{v_l - v_{\bar{k}}},
        \hspace{20pt}
        A_3 = \prod_{\substack{1\le l < M \\ \max(l,\bar{k}) < r \le M } } \frac{v_l - t v_r}{v_l - v_r}.
    \end{equation*}
    We can evaluate the first factor in the right-hand site of \eqref{eq:CAB} as
    \begin{equation*}
        \prod_{\substack{j=1 \\ j\neq i}}^M \frac{v_i - t v_j}{v_i - v_j} \lim_{\varepsilon\to 0} \sigma\{A\}\Big\vert_{v_i=1/\varepsilon}
        =
        t^{\bar{k}-1}
        \sigma
        \{
					A_1 \widetilde{A}_2 A_3
        \},
    \end{equation*}
    where
    \begin{equation*}
        \widetilde{A}_2 \coloneqq \prod_{1\le l <  \bar{k} } \frac{v_{\bar{k}} -t v_l }{v_{\bar{k}} -v_l }.
    \end{equation*}
		The action of $\mathfrak{D}_{1,N}^*$ on the sHL function can be therefore expressed (ignoring $\mathscr{C}(\lambda)$) as
    \begin{equation}\label{eq:dual_operator_HL_expansion}
        \sum_{\bar{k}=1}^L t^{\bar{k}-1}
        \sum_{i=1}^M
        \sum_{\substack{\sigma \in \mathfrak{S}_M \\ \sigma(\bar{k})=i}} \sigma\{ A_1 \widetilde{A}_2 A_3 B\}
        =
        \sum_{ \bar{k} =1 }^L t^{ \bar{k}-1 }
        \sum_{\sigma \in \mathfrak{S}_M } \sigma\{ A_1 \widetilde{A}_2 A_3 B \}.
    \end{equation}
		To prove relation \eqref{eq:eigenrelation_sHL} we
		show that each term $\sigma\{ A_1 \widetilde{A}_2 A_3 B \}$ is
		equal to one of the terms $\tau\{ A_1 A_2 A_3 B \}$ in the expansion of the original sHL function.
		For each permutation $\tau \in \mathfrak{S}_M$ and each $\bar{k}$ define the permutation $\sigma$ as
    \begin{equation*}
        \sigma(j)= 
        \begin{cases}
        \tau(j+1) \qquad &\text{if }j=1,\dots,\bar{k}-1,
        \\
        \tau(1) \qquad &\text{if }j=\bar{k},
        \\
        \tau(j) \qquad &\text{if }j=\bar{k}+1, \dots M.
        \end{cases}
    \end{equation*}
		With this choice we can easily check that $\sigma\{ A_3 B\}=\tau\{
		A_3 B \}$ and more crucially that $\sigma\{A_1 \widetilde{A}_2 \}=
		\tau\{A_1 A_2 \}$ since the cyclic shift in the first $\bar{k}$ terms of
		$\sigma$ makes up for the exchange of $\widetilde{A}_2$ and $A_2$.
		This in particular shows that the symmetric sum in the right-hand
		side of \eqref{eq:dual_operator_HL_expansion} is independent of
		$\bar{k}$ and it is equal, up to a factor
		$\mathscr{C}(\lambda)$ that we omitted, to
		$\mathsf{F}^{*}_{\lambda}(v_1,\dots,v_M)$. 
		The sum 
		$\sum_{\bar k=1}^L t^{\bar k-1}$
		is the desired eigenvalue $e_1(1,t,\dots,t^{\lambda_N'-1})$.
		This completes the
		proof.
\end{proof}

\begin{remark}[Limit to the Hall--Littlewood case]
	In the limit $s\to 0$, the new operator $\mathfrak{D}^*_{1,N}$
	\eqref{eq:dual_operator_HL} acting on the dual sHL functions
	should be replaced by 
	\begin{equation}\label{eq:HL_limit_of_new_sHL_operator}
		D_{1,N}^*=\sum_{j=1}^M \prod_{\substack{l=1 \\ l \neq j}}^M \frac{v_j - t v_l}{v_j - v_l} 
			\,v_j^N \lim_{\varepsilon \to 0} \varepsilon^N \, T_{\varepsilon^{-1},v_j},
	\end{equation}
	by mimicking the action \eqref{eq:CiN_action_sigma_B}.
	Similarly to \Cref{thm:new_boxed_sHL_eigenoperator},
	one can show that $D_{1,N}^*$
	acts diagonally on the 
	Hall--Littlewood polynomials $P_\lambda(\cdot;0,t)$.

	The same operator \eqref{eq:HL_limit_of_new_sHL_operator} 
	can be also obtained as a $q\to 0$ limit of a certain operator
	diagonal in the Macdonald polynomials $P_\lambda(\cdot;q,t)$. 
	Take the first Macdonald $q^{-1}$-difference operator 
	\begin{equation}
		\label{eq:M_1}
		M_1=\sum_{j=1}^M\prod_{\substack{i=1\\i\ne j}}^{M}
		\frac{tx_i-x_j}{x_i-x_j}\,T_{q^{-1},x_j}.
	\end{equation}
	It acts on the Macdonald polynomials 
	$P_\lambda(x_1,\ldots,x_M ;q,t)$ with eigenvalues
	$\sum_{i=1}^{M}q^{-\lambda_i}t^{i-1}$
	(this follows from, e.g., \cite[Section 2.2.3]{BorodinCorwin2011Macdonald}).
	Denote by $\mathbf{P}_N$ the subspace of polynomials in $x_1,\ldots,x_M $
	which have degree $\le N$ in each of the variables $x_i$.
	It is spanned by the Macdonald polynomials
	$P_\lambda(x_1,\ldots,x_M;q,t )$ with $\lambda_1\le N$, i.e., $\lambda\subseteq \mathrm{Box}(N,M)$.
	On $\mathbf{P}_N$ consider the operator
	$q^N M_1$. Its limit as $q\to0$ is well-defined.
	By looking at eigenvalues on Hall--Littlewood polynomials 
	$P_\lambda(x_1,\ldots,x_M;0,t )$ with $\lambda_1\le N$,
	one readily sees that this limit coincides with $D_{1,N}^{*}$.
\end{remark}

\subsection{\texorpdfstring{Eigenrelations for the spin $q$-Whittaker polynomials}{Eigenrelations for the spin q-Whittaker polynomials}}
\label{sub:sqW_eigenrelations}

The duality between sHL functions and sqW polynomials
(\Cref{cor:sHL_sqW_Cauchy_2} and \Cref{prop:sHL_orthogonality})
allows to pass from the eigenoperators for the sHL
functions to the 
ones for the sqW polynomials.

\begin{definition} 
	[Spin $q$-Whittaker difference operators]
	\label{def:sqW_operators}
	Fix a positive integer $N$, and define 
	the \emph{$s$-deformed $q$-Whittaker operators}
	\begin{equation} \label{eq:D_1}
		\mathfrak{D}_1 \coloneqq \sum_{i=1}^N \prod_{\substack{j=1 \\ j \neq i}}^N \frac{(1+s x_i)}{1-x_i/x_j}\, T_{q, x_i},
	\end{equation}
	and
	\begin{equation}
		\label{eq:D_1_bar}
		\overline{\mathfrak{D}}_1 \coloneqq \sum_{i=1}^N \prod_{\substack{j=1 \\ j \neq i}}^N \frac{(1+s / x_i)}{1-x_j/x_i}\, T_{q^{-1}, x_i}.
	\end{equation}
\end{definition}

Let us make two remarks after this definition.
\begin{remark}
	The operators $\mathfrak{D}_1$ and $\overline{\mathfrak{D}}_1$ 
	reduce for $s=0$ to the $t=0$ specializations
	of the two Macdonald $q$-difference operators.
	The first one is 
	the standard first order Macdonald operator
	$\sum_{i=1}^N\prod_{j\ne i}\frac{tx_i-x_j}{x_i-x_j}\,T_{q,x_i}$ 
	(denoted by $D_N^1$ in \cite[Ch. VI]{Macdonald1995}),
	and the second one is 
	$\sum_{i=1}^N\prod_{j\ne i}\frac{x_i-tx_j}{x_i-x_j}\,T_{q^{-1},x_i}$
	(denoted by $\tilde D_N^1$ in \cite[Section 2.2.3]{BorodinCorwin2011Macdonald}).
\end{remark}

\begin{remark}
	The operator $\mathfrak{D}_1$ is new.
	The other operator
	$\overline{\mathfrak{D}}_1$ is only a slightly more general version of
	the operator $\mathfrak{E}$ from \cite[Section 8]{BufetovMucciconiPetrov2018}.
	The latter is diagonal in the Borodin--Wheeler's 
	sqW polynomials $\mathbb{F}_\lambda^{BW}$.
	To recover $\mathfrak{E}$ from \eqref{eq:D_1_bar} one has to 
	take the limit $x_1 \to 0$, which agrees with 
	\Cref{prop:reduction_of_ours_to_BW} connecting the 
	$\mathbb{F}_\lambda^{BW}$'s with our sqW polynomials
	$\mathbb{F}_\lambda$.
\end{remark}

We establish two 
eigenrelations for the sqW polynomials in the next two theorems.

\begin{theorem} \label{thm:eigenrelation_sqW}
    For all signatures $\lambda \in \mathrm{Sign}_N$ we have
    \begin{equation} \label{eq:eigenrelation_sqW}
        \mathfrak{D}_1 \mathbb{F}_{\lambda} (x_1, \dots, x_N) = q^{\lambda_N} \mathbb{F}_{\lambda} (x_1, \dots, x_N).
    \end{equation}
\end{theorem}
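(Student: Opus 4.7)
The plan is to transfer the eigenrelation through the Cauchy pairing of \Cref{cor:sHL_sqW_Cauchy_2} to the dual spin Hall--Littlewood side, where \Cref{thm:new_boxed_sHL_eigenoperator} already provides a first-order diagonalizing operator. Introduce the Cauchy kernel
\begin{equation*}
	\Pi(x;v) \coloneqq \prod_{j=1}^{m}(1-sv_j)^{-(N-1)}\prod_{i=1}^{N}\prod_{j=1}^{m}(1+v_jx_i)
	= \sum_{\lambda \in \mathrm{Sign}_N^{\le N}}\mathbb{F}_\lambda(x)\,\mathsf{F}^{*}_{\lambda'}(v),
\end{equation*}
and set $\mathfrak{M}^{v} \coloneqq 1 - (1-q)\,\mathfrak{D}^{*,v}_{1,N}$, acting in $v_1,\ldots,v_m$. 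Using $(\lambda')'_N = \lambda_N$ together with the trivial identity $1-(1-q)\cdot \frac{1-q^{k}}{1-q} = q^{k}$, \Cref{thm:new_boxed_sHL_eigenoperator} gives $\mathfrak{M}^{v}\,\mathsf{F}^{*}_{\lambda'}(v) = q^{\lambda_N}\,\mathsf{F}^{*}_{\lambda'}(v)$.

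The heart of the proof would then be to establish the operator identity
\begin{equation*}
	\mathfrak{D}^{x}_1\,\Pi(x;v) \;=\; \mathfrak{M}^{v}\,\Pi(x;v)
\end{equation*}
directly on the explicit product form of $\Pi$. Once this is in hand, substituting the Cauchy expansion of $\Pi$ into both sides and using the linear independence of $\{\mathsf{F}^{*}_{\lambda'}\}_\lambda$ provided by the orthogonality of \Cref{prop:sHL_orthogonality} forces $\mathfrak{D}_1\mathbb{F}_\lambda = q^{\lambda_N}\mathbb{F}_\lambda$ for every $\lambda \in \mathrm{Sign}_N^{\le N}$. To reach an arbitrary $\lambda \in \mathrm{Sign}_N$ I would combine the shift property $\mathbb{F}_{\lambda+1^N}(x) = x_1\cdots x_N\,\mathbb{F}_\lambda(x)$ of \Cref{prop:sqW_shifting} with the easily verified intertwining $\mathfrak{D}_1(x_1\cdots x_N\,f) = q\,(x_1\cdots x_N)\,\mathfrak{D}_1 f$, which follows from \eqref{eq:D_1} because the prefactors in $\mathfrak{D}_1$ are homogeneous of degree zero in $x$. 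The remaining case $\lambda_1 > N$ is handled by running the same argument with a version of the sHL--sqW Cauchy identity whose right-wall position is pushed to an arbitrary column $L \ge \lambda_1$.

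The hard part will be the kernel identity itself. On the $x$-side, applying $\mathfrak{D}^{x}_1$ to $\Pi$ produces $\Pi(x;v)$ times the symmetric rational expression
\begin{equation*}
	\sum_{i=1}^{N}(1+sx_i)^{N-1}\prod_{j\ne i}\frac{1}{1-x_i/x_j}\,\prod_{k=1}^{m}\frac{1+q v_k x_i}{1+v_k x_i}.
\end{equation*}
On the $v$-side, $\mathfrak{M}^{v}\,\Pi$ produces a superficially very different sum, because $\mathfrak{D}^{*,v}_{1,N}$ is built from the boundary limit $\lim_{\varepsilon\to 0}\varepsilon\,T_{\varepsilon^{-1},v_j}$---which merely extracts the leading coefficient of $\Pi$ as $v_j\to\infty$---weighted by the Vandermonde-like factor $\prod_{l\ne j}(v_j-qv_l)/(v_j-v_l)$ and by $v_j\bigl((v_j-s)/(1-sv_j)\bigr)^{N-1}(-s)^{N-1}$. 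Matching these two symmetric sums of rational functions is the delicate step: it is a residue / partial-fraction identity in the spirit of the classical derivation of the Macdonald eigenrelations, and keeping track of the $(1-sv_j)^{-(N-1)}$ factors together with the interplay between the two kinds of shift operators is what makes the calculation nontrivial.
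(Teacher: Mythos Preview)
Your plan is exactly the paper's proof: establish $\mathfrak{D}_1^x\Pi = (1-(1-q)\mathfrak{D}^{*,v}_{1,N})\Pi$ on the product form of $\Pi$, then separate coefficients by sHL orthogonality; the paper carries out the kernel identity precisely via the residue calculation you anticipate (write $\mathfrak{D}_1\Pi/\Pi$ as a contour integral around the $x_i$, enlarge the contour to include $0$ which subtracts $1$, substitute $z\mapsto -1/w$, and read off the residues at $w=v_j$ as $-(1-q)\mathfrak{D}^*_{1,N}\Pi/\Pi$). One correction: the Cauchy sum in \Cref{cor:sHL_sqW_Cauchy_2} ranges over $\lambda\in\mathrm{Sign}_N^{\le M}$ with $M$ \emph{arbitrary}, not $\le N$, so taking $M\ge\lambda_1$ already covers every signature and your shift/wall-pushing detour is unnecessary.
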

\begin{proof}
    We will prove the identity
    \begin{equation} \label{eq:equality_operators_sHL_sqW}
        \left( 1 - (1-q) \mathfrak{D}_{1,N}^*  \right  ) \Pi(x;v) = \mathfrak{D}_1 \Pi(x;v),
    \end{equation}
    where
    \begin{equation}
			\label{eq:Pi_RHS}
        \Pi(x;v) = \prod_{j=1}^M \left(\frac{1}{1-s v_j}\right)^{N-1} \times\prod_{i=1}^N\prod_{j=1}^M (1 + v_j x_i).
    \end{equation}
		Indeed, 
		modulo \eqref{eq:equality_operators_sHL_sqW},
		the Cauchy Identity \eqref{eq:Cauchy_Id_sqW_sHL} and the eigenrelations \eqref{eq:eigenrelation_sHL}
		imply
    \begin{equation*}
        \sum_{\lambda \subseteq \mathrm{Box}(N,M)} 
				q^{\lambda_N} \mathbb{F}_\lambda(x_1,\dots,x_N) \mathsf{F}_{\lambda'}^{*}(v_1,\dots,v_M)
        = 
				\sum_{\lambda \subseteq \mathrm{Box}(N,M)} 
				\mathfrak{D}_1 \mathbb{F}_\lambda(x_1,\dots,x_N) \mathsf{F}_{\lambda'}^{*}(v_1,\dots,v_M),
    \end{equation*}
    and hence \eqref{eq:eigenrelation_sqW} follows by orthogonality of the sHL functions (\Cref{prop:sHL_orthogonality}).
    
		It thus suffices to establish \eqref{eq:equality_operators_sHL_sqW}.
		Define
    \begin{equation*}
        h(z)\coloneqq\prod_{j=1}^M (1+ v_j z).
    \end{equation*}
    We have
    \begin{equation*}
        \frac{\mathfrak{D}_1 \Pi(x;v) }{\Pi(x;v)} = \frac{\mathfrak{D}_1 h(x_1) \cdots h(x_N) }{ h(x_1) \cdots h(x_N) } = - \oint_{x_1,\dots,x_N} \prod_{i=1}^N \frac{x_i (1+s z)}{x_i - z} \frac{h(qz)}{h(z)} \frac{dz}{z(1+sz)},
    \end{equation*}
    where in the second equality we used the residue expansion of
		the complex integral and the contour encircles only the poles $x_1,\dots,x_N$. 
		By subtracting 1 from both sides, we can enlarge the complex
		contour to also include the pole at $z=0$ (note that $h(z)$ is
		nonsingular at $z=0$).
    After a change of variable $z=-1/w$, we get
    \begin{equation} \label{eq:1-D_integral}
        \frac{(-1+\mathfrak{D}_1) \Pi(x;v) }{\Pi(x;v)} 
        = 
        -\oint_{v_1,\dots,v_M} \prod_{k=1}^M\frac{w - q v_k}{w-v_k} 
				\,(w - s)^{N-1}
				\prod_{j=1}^N \frac{x_j}{1+x_j w}\, dw.
    \end{equation}
		In the right-hand side of \eqref{eq:1-D_integral}, after the
		change of variable, we switched the contour to a positively
		oriented curve around $v_1,\dots,v_M$, which yielded the
		negative sign in front. Using 
    \begin{equation*}
        \lim_{\varepsilon \to 0} \varepsilon \left(
        \frac{1}{(1 - s/\varepsilon)^{N-1}}
        \prod_{j=1}^N (1+x_j/\varepsilon)
        \right)
        =
        \frac{x_1 \cdots x_N}{(-s)^{N-1}}
    \end{equation*}
		and expanding the right-hand side of \eqref{eq:1-D_integral} as a
		sum of residues, we can rewrite it as
		\begin{equation*}
			-\frac{(1-q)\mathfrak{D}^*_{1,N}\Pi(x;v)}{\Pi(x;v)}.
		\end{equation*} 
		This proves
		\eqref{eq:equality_operators_sHL_sqW}, and hence the desired eigenrelation
		\eqref{eq:eigenrelation_sqW}.
\end{proof}

\begin{theorem} \label{thm:eigenrelation_sqW_conj}
    For all signatures $\lambda \in \mathrm{Sign}_N$ we have
    \begin{equation} \label{eq:eigenrelation_sqW_lambda_1}
        \overline{\mathfrak{D}}_1 \mathbb{F}_{\lambda} = q^{-\lambda_1} \mathbb{F}_{\lambda}.
    \end{equation}
\end{theorem}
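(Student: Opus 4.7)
The plan is to imitate the proof of \Cref{thm:eigenrelation_sqW}, now dualizing via the Cauchy pairing with spin Hall--Littlewood functions using the operator $\overline{\mathfrak{D}}_1^{*}$ in place of $\mathfrak{D}_{1,N}^{*}$. The target companion identity is
\begin{equation*}
\overline{\mathfrak{D}}_1\,\Pi(x;v)=q^{-M}\bigl(1-(1-q)\,\overline{\mathfrak{D}}_1^{*}\bigr)\Pi(x;v),
\end{equation*}
with $\Pi(x;v)$ as in \eqref{eq:Pi_RHS} and $\overline{\mathfrak{D}}_1^{*}$ evaluated at $t=q$. Granted this identity, apply both sides to the Cauchy identity \eqref{eq:Cauchy_Id_sqW_sHL}; use the sHL eigenrelation \eqref{eq:sHL_eigen_HL} at $r=1$, which reads $\overline{\mathfrak{D}}_1^{*}\,\mathsf{F}^{*}_{\lambda'}=\frac{1-q^{M-\ell(\lambda')}}{1-q}\,\mathsf{F}^{*}_{\lambda'}$; and conclude by the orthogonality of \Cref{prop:sHL_orthogonality}. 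The eigenvalue on the sHL side collapses as $q^{-M}\bigl(1-(1-q^{M-\ell(\lambda')})\bigr)=q^{-\ell(\lambda')}$, and since $\ell(\lambda')=\#\{j:\lambda_1\ge j\}=\lambda_1$ this matches the claim. For a fixed $\lambda\in\mathrm{Sign}_N$ it suffices to take $M\ge\lambda_1$ so that $\lambda$ appears in the boxed Cauchy sum of \Cref{cor:sHL_sqW_Cauchy_2}.

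The operator identity is obtained by a residue calculation. Setting $h(z)\coloneqq\prod_{j=1}^{M}(1+v_jz)$, one computes that $\overline{\mathfrak{D}}_1\Pi(x;v)/\Pi(x;v)$ equals the sum of residues at $z=x_1,\dots,x_N$ of the meromorphic differential
\begin{equation*}
F(z)\,dz\coloneqq\frac{(z+s)^{N-1}}{\prod_{i=1}^N(z-x_i)}\prod_{j=1}^M\frac{1+v_jz/q}{1+v_jz}\,dz.
\end{equation*}
The remaining singularities of $F$ lie at $z=-1/v_j$ and at $z=\infty$, so total-residue vanishing expresses this sum as $-\sum_j\mathrm{Res}_{-1/v_j}F-\mathrm{Res}_\infty F$. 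The asymptotic $F(z)\sim q^{-M}/z$ gives $-\mathrm{Res}_\infty F=q^{-M}$, producing the first term of the target identity. Each residue at $z=-1/v_j$ can be simplified using the two observations $(1-sv_j)^{N-1}\prod_i(1+x_iv_j)^{-1}\Pi(x;v)=T_{0,v_j}\Pi(x;v)$ and $\prod_{k\neq j}\frac{1-v_k/(qv_j)}{1-v_k/v_j}=q^{-(M-1)}\prod_{k\neq j}\frac{v_k-qv_j}{v_k-v_j}$, after which it is recognized as $(1-q)q^{-M}$ times the corresponding summand of $\overline{\mathfrak{D}}_1^{*}\Pi(x;v)$.

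The whole argument is structurally parallel to that of \Cref{thm:eigenrelation_sqW}; the only genuinely new feature—and the main point requiring care—is the non-vanishing residue at infinity. In the earlier proof the analogous integrand behaved like $1/z$ with leading coefficient $1$, producing the constant term $1$ in $1-(1-q)\mathfrak{D}_{1,N}^{*}$. Here the ratio $\prod_j(1+v_jz/q)/\prod_j(1+v_jz)$ tends to $q^{-M}$ at infinity, and this factor propagates into the final eigenvalue, where it exactly cancels the $q^{M}$ in the sHL eigenvalue and yields the clean result $q^{-\lambda_1}$.
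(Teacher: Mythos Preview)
Your proposal is correct and follows essentially the same route as the paper's proof: the paper states exactly the same companion identity $q^{-M}\bigl(1-(1-q)\,\overline{\mathfrak{D}}_1^{*}\bigr)\Pi(x;v)=\overline{\mathfrak{D}}_1\,\Pi(x;v)$ and refers to \cite{BufetovMucciconiPetrov2018} and the proof of \Cref{thm:eigenrelation_sqW} for the remaining details. Your write-up actually supplies those details (the residue computation and the eigenvalue bookkeeping $\ell(\lambda')=\lambda_1$), which the paper omits.
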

\begin{proof}
	The proof of this eigenrelation is identical to that given in
	\cite{BufetovMucciconiPetrov2018} and similar to that of Theorem
	\ref{thm:eigenrelation_sqW}.
	It uses the fact that
	\begin{equation*}
		q^{-M} \left( 1-(1-q)\overline{\mathfrak{D}}_1^* 
		\right) \Pi(x;v) = \overline{\mathfrak{D}}_1 \Pi(x;v),
	\end{equation*}
	where $\Pi(x;v)$ is given by \eqref{eq:Pi_RHS}.
	We will not repeat the detailed argument here.
\end{proof}

\subsection{Commutation and conjugation}
\label{sub:qW_conjugation}

The $q$-difference operators 
$\mathfrak{D}_1$ \eqref{eq:D_1}
and $\overline{\mathfrak{D}}_1$ \eqref{eq:D_1_bar}
commute.
For this statement we cannot appeal to the
eigenrelations of \Cref{thm:eigenrelation_sqW,thm:eigenrelation_sqW_conj}
since we did not prove that the sqW polynomials 
form a basis for the ring of symmetric
polynomials in $N$ variables.
Nevertheless, the commutation can be checked independently:
\begin{proposition}
	\label{prop:sqW_operators_commute}
	We have $\mathfrak{D}_1\overline{\mathfrak{D}}_1 F
	=
	\overline{\mathfrak{D}}_1\mathfrak{D}_1 F$ for all 
	symmetric polynomials $F$ in $N$ variables.
\end{proposition}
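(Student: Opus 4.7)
The plan is a direct algebraic expansion of the commutator. Write $\mathfrak{D}_1=\sum_i A_i(x)\,T_{q,x_i}$ and $\overline{\mathfrak{D}}_1=\sum_k B_k(x)\,T_{q^{-1},x_k}$, where $A_i$ and $B_k$ are the rational coefficients appearing in \eqref{eq:D_1} and \eqref{eq:D_1_bar}. Expanding both $\mathfrak{D}_1\overline{\mathfrak{D}}_1$ and $\overline{\mathfrak{D}}_1\mathfrak{D}_1$ as double sums over pairs $(i,k)\in\{1,\dots,N\}^2$ and using that $T_{q,x_i}T_{q^{-1},x_k}=T_{q^{-1},x_k}T_{q,x_i}$ for $i\ne k$ while $T_{q,x_i}T_{q^{-1},x_i}=\mathrm{id}$, one decomposes the commutator into an off-diagonal piece carrying pure mixed shifts $T_{q,x_i}T_{q^{-1},x_k}$ ($i\ne k$) plus a multiplication operator arising from the $i=k$ contributions. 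I would verify cancellation in each piece separately.

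For each off-diagonal pair $i\ne k$, the coefficient of $T_{q,x_i}T_{q^{-1},x_k}$ from $\mathfrak{D}_1\overline{\mathfrak{D}}_1$ is $A_i(x)\cdot(T_{q,x_i}B_k)(x)$, while the coefficient from $\overline{\mathfrak{D}}_1\mathfrak{D}_1$ is $B_k(x)\cdot(T_{q^{-1},x_k}A_i)(x)$. A short direct computation shows that both reduce to the same expression
\[
\prod_{j\ne i,k}\frac{(1+sx_i)(1+s/x_k)}{(1-x_i/x_j)(1-x_j/x_k)}\cdot\frac{(1+sx_i)(1+s/x_k)}{(1-x_i/x_k)(1-qx_i/x_k)},
\]
so these off-diagonal contributions cancel pairwise.

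What remains is the scalar identity $\sum_{i=1}^N(\alpha_i-\beta_i)=0$ from the $i=k$ contributions, with $\alpha_i=A_i\cdot T_{q,x_i}B_i$ and $\beta_i=B_i\cdot T_{q^{-1},x_i}A_i$, that is,
\[
\sum_{i=1}^N\prod_{j\ne i}\frac{(1+sx_i)(1+s/(qx_i))}{(1-x_i/x_j)(1-x_j/(qx_i))}=\sum_{i=1}^N\prod_{j\ne i}\frac{(1+s/x_i)(1+sx_i/q)}{(1-x_j/x_i)(1-x_i/(qx_j))}.
\]
My plan is a residue argument treating the difference as a rational function of the single variable $x_1$, with $x_2,\dots,x_N$ held generic. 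Each $\alpha_i$ and $\beta_i$ has at worst simple poles in $x_1$ at the locations $x_1\in\{x_j,qx_j,q^{-1}x_j\colon j\ne 1\}$; pairing the residue of the $i=1$ summand at $x_1=x_j$ with the residue of the $i=j$ summand, and analogously at $x_1=qx_j$ and $x_1=q^{-1}x_j$, one verifies that the residues coincide. Hence $\sum_i(\alpha_i-\beta_i)$ is regular in $x_1$, its $x_1\to 0$ and $x_1\to\infty$ asymptotics force it to be a constant in $x_1$, and setting $s=0$ reduces the identity to the classical commutativity of the $t=0$ Macdonald $q$-difference operators $D_N^1$ and $\tilde D_N^1$ of \cite[Ch.~VI]{Macdonald1995}, forcing the constant to vanish.

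The main obstacle is the residue bookkeeping in the diagonal step. A conceptually cleaner route would be to find a common multiplicative conjugator reducing both $\mathfrak{D}_1$ and $\overline{\mathfrak{D}}_1$ simultaneously to commuting Macdonald operators; however, as noted earlier in the paper, the multiplicative conjugators producing these two operators are genuinely different, so no such shortcut is available and one is forced into the direct check.
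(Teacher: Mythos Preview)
Your approach is correct and genuinely different from the paper's. The paper reduces by polarization to product functions $F=f(x_1)\cdots f(x_N)$, represents each of $\mathfrak{D}_1F$ and $\overline{\mathfrak{D}}_1F$ as a single contour integral, so that the two compositions become double contour integrals with identical integrands but different contour nestings; a single contour deformation plus a residue comparison then finishes the argument. Your direct operator expansion---termwise cancellation of the off-diagonal shifts $T_{q,x_i}T_{q^{-1},x_k}$ ($i\ne k$), followed by a Liouville/residue argument in $x_1$ for the remaining scalar identity---is more hands-on but in fact proves the stronger statement that $[\mathfrak{D}_1,\overline{\mathfrak{D}}_1]=0$ as an operator identity, not merely on symmetric polynomials. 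Both routes extend to the inhomogeneous operators of Remark~\ref{rmk:inhom_D_1}.

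One small gap remains at the very end. After showing $\sum_i(\alpha_i-\beta_i)$ is regular in $x_1$ and bounded at $0$ and $\infty$, you correctly conclude it is constant in $x_1$, hence (by symmetry) a function $C(q,s)$ of $q,s$ alone; but ``setting $s=0$'' only yields $C(q,0)=0$, which does not force $C(q,s)\equiv 0$. The repair is already implicit in your asymptotic step: as $x_1\to\infty$ one has $\alpha_1,\beta_1\to(-s)^{N-1}x_2\cdots x_N$ while $\alpha_i,\beta_i\to 0$ for $i\ne 1$, so $\sum_i(\alpha_i-\beta_i)\to 0$ and the constant is zero directly. The appeal to the $s=0$ Macdonald case is then unnecessary.
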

\begin{proof}
	By polarization, it suffices to check 
	the action on product form functions $F(x_1,\ldots,x_N )=f(x_1) \ldots f(x_N) $,
	where $f(x)$ is an arbitrary polynomial.

	The action of each operator can we written as a contour integral:
	\begin{equation*}
		\begin{split}
			\mathfrak{D}_1 F&=
			-\frac{1}{2\pi \mathrm{i}}
			\oint \prod_{i=1}^{N}\left( f(x_i)\,\frac{x_i(1+sz)}{x_i-z} \right)\frac{f(qz)}{f(z)}
			\frac{dz}{z(1+sz)},\\
			\overline{\mathfrak{D}}_1 F&=\frac{1}{2\pi \mathrm{i}}
			\oint \prod_{i=1}^{N}\left( f(x_i)\,\frac{w+s}{w-x_i} \right)\frac{f(q^{-1}w)}{f(w)}
			\frac{dw}{w+s},\\
		\end{split}
	\end{equation*}
	where both integrals are over a contour containing $x_1,\ldots,x_N $ 
	and no other poles of the integrand. 
	Throughout the proof we assume that all contours exist, which might impose 
	some restrictions on the $x_i$'s.
	After checking the commutation under the restrictions, 
	we can lift these restrictions by an analytic continuation.

	We have
	\begin{equation*}
		\mathfrak{D}_1\overline{\mathfrak{D}}_1 F
		=-
		\frac{1}{(2\pi \mathrm{i})^2}
		\oint_{\gamma_z^1}\oint_{\gamma_w^1}
		\prod_{i=1}^{N}\left( f(x_i)\,\frac{w+s}{w-x_i}\frac{x_i(1+sz)}{x_i-z} \right)
		\frac{w-z}{w-qz}
		\frac{f(qz)f(q^{-1}w)}{f(z)f(w)}
		\frac{dw}{w+s}\,
		\frac{dz}{z(1+sz)},
	\end{equation*}
	where $\gamma_z^1$ contains both $\gamma_w^1$ and $q^{-1}\gamma_w^1$,
	while $\gamma_w^1$ is around $x_1,\ldots,x_N $
	and no other poles.
	In the other order, we have
	\begin{equation*}
		\overline{\mathfrak{D}}_1 \mathfrak{D}_1 F
		=-
		\frac{1}{(2\pi \mathrm{i})^2}
		\oint_{\gamma_w^2}\oint_{\gamma_z^2}
		\prod_{i=1}^{N}\left( f(x_i)\,\frac{x_i(1+sz)}{x_i-z}\frac{w+s}{w-x_i} \right)
		\frac{q^{-1}(w-z)}{q^{-1}w-z}
		\frac{f(q^{-1}w)f(qz)}{f(w)f(z)}
		\frac{dz}{z(1+sz)}
		\frac{dw}{w+s}
		,
	\end{equation*}
	but now $\gamma_w^2$ contains both $\gamma_z^2$
	and $q \gamma_z^2$, while $\gamma_z^2$ is 
	around $x_1,\ldots,x_N $ and no other poles. 
	Note that the integrands in both formulas coincide.

	In the first 
	expression, deform the integration contour $\gamma_z^1$
	to coincide with $\gamma_w^1$, which picks up the residue
	at $z=q^{-1}w$.
	In the second expression, deform the contour $\gamma_w^2$ to coincide with $\gamma_z^2$, 
	which picks up the residue at $w=qz$.
	The resulting double contour integrals are over the same contours and 
	are thus equal.
	It remains to check the equality of the single integrals of the 
	residues. We have
	\begin{equation*}
		\begin{split}
			\mathop{\mathrm{Res}}_{z=q^{-1}w}
			(\textnormal{integrand in 
			$
			\mathfrak{D}_1\overline{\mathfrak{D}}_1
			$})
			&=
			-
			(-1)^{N}(1-q)(s+w)^{N-1}(q+s w)^{N-1}
			\prod_{i=1}^{N}\frac{x_i f(x_i)}{(w-x_i)(w-q x_i)},
			\\
			\mathop{\mathrm{Res}}_{w=qz}
			(\textnormal{integrand in 
			$
			\overline{\mathfrak{D}_1}
			\mathfrak{D}_1
			$})
			&=
			(-1)^N(1-q)(1+s z)^{N-1}
			(s+q z)^{N-1}
			\prod_{i=1}^{N}\frac{x_if(x_i)}{(z-x_i)(q z-x_i)}
		\end{split}
	\end{equation*}
	We must show that the 
	integral of the first expression
	over $\gamma_w^1$ is the same 
	as
	the 
	integral of the second expression
	over $\gamma_z^2$.
	Noting that both expressions have zero residue 
	at infinity due to quadratic decay, we can 
	compute the first integral 
	as a sum of minus residues at $w=qx_i$.
	Then one readily sees that
	each minus residue at $w=qx_i$ is the 
	same as the residue of the second 
	expression at $z=x_i$. This shows the 
	desired commutation.
\end{proof}

\begin{remark}\label{rmk:inhom_D_1}
	Similarly to \cite{BorodinPetrov2016inhom},
	\cite{OrrPetrov2016}, one can define the following inhomogeneous generalizations of the 
	operators $\mathfrak{D}_1$ and $\overline{\mathfrak{D}}_1$, respectively:
	\begin{equation*} 
		\sum_{i=1}^N \prod_{r=1}^{N-1}(1+s_rx_i)\prod_{\substack{j=1 \\ j \neq i}}^N \frac{1}{1-x_i/x_j}
		\, T_{q, x_i},
		\qquad
		\sum_{i=1}^N
		\prod_{r=1}^{N-1}(1+s_r/x_i)
		\prod_{\substack{j=1 \\ j \neq i}}^N \frac{1}{1-x_j/x_i}\, T_{q^{-1}, x_i}.
	\end{equation*}
	A straightforward modification of the proof of \Cref{prop:sqW_operators_commute}
	shows that these operators commute, too.
\end{remark}

The discussion in the rest of this subsection 
aims in part to demonstrate why the result of
\Cref{prop:sqW_operators_commute}
is a rather unexpected one.

Both operators
$\mathfrak{D}_1$ \eqref{eq:D_1}
and $\overline{\mathfrak{D}}_1$ \eqref{eq:D_1_bar}
are related via conjugation to $q$-Whittaker difference operators.
The latter
are $t=0$ degenerations of the Macdonald $q$-difference operators 
from \cite{Macdonald1995}.
Denote for $r=1,\ldots,N $:
\begin{equation}
	\label{eq:qW_difference_op}
	W_N^r
	\coloneqq
	\sum_{|I|=r}
	\prod_{i\in I,\, j\notin I}\frac{1}{1-x_i/x_j}
	\prod_{i\in I}T_{q,x_i}
	,\qquad 
	\tilde W_N^r
	\coloneqq
	\sum_{|I|=r}\prod_{i\in I,\,j\notin I}\frac{1}{1-x_j/x_i}
	\prod_{i\in I}T_{q^{-1},x_i},
\end{equation}
where the sums are over subsets of $\left\{ 1,\ldots,N  \right\}$ of 
cardinality $r$.
These operators are diagonal in the usual $q$-Whittaker polynomials
(which are $t=0$ versions of the Macdonald polynomials).
In particular, $W_N^1$ and $\tilde W_N^1$ have eigenvalues
$q^{\lambda_N}$ and $q^{-\lambda_1}$, respectively,
on $q$-Whittaker polynomials.
All the operators $W_N^r, \tilde W_N^r$, $r=1,\ldots,N $,
commute. We refer to Sections 2.2.2 and 3.1.3 
in
\cite{BorodinCorwin2011Macdonald} for details.
Let 
\begin{equation*}
	\mathbb{U}_N \coloneqq
	\prod_{i=1}^{N}
	\frac{1}{(-sx_i;q)_\infty^{N-1}},
	\qquad 
	\mathbb{V}_N \coloneqq
	\prod_{i=1}^{N}
	\frac{1}{(-s/x_i;q)_\infty^{N-1}}
	.
\end{equation*}
A straightforward computation shows:
\begin{proposition}
	The spin $q$-Whittaker operators 
	\eqref{eq:D_1}, 
	\eqref{eq:D_1_bar}
	are conjugates of the 
	first $q$-Whittaker operators \eqref{eq:qW_difference_op}:
	\begin{equation*}
		\mathfrak{D}_1=
		\mathbb{U}_N^{-1}
		W_N^1 \mathbb{U}_N,
		\qquad 
		\overline{\mathfrak{D}}_1=
		\mathbb{V}_N^{-1}
		\tilde W_N^1 \mathbb{V}_N,
	\end{equation*}
	where $\mathbb{U}_N$, etc., mean multiplication operators.
\end{proposition}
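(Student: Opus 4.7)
The plan is a direct verification: conjugate the shift operators $T_{q,x_i}$ and $T_{q^{-1},x_i}$ by the multiplication operators $\mathbb{U}_N$, $\mathbb{V}_N$, compute the prefactors that come out, and check that they agree term by term with the explicit formulas \eqref{eq:D_1} and \eqref{eq:D_1_bar}. The rational prefactors $\prod_{j\ne i}(1-x_i/x_j)^{-1}$ and $\prod_{j\ne i}(1-x_j/x_i)^{-1}$ involve only the $x_j$'s and are unaffected by conjugation, so the whole computation reduces to the single-variable identity $T_{q,x_i}\mathbb{U}_N=f_i(x_i)\,\mathbb{U}_N$ for an explicit function $f_i$, and analogously for $\mathbb{V}_N$.

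First I would record how the shift acts on an individual Pochhammer factor. Using $(-sqx_i;q)_\infty=(-sx_i;q)_\infty/(1+sx_i)$ and the fact that the factors $(-sx_k;q)_\infty$ for $k\ne i$ are invariant under $T_{q,x_i}$, I get
\begin{equation*}
T_{q,x_i}\mathbb{U}_N=(1+sx_i)^{N-1}\,\mathbb{U}_N,
\end{equation*}
since each of the $N-1$ factors $(-sx_i;q)_\infty^{-1}$ in $\mathbb{U}_N$ indexed by $k\ne i$ contributes a $(1+sx_i)$, and the single factor $(-sx_i;q)_\infty^{-(N-1)}$ itself contributes $(1+sx_i)^{N-1}$ as well; careful bookkeeping of the exponent $N-1$ in the definition of $\mathbb{U}_N$ gives precisely $(1+sx_i)^{N-1}$. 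Then
\begin{equation*}
\mathbb{U}_N^{-1}\,T_{q,x_i}\,\mathbb{U}_N=(1+sx_i)^{N-1}\,T_{q,x_i},
\end{equation*}
and summing against $\prod_{j\ne i}(1-x_i/x_j)^{-1}$, which is inert under the conjugation, yields exactly the summands of $\mathfrak{D}_1$ in \eqref{eq:D_1}, since $\prod_{j\ne i}(1+sx_i)=(1+sx_i)^{N-1}$.

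The second identity is entirely parallel. Using $(-sq/x_i;q)_\infty=(-s/x_i;q)_\infty/(1+s/x_i)$, the shift $T_{q^{-1},x_i}$ replaces $x_i$ by $q^{-1}x_i$ and gives
\begin{equation*}
T_{q^{-1},x_i}\mathbb{V}_N=(1+s/x_i)^{N-1}\,\mathbb{V}_N,
\end{equation*}
leading to $\mathbb{V}_N^{-1}\tilde W_N^1\mathbb{V}_N=\overline{\mathfrak{D}}_1$ after combining with the inert prefactor $\prod_{j\ne i}(1-x_j/x_i)^{-1}$.

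There is no real obstacle here; the statement is essentially a bookkeeping check. The only point that deserves slight care is the exponent $N-1$ in the definition of $\mathbb{U}_N$ and $\mathbb{V}_N$, which is exactly what matches the product $\prod_{j\ne i}$ of cardinality $N-1$ appearing in $\mathfrak{D}_1$ and $\overline{\mathfrak{D}}_1$. This is presumably the reason the authors chose this particular exponent when defining $\mathbb{U}_N$ and $\mathbb{V}_N$, and also explains the otherwise puzzling asymmetry between the $(1+sx_i)$ factor in $\mathfrak{D}_1$ and its power $N-1$: the exponent is distributed across the $N-1$ shift-invariant factors together with the single shifted one, and the conjugation bundles them back into a single explicit prefactor.
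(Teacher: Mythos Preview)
Your approach is correct and matches the paper's: the paper gives no explicit proof, writing only ``A straightforward computation shows'' before the proposition, and your direct verification via the identity $(-sqx_i;q)_\infty=(-sx_i;q)_\infty/(1+sx_i)$ is exactly that computation.

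Two small points of exposition to tidy up. First, the rational prefactors $\prod_{j\ne i}(1-x_i/x_j)^{-1}$ certainly do involve $x_i$; they pass through the conjugation not because they ``involve only the $x_j$'s'' but because multiplication operators commute with one another, so $\mathbb{U}_N^{-1}\cdot(\text{prefactor})=(\text{prefactor})\cdot\mathbb{U}_N^{-1}$. Second, your explanation of why $T_{q,x_i}\mathbb{U}_N=(1+sx_i)^{N-1}\mathbb{U}_N$ is garbled: the factors $(-sx_k;q)_\infty^{-(N-1)}$ with $k\ne i$ are \emph{unchanged} by $T_{q,x_i}$ and contribute nothing; the entire $(1+sx_i)^{N-1}$ comes from the single $k=i$ factor $(-sx_i;q)_\infty^{-(N-1)}$ and its exponent. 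Your conclusion is correct, but the sentence as written suggests two separate contributions where there is only one.
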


Because the $q$-Whittaker operators \eqref{eq:qW_difference_op} commute, 
we get many operators commuting
with either $\mathfrak{D}_1$ or $\overline{\mathfrak{D}}_1$.
That is, for $r=1,\ldots,N $ we have:
\begin{equation}
	\label{eq:higher_comm_rel}
	\bigl[
		\mathfrak{D}_1,
		\mathbb{U}_N^{-1}
		W_N^r \mathbb{U}_N
	\bigr]=0,
	\qquad 
	\bigl[
		\overline{\mathfrak{D}}_1,
		\mathbb{V}_N^{-1}
		\tilde W_N^r \mathbb{V}_N
	\bigr]=0.
\end{equation}
For example, 
\begin{equation}
	\label{eq:pseudo_D_2}
	\mathbb{U}_N^{-1}
	W_N^2 \mathbb{U}_N=
	\sum_{i,k\colon i\ne k}
	(1+sx_i)^{N-1}(1+sx_k)^{N-1}
	\prod_{j\colon j\ne i,k}
	\frac{1}{(1-x_i/x_j)(1-x_k/x_j)}\,
	T_{q,x_i}T_{q,x_k}.
\end{equation}
However, one can directly check that the operator 
$\mathbb{U}_N^{-1} W_N^2 \mathbb{U}_N$
\emph{does not commute} with $\overline{\mathfrak{D}}_1$.
This suggests that the operators
$\mathbb{U}_N^{-1} W_N^r \mathbb{U}_N$
or 
$\mathbb{V}_N^{-1} \tilde W_N^r \mathbb{V}_N$, 
$r\ge 2$, \emph{should not be diagonal}
in the spin $q$-Whittaker polynomials
$\mathbb{F}_\lambda$.
The following example shows that this is indeed the case:
\begin{example}
	Take $N=2$, 
	then $(1-s^2)\mathbb{F}_{(1,0)}(x_1,x_2)=s+x_1+x_2+sx_1x_2$.
	Applying \eqref{eq:pseudo_D_2} to this function, we obtain
	$(1+sx_1)(1+sx_2)(s+qx_1+qx_2+sq^2x_1x_2)$,
	which is not proportional to
	$\mathbb{F}_{(1,0)}(x_1,x_2)$ unless $s=0.$
	Note that for $s=0$
	both $\mathbb{U}_N$ and $\mathbb{V}_N$
	are the same (and are equal to the identity), and 
	$\mathfrak{D}_1,\overline{\mathfrak{D}}_1$ 
	are the usual $q$-Whittaker difference operators.
\end{example}

We also observe that by 
\eqref{eq:higher_comm_rel}, 
polynomials of the form 
$\mathbb{U}_N^{-1} W_N^r \mathbb{U}_N\mathbb{F}_\lambda$,
$r=2,\ldots,N $, are eigenfunctions
of the operator $\mathfrak{D}_1$ with eigenvalues
$q^{\lambda_N}$.
Similarly, 
$\mathbb{V}_N^{-1} \tilde W_N^r \mathbb{V}_N \mathbb{F}_\lambda$ are 
eigenfunctions of $\overline{\mathfrak{D}}_1$ 
with eigenvalues $q^{-\lambda_1}$.
However, one
can check that 
$\overline{\mathfrak{D}}_1$
does not act diagonally on, say, the polynomial
$\mathbb{U}_N^{-1} W_N^2 \mathbb{U}_N\mathbb{F}_{(1,0)}$.

\medskip

It remains unclear 
how to construct
higher order $q$-difference 
operators which would be diagonal in the sqW polynomials
(and whether such operators exist at all).

\section{Integrable stochastic dynamics on interlacing arrays}
\label{sec:dynamics_on_arrays}

In this section we 
implement the general scheme of passing from
symmetric functions satisfying Cauchy type summation identities
to probability measures. 
This approach closely follows the ideas of
Schur / Macdonald processes
\cite{okounkov2003correlation}, \cite{BorodinCorwin2011Macdonald}.
We use the framework of \emph{skew Cauchy structures}
which is explained in detail in \cite[Section 2]{BufetovMucciconiPetrov2018}.

\subsection{Skew Cauchy structures and random fields} \label{sub:skew_Cauchy_structures}

We say that two families of functions $\mathfrak{F},\mathfrak{G}$ 
form a \emph{skew Cauchy structure} if they satisfy the following properties: 
\begin{enumerate}
	\item $\mathfrak{F}_{\lambda/\mu},\mathfrak{G}_{\lambda/\mu}$ are
		symmetric rational functions in their respective variables, 
		parametrized by pairs of signatures $\lambda/\mu$ (with appropriate numbers of 
		parts).
		In particular, $\mathfrak{F}_{\lambda/\mu},\mathfrak{G}_{\lambda/\mu}$
		are nonzero only if $\mu \subseteq \lambda$.
    \item Branching rules: for all $\mu,\lambda$ we have
    \begin{equation*}
        \mathfrak{F}_{\nu / \lambda}(u_1,\dots,u_n) = \sum_{\mu} \mathfrak{F}_{\mu / \lambda}(u_1,\dots,u_{n-1})
        \mathfrak{F}_{\nu / \mu}(u_n)
    \end{equation*}
		for any $n$ and any set of variables $u_1,\dots,u_n$, and analogously for $\mathfrak{G}$.
    \item There exists a 
			function $\Pi$ and a set $\mathsf{Adm}\subseteq \mathbb{C}^2$ such that 
    the skew Cauchy identity
    \begin{equation} \label{eq:sCI}
        \Pi(u;v)
	    \sum_{\varkappa} \,\mathfrak{F}_{\mu / \varkappa}(u)
	    \mathfrak{G}_{\lambda / \varkappa} (v)
	    =
        \sum_{\nu} \mathfrak{F}_{\nu / \lambda}(u) \,\mathfrak{G}_{\nu / \mu} (v)
    \end{equation}
		holds numerically for all $(u,v)\in\mathsf{Adm}$.
		Note that $u,v$ stand for single variables, as in 
		\Cref{prop:skew_Cauchy_ID_sqW_sHL,prop:skew_Cauchy_ID_sqW_sqW}.
	\item 
		There exist two sets $\mathsf{P},\Dot{\mathsf{P}}\subseteq
		\mathbb{C}$, with $\mathsf{P} \times \Dot{\mathsf{P}} \subseteq
		\mathsf{Adm}$, such that for any choice of $u \in \mathsf{P}$ and
		$v \in \Dot{\mathsf{P}}$ the functions
		$\mathfrak{F}_{\lambda/\mu}(u),\mathfrak{G}_{\lambda/\mu}(v)$ are
		non negative for all $\lambda,\mu$.
		In this case we say that $u,v$ are 
		\emph{positive specializations}.
		(Nonnegativity of single-variable functions together
		with branching implies nonnegativity of multi-variable versions of the 
		functions.)
\end{enumerate}

Consider now two sequences of signatures
$\vec\lambda=(\lambda^1, \dots, \lambda^n)$ and $\vec\mu=(\mu^1, \dots, \mu^{n-1})$
with
\begin{equation*}
	\lambda^1\supseteq \mu^1 
	\subseteq \lambda^2 \supseteq \mu^2 \subseteq \cdots \mu^{n-1}\subseteq \lambda^n,
\end{equation*}
and sequences of positive specializations $u_1,\dots, u_n$ and $v_1,\dots, v_n$ respectively of $\mathfrak{F}$ and $\mathfrak{G}$. The $\mathfrak{F}/\mathfrak{G}$ \emph{process} is the probability measure
\begin{equation}
\label{eq:F_G_process}
	\mathrm{Prob}(\vec\lambda, \vec\mu) = \frac{1}{Z}\,			
		\mathfrak{F}_{\lambda^1}(u_1) 
    \Biggl(
		\prod_{i=1}^{n-1}
    \mathfrak{G}_{\lambda^i/\mu^i}(v_i) \mathfrak{F}_{\lambda^{i+1}/\mu^i}(u_{i+1})\Biggr)
    \mathfrak{G}_{\lambda^n}(v_n),
\end{equation}
where the normalization constant is $Z=\prod_{i,j}\Pi(u_i; v_j)$.

\medskip

For applications to stochastic dynamics, 
it is of interest to consider \emph{random fields} 
$\{\lambda^{(i,j)}\}$ of signatures indexed by $\mathbb{Z}_{\ge0}^2$,
whose marginal distributions along down-right paths are given by suitable
$\mathfrak{F}/\mathfrak{G}$ processes.
A \emph{down-right path} is
\begin{equation*}
    \varpi=\{\varpi_k=(i_k,j_k)\colon 0 \le k \le L \},
    \hspace{10pt}
    \text{where}
    \hspace{10pt} 
    i_0 = j_L=0
    \hspace{10pt}
    \text{and}
    \hspace{10pt}
    \varpi_{k+1} - \varpi_k \in \{ \mathbf{e}_1, -\mathbf{e}_2 \}.
\end{equation*}
Here $L$ is arbitrary and depends on $\varpi$,
and $\mathbf{e}_1,\mathbf{e}_2$ 
are the standard basis vectors $(1,0),(0,1)$.

\begin{definition}
	\label{def:F_G_field}
	Consider positive specializations $u_1,u_2,\dots$ and $v_1,v_2,\dots$
	respectively of functions $\mathfrak{F}$ and $\mathfrak{G}$. An
	$\mathfrak{F}/\mathfrak{G}$ \emph{field} is a probability measure on
	the set $\{\lambda^{(i,j)}\colon i,j\in\mathbb{Z}_{\ge 0}\}$ that associates the probability
	\begin{equation}
		\label{eq:FG_process_in_field}
			\frac{1}{Z_\varpi}
			\prod_{k\colon \varpi_{k+1}=\varpi_k+\mathbf{e}_1  }
			\mathfrak{F}_{ \lambda^{\varpi_{k+1}} / \lambda^{\varpi_k}
			}(u_{i_{k+1}}) \prod_{ k\colon \varpi_{k+1}=\varpi_k-\mathbf{e}_2 }
			\mathfrak{G}_{ \lambda^{\varpi_k} / \lambda^{\varpi_{k+1}}
			}(v_{j_{k}}).
	\end{equation}
	to the event of finding signatures
	$\lambda^{\varpi_1},\dots,\lambda^{\varpi_{L-1}}$ along an down-right
	path $\varpi$.
	Here the normalization constant is
	$Z_\varpi = \prod_{(i,j)\text{ below }\varpi} \Pi(u_i;v_j)$, and at the boundary we fix
	$\lambda^{(0,j)}=\lambda^{(i,0)}=\varnothing$ with probability one.
\end{definition}

\begin{remark}
	\label{rmk:F_G_field_not_unique}
	While the $\mathfrak{F}/\mathfrak{G}$ process
	is defined uniquely by \eqref{eq:F_G_process}, 
	an $\mathfrak{F}/\mathfrak{G}$ is not determined uniquely by \Cref{def:F_G_field}.
	Below in this section we outline two different constructions of 
	a field in our particular cases. See also the discussion
	in \cite[Section 2.6]{BufetovMucciconiPetrov2018} for more 
	details and additional references.
\end{remark}

To visualize an $\mathfrak{F}/ \mathfrak{G}$ field, decorate edges
$(i-1,j) \to (i,j)$ of the first quadrant with specializations $u_i$,
and edges $(i,j-1)\to(i,j)$ with $v_j$. Then for each down-right
path $\varpi$, the probability of finding the sequence
$\lambda^{\varpi_k}$ is computed by 
climbing down $\varpi$ and picking up skew
functions $\mathfrak{F}(u_{i_k})$ along horizontal edges, and 
$\mathfrak{G}(v_{j_k})$ along vertical edges.
See \Cref{fig:field} for an illustration.

\begin{figure}[htpb]
	\centering
	\includegraphics{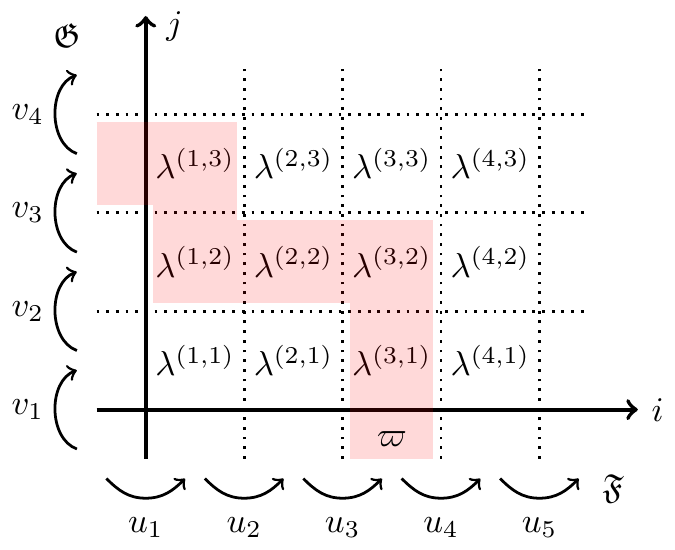}
	\caption{A down-right path (highlighted) in a random field,
	and edge decorations.}
	\label{fig:field}
\end{figure}

In this paper, particularly interesting instances of 
$\mathfrak{F}/\mathfrak{G}$ processes will be 
those arising when considering paths $\varpi$ 
of the form $(0,T)\to (N,T) \to (N,0)$.
Taking the marginal distribution of 
$\lambda^{(N,1)},\ldots, \lambda^{(N,T)}$, we 
arrive at the following definition:
\begin{definition}
	\label{def:F_G_process}
		The \emph{ascending} $\mathfrak{F} / \mathfrak{G}$ \emph{process}
		is the probability measure on the set of signatures
    \begin{equation*}
			\lambda^1 \subseteq \lambda^2 \subseteq \cdots \subseteq \lambda^N,
    \end{equation*}
		assigning to each such sequence the probability weight
    \begin{equation*}
			\frac{1}{\prod_{i=1}^{N}\prod_{j=1}^{T}\Pi(u_i;v_j)}\, \mathfrak{F}_{\lambda^1}(u_1)\,
			\mathfrak{F}_{\lambda^2 / \lambda^1}(u_2) \cdots
			\mathfrak{F}_{\lambda^N / \lambda^{N-1} }(u_N)\,
			\mathfrak{G}_{\lambda^N}(v_1, \cdots , v_T).
    \end{equation*}
\end{definition}

\subsection{\texorpdfstring{Fields based on spin $q$-Whittaker polynomials}{Fields based on spin q-Whittaker polynomials}} \label{sub:sqW_fields}

Here we specialize skew Cauchy structures to 
two cases involving spin $q$-Whittaker and spin Hall--Littlewood functions.

\begin{definition} \label{def:sqW/sHL_field}
    Let $s \in (-1,0)$ and take 
		parameters $x_i \in [-s,-s^{-1}]$, $v_j\in[0,1)$.
    The \emph{sqW/sHL field} is obtained by
		specializing $\mathfrak{F}_{\lambda/\mu}(x_i)=\mathbb{F}_{\lambda/\mu}(x_i)$
		and $\mathfrak{G}_{\lambda/\mu}(v_j)=\mathsf{F}^*_{\lambda'/\mu'}(v_j)$.

		The corresponding skew Cauchy identity is \Cref{prop:skew_Cauchy_ID_sqW_sHL}.
		One readily verifies that the sHL and sqW functions specialized 
		like this are nonnegative, which leads to probability distributions.
		Joint distributions along down-right paths in this field
		are given by \emph{sqW/sHL processes} which are specializations
		of \eqref{eq:F_G_process}.
\end{definition}

\begin{definition} \label{def:sqW/sqW_field}
    Let $s\in(-1,0)$ and take parameters $x_i,y_j\in[-s,-s^{-1}]$. 
		The \emph{sqW/sqW field} is obtained by 
		specializing $\mathfrak{F}_{\lambda/\mu}(x_i)=\mathbb{F}_{\lambda/\mu}(x_i)$ and
		$\mathfrak{G}_{\lambda/\mu}(y_j)=\mathbb{F}^*_{\lambda/\mu}(y_j)$.

		The corresponding skew Cauchy identity is \Cref{prop:skew_Cauchy_ID_sqW_sqW}.
		The range of parameters here also leads to nonnegative functions
		$\mathbb{F},\mathbb{F}^*$, thus producing probability measures.
		Joint distributions along down-right paths in the sqW/sqW field
		are given by \emph{sqW/sqW processes} which are specializations
		of \eqref{eq:F_G_process}.
\end{definition}

\begin{remark}
	Both types of fields were already defined in 
	\cite{BufetovMucciconiPetrov2018},
	though using slightly different 
	versions of the sHL and sqW functions.
\end{remark}

\subsection{Sampling a field via bijectivization}
As mentioned in 
\Cref{rmk:F_G_field_not_unique},
a random field is not determined
uniquely.
Moreover, its properties (like 
marginal stochastic dynamics)
heavily rely on a particular choice of the field's construction.
This choice can be encoded by certain Markov transition operators.
Let us return to the general formalism of 
skew Cauchy structures. 

Suppose that we have 
Markov transition operators
\begin{equation*}
    \Ufwd_{u,v}(\varkappa \to \nu \mid \lambda, \mu) 
    \qquad
    \text{and}
    \qquad
    \Ubwd_{u,v}(\nu \to \varkappa \mid \lambda, \mu),
\end{equation*}
that satisfy the \emph{reversibility condition}
\begin{equation} \label{eq:reversibility}
    \Ufwd_{u,v}(\varkappa \to \nu \mid \lambda, \mu)
    \Pi(u;v) \mathfrak{F}_{\mu / \varkappa}(u) \mathfrak{G}_{\lambda / \varkappa} (v) 
    =
    \Ubwd_{u,v}(\nu \to \varkappa \mid \lambda, \mu)
    \mathfrak{F}_{\nu / \lambda}(u) \,\mathfrak{G}_{\nu / \mu} (v).
\end{equation}
Here $\Ufwd_{u,v} (\varkappa \to \nu \mid \lambda, \mu)$ encodes the
probability of a transition $\varkappa \to \nu$ conditioned on
$\lambda,\mu$, 
whereas $\Ubwd_{u,v} (\nu \to \varkappa \mid \lambda, \mu)$ describes
the probability of the opposite move (specializations
$u,v$ are assumed positive). See Figure
\ref{fig:qudruplet_move}, left, for an illustration. 
Summing \eqref{eq:reversibility}
over both $\nu$ and $\varkappa$ and using the Markov property
of $\Ufwd, \Ubwd$, 
one recovers the skew Cauchy Identity \eqref{eq:sCI}. 
Condition
\eqref{eq:reversibility} determines $\Ubwd$ once $\Ufwd$ if given, and vice versa.

\begin{figure}[htbp]
    \centering
    \begin{minipage}{0.45\textwidth}
        \centering
        \includegraphics[width=0.6\textwidth]{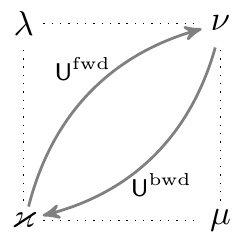}
    \end{minipage}\hfill
    \begin{minipage}{0.45\textwidth}
        \centering
        \includegraphics[width=0.9\textwidth]{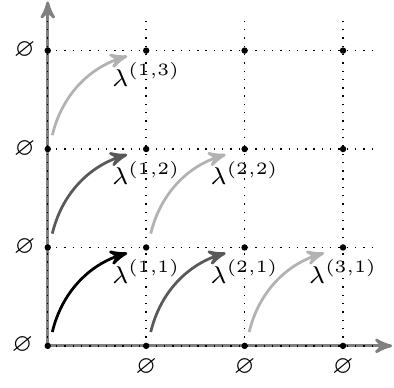}
    \end{minipage}
		\caption{Left: Forward and backward transition operators.
			Right: Construction of a random field using $\Ufwd$,
			where 
			lighter arrows correspond to moves happening later in the update.}
    \label{fig:qudruplet_move}
\end{figure}

If $\Ufwd$ is given, we can construct a random field
$\{\lambda^{(i,j)}\colon i,j \in \mathbb{Z}_{\ge 0} \}$ as 
in Figure \ref{fig:qudruplet_move}, right.
Namely, fix empty boundary conditions. 
Inductively for
$n\ge2$,
assuming we already sampled signatures $\lambda^{(i,j)}$ with
$i+j\le n$, pick signatures $\lambda^{(i',j')}$ for each
$i'+j'=n+1$ at random with probabilities
\begin{equation*}
	\Ufwd_{u_{i'},v_{j'}}
	(\lambda^{(i'-1,j'-1)}\to\lambda^{(i',j')}\mid \lambda^{(i'-1,j')}, \lambda^{(i',j'-1)} ),
\end{equation*}
independently for various pairs $(i',j')$.
We say that the field is \emph{generated} by $\Ufwd$. 
\begin{proposition}
    Assume that $\Ufwd$ is known.
		Then the procedure described right above samples an $\mathfrak{F}/\mathfrak{G}$ field. 
\end{proposition}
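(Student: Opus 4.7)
The plan is to verify the claim by induction on the maximum antidiagonal level $N$. The induction hypothesis is that once the procedure has produced $\{\lambda^{(i,j)}: i+j\le N\}$, the marginal distribution of $\{\lambda^{\varpi_k}\}_k$ along any down-right path $\varpi$ contained in $\{(i,j): i+j\le N\}$ equals the $\mathfrak{F}/\mathfrak{G}$ field formula \eqref{eq:FG_process_in_field}. The base cases $N\in\{0,1\}$ are vacuous: the boundary conditions force every signature on such a path to equal $\varnothing$, and both sides of \eqref{eq:FG_process_in_field} reduce to $1$.

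The key technical input is a consequence of the reversibility relation \eqref{eq:reversibility}. Summing \eqref{eq:reversibility} over $\varkappa$ and using that $\Ubwd_{u,v}(\nu\to\cdot\mid\lambda,\mu)$ is a stochastic kernel yields
\begin{equation*}
\sum_{\varkappa}\Ufwd_{u,v}(\varkappa\to\nu\mid\lambda,\mu)\,\mathfrak{F}_{\mu/\varkappa}(u)\,\mathfrak{G}_{\lambda/\varkappa}(v) = \frac{1}{\Pi(u,v)}\,\mathfrak{F}_{\nu/\lambda}(u)\,\mathfrak{G}_{\nu/\mu}(v).
\end{equation*}
Read as a statement about down-right paths: if a path $\tilde\varpi$ has a concave corner at $\varkappa$ with upper neighbor $\lambda$ and right neighbor $\mu$, then sampling $\nu$ conditionally via $\Ufwd$ and marginalizing out $\varkappa$ produces precisely the factors of \eqref{eq:FG_process_in_field} for the deformed path $\varpi$ in which $\varkappa$ is replaced by the convex corner $\nu$. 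The extra factor $1/\Pi(u,v)$ correctly adjusts the normalization to $Z_\varpi=Z_{\tilde\varpi}\cdot\Pi(u,v)$, reflecting the enlargement of the region below $\varpi$ by the unit cell having $\nu$ as its top-right corner.

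For the step $N\to N+1$, fix a down-right path $\varpi\subset\{i+j\le N+1\}$. Since paths consist of right and down steps, any level-$(N+1)$ vertex on $\varpi$ must appear as a convex corner. Let $\tilde\varpi\subset\{i+j\le N\}$ be obtained from $\varpi$ by replacing each such convex corner $(i,j)$ by the concave corner $(i-1,j-1)$, and apply the induction hypothesis to $\tilde\varpi$. Level-$(N+1)$ vertices not on $\varpi$ are sampled but, when summed out, contribute trivially because $\Ufwd$ is stochastic. For each convex corner $(i,j)$ of $\varpi$ at level $N+1$, the conditional sampling $\lambda^{(i-1,j-1)}\to\lambda^{(i,j)}$ via $\Ufwd_{u_i,v_j}$ followed by marginalizing out $\lambda^{(i-1,j-1)}$ is precisely the move described by the identity above. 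Since the sampling triples for different convex corners at level $N+1$ involve disjoint quadruples of vertices, these elementary marginalizations commute and can be performed in any order. The main obstacle is essentially bookkeeping: confirming that the $\Pi(u_i,v_j)$ factors accumulated from the successive flips combine exactly into $Z_\varpi$, which reduces to the geometric observation that each flip enlarges the region below the path by the single unit cell whose top-right corner is $(i,j)$.
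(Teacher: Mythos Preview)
Your proof is correct and takes essentially the same approach as the paper's: both argue by induction, using the key identity obtained by summing the reversibility relation \eqref{eq:reversibility} over $\varkappa$, which converts the concave-corner factors $\mathfrak{F}_{\mu/\varkappa}\mathfrak{G}_{\lambda/\varkappa}$ into the convex-corner factors $\mathfrak{F}_{\nu/\lambda}\mathfrak{G}_{\nu/\mu}$ together with the correct adjustment to the normalization. The paper phrases the induction as ``adding one box at a time to the area below the path'' while you organize it antidiagonal by antidiagonal (possibly flipping several disjoint boxes simultaneously), but this is only a cosmetic difference in how the same elementary move is iterated.
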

\begin{proof}
	One has to show that the distribution of the signatures
	along any down-right path is described by the corresponding 
	$\mathfrak{F}/\mathfrak{G}$ process.
	This is readily verified by induction 
	on adding one box to the area 
	below the down-right path, and using \eqref{eq:reversibility}.
	We omit the details.
\end{proof}

\subsection{Borodin--Ferrari fields}
\label{sub:BF_Fields}

Let us now describe a particular choice 
of the forward transition probabilities
which guarantees the existence of a field for
a skew Cauchy structure.
This construction is based on 
\cite{BorFerr2008DF} and follows an earlier coupling idea of 
\cite{DiaconisFill1990}.
Choose
\begin{equation}\label{eq:Borodin_Ferrari}
    \begin{split}
        &\Ufwd_{u,v}(\varkappa\to \nu \mid \, \lambda,\mu) = \frac{\mathfrak{F}_{ \nu / \lambda }(u) \mathfrak{G}_{\nu/\mu}(v) }{\Pi(u;v) \sum_{\varkappa} \mathfrak{F}_{ \mu/\varkappa }(u) \mathfrak{G}_{ \lambda/\varkappa }(v)  },
        \\
        &
        \Ubwd_{u,v}(\nu \to \varkappa \mid \, \lambda,\mu) = \frac{ \Pi(u;v) \mathfrak{F}_{ \mu/\varkappa }(u) \mathfrak{G}_{ \lambda/\varkappa }(v) }{ \sum_{\nu} \mathfrak{F}_{ \nu / \lambda }(u) \mathfrak{G}_{\nu/\mu}(v)}.
    \end{split}
\end{equation}
In general, although transition probabilities
\eqref{eq:Borodin_Ferrari} are explicit,
in particular examples
their concrete meaning 
may be
far
from transparent. 

A helpful simplification can be made if we
assume that $\mathfrak{G}$ admits expansion 
\begin{equation}
    \mathfrak{G}_{\nu/\mu}(v) = (v-v^*)^{d(\nu/\mu)} (g_{\nu/\mu} + \bigO(v-v^*)),
\end{equation}
for some fixed value $v^*$ independent of $\nu,\mu$,
coefficients $g_{\nu/\mu}$, and a
``nice'' degree
function $d$ 
such that 
$d(\nu/\nu)=0$.
Then one can consider a Poisson-type scaling limit 
of the field \eqref{eq:Borodin_Ferrari} as 
$v_j \to v^*$ for all $j$.
Under this scaling, the discrete vertical axis
becomes continuous, and the field turns into a
Markov dynamics 
$\{
\lambda^{(i,t)}\colon i\in\mathbb{Z}_{\ge 0}, t\in\mathbb{R}_{\ge 0} \}$,
where $t$ is the continuous time variable. 
The dynamics lives on sequences of signatures.

When $\mathfrak{F},\mathfrak{G}$ are Schur functions, such continuous 
processes is the \emph{push--block dynamics} introduced in \cite{BorFerr2008DF}. 

\subsection{Bijectivization of the Yang--Baxter equation} \label{sub:bijectivizations}

In many cases, skew Cauchy Identities
descend directly from the Yang--Baxter equation
(cf. \Cref{sub:sHL_func_def,sub:sqw_dual_def}).
This observation was used
in \cite{BufetovPetrovYB2017}, \cite{BufetovMucciconiPetrov2018} to provide an
explicit construction of random fields for sHL and sqW functions, which we briefly
recall here.
In general, this approach produces 
fields which \emph{differ} from the Borodin--Ferrari ones.
On the other hand, \emph{Yang--Baxter fields} by design possess
Markovian marginals.

For any given identity with positive terms 
\begin{equation} \label{eq:identity_sum}
    \sum_{a\in A} \mathsf{w}(a) = \sum_{b \in B} \mathsf{w}(b),
\end{equation}
we say that two stochastic matrices $\mathsf{p}^{\mathrm{fwd}}(a,b)$ and $\mathsf{p}^{\mathrm{bwd}}(b,a)$ 
(with indices $a\in A$, $b\in B$)
form a (stochastic)
\emph{bijectivization of identity} \eqref{eq:identity_sum}
if they satisfy the reversibility condition
\begin{equation*}
	\mathsf{p}^{\mathrm{fwd}}(a \to b)\,
	\mathsf{w}(a) = \mathsf{p}^{\mathrm{bwd}}(b \to a)\, \mathsf{w}(b)
	\qquad \text{for all $a\in A$, $b\in B$}.
\end{equation*}
A bijectivization always exists since we can take $\mathsf{p}^{\mathrm{fwd}}(a \to b) \propto \mathsf{w}(b)$.
A bijectivization is unique only when $A$ or $B$ has a single element. Another simple case is given when both $A$ and $B$ have only two elements.

\begin{example} \label{ex:bijectivization_2_elements}
    When $A=\{a_1,a_2\}$ and $B=\{b_1,b_2\}$, identity \eqref{eq:identity_sum} becomes
    \begin{equation*}
        \mathsf{w}(a_1) + \mathsf{w}(a_2) = \mathsf{w}(b_1) + \mathsf{w}(b_2).
    \end{equation*}
    In this case all stochastic bijectivizations $\mathbf{p}^{\mathrm{fwd}}$, $\mathbf{p}^{\mathrm{bwd}}$ are expressed as
    \begin{alignat*}{2}
        &
				\mathbf{p}^{\mathrm{fwd}}(a_1 \to b_1) 
				=
				\gamma, 
        \qquad
				&&
        \mathbf{p}^{\mathrm{fwd}}(a_2 \to b_1) 
				=
				\frac{ \mathsf{w}(a_2) - \mathsf{w}(b_2) + (1-\gamma)\mathsf{w}(a_1) }{\mathsf{w}(a_2)},
        \\
        &
				\mathbf{p}^{\mathrm{fwd}}(a_1 \to b_2) 
				=
				1-\gamma,
        \qquad
				&&
        \mathbf{p}^{\mathrm{fwd}}(a_2 \to b_2) 
				=
				1 - \mathbf{p}^{\mathrm{fwd}}(a_2 \to b_1),
    \end{alignat*}
    for a parameter $\gamma \in [0,1]$.
\end{example}

Let now \eqref{eq:identity_sum} be one of the Yang--Baxter equations 
\eqref{eq:YBE_RWw_star},
\eqref{eq:YBE_RWw_star_wall},
\eqref{eq:YBE_RWW_star},
\eqref{eq:YBE_RWW_star_wall} from \Cref{app:YBE}, 
corresponding to Figure \ref{fig:YBE_W_Wstar}. 
Let us rewrite them in a unified notation as
\begin{equation} \label{eq:YBE_general}
    \sum_{K} \mathsf{wl} (K \mid I,J)
    =
    \sum_{K'} 
    \mathsf{wr}(K'\mid I,J),
\end{equation}
where $I=\{i_1,i_2,i_3\}$, $J=\{j_1,j_2,j_3\}$, $K=\{k_1,k_2,k_3\}$ and $K'=\{k_1',k_2',k_3'\}$,
and weight functions $\mathsf{wl},\mathsf{wr}$ denote the 
terms in the left and right-hand sides of each of
\eqref{eq:YBE_RWw_star}--\eqref{eq:YBE_RWW_star_wall}.
Equations 
\eqref{eq:YBE_RWw_star_wall} and \eqref{eq:YBE_RWW_star_wall}
with the right boundary, by agreement, 
correspond to 
$j_1=\varnothing$.

Denote by $\mathsf{p}^{\mathrm{fwd}}_{I,J}(K\to K')$ and
$\mathsf{p}^{\mathrm{bwd}}_{I,J}(K' \to K)$ a stochastic
bijectivization of \eqref{eq:YBE_general}. Then $\mathsf{p}^{\mathrm{fwd}}_{I,J}$
is the probability of moving the cross from left to right 
(in the local configuration in \Cref{fig:YBE_W_Wstar}),
while transforming the occupation numbers $K$ into $K'$.
The probabilities $\mathsf{p}^{\mathrm{bwd}}_{I,J}(K' \to K)$ 
similarly correspond to moving the cross from right to left.
By the
conservation of paths at each vertex, once $I,J$ are fixed,
the configuration $K$ 
is completely determined specifying only one of the numbers $k_1,k_2$, or $k_3$
(and similarly for $K'$).

\medskip

Bijectivizations of the Yang--Baxter equation are building blocks of
operators $\Ufwd, \Ubwd$. Given $\varkappa,\mu \in
\mathrm{Sign}_{N}$, $\lambda,\nu \in \mathrm{Sign}_{N+1}$
we identify path configurations through two rows
of vertices as in \Cref{fig:local_forward_move}
(in the same way as in \Cref{sub:directed_paths}). 
Vertices
crossed by blue paths are assigned non dual weights $W$ 
\eqref{eq:W_left_boundary}--\eqref{eq:W_corner} whereas those in red
have dual weights $w^*$ or $W^*$. We assume that at the leftmost
column an infinite number of paths flows in the vertical direction.
The transition probability $\Ufwd(\varkappa \to \nu \mid \lambda, \mu)$
is the product of probabilities of sequential local moves
$\mathsf{p}^{\mathrm{fwd}}$
obtained dragging the cross vertex from the leftmost column to the right.
The operator
$\Ubwd$ is constructed using the opposite local moves with probabilities
$\mathsf{p}^{\mathrm{bwd}}$, starting from the $N+1$-th
column. See
\Cref{fig:local_forward_move} for an illustration.

\begin{figure}
    \centering
    \includegraphics[scale=0.95]{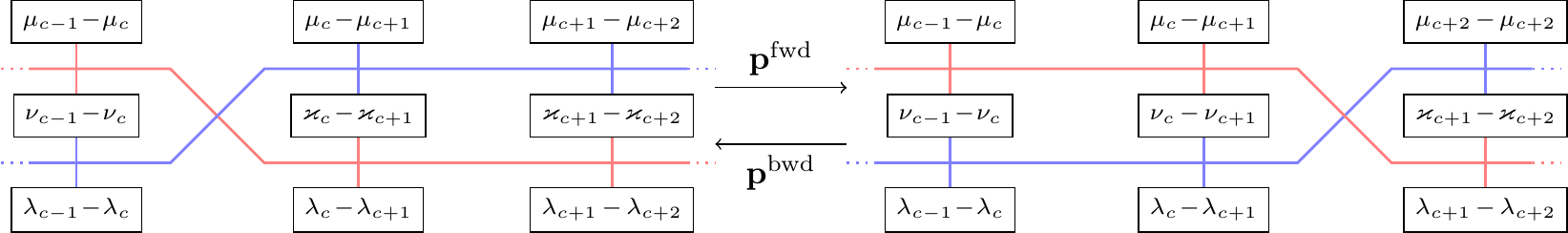}
		\caption{A local random move in a Yang--Baxter field.
			Moving the cross through the column $c$ 
		updates the value of  $\varkappa_c-\varkappa_{c+1}$ to $\nu_c - \nu_{c+1}$.}
    \label{fig:local_forward_move}
\end{figure}

\begin{proposition} \label{prop:bijectivization_samples_sqW_sHL_field}
	Let $\mathsf{p}^{\mathrm{fwd}}$ and $\mathsf{p}^{\mathrm{bwd}}$ be a stochastic bijectivization of Yang--Baxter equations \eqref{eq:YBE_RWw_star}, \eqref{eq:YBE_RWw_star_wall} for the weights $W,w^*$,
	and
  $\Ufwd$ be constructed from sequential local moves $\mathsf{p}^{\mathrm{fwd}}$. 
	Then the random field generated by $\Ufwd$ is a sqW/sHL field.
\end{proposition}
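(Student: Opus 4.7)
The plan is to reduce the claim to the reversibility condition \eqref{eq:reversibility} from \Cref{sub:skew_Cauchy_structures}, applied to the sqW/sHL skew Cauchy identity \eqref{eq:skew_Cauchy_ID_sqW_sHL}. Once we verify that $\Ufwd$ and $\Ubwd$ built from sequential local moves satisfy this global reversibility, the same inductive argument used to justify the Borodin--Ferrari construction shows that the resulting field has the correct marginals along every down-right path, hence is a sqW/sHL field in the sense of \Cref{def:sqW/sHL_field}.

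First I would unwrap the partition function interpretation. Given $\mu,\varkappa\in\mathrm{Sign}_N$ and $\lambda,\nu\in\mathrm{Sign}_{N+1}$, the quantity $\mathbb{F}_{\nu/\lambda}(x)\mathsf{F}^{*}_{\nu'/\mu'}(v)$ is the weight of the two-row path configuration (lower row with weights $W^{\leftwall},W^{\bulk},W^{\rightcorner}$, upper row with $w^{*,\leftwall},w^{*,\bulk},W^{*,\rightwall}$) pinned by $\mu,\nu,\lambda$ in the natural way, while $\Pi(x;v)\mathbb{F}_{\mu/\varkappa}(x)\mathsf{F}^{*}_{\lambda'/\varkappa'}(v)$ is the weight of the \emph{same} two-row configuration with the rows interchanged and an additional cross vertex $\mathcal{R}_{x,v,s}$ placed at the left boundary. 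By relation \eqref{eq:boundary_W_infinite_lines} and its $w^*$-counterpart, the prefactor $\Pi(x;v)=(1+vx)/(1-sv)$ is precisely the contribution of the cross when absorbed into the leftmost column.

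Next I would telescope. Dragging the cross one column to the right, the local weight redistribution is governed by the Yang--Baxter equation \eqref{eq:YBE_RWw_star} (and \eqref{eq:YBE_RWw_star_wall} at the rightmost column), written in the unified form $\sum_K \mathsf{wl}(K\mid I,J)=\sum_{K'}\mathsf{wr}(K'\mid I,J)$. Since $I,J$ are fixed by the neighbouring edges and by the incoming horizontal edges of the cross, the configuration $K$ at column $c$ is determined by the single number $\varkappa_c-\varkappa_{c+1}$, and $K'$ by $\nu_c-\nu_{c+1}$. The stochastic bijectivization gives
\begin{equation*}
	\mathsf{p}^{\mathrm{fwd}}_{I,J}(K\to K')\,\mathsf{wl}(K\mid I,J)
	=\mathsf{p}^{\mathrm{bwd}}_{I,J}(K'\to K)\,\mathsf{wr}(K'\mid I,J).
\end{equation*}
Multiplying these identities over the columns $c=1,\dots,N$, the outgoing horizontal edge at column $c$ equals the incoming horizontal edge at column $c+1$, so the product of $\mathsf{wl}$-factors collapses to the global partition function $\Pi(x;v)\mathbb{F}_{\mu/\varkappa}(x)\mathsf{F}^{*}_{\lambda'/\varkappa'}(v)$ (after the boundary absorption step above), and the product of $\mathsf{wr}$-factors collapses to $\mathbb{F}_{\nu/\lambda}(x)\mathsf{F}^{*}_{\nu'/\mu'}(v)$. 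Meanwhile, by the very definition of sequential local moves, $\prod_c \mathsf{p}^{\mathrm{fwd}}_{I_c,J_c}=\Ufwd_{x,v}(\varkappa\to\nu\mid\lambda,\mu)$ and $\prod_c \mathsf{p}^{\mathrm{bwd}}_{I_c,J_c}=\Ubwd_{x,v}(\nu\to\varkappa\mid\lambda,\mu)$. This yields exactly \eqref{eq:reversibility} for the sqW/sHL Cauchy structure.

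Finally, a short induction on down-right paths (identical to the one behind the Borodin--Ferrari construction of \Cref{sub:BF_Fields}) uses this one-step reversibility to propagate the sqW/sHL process distribution from a path $\varpi$ to any path obtained by adding a single unit box below it; starting from the empty boundary this covers all down-right paths. The main technical obstacle will be the bookkeeping in the telescoping step, specifically checking that the column-by-column product of local weights in $\mathsf{wl}$ indeed leaves behind the single cross vertex $\mathcal{R}_{x,v,s}$ at the left wall whose absorption produces the factor $\Pi(x;v)$; everything else is a mechanical consequence of the local reversibility together with the branching definition of multi-variable sqW and sHL functions.
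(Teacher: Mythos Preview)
Your proposal is correct and is precisely the argument the paper defers to via its citation of \cite[Section 6.3]{BufetovMucciconiPetrov2018} and \cite[Theorem 6.3]{BufetovPetrovYB2017}: verify the global reversibility \eqref{eq:reversibility} by telescoping the local bijectivization reversibility across columns, then propagate along down-right paths. The only bookkeeping point worth tightening is the telescoping itself: the product $\prod_c \mathsf{wl}_c$ is not literally one global weight, since the cross sits at a different position in each factor; rather, the identity $\mathsf{p}^{\mathrm{fwd}}_{c}\,\mathsf{wl}_c=\mathsf{p}^{\mathrm{bwd}}_{c}\,\mathsf{wr}_c$ says that moving the cross one step to the right (from ``left of column $c$'' to ``right of column $c$'') multiplies the global two-row weight by the ratio $\mathsf{p}^{\mathrm{bwd}}_c/\mathsf{p}^{\mathrm{fwd}}_c$, and chaining these from the left boundary (cross absorbed, weight $\Pi(x;v)\,\mathbb{F}_{\mu/\varkappa}\mathsf{F}^*_{\lambda'/\varkappa'}$) to the right boundary (cross emitted, weight $\mathbb{F}_{\nu/\lambda}\mathsf{F}^*_{\nu'/\mu'}$) gives exactly \eqref{eq:reversibility}.
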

\begin{proof}
    This is analogous to \cite[Section 6.3]{BufetovMucciconiPetrov2018}. See also \cite[Theorem 6.3]{BufetovPetrovYB2017}.
\end{proof}

\begin{proposition}
	Let $\mathsf{p}^{\mathrm{fwd}}$ and $\mathsf{p}^{\mathrm{bwd}}$ be a stochastic bijectivization of Yang--Baxter equations \eqref{eq:YBE_RWW_star}, \eqref{eq:YBE_RWw_star_wall} for the weights $W,W^*$,
	and 
	$\Ufwd$ be constructed from sequential local moves $\mathbf{p}^{\mathrm{fwd}}$. Then the random field generated by $\Ufwd$ is a sqW/sqW field.
\end{proposition}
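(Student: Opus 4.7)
The plan is to adapt the argument of Proposition \ref{prop:bijectivization_samples_sqW_sHL_field} (which itself follows \cite[Section 6.3]{BufetovMucciconiPetrov2018} and \cite[Theorem 6.3]{BufetovPetrovYB2017}) by replacing the dual sHL weights $w^*$ with the dual sqW weights $W^*$. The key observation is that Proposition \ref{prop:skew_Cauchy_ID_sqW_sqW} (the skew Cauchy identity for $\mathbb{F}$ and $\mathbb{F}^*$) is obtained by exactly the same ``cross dragging'' scheme that produces Proposition \ref{prop:skew_Cauchy_ID_sqW_sHL}, only using the Yang--Baxter equations \eqref{eq:YBE_RWW_star}, \eqref{eq:YBE_RWW_star_wall} for the pair $(W, W^*)$ in place of \eqref{eq:YBE_RWw_star}, \eqref{eq:YBE_RWw_star_wall} for $(W, w^*)$. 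This parallel structure is what lets the sampling proof transport verbatim.

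First, I would verify the \emph{global reversibility} of $\Ufwd$ and $\Ubwd$ with respect to $\mathfrak{F}_{\lambda/\mu} = \mathbb{F}_{\lambda/\mu}$ and $\mathfrak{G}_{\lambda/\mu} = \mathbb{F}^*_{\lambda/\mu}$ in the form \eqref{eq:reversibility}, with the normalization $\Pi(x;y) = \tfrac{(-sx;q)_\infty (-sy;q)_\infty}{(s^2;q)_\infty(xy;q)_\infty}$ read off from \eqref{eq:skew_Cauchy_ID_sqW_sqW}. The operators $\Ufwd, \Ubwd$ are defined as compositions of the local moves $\mathsf{p}^{\mathrm{fwd}}_{I,J}$ and $\mathsf{p}^{\mathrm{bwd}}_{I,J}$ obtained by dragging the auxiliary cross across the two-row strip between signatures $\mu, \varkappa$ (below) and $\lambda, \nu$ (above), as illustrated in \Cref{fig:local_forward_move}. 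At each column, the local reversibility
\[
	\mathsf{p}^{\mathrm{fwd}}_{I,J}(K \to K')\, \mathsf{wl}(K\mid I,J) = \mathsf{p}^{\mathrm{bwd}}_{I,J}(K' \to K)\, \mathsf{wr}(K'\mid I,J)
\]
is precisely the defining property of a bijectivization of the Yang--Baxter equation \eqref{eq:YBE_RWW_star}. Telescoping these local identities across all columns from left to right, and noting that the leftmost cross insertion / rightmost cross removal at the right boundary contribute the factor $\Pi(x;y)$ in exactly the same way as in the proof of Proposition \ref{prop:skew_Cauchy_ID_sqW_sqW} (using \eqref{eq:boundary_W_infinite_lines} on the left boundary and \eqref{eq:sum_R_matrix} / \eqref{eq:YBE_RWW_star_wall} on the right), yields the global reversibility \eqref{eq:reversibility}.

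Given global reversibility, the second step is a straightforward induction on down-right paths. Fix a down-right path $\varpi$; we show by induction on the number of lattice squares lying strictly below $\varpi$ that the marginal distribution along $\varpi$ in the field generated by $\Ufwd$ coincides with \eqref{eq:FG_process_in_field} for $\mathbb{F}, \mathbb{F}^*$. The base case is the staircase path hugging the axes, where only empty signatures appear and the statement is trivial. The inductive step replaces a convex corner of $\varpi$ (containing the four signatures $\mu, \lambda, \varkappa, \nu$) by a concave corner, inserting a single square; the new marginal is obtained from the old one by applying $\Ufwd_{x,y}(\varkappa \to \nu \mid \lambda, \mu)$, and reversibility together with the local Cauchy identity \eqref{eq:skew_Cauchy_ID_sqW_sqW} (summed over $\nu$ or $\varkappa$) preserves the sqW/sqW weight structure.

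The main obstacle is the right boundary behavior. Unlike in the sqW/sHL case, here both families of weights allow arbitrarily many paths per edge, and the right-wall weight $W^{*,\rightwall}$ is a delta-function forcing $i_1 = j_1 + i_2$. I would therefore need to check carefully that dragging the cross across the rightmost vertex, with these degenerate weights in place, yields the normalization factor $\Pi(x;y)$ of \eqref{eq:skew_Cauchy_ID_sqW_sqW} rather than the $(1+vx)/(1-sv)$ appearing in the sHL setting. This is controlled by \eqref{eq:YBE_RWW_star_wall} combined with the sum-to-one identity \eqref{eq:sum_R_matrix} for the R-matrix $R_{x,y}$, exactly as used in the proof of the symmetry of $\mathbb{F}^*$ and in Proposition \ref{prop:skew_Cauchy_ID_sqW_sqW}. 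Once this right-boundary step is in hand, the rest of the argument is mechanical.
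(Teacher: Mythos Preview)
Your proposal is correct and follows the same approach as the paper, which simply states ``This is again analogous to \cite[Section 6.3]{BufetovMucciconiPetrov2018}.'' You have merely unpacked that reference in detail, verifying local-to-global reversibility and the inductive propagation along down-right paths; you also correctly use the wall equation \eqref{eq:YBE_RWW_star_wall} for the pair $(W,W^*)$ rather than \eqref{eq:YBE_RWw_star_wall}, which appears to be a typo in the statement.
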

\begin{proof}
    This is again analogous to \cite[Section 6.3]{BufetovMucciconiPetrov2018}.
\end{proof}

By the very construction, 
we see that for any fixed $c\ge1$, 
the update
$(\varkappa_1,\ldots,\varkappa_c )\to(\nu_1,\ldots,\nu_c )$
is independent
of $\varkappa_i,\mu_i,\lambda_i$ for all $i\ge c+1$.
Therefore, we have:
\begin{proposition} \label{prop:marginal_markov}
	Let $\{\lambda^{(i,j)}\colon i,j\in\mathbb{Z}_{j\in\mathbb\ge 0}\}$ be a Yang--Baxter random field
	as above.
	For any $c\in \mathbb{Z}_{\ge 1}$, the
	marginal process $\{(\lambda_1^{(i,j)}\ge\cdots \ge \lambda_c^{(i,j)})
	:i,j\in\mathbb{Z}_{\ge 0}\}$ is a Markov process.
\end{proposition}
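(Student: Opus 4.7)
The plan is to derive the Markov property of the marginal from the column-by-column locality of the single-site kernel $\Ufwd$ that is asserted verbally in the paragraph preceding the proposition. Concretely, I would first formalize the following \emph{locality claim}: for every $c \ge 1$, if $\nu$ is drawn from $\Ufwd_{u,v}(\varkappa \to \cdot \mid \lambda, \mu)$, then the conditional distribution of $(\nu_1, \ldots, \nu_c)$ depends on $(\varkappa, \lambda, \mu)$ only through the first-$c$-coordinate projections $(\varkappa_1, \ldots, \varkappa_c)$, $(\lambda_1, \ldots, \lambda_c)$, $(\mu_1, \ldots, \mu_c)$.

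To justify locality, I would unpack the sequential cross-dragging construction of $\Ufwd$ described in \Cref{sub:bijectivizations} and illustrated in \Cref{fig:local_forward_move}. The cross enters at the left boundary and is moved through columns $1, 2, \ldots$ in succession; the $k$-th step applies a bijectivization $\mathsf{p}^{\mathrm{fwd}}_{I_k, J_k}$ that reads only the column-$k$ vertex data together with the cross state inherited from column $k-1$, and it updates the path-count on the vertical edge at column $k$, which encodes the assignment $\varkappa_k - \varkappa_{k+1} \mapsto \nu_k - \nu_{k+1}$. By induction on $k$, everything determined after the first $k$ moves --- both the cross state and the updated coordinates $(\nu_1, \ldots, \nu_k)$ --- is measurable with respect to $(\varkappa_1, \ldots, \varkappa_k, \lambda_1, \ldots, \lambda_k, \mu_1, \ldots, \mu_k)$ and the random choices used. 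Taking $k = c$ yields the locality claim.

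With locality in hand, the Markov property of the marginal is essentially immediate. The Yang--Baxter random field is sampled by applying $\Ufwd_{u_i, v_j}$ independently at each site $(i,j)$ to the triple $(\lambda^{(i-1, j-1)}, \lambda^{(i-1, j)}, \lambda^{(i, j-1)})$, starting from empty boundary data. By locality, the conditional distribution of $(\lambda_1^{(i,j)}, \ldots, \lambda_c^{(i,j)})$ given the whole sigma-algebra of previously drawn signatures depends only on the first-$c$-projections of its three parent signatures. Hence the truncated field $\{(\lambda_1^{(i,j)}, \ldots, \lambda_c^{(i,j)}) : i, j \in \mathbb{Z}_{\ge 0}\}$ is itself generated by a Markov kernel on first-$c$-projections, which is the desired Markov property.

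The main obstacle I expect is the rigorous verification of locality, since the vertex data at column $c$ formally involves $\varkappa_{c+1}$ through the difference $\varkappa_c - \varkappa_{c+1}$. One has to check that the aggregated effect of the first $c$ cross-dragging moves on $(\nu_1, \ldots, \nu_c)$ is genuinely independent of $\varkappa_{c+1}, \mu_{c+1}, \lambda_{c+1}$, via a conservation-of-paths argument internal to the cross. This amounts to a careful matching of the signature-to-occupation-number dictionary of \Cref{sub:directed_paths} with the bijectivization data of \Cref{sub:bijectivizations}; once done, the Markov property of the marginal is automatic.
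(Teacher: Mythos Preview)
Your proposal is correct and is exactly the argument behind the paper's one-line citation to \cite[Proposition~6.2]{BufetovPetrovYB2017}; the locality claim you formalize is precisely what the paper asserts verbally in the paragraph preceding the proposition. Your concern about $\varkappa_{c+1}$ leaking in dissolves once the column indexing is unwound: determining $(\nu_1,\ldots,\nu_c)$ requires the cross to pass only through the boundary column and bulk columns $1,\ldots,c-1$ (the boundary move fixes $\nu_1$, and the move at bulk column $k$ updates $\varkappa_k-\varkappa_{k+1}\mapsto\nu_k-\nu_{k+1}$), and the vertex data at those columns involve only the differences $\mu_k-\mu_{k+1},\,\varkappa_k-\varkappa_{k+1},\,\lambda_k-\lambda_{k+1}$ for $k\le c-1$ together with the boundary horizontal edges (index $1$), so no separate conservation-of-paths argument internal to the cross is needed.
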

\begin{proof}
	This is \cite[Proposition 6.2]{BufetovPetrovYB2017}.
\end{proof}

In the simplest case $c=1$, transition probabilities of the 
one-dimensional marginal field can be written down explicitly:

\begin{proposition} \label{prop:general_marginal_YBfield}
    Let $\{\lambda^{(i,j)}\colon i,j\in\mathbb{Z}_\ge 0\}$ be a 
		random field generated by $\Ufwd$ constructed from bijectivization
		of the Yang--Baxter equation. Let $\{ \lambda^{(i,j)}_1 \colon i,j
		\in \mathbb{Z}_{\ge 0} \}$ be the first row marginal process. Then
		for all $i,j\ge1$ we have
    \begin{equation} \label{eq:lambda_1_around_a_vertex}
        \mathrm{Prob}\{ \lambda^{(i,j)}_1=n \mid \lambda^{(i,j-1)}_1=m,\lambda^{(i-1,j)}_1=\ell,\lambda^{(i-1,j-1)}_1=k \} = \mathbf{L}_{u_i,v_i}(m-k,\ell-k;n-\ell,n-m),
    \end{equation}
		for all $n,m,k,n \ge 0$, where $\mathbf{L}$ is the stochastic vertex weight
    \begin{equation} \label{eq:general_stochastic_L}
    \mathbf{L}_{u,v} (j_2,j_1;k_1',k_2') = \frac{ \mathsf{wr}_{\{0,0,\infty\},\{j_1,j_2,\infty\}} (\{k_1',k_2',\infty\})}{ \sum_{k_1,k_2} \mathsf{wl}_{\{0,0,\infty\},\{j_1,j_2,\infty\}} (\{k_1,k_2,\infty\})}.
\end{equation}
\end{proposition}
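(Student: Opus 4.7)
The plan is to combine the Markov property from \Cref{prop:marginal_markov} (applied with $c=1$) with a direct analysis of the bijectivization step in the cross-dragging construction that determines the first coordinate $\nu_1$.

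First, by \Cref{prop:marginal_markov}, the first-row process $\{\lambda_1^{(i,j)}\}$ is Markov, so it suffices to compute, within a single local update $(\varkappa,\mu,\lambda)\to\nu$ produced by sequential cross-dragging, the conditional probability of $\nu_1 \coloneqq \lambda_1^{(i,j)}$ given $\varkappa_1 = k$, $\lambda_1 = \ell$, $\mu_1 = m$. Following the same argument used to prove \Cref{prop:marginal_markov} (cf.\ \cite[Proposition 6.2]{BufetovPetrovYB2017}), I would argue that this conditional probability is determined by a single bijectivization move, namely the one that drags the cross past the leftmost column of the two-row strip. In that move the top and bottom left-boundary vertices are updated, and after summing over all subsequent cross-dragging steps (which only rearrange the lower coordinates $\nu_{\ge 2}$), the distribution of $\nu_1$ depends only on the outcome of this first move and on the first-coordinate data $\varkappa_1,\mu_1,\lambda_1$.

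Next, using \eqref{eq:boundary_W_infinite_lines} together with the analogous identity for the dual weights $w^*$ and $W^*$, I would replace each left-boundary vertex in the two-row strip by a bulk vertex carrying an infinite number of incoming vertical arrows. Under this substitution, the cross-dragging step at the leftmost column becomes an instance of the bijectivization of the bulk Yang--Baxter equation \eqref{eq:YBE_general} with input data $I=\{0,0,\infty\}$ and $J=\{j_1,j_2,\infty\}$: the two zeros in $I$ record the absence of horizontal paths to the left of the leftmost column before the cross has entered, and the $\infty$ entries come from the boundary substitution. A direct inspection of horizontal edge occupations then gives $j_1=\ell-k$ and $j_2=m-k$ in the ``before'' configuration and $k_1'=n-\ell$, $k_2'=n-m$ in the ``after'' configuration, where $n=\nu_1$. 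By the defining property of a stochastic bijectivization, the conditional probability of the outputs $(k_1',k_2')$ given the inputs equals the ratio
\[
\mathsf{wr}_{\{0,0,\infty\},\{j_1,j_2,\infty\}}(\{k_1',k_2',\infty\})\;\Big/\;\sum_{k_1,k_2}\mathsf{wl}_{\{0,0,\infty\},\{j_1,j_2,\infty\}}(\{k_1,k_2,\infty\}),
\]
which coincides with \eqref{eq:general_stochastic_L} evaluated at $(m-k,\ell-k;n-\ell,n-m)$, and gives the claimed transition.

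The main technical obstacle is the claim in the first paragraph that, after summation over the subsequent cross-dragging moves, the leftmost step alone determines the distribution of $\nu_1$ (conditional on $\varkappa_1$). Making this precise requires a careful tracking of paths through the two-row strip and an appeal to the sum-to-one property of the cross weights (used in the same way as in the proof of \Cref{prop:sqW_are_symmetric}) to telescope the contributions of the subsequent bijectivization moves. Once this reduction is in hand, the identification of edge occupations with signature increments and the direct use of the bijectivization formula complete the proof.
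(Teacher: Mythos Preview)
Your approach is essentially the same as the paper's: both reduce the update of $\nu_1$ to the single random move at the leftmost column, exploit the infinitely many vertical arrows there, and then identify the edge occupations $j_1,j_2,k_1',k_2'$ with the signature differences.

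Two points deserve comment. First, your justification ``by the defining property of a stochastic bijectivization'' is not quite right as stated. The reversibility axiom $\mathsf{p}^{\mathrm{fwd}}(K\to K')\,\mathsf{wl}(K)=\mathsf{p}^{\mathrm{bwd}}(K'\to K)\,\mathsf{wr}(K')$ does \emph{not} by itself force $\mathsf{p}^{\mathrm{fwd}}(K\to K')=\mathsf{wr}(K')/\sum_{K}\mathsf{wl}(K)$; that is merely one particular bijectivization (the ``independent'' one, cf.\ the remark after \eqref{eq:identity_sum}). The paper's argument is instead that $\mathbf{L}$ \emph{is} a valid bijectivization at the leftmost column, and this is the one used in the construction; the footnote records that it is in fact the unique bijectivization there for the relevant weights. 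What makes your ratio formula correct regardless is that the initial cross state $K$ at the leftmost column is not determined by $\varkappa$ but is effectively distributed proportionally to $\mathsf{wl}(K)$; summing reversibility over $K$ then gives $\mathsf{wr}(K')$ as the (unnormalized) law of $K'$. You should make this step explicit rather than invoking the bijectivization axiom alone.

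Second, the ``main technical obstacle'' you flag is not one. The sequential cross-dragging construction determines $\nu_1$ at the very first move (this is exactly the content of \Cref{prop:marginal_markov} for $c=1$), and each subsequent $\mathsf{p}^{\mathrm{fwd}}$ is a stochastic matrix, so summing over the remaining moves contributes a factor of $1$. No telescoping via cross-weight identities (as in \Cref{prop:sqW_are_symmetric}) is needed; that proof uses the Yang--Baxter equation rather than any sum-to-one property, so the analogy is misplaced.
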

Note that interlacing
		implies that $k\le m\le n$, $k\le \ell\le n$,
		so the arguments of $\mathbf{L}_{u_i,v_i}$ in \eqref{eq:lambda_1_around_a_vertex} are all
		nonnegative.
\begin{proof}[Proof of \Cref{prop:general_marginal_YBfield}]
		This is proven in \cite[Section 6.4]{BufetovMucciconiPetrov2018}
		and we briefly reproduce the argument here. 
		The
		update $\lambda_1^{(i-1,j-1)} \to \lambda_1^{(i,j)}$, once
		$\lambda_1^{(i-1,j)},\lambda_1^{(i,j-1)}$ 
		are fixed, is determined only by a single random move at the
		leftmost column of vertices.
		By construction, 
		the vertical direction at the leftmost column
		has infinitely many paths.
		The corresponding Yang--Baxter equation is
    \begin{equation} \label{eq:YBE_general_leftmost}
    \sum_{k_1,k_2} \mathsf{wl} (\{k_1,k_2,\infty\} \mid \{0,0,\infty\},\{j_1,j_2,\infty\})
    =
    \sum_{k_1',k_2'} 
    \mathsf{wr}(\{k_1',k_2',\infty\}\mid\{0,0,\infty\},\{j_1,j_2,\infty\}).
\end{equation}
		This implies that taking
    \begin{equation*}
			\mathbf{p}^{\mathrm{fwd}}_{\{0,0,\infty\},\{j_1,j_2,\infty \}}
			\left( \{k_1,k_2,\infty\} \to \{k_1',k_2',\infty\} \right) =
			\mathbf{L}_{u,v}(j_2,j_1;k_2',k_1')
    \end{equation*}
		indeed produces a bijectivization.\footnote{This bijectivization
		is in fact unique for our choices of weights
		(this follows similarly to \cite{BufetovMucciconiPetrov2018}). However,
		we do not need this fact.}
		Here $u,v$ denote generic spectral parameters of weights appearing in the Yang--Baxter equation.
		Recall that occupation numbers are related to signatures as
		\begin{equation*}
			\begin{array}{lcl}
				j_1=\lambda_1^{(i-1,j)}-\lambda_1^{(i-1,j-1)},&\qquad 
				&j_2=\lambda_1^{(i,j-1)}-\lambda_1^{(i-1,j-1)},
				\\
				\rule{0pt}{15pt}
        k_1'=\lambda_1^{(i,j)}-\lambda_1^{(i,j-1)},
        &\qquad 
        &k_2'=\lambda_1^{(i,j)}-\lambda_1^{(i-1,j)}.
				
			\end{array}
    \end{equation*}
		This completes the proof.
\end{proof}

The fact that the sqW functions are parametrized by signatures with
specified number of rows also allows to access the
random dynamics of \emph{last rows} of a field by writing down
explicit bijectivizations. 
In particular, the evolution of 
$\{ \lambda_i^{(i,j)} \colon i,j \ge 0 \}$ is related to the Yang--Baxter
equations \eqref{eq:YBE_RWw_star_wall}, \eqref{eq:YBE_RWW_star_wall}
corresponding to configurations depicted in Figure
\ref{fig:YBE_W_Wstar} (b). 

\begin{remark}
		The construction of a random field using stochastic
		bijectivizations \emph{does not guarantee} that the evolution of
		last rows is autonomous. 
		This contrasts with
		the fact that the first rows form autonomous Markov marginal
		processes by the very construction of Yang--Baxter fields
		(\Cref{prop:marginal_markov}).
		In
		\Cref{thm:sqW_sHL_last_row,thm:sqW_sqW_last_row}
		below we show that the marginals $\{
		\lambda_i^{(i,j)} \colon i,j \ge 0 \}$ 
		of sqW/sHL and sqW/sqW fields, respectively, 
		are
		in fact autonomous for a 
		particular
		bijectivization we construct.
\end{remark}

\section{Schur case: Robinson--Schensted--Knuth from Yang--Baxter}
\label{sec:rsk_from_YB}

In this section, as a simpler illustration, we consider the 
degeneration of the vertex weights and the Yang--Baxter
equation obtained by setting $q=s=0$,
and show how this produces (via bijectivization) 
the classical
Robinson--Schensted--Knuth (RSK) row insertion 
algorithm \cite{Knuth1970}, \cite{fulton1997young}, \cite{Stanley1999}.
We would obtain a ``local''
description of the RSK insertion 
in terms of ``toggle'' operations.
We refer to, e.g., \cite{Pak2001hook},
\cite{kirillov1995groups}, \cite{fomin1995schensted} and also
to the recent notes \cite{hopkins2014rsk} for this description.

We consider the $q=s=0$ degeneration
of the Yang--Baxter equations 
(\Cref{prop:YBE_sqW_sqW,prop:YBE_sqW_sqW_corner})
proving the sqW/sqW skew Cauchy identity
(\Cref{prop:skew_Cauchy_ID_sqW_sqW}).
Note that for $q=s=0$, the spin $q$-Whittaker polynomials become the 
Schur polynomials.
The Yang--Baxter equations we need are illustrated
in \Cref{fig:YBE_W_Wstar}.
In fact, in the Schur degeneration the 
corner Yang--Baxter equation 
\Cref{prop:YBE_sqW_sqW_corner}
illustrated in 
\Cref{fig:YBE_W_Wstar}\,{\rm{(b)}}
is the same as the usual one, 
and so we only need the equation from 
\Cref{prop:YBE_sqW_sqW}.

The weights entering the Yang--Baxter equation degenerate as follows:
\begin{equation}
	\begin{split}
		W^{\bulk}_{x,0}(i_1,j_1;i_2, j_2) 
		&=
		\mathbf{1}_{i_1 + j_1 = i_2 + j_2} \, \mathbf{1}_{i_1 \geq j_2}\,  x^{j_2},
		\\
		W^{*,\bulk}_{y,0}(i_1,j_1;i_2,j_2)
		&=
		\mathbf{1}_{i_1 + j_2 = i_2 + j_1} \, \mathbf{1}_{i_2 \geq j_2}\,  y^{j_2},
		\\
		\mathbb{R}_{x,y,s}(i_1,j_1;i_2,j_2)
		&=
		\mathbf{1}_{i_2 + j_1 = i_1 + j_2 }\,
		(xy)^{\min(j_1,j_2)}.
	\end{split}
	\label{eq:from_YBE_to_RSK}
\end{equation}
Equation \eqref{eq:YBE_RWW_star} thus reads for all
fixed $i_1,i_2,i_3,j_1,j_2,j_3\in \mathbb{Z}_{\ge0}$:
\begin{multline}
	\label{eq:Schur_YBE}
	\sum_{k_1,k_2,k_3\ge0}
	\mathbf{1}_{\textnormal{arrow preservation}}
	\,
	\mathbf{1}_{k_3\ge \max(j_1,j_2)}
	\,
	x^{j_2}y^{j_1}(xy)^{\min(k_1,k_2)}
	\\=
	\sum_{k_1',k_2',k_3'\ge0}
	\mathbf{1}_{\textnormal{arrow preservation}}
	\,
	\mathbf{1}_{i_3\ge k_2'}
	\mathbf{1}_{j_3\ge k_1'}
	(xy)^{\min(j_1,j_2)}
	x^{k_2'}y^{k_1'},
\end{multline}
where by ``arrow preservation''
we mean the intersection of 
all the conditions of the form $a_1+b_1=a_2+b_2$ in 
\eqref{eq:from_YBE_to_RSK}
which the indices $k_1,k_2,k_3$ and $k_1',k_2',k_3'$ must satisfy.
In particular, by arrow preservation we have
\begin{equation*}
	k_2=i_2+k_1-i_1,\qquad k_3=i_3+j_1-k_1,
	\qquad 
	k_2'=j_2+k_1'-j_1,
	\qquad 
	k_3'=i_1+j_3-k_1',
\end{equation*}
and thus the summands in the left- and right-hand sides of
\eqref{eq:Schur_YBE} 
are indexed only by $k_1$ or $k_1'$, respectively.
Equation \eqref{eq:Schur_YBE} admits a 
bijective proof (or, in terms of \Cref{sec:dynamics_on_arrays}, 
a bijectivization which is deterministic):
\begin{lemma}
	\label{lemma:Schur_YBE}
	Setting
	\begin{equation}
		\label{eq:Schur_YBE_bijection}
		k_1'=j_1-\min(j_1,j_2)+\min(k_1,i_2+k_1-i_1)
	\end{equation}
	produces a 
	bijection
	between the terms in both sides of 
	\eqref{eq:Schur_YBE}.
\end{lemma}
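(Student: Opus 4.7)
The plan is to verify \eqref{eq:Schur_YBE_bijection} directly by treating it as a relabeling of single-index sums on both sides of \eqref{eq:Schur_YBE}: after using the arrow-preservation conditions to eliminate $k_2,k_3$ and $k_2',k_3'$, I would check that the prescribed affine formula (i)~preserves the monomial weight $x^?y^?$, and (ii)~maps the admissible range of $k_1$ bijectively onto that of $k_1'$. Throughout, write $(a)_+\coloneqq\max(a,0)$ and note the two identities $j_1-\min(j_1,j_2)=(j_1-j_2)_+$ and $\min(k_1,i_2+k_1-i_1)=k_1-(i_1-i_2)_+$, so \eqref{eq:Schur_YBE_bijection} reads
\begin{equation*}
	k_1'=k_1+(j_1-j_2)_+-(i_1-i_2)_+.
\end{equation*}

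For weight matching: on the left, $\min(k_1,k_2)=k_1-(i_1-i_2)_+$, so the summand equals $x^{j_2+k_1-(i_1-i_2)_+}y^{j_1+k_1-(i_1-i_2)_+}$; on the right, using $\min(j_1,j_2)=j_1-(j_1-j_2)_+$ and $k_2'=j_2+k_1'-j_1$, the summand equals $x^{j_2+k_1'-(j_1-j_2)_+}y^{j_1+k_1'-(j_1-j_2)_+}$. These coincide precisely under the displayed shift.

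For range matching: nonnegativity of $k_1,k_2,k_3$ together with the indicator $k_3\ge\max(j_1,j_2)$ yields $(i_1-i_2)_+\le k_1\le i_3-(j_2-j_1)_+$, while nonnegativity of $k_1',k_2',k_3'$ together with $k_2'\le i_3$ and $k_1'\le j_3$ yields $(j_1-j_2)_+\le k_1'\le\min(j_3,\,i_3+j_1-j_2)$ (the constraint $k_1'\le i_1+j_3$ is redundant since $i_1\ge 0$). The affine shift sends the lower endpoint $k_1=(i_1-i_2)_+$ to $k_1'=(j_1-j_2)_+$ automatically, and sends the upper endpoint $k_1=i_3-(j_2-j_1)_+$ to $k_1'=i_3+j_1-j_2-(i_1-i_2)_+$ (using $(j_1-j_2)_+-(j_2-j_1)_+=j_1-j_2$).

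The main obstacle is checking that $i_3+j_1-j_2-(i_1-i_2)_+$ equals $\min(j_3,\,i_3+j_1-j_2)$. For this I would use the global arrow conservation
\begin{equation*}
	i_1+j_2+j_3=i_2+i_3+j_1,
\end{equation*}
which is forced by summing the three vertex-level arrow-preservation relations in the Yang--Baxter configuration (each internal edge being shared by two vertices, the internal labels $k_1,k_2,k_3$ cancel out, leaving a relation among the external labels alone). Splitting into cases: if $i_1\le i_2$, then $(i_1-i_2)_+=0$ and the conservation gives $j_3=i_3+j_1-j_2+(i_2-i_1)\ge i_3+j_1-j_2$, so the minimum is $i_3+j_1-j_2$ and the equality holds; if $i_1>i_2$, then $(i_1-i_2)_+=i_1-i_2$ and the conservation gives $j_3=i_3+j_1-j_2-(i_1-i_2)\le i_3+j_1-j_2$, so the minimum is $j_3$ and the equality again holds. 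This confirms that \eqref{eq:Schur_YBE_bijection} is a bijection and completes the proof.
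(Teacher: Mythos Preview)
Your proof is correct and follows the same approach as the paper: verify that the monomial weights match under the prescribed map, then check that the admissible index ranges correspond. The paper simply declares the latter check ``straightforward'' and omits it, whereas you carry it out in full by rewriting the shift as $k_1'=k_1+(j_1-j_2)_+-(i_1-i_2)_+$, identifying both admissible ranges as integer intervals, and using the global arrow conservation $i_1+j_2+j_3=i_2+i_3+j_1$ to match the upper endpoints. This is exactly the content the paper leaves implicit, so your argument is a strictly more detailed version of the same proof.
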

\begin{proof}
	By \eqref{eq:Schur_YBE_bijection}
	we see that 
	the powers of $y$ in the corresponding terms match.
	The powers of $x$ match, too:
	\begin{equation*}
		j_2+\min(k_1,i_2+k_1-i_1)
		=
		\min(j_1,j_2)+
		j_2+k_1'-j_1,
	\end{equation*}
	where $k_1'$ is given by \eqref{eq:Schur_YBE_bijection}.
	It remains to check that if $k_1$ is such that the 
	product of indicators in the left-hand
	side of \eqref{eq:Schur_YBE} is nonzero, then the same
	holds for $k_1'$ in the right-hand side. 
	This check is straightforward.
\end{proof}

Let us now interpret a sequence of bijective Yang--Baxter
transformations as a row RSK insertion.
Fix signatures $\lambda,\mu$ and use \Cref{lemma:Schur_YBE}
to construct a bijection between the sets
\begin{equation}
	\label{eq:Schur_YBE_RSK_sets_for_bijection}
	\{\varkappa\colon \varkappa\prec\lambda,\ \varkappa\prec \mu \}\times \mathbb{Z}_{\ge0}
	\longleftrightarrow
	\left\{ \nu\colon \nu \succ\lambda,\ \nu\succ\mu \right\}.
\end{equation}
This bijection would correspond to a local move in the 
``Fomin growth diagram'' \cite{fomin1995schensted}
interpretation of the RSK.

Interpret $\mu,\varkappa,\lambda$ as a path configuration
as in 
\Cref{fig:local_forward_move}.
The numbers of paths through vertical edges 
are then equal to 
$\mu_c-\mu_{c+1}$, 
$\varkappa_c-\varkappa_{c+1}$,
and $\lambda_c-\lambda_{c+1}$.
One can also check that the horizontal edges carry 
$\mu_c-\varkappa_c$ and $\lambda_c-\varkappa_c$ paths.

Attach a cross at the leftmost boundary of the path configuration. This 
cross is not uniquely determined since at the leftmost
boundary (with $i_3=j_3=\infty$, $i_1=i_2=0$) the Yang--Baxter
equation
\eqref{eq:Schur_YBE}
takes the form
\begin{equation*}
	x^{j_2}y^{j_1}\sum_{k=0}^{\infty}
	(xy)^{k}
	=
	(xy)^{\min (j_1,j_2)}x^{j_2-j_1}
	\sum_{k_1'\ge0}
	\mathbf{1}_{\textnormal{arrow preservation}}\,
	x^{k_1'}y^{k_1'}.
\end{equation*}
Arrow preservation in the right-hand side here 
means that there exists an arrow configuration 
with the given $k_1'$.

Selecting an arbitrary $k$ in the left-hand side 
is equivalent to selecting an element
of $\mathbb{Z}_{\ge0}$
in the left-hand side of \eqref{eq:Schur_YBE_RSK_sets_for_bijection}.
Then we set based on the Yang--Baxter equation (see \Cref{lemma:Schur_YBE}):
\begin{equation*}
	k_1'=j_1-\min(j_1,j_2)+k.
\end{equation*}
In terms of signatures, $k_1'$
corresponds to the new number of paths
on the horizontal edge, and so the above equation means that
\begin{equation*}
	\nu_1-\mu_1=\lambda_1-\varkappa_1-\min(\lambda_1-\varkappa_1,\mu_1-\varkappa_1)+k,
\end{equation*}
that is,
\begin{equation}
	\label{eq:RSK_toggle_1}
	\nu_1=k+\max(\lambda_1,\mu_1).
\end{equation}

After dealing with the leftmost boundary, we move the cross
one by one to the right, updating each 
$\varkappa_c-\varkappa_{c+1}$ to $\nu_c-\nu_{c+1}$, where $c\ge2$.
At each step the signatures correspond to the path numbers as
\begin{align*}
	j_1&=\lambda_{c+1}-\varkappa_{c+1},\qquad 
	j_2=\mu_{c+1}-\varkappa_{c+1},\qquad 
	k_1=\lambda_c-\varkappa_c
	,
	\qquad 
	k_2=i_2+k_1-i_1=\mu_c-\varkappa_c
	,
	\\
	k_1'&=\nu_{c+1}-\mu_{c+1}
	,\qquad 
	k_2'=j_2+k_1'-j_1=\nu_{c+1}-\lambda_{c+1}.
\end{align*}
The local bijection of \Cref{lemma:Schur_YBE}
then leads to 
\begin{align*}
	k_1'
	&=
	j_1-\min(j_1,j_2)+\min(k_1,k_2)
	\\&=
	\lambda_{c+1}-\varkappa_{c+1}-\min(\lambda_{c+1}-\varkappa_{c+1},\mu_{c+1}-\varkappa_{c+1})+
	\min(\lambda_c-\varkappa_c,\mu_c-\varkappa_c)
	\\&=\lambda_{c+1}-\min(\lambda_{c+1},\mu_{c+1})-\varkappa_c+\min(\lambda_c,\mu_c),
\end{align*}
which leads to 
\begin{equation}
	\label{eq:RSK_toggle_2}
	\nu_{c+1}=\max\left( \lambda_{c+1},\mu_{c+1} \right)+
	\min(\lambda_c,\mu_c)-\varkappa_c.
\end{equation}

Formulas \eqref{eq:RSK_toggle_1}--\eqref{eq:RSK_toggle_2}
for $\nu$ in terms of $\varkappa$ 
provide the local RSK bijection between the two sets
\eqref{eq:Schur_YBE_RSK_sets_for_bijection}.
Moreover, these formulas 
have the ``toggle'' form, e.g., see \cite{hopkins2014rsk}.

Therefore, we see that in the Schur degeneration 
the Yang--Baxter equation of \Cref{prop:YBE_sqW_sqW}
produces a bijection, and this bijection coincides
with the ``toggle'' bijection
in the local description of the 
classical Robinson--Schensted--Knuth row insertion.

\section{\texorpdfstring{Marginals of spin $q$-Whittaker fields}{Marginals of spin q-Whittaker fields}} \label{sec:marginals_sqW_processes}

In this section we study two random fields of signatures defined in
\Cref{sub:sqW_fields} based on sqW functions.
We identify their Markov marginals 
corresponding to the first and last coordinates $\lambda^{(i,j)}_1$ and
$\lambda^{(i,j)}_i$. These are matched with stochastic vertex models or
particle dynamics introduced in 
\cite{Povolotsky2013}, \cite{CorwinPetrov2015}, \cite{CMP_qHahn_Push}. These results
extend the characterization of marginals of the
$q$-Whittaker processes given in \cite{MatveevPetrov2014} by 
adding the spin parameter $s$ into the picture.
The matchings are summarized in the table in \Cref{fig:matchings_table}.
\begin{figure}[htbp]
	\centering
	\begin{tabular}{c|l|l}
		& first row $\lambda^{(i,j)}_{1}$ & last row $\lambda^{(i,j)}_i\phantom{\underbrace{asd}}$
		\\
		\hline
		sqW/sHL field
		&
		\parbox{.35\textwidth}
		{[\ref{sub:sqw_shl_first}]
		Stochastic higher spin six vertex model
		\cite{CorwinPetrov2015}, 
		\cite{BorodinPetrov2016inhom}
		}
		&
		\parbox{.4\textwidth}
		{ {\ }\\[0pt]
		[\ref{sub:sqw_shl_last}]
		Stochastic higher spin six vertex model
		\cite{CorwinPetrov2015}, 
		\cite{BorodinPetrov2016inhom}
		}
		\\
		\hline
		sqW/sqW field
		&
		\parbox{.35\textwidth}{
			[\ref{sub:sqw_sqw_first}]
			$_4\phi_3$ vertex model
			and $q$-Hahn PushTASEP \cite{CMP_qHahn_Push},
			\cite{BufetovMucciconiPetrov2018}
			}
		&
		\parbox{.4\textwidth}{
			{\ }\\[0pt]
			[\ref{sub:sqw_sqw_last}, \ref{sub:sqw_sqw_last_continuous}]
			$q$-Hahn TASEP / Boson particle systems
		\cite{Povolotsky2013}, \cite{Corwin2014qmunu}
	}
	\end{tabular}
	\caption{A summary of matchings of \Cref{sec:marginals_sqW_processes},
	with numbers of relevant subsections.}
	\label{fig:matchings_table}
\end{figure}

\subsection{Stochastic vertex models}

We work with two typologies of stochastic
vertex models: 
\emph{up-right} or \emph{up-left}.
These are probability measures on 
directed path ensembles (of the corresponding direction)
in
the integer quadrant,
constructed from families of \emph{stochastic vertex weights}
$L_{(i,j)}$. By ``stochastic'' we mean that the weights
must satisfy the sum to one
condition
\begin{equation} \label{eq:sum_to_one}
	\sum_{\alpha_2,\beta_2\ge 0} L_{(i,j)}(\alpha_1,\beta_1;\alpha_2,\beta_2) = 1
\end{equation}
for all $\alpha_1,\beta_1$,
where $\alpha_1,\alpha_2,\beta_1,\beta_2\in \mathbb{Z}_{\ge0}$
are the occupation numbers of edges at a vertex $(i,j)$.

\medskip

For the first type of stochastic vertex models,
equip the lattice with 
\emph{up-right} vertex weights $L_{(i,j)}^{\mathrm{ur}}$ subject to the 
arrow preservation condition
\begin{equation*}
    L_{(i,j)}^{\mathrm{ur}}(\alpha_1,\beta_1;\alpha_2,\beta_2)=0 
		\qquad \text{if} \qquad\alpha_1+\beta_1 \neq \alpha_2+\beta_2.
\end{equation*}

\begin{definition}[Up-right stochastic vertex model]
	\label{def:ur_sVM}
		The \emph{up-right stochastic vertex model} with weights
		$L_{(i,j)}^{\mathrm{ur}}$ and boundary conditions
		$B^{\mathrm{h}}=\{b_1^{\mathrm{h}},b_2^{\mathrm{h}},\dots \}$,
		$B^{\mathrm{v}}=\{b_1^{\mathrm{v}},b_2^{\mathrm{v}},\dots \}$,
		with $b_i^{\mathrm{h}} ,b_j^{\mathrm{v}} \ge 0$, is the unique
		probability measure on the set of up-right directed paths on
		$\mathbb{Z}_{\ge 1}\times \mathbb{Z}_{\ge 0}$, such that:
    \begin{itemize}
        \item each vertex $(1,j)$ emanates $b_j^{\mathrm{v}}$ paths initially directed to the right;
        \item each vertex $(i,0)$ emanates $b_i^{\mathrm{h}}$ paths initially directed upwards;
				\item the probability of observing a configuration
					$(\alpha_1,\beta_1;\alpha_2,\beta_2)$ at vertex $(i,j)$,
					conditioned on the configuration at all vertices $(i',j')$
					with $i'+j'<i+j$, is given by
					$L^{\mathrm{ur}}_{(i,j)}(\alpha_1,\beta_1;\alpha_2,\beta_2)$.
					Moreover, this event is independent of 
					choosing arrow configurations
					at other vertices 
					$\dots,(i-1,j+1),(i+1,j-1),\dots$
					on the same diagonal.
    \end{itemize}
\end{definition}

Up-right directed lattice path configurations can be encoded by
the \emph{height function}:
\begin{equation} \label{eq:ur_height}
    \mathcal{H}^{\mathrm{ur}}(i,j) = \# \{ \text{occupations at horizontal edges} \} - \# \{ \text{occupations at vertical edges} \},
\end{equation}
where occupations are counted along the path
$(\frac{1}{2},\frac{1}{2}) 
\to
(i+\frac{1}{2},\frac{1}{2}) 
\to
(i+\frac{1}{2} , j+\frac{1}{2})$ (equivalently, along any up-right directed path from
$(\frac{1}{2},\frac{1}{2})$ to $(i+\frac{1}{2},j+\frac{1}{2})$).
See \Cref{fig:stoch_vertex_models}, right, for an illustration of the vertex model 
and the corresponding height function.

\begin{figure}[htpp]
	\centering
	\includegraphics[height=.3\textwidth]{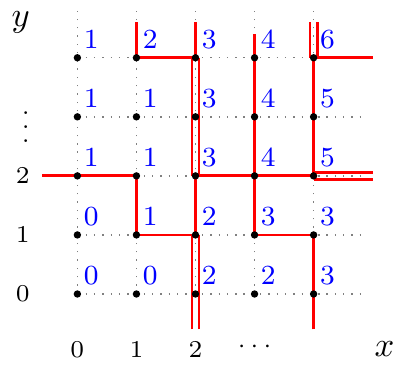}
	\qquad \qquad 
	\includegraphics[height=.3\textwidth]{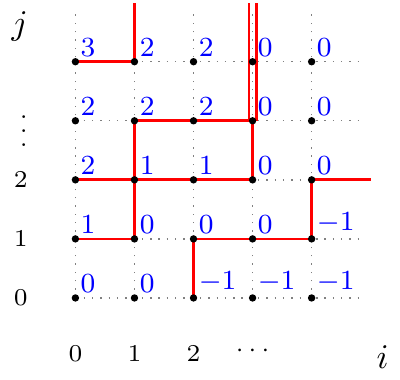}
	\caption{Realizations of the up-left and the up-right stochastic
	vertex models (left and right panels, respectively).}
	\label{fig:stoch_vertex_models}
\end{figure}

\begin{remark}[Up-right model and TASEPs] \label{rem:connection_TASEP}
    Path configurations can be interpreted as trajectories of particles performing totally asymmetric random walks, with time running in the upward direction. In particular, one can define a process 
    \begin{equation*}
			\{\mathsf{X}(t) = (\mathsf{x}_1(t) > \mathsf{x}_2(t) > \cdots) \}_{t\in \mathbb{Z}_{\ge0}}
    \end{equation*}
		by setting $\mathsf{x}_n(t) := \mathcal{H}^{\mathrm{ur}}(n,t) -n$.
		Then $\mathsf{X}$ is a discrete time totally asymmetric simple exclusion
		process, in which the 
		random jump 
		$\mathsf{x}_n(t-1)\to\mathsf{x}_n(t)$
		of the 
		$n$-th particle at time $t$ is governed by 
		\begin{equation*}
			L^{\mathrm{ur}}_{(n,t)}
			\bigl(
				\mathsf{x}_{n-1}(t-1)-\mathsf{x}_n(t-1)-1,
				\mathsf{x}_{n-1}(t)-\mathsf{x}_{n-1}(t-1);
				\mathsf{x}_{n-1}(t)-\mathsf{x}_n(t)-1,
				\mathsf{x}_n(t)-\mathsf{x}_n(t-1)
			\bigr).
		\end{equation*}
\end{remark}

Let us now turn to up-left path ensembles. 
The
\emph{up-left} weights $L_{(i,j)}^{\mathrm{ul}}$ satisfy the following arrow preservation property:
\begin{equation*}
	L_{(i,j)}^{\mathrm{ul}}(\alpha_1,\beta_1;\alpha_2,\beta_2)=0 \qquad \text{if} \qquad\alpha_1+\beta_2 \neq \beta_1+\alpha_2.
\end{equation*}

\begin{definition}[Up-left stochastic vertex model]
	\label{def:ul_sVM}
    The \emph{up-left stochastic vertex model} with weights $L_{(i,j)}^{\mathrm{ul}}$ and boundary conditions $B^{\mathrm{h}}=\{b_1^{\mathrm{h}},b_2^{\mathrm{h}},\dots \}$, $B^{\mathrm{v}}=\{b_1^{\mathrm{v}},b_2^{\mathrm{v}},\dots \}$, with $b_i^{\mathrm{h}},b_j^{\mathrm{v}}\ge 0$, is the unique probability measure on the set of up-left directed path on $\mathbb{Z}_{\ge 1}\times\mathbb{Z}_{\ge 0}$, such that:
    \begin{itemize}
        \item each vertex $(1,j)$ has $b_j^{\mathrm{v}}$ paths entering from its left;
        \item each vertex $(i,0)$ emanates $b_i^{\mathrm{h}}$ paths initially directed upwards;
        \item the probability of observing a configuration $(\alpha_1,\beta_1;\alpha_2,\beta_2)$ at a
					vertex $(i,j)$, conditioned on the path configuration 
					at vertices $(i',j')$ with $i'+j'<i+j$, 
					is given by $L^{\mathrm{ul}}_{(i,j)}(\alpha_1,\beta_1;\alpha_2,\beta_2)$.
					Moreover, this event is independent of 
					choosing arrow configurations
					at other vertices 
					$\dots,(i-1,j+1),(i+1,j-1),\dots$
					on the same diagonal.
    \end{itemize}
\end{definition}

Up-left directed lattice path configurations can be encoded by the 
\emph{height function}:
\begin{equation} \label{eq:ul_height}
    \mathcal{H}^{\mathrm{ul}}(i,j) = \# \{ \text{occupations at horizontal edges} \} + \# \{ \text{occupations at vertical edges} \},
\end{equation}
where occupations are counted along the 
path $(\frac{1}{2},\frac{1}{2}) \to (i + \frac{1}{2},\frac{1}{2}) \to (i+\frac{1}{2} , j+\frac{1}{2})$
(equivalently, along any up-right directed path from
$(\frac{1}{2},\frac{1}{2})$ to $(i+\frac{1}{2},j+\frac{1}{2})$).
Notice the difference in sign with the definition of $\mathcal{H}^{\mathrm{ur}}$ \eqref{eq:ur_height}.
See \Cref{fig:stoch_vertex_models}, left, for an illustration of the up-left vertex model 
and the corresponding height function.

\begin{remark}[Up-left model and PushTASEPs] \label{rem:connection_pushTASEP}
	Define a process 
	\begin{equation*}
		\{\mathsf{Y}(t) = (\mathsf{y}_1(t) > \mathsf{y}_2(t) > \cdots) \}_{t\in\mathbb{Z}_{\ge0}}
	\end{equation*}
	by 
	setting $\mathsf{y}_n(t) = - \mathcal{H}^{\mathrm{ul}}(n,t) - n$.
	Then $\mathsf{Y}$ is a discrete time totally asymmetric simple exclusion
	process under which particles jump to the 
	left, and a 
	\emph{pushing mechanism} is present.
	The random jump 
	$\mathsf{y}_n(t-1)\to\mathsf{y}_n(t)$
	of the $n$-th particle at time $t$ 
	is governed by 
	\begin{equation*}
		L^{\mathrm{ul}}_{(n,t)}
		\bigl(
			\mathsf{y}_{n-1}(t-1)-\mathsf{y}_n(t-1)-1
			,
			\mathsf{y}_{n-1}(t-1)-\mathsf{y}_{n-1}(t)
			;
			\mathsf{y}_{n-1}(t)-\mathsf{y}_n(t)-1
			,
			\mathsf{y}_{n}(t-1)-\mathsf{y}_{n}(t)
		\bigr).
	\end{equation*}
\end{remark}

In the rest of this section we 
establish the matching results
outlined in \Cref{fig:matchings_table}.

\subsection{Last row in sqW/sHL field}
\label{sub:sqw_shl_last}

We start by defining the stochastic higher spin six vertex
model:
\begin{definition}[\cite{CorwinPetrov2015}, \cite{BorodinPetrov2016inhom}] \label{def:ur_HS6VS}
	Specialize the up-right stochastic vertex model
	of 
	\Cref{def:ur_sVM} by taking 
	$L_{(i,j)}^{\mathrm{ur}} =
	\mathsf{L}_{x_i,v_j}^{\mathrm{ur}}$,
	where the latter are given in 
	\Cref{fig:up_right_HS6VM}.
	We refer to this model as the 
	\emph{up-right stochastic higher spin six vertex model}.
	We consider the \emph{step-stationary}
	boundary conditions:
	\begin{equation}
		\label{eq:ur_Bernoulli_bc}
		b_j^{\mathrm{v}} \sim \mathrm{Ber}\left(\frac{x_1 v_j}{1+x_1 v_j}\right) 
		\qquad 
		\text{and} \qquad b_i^{\mathrm{h}}=0,
\end{equation}
where $\mathrm{Ber}(\cdot)$ are independent Bernoulli random variables
with the probability of success given in the 
parentheses.\footnote{A slightly broader class of boundary conditions than the
step-stationary ones, where also $b_i^{\mathrm{h}}$ are allowed to be
positive numbers, can be considered using the fusion argument introduced
in \cite{Amol2016Stationary}; see also
\cite{imamura2019stationary}, \cite{BufetovMucciconiPetrov2018}.}
\end{definition}

\begin{figure}
    \centering
    \includegraphics{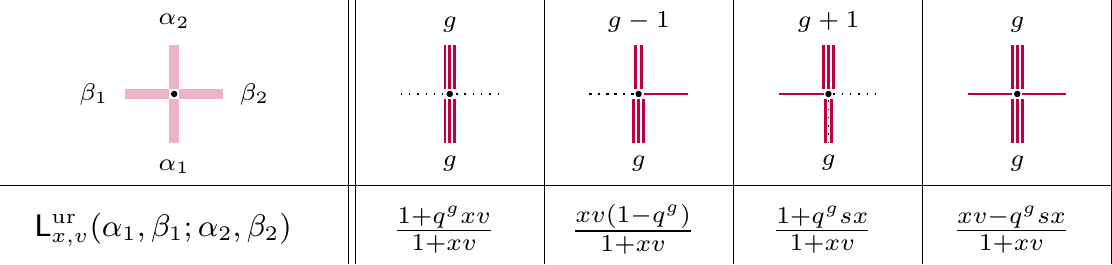}
    \caption{The stochastic vertex weights $\mathsf{L}^{\mathrm{ur}}_{x,v}$ for the up-right stochastic higher spin six vertex model.}
    \label{fig:up_right_HS6VM}
\end{figure}

\begin{remark} \label{rem:matching_to_BP}
	The model in \Cref{def:ur_HS6VS} is equivalent to that of \cite{BorodinPetrov2016inhom}
	(the latter with step boundary conditions
	$b_j^{\mathrm{v}}=1$, $b_i^{\mathrm{h}}=0$), 
	under specializations $\xi_1 \mathsf{s}_1 \to
	x_1$, $\mathsf{s}^2_1\to 0$, $\mathsf{s}_\alpha \xi_\alpha \to
	x_\alpha$, $\mathsf{s}^2_\alpha \to -s x_\alpha$ and $u_\beta \to -
	v_\beta$.
\end{remark}

\begin{theorem}[sqW/sHL last row]
	\label{thm:sqW_sHL_last_row}
	The last row marginal $\{\lambda^{(i,j)}_i \}_{i \ge 1,j \ge 0}$ 
	of the sqW/sHL field
	has the same distribution as 
	the height function 
	$\{\mathcal{H}_{\mathrm{HS}}^{\mathrm{ur}} (i,j)\}_{i \ge 1,j \ge 0}$
	of the
	up-right higher spin six vertex model with step-stationary boundary conditions.
\end{theorem}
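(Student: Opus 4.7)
The strategy is to realize the sqW/sHL field through a specific bijectivization of the Yang--Baxter equation and then isolate the local random moves that affect the last coordinate $\lambda^{(i,j)}_i$. By construction (\Cref{def:up_right_paths,def:sHL_func}), this last coordinate records the number of paths reflected at the right corner $(i,i)$, so its evolution is governed by the corner Yang--Baxter equation \eqref{eq:YBE_RWw_star_wall} involving the corner weight $W^{\rightcorner}_{x_i,s}$ and the right-wall weight $W^{*,\rightwall}$, together with the cross weight $\mathcal{R}_{x_i,v_j,s}$ of \Cref{fig:table_R_cal}. My first step will be to select a canonical bijectivization of this corner relation. Because at the right boundary the vertical direction $i_3$ is replaced by a single wall output, only a handful of occupation patterns occur on each side of \eqref{eq:YBE_RWw_star_wall}, and, as in \Cref{ex:bijectivization_2_elements}, all bijectivizations form a small family parameterized by one free probability; I will fix the choice which decouples the update of $\lambda^{(i,j)}_i$ from the interior rows.

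With this choice, the reasoning of \Cref{prop:general_marginal_YBfield} applies verbatim at the corner instead of at the leftmost column. Concretely, dragging the cross through the corner vertex $(i,i)$ produces a stochastic update whose transition probability is the ratio
\[
\mathbf{L}^{\text{corner}}_{x_i,v_j}(j_2,j_1;k_1',k_2')=\frac{\mathsf{wr}_{\rm corner}(k_1',k_2'\mid j_1,j_2)}{\sum_{k_1,k_2}\mathsf{wl}_{\rm corner}(k_1,k_2\mid j_1,j_2)},
\]
the analogue of \eqref{eq:general_stochastic_L} but with the weights from the corner Yang--Baxter equation. A direct computation using the explicit forms \eqref{eq:W_corner}, \eqref{eq:W_rightwall} and the $\mathcal{R}$-matrix entries of \Cref{fig:table_R_cal} yields $\mathbf{L}^{\text{corner}}_{x_i,v_j}$ in closed form; matching it to the vertex weights $\mathsf{L}^{\mathrm{ur}}_{x_i,v_j}$ of \Cref{fig:up_right_HS6VM} is a finite bookkeeping check, greatly facilitated by the parameter identification recorded in \Cref{rem:matching_to_BP}, which already establishes that the up-right HS6VM is a specialization of the inhomogeneous six vertex model of \cite{BorodinPetrov2016inhom}.

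Identifying the boundary condition is the next step. The field prescription $\lambda^{(0,j)}=\varnothing$ together with the left-boundary weight $W^{\leftwall}_{x_1,s}$ and the corner weight $W^{\rightcorner}_{x_1,s}$ at the single site $(1,1)$ feeds into row $j$ a number of paths whose law, after telescoping across vertical edges of the first column, is precisely $b_j^{\mathrm{v}}\sim \mathrm{Ber}\bigl(x_1v_j/(1+x_1v_j)\bigr)$ independently over $j$; this matches the step-stationary condition \eqref{eq:ur_Bernoulli_bc}. Combined with the weight identification of the previous paragraph and the dictionary between height functions and exclusion processes (\Cref{rem:connection_TASEP}), this produces the equality in distribution with $\mathcal{H}^{\mathrm{ur}}_{\mathrm{HS}}$.

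The main obstacle, as flagged in the remark preceding the theorem, is that autonomy of the last-row process $\{\lambda^{(i,j)}_i\}$ is \emph{not} automatic from the general Yang--Baxter field construction (in contrast to the first-row case of \Cref{prop:marginal_markov}). I plan to overcome this exactly by the choice of corner bijectivization made above: under it, the random move updating $\lambda^{(i,j)}_i$ is determined solely by corner data at the three preceding sites $(i,j-1)$, $(i-1,j)$, $(i-1,j-1)$, with no dependence on the bulk updates in lower rows, so induction on $i+j$ shows that the filtration generated by the last-row values is closed under the dynamics. Verifying this locality property of the corner bijectivization is the most delicate point and is what distinguishes our chosen sampling algorithm from the Borodin--Ferrari one of \Cref{sub:BF_Fields}, for which the last row need not be Markovian.
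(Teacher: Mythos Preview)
Your overall strategy coincides with the paper's: realize the sqW/sHL field via bijectivization and analyze the corner move coming from \eqref{eq:YBE_RWw_star_wall}. The gap is in the execution of that corner move.

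The displayed formula you propose,
\[
\mathbf{L}^{\text{corner}}_{x_i,v_j}(j_2,j_1;k_1',k_2')=\frac{\mathsf{wr}_{\rm corner}(k_1',k_2'\mid j_1,j_2)}{\sum_{k_1,k_2}\mathsf{wl}_{\rm corner}(k_1,k_2\mid j_1,j_2)},
\]
is the trivial bijectivization $\mathsf{p}^{\mathrm{fwd}}(K\to K')\propto \mathsf{wr}(K')$, independent of the starting configuration $K$. Invoking \Cref{prop:general_marginal_YBfield} ``verbatim at the corner'' to justify this is not correct: that proposition works at the \emph{leftmost} column precisely because there $i_1=i_2=0$ and $i_3=j_3=\infty$, which simultaneously forces the bijectivization and reduces the external data to the first-row numbers $j_1,j_2$. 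At the corner none of this holds; the external indices $i_1,i_2,i_3,j_2,j_3$ are all finite and encode $(n-1)$-th as well as $n$-th coordinates, so the ratio above still depends on interior rows and does not give autonomy.

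The paper's proof instead singles out, within the one-parameter family of \Cref{ex:bijectivization_2_elements}, the choice
\[
\mathsf{p}^{\mathrm{fwd}}_{\{i_1,i_2,i_3\},\{\varnothing,j_2,j_3\}}(K\to K')
=\mathsf{L}^{\mathrm{ur}}_{x,v}\bigl(j_3-i_2+i_1-k_1,\;k_1\,;\;j_3-i_2+i_1-k_1',\;k_1'\bigr),
\]
which \emph{does} depend on the starting state through $k_1$. The point---flagged as ``remarkably'' in the paper---is that this particular choice depends on the external indices only through the single combination $j_3-i_2+i_1=\lambda^{(i-1,j)}_{i-1}-\lambda^{(i,j-1)}_i$, and on $k_1,k_1'$; all three are last-row quantities. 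That collapse is what delivers autonomy and simultaneously identifies the transition with $\mathsf{L}^{\mathrm{ur}}_{x,v}$. Your proposal announces the intention to ``fix the choice which decouples'' but the formula you then write is not that choice, and nothing you say explains why a decoupling choice should exist at all.
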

\begin{proof}
    We use \Cref{prop:bijectivization_samples_sqW_sHL_field}. 
		During the update 
    \begin{equation*} 
        \lambda^{(n-1,t-1)} \to \lambda^{(n,t)}, \qquad \text{for fixed} \qquad \lambda^{(n-1,t)}, \lambda^{(n,t-1)},
    \end{equation*}
		weighted by the stochastic matrix $\Ufwd$, the law of the rightmost
		local move is given by a stochastic bijectivization of the
		Yang--Baxter equation \eqref{eq:YBE_RWw_star_wall}.
		A straightforward computation
		shows that one such bijectivization is given by the choice
    \begin{equation} \label{eq:choice_bijectivization}
        \mathbf{p}^{\mathrm{fwd}}_{\{i_1,i_2,i_3\},\{\varnothing,j_2,j_3\}}(\{k_1,k_2,k_3\}\to\{k_1',k_2',k_3'\}) = \mathsf{L}_{x,v}^{\mathrm{ur}}(j_3-i_2+i_1-k_1,k_1;j_3-i_2+i_1-k_1',k_1').
    \end{equation}
    This can be readily verified using the parametrization 
		from
		\Cref{ex:bijectivization_2_elements}.
		For simpler notation, let us denote 
		\begin{equation*}
			\varkappa=\lambda^{(n-1,t-1)},\qquad 
			\lambda=\lambda^{(n-1,t)},\qquad 
			\mu=\lambda^{(n,t-1)},\qquad 
			\nu=\lambda^{(n,t)}.
		\end{equation*}
		In terms of elements of these signatures,
		the integers $i_1,i_2,j_3,k_1,k_1'$ are
		interpreted as
		\begin{align*}
			i_1&=\nu_{n-1}-\mu_{n-1},
			\qquad 
			i_2 = \nu_{n-1}-\lambda_{n-1}, 
			\qquad 
			j_3= \mu_{n-1}-\mu_{n},
			\\
			k_1 &= \lambda_{n-1}-\varkappa_{n-1}, 
			\qquad
			k_1' = \nu_n-\mu_n. 
		\end{align*}
		Remarkably, transition weight
		\eqref{eq:choice_bijectivization} only depends on 
		$j_3-i_2+i_1=\lambda_{n-1}-\mu_n$ and on $k_1,k_1'$, 
		but not on other edge occupation
		numbers.
		Observe that these quantities
		involve only the last components of the signatures 
		$\mu,\varkappa,\lambda,\nu$.
		Therefore, the law of the last component $\nu_n=\lambda_n^{(n,t)}$ is fully
		determined by the last components
		$\lambda_{n-1}=\lambda_{n-1}^{(n-1,t)}$,
		$\mu_n=\lambda_n^{(n,t-1)}$,
		and 
		$\varkappa_{n-1}=\lambda_{n-1}^{(n-1,t-1)}$.
		This implies that the last row marginal $\{\lambda^{(i,j)}_i \}_{i
		\ge 1,j \ge 0}$ is an autonomous Markov process.
		Moreover, 
		this autonomous process
		has the same multitime joint distribution
		as the height function of the up-right higher spin six
		vertex model because
		$\mathsf{L}^{\mathrm{ur}}$ appears in
		\eqref{eq:choice_bijectivization}.
		This completes the proof.
\end{proof}

In
\cite{CorwinPetrov2015}, \cite{BorodinPetrov2016inhom}, 
joint $q$-moments
of the 
up-right
stochastic higher spin six vertex model 
were expressed in terms of nested contour integrals. 
These moments completely determine the joint distribution of 
the model's height function 
$\mathcal{H}_{\mathrm{HS}}^{\mathrm{ur}}(\cdot, j)$
along any given horizontal line (because $q\in (0,1)$ and the random
variables in question are nonnegative).
Let us reproduce the $q$-moment formula:

\begin{proposition}[\cite{BorodinPetrov2016inhom}] \label{prop:BP_moments_HS}
	Consider the up-right stochastic higher spin six vertex model with
	step-stationary boundary conditions and assume $v_\alpha \neq q
	v_\beta$. 
	For any $i_1\ge \ldots\ge i_\ell\ge1$ we have
\begin{equation} \label{eq:HS_ur_moments}
    \begin{split}
        \mathbb{E} \prod_{k = 1}^{\ell} q^{\mathcal{H}_{\mathrm{HS}}^{\mathrm{ur}}(i_k,j)}
        =
        q^{\binom{\ell}{2}}
        &
        \oint_{\gamma[-\overline{\mathbf{v}} | 1]} \frac{d z_1}{2 \pi \mathrm{i}} \cdots \oint_{\gamma[-\overline{\mathbf{v}} \mid \ell]} \frac{d z_\ell}{2 \pi \mathrm{i}}
        \prod_{1\le A < B \le \ell} \frac{z_A - z_B}{z_A - q z_B}
        \\
        &
        \hspace{1pt}
        \times
        \prod_{k=1}^\ell \left( \frac{1}{ z_k ( 1 + s z_k ) } \prod_{\alpha=1}^{i_k} \frac{x_\alpha ( 1 + s  z_k ) }{x_\alpha - z_k} \prod_{\alpha=1}^{j} \frac{ 1 + q v_\alpha z_k }{ 1 + v_\alpha z_k } \right).
    \end{split}
\end{equation}
Here, integration contours are $\gamma[-\overline{\mathbf{v}} | k] = \gamma[-\overline{\mathbf{v}}] \cup r^{k-1}c_0$, where $\gamma[-\overline{\mathbf{v}}]$ encircles $-1/v_1,\dots,-1/v_j$ and no other singularity, $c_0$ is a small circle around $0$, and $r>q^{-1}$. All curves are positively oriented, and $r^{k-1}c_0$ never intersects $\gamma[-\overline{\mathbf{v}}]$ for $k=1,\dots,\ell$.
\end{proposition}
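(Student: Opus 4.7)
The plan is to obtain \eqref{eq:HS_ur_moments} as a specialization of the nested contour $q$-moment formulas for the inhomogeneous stochastic higher spin six vertex model of \cite{BorodinPetrov2016inhom}, using the explicit parameter identification stated in \Cref{rem:matching_to_BP}. Strictly speaking the statement is cited rather than re-derived, so the proposal is really one of verification: match parameters, handle the boundary, and confirm that the contour prescription survives the substitution.

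Concretely, I would begin with the Borodin--Petrov formula for $\mathbb{E}\prod_{k=1}^{\ell} q^{\mathcal{H}(i_k,j)}$ in the model with horizontal inhomogeneities $(\xi_\alpha,\mathsf{s}_\alpha)$ and vertical parameters $u_\beta$, under step boundary conditions. Applying the substitution $\xi_\alpha \mathsf{s}_\alpha \to x_\alpha$, $\mathsf{s}_\alpha^{2}\to -s x_\alpha$ for $\alpha\ge 2$, $\mathsf{s}_1^{2}\to 0$, and $u_\beta\to -v_\beta$ turns the generic integrand of \cite{BorodinPetrov2016inhom} into the one in \eqref{eq:HS_ur_moments}. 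The factor $\prod_{\alpha=1}^{i_k} \tfrac{x_\alpha(1+s z_k)}{x_\alpha-z_k}$ arises from the $\alpha\ge 2$ horizontal vertices once $\mathsf{s}_\alpha^{2}=-s x_\alpha$ is substituted into the standard ratio $\tfrac{1-\xi_\alpha \mathsf{s}_\alpha/z}{1-\xi_\alpha \mathsf{s}_\alpha\cdot \mathsf{s}_\alpha^{-2}/z}$ of \cite{BorodinPetrov2016inhom}; the extra factor $\tfrac{1}{z_k(1+s z_k)}$ together with the $\alpha=1$ piece $\tfrac{x_1(1+s z_k)}{x_1-z_k}$ is exactly what remains from column $\alpha=1$ once $\mathsf{s}_1^{2}=0$ collapses one of the two rational factors.

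The next step is to pass from step to step-stationary boundary conditions \eqref{eq:ur_Bernoulli_bc}. I would invoke the fusion/analytic continuation argument of \cite{Amol2016Stationary} (also used in \cite{imamura2019stationary,BufetovMucciconiPetrov2018}): the Bernoulli$\bigl(\tfrac{x_1 v_j}{1+x_1 v_j}\bigr)$ inputs at column $1$ correspond precisely to the degeneration $\mathsf{s}_1\to 0$ with $\xi_1 \mathsf{s}_1\to x_1$ held fixed, which is why the integrand treats $\alpha=1$ asymmetrically (no $(1+s z_k)$ in the numerator, an extra $\tfrac{1}{z_k(1+sz_k)}$ outside the horizontal product). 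One verifies that this limit commutes with the expectation by the standard fusion/Bethe-ansatz-type argument; this is the only non-trivial analytic step in the specialization.

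Finally I would check that the contours $\gamma[-\overline{\mathbf{v}}\,|\,k]=\gamma[-\overline{\mathbf{v}}]\cup r^{k-1}c_0$ are the images of the contours in \cite{BorodinPetrov2016inhom} under the change of variables. Under the substitution, the relevant singularities of the integrand as a function of $z_k$ are located at $z_k=-1/v_\alpha$ (to be encircled), at $z_k=0$ (to be encircled), and at $z_k=x_\alpha$, $z_k=-1/s$ (to be left outside); the Cauchy factor $\prod_{A<B}\tfrac{z_A-z_B}{z_A-qz_B}$ forces the nesting $r>q^{-1}$ to avoid the poles at $z_A=q z_B$. The main (modest) obstacle is organising this bookkeeping: confirming that the fusion limit producing step-stationary boundary does not pull any pole across the contour and that the nested structure degenerates correctly. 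Once this is done, the formula \eqref{eq:HS_ur_moments} follows immediately.
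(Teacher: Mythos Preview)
Your proposal is correct and follows essentially the same route as the paper: both obtain \eqref{eq:HS_ur_moments} by specializing the nested contour $q$-moment formula of \cite{BorodinPetrov2016inhom} via the parameter identification of \Cref{rem:matching_to_BP}, with the step-stationary boundary arising from the $\mathsf{s}_1\to 0$ degeneration. The paper is slightly more specific---it points to Theorem~9.8 and Corollary~10.3 of \cite{BorodinPetrov2016inhom} directly, and handles the Bernoulli boundary by citing \cite[Remark~6.14]{BorodinPetrov2016inhom} rather than the fusion argument of \cite{Amol2016Stationary}---but the substance is the same.
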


\begin{proof}
	This follows from
	Theorem 9.8  
	in 
	\cite{BorodinPetrov2016inhom}
	by identifying the parameters
	as in \Cref{rem:matching_to_BP}
	and noting that 
	$\mathcal{H}^{\mathrm{ur}}_{\mathrm{HS}}(i,j)$ is the same as the height
	function $\mathfrak{h}(i)$ at the $j$-th horizontal slice. 
	Note also that
	\cite[Corollary 10.3]{BorodinPetrov2016inhom} is essentially the 
	same as our $q$-moments \eqref{eq:HS_ur_moments},
	but with contours dragged through infinity, and 
	identification of $\mathsf{s}_1^2$ with $x_1$.
	The latter follows by 
	comparing \eqref{eq:ur_Bernoulli_bc} with \cite[Remark 6.14]{BorodinPetrov2016inhom}.
\end{proof}

Eigenrelations for sqW polynomials given in Theorem \ref{thm:eigenrelation_sqW} can be employed to provide an alternative proof of the moment formula \eqref{eq:HS_ur_moments}.

\begin{proof}[Alternative proof of Proposition \ref{prop:BP_moments_HS}]
	We express $q$-moments of last rows of the sqW/sHL process using 
	the $q$-difference operators 
	$\mathfrak{D}_1$ 
	\eqref{eq:D_1} at several levels, 
	following the argument in 
	\cite[Proposition 4.4]{BCGS2013}.
        
	Denote by
	$\mathfrak{D}_1^{(i)}$ the operator $\mathfrak{D}_1$ 
	acting on $i$ variables $x_1,\dots,x_i$.
	Then for any $\ell$ and any sequence $1 \le i_1 \le \cdots \le
	i_\ell$, we have
    \begin{equation} \label{eq:sHL_sqW_moments_operators}
			\mathbb{E} \prod_{k = 1}^{\ell} q^{\lambda^{(i_k,j)}_{i_k}} = \frac{\mathfrak{D}_1^{(i_1)} \cdots \mathfrak{D}_1^{(i_\ell)} \Pi( x_1,\dots,x_N ; v_1,\dots,v_j ) }{\Pi( x_1,\dots,x_N ; v_1,\dots,v_j )},
    \end{equation}
		where $N\ge i_\ell$ is arbitrary, and 
		\begin{equation*}
			\Pi(x_1,\ldots,x_N;v_1,\ldots,v_j)=
			\prod_{r=1}^j \left(\frac{1}{1-s v_r}\right)^{N-1} 
			\prod_{i=1}^N
			\prod_{r=1}^j (1 + v_r x_i)
		\end{equation*}
		is the partition
		function in the right-hand side of 
		the sqW/sHL Cauchy identity \eqref{eq:Cauchy_Id_sqW_sHL}.
		Equality \eqref{eq:sHL_sqW_moments_operators} is a straightforward
		consequence of the Cauchy identities
		\eqref{eq:skew_Cauchy_ID_sqW_sHL}, 
		\eqref{eq:Cauchy_Id_sqW_sHL},
		eigenrelation
		\eqref{eq:eigenrelation_sqW}, and the branching rules for the sqW
		functions. 

		Let us now express the right-hand side of
		\eqref{eq:sHL_sqW_moments_operators} in terms of nested contour
		integrals. For
    \begin{equation*}
        h(z)=\prod_{r=1}^j (1+ v_r z),
    \end{equation*}
    we have
    \begin{equation*}
        \mathrm{r.h.s.}\, \eqref{eq:sHL_sqW_moments_operators} =  \frac{\mathfrak{D}_1^{(i_1)} \cdots \mathfrak{D}_1^{(i_\ell)} h(x_1) \cdots h(x_N) }{ h(x_1) \cdots h(x_N) }.
    \end{equation*}
    Moreover, for any meromorphic function $\widetilde{h}$ 
		we have 
    \begin{equation*}
			\mathfrak{D}_1^{(n)} \left( \widetilde{h}(x_1) \cdots \widetilde{h}(x_n) \right) =
			\frac{1}{2\pi \mathrm{i}}\oint_{\gamma_{\widetilde{h}}} \prod_{\alpha=1}^n \left( \widetilde{h}(x_\alpha)\, \frac{x_\alpha (1+s z)}{x_\alpha - z}  \right) 
        \frac{\widetilde{h}(qz)}{\widetilde{h}(z)}
        \frac{dz}{z(1+sz)},
    \end{equation*}
    where the curve $\gamma_{\widetilde{h}}$ 
		encircles 0 and all poles of $\widetilde{h}(qz) / \widetilde{h}(z) $.
		The latter poles may include infinity, too.
		In other words, the integral over 
		$\gamma_{\widetilde h}$ is equal to 
		the sum of minus residues of the integrand at $x_1,\ldots,x_n $.

		By iterating this integral representation, 
		we can evaluate \eqref{eq:sHL_sqW_moments_operators} 
		and match the resulting expression with the $q$-moment formula \eqref{eq:HS_ur_moments}. 
		The equivalence of processes $\lambda^{(i,j)}_i$ and $\mathcal{H}_{\mathrm{HS}}^{\mathrm{ur}}(i,j)$ stated in Theorem \ref{thm:sqW_sHL_last_row} allows us to complete the proof.
\end{proof}

\subsection{First row in sqW/sHL field}
\label{sub:sqw_shl_first}

Let us define an \emph{up-left version of the 
stochastic higher spin six vertex model}.
Take an up-left model 
in the sense of \Cref{def:ul_sVM},
with the weights 
$L^{\mathrm{ul}}_{(i,j)}=\mathsf{L}_{x_i,v_j}^{\mathrm{ul}}$,
given in \Cref{fig:ul_HS6VM}.
We take this model with the same step-stationary boundary conditions
\eqref{eq:ur_Bernoulli_bc}.
In fact, this model is essentially
the same as the one from \Cref{def:ur_HS6VS}:

\begin{remark}
	When at most one path occupies each horizontal edge
	(as in our case),
	swapping the horizontal
	occupation numbers $0 \leftrightarrow 1$
	is a bijection between up-left and up-right
	models.
	Their height
	functions are related as 
	$\mathcal{H}_{\mathrm{HS}}^{\mathrm{ur}} (i,j) = j -
	\mathcal{H}_{\mathrm{HS}}^{\mathrm{ul}} (i,j)$.
	Moreover, the weights $\mathsf{L}_{x,v}^{\mathrm{ul}}$ become
	the weights $\mathsf{L}_{x,v}^{\mathrm{ur}}$ from \Cref{fig:up_right_HS6VM}
	after this swapping of horizontal occupations, 
	and the inversion of the parameters $(x,v)\mapsto (x^{-1},v^{-1})$.
\end{remark}

However, it is convenient to 
work with the up-right and the up-left models separately,
as in the sqW/sqW case they are genuinely different.

\begin{figure}
    \centering
    \includegraphics{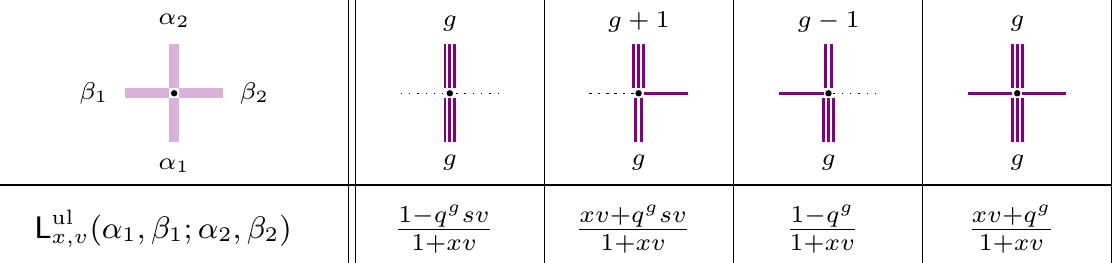}
    \caption{
    The stochastic vertex weights $\mathsf{L}^{ \mathrm{ul} }_{x,v}$ for the up-left stochastic higher spin six vertex model.
    }
    \label{fig:ul_HS6VM}
\end{figure}

\begin{theorem}[sqW/sHL first row]
	\label{thm:sqW_sHL_first_row}
	The first row marginal $\{\lambda^{(i,j)}_1 \}_{i\ge1, j\ge0}$ 
	of the sqW/sHL field 
	has the same distribution as the 
	height function 
	$\{
	\mathcal{H}_{\mathrm{HS}}^{\mathrm{ul}} (i,j) \}_{i\ge1, j\ge0}$
	of
	the up-left stochastic higher spin six vertex model with
	step-stationary boundary conditions.
\end{theorem}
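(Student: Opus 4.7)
The plan is to follow the same strategy used in the proof of \Cref{thm:sqW_sHL_last_row}, but now applied to the first row marginal. The key abstract tool is \Cref{prop:general_marginal_YBfield}, which already says that for any Yang--Baxter field the one-row marginal is Markovian and that its update around a plaquette is given by a stochastic vertex weight $\mathbf{L}_{u,v}$ built directly from the Yang--Baxter equation via \eqref{eq:general_stochastic_L}. Combined with \Cref{prop:bijectivization_samples_sqW_sHL_field}, this reduces the theorem to the verification of two things: (i) that the weight $\mathbf{L}_{x_i,v_j}$ constructed out of the Yang--Baxter equation \eqref{eq:YBE_RWw_star} for the $W$/$w^*$-weights coincides with $\mathsf{L}^{\mathrm{ul}}_{x_i,v_j}$ from \Cref{fig:ul_HS6VM}, and (ii) that the boundary behaviour built into the sqW/sHL field reproduces the step-stationary boundary condition \eqref{eq:ur_Bernoulli_bc}.

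For step (i) I would first write down explicitly the single-plaquette Yang--Baxter equation $\sum_{k_1,k_2}\mathsf{wl}(\{k_1,k_2,\infty\}\mid\{0,0,\infty\},\{j_1,j_2,\infty\})=\sum_{k_1',k_2'}\mathsf{wr}(\{k_1',k_2',\infty\}\mid\{0,0,\infty\},\{j_1,j_2,\infty\})$ with the sqW bulk weight $W^{\bulk}_{x,s}$ on one side and the dual sHL bulk weight $w^{*,\bulk}_{v,s}$ on the other. Because $j_1\in\{0,1\}$ (the sHL horizontal edges are $0/1$-occupied) and $i_1,i_2$ are already set to $\infty$ at the leftmost column, there are only a few cases to enumerate: $(j_1,j_2)\in\{0,1\}\times\mathbb{Z}_{\ge0}$. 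For each such case one computes $\mathsf{wr}/\mathsf{wl}$, sums the denominator using the $q$-binomial theorem, and reads off $\mathbf{L}_{x,v}(j_2,j_1;k'_1,k'_2)$. The resulting expression should agree term by term with the entries of \Cref{fig:ul_HS6VM}; this is essentially the same kind of computation that produced \eqref{eq:choice_bijectivization} in the proof of \Cref{thm:sqW_sHL_last_row}, but now for the leftmost column rather than the rightmost corner, so the corner weight $W^{\rightcorner}$ is replaced by the standard bulk weight $W^{\bulk}$.

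For step (ii) one notes that the ``paths'' in the first row of the sqW/sHL field correspond to the differences $\lambda^{(i,j)}_1-\lambda^{(i-1,j)}_1$ and $\lambda^{(i,j)}_1-\lambda^{(i,j-1)}_1$; the horizontal boundary $\lambda^{(i,0)}_1=0$ gives $b^{\mathrm{h}}_i=0$, matching \eqref{eq:ur_Bernoulli_bc}, while the vertical boundary is fixed by the left boundary weight $W^{\leftwall}_{x_1,s}(j)=x_1^j(-s/x_1;q)_j/(q;q)_j$ applied to the first column of the YB field. After the bijectivization the effective law of the horizontal occupation entering the $(1,j)$-vertex is precisely Bernoulli with parameter $x_1v_j/(1+x_1v_j)$; this is the same computation that identifies the leftmost column in \cite{BufetovMucciconiPetrov2018} and I would quote it after checking it against the explicit form of $\mathbf{L}^{\mathrm{ul}}_{x_1,v_j}(0,1;\cdot,\cdot)$.

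The main technical obstacle is the bookkeeping in step (i): one has to match all arrow configurations around a plaquette and check that the closed-form simplification of \eqref{eq:general_stochastic_L} really yields the five tabulated cases of $\mathsf{L}^{\mathrm{ul}}_{x,v}$, including the correct normalization $(1+xv)$ in the denominator coming from the $q$-binomial sum $\sum_{k_2}W^{\bulk}_{x,s}(\infty,j_2;\infty,k_2)w^{*,\bulk}_{v,s}(\infty,j_1;\infty,k_1)$. Once this identification is done, the Markov property of the marginal from \Cref{prop:marginal_markov} together with the fact that the update law does not depend on $\lambda^{(\cdot,\cdot)}_2,\lambda^{(\cdot,\cdot)}_3,\ldots$ (which is a direct consequence of the independence of the leftmost local move on occupation numbers further to the right, exactly as in the proof of \Cref{thm:sqW_sHL_last_row}) immediately upgrades the one-step matching to a matching of the full two-dimensional laws, completing the proof.
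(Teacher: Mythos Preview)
Your approach is the same as the paper's: invoke \Cref{prop:general_marginal_YBfield} and identify the stochastic weight $\mathbf{L}$ of \eqref{eq:general_stochastic_L} with $\mathsf{L}^{\mathrm{ul}}_{x,v}$, then handle the $i=1$ boundary separately. Two points of your bookkeeping need correction, though. First, in the leftmost-column specialization it is $i_3=j_3=\infty$ while $i_1=i_2=0$ (not $i_1,i_2=\infty$), and the object carrying the actual dependence on $(j_1,j_2,k_1',k_2')$ in $\mathsf{wl},\mathsf{wr}$ is the cross weight $\mathcal{R}_{x,v,s}$ of \Cref{fig:table_R_cal}, not a product $W^{\bulk}\cdot w^{*,\bulk}$; after cancelling common factors one has $\mathsf{wr}\propto\mathcal{R}_{x,v,s}(k_2',k_1';j_2,j_1)$ and the denominator is the (two-term) sum of $\mathcal{R}_{x,v,s}(0,0;\cdot,\cdot)$, so no $q$-binomial summation is involved and the five entries of $\mathsf{L}^{\mathrm{ul}}$ are read off directly from the table for $\mathcal{R}$. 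Second, the $i=1$ boundary is governed by the corner Yang--Baxter equation \eqref{eq:YBE_RWw_star_wall} rather than by $W^{\leftwall}$; specializing it with $I=\{0,0,\infty\}$, $J=\{\varnothing,j_2,\infty\}$ yields the Bernoulli$\bigl(\tfrac{x_1v_j}{1+x_1v_j}\bigr)$ law directly.
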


\begin{proof}
	We use Yang--Baxter fields similarly to the approach
	taken in \cite[Section 7.3]{BufetovMucciconiPetrov2018}.
	Let us
	specialize the general notation of \Cref{prop:general_marginal_YBfield}.
	We need to 
	match the stochastic vertex weight 
	$\mathbf{L}$ of \eqref{eq:general_stochastic_L} with $\mathsf{L}^{\mathrm{ul}}$,
	and verify boundary conditions. 

	The random move $\lambda_1^{(i-1,j-1)} \to \lambda_1^{(i,j)}$,
	conditioned on $\lambda_1^{(i,j-1)},\lambda_1^{(i-1,j)}$
	is determined by the bijectivization of the Yang--Baxter equation
	\eqref{eq:YBE_RWw_star} for $i>1,j \ge 1$ and by the bijectivization
	of \eqref{eq:YBE_RWw_star_wall}, if $i=1$. We start with the first
	case, where in \eqref{eq:general_stochastic_L} we get (after canceling common factors)
    \begin{equation*}
        \begin{split}
            &
						\mathsf{wl}_{ \{0,0,\infty\},\{j_1,j_2,\infty \}}(\{k_1,k_2,\infty\}) 
						= 
						\mathcal{R}_{x,v,s}(0,0;k_1,k_2) \, \frac{v^{j_1}}{1-sv}\, x^{j_2} \,
						\frac{(-s/x;q)_{j_2}}{(q;q)_{j_2}},
            \\
            &
						\mathsf{wr}_{ \{0,0,\infty\},\{j_1,j_2,\infty \}}(\{k_1',k_2',\infty\})
						=
						\mathcal{R}_{x,v,s}(k_2',k_1';j_2,j_1) \frac{v^{k_1'}}{1-sv} \,
						x^{k_2'}\, \frac{(-s/x;q)_{k_2'}}{(q;q)_{k_2'}}.
        \end{split}
    \end{equation*}
		One readily sees that then \eqref{eq:general_stochastic_L} gives the 
		stochastic weight $\mathsf{L}^{\mathrm{ul}}_{x,v}$.

		For the boundary signature $\lambda^{(1,j)}_1$ case, configuration
		weights $\mathsf{wl},\mathsf{wr}$ become (after canceling common factors)
    \begin{equation*}
        \begin{split}
            &
						\mathsf{wl}_{ \{0,0,\infty\},\{\varnothing,j_2,\infty \}}(\{k_1,k_2,\infty\}) = \mathcal{R}_{x,v,s}(0,0;k_1,k_2) \, x^{j_2},
            \\
            &
						\mathsf{wr}_{ \{0,0,\infty\},\{\varnothing,j_2,\infty \}}(\{k_1',k_2',\infty\})= \frac{v^{k_1'}}{1-sv}\, x^{k_2'},
        \end{split}
    \end{equation*}
		which leads to the step-stationary boundary conditions \eqref{eq:ur_Bernoulli_bc}
		since $\lambda^{(i,0)}=\varnothing$ for all $i$.
\end{proof}

\subsection{Last row in sqW/sqW field}
\label{sub:sqw_sqw_last}

Define the up-right stochastic weight by
\begin{equation} \label{eq:q_Hahn_vertex_weight}
    \mathbb{L}_{x,y}^{\mathrm{ur}}(\alpha_1,\beta_1;\alpha_2,\beta_2)
    \coloneqq 
    \mathbf{1}_{\alpha_1 + \beta_1 = \alpha_2 + \beta_2 }\,
		\varphi_{q,x y, -s x} ( \beta_2 \mid \alpha_1 ),
\end{equation}
where $\varphi$ is the $q$-beta-binomial
distribution
\eqref{eq:phi_def}--\eqref{eq:phi_def_infty}.

\begin{definition}[\cite{Povolotsky2013}] \label{def:q_Hahn_vertex_model}
	The $q$-\emph{Hahn vertex model} is the up-right stochastic vertex
	model, in the sense of Definition \ref{def:ur_sVM}, with weights
	$L_{i,j}^{\mathrm{ur}} = \mathbb{L}_{x_i,y_j}^{\mathrm{ur}}$. We
	consider \emph{step-stationary} boundary conditions:
	\begin{equation}
		\label{eq:q_Hahn_step_stationary}
		b_j^{\mathrm{v}} \sim \varphi_{q,x_1 y_j,-s x_1}(\bullet\mid
		\infty) \qquad \text{and} \qquad b_i^{\mathrm{h}}=0,
	\end{equation}
	where the random variables for 
	$b_j^{\mathrm{v}}$ are independent.
	Denote the corresponding
	height function by
	$\mathcal{H}^{\mathrm{ur}}_{q\textnormal{-Hahn}}$.
\end{definition}

\begin{remark}
		The model of \Cref{def:q_Hahn_vertex_model} is
		equivalent to that of \cite[Section
		6.6.2]{BorodinPetrov2016inhom}, where parameters have been
		specialized as $\mathsf{s}^2_\alpha \to -s x_\alpha$ and
		$q^{J_\alpha}\to -y_\alpha /s$.
\end{remark}

\begin{theorem}[sqW/sqW last row] \label{thm:sqW_sqW_last_row}
	The last row marginal 
	$\{\lambda^{(i,j)}_i\}_{i \ge 1,j \ge 0}$ 
	of the sqW/sqW field 
	has the same distribution as the 
	height
	function 
	$\{\mathcal{H}_{q\textnormal{-Hahn}}^{\mathrm{ur}} (i,j)\}_{i \ge 1,j \ge 0}$
	of the up-right $q$-Hahn vertex model.
\end{theorem}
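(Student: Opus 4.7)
The plan is to mirror the strategy used in the proof of \Cref{thm:sqW_sHL_last_row}: construct the sqW/sqW field via a Yang--Baxter bijectivization (\Cref{prop:bijectivization_samples_sqW_sHL_field} in its sqW/sqW analogue), and then single out a specific bijectivization whose rightmost local move, at the corner vertex on the diagonal, depends only on the \emph{last} components of the signatures $\lambda^{(n-1,t-1)}$, $\lambda^{(n-1,t)}$, $\lambda^{(n,t-1)}$, $\lambda^{(n,t)}$. This marginal autonomy combined with identifying the resulting stochastic kernel with $\mathbb{L}^{\mathrm{ur}}_{x,y}$ will yield the theorem.

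Concretely, first I would write down the corner Yang--Baxter equation \eqref{eq:YBE_RWW_star_wall} in the unified notation of \Cref{sub:bijectivizations}, with $W^{\rightcorner}_{x,s}$ on the bottom row and $W^{*,\rightwall}$ on the top row, and an $\mathbb{R}_{x,y,s}$ cross vertex crossing from one side of the corner column to the other. The equation has only a few summands on each side (since both the corner weight and the right wall weight constrain occupations), so it admits a simple two-term--to--two-term bijectivization in the sense of \Cref{ex:bijectivization_2_elements}. I would then solve for the forward probability $\mathbf{p}^{\mathrm{fwd}}$ corresponding to moving the cross through the corner column, and check by direct computation that it matches the $q$-beta-binomial weight $\varphi_{q,xy,-sx}(\beta_2\mid\alpha_1)$ entering $\mathbb{L}^{\mathrm{ur}}_{x,y}$ in \eqref{eq:q_Hahn_vertex_weight}. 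The appearance of $\varphi_{q,xy,-sx}$ is natural because the corner weight $W^{\rightcorner}_{x,s}(j)=(q;q)_j/(-s/x;q)_j$ precisely supplies the Pochhammer denominators of $\varphi$, while the $q$-Pochhammer $(xy;q)_\infty$-type normalization comes from the bulk weight $W^{*,\bulk}$.

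Once the marginal kernel at the corner is identified, I would verify exactly as in the proof of \Cref{thm:sqW_sHL_last_row} that the transition probability depends on the full signatures $\mu=\lambda^{(n,t-1)},\ \nu=\lambda^{(n,t)},\ \lambda=\lambda^{(n-1,t)},\ \varkappa=\lambda^{(n-1,t-1)}$ only through $\mu_n,\nu_n,\lambda_{n-1},\varkappa_{n-1}$; this is the reason the dynamics of the diagonal last component is autonomous. Doing this for every $n,t$ simultaneously, the diagonal marginal $\{\lambda^{(i,j)}_i\}_{i\ge1,j\ge0}$ becomes a two-dimensional stochastic process driven locally by $\mathbb{L}^{\mathrm{ur}}_{x_i,y_j}$, which is by definition the $q$-Hahn vertex model. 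Finally, I would check that the step-stationary boundary conditions \eqref{eq:q_Hahn_step_stationary} emerge from the boundary Yang--Baxter equation at $i=1$, exactly as the Bernoulli step-stationary condition emerged in the sqW/sHL case.

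The main obstacle will be the algebraic verification that the bijectivization ratio evaluated at the corner is indeed $\varphi_{q,xy,-sx}$: although the identity is local, it requires massaging products of $q$-Pochhammer symbols coming from $W^{\bulk}_{x,s}$, $W^{\rightcorner}_{x,s}$, $W^{*,\bulk}_{y,s}$, $W^{*,\rightwall}$ and the cross $\mathbb{R}_{x,y,s}$ into the specific shape \eqref{eq:phi_def}--\eqref{eq:phi_def_infty}. As a sanity check and as an alternative route, one can independently match the $q$-moments $\mathbb{E}\prod_k q^{\lambda^{(i_k,j)}_{i_k}}$ with those of the $q$-Hahn TASEP (e.g.\ \cite{Corwin2014qmunu}) by running the argument of the alternative proof of \Cref{prop:BP_moments_HS}, using the eigenrelation \eqref{eq:eigenrelation_sqW} with the operator $\mathfrak{D}_1$ applied to the sqW/sqW Cauchy kernel \eqref{eq:Cauchy_Id_sqW_sqW}; this provides an independent confirmation that the last-row marginal is the $q$-Hahn vertex model.
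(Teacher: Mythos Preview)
Your proposal diverges from the paper's proof and contains a genuine gap.

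The paper does not repeat the corner-bijectivization argument. Instead it deduces the sqW/sqW statement from the sqW/sHL one (\Cref{thm:sqW_sHL_last_row}) via \emph{fusion}: the dual sHL functions fuse into the dual sqW functions, turning the sqW/sHL field into the sqW/sqW field, while the same fusion turns the higher spin six vertex model into the $q$-Hahn vertex model. No new bijectivization or algebraic verification is needed.

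The gap in your approach is the claim that the corner equation \eqref{eq:YBE_RWW_star_wall} ``has only a few summands'' and hence admits a bijectivization in the sense of \Cref{ex:bijectivization_2_elements}. This is false. In the sqW/sHL case the two-term structure arose because horizontal sHL occupations lie in $\{0,1\}$; neither the corner weight $W^{\rightcorner}_{x,s}(j)$ (defined for all $j\ge0$) nor the right-wall weight $W^{*,\rightwall}$ imposes any such restriction. In the sqW/sqW setting the horizontal occupations $k_1,k_1'$ range over all of $\mathbb{Z}_{\ge0}$, so both sides of \eqref{eq:YBE_RWW_star_wall} are many-term sums and the bijectivization is highly nonunique. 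To rescue your route you would have to \emph{guess} the candidate
\[
\mathbf{p}^{\mathrm{fwd}}\bigl(\{k_1,k_2,k_3\}\to\{k_1',k_2',k_3'\}\bigr)=\mathbb{L}^{\mathrm{ur}}_{x,y}(j_3-i_2+i_1-k_1,\,k_1;\,j_3-i_2+i_1-k_1',\,k_1')
\]
and then verify the reversibility condition directly. That is a nontrivial $q$-hypergeometric identity involving the ${}_4\overline{\phi}_3$ cross weight $\mathbb{R}_{x,y,s}$ and $\varphi_{q,xy,-sx}$, which is precisely what the fusion argument delivers for free.

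Finally, your $q$-moment sanity check via $\mathfrak{D}_1$ and \eqref{eq:Cauchy_Id_sqW_sqW} only yields equal-time moments (fixed $j$), not the full multi-time joint law over all $(i,j)$ asserted in the theorem; autonomy of the last-row evolution still has to be supplied by a bijectivization. Indeed, the paper runs this implication in the opposite direction: it uses the theorem to obtain the alternative proof of \Cref{prop:BP_moments_qHahn}, not the reverse.
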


\begin{proof}
		This follows from \Cref{thm:sqW_sHL_last_row}
		which established an analogous result 
		matching 
		the last row of the sqW/sHL field
		and the height function of the up-right higher spin six vertex
		model. 
		By fusion, 
		the dual sHL 
		functions turn into the dual sqW functions 
		(cf. \cite{BorodinWheelerSpinq}).
		Therefore, the 
		sqW/sHL field under fusion turns into
		the sqW/sqW field. 

		On the other hand, the same fusion
		procedure turns the
		up-right higher spin six vertex model into the up-right $q$-Hahn
		vertex model\footnote{For a practical explanation of fusion in the
		context of $\mathfrak{sl}_2$ stochastic vertex models see
		\cite{BorodinWheelerSpinq} and \cite{CorwinPetrov2015}, 
		\cite{BorodinPetrov2016inhom} 
		and
		references therein.}. 
		This completes the proof.
\end{proof}

In \cite[Corollary 10.4]{BorodinPetrov2016inhom} the multi-point
$q$-moments of the up-right $q$-Hahn vertex model were expressed in
terms of nested contour integrals:

\begin{proposition}[\cite{BorodinPetrov2016inhom}, Corollary 10.4] \label{prop:BP_moments_qHahn}
	Assume $\min_\alpha |s x_\alpha|>q \max_\alpha |s x_\alpha|$. 
	For any $i_1\ge \ldots\ge i_\ell\ge1$ we have
    \begin{equation} \label{eq:q_moments_q_Hahn}
        \begin{split}
            \mathbb{E} \prod_{k = 1}^{\ell} q^{\mathcal{H}^{\mathrm{ur}}_{q\textnormal{-Hahn}}(i_k,j)}
            =
           (-1)^\ell q^{\binom{\ell}{2}}
            &
            \oint_{\gamma_1^+[- s \mathbf{x}]} \frac{d w_1}{2 \pi \mathrm{i}} \cdots \oint_{\gamma_\ell^+[- s \mathbf{x} ]} \frac{d w_\ell}{2 \pi \mathrm{i}}
            \prod_{1\le A < B \le \ell} \frac{w_A - w_B}{w_A - q w_B}
            \\
            &
            \hspace{1pt}
            \times
            \prod_{k=1}^\ell \left( \frac{1}{ w_k ( 1- w_k ) } \prod_{\alpha=1}^{i_k} \frac{ 1 -  w_k }{1 + w_k/(s x_\alpha)} \prod_{\alpha=1}^{j} \frac{ 1 +  w_k y_\alpha/s }{ 1 - w_k } \right).
        \end{split}
    \end{equation}
		Integration contours encircle $-sx_1,-sx_2,\dots$, and leave out
		0,1 and are $q$-nested in the sense that $q \gamma_{k+1}^+[- s
		\mathbf{x}]$ is inside $\gamma_k^+[- s \mathbf{x}]$ for all
		$k=1,\dots,\ell-1$. 
\end{proposition}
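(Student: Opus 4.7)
The plan is to mimic the alternative proof of \Cref{prop:BP_moments_HS} given in \Cref{sub:sqw_shl_last}, but with spin Hall--Littlewood replaced by dual spin $q$-Whittaker in the Cauchy pairing. First, invoke \Cref{thm:sqW_sqW_last_row} to rewrite the left-hand side of \eqref{eq:q_moments_q_Hahn} as a $q$-moment of the last-row marginal of the sqW/sqW process:
\begin{equation*}
    \mathbb{E} \prod_{k = 1}^{\ell} q^{\mathcal{H}^{\mathrm{ur}}_{q\textnormal{-Hahn}}(i_k,j)}
    =
    \mathbb{E} \prod_{k=1}^{\ell} q^{\lambda^{(i_k,j)}_{i_k}},
\end{equation*}
where the sqW/sqW process on interlacing signatures is defined via \Cref{def:sqW/sqW_field} with spectral parameters $(x_1,\dots,x_N,y_1,\dots,y_j)$ and any $N \ge i_\ell$. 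Using the Cauchy identity \eqref{eq:Cauchy_Id_sqW_sqW} and the eigenrelation \eqref{eq:eigenrelation_sqW} at the nested levels $i_1 \le \dots \le i_\ell$, this expectation equals
\begin{equation*}
    \frac{ \mathfrak{D}_1^{(i_1)} \cdots \mathfrak{D}_1^{(i_\ell)}\, \Pi(x_1,\dots,x_N;y_1,\dots,y_j) }{\Pi(x_1,\dots,x_N;y_1,\dots,y_j)},
\end{equation*}
where $\Pi$ is now the right-hand side of \eqref{eq:Cauchy_Id_sqW_sqW} and $\mathfrak{D}_1^{(i)}$ denotes the operator $\mathfrak{D}_1$ acting on $x_1,\dots,x_i$ only; the ordering requirement $i_1 \le \dots \le i_\ell$ ensures that $\Pi/\prod_\alpha h(x_\alpha)$ depends on the variables it leaves untouched, so the standard telescoping argument applies verbatim.

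Next, I would express each $\mathfrak{D}_1^{(i)}$ as a contour integral, exactly as in the proof of \Cref{thm:eigenrelation_sqW}: writing $h(z) = \prod_{\alpha=1}^j (1 + y_\alpha z) \cdot \prod_{\alpha=1}^N (-s x_\alpha;q)_\infty^{-(N-1)}$ (only the $y$-factor matters after cancellation), we have
\begin{equation*}
    \mathfrak{D}_1^{(i)} \bigl( \widetilde h(x_1)\cdots\widetilde h(x_i) \bigr)
    = \frac{1}{2\pi\mathrm{i}} \oint_{\gamma} \prod_{\alpha=1}^i \frac{x_\alpha(1+sz)}{x_\alpha - z}\,\widetilde h(x_\alpha) \cdot \frac{\widetilde h(qz)}{\widetilde h(z)} \frac{dz}{z(1+sz)},
\end{equation*}
where $\gamma$ encloses $x_1,\dots,x_i$. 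Applying this iteratively introduces variables $z_1,\dots,z_\ell$. After the substitution $w_k = -s z_k / (1 + s z_k)^{-1}$ (or equivalently $z_k = -w_k / (s(1-w_k))$ chosen to match poles at $-1/(sx_\alpha)$), the factors $\frac{x_\alpha(1+s z_k)}{x_\alpha - z_k}$ become $\frac{1 - w_k}{1 + w_k/(s x_\alpha)}$, and the $y$-factors produce $\frac{1 + y_\alpha w_k / s}{1 - w_k}$; the Jacobian and residue at $z_k = 0$ combine to produce $\frac{1}{w_k(1 - w_k)}$. The nesting of integration contours follows from the interaction between successive applications of $\mathfrak{D}_1^{(i_k)}$ with $i_1 \le \dots \le i_\ell$: dragging a new contour past the previously introduced ones picks up the factors $(z_A - z_B)/(z_A - q z_B) \to (w_A - w_B)/(w_A - q w_B)$, and the $q$-nesting comes from the shift $\widetilde h(q z)/\widetilde h(z)$ producing poles at $q z$-shifted locations of earlier variables.

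The main obstacle I anticipate is tracking the contours carefully under the change of variables $z \mapsto w$, verifying that one lands exactly on the contours $\gamma_k^+[-s\mathbf{x}]$ encircling $-sx_1,-sx_2,\dots$ while excluding $0$ and $1$, with the correct $q$-nesting. In particular, one must check that the constraint $\min_\alpha |s x_\alpha| > q \max_\alpha |s x_\alpha|$ is precisely what is needed to maintain simultaneous $q$-nesting of all $\ell$ contours around the cluster of poles $-s x_\alpha$. Once the contour bookkeeping is done, matching the resulting integrand term-by-term with \eqref{eq:q_moments_q_Hahn} is straightforward. Alternatively, one can bypass these contour gymnastics altogether by taking the fusion limit ($y_j \to 0$ with appropriately scaled $v_j$) of the already-proved formula \eqref{eq:HS_ur_moments}, which is how the result is obtained in \cite{BorodinPetrov2016inhom}; I would present both derivations and note their equivalence via \Cref{thm:sqW_sqW_last_row}.
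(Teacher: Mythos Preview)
Your approach is exactly the paper's alternative proof: identify the height function with the last-row marginal via \Cref{thm:sqW_sqW_last_row}, express the $q$-moment as iterated applications of $\mathfrak{D}_1^{(i_k)}$ to the sqW/sqW Cauchy kernel, write each application as a contour integral around $x_1,\dots,x_{i_k}$ (excluding $0$ and the poles of $\widetilde h(qz)/\widetilde h(z)$), and change variables. The paper also records, as you do, that the original derivation in \cite{BorodinPetrov2016inhom} proceeds by fusion from \eqref{eq:HS_ur_moments}.

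Two slips to fix. First, your $h$ is the sqW/sHL kernel, not the sqW/sqW one: for \eqref{eq:Cauchy_Id_sqW_sqW} the $x$-dependent factor is $\widetilde h(z)=\prod_{r=1}^{j}\frac{(-sz;q)_\infty}{(zy_r;q)_\infty}$, whence $\widetilde h(qz)/\widetilde h(z)=\prod_{r=1}^{j}\frac{1-zy_r}{1+sz}$; your subsequent transformed factors are actually consistent with this correct $\widetilde h$, not with the $\prod(1+y_\alpha z)$ you wrote. Second, your change of variables is garbled: the map that sends the poles $z=x_\alpha$ to $w=-sx_\alpha$ and produces exactly the factors you claim is simply $w=-sz$. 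With these corrections (and restoring the overall minus sign in the contour-integral representation of $\mathfrak{D}_1$ when the contour encloses only the $x_\alpha$), the computation goes through as you outline.
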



Proposition \ref{prop:BP_moments_qHahn} was obtained in
\cite{BorodinPetrov2016inhom} 
as a corollary (under fusion)
of the multi-point
$q$-moment formula \eqref{eq:HS_ur_moments} for the up-right
higher spin six vertex model. 
Both of these $q$-moment formulas have 
several different proofs: 
via duality 
\cite{CorwinPetrov2015},
manipulations with symmetric functions using Bethe Ansatz
\cite{BorodinPetrov2016inhom}, 
or distributional matchings and difference operators
\cite{OrrPetrov2016}.
Eigenrelations for the sqW
polynomials provide yet another independent proof:

\begin{proof}[Alternative proof of Proposition \ref{prop:BP_moments_qHahn}]
	Similarly to the alternative proof of
	\Cref{prop:BP_moments_HS} given in \Cref{sub:sqw_shl_last}, we will
	use eigenrelations of the sqW polynomials to compute
	$q$-moments. To express $q$-moments of the
	sqW/sqW field, we use formula
	\eqref{eq:sHL_sqW_moments_operators}, after replacing the function
	$\Pi$ with the right-hand side of \eqref{eq:Cauchy_Id_sqW_sqW}. 
	The action of the difference operator $\mathfrak{D}_1$
	\eqref{eq:D_1} (in $n$ variables)
	on a meromorphic function $\widetilde{h}$ can be written as
	\begin{equation*}
		\mathfrak{D}_1 \left( \widetilde{h}(x_1) \cdots
		\widetilde{h}(x_n) \right) 
		= 
		-\frac{1}{2\pi \mathrm{i}}
		\oint_{x_1,\dots,x_n}
		\prod_{\alpha=1}^n \left( \widetilde{h}(x_\alpha)\, \frac{x_\alpha
		(1+s z)}{x_\alpha - z}  \right)
		\frac{\widetilde{h}(qz)}{\widetilde{h}(z)} \frac{dz}{z(1+sz)},
	\end{equation*}
	where the integration contour contains $x_1,\dots,x_n$, but doesn't
	contain 0 or any pole of $\widetilde{h}(qz)/\widetilde{h}(z)$. 
	Using this formula 
	repeatedly, we can match the 
	$q$-moments of the 
	marginal $\lambda^{(i,j)}_i$ 
	to expression \eqref{eq:q_moments_q_Hahn}. The equivalence of processes between last row of the sqW/sqW field and height function of the $q$-Hahn vertex model stated in Theorem \ref{thm:sqW_sqW_last_row} yields the proof.
\end{proof}

\subsection{First row in sqW/sqW field}
\label{sub:sqw_sqw_first}

For our fourth and final vertex model,
define the up-left stochastic weight by
\begin{equation}\label{eq:4phi3_weights}
    \begin{split}
	        &
	        \mathbb{L}_{x,y}^{\mathrm{ul}}(\alpha_1,\beta_1;\alpha_2,\beta_2)
	        \coloneqq
	        \mathbf{1}_{\alpha_1 + \beta_2 = \alpha_2 + \beta_1 }\,  
	        \frac{ y^{\alpha_2} s^{\alpha_1} x^{\alpha_2-\alpha_1} \, q^{ \beta_1 \beta_2 +\frac{1}{2}\alpha_1(\alpha_1 -1) }
	        \, (-s/x;q)_{\alpha_2} (-s/y;q)_{\beta_2} }
	        { (-s/x;q)_{\alpha_1} (-s/y;q)_{\beta_1} 
	        (q;q)_{\beta_2} (-q/(sy);q)_{\beta_2 -\alpha_2} }
	        \\
	        &
	        \hspace{80pt}
	        \times \frac{(s^2 q^{\alpha_1+\beta_2};q)_\infty 
	        (x  y; q)_\infty}{(-s y;q)_\infty (-s x ; q)_\infty} \,{}_4 \overline{ \phi}_3
	        \left(\begin{minipage}{5.2cm}
	        \center{$q^{-\beta_1}; q^{-\beta_2}, -s x,  -q/(s y)$}
        	\\
	        \center{$-s/y,q^{1+\alpha_1-\beta_1}, -x q^{1-\beta_2-\alpha_1}/s$}
			\end{minipage} \Big\vert\, q,q\right),
        \end{split}
    \end{equation}
		where ${}_4 \overline{ \phi}_3$
		is the regularized
		$q$-hypergeometric function
		\eqref{eq:phi_regularized}.

\begin{remark}
    An expression equivalent to \eqref{eq:4phi3_weights} for the stochastic weight $\mathbb{L}_{x,y}^{\mathrm{ul}}$ is given by
    \begin{equation} \label{eq:4phi3_vertex_model_alternative}
        \mathbb{L}_{x,y}^{\mathrm{ul}}(g,\ell;g+L-\ell,L)
        =
        \sum_{k=0}^{\min(\ell,L)}\varphi_{q^{-1},q^g,-syq^{g-1}}(k \mid \ell)
        \,
        \psi_{q,-q^k s/ y, -q^gs/x,s^2q^{g+k} }(L-k),
    \end{equation}
		where we used the $q$-beta-binomial and the $q$-hypergeometric distributions
		\eqref{eq:phi_def},
		\eqref{eq:q_hypergeom_distr}. This can be proved through simple
		manipulations of the $q$-Pochhammer
		terms. From 
		\eqref{eq:4phi3_vertex_model_alternative} it is immediate to see
		that $\mathbb{L}_{x,y}^{\mathrm{ul}}$ possesses the sum to one
		property \eqref{eq:sum_to_one}. The positivity of the weights (under certain 
		restrictions on the parameters)
		follows from \Cref{prop:positivity_R}.
\end{remark}

\begin{definition} \label{def:4phi3_vertex_model}
	The ${}_4\phi_3$ \emph{vertex model} is the up-left stochastic
	vertex model, in the sense of \Cref{def:ul_sVM}, with
	weights $L_{i,j}^{\mathrm{ur}} =
	\mathbb{L}_{x_i,y_j}^{\mathrm{ul}}$. We consider
	the same
	\emph{step-stationary} boundary conditions 
	as in \eqref{eq:q_Hahn_step_stationary}.
	The height function of this model is denoted by 
	$\mathcal{H}_\phi^{\mathrm{ul}}$.
\end{definition}

\begin{theorem}[sqW/sqW first row] \label{thm:sqW_sqW_first_row}
	Let $s\in (-\sqrt{q},0)$. The first row marginal $\{
	\lambda^{(i,j)}_1 \}_{i,j\in \mathbb{Z}_{\ge0}}$ 
	of the sqW/sqW
	field 
	has the same distribution
	as the height function
	$\{ \mathcal{H}_{\phi}^{\mathrm{ul}}(i,j)\}_{i,j\in \mathbb{Z}_{\ge0}}$
	of the 
	${}_4\phi_3$ stochastic
	vertex model.
\end{theorem}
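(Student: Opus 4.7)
The strategy mirrors the proof of \Cref{thm:sqW_sHL_first_row}: we will realize the sqW/sqW field via a bijectivization of the Yang--Baxter equation and then identify the induced one-dimensional marginal transition kernel, given by \Cref{prop:general_marginal_YBfield}, with the ${}_4\phi_3$ stochastic weights \eqref{eq:4phi3_weights}. The relevant Yang--Baxter equations are \eqref{eq:YBE_RWW_star} for interior vertices and \eqref{eq:YBE_RWW_star_wall} at the right corner; these involve the $\mathbb{R}$-matrix (the R-matrix for $W$ and $W^{*}$) together with the bulk weights $W^{\bulk}_{x,s}$ and $W^{*,\bulk}_{y,s}$ introduced in \Cref{sub:sqw_poly_def,sub:sqw_dual_def}.

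First, by \Cref{prop:general_marginal_YBfield} the update $\lambda_1^{(i-1,j-1)}\to \lambda_1^{(i,j)}$ (given $\lambda_1^{(i-1,j)}$, $\lambda_1^{(i,j-1)}$) is governed by the stochastic weight
\begin{equation*}
	\mathbf{L}_{x_i,y_j}(j_2,j_1;k_1',k_2')=
	\frac{\mathsf{wr}_{\{0,0,\infty\},\{j_1,j_2,\infty\}}(\{k_1',k_2',\infty\})}
	{\sum_{k_1,k_2}\mathsf{wl}_{\{0,0,\infty\},\{j_1,j_2,\infty\}}(\{k_1,k_2,\infty\})},
\end{equation*}
where $\mathsf{wl},\mathsf{wr}$ are the left- and right-hand sides of \eqref{eq:YBE_RWW_star}. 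Specializing to $i_1=i_2=0$, $i_3=\infty$ makes the vertex on the vertical line at the leftmost column carry all incoming arrows, and forces the $W^{\bulk}$ and $W^{*,\bulk}$ factors in the numerator/denominator to reduce to simple closed-form products of $q$-Pochhammer symbols (using \eqref{eq:boundary_W_infinite_lines} and its dual version). What remains in the numerator is essentially one matrix element of $\mathbb{R}_{x_i,y_j,s}$ (cf. the R-matrix defined in \Cref{app:YBE}), while the denominator is a sum of $\mathbb{R}$-matrix entries that, by the sum-to-one identity \eqref{eq:sum_R_matrix}, is an explicit rational function of $x_i,y_j,s,q$. Matching the ratio with \eqref{eq:4phi3_weights} then reduces to an identity of regularized ${}_4\overline{\phi}_3$'s, which can be verified using \eqref{eq:4phi3_vertex_model_alternative} and the decomposition of $\mathbb{R}$ in terms of $q$-beta-binomial and $q$-hypergeometric distributions.

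Second, the boundary case $i=1$ must be handled separately via the Yang--Baxter equation \eqref{eq:YBE_RWW_star_wall} (which corresponds to taking $j_1=\varnothing$). Setting $i_1=i_2=0$, $i_3=\infty$ there, the resulting single-vertex weight depends only on the outgoing $k_1'$ and simplifies (after canceling the boundary prefactor $W^{\leftwall}_{x_1,s}$) to the $q$-beta-binomial distribution $\varphi_{q,x_1y_j,-sx_1}(\bullet\mid\infty)$. This is precisely the step-stationary input law \eqref{eq:q_Hahn_step_stationary} for the ${}_4\phi_3$ vertex model. Independence across $j$ follows because at the first column the Yang--Baxter move used to produce $\lambda_1^{(1,j)}$ only involves the spectral parameters $(x_1,y_j)$ and the fixed data $\lambda^{(0,\cdot)}=\varnothing$.

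The main obstacle is the computational step of identifying the bulk ratio with \eqref{eq:4phi3_weights}: it requires an application of a contiguous-relation identity for ${}_4\overline{\phi}_3$ series, which is where the hypothesis $s\in(-\sqrt{q},0)$ enters to ensure convergence and positivity of all relevant terms (cf.~\Cref{prop:positivity_R}). Once this identity is established, together with the boundary check above, the whole first-row process coincides in distribution with the height function $\mathcal{H}^{\mathrm{ul}}_\phi$, completing the proof.
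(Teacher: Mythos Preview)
Your approach is essentially the same as the paper's: apply \Cref{prop:general_marginal_YBfield} with the Yang--Baxter pair \eqref{eq:YBE_RWW_star}/\eqref{eq:YBE_RWW_star_wall}, specialize $i_1=i_2=0$, $i_3=\infty$, evaluate the denominator via the summation identity \eqref{eq:sum_R_matrix}, and match with the ${}_4\phi_3$ weights.

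A few minor corrections to your writeup. First, the reference to ``canceling the boundary prefactor $W^{\leftwall}_{x_1,s}$'' is misplaced: the relevant boundary equation \eqref{eq:YBE_RWW_star_wall} involves the right corner/wall weights, not $W^{\leftwall}$, and the leftmost-column behavior is already captured by the $i_3=\infty$ specialization of the bulk weights (which is what produces the simple $q$-Pochhammer factors you mention). Second, the paper does not need any separate ``contiguous-relation identity for ${}_4\overline{\phi}_3$'': since $\mathbb{R}_{x,y,s}$ itself is given by a ${}_4\overline{\phi}_3$ expression, the numerator already has the right form, and only \eqref{eq:sum_R_matrix} is required for the normalizing denominator. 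Third, the hypothesis $s\in(-\sqrt{q},0)$ is used solely for positivity of the $\mathbb{R}$-matrix (and hence of the stochastic weights), not for any convergence issue.
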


\begin{proof}
	The proof of this matching is similar to that of 
	\Cref{thm:sqW_sHL_first_row},
	and follows from 
	\Cref{prop:general_marginal_YBfield}. 
	Namely, we 
	specialize formula
	\eqref{eq:general_stochastic_L} using the Yang--Baxter equations
	\eqref{eq:YBE_RWW_star}, \eqref{eq:YBE_RWW_star_wall}. 
	For updates
	of ``bulk'' transition $\lambda_1^{(i-1,j-1)} \to \lambda_1^{(i,j)}$,
	for $i>1,j\ge1$, 
	conditioned on $\lambda_1^{(i,j-1)},\lambda_1^{(i-1,j)}$, the
	stochastic weight \eqref{eq:general_stochastic_L} 
	uses
	\begin{equation*}
			\begin{split}
					&
					\mathsf{wl}_{ \{0,0,\infty\},\{j_1,j_2,\infty \}}(\{k_1,k_2,\infty\}) 
					=
					\mathbb{R}_{x,y,s}(0,0;k_1,k_2) \, y^{j_1} \frac{(-s/y;q)_{j_1}}{(q;q)_{j_1}}\,
					x^{j_2} \frac{(-s/x;q)_{j_2}}{(q;q)_{j_2}},
					\\
					&
					\mathsf{wr}_{ \{0,0,\infty\},\{j_1,j_2,\infty \}}(\{k_1',k_2',\infty\})
					=
					\mathbb{R}_{x,y,s}(k_2',k_1';j_2,j_1) \,
					y^{k_1'} \frac{(-s/y;q)_{k_1'}}{(q;q)_{k_1'}}
					\,x^{k_2'}
					\frac{(-s/x;q)_{k_2'}}{(q;q)_{k_2'}}.
			\end{split}
	\end{equation*}
	Using the expression of the R-matrix $\mathbb{R}_{x,y,s}$ and
	summation identity \eqref{eq:sum_R_matrix} one can match
	$\mathbf{L}_{x,y}$ with $\mathbb{L}_{x,y}$. At the boundary
	$\lambda^{(1,j)}_1$, we use a
	stochastic bijectivization of \eqref{eq:YBE_RWW_star_wall} and therefore in this case we have
	\begin{equation*}
		\begin{split}
			&
			\mathsf{wl}_{ \{0,0,\infty\},\{\varnothing,j_2,\infty
			\}}(\{k_1,k_2,\infty\}) = \mathbb{R}_{x,y,s}(0,0;k_1,k_2) \,
			x^{j_2},
			\\
			&
			\mathsf{wr}_{ \{0,0,\infty\},\{\varnothing,j_2,\infty
			\}}(\{k_1',k_2',\infty\})= y^{k_1'}\frac{ (-s/y;q)_{k_1'} }{
			(q;q)_{k_1'} } \frac{(-sy;q)_\infty}{(s^2;q)_\infty}
			\,x^{k_2'},
		\end{split}
	\end{equation*}
	that yields boundary conditions \eqref{eq:q_Hahn_step_stationary}
	after using again summation identity \eqref{eq:sum_R_matrix}.
\end{proof}

\subsection{Push--block dynamics for sqW/sqW process}
\label{sub:sqw_sqw_last_continuous}

Let us now present another, more explicit matching 
of last rows of the sqW/sqW field
in a ``Plancherel'' (or ``Poisson-type'') 
continuous time limit.
Here the dynamics of the last rows is matched
to the corresponding continuous time limit of the 
$q$-Hahn TASEP. 
This construction is very similar to how the continuous time 
$q$-TASEP emerges from $q$-Whittaker processes 
in \cite{BorodinCorwin2011Macdonald}.

\medskip

Consider the Borodin--Ferrari forward transition map
(cf. \Cref{sub:BF_Fields})
\begin{equation} 
	\label{eq:BF_sqW_sqW}
    \Ufwd_{x,y}(\varkappa\to \nu \mid \lambda,\mu) = \frac{\mathbb{F}_{ \nu / \lambda }(x) \mathbb{F}^*_{\nu/\mu}(y) }{\Pi(x;y) \sum_{\varkappa} \mathbb{F}_{ \mu/\varkappa }(x) \mathbb{F}^*_{ \lambda/\varkappa }(y)  },
\end{equation}
where $\Pi(x;y)=\frac{(-sx;q)_\infty (-sy;q)_\infty}{(xy;q)_\infty (s^2;q)_\infty}$. 
In the limit as
$y=-s +
\varepsilon (1-q)$, $\varepsilon\to0$,
the dual sqW function at a single variable becomes
(we use the notation $[r]_q=(1-q^r)/(1-q)$)
\begin{equation*}
	\mathbb{F}^*_{\lambda/\mu}(-s+\varepsilon (1-q)) 
	= 
	\begin{cases}
		1+\bigO(\varepsilon),
		&\lambda=\mu
		;
		\\
		\varepsilon\,
		\dfrac{(-s)^{r-1}}{[r]_q}
		\dfrac{(q^{\mu_{i-1}-\lambda_i+1};q)_r}{(q^{\mu_{i-1}-\lambda_i} s^2
		;q)_r} +\bigO(\varepsilon^2),
		& \lambda=\mu+r \mathbf{e}_i \ \textnormal{for some $i,r>0$}.
	\end{cases}
\end{equation*}
see \eqref{eq:dual_sqw_single_variable_expression}.
Take $y_j=-s + \varepsilon (1-q)$ for all $j$ and
rescale $M=\floor{t/\varepsilon}$, $t\in \mathbb{R}_{\ge0}$, in the sqW/sqW field.
Thus, we get a
continuous time dynamics on interlacing arrays
$\lambda^1(t) \prec \lambda^2 (t) \prec \cdots$, where at time $t$,
each 
$\lambda^k_i$ jumps to  $\lambda^k_i+r$, $r\ge1$, according to an exponential clock with rate
(see \eqref{eq:BF_sqW_sqW})
\begin{equation} \label{eq:sqW_sqW_push_block_rate}
		\mathrm{rate}(\lambda^k \to \lambda^k + r  \mathbf{e}_{i} \mid
		\lambda^{k-1} ) = x_k^{r}\, \frac{(-s)^{r-1}}{[r]_q}
		\frac{(-q^{\lambda^k_i - \lambda^{k-1}_i} s/x_k \, , \,
		q^{\lambda^k_i - \lambda^{k}_{i+1}+1} \, , \,
		q^{\lambda^{k-1}_{i-1} - \lambda^k_i + 1 -r }\, ; \, q )_r}{ (
		q^{\lambda^{k}_{i} - \lambda^{k-1}_i + 1 } \, , \, q^{\lambda^k_i -
		\lambda^{k}_{i+1}} s^2 \, , \, -q^{\lambda^{k-1}_{i-1} - \lambda^{k}_i
		-r} s x_k \, ; \, q )_r }.
\end{equation}
When an update occurs at level $j$ bringing $\lambda^j \to \widetilde{\lambda}^j = \lambda^j+r\mathbf{e}_i$, the
signature $\lambda^{j+1}$ is instantaneously updated to $\widetilde{\lambda}^{j+1}$ in the following way:
\begin{itemize}
    \item if $\widetilde{\lambda}^j_i \le \lambda^{j+1}_i$, then $\widetilde{\lambda}^{j+1}=\lambda^{j+1}$
    \item if $\widetilde{\lambda}^j_i > \lambda^{j+1}_i$, 
			then assume $\widetilde{\lambda}^{j}_i - \lambda^{j+1}_i = m$ 
			and set $\widetilde{\lambda}^{j+1}=\lambda^{j+1}+(m+\ell)\mathbf{e}_i$ with probability
    \begin{equation*}
        \mathrm{prob}(\lambda^{j+1} \to \widetilde{\lambda}^{j+1}\mid\lambda^{j} \to \widetilde{\lambda}^{j})
        =
        \lim_{\varepsilon\to 0}
				\frac{\mathbb{F}_{\widetilde{\lambda}^{j+1}/\widetilde{\lambda}^j }(x) \mathbb{F}^*_{\widetilde{\lambda}^{j+1} / \lambda^{j+1} } (y) }{ \sum_{ \eta = \lambda^{j+1} + (m+\ell')\mathbf{e}_i } \mathbb{F}_{ \eta /\widetilde{\lambda}^j }(x) \mathbb{F}^*_{ \eta / \lambda^{j+1} } (y) }\,\bigg|_{y=-s+\varepsilon(1-q)}.
    \end{equation*}
    for any $\ell\ge0$ (for $\ell$ large enough this probability vanishes).
		See \Cref{fig:push_block} for an illustration.
\end{itemize}

\begin{figure}[htbp]
    \centering
    \subfloat[]{{\includegraphics[width=6.5cm]{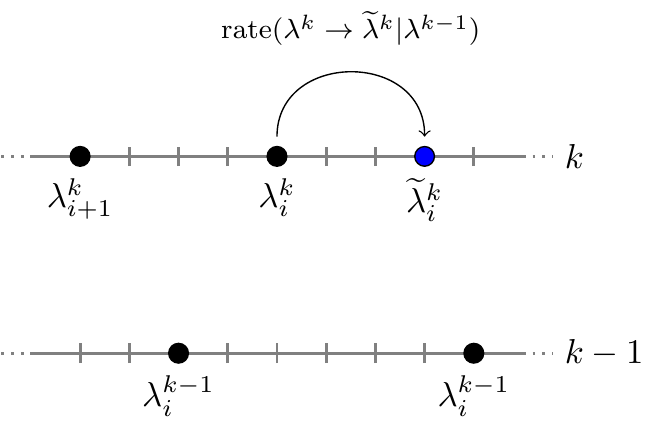} }}%
    \hspace{60pt}
    \subfloat[]{{\includegraphics[width=6.5cm]{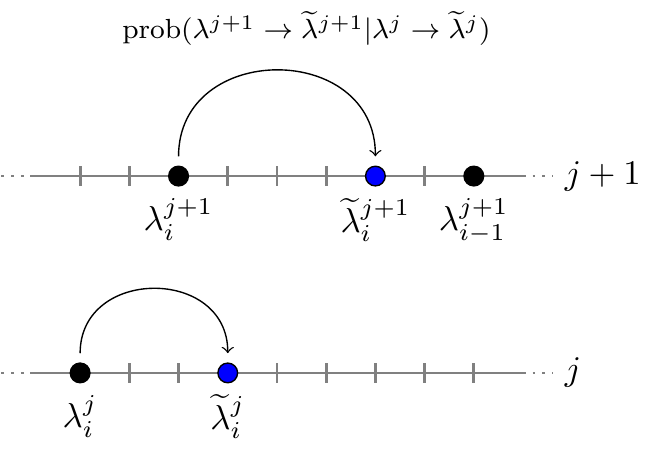} }}%
		\caption{Push--block mechanism in the half-continuous sqW/sqW
		field. Each $\lambda_i^k$ jumps to
		$\widetilde{\lambda}^k_i=\lambda_i^k+r$ at rate
		\eqref{eq:sqW_sqW_push_block_rate}, which only
		depends on $\lambda^{k-1}$; see left panel. When a
		jump happens at level $k$ and breaks interlacing, 
		it
		triggers an instantaneous push at levels above to
		re-establish interlacing; see right panel.}%
    \label{fig:push_block}%
\end{figure}

When $s=0$ and $q\in(0,1)$ in our dynamics, we recover
the continuous time $q$-Whittaker 2d-growth model introduced in
\cite[Definition 3.3.3]{BorodinCorwin2011Macdonald}.
Further setting $q=0$ brings the original
Borodin--Ferrari's push--block process 
corresponding to Schur measures 
\cite{BorFerr2008DF}.
Note that in our case, in contrast with the Schur and $q$-Whittaker situations, jumps are long range.

Restricting attention to the last rows (leftmost diagonal) of the array 
and setting
$i=k$ in 
\eqref{eq:sqW_sqW_push_block_rate}, we see that
the rate 
only depends
on $\lambda^k_k$ and $\lambda^{k-1}_{k-1}$.
Moreover, the pushing mechanism does not affect the leftmost diagonal
of the 
array.
Thus,
the marginal
evolution of the particles in the leftmost diagonal 
is an autonomous Markov process. 
Its jump rates are
\begin{equation*}
    \mathrm{rate}(\lambda^k_k \to \lambda^k_k+r \mid \lambda^{k-1}_{k-1}) 
		=
		x_k^r \,\frac{(-s)^{r-1}}{[r]_q} \frac{(q^{\lambda^{k-1}_{k-1} -
		\lambda^k_k +1-r } ;q )_r}{ (-q^{\lambda^{k-1}_{k-1} -
		\lambda^{k}_k - r} s x_k ; q )_r }.
\end{equation*}
These rates correspond to
an
inhomogeneous version of the continuous time $q$-Hahn TASEP studied in \cite{barraquand2015q},
which is also a continuous time degeneration of the $q$-Hahn TASEP of \cite{Corwin2014qmunu}.
Thus, we see that the continuous time push--block dynamics in the sqW case 
agrees with the last row marginal evolution.

\section{\texorpdfstring{Spin Whittaker Functions from $q\to1$ limit}
{Spin Whittaker Functions from q -> 1 limit}}
\label{sec:sW_functions}

In this section we introduce new 
one-parameter deformations of the $\mathfrak{gl}_n$ Whittaker functions
\cite{Jacquet1967}, \cite{Kostant_Whittaker}. 
These deformations arise from our version of
spin $q$-Whittaker polynomials
in a scaling limit as $q\to1$.
The deformation parameter is denoted by $S>0$.

\subsection{Whittaker functions}
\label{sub:usual_Whittaker}

Before proceeding with deformations of Whittaker functions, 
let us recall the usual $\mathfrak{gl}_{N}$ Whittaker functions.
These functions play a central role in representation theory and 
integrable systems
\cite{Kostant1977Whitt}, \cite{Etingof1999}, \cite{givental1996stationary}
as well as are related to several models of random polymers 
\cite{Oconnell2009_Toda}, \cite{COSZ2011}, \cite{OSZ2012}, 
\cite{BorodinCorwin2011Macdonald}.

The $\mathfrak{gl}_N$ 
Whittaker functions
$\psi_{\lambda_1,\dots,\lambda_N} (\underline{u}_N)$
are indexed\footnote{To match 
the historical notation for Whittaker functions, 
here and in the discussion of the spin Whittaker functions
we place the ``variables'' into the subscript of a Whittaker function,
and the ``index'' in the parentheses.} by $N$-tuples 
$\underline{u}_N=(u_{N,1},\ldots,u_{N,N} )\in \mathbb{R}^N$,
depend on 
$\underline{\lambda}=(\lambda_1,\ldots,\lambda_N )\in \mathbb{C}^N$,
and may be defined through the recursion (following 
from the Givental integral representation \cite{givental1996stationary}, cf.
\cite{Gerasimov_et_al_OnAGaussGivental}): 
\begin{equation}
	\label{eq:usual_Whittaker}
	\psi_{\lambda_1,\dots,\lambda_N} (\underline{u}_N)
	=
	\int_{\mathbb{R}^{N-1}}
	\psi_{\lambda_1,\dots,\lambda_{N-1}} (\underline{u}_{N-1}) \,
	Q^{N \to N-1}_{\lambda_N}(\underline{u}_N,\underline{u}_{N-1})
	\prod_{k=1}^{N-1}  du_{N-1,k},
\end{equation}
where
\begin{equation}
	\label{eq:Baxter_Q}
		Q^{N \to N-1}_{\lambda}(\underline{u}_N,\underline{u}_{N-1}) 
		= e^{\mathrm{i} \lambda \left( \sum_{i=1}^{N} u_{N,i} - \sum_{i=1}^{N-1} u_{N-1,i} \right) }
		\prod_{i=1}^{N-1} 
		\exp\left\{ 
			-
			e^{ u_{N-1,i}-u_{N,i}} 
			- e^{u_{N,i+1}-u_{N-1,i}   } \right\}
\end{equation}
is known as the
Baxter $Q$-operator.
The function $\underline{\lambda}\mapsto
\psi_{\underline{\lambda}}(\underline{u}_N)$
is an entire function of $\underline{\lambda}\in\mathbb{C}^N$ 
for all $\underline{u}_N\in \mathbb{R}^N$.
For $N=1$, we have $\psi_\lambda(u)=e^{\mathrm{i}\lambda u}$.
For $N = 2$, the Whittaker functions can be expressed 
through the (single-variable) 
Bessel $K$ function
$K_v(z)=\frac{1}{2}\int_{-\infty}^{\infty}e^{xv}\exp\left( -\frac{z}{2}(e^x+e^{-x}) \right)dx$.

For the Whittaker functions, 
$Q^{N\to N-1}_{\lambda_{N}}(\underline{u}_N,\underline{u}_{N-1})$ 
plays the role of a
branching function like the single-variable sqW function
$\mathbb{F}_{\nu/\mu}(x)$
\eqref{eq:inc_sqW} (here $x$ plays the same role as $\lambda_N$, and 
$\nu,\mu$ correspond to $\underline{u}_N,\underline{u}_{N-1}$).
Note that the Whittaker functions are not indexed by 
ordered sequences of numbers $\underline{u}_N$. 
Rather, in the Baxter $Q$-operator,
the interlacing condition among arrays
$\underline{u}_{N-1},\underline{u}_{N}$ is replaced by the 
``mild interlacing''.
Namely, 
$Q^{N \to N-1}$ \eqref{eq:Baxter_Q}
decays doubly exponentially whenever
$u_{N,i+1}>u_{N-1,i}$ or $u_{N-1,i}>u_{N,i}$.

The Whittaker functions satisfy the following analogue
of the Cauchy identity due to Bump and Stade
\cite{Bump1989},
\cite{Stade2002},
\cite{gerasimov2008baxter}:
\begin{equation}
	\label{eq:Bump_Stade_cauchy}
	\int_{\mathbb{R}^N}
	e^{-e^{-u_{N,N}}}
	\overline{\psi_{\underline{\lambda}_N}(\underline{u}_N)}
	\psi_{\underline{\nu}_N}(\underline{u}_N)
	\prod_{j=1}^N
	d u_{N,j}
	=\prod_{j,k=1}^N
	\Gamma(\mathrm{i}\nu_j-\mathrm{i}\lambda_k).
\end{equation}
See also 
\cite[(1.2)]{COSZ2011}, \cite[Section 4.2.1]{BorodinCorwin2011Macdonald}
for a generalization when 
one of the Whittaker functions is replaced by a certain integral
coming from the limit of the torus product representation
of Macdonald polynomials:
\begin{equation}
	\label{eq:dual_usual_Whittaker_function}
	\theta_{\underline{Y}}(\underline{u}_N)
	\coloneqq
	\int_{\mathbb{R}^N}\overline{\psi_{\underline{\nu}}(\underline u_N)}
	\prod_{i=1}^{T}\prod_{k=1}^N
	\Gamma(Y_i- \mathrm{i}\nu_k)\,
	\cdot
	\frac{1}{(2\pi)^N N!}
	\prod_{1\le A\ne B\le N}
	\frac{1}{\Gamma(\mathrm{i}\nu_A-\mathrm{i}\nu_B)}\,
	d\underline{\nu},
\end{equation}
where $\underline{Y}=(Y_1,\ldots,Y_T )\in \mathbb{R}^T$.
We refer to $\theta_{\underline{Y}}(\underline{u}_N)$
as the \emph{dual Whittaker function}.

Similar integral representations for dual spin Hall--Littlewood
functions are found in 
\cite[Proposition 7.3]{Borodin2014vertex}, 
\cite[Section 7.3]{BorodinPetrov2016inhom}.

The Whittaker functions are eigenfunctions
of the $\mathfrak{gl}_N$ quantum Toda Hamiltonian
$\mathscr{H}_2^{\mathrm{Toda}}$, 
see formula 
\eqref{eq:intro_Whit_eigenrelations} in the Introduction.

\medskip

\noindent\textbf{Convention on multiplicative notation}. The papers \cite{COSZ2011}, \cite{OSZ2012} 
use \emph{multiplicative parameters}
$U_{N,i}=e^{u_{N,i}}\in \mathbb{R}_{>0}$ instead of the additive ones.
In multiplicative notation,
the integration in \eqref{eq:usual_Whittaker} and \eqref{eq:Bump_Stade_cauchy}
is over the product measures of the form $\prod \frac{dU_{m,i}}{U_{m,i}}$.
It is convenient for us to 
adopt multiplicative notation throughout most of the 
discussion of the spin Whittaker functions.
We will often denote multiplicative variables and parameters by capital letters.

\subsection{Signatures in continuous space}

In contrast with the usual Whittaker functions
indexed by unordered $N$-tuples of reals,
the spin Whittaker functions will be indexed by
\emph{nondecreasing}
sequences of real numbers. 
Introduce the \emph{Weyl chamber} of
$\mathbb{R}_{\ge 1}^N$ by
\begin{equation}
	\label{eq:Whit_Weyl_chamber}
	\mathpzc{W}_N \coloneqq \{ \underline{L}_N = (L_{N,i})_{1
	\le i \le N} \in \mathbb{R}_{\ge 1}^N \colon L_{N,N}\le L_{N,N-1}\le \ldots\le L_{N,1} \}.
\end{equation}
By 
$\mathring{\mathpzc{W}}_N$
denote the interior of the Weyl chamber with strict inequalities in 
\eqref{eq:Whit_Weyl_chamber}.

Given two sequences $\underline{L}_{N-1} \in \mathpzc{W}_{N-1}$ and $\underline{L}_{N} \in \mathpzc{W}_{N}$, we say that they \emph{interlace} if
\begin{equation} \label{eq:interlacing_continuous}
	L_{N,i+1} \le L_{N-1,i} \le L_{N,i}, \qquad \text{for } 1\le i \le N-1.
\end{equation}
As in discrete setting, we denote interlacing by $\underline{L}_{N-1} \prec
\underline{L}_{N}$. 
The interlacing relation is naturally extended to
sequences of the same length by dropping the last inequality in
\eqref{eq:interlacing}. 

We endow the Weyl chamber $\mathpzc{W}_N$ with
the measure $\frac{d \underline{L}_N}{\underline{L}_N} = \prod_{k=1}^N
\frac{d L_{N,k}}{L_{N,k}}$. In most
cases
we do not explicitly indicate the 
integration domain $\mathpzc{W}_N$
when the measure $\frac{d \underline{L}_N}{\underline{L}_N}$ is used.

Define the \emph{continuous Gelfand-Tsetlin cone} as
    \begin{equation}
			\label{eq:cont_GT_cone}
			\mathpzc{GT}_{\hspace{-0.5ex}N} \coloneqq \{ \doubleunderline{L}_N = (L_{k,i})_{1 \le i \le k \le N} \in \mathbb{R}_{\ge 1}^{N(N+1)/2} \colon L_{k+1,i+1} \le L_{k,i} \le L_{k+1,i} \},
    \end{equation}
which is
the set of 
interlacing sequences
$\underline{L}_1 \prec \cdots \prec \underline{L}_N$. The set
$\mathpzc{GT}_{\hspace{-0.5ex}N}$ is endowed with the measure $\frac{d
\doubleunderline{L}_N}{\doubleunderline{L}_N} = \prod_{1\le i \le j
\le N} \frac{d L_{j,i}}{L_{j,i}}$.

\subsection{Spin Whittaker functions}

We begin with a branching function
from which we can recursively build 
spin Whittaker functions.
The branching function is an analogue of the skew polynomial
evaluated at a single variable.

Fix a deformation parameter $S>0$
throughout the section.
Let us denote
\begin{equation} \label{eq:A}
	\mathcal{A}_{S,X}(u,v,z) 
	\coloneqq
	\frac{1}{\mathrm{B}(S+X, S-X)} 
	\left( 1-\frac{v}{z} \right)^{S-X-1} \left( 1-\frac{u}{v} \right)^{S+X-1}
	\left( 1-\frac{ u }{ z } \right)^{1-2S},
\end{equation}
where $1\le u < v < z$ are real, and $|X|<S$. 
Here $\mathrm{B}(\cdot,\cdot)$ is the beta function
\eqref{eq:Beta}.

\begin{definition}
	Let $|X|<S$ and $k\ge1$. 
	The \emph{spin Whittaker branching functions} are given by
	\begin{equation*}
			\mathfrak{f}_X (\underline{L}_k;\underline{L}_{k+1})
			\coloneqq
			\mathbf{1}_{\underline{L}_k \prec \underline{L}_{k+1}}
			\left(
			\frac{ L_{k+1,k+1} \cdots L_{k+1,1} }{ L_{k,k} \cdots L_{k,1} }
		\right)^{-X} \prod_{i=1}^k \mathcal{A}_{S,X} (L_{k+1,i+1}, L_{k,i}, L_{k+1,i}).
	\end{equation*}
\end{definition}

We now introduce the main object of the present section.

\begin{definition}[Spin Whittaker functions]
	\label{def:sW}
    For $N\ge 1$, consider parameters $X_1, \dots, X_N$ and $S$ such that $|X_i|<S$ for all $i$. 
		The \emph{spin Whittaker functions} $\mathfrak{f}_{X_1, \dots ,X_N}(\underline{L}_N)$,
		$\underline{L}_N\in \mathpzc{W}_N$,
		are defined recursively by
    \begin{equation} \label{eq:sW_N=1}
        \mathfrak{f}_{X_1} (L_{1,1}) \coloneqq L_{1,1}^{-X_1}
    \end{equation}
		for $N=1$,
		and via the branching rule
    \begin{equation} \label{eq:sW}
        \mathfrak{f}_{X_1,\dots,X_N}(\underline{L}_N) 
        \coloneqq
        \int_{  \underline{L}_{N-1} \prec \underline{L}_N } \mathfrak{f}_{X_1,\dots,X_{N-1}}(\underline{L}_{N-1})
        \,
        \mathfrak{f}_{X_N}(\underline{L}_{N-1} ; \underline{L}_N)  \frac{d \underline{L}_{N-1}}{\underline{L}_{N-1}}
    \end{equation}
		for $N\ge2$.
\end{definition}

\begin{example}[Two-variable spin Whittaker function]
	\label{ex:two_var_sW}
	Let us 
	compute the integral \eqref{eq:sW} for $N=2$.
	Denote $\underline{X}_2=(X,Y)$, 
	$\underline{L}_2=(u,u+\alpha)$, where $u\ge 1$, $\alpha>0$. Then
	\begin{align*}
		&
		\mathfrak{f}_{X,Y}(u,u+\alpha)
		\\&
		\hspace{5pt}=
		\frac{(u(u+\alpha))^{-Y}}{\mathrm{B}(S+Y,S-Y)}
		\left( 1-\frac{u}{u+\alpha} \right)^{1-2S}
		\int_{u}^{u+\alpha}
		v^{Y-X-1}
		\left( 1-\frac{v}{u+\alpha} \right)^{S-Y-1}
		\left( 1-\frac{u}{v} \right)^{S+Y-1}
		dv
		\\&
		\hspace{5pt}=
		\frac{u^{-Y}(u+\alpha)^{S}}{\mathrm{B}(S+Y,S-Y)}
		\int_{0}^{1}
		(u+t \alpha)^{-X-S}
		\left( 1-t \right)^{S-Y-1}
		t^{S+Y-1}
		dt,
	\end{align*}
	where we changed the variable as $v=u+\alpha t$, $t\in [0,1]$.
	The integral can now be evaluated using 
	Euler's representation of the Gauss hypergeometric function
	$_2F_1$ \eqref{eq:2F1}. Let us also rename back $z=u+\alpha$. We have 
	\begin{equation}
		\label{eq:sW_two_var}
		\mathfrak{f}_{X,Y}(u,z)
		=
		(z/u)^S 
		u^{-X-Y}
		{}_2F_1 
    \left(\begin{minipage}{2.15cm}
		\center{$S+X \, , \, S+Y$}
		\\
		\center{$2S$}
		\end{minipage} \Big\vert\, 1-\frac{z}{u}\right).
	\end{equation}
	When $|1-z/u| \ge 1$, the hypergeometric function 
	in
	\eqref{eq:sW_two_var}
	should be understood in the sense of analytic continuation.

	We remark that most of the properties of the spin Whittaker functions 
	given below in this section 
	can be directly derived for $N=2$ from known properties of the Gauss 
	hypergeometric function ${}_2F_1$.

\end{example}

\begin{proposition} \label{prop:sW_are_well_posed}
	For $\underline{X}_N=(X_1,\ldots,X_N )$ with $|X_i|<S$, the 
	spin Whittaker function $\mathfrak{f}_{\underline{X}_N}(\underline{L}_N)$ is well-defined and 
	continuous in $\underline{L}_N \in \mathpzc{W}_N$.
\end{proposition}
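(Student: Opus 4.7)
The plan is to establish the proposition by induction on $N$, using a change of variables that recasts the Givental-type integral as a convex combination with respect to a beta-type probability density, so that continuity and finiteness follow by dominated convergence.

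The base case $N=1$ is immediate since $L_{1,1}^{-X_1}$ is continuous on $\mathpzc{W}_1=[1,\infty)$. For the inductive step, assume $\mathfrak{f}_{X_1,\ldots,X_{N-1}}$ is continuous on $\mathpzc{W}_{N-1}$. For each $i=1,\ldots,N-1$ I will substitute
\[
L_{N-1,i}=L_{N,i+1}+t_i\,(L_{N,i}-L_{N,i+1}),\qquad t_i\in[0,1],
\]
which bijects the interlacing region $\underline{L}_{N-1}\prec\underline{L}_N$ (for $\underline{L}_N\in\mathring{\mathpzc{W}}_N$) onto the cube $[0,1]^{N-1}$. A direct computation using
\[
1-\tfrac{L_{N-1,i}}{L_{N,i}}=(1-t_i)\tfrac{L_{N,i}-L_{N,i+1}}{L_{N,i}},\qquad 1-\tfrac{L_{N,i+1}}{L_{N-1,i}}=t_i\tfrac{L_{N,i}-L_{N,i+1}}{L_{N-1,i}},
\]
shows that the singular factors $(L_{N,i}-L_{N,i+1})^{\pm}$ in $\mathcal{A}_{S,X_N}$ cancel against the Jacobian $dL_{N-1,i}/L_{N-1,i}=(L_{N,i}-L_{N,i+1})L_{N-1,i}^{-1}dt_i$, and after combining with the prefactor in the definition of $\mathfrak{f}_{X_N}$ one obtains the clean expression
\[
\mathfrak{f}_{\underline{X}_N}(\underline{L}_N)=L_{N,N}^{-X_N}\int_{[0,1]^{N-1}}\mathfrak{f}_{\underline{X}_{N-1}}(\underline{L}_{N-1})\prod_{i=1}^{N-1}\left(\tfrac{L_{N,i}}{L_{N-1,i}}\right)^{\!S}\prod_{i=1}^{N-1}\tfrac{t_i^{S+X_N-1}(1-t_i)^{S-X_N-1}}{\mathrm{B}(S+X_N,S-X_N)}\,d\underline{t}.
\]
Here $\underline{L}_{N-1}=\underline{L}_{N-1}(\underline{t},\underline{L}_N)$ is a \emph{continuous} function of $(\underline{t},\underline{L}_N)\in[0,1]^{N-1}\times\mathpzc{W}_N$, and crucially the singular behavior has been absorbed into the beta factor $t_i^{S+X_N-1}(1-t_i)^{S-X_N-1}$, which is integrable precisely because $|X_N|<S$.

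With this representation, finiteness and continuity follow from dominated convergence. Fix a compact neighborhood $K\subset\mathpzc{W}_N$ of a given point $\underline{L}_N^{\star}$. The image $\underline{L}_{N-1}(K\times[0,1]^{N-1})$ is a compact subset of $\mathpzc{W}_{N-1}$, on which $\mathfrak{f}_{\underline{X}_{N-1}}$ is bounded by the inductive hypothesis. Since $L_{N,i}/L_{N-1,i}\in[1,L_{N,i}/L_{N,i+1}]$ is bounded on $K$, the non-beta part of the integrand is uniformly bounded on $K\times[0,1]^{N-1}$ by some constant $C_K$. The integrable dominating function is $C_K$ times the beta density, which integrates to $1$. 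Pointwise continuity of the integrand in $\underline{L}_N$ (for almost every $\underline{t}$) follows from continuity of $\mathfrak{f}_{\underline{X}_{N-1}}$ and of the continuous factors, so dominated convergence yields continuity of $\mathfrak{f}_{\underline{X}_N}$ at $\underline{L}_N^{\star}$.

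The one subtlety to verify carefully is the behavior at the boundary $\partial\mathpzc{W}_N$ where some $L_{N,i}=L_{N,i+1}$: the original substitution becomes singular there, but the reformulated integrand in $(\underline{t},\underline{L}_N)$ extends continuously to $\mathpzc{W}_N$ (at such a boundary point, $L_{N-1,i}\equiv L_{N,i}$ so $L_{N,i}/L_{N-1,i}\to 1$ and $\underline{L}_{N-1}$ still defines a valid element of $\mathpzc{W}_{N-1}$). This is the main technical point of the argument, and it is precisely what makes the $t$-parameterization the natural device: the explicit cancellation of the Jacobian against the singular factors of $\mathcal{A}_{S,X_N}$, together with the fact that the beta density is independent of $\underline{L}_N$, allows the boundary of the Weyl chamber to be handled uniformly with its interior.
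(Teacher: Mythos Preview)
Your proof is correct and rests on the same key device as the paper's: the affine change of variables $L_{N-1,i}=L_{N,i+1}+t_i(L_{N,i}-L_{N,i+1})$, under which the singular factors of $\mathcal{A}_{S,X_N}$ and the Jacobian combine into the beta density $t_i^{S+X_N-1}(1-t_i)^{S-X_N-1}/\mathrm{B}(S+X_N,S-X_N)$. The difference is in packaging. The paper first isolates this as a one-dimensional ``delta function'' lemma (\Cref{lemma:delta_function}), showing that $\int_{\ell_3}^{\ell_1}\mathcal{A}_{S,X}(\ell_3,\ell_2,\ell_1)f(\ell_2)\,d\ell_2/\ell_2\to f(\ell_1)$ as $\ell_3\to\ell_1^-$; then in \Cref{lemma:sW_branching_general} it treats the interior of $\mathpzc{W}_N$ by direct integrability of the power-law singularities, and handles the boundary by applying the change of variables only to those coordinates $i$ with $L_{N,i}=L_{N,i+1}$. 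You instead apply the substitution to \emph{all} $N-1$ coordinates simultaneously, which converts the branching integral into an average over the fixed cube $[0,1]^{N-1}$ against a product of beta densities, and then run a single dominated convergence argument valid uniformly on $\mathpzc{W}_N$. Your route is more economical and avoids the interior/boundary case split; the paper's route has the advantage that the one-variable delta-function lemma is stated separately and can be reused.
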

In particular, we can first define 
$\mathfrak{f}_{\underline{X}_N}(\underline{L}_N)$ for $\underline{L}_N\in \mathring{\mathpzc{W}}_N$,
and then extend to the whole Weyl chamber by continuity.
(Note that $\mathcal{A}_{S,X}(u,v,z)$ \eqref{eq:A} might have a singularity at $u=z$.)
The proof of \Cref{prop:sW_are_well_posed} is based on the next two lemmas.

\begin{lemma} \label{lemma:delta_function}
	Let $\ell_1>0$ and let $f(\cdot )$ be a left continuous function on $\mathbb{R}_{\ge1}$. 
	Then, we have
	\begin{equation} \label{eq:delta_function}
			\lim_{\ell_3 \to \ell_1^-} \int_{\ell_3}^{\ell_1} \frac{d\ell_2}{\ell_2} \mathcal{A}_{S,X} (\ell_3, \ell_2 ,\ell_1) f(\ell_2)
			= f(\ell_1).
	\end{equation}
\end{lemma}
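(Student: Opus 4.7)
The plan is to reduce the integral to a fixed reference interval via a change of variables, at which point the convolution kernel becomes a recognizable Beta-density approximation of a Dirac delta, and then apply dominated convergence.

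First I would substitute $\ell_2 = \ell_3 + t(\ell_1 - \ell_3)$ with $t \in [0,1]$, which rescales the shrinking interval $[\ell_3, \ell_1]$ onto a fixed $[0,1]$. The three factors in $\mathcal{A}_{S,X}(\ell_3, \ell_2, \ell_1)$ rewrite as
\begin{align*}
	1-\ell_2/\ell_1 &= (1-t)(\ell_1-\ell_3)/\ell_1,\\
	1-\ell_3/\ell_2 &= t(\ell_1-\ell_3)/\ell_2,\\
	1-\ell_3/\ell_1 &= (\ell_1-\ell_3)/\ell_1,
\end{align*}
and $d\ell_2/\ell_2 = (\ell_1-\ell_3)\,dt/\ell_2$.

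Second, I would collect the powers of $(\ell_1-\ell_3)$ produced by these substitutions; they arise with total exponent $(S-X-1) + (S+X-1) + (1-2S) + 1 = 0$, so the entire $(\ell_1-\ell_3)$-dependence cancels. Tracking the remaining powers of $\ell_1$ and $\ell_2$, the integral becomes
\begin{equation*}
	\frac{\ell_1^{S+X}}{\mathrm{B}(S+X,S-X)} \int_0^1 (1-t)^{S-X-1}\, t^{S+X-1}\, \ell_2(t)^{-(S+X)}\, f(\ell_2(t))\,dt,
\end{equation*}
where $\ell_2(t) = \ell_3 + t(\ell_1-\ell_3)$. In this form the $t$-density is exactly the Beta$(S+X, S-X)$ density, up to the factor $\ell_2(t)^{-(S+X)} f(\ell_2(t))$ which converges pointwise to its value at $\ell_1$.

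Third, I pass to the limit $\ell_3 \to \ell_1^-$. For each $t \in (0,1)$ we have $\ell_2(t) \to \ell_1^-$, so $\ell_2(t)^{-(S+X)} f(\ell_2(t)) \to \ell_1^{-(S+X)} f(\ell_1)$ by the left-continuity hypothesis. Since $f$ is bounded on a left neighborhood of $\ell_1$, the integrand is dominated by $C\,(1-t)^{S-X-1} t^{S+X-1}$, which is integrable on $(0,1)$ thanks to $|X|<S$. Dominated convergence then yields
\begin{equation*}
	\frac{\ell_1^{S+X}}{\mathrm{B}(S+X,S-X)} \cdot \ell_1^{-(S+X)} f(\ell_1) \int_0^1 (1-t)^{S-X-1} t^{S+X-1}\,dt = f(\ell_1),
\end{equation*}
using the Beta integral $\mathrm{B}(S+X,S-X)=\int_0^1 (1-t)^{S-X-1} t^{S+X-1}\,dt$.

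The only delicate points are (i) spotting the exponent identity that kills every $(\ell_1-\ell_3)$ after the substitution, so that the rescaled kernel has a finite, nontrivial limit, and (ii) verifying that the condition $|X|<S$ makes the Beta weight integrable at both endpoints so that dominated convergence applies. Both are straightforward once the substitution is made; the main obstacle is simply recognizing that $\mathcal{A}_{S,X}(\ell_3,\cdot,\ell_1)\,d\ell_2/\ell_2$ is essentially a push-forward of the Beta$(S+X,S-X)$ distribution on $(\ell_3,\ell_1)$ whose support collapses to $\{\ell_1\}$.
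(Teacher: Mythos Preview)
Your proof is correct and essentially identical to the paper's: the paper sets $\ell_3=\ell_1-\delta$ and substitutes $\ell_2=\ell_1-\delta(1-\ell_2')$, which is precisely your substitution $\ell_2=\ell_3+t(\ell_1-\ell_3)$ with $t=\ell_2'$, arriving at the same Beta-weighted integral and concluding via dominated convergence and left continuity.
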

\begin{proof}
To compute the limit set $\ell_3=\ell_1 - \delta$ for a small positive $\delta$. After a change of variable $\ell_2=\ell_1 - \delta (1-\ell_2')$, the integral in \eqref{eq:delta_function} becomes 
\begin{equation*}
    \frac{1}{\mathrm{B}(S+X,S-X)} \int_0^1 d\ell_2' 
		\left( \frac{\ell_1}{\ell_1- \delta (1-\ell_2') } \right)^{S+X}
		(1-\ell_2')^{S-X-1} \ell_2'^{S+X-1} f(\ell_1 - \delta (1-\ell_2')).
\end{equation*}
	Using the left continuity of $f$, we see that the integrand converges 
	to $(1-\ell_2')^{S-X-1} \ell_2'^{S+X-1}f(\ell_1)$
	as $\delta\to0$.
	The limiting integrand integrates to 
	$\mathrm{B}(S+X,S-X)$,
	and so by the Dominated Convergence Theorem
	the lemma follows.
\end{proof}

\begin{lemma} \label{lemma:sW_branching_general}
	Let $f:\mathpzc{W}_{N-1}\to \mathbb{C}$ be left continuous in each of 
	$L_{N-1,i}$.
	Define $F:\mathring{\mathpzc{W}}_N \to \mathbb{C}$ as
	\begin{equation}\label{eq:sW_branching_general}
		F(\underline{L}_N) 
		=
		\int f(\underline{L}_{N-1})
		\,
		\mathfrak{f}_{X}(\underline{L}_{N-1} ; \underline{L}_N) \frac{d \underline{L}_{N-1}}{\underline{L}_{N-1}}.
	\end{equation}
	Then $F$ is continuous and can be extended by continuity to $\mathpzc{W}_N$. 
\end{lemma}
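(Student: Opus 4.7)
The strategy is to change variables in the integration domain so that it becomes the cube $[0,1]^{N-1}$ and the singular structure of $\mathcal{A}_{S,X}$ becomes a Beta-type density separable from a continuous remainder. For each $i=1,\dots,N-1$, substitute $L_{N-1,i}=L_{N,i+1}+t_i(L_{N,i}-L_{N,i+1})$ with $t_i\in[0,1]$. A direct computation using \eqref{eq:A} yields the identity
\begin{equation*}
\mathcal{A}_{S,X}(L_{N,i+1},L_{N-1,i},L_{N,i})\,\frac{dL_{N-1,i}}{L_{N-1,i}}
=\frac{t_i^{S+X-1}(1-t_i)^{S-X-1}}{\mathrm{B}(S+X,S-X)}\,\frac{L_{N,i}^{S+X}}{L_{N-1,i}^{S+X}}\,dt_i,
\end{equation*}
so that \eqref{eq:sW_branching_general} rewrites as
\begin{equation*}
F(\underline{L}_N)=\int_{[0,1]^{N-1}}\! f(\underline{L}_{N-1}(\underline{t}))\,\mathscr{G}(\underline{L}_N,\underline{t})\,\prod_{i=1}^{N-1}\frac{t_i^{S+X-1}(1-t_i)^{S-X-1}}{\mathrm{B}(S+X,S-X)}\,dt_i,
\end{equation*}
where $\mathscr{G}$ collects the ratio $\bigl(\prod_j L_{N,j}/\prod_i L_{N-1,i}\bigr)^{-X}$ together with the factors $L_{N,i}^{S+X}/L_{N-1,i}^{S+X}$, and is jointly continuous in $(\underline{L}_N,\underline{t})$ since $L_{N-1,i}(\underline t)\ge L_{N,i+1}\ge 1$.

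To prove continuity on $\mathring{\mathpzc{W}}_N$, I would fix a compact neighborhood $K\subset\mathring{\mathpzc{W}}_N$ of a point $\underline{L}_N^\star$. On $K$, the images $\underline{L}_{N-1}(\underline{t})$ stay in a fixed compact subset of $\mathpzc{W}_{N-1}$ on which $f$ is bounded (by left continuity on a compact set, arguing by covering) and $\mathscr{G}$ is bounded. Since $|X|<S$ makes the Beta density $t^{S+X-1}(1-t)^{S-X-1}$ integrable on $[0,1]$, Dominated Convergence delivers continuity of $F$ on $\mathring{\mathpzc{W}}_N$.

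The boundary extension is the critical step. Let $\underline{L}_N^{(0)}\in\mathpzc{W}_N\setminus\mathring{\mathpzc{W}}_N$ and set $I=\{i:L_{N,i}^{(0)}=L_{N,i+1}^{(0)}\}$. As $\underline{L}_N\to\underline{L}_N^{(0)}$ from $\mathring{\mathpzc{W}}_N$, for each $i\in I$ we have $L_{N-1,i}(\underline{t})\to L_{N,i}^{(0)}$ \emph{uniformly} in $t_i\in[0,1]$, i.e., the probability measure $\mathcal{A}_{S,X}(L_{N,i+1},\cdot,L_{N,i})\frac{dL_{N-1,i}}{L_{N-1,i}}$ on $[L_{N,i+1},L_{N,i}]$ concentrates to the Dirac mass at $L_{N,i}^{(0)}$, which is precisely \Cref{lemma:delta_function}. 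Iterating this observation over $i\in I$ and invoking Dominated Convergence on the remaining $t_i$'s with $i\notin I$ (using the continuity of $f$ and $\mathscr{G}$ in the other arguments), one obtains a finite limit depending continuously on $\underline{L}_N^{(0)}$, which defines the desired extension.

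The main obstacle will be ensuring that the left-continuity hypothesis on $f$ is exactly what the argument needs. Since $L_{N-1,i}(\underline{t})\le L_{N,i}$ by construction, the approach $L_{N,i+1}\uparrow L_{N,i}$ drives $L_{N-1,i}$ to its limit from below, so left continuity in the $i$-th argument suffices. A small subtlety is that when several adjacent coordinates of $\underline L_N$ coincide in the limit, the Beta factors collapse simultaneously; because the Beta densities are tensor products and $f$ is left continuous in each argument separately, this is handled by iterated dominated convergence applied in any fixed order. Boundedness of $|f|$ in the required neighborhoods --- needed to supply the dominating function --- follows because left continuity together with the closed Weyl chamber structure implies $f$ is bounded on compact subsets of $\mathpzc{W}_{N-1}$.
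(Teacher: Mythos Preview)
Your approach is essentially the same as the paper's: both hinge on the substitution $L_{N-1,i}=L_{N,i+1}+t_i(L_{N,i}-L_{N,i+1})$, which turns the $\mathcal{A}_{S,X}$ factor into a Beta density times a continuous remainder (this is exactly the computation underlying \Cref{lemma:delta_function}). The difference is organizational: you perform the change of variables globally on all coordinates and then argue by dominated convergence over the fixed cube $[0,1]^{N-1}$, whereas the paper argues continuity in the interior directly from the summability of the singularities, and only at the boundary introduces the substitution (and only for those indices $i$ with $L_{N,i}=L_{N,i+1}$), approaching via the specific perturbation $(L_{N,N},L_{N,N-1}+d_{N-1},\dots,L_{N,1}+d_1)$. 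Your global version is arguably cleaner since it treats interior and boundary uniformly.

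One caution: your final sentence asserts that left continuity in each variable implies boundedness of $f$ on compact subsets of $\mathpzc{W}_{N-1}$. This is not true in general (e.g.\ $f(1)=0$, $f(x)=1/(x-1)$ for $x>1$ is left continuous on $[1,2]$ but unbounded). The paper's proof tacitly makes the same local-boundedness assumption when it says the singularities ``come only from the branching function'', so this is a shared technicality rather than a defect specific to your argument; in the actual application (the induction proving \Cref{prop:sW_are_well_posed}) the function $f$ is continuous and the issue does not arise. Similarly, your claim that ``the approach $L_{N,i+1}\uparrow L_{N,i}$ drives $L_{N-1,i}$ to its limit from below'' only covers approaches where $L_{N,i}$ does not exceed $L_{N,i}^{(0)}$; the paper finesses this by choosing a specific monotone approach from above, while you would need a word or two more to handle arbitrary sequences in $\mathring{\mathpzc{W}}_N$ converging to the boundary point.
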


\begin{proof}
For $\underline{L}_N \in \mathring{\mathpzc{W}}_N$, 
the singularities of the integrand in \eqref{eq:sW_branching_general}
come only from the branching function
$\mathfrak{f}_{X}(\underline{L}_{N-1},\underline{L}_N)$ and they are
of the form
\begin{equation*}
    \left( 1 - \frac{L_{N-1,i}}{L_{N,i}} \right)^{S-X-1},
    \qquad
    \text{or}
    \qquad
    \left( 1 - \frac{L_{N,i+1}}{L_{N-1,i}} \right)^{S+X-1}
\end{equation*}
for some $i$. 
Because $|X|<S$ these singularities are summable.
Therefore $F$, is
continuous inside the interior $\mathring{\mathpzc{W}}_N$ of the Weyl chamber. 

To prove that $F$ can be
extended by continuity to $\mathpzc{W}_N$ we
first define, from small positive increments
$\delta_1,\dots,\delta_{N-1}$, the quantities 
$d_i =
\delta_i + \cdots + \delta_{N-1}$ for each $i=1,\dots,N-1$.
We aim to compute the limit
\begin{equation*}
    \lim_{\delta_1,\dots, \delta_{N-1} \to 0} F(L_{N,N}, L_{N,N-1} + d_{N-1}, \dots, L_{N,1}+d_1),
\end{equation*}
when some of the $L_{N,i}$'s 
are equal to each other. 
Before the limit, this function is equal to
\begin{equation*}
    \begin{split}
        &
        \int_{L_{N,N}}^{L_{N,N-1}+\delta_{N-1}} \mathcal{A}_{S,X} (L_{N,N},L_{N-1,N-1},L_{N,N-1}+\delta_{N-1}) \frac{d L_{N-1,N-1} }{L_{N-1,N-1}}
        \\
        &
        \cdots
        \int_{L_{N,2}+d_2 }^{L_{N,1}+d_2+\delta_1} \mathcal{A}_{S,X} (L_{N,2}+d_2,L_{N-1,1},L_{N,1}+d_2+\delta_1) \frac{d L_{N-1,1} }{L_{N-1,1}} f(\underline{L}_{N-1}) \left( \frac{\prod_{i=1}^{N-1} L_{N-1,i}}{\prod_{i=1}^N (L_{N,i}+d_i)} \right)^X.
    \end{split}
\end{equation*}
For any $i$ such that $L_{N,i}=L_{N,i+1}$, make the change of
variables $L_{N-1,i}=L_{N,i}+d_{i+1} - \delta_i(1-\ell_{N-1,i})$.
As in the proof of 
\Cref{lemma:delta_function},
this removes all the corresponding singularities.
Therefore, the limit as
$\delta_1,\dots,\delta_{N-1}\to 0$ exists, is finite, and can
be computed using \eqref{eq:delta_function}.
\end{proof}

\begin{proof}[Proof of Proposition \ref{prop:sW_are_well_posed}]
For $N=1$ the spin Whittaker function \eqref{eq:sW_N=1} is clearly
continuous. Therefore, by 
\Cref{lemma:sW_branching_general},
$\mathfrak{f}_{X_1,X_2}(\underline{L}_2)$ is well defined and 
continuous on $\mathpzc{W}_2$. Proceeding by induction on $N$,
we get the result of 
\Cref{prop:sW_are_well_posed}.
\end{proof}

The next corollary gives a Givental type
representation of the spin Whittaker functions, obtained by writing down
explicitly the recursive definition \eqref{eq:sW}.
\begin{corollary}
	\label{cor:sW_Givental}
	We have
	\begin{equation} \label{eq:sW_Givental}
		\mathfrak{f}_{X_1,\dots,X_N}(\underline{L}_N)
		=
		\int
		\prod_{1 \le k \le N} \frac{ \prod_{i=1}^{k-1}L_{k-1,i}^{X_k} }{ \prod_{i=1}^{k}L_{k,i}^{X_k} }
		\prod_{1\le i \le k \le N-1}
		\mathcal{A}_{S,X_{k+1}}(L_{k+1,i+1},L_{k,i},L_{k+1,i}) 
		\frac{d \doubleunderline{L}_{N-1}}{\doubleunderline{L}_{N-1}}.
	\end{equation}
\end{corollary}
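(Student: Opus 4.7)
The plan is to prove this corollary by a straightforward induction on $N$, simply unfolding the recursive definition \eqref{eq:sW} of the spin Whittaker functions. The claim is not conceptually deep: it just says that iterating the single-step branching integral \eqref{eq:sW} gives rise to an integral over the continuous Gelfand-Tsetlin cone $\mathpzc{GT}_{\hspace{-0.5ex}N}$ in which every integration variable contributes a power factor and an $\mathcal{A}_{S,X_{k+1}}$ weight according to its position.

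First I would verify the base case $N=1$. The right-hand side reduces to the $k=1$ term with an empty inner product in the numerator and $L_{1,1}^{X_1}$ in the denominator, the $\mathcal{A}$ product is empty (since $k \le N-1 = 0$), and there are no integrations. This gives $L_{1,1}^{-X_1}$, matching \eqref{eq:sW_N=1}.

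Next, the inductive step. Assume \eqref{eq:sW_Givental} at level $N-1$. Substituting into the recursive definition \eqref{eq:sW} and collecting terms, I would check two bookkeeping identities. For the power factors, the induction hypothesis supplies $\prod_{1 \le k \le N-1} \frac{\prod_{i=1}^{k-1} L_{k-1,i}^{X_k}}{\prod_{i=1}^{k} L_{k,i}^{X_k}}$, while the single-step branching function contributes $\bigl(\frac{L_{N,N}\cdots L_{N,1}}{L_{N-1,N-1}\cdots L_{N-1,1}}\bigr)^{-X_N} = \frac{\prod_{i=1}^{N-1} L_{N-1,i}^{X_N}}{\prod_{i=1}^N L_{N,i}^{X_N}}$, which is exactly the missing $k=N$ term. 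For the $\mathcal{A}$-factors, the induction hypothesis gives $\prod_{1\le i\le k\le N-2}\mathcal{A}_{S,X_{k+1}}(\cdot,\cdot,\cdot)$ while the branching function supplies $\prod_{i=1}^{N-1}\mathcal{A}_{S,X_N}(L_{N,i+1},L_{N-1,i},L_{N,i})$, filling in the $k=N-1$ slice. Together these give the full product $\prod_{1\le i\le k\le N-1}\mathcal{A}_{S,X_{k+1}}(\cdot,\cdot,\cdot)$ in \eqref{eq:sW_Givental}.

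The only nontrivial point (and the one the careful reader should check) is the legitimacy of the iterated integral: we need to interchange the inner integration over $\doubleunderline{L}_{N-2}$ with the outer integration over $\underline{L}_{N-1}$, and combine them into one integral over $\mathpzc{GT}_{\hspace{-0.5ex}N-1}$. Since the constraints $\underline{L}_{k}\prec \underline{L}_{k+1}$ from the indicator functions in each branching factor are exactly the defining inequalities of the continuous Gelfand-Tsetlin cone \eqref{eq:cont_GT_cone}, Fubini's theorem applies once absolute integrability is established. The latter follows from \Cref{prop:sW_are_well_posed}: the local singularities of the integrand are of the form $(1-L_{k,i}/L_{k+1,i})^{S-X_{k+1}-1}$ and $(1-L_{k+1,i+1}/L_{k,i})^{S+X_{k+1}-1}$, which are integrable under the assumption $|X_i|<S$. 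This is the only analytical step; the rest is combinatorial bookkeeping.
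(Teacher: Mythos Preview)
Your proposal is correct and follows essentially the same approach as the paper: both invoke absolute convergence of the iterated integrals (established in \Cref{prop:sW_are_well_posed}) and apply Fubini--Tonelli to identify the recursion with a single integral over $\mathpzc{GT}_{\hspace{-0.5ex}N-1}$. You are simply more explicit about the combinatorial bookkeeping of the power and $\mathcal{A}$-factors, which the paper leaves implicit.
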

\begin{proof}
Because the sequence of integrations as in \eqref{eq:sW} 
leading to $\mathfrak{f}_{\underline{X}_N}(\underline{L}_N)$
is (absolutely) convergent, 
so is the integration over the Gelfand-Tsetlin array 
$\mathpzc{GT}_{\hspace{-0.5ex}N-1}$. 
The two integration procedures give the same result by the Fubini--Tonelli theorem.
\end{proof}

\subsection{Dual Spin Whittaker functions} 
\label{sub:dual_sw}

In this section we define a 
dual family of functions.
Given interlacing sequences
$\underline{\widetilde{L}}_{k}\prec\underline{L}_{k}$ of the same length $k$, introduce the
\emph{dual spin Whittaker branching functions}
    \begin{equation} \label{eq:dual_branch_sW_1}
			\begin{split}
        \mathfrak{g}_Y(\underline{\widetilde{L}}_k ; \underline{L}_k)
        &\coloneqq
				\mathbf{1}_{\underline{\widetilde L}_{k} \prec \underline{L}_k}\,
        \frac{1}{\Gamma(S-Y)}
        \left(
        \frac{ \widetilde{L}_{k,k} \cdots \widetilde{L}_{k,1} }{ L_{k,k} \cdots L_{k,1} }
        \right)^Y
        \left( 1-\frac{\widetilde{L}_{k,1}}{L_{k,1}} \right)^{S-Y-1} 
        \\
				&
				\hspace{180pt}
				\times\prod_{i=2}^{k} 
        \mathcal{A}_{S,-Y}(\widetilde{L}_{k,i},L_{k,i},\widetilde{L}_{k,i-1})
        .
			\end{split}
    \end{equation}
For pairs of interlacing
sequences $\underline{L}_{k-1} \prec \underline{L}_k$, $k\ge1$, of different
lengths, set
\begin{equation*}
	\mathfrak{g}_Y(\underline{L}_{k-1};\underline{L}_k)
	\coloneqq
	\mathfrak{g}_Y((1,\underline{L}_{k-1});\underline{L}_k).
\end{equation*}
\begin{remark}
	One can also write $\mathfrak{g}_Y$ as
			\begin{equation} \label{eq:dual_branch_sW_2}
					\mathfrak{g}_Y(\underline{\widetilde{L}}_k ; \underline{L}_k) 
					= 
					\frac{L_{k,1}^{-Y}}{\Gamma(S-Y)}
					\left( 1 - \frac{\widetilde{L}_{k,1}}{L_{k,1}} \right)^{S-Y-1} \mathfrak{f}_{-Y}(\underline{\ell}_{k-1}; \underline{\widetilde{L}}_k),
			\end{equation}
	where $\underline{L}_k = (\underline{\ell}_{k-1}, L_{k,1})$. 
\end{remark}
\begin{definition}
	Let $N \le M$ and consider parameters $Y_1,\dots, Y_M$ 
	such that $|Y_i|<S$ for all $i$. 
	The \emph{dual spin Whittaker functions} are defined recursively by
    \begin{equation}
			\label{eq:g_branching_sW}
        \mathfrak{g}_{Y_1,\dots,Y_M}(\underline{L}_N)
        =
        \begin{dcases}
        \int
        \mathfrak{g}_{Y_1,\dots,Y_{M-1}}(\underline{\widetilde{L}}_N)
        \mathfrak{g}_{Y_M}(\underline{\widetilde{L}}_N ; \underline{L}_N)  \frac{d \underline{\widetilde{L}}_{N}}{\underline{\widetilde{L}}_{N}} \qquad & \text{if } N<M,
        \\
        \int
        \mathfrak{g}_{Y_1, \dots, Y_{N-1}}(\underline{\widetilde{L}}_{N-1})
        \mathfrak{g}_{Y_N}(\underline{\widetilde L}_{N-1} ; \underline{L}_N)  \frac{d \underline{\widetilde L}_{N-1}}{\underline{\widetilde L}_{N-1}} \qquad & \text{if } N=M.
        \end{dcases}
    \end{equation}
		In particular, 
		for $M=N=1$ we have
		\begin{equation*}
			\mathfrak{g}_Y(L)=
			\mathfrak{g}_Y(1;L)
			=\frac{L^{-Y}(1-L^{-1})^{S-Y-1}}{\Gamma(S-Y)}.
		\end{equation*}
\end{definition}

The next two propositions explain that 
$\mathfrak{g}_{Y_1,\dots,Y_M}$ are well-defined as elements of the ``dual'' space of
compactly supported continuous functions on the Weyl chamber $\mathpzc{W}_{N}$.
    
\begin{proposition} \label{prop:integral_dual_branch_sW}
Let $f(\underline{L}_N)$ be a compactly supported continuous function
on $\mathpzc{W}_N$. Then the function
\begin{equation} \label{eq:integral_dual_branch_sW}
	\underline{\widetilde{L}}_N\mapsto
    \int \mathfrak{g}_Y (\underline{\widetilde{L}}_N;\underline{L}_N) f(\underline{L}_N) \frac{d \underline{L}_N}{\underline{L}_N},
\end{equation}
is also compactly supported and continuous.
\end{proposition}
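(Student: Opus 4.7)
The plan is to establish compact support directly from the interlacing constraint and compact support of $f$, and then prove continuity of \eqref{eq:integral_dual_branch_sW} by a change of variables that absorbs the $\underline{\widetilde{L}}_N$-dependence of the singular factors of $\mathfrak{g}_Y$, so that dominated convergence applies uniformly --- including at boundary points of $\mathpzc{W}_N$ where coordinates of $\underline{\widetilde{L}}_N$ coincide. For the support, let $K \coloneqq \mathrm{supp}(f)$ and pick $M>0$ with $L_{N,1}\le M$ on $K$. Whenever the integrand is nonzero, the interlacing $\underline{\widetilde{L}}_N\prec \underline{L}_N$ with $\underline{L}_N\in K$ forces
\begin{equation*}
  1\le \widetilde{L}_{N,N}\le \widetilde{L}_{N,N-1}\le \cdots\le \widetilde{L}_{N,1}\le L_{N,1}\le M,
\end{equation*}
confining $\underline{\widetilde{L}}_N$ to a compact subset of $\mathpzc{W}_N$.

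For continuity, fix $\underline{\widetilde{L}}_N^{\star}\in \mathpzc{W}_N$, set $\Delta_i=\widetilde{L}_{N,i-1}-\widetilde{L}_{N,i}$, and in a neighborhood of $\underline{\widetilde{L}}_N^{\star}$ substitute
\begin{equation*}
  t_1=1-\widetilde{L}_{N,1}/L_{N,1},\qquad s_i=(L_{N,i}-\widetilde{L}_{N,i})/\Delta_i\quad(i=2,\ldots,N),
\end{equation*}
so that $(t_1,s_2,\ldots,s_N)$ ranges over a fixed compact subset of $[0,1)\times[0,1]^{N-1}$ (using $L_{N,1}\le M$). A direct computation from \eqref{eq:A} shows that in these variables all powers of $\Delta_i$ cancel exactly between the Jacobians $dL_{N,i}/L_{N,i}$ and the singular factors of $\mathcal{A}_{S,-Y}$, reducing the integrand to
\begin{equation*}
  \frac{t_1^{S-Y-1}}{\Gamma(S-Y)}\prod_{i=2}^{N}\frac{s_i^{S-Y-1}(1-s_i)^{S+Y-1}}{\mathrm{B}(S-Y,S+Y)}\,\Phi(\underline{\widetilde{L}}_N,t_1,s_2,\ldots,s_N),
\end{equation*}
where $\Phi$ packages the remaining factors (including $f$ evaluated at the inverse substitution) and is jointly continuous and uniformly bounded on the relevant compact set. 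Since $|Y|<S$, both $S-Y-1$ and $S+Y-1$ exceed $-1$, so the singular factors are integrable and dominated convergence yields continuity of \eqref{eq:integral_dual_branch_sW} in $\underline{\widetilde{L}}_N$.

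The main technical obstacle is continuity at boundary configurations $\underline{\widetilde{L}}_N^{\star}$ with some $\Delta_i=0$: there the original $L_{N,i}$-integration shrinks to a single point while the integrand acquires non-integrable singularities of the $0\cdot\infty$ type. The cancellation identified above is precisely what resolves this ambiguity --- the transformed integrand depends on $\underline{\widetilde{L}}_N$ only through bounded continuous factors, so the dominated convergence argument applies uniformly through such boundaries. At a boundary point, the Beta identity $\int_0^1 s^{S-Y-1}(1-s)^{S+Y-1}\,ds=\mathrm{B}(S-Y,S+Y)$ plays the collapsing role of \Cref{lemma:delta_function}, effectively forcing $L_{N,i}=\widetilde{L}_{N,i}^{\star}$ and producing the natural lower-dimensional limit, in the same spirit as the proof of \Cref{lemma:sW_branching_general}.
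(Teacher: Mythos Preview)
Your argument is correct. The compact-support step is exactly right, and your global change of variables $t_1,s_2,\ldots,s_N$ indeed cancels all powers of $\Delta_i$, leaving an integrand that is jointly continuous and dominated by an integrable Beta-type kernel, so dominated convergence yields continuity through the boundary faces $\Delta_i=0$.

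The paper takes the same idea but packages it more modularly. Instead of changing all $L_{N,i}$ at once, it invokes the identity \eqref{eq:dual_branch_sW_2}, which rewrites $\mathfrak{g}_Y(\underline{\widetilde L}_N;\underline L_N)$ as a one-variable factor in $L_{N,1}$ times $\mathfrak{f}_{-Y}(\underline\ell_{N-1};\underline{\widetilde L}_N)$ with $\underline L_N=(\underline\ell_{N-1},L_{N,1})$. The inner integral over $\underline\ell_{N-1}$ is then \emph{literally} the setup of \Cref{lemma:sW_branching_general} (with $X=-Y$), whose proof already contains the change of variables you perform; so the paper just cites it, getting a family of bounded continuous functions of $\underline{\widetilde L}_N$ indexed by $L_{N,1}$. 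Only the remaining $L_{N,1}$-integral (your $t_1$ variable) needs a separate one-line convergence check. What the paper's route buys is economy and reuse of prior work; what yours buys is a self-contained, explicitly dominated argument that makes the cancellation of $\Delta_i$ and the integrability exponents $S\pm Y-1>-1$ completely transparent. You clearly recognized this parallel yourself in the final paragraph.
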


\begin{proof}
We evaluate the integral \eqref{eq:integral_dual_branch_sW} using expression \eqref{eq:dual_branch_sW_2} for $\mathfrak{g}_Y$ as
\begin{equation*}
    \int 
		\frac{dL_{N,1}}{L_{N,1}^{1+Y}} \frac{1}{\Gamma(S-Y)} \left( 1 - \frac{\widetilde{L}_{N,1}}{L_{N,1}} \right)^{S-Y-1} 
    \int 
		f(\underline{\ell}_{N-1},L_{N,1})\,
		\frac{d \underline{\ell}_{N-1} }{ \underline{\ell}_{N-1} }
		\,
    \mathfrak{f}_{-Y}( \underline{\ell}_{N-1} ; \underline{\widetilde{L}}_k ).
\end{equation*}
By \Cref{lemma:sW_branching_general},
the integral in the variables $\underline{\ell}_{N-1}$ defines a family
of continuous bounded functions in $\underline{\widetilde{L}}_N$,
depending on
$L_{N,1}$. The (improper)
integral in $L_{N,1}$ is
convergent both at $\widetilde{L}_{N,1}$ and $\infty$ (the latter because $f$
vanishes for $L_{N,1}$ large enough). This proves the claim.
\end{proof}

\begin{proposition}
Let $f(\underline{L}_N)$ be a compactly supported continuous function. Then the integral 
\begin{equation*}
    \int \mathfrak{g}_{Y_1,\dots,Y_M} (\underline{L}_N) f(\underline{L}_N) \frac{d \underline{L}_N}{\underline{L}_N}
\end{equation*}
is absolutely convergent.
\end{proposition}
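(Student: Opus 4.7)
The plan is to induct on $M\ge 1$, with the inductive statement being that for every $N\le M$ and every compactly supported continuous $f$ on $\mathpzc{W}_N$, the asserted integral converges absolutely. A preliminary observation is that, since $|X_i|,|Y_j|<S$ forces $\Gamma(S-Y_j)>0$, $\mathrm{B}(S+X_i,S-X_i)>0$, and every other factor appearing in $\mathfrak{f}$ and $\mathfrak{g}$ is a real nonnegative power of a positive real number, the branching kernels $\mathfrak{g}_Y$ and the iterated integrals defining $\mathfrak{g}_{Y_1,\dots,Y_M}$ are all nonnegative. Consequently absolute convergence is equivalent to finiteness of the positive iterated integral, and Tonelli's theorem will let us exchange the order of integration freely.

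The base case $M=N=1$ amounts to bounding
\begin{equation*}
\int_{1}^{\infty} L^{-Y}(1-L^{-1})^{S-Y-1}\,|f(L)|\,\frac{dL}{L}
\le \|f\|_\infty \int_{1}^{R} L^{-Y}(1-L^{-1})^{S-Y-1}\,\frac{dL}{L},
\end{equation*}
where $R$ is a bound on the support of $f$. Near $L=1$ the integrand is $\bigO((L-1)^{S-Y-1})$ with $S-Y-1>-1$, hence integrable; on $[1+\varepsilon,R]$ the integrand is continuous. For the inductive step assume the statement at level $M-1$. If $N<M$, unfold the recursion \eqref{eq:g_branching_sW} and apply Tonelli to rewrite the integral as
\begin{equation*}
\int \mathfrak{g}_{Y_1,\dots,Y_{M-1}}(\underline{\widetilde L}_N)\,\widetilde f(\underline{\widetilde L}_N)\,\frac{d\underline{\widetilde L}_N}{\underline{\widetilde L}_N},
\qquad
\widetilde f(\underline{\widetilde L}_N)=\int \mathfrak{g}_{Y_M}(\underline{\widetilde L}_N;\underline{L}_N)\,f(\underline{L}_N)\,\frac{d\underline{L}_N}{\underline{L}_N}.
\end{equation*}
By \Cref{prop:integral_dual_branch_sW}, $\widetilde f$ is compactly supported and continuous on $\mathpzc{W}_N$, and the inductive hypothesis at level $M-1$ (with the same $N$, still admissible since $N\le M-1$) produces the desired convergence. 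If instead $N=M$, the same unfolding yields the outer integral over $\underline{\widetilde L}_{N-1}$, and the inner factor is $\mathfrak{g}_{Y_N}(\underline{\widetilde L}_{N-1};\underline{L}_N)=\mathfrak{g}_{Y_N}((1,\underline{\widetilde L}_{N-1});\underline{L}_N)$. Applying \Cref{prop:integral_dual_branch_sW} on $\mathpzc{W}_N$ and then restricting the resulting compactly supported continuous function to the closed embedding $\underline{\widetilde L}_{N-1}\mapsto (1,\underline{\widetilde L}_{N-1})$ of $\mathpzc{W}_{N-1}$ into $\mathpzc{W}_N$ gives a compactly supported continuous $\widetilde f$ on $\mathpzc{W}_{N-1}$; the inductive hypothesis at level $(M-1,N-1)$ concludes.

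The only analytic input beyond this bookkeeping is the integrability of the edge singularities carried by $\mathfrak{g}_Y(\underline{\widetilde L}_k;\underline{L}_k)$, notably the factor $(1-\widetilde L_{k,1}/L_{k,1})^{S-Y-1}$ and the internal $\mathcal{A}_{S,-Y}$-factors, and this is precisely what \Cref{prop:integral_dual_branch_sW} already absorbs. Thus the hypothesis $|Y_j|<S$ is exactly what closes the induction, and there is no separate obstacle to address.
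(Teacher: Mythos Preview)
Your proposal is correct and follows the same approach as the paper, which simply says the result ``follows from \Cref{prop:integral_dual_branch_sW} applied recursively after expanding $\mathfrak{g}_{Y_1,\dots,Y_M}$ using the branching rules.'' You spell out this recursion as an explicit induction on $M$, handling the $N=M$ step by restricting the output of \Cref{prop:integral_dual_branch_sW} to the slice $\widetilde L_{N,N}=1$; one small slip is that the $X_i$'s play no role here (only the $Y_j$'s enter, with the relevant Beta factor being $\mathrm{B}(S-Y,S+Y)$ inside $\mathcal{A}_{S,-Y}$), but this does not affect the argument.
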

\begin{proof}
This follows from \Cref{prop:integral_dual_branch_sW} 
applied recursively after expanding $\mathfrak{g}_{Y_1,\dots,Y_M}$ 
using the branching rules \eqref{eq:g_branching_sW}.
\end{proof}

\subsection{\texorpdfstring{Convergence of the sqW functions as $q\to1$}{Convergence of the sqW functions as q -> 1}}
\label{sub:conv_sqW}

Here and in the following
subsection we establish that the 
spin Whittaker functions $\mathfrak{f}_{\underline{X}}(\underline{L}_N)$
and 
$\mathfrak{g}_{\underline{Y}}(\underline{L}_N)$
are scaling limits, as $q\to 1$, 
of the spin $q$-Whittaker functions $\mathbb{F}_{\lambda}(x_1,\ldots, x_N)$
and $\mathbb{F}^*_\mu(y_1,\ldots,y_k )$, respectively.
Recall that they also depend on 
two parameters, $q\in(0,1)$ and $s\in(-1,0)$.

First, this subsection we deal with the non-dual functions.
Let us fix a scaling of all parameters.

\begin{definition}[Scaling] \label{def:scaling}
	We consider the following renormalization of parameters:
	\begin{equation} \label{eq:scaling}
		x_i=q^{X_i},
		\qquad 
		s=-q^{S},
		\qquad
		\lambda^i_j= \floor{\log_q (1/L_{i,j})}.
	\end{equation}
	We will assume throughout that
	\begin{equation*}
		S>0, \qquad |X_i|<S, \qquad \text{and} \qquad 1\le 
		L_{i+1,j+1} \le L_{i,j} \le L_{i+1,j}
	\end{equation*}
	for all $i,j$.
	Therefore, the pre-limit quantities in 
	\eqref{eq:scaling} satisfy $s\in(0,1)$,
	$x_i\in(-s,-s^{-1})$, and
	$0\le \lambda^{i+1}_{j+1}\le \lambda^i_j\le \lambda^{i+1}_{j}$.
\end{definition}

For any triple 
of real numbers
$1 \le \ell_3 \le \ell_2 \le \ell_1$, set $n_i\coloneqq\floor{\log_q(1/\ell_i)}$ 
(so $0\le n_3\le n_2\le n_1$).
\begin{lemma}
	\label{lemma:discrete_sums_Delta_q}
	With the above notation,
	for any function $f:\mathbb{Z} \to \mathbb{R}$ we have
	\begin{equation}
		\label{eq:discrete_sums_Delta_q}
		\sum_{n_2=n_3}^{n_1}  f(n_2) 
		= 
		\int_{\ell_3}^{\ell_1} \frac{1}{\Delta_q(\ell_3,\ell_2,\ell_1)} f(\floor{\log_q (1 /\ell_2)})
		\,
		\frac{d \ell_2}{\ell_2},
	\end{equation}
	where 
	\begin{equation} \label{eq:Delta_q}
			\Delta_q (\ell_3,\ell_2,\ell_1)
			\coloneqq
			\int_{\max(\ell_3,q^{-n_2})}^{\min(\ell_1,q^{-n_2-1})} \frac{d \ell_2'}{\ell_2'}
			=
			\begin{cases}
					- \log q \qquad & \text{if } n_3<n_2<n_1;
					\\
					\log(q^{n_1} \ell_1) & \text{if } n_3<n_2=n_1;
					\\
					- \log (q^{n_3+1} \ell_3)
					\qquad & \text{if } n_3=n_2<n_1;
					\\
					\log(\ell_1/\ell_3) \qquad & \text{if } n_3=n_2=n_1.
			\end{cases}
	\end{equation}
	When $\ell_3=\ell_1$, the integral in \eqref{eq:discrete_sums_Delta_q}
	is understood in the limiting sense.
\end{lemma}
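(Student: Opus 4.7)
The plan is to prove the identity by partitioning the integration interval $[\ell_3,\ell_1]$ according to the level sets of the step function $\ell_2 \mapsto \lfloor \log_q(1/\ell_2)\rfloor$, and then observing that $\Delta_q(\ell_3,\ell_2,\ell_1)$ has been \emph{defined precisely} as the length (in the $d\ell_2'/\ell_2'$ measure) of the corresponding level set, so that dividing by it yields the identity on each piece.

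More concretely, first I will check that, since $0<q<1$ and $\ell_2\ge 1$, the condition $\lfloor \log_q(1/\ell_2)\rfloor = k$ is equivalent to $q^{-k}\le \ell_2 < q^{-k-1}$ (the logarithm $\log_q$ is decreasing in its argument, which reverses the direction of the inequalities). Consequently, for integer $k$ with $n_3\le k\le n_1$, the set
\[
I_k \coloneqq \{\ell_2\in[\ell_3,\ell_1]\colon \lfloor\log_q(1/\ell_2)\rfloor=k\} = [\max(\ell_3,q^{-k}),\,\min(\ell_1,q^{-k-1}))
\]
is a nonempty interval, and these intervals partition $[\ell_3,\ell_1]$ up to a null set. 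Comparing with \eqref{eq:Delta_q}, we get $\int_{I_k} d\ell_2'/\ell_2' = \Delta_q(\ell_3,\ell_2,\ell_1)$ whenever $\ell_2\in I_k$ (the right-hand side is constant on $I_k$, by design).

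Then the right-hand side of \eqref{eq:discrete_sums_Delta_q} becomes
\[
\int_{\ell_3}^{\ell_1} \frac{f(\lfloor \log_q(1/\ell_2)\rfloor)}{\Delta_q(\ell_3,\ell_2,\ell_1)}\,\frac{d\ell_2}{\ell_2}
= \sum_{k=n_3}^{n_1} \int_{I_k} \frac{f(k)}{\Delta_q(\ell_3,\ell_2,\ell_1)}\,\frac{d\ell_2}{\ell_2}
= \sum_{k=n_3}^{n_1} f(k),
\]
where in the last step we used that $\Delta_q$ is constant on $I_k$ and equal to $\int_{I_k}d\ell_2/\ell_2$. The four cases of \eqref{eq:Delta_q} are just the explicit evaluations of this latter integral when $k$ is, respectively, strictly inside $(n_3,n_1)$, equal to $n_1>n_3$, equal to $n_3<n_1$, or equal to both (so $n_3=n_2=n_1$ and the sum on the left is a single term $f(n_1)$, to be matched by the limiting interpretation of the integral on the right).

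This proof is essentially a tautology once one unpacks the definition of $\Delta_q$, so no real obstacle is expected; the only care needed is bookkeeping the direction of inequalities arising from $q<1$ (equivalently, $\log_q$ being decreasing), and treating the boundary case $\ell_3=\ell_1$ as the stated limit of the formula. I would spend no more than a few lines in the actual write-up.
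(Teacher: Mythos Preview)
Your proposal is correct and follows exactly the same approach as the paper: the paper's one-line proof simply observes that $\Delta_q$ is the $d\ell_2/\ell_2$-measure of the level sets of $\ell_2\mapsto\lfloor\log_q(1/\ell_2)\rfloor$ inside $[\ell_3,\ell_1]$, which is precisely what you unpack. Your treatment is just a more detailed write-up of the same tautology.
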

\begin{proof}
	This follows by observing that $\Delta_q$ is the measure of intervals where the function
	$\ell_2 \mapsto \floor{\log_q(1/\ell_2)}$ is constant, and simultaneously
	$\ell_2$ lies in the interval $[\ell_3, \ell_1]$. 
\end{proof}

The rescaled spin $q$-Whittaker functions are defined recursively as
\begin{align*}
		\mathfrak{f}_{X_N}^{(q)}(\underline{L}_{N-1} ; \underline{L}_N) 
		&= 
		\prod_{k=1}^{N-1}\frac{1}{\Delta_q (L_{N,k+1}, L_{N-1,k} , L_{N,k})} 
		\,
		\mathbb{F}_{\lambda^N / \lambda^{N-1}}(x_N)\bigg \vert_{\mathrm{scaling \,}\eqref{eq:scaling}};
    \\
		\mathfrak{f}_{X_1}^{(q)}(L_{1,1}) 
		&=
		x_1^{\lambda_1^1}\bigg \vert_{\mathrm{scaling \,}\eqref{eq:scaling}}
		=
		q^{X_1\floor{\log_q (1/L_{1,1})}} ;
		\\
    \mathfrak{f}_{X_1,\dots, X_N}^{(q)} (\underline{L}_N) &=
    \int
		\mathfrak{f}_{X_1,\dots, X_{N-1}}^{(q)} (\underline{L}_{N-1}) \,
		\mathfrak{f}_{X_N}^{(q)}(\underline{L}_{N-1} ; \underline{L}_N) 
		\frac{d \underline{L}_{N-1}}{\underline{L}_{N-1}},
\end{align*}

The next theorem is the 
main result of this subsection:
\begin{theorem}\label{thm:sqW_to_sW}
    We have
    \begin{equation} \label{eq:sqW_to_sW}
        \lim_{q \to 1} 
        \mathfrak{f}_{X_1,\dots, X_N}^{(q)}
        = 
        \mathfrak{f}_{X_1,\dots, X_N} ,
    \end{equation}
    uniformly on any compact subset of $\mathpzc{W}_N$.
\end{theorem}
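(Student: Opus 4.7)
The plan is induction on $N$, passing to the limit inside the Givental-type recursion of \Cref{cor:sW_Givental}. The base case $N=1$ is immediate: since $q^{\floor{\log_q(1/L_{1,1})}}\in[L_{1,1}^{-1},q^{-1}L_{1,1}^{-1})$, we have $\mathfrak{f}^{(q)}_{X_1}(L_{1,1})=q^{X_1\floor{\log_q(1/L_{1,1})}}\to L_{1,1}^{-X_1}=\mathfrak{f}_{X_1}(L_{1,1})$ uniformly on compacts of $[1,\infty)$. For the inductive step, assuming uniform-on-compact convergence at level $N-1$, it suffices to establish (i) pointwise convergence of the rescaled branching factor $\mathfrak{f}^{(q)}_{X_N}(\underline L_{N-1};\underline L_N)\to\mathfrak{f}_{X_N}(\underline L_{N-1};\underline L_N)$ on the strict interlacing region, and (ii) a uniform-in-$q$ integrable majorant for $\mathfrak{f}^{(q)}_{X_N}$, after which dominated convergence passes the limit through the integral in \eqref{eq:sW}.

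For (i), rewrite each $q$-Pochhammer in \eqref{eq:inc_sqW} via $(q^a;q)_n=(1-q)^n\Gamma_q(a+n)/\Gamma_q(a)$, setting $m^{(1)}_i=\lambda^N_i-\lambda^{N-1}_i$, $m^{(2)}_i=\lambda^{N-1}_i-\lambda^N_{i+1}$, $m^{(3)}_i=m^{(1)}_i+m^{(2)}_i$. Under the scaling of \Cref{def:scaling}, $q^{m^{(1)}_i}\to L_{N-1,i}/L_{N,i}$, $q^{m^{(2)}_i}\to L_{N,i+1}/L_{N-1,i}$, $q^{m^{(3)}_i}\to L_{N,i+1}/L_{N,i}$, all in $(0,1)$, and the $(1-q)^n$ factors cancel in the ratios, leaving a product over edges of
\[
\frac{\Gamma_q(S-X_N+m^{(1)}_i)\Gamma_q(S+X_N+m^{(2)}_i)\Gamma_q(1+m^{(3)}_i)\Gamma_q(2S)}{\Gamma_q(S-X_N)\Gamma_q(S+X_N)\Gamma_q(1+m^{(1)}_i)\Gamma_q(1+m^{(2)}_i)\Gamma_q(2S+m^{(3)}_i)}.
\]
The key asymptotic, derivable from $\Gamma_q(y)=(1-q)^{1-y}(q;q)_\infty/(q^y;q)_\infty$ by a Riemann-sum evaluation of $\log[(q^{n+1};q)_\infty/(q^{a+n};q)_\infty]$, is
\[
\frac{\Gamma_q(a+n)}{\Gamma_q(1+n)}=(1-q)^{1-a}(1-q^n)^{a-1}\bigl(1+o(1)\bigr),\qquad q\to 1,\ q^n\to\alpha\in(0,1).
\]
Applied to the three edge ratios, the $(1-q)$-exponents $(1-(S-X_N))+(1-(S+X_N))+(2S-1)=1$ sum to $1$ per edge; the total $(1-q)^{N-1}$ cancels the $N-1$ factors $\Delta_q^{-1}\sim(-\log q)^{-1}\sim(1-q)^{-1}$ built into $\mathfrak{f}^{(q)}_{X_N}$. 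What survives, multiplied by $\Gamma_q(2S)/[\Gamma_q(S+X_N)\Gamma_q(S-X_N)]\to\mathrm{B}(S+X_N,S-X_N)^{-1}$ per edge, is exactly $\prod_i\mathcal A_{S,X_N}(L_{N,i+1},L_{N-1,i},L_{N,i})$, while the prefactor $q^{X_N(|\lambda^N|-|\lambda^{N-1}|)}$ converges to $(L_{N,1}\cdots L_{N,N}/L_{N-1,1}\cdots L_{N-1,N-1})^{-X_N}$, reconstructing $\mathfrak{f}_{X_N}(\underline L_{N-1};\underline L_N)$.

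For (ii), the continuous branching factor has only integrable boundary singularities $(1-L_{N-1,i}/L_{N,i})^{S-X_N-1}$ and $(1-L_{N,i+1}/L_{N-1,i})^{S+X_N-1}$, summable since $|X_N|<S$; performing the above asymptotic uniformly on compact subsets of the variables $q^{m^{(r)}_i}\in(0,1)$ produces a uniform-in-$q$ bound of the same functional form, and together with the inductive-hypothesis bound on $\mathfrak{f}^{(q)}_{\underline X_{N-1}}$ yields an integrable majorant. Uniform convergence on compacts of $\mathpzc{W}_N$ that may touch the strata $\{L_{N,i}=L_{N,i+1}\}$ follows by combining the above with a $q$-analog of \Cref{lemma:delta_function}, in which the $q$-Pochhammer product degenerates to a Kronecker delta collapsing the corresponding integration, matching the continuity extension of \Cref{lemma:sW_branching_general}. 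The main technical obstacle is the uniform bound in (ii) near the stratum $q^{m^{(r)}_i}\to 1$, where the classical $\Gamma_q$ asymptotic ceases to be uniform; I would handle it by splitting each $q$-Pochhammer into a bulk region (where $q^{m^{(r)}_i}$ is bounded away from both $0$ and $1$, giving the Riemann-sum asymptotic) and a boundary region (where $m^{(r)}_i$ stays bounded, reducing to the ordinary $\Gamma$-limit $\Gamma_q\to\Gamma$), then verifying by a matching argument that the cross-over produces precisely the claimed boundary exponents $(1-L_{N-1,i}/L_{N,i})^{S-X_N-1}$ and $(1-L_{N,i+1}/L_{N-1,i})^{S+X_N-1}$.
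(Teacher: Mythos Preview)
Your approach is essentially the paper's: induction on $N$ via the branching recursion, with the inductive step reduced to showing that integration against the rescaled one-variable branching factor $\mathfrak{f}^{(q)}_X(\underline L_{N-1};\underline L_N)$ commutes with the $q\to 1$ limit, uniformly on compacts of $\mathpzc{W}_N$. The paper isolates exactly this statement as \Cref{prop:skew_SW_uniform_conv} (proved in \Cref{app:proof_prop}), and \Cref{cor:skew_SW_unif_conv} is precisely your observation that the test function may itself be a $q$-dependent, uniformly convergent sequence.

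One caveat on the framing as ``dominated convergence with a uniform majorant $C\cdot\mathcal A_{S,X}$'': this does not literally hold pointwise. When $n_2=n_3$ (so $\ell_2$ shares a $q$-cell with $\ell_3$) the relevant $q$-Pochhammer equals $1$ and no bound of the shape $(1-\ell_3/\ell_2)^{S+X-1}$ is available; one only controls the \emph{integral} over that cell, which is $O\bigl((1-q)^{S+X}\bigr)$. You flag this yourself as the main obstacle and propose the right fix. The paper executes it as follows: \Cref{lemma:bound_ratio_pochhammer} gives the uniform estimate $(q^{A+n};q)_\infty/(q^{B+n};q)_\infty\le c\,(1-q^n)^{B-A}$ for $n\ge 1$, and \Cref{lemma:A_integral_unif_conv} carries out the bulk/boundary/small-gap trichotomy---uniform pointwise convergence away from both endpoints (\Cref{lemma:BC_lemma_uniform}), contributions $O(\delta)+O\bigl(((1-q)/\delta)^{S\pm X}\bigr)$ near each endpoint, and for $\ell_1-\ell_3\le\delta$ the sum-to-one property of the $q$-beta-binomial $\varphi$, which is exactly your $q$-analog of \Cref{lemma:delta_function}. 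So your sketch is on target; the paper just makes the splitting and the matching explicit rather than invoking DCT.
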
 

Pointwise convergence
in
\eqref{eq:sqW_to_sW} is a
consequence of a simpler result stated 
in Lemma 2.2 of \cite{CorwinBarraquand2015Beta} (reproduced as \Cref{lemma:BC_lemma_uniform}
in \Cref{app:proof_prop}):
\begin{equation} \label{eq:BC_limit}
    \lim_{q\to 1}
    \frac{(\ell q^A ;q)_\infty }{ (\ell q^B ;q)_\infty
    }
    =
    (1-\ell)^{B-A},
\end{equation}
for any $\ell \in (0,1)$ and $A,B>0$. 

By \eqref{eq:BC_limit} and 
through a repeated use of the identity
\begin{equation} \label{eq:ratio_q_pochhammer}
    \frac{(q^a;q)_n}{(q^b;q)_n} = \mathbf{1}_{n=0} + \mathbf{1}_{n \ge 1} \frac{\Gamma_q(b)}{\Gamma_q(a)}(1-q)^{b-a} \frac{( q^{b+n} ;q)_\infty }{ ( q^{a+n} ;q)_\infty},
\end{equation}
where $\Gamma_q$ is the $q$-Gamma
function \eqref{eq:q_Gamma_q_Beta},
one readily gets the 
pointwise convergence of the branching
function $\mathfrak{f}^{(q)}_{X}( \underline{L}_{N-1}; \underline{L}_N
)$ to $\mathfrak{f}_{X}( \underline{L}_{N-1}; \underline{L}_N )$.
Nevertheless, for the finer uniform convergence result of
\Cref{thm:sqW_to_sW}, a slightly more accurate analysis of
ratios of $q$-Pochhammer symbols appearing in the sqW functions is required. We
postpone this technical discussion to 
\Cref{app:proof_prop}.
Let us summarize the main technical result proven in 
\Cref{app:proof_prop}:

\begin{proposition} \label{prop:skew_SW_uniform_conv}
    Let $f(\underline{L}_{N-1})$ be a continuous function on $\mathpzc{W}_{N-1}$. Then
		for any $\underline{L}_N\in \mathpzc{W}_N$ we have
    \begin{equation} \label{eq:skew_SW_uniform_conv}
        \lim_{q \to 1}
				\int f(\underline{L}_{N-1}) \,\mathfrak{f}^{(q)}_X (\underline{L}_{N-1}; \underline{L}_N ) \frac{d \underline{L}_{N-1}}{\underline{L}_{N-1}}
        =
				\int f(\underline{L}_{N-1}) \,\mathfrak{f}_X (\underline{L}_{N-1}; \underline{L}_N ) \frac{d \underline{L}_{N-1}}{\underline{L}_{N-1}},
    \end{equation}
    and the convergence is uniform on compact subsets of $\mathpzc{W}_{N}$.
\end{proposition}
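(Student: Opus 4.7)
\medskip

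The plan is to establish pointwise convergence of the rescaled branching density $\mathfrak{f}^{(q)}_X(\underline{L}_{N-1};\underline{L}_N)$ to $\mathfrak{f}_X(\underline{L}_{N-1};\underline{L}_N)$ on the interior of the interlacing region, and then to upgrade this to the integral statement via dominated convergence using a $q$-independent integrable majorant. Uniformity in $\underline{L}_N$ on compact subsets of $\mathpzc{W}_N$ will follow because the majorant and the limiting integrand can both be chosen continuous in $\underline{L}_N$.

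First, starting from the explicit formula \eqref{eq:inc_sqW} for $\mathbb{F}_{\lambda^N/\lambda^{N-1}}(x_N)$, substitute the scaling \eqref{eq:scaling} and rewrite each ratio of $q$-Pochhammer symbols $(q^a;q)_n/(q^b;q)_n$ via \eqref{eq:ratio_q_pochhammer} as a product of a $q$-Beta prefactor $\Gamma_q(b)\Gamma_q(a)^{-1}(1-q)^{b-a}$ and a ratio of infinite $q$-Pochhammers. The renormalization factor $\prod_k \Delta_q(L_{N,k+1},L_{N-1,k},L_{N,k})^{-1}$ exactly absorbs the lattice-spacing factors $(1-q)^\bullet$ produced in this rewriting, using that $\Delta_q\sim -\log q\sim 1-q$ in the generic case of \eqref{eq:Delta_q}. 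Applying \eqref{eq:BC_limit} to the surviving infinite $q$-Pochhammer ratios gives the three Beta-type powers $(1-v/z)^{S-X-1}$, $(1-u/v)^{S+X-1}$, $(1-u/z)^{1-2S}$ appearing in $\mathcal{A}_{S,X}$ \eqref{eq:A}, while the classical limit $\Gamma_q\to\Gamma$ produces $1/\mathrm{B}(S+X,S-X)$. The monomial $x_N^{|\lambda^N|-|\lambda^{N-1}|}$ becomes $(\prod_iL_{N,i}/\prod_iL_{N-1,i})^{-X}$, and altogether the integrand converges pointwise to $\mathfrak{f}_X(\underline{L}_{N-1};\underline{L}_N)$ at every interior point $\underline{L}_{N-1}$.

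The main obstacle is the construction of a $q$-independent integrable dominant for the family $\{\mathfrak{f}^{(q)}_X\}$. The limiting function has boundary singularities of the form $(1-L_{N-1,i}/L_{N,i})^{S-X-1}$ and $(1-L_{N,i+1}/L_{N-1,i})^{S+X-1}$, both integrable precisely because $|X|<S$; we need to show the pre-limit enjoys the same type of bound uniformly for $q$ close to $1$. The idea is to apply the uniform version of \eqref{eq:BC_limit} provided by Lemma \ref{lemma:BC_lemma_uniform} of \cite{CorwinBarraquand2015Beta} to each infinite $q$-Pochhammer ratio on compact subintervals $\ell\in[\delta,1-\delta]$, combined with standard $q$-independent estimates on $\Gamma_q(b)/\Gamma_q(a)$ and on $\Delta_q^{-1}$ in the boundary cases of \eqref{eq:Delta_q}. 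A short case analysis (generic bulk versus the three boundary regimes $n_3=n_2$, $n_2=n_1$, $n_3=n_2=n_1$) then produces a bound of the shape
\begin{equation*}
|\mathfrak{f}^{(q)}_X(\underline{L}_{N-1};\underline{L}_N)|\le C(\underline{L}_N)\prod_{i=1}^{N-1}\left(1-\frac{L_{N-1,i}}{L_{N,i}}\right)^{S-X-1}\left(1-\frac{L_{N,i+1}}{L_{N-1,i}}\right)^{S+X-1},
\end{equation*}
valid for all $q\in(q_0,1)$ with some $q_0<1$, and with $C(\underline{L}_N)$ continuous in $\underline{L}_N$.

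With this dominant in hand, the Dominated Convergence Theorem applied to the compactly supported continuous function $f$ yields \eqref{eq:skew_SW_uniform_conv}. To promote this to convergence that is uniform in $\underline{L}_N$ on compact subsets of $\mathpzc{W}_N$, we observe that the dominant $C(\underline{L}_N)$ times the singular product is jointly continuous in $(\underline{L}_{N-1},\underline{L}_N)$ on its domain, and the convergence in the first step can be made uniform in $\underline{L}_N$ on compacts via \Cref{lemma:BC_lemma_uniform}. Finally, at boundary points of $\mathpzc{W}_N$ where some $L_{N,i}=L_{N,i+1}$, the singularities in $\mathfrak{f}^{(q)}_X$ concentrate as a delta function just as in \Cref{lemma:delta_function}, matching the continuous extension of $\mathfrak{f}_X$ guaranteed by \Cref{lemma:sW_branching_general}, which completes the proof.
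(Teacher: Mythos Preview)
Your strategy of pointwise convergence plus a $q$-independent integrable majorant followed by dominated convergence is natural, but the majorant you claim does not exist. The issue is the factor $1/\Delta_q$ in $\mathfrak{f}^{(q)}_X$. On the lattice boundary strip where, say, $n_2=n_3$ (i.e., $L_{N-1,i}$ and $L_{N,i+1}$ fall in the same $q$-cell), the function $\mathcal{A}^{(q)}_{S,X}$ is constant in $\ell_2$ and of order $(1-q)^{S+X}/\bigl(\Delta_q\,(1-\ell_3/\ell_1)^{S+X}\bigr)$, while $\Delta_q=-\log(q^{n_3+1}\ell_3)$ depends on where $\ell_3$ sits inside its $q$-cell and can be arbitrarily small compared to $1-q$ as $q\to1$ (the fractional part of $\log\ell_3/(-\log q)$ is equidistributed). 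Hence for fixed $\ell_1,\ell_3$ the value of $\mathcal{A}^{(q)}$ on that strip is unbounded along subsequences $q\to1$, and no $q$-independent pointwise bound of the shape $C(\underline{L}_N)\prod_i(1-L_{N-1,i}/L_{N,i})^{S-X-1}(1-L_{N,i+1}/L_{N-1,i})^{S+X-1}$ can hold. (Even in the regime $S\pm X>1$ where your proposed dominant is bounded, it vanishes at the boundary while $\mathcal{A}^{(q)}$ does not.)

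What saves the day, and what the paper does, is to control the \emph{integral} over these boundary strips rather than the integrand: the strip has $d\ell_2/\ell_2$-measure exactly $\Delta_q$, which cancels the $1/\Delta_q$ and leaves a contribution of order $(1-q)^{S\pm X}$, vanishing as $q\to1$. Concretely, the paper proves a one-variable lemma (\Cref{lemma:A_integral_unif_conv}) via a $\delta$-splitting: away from the endpoints use the uniform version of \eqref{eq:BC_limit} for convergence, and near the endpoints bound the integral by $O(\delta)+O\bigl((1-q)/\delta\bigr)^{S\pm X}$; when $\ell_1-\ell_3\le\delta$ the whole thing is handled by the $q$-beta-binomial sum-to-one as in \Cref{lemma:delta_function}. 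The multivariate statement then follows by iterating this single-variable lemma over the $N-1$ nested integrals. Your sketch would be repaired by replacing the nonexistent pointwise dominant with this integral-level $\delta$-splitting.
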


The continuous
function $f$ in \Cref{prop:skew_SW_uniform_conv} can also be replaced by a uniformly converging sequence:
\begin{corollary} \label{cor:skew_SW_unif_conv}
		Let $f^{(q)}(\underline{L}_{N-1})$ be a sequence uniformly
		convergent as $q\to1$ on compact subsets of
		$\mathpzc{W}_{N-1}$ to a continuous function $f(\underline{L}_{N-1})$. 
		Then
    \begin{equation*}
			\lim_{q \to 1}
			\int f^{(q)}(\underline{L}_{N-1}) \,
			\mathfrak{f}^{(q)}_X
			(\underline{L}_{N-1}; \underline{L}_N ) \,
			\frac{d \underline{L}_{N-1}}{\underline{L}_{N-1}}
			=
			\int f(\underline{L}_{N-1})\,
			\mathfrak{f}_X (\underline{L}_{N-1};
			\underline{L}_N ) \,\frac{d \underline{L}_{N-1}}{\underline{L}_{N-1}}
    \end{equation*}
    and the convergence is uniform on compact subsets of $\mathpzc{W}_{N}$.
\end{corollary}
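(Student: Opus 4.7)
My plan is to write $f^{(q)} = f + (f^{(q)} - f)$ and treat the two pieces separately, exploiting the fact that \Cref{prop:skew_SW_uniform_conv} already handles the term involving $f$ and that the branching function $\mathfrak{f}^{(q)}_X(\underline{L}_{N-1};\underline{L}_N)$ is supported on the set $\{\underline{L}_{N-1} \prec \underline{L}_N\}$, which is compact whenever $\underline{L}_N$ ranges over a compact subset of $\mathpzc{W}_N$.

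First I would invoke \Cref{prop:skew_SW_uniform_conv} directly with the continuous function $f$ to conclude that
\begin{equation*}
\int f(\underline{L}_{N-1})\,\mathfrak{f}^{(q)}_X(\underline{L}_{N-1};\underline{L}_N)\,\frac{d\underline{L}_{N-1}}{\underline{L}_{N-1}} \longrightarrow \int f(\underline{L}_{N-1})\,\mathfrak{f}_X(\underline{L}_{N-1};\underline{L}_N)\,\frac{d\underline{L}_{N-1}}{\underline{L}_{N-1}},
\end{equation*}
with the convergence uniform on compact subsets of $\mathpzc{W}_N$. It therefore suffices to show that the error term
\begin{equation*}
E_q(\underline{L}_N) \coloneqq \int \bigl(f^{(q)}(\underline{L}_{N-1})-f(\underline{L}_{N-1})\bigr)\,\mathfrak{f}^{(q)}_X(\underline{L}_{N-1};\underline{L}_N)\,\frac{d\underline{L}_{N-1}}{\underline{L}_{N-1}}
\end{equation*}
tends to $0$ uniformly for $\underline{L}_N$ in any compact $\mathcal{K}\subset\mathpzc{W}_N$.

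For this, I would fix such a compact set $\mathcal{K}$ and let $\mathcal{K}'\subset\mathpzc{W}_{N-1}$ be the (compact) set of all $\underline{L}_{N-1}$ which interlace with some $\underline{L}_N\in\mathcal{K}$; then $\mathfrak{f}^{(q)}_X$ is supported on $\mathcal{K}'$ for $\underline{L}_N\in\mathcal{K}$. Under the scaling of \Cref{def:scaling} we have $-s=q^S>0$ and both $-s/x_i=q^{S-X_i}$ and $-sx_i=q^{S+X_i}$ are positive, so by \eqref{eq:inc_sqW} the rescaled branching functions $\mathfrak{f}^{(q)}_X$ are nonnegative. Using this and the uniform convergence $\sup_{\mathcal{K}'}|f^{(q)}-f|\eqqcolon \varepsilon_q \to 0$, I can estimate
\begin{equation*}
|E_q(\underline{L}_N)| \le \varepsilon_q\,\int \mathfrak{f}^{(q)}_X(\underline{L}_{N-1};\underline{L}_N)\,\frac{d\underline{L}_{N-1}}{\underline{L}_{N-1}}.
\end{equation*}

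The final step is to show that the integral $\int\mathfrak{f}^{(q)}_X$ is uniformly bounded in $q$ and in $\underline{L}_N\in\mathcal{K}$. This follows by applying \Cref{prop:skew_SW_uniform_conv} with the (continuous) choice $f\equiv 1$: that result yields uniform convergence of this integral to $\int\mathfrak{f}_X(\underline{L}_{N-1};\underline{L}_N)\,\frac{d\underline{L}_{N-1}}{\underline{L}_{N-1}}$, which by \Cref{prop:sW_are_well_posed} is a continuous function of $\underline{L}_N$ and hence bounded on $\mathcal{K}$. Thus $\sup_{\underline{L}_N\in\mathcal{K}}|E_q(\underline{L}_N)|=O(\varepsilon_q)\to 0$, completing the proof.

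The only subtle point, and the one that requires care, is ensuring the nonnegativity of $\mathfrak{f}^{(q)}_X$ and the compactness of the effective integration domain; both are immediate from the scaling conventions and the interlacing support of the skew branching function, so no genuine obstacle is anticipated.
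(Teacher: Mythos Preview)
Your proof is correct and follows essentially the same approach as the paper's (very terse) argument, which merely invokes \Cref{prop:skew_SW_uniform_conv} together with the compact support of the branching functions. Your splitting $f^{(q)}=f+(f^{(q)}-f)$, the nonnegativity observation, and the use of \Cref{prop:skew_SW_uniform_conv} with $f\equiv 1$ to bound $\int\mathfrak{f}^{(q)}_X$ are exactly the details one would fill in; a minor remark is that for the continuity of the limiting integral you could cite \Cref{lemma:sW_branching_general} (with $f\equiv 1$) rather than \Cref{prop:sW_are_well_posed}, though uniform convergence already suffices for the boundedness you need.
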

\begin{proof}
	This follows from 
	\Cref{prop:skew_SW_uniform_conv} and 
	the fact that for fixed $\underline{L}_N\in \mathpzc{W}_N$,
	the functions $\underline{L}_{N-1}\mapsto 
	\mathfrak{f}^{(q)}_X(\underline{L}_{N-1}; \underline{L}_N )$
	and
	$\underline{L}_{N-1}\mapsto 
	\mathfrak{f}_X(\underline{L}_{N-1}; \underline{L}_N )$
	are compactly supported on $\mathpzc{W}_{N-1}$.
\end{proof}

\begin{proof}[Proof of Theorem \ref{thm:sqW_to_sW}]
    For $N=1$ we have
    \begin{equation*}
			\mathfrak{f}_{X_1}^{(q)}(L) = q^{X_1\floor{\log_q (1/L)}} \xrightarrow[q \to 1]{} L^{-X_1} = \mathfrak{f}_{X_1}(L),
    \end{equation*}
    uniformly with respect to $L\ge1$ varying in any compact domain. 
		\Cref{cor:skew_SW_unif_conv} then implies  
		\Cref{thm:sqW_to_sW} by induction on $N$.
\end{proof}

\subsection{\texorpdfstring{Convergence of the dual sqW functions as $q\to1$}{Convergence of the dual sqW functions as q -> 1}}
\label{sub:conv_dual_sqW}

We now establish the convergence of functions
$\mathbb{F}^*$ to the dual spin Whittaker functions $\mathfrak{g}$.
The scaling of
parameters we adopt is that of Definition \ref{def:scaling}. For
consistency with the previous sections, dual functions will depend
on $y$ variables for which the scaling is
\begin{equation} \label{eq:scaling_y_variables}
	y_i=q^{Y_i}, \qquad \qquad |Y_i|<S.
\end{equation}
    
For two interlacing arrays $\underline{\widetilde{L}}_k \prec \underline{L}_k$ define the
rescaled dual spin Whittaker branching
functions
\begin{equation}\label{eq:dual_branch_sW_q_def}
		\mathfrak{g}_{Y_k}^{(q)}(\underline{\widetilde{L}}_{k};\underline{L}_k)
		=
		(1-q)^{S-Y_k}
		\left(
		\prod_{j=1}^{k} \frac{1}{\Delta_q( \widetilde{L}_{k,j}, L_{k,j} , \widetilde{L}_{k,j-1} )} \right)
		\mathbb{F}^*_{\lambda^k / \widetilde{\lambda}^{k}} (y_k)\bigg \vert_{\mathrm{scaling \,}\eqref{eq:scaling},\eqref{eq:scaling_y_variables}},
\end{equation}
where, by agreement, $\widetilde{L}_{k,0}=\infty$,
and $\Delta_q$ is given by \eqref{eq:Delta_q}.
In particular, the rescaled one-variable function is
(assuming $L>1$ and $q$ close enough to $1$)
\begin{equation*}
	\mathfrak{g}_Y^{(q)}(L)
	=
	\mathfrak{g}_Y^{(q)}(1;L)
	=(1-q)^{S-Y}\frac{1}{(-\log q)}\frac{(q^{S-Y};q)_{\floor{\log_q(1/L)}}}{(q;q)_{\floor{\log_q(1/L)}}}
	\,q^{Y\floor{\log_q(1/L)}}.
\end{equation*}
For interlacing arrays of different lengths $\underline{L}_{k-1}\prec
\underline{L}_k$, we set
$\mathfrak{g}^{(q)}_{Y}(\underline{L}_{k-1};\underline{L}_k) =
\mathfrak{g}^{(q)}_{Y}((1,\underline{L}_{k-1});\underline{L}_k)$,
as before.
Define the rescaled dual spin $q$-Whittaker functions recursively as 
\begin{equation*}
	\mathfrak{g}_{Y_1,\dots,Y_M}^{(q)}(\underline{L}_N)
	=
	\begin{dcases}
	\int
	\mathfrak{g}_{Y_1,\dots,Y_{M-1}}^{(q)}(\underline{\widetilde{L}}_N)
	\mathfrak{g}_{Y_M}^{(q)}(\underline{\widetilde{L}}_N ; \underline{L}_N)  \frac{d \underline{\widetilde{L}}_{N}}{\underline{\widetilde{L}}_{N}} \qquad & \text{if } N<M,
	\\
	\int
	\mathfrak{g}_{Y_1, \dots, Y_{N-1}}^{(q)}(\underline{\widetilde{L}}_{N-1})
	\mathfrak{g}_{Y_N}^{(q)}(\underline{\widetilde{L}}_{N-1} ; \underline{L}_N)  \frac{d \underline{\widetilde{L}}_{N-1}}{\underline{\widetilde{L}}_{N-1}} \qquad & \text{if } N=M.
		\end{dcases}
\end{equation*}

The next result establishes a weak convergence of rescaled branching functions $\mathfrak{g}^{(q)}$.
\begin{theorem} 
	Let $f(\underline{L}_N)$ be a compactly supported continuous function on $\mathpzc{W}_N$. Then
\begin{equation} \label{eq:weak_unif_conv_sW}
		\lim_{q \to 1} 
		\int \mathfrak{g}^{(q)}_Y (\underline{\widetilde{L}}_N ; \underline{L}_N) f (\underline{L}_N)\,
		\frac{d \underline{L}_N}{\underline{L}_N}
		=
		\int \mathfrak{g}_Y (\underline{\widetilde{L}}_N ; \underline{L}_N) f (\underline{L}_N)\,
		\frac{d \underline{L}_N}{\underline{L}_N},
\end{equation}
and the convergence is uniform with respect to $\underline{\widetilde{L}}_N$.
\end{theorem}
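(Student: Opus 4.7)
The plan is to reduce the convergence for the dual branching function $\mathfrak{g}^{(q)}_Y$ to the analogous result for the non-dual function $\mathfrak{f}^{(q)}_{-Y}$ already established in \Cref{prop:skew_SW_uniform_conv}, via a $q$-analog of the continuous factorization \eqref{eq:dual_branch_sW_2}. Denote by $\lambda,\widetilde\lambda\in\mathrm{Sign}_N$ the signatures associated to $\underline{L}_N$ and $\underline{\widetilde{L}}_N$ under the scaling of \Cref{def:scaling}, and set $\underline{\ell}_{N-1}\coloneqq(L_{N,N},\ldots,L_{N,2})$ and $\nu\coloneqq(\lambda_2,\ldots,\lambda_N)\in\mathrm{Sign}_{N-1}$. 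A direct manipulation of the explicit formulas \eqref{eq:inc_sqW} and \eqref{eq:dual_sqw_single_variable_expression} shows that the interior factors in the ratio $\mathbb{F}^*_{\lambda/\widetilde\lambda}(q^Y)/\mathbb{F}_{\widetilde\lambda/\nu}(q^{-Y})$ telescope, producing the exact discrete factorization
\begin{equation*}
\mathfrak{g}^{(q)}_Y(\underline{\widetilde{L}}_N;\underline{L}_N)=\mathsf{G}^{(q)}(\widetilde{L}_{N,1},L_{N,1};Y)\,\mathfrak{f}^{(q)}_{-Y}(\underline{\ell}_{N-1};\underline{\widetilde{L}}_N),
\end{equation*}
with the explicit boundary factor
\begin{equation*}
\mathsf{G}^{(q)}(\widetilde{L}_{N,1},L_{N,1};Y)\coloneqq\frac{(1-q)^{S-Y}}{\Delta_q(\widetilde{L}_{N,1},L_{N,1},\infty)}\,q^{Y\lfloor\log_q(1/L_{N,1})\rfloor}\,\frac{(q^{S-Y};q)_n}{(q;q)_n},
\end{equation*}
where $n=\lfloor\log_q(1/L_{N,1})\rfloor-\lfloor\log_q(1/\widetilde{L}_{N,1})\rfloor$. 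Combining \eqref{eq:ratio_q_pochhammer}, \eqref{eq:BC_limit}, $\Gamma_q\to\Gamma$, and $(1-q)/(-\log q)\to 1$, one verifies that $\mathsf{G}^{(q)}$ converges to $L_{N,1}^{-Y}\Gamma(S-Y)^{-1}(1-\widetilde{L}_{N,1}/L_{N,1})^{S-Y-1}$ uniformly on compact subsets of $\{\widetilde{L}_{N,1}<L_{N,1}\}$, recovering \eqref{eq:dual_branch_sW_2} in the limit.

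Using this factorization and Fubini, the left-hand side of \eqref{eq:weak_unif_conv_sW} rewrites as
\begin{equation*}
\int\mathsf{G}^{(q)}(\widetilde{L}_{N,1},L_{N,1};Y)\,I^{(q)}(\underline{\widetilde{L}}_N,L_{N,1})\,\frac{dL_{N,1}}{L_{N,1}},\qquad I^{(q)}(\underline{\widetilde{L}}_N,L_{N,1})\coloneqq\int f(\underline{\ell}_{N-1},L_{N,1})\,\mathfrak{f}^{(q)}_{-Y}(\underline{\ell}_{N-1};\underline{\widetilde{L}}_N)\,\frac{d\underline{\ell}_{N-1}}{\underline{\ell}_{N-1}}.
\end{equation*}
Since $f$ is compactly supported, the outer integration reduces to a bounded interval $L_{N,1}\in[\widetilde{L}_{N,1},M]$, and nontrivial values of $\underline{\widetilde{L}}_N$ are confined to a compact subset of $\mathpzc{W}_N$. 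Viewing $\underline{\ell}_{N-1}\mapsto f(\underline{\ell}_{N-1},L_{N,1})$ as a continuous compactly supported function depending continuously on the parameter $L_{N,1}$, \Cref{prop:skew_SW_uniform_conv} (in a version absorbing the parameter $L_{N,1}$) yields uniform convergence of $I^{(q)}$ to its continuous analog in $(\underline{\widetilde{L}}_N,L_{N,1})$ on the relevant compact set.

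It remains to pass to the limit in the outer integral. Away from the diagonal $L_{N,1}=\widetilde{L}_{N,1}$, $\mathsf{G}^{(q)}$ converges uniformly and the conclusion is immediate. The delicate point is the integrable singularity $(1-\widetilde{L}_{N,1}/L_{N,1})^{S-Y-1}$ as $L_{N,1}\searrow\widetilde{L}_{N,1}$ when $S-Y<1$; the exponent is $>-1$ because $|Y|<S$. The plan is to establish a uniform majorization $|\mathsf{G}^{(q)}(\widetilde{L}_{N,1},L_{N,1};Y)|\le C(1-\widetilde{L}_{N,1}/L_{N,1})^{S-Y-1}$ valid for all $q$ close to $1$, by controlling the $q$-Pochhammer ratio $(q^{S-Y};q)_n/(q;q)_n$ both for small $n$ (where it is uniformly bounded) and for large $n$ (where it approximates its Gamma function asymptotics $n^{S-Y-1}/\Gamma(S-Y)$), in the spirit of the estimates developed for \Cref{prop:skew_SW_uniform_conv} in \Cref{app:proof_prop}. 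Together with the uniform boundedness of $I^{(q)}$ on the support of $f$ and dominated convergence, this yields the desired uniform convergence in $\underline{\widetilde{L}}_N$. The main technical obstacle is precisely establishing this uniform domination of the boundary factor up to and across the singular region.
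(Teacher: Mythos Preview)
Your proposal is correct and follows essentially the same route as the paper: the same factorization of $\mathfrak{g}^{(q)}_Y$ into a boundary factor times $\mathfrak{f}^{(q)}_{-Y}$, the same Fubini split into an inner $\underline{\ell}_{N-1}$-integral handled by \Cref{prop:skew_SW_uniform_conv} and an outer $L_{N,1}$-integral, and the same treatment of the integrable singularity via a uniform bound of the form $C(1-\widetilde{L}_{N,1}/L_{N,1})^{S-Y-1}$. The only sharpening worth making is that the ``uniform domination'' you flag as the main obstacle is precisely the content of \Cref{lemma:bound_ratio_pochhammer}, which you should cite directly rather than gesture at.
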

\begin{proof}
We start by rewriting the branching 
function $\mathfrak{g}^{(q)}_Y (\underline{\widetilde{L}}_N ; \underline{L}_N)$ as
(this follows from straightforward algebraic manipulations with \eqref{eq:dual_branch_sW_q_def})
\begin{equation*}
		q^{Y \lambda^k_1} \frac{(q^{S-Y};q)_{\lambda^k_1 - \widetilde{\lambda}^k_1 }}
		{ (q;q)_{\lambda^k_1 - \widetilde{\lambda}^k_1 } }
		\frac{(1-q)^{S-Y}}{\Delta_q(\widetilde{L}_{k,1} , L_{k,1}, \infty )}\,
		\mathfrak{f}_{-Y}^{(q)}(\underline{\ell}_{k-1}; \underline{\widetilde{L}}_k ).
\end{equation*}
The integral in the left-hand side of
\eqref{eq:weak_unif_conv_sW} becomes
\begin{equation} \label{eq:weak_unif_conv_2}
		\int \frac{dL_{k,1}}{L_{k,1}^{1}}
		\left(
		q^{Y \lambda^k_1} \frac{(q^{S-Y};q)_{\lambda^k_1 - \widetilde{\lambda}^k_1 }}{ (q;q)_{\lambda^k_1 - \widetilde{\lambda}^k_1 } }
		\frac{(1-q)^{S-Y}}{\Delta_q(\widetilde{L}_{k,1} , L_{k,1}, \infty )}
		\right)
		\int \frac{d \underline{\ell}_{k-1} }{ \underline{\ell}_{k-1} }\,
		\mathfrak{f}^{(q)}_{-Y}( \underline{\ell}_{k-1} ; \underline{\widetilde{L}}_k ) f(\underline{\ell}_{k-1},L_{k,1}).
\end{equation}
The inner integral involving the function $\mathfrak{f}^{(q)}_{-Y}$ is uniformly (with respect to $\underline{\widetilde{L}}_k$) convergent to 
\begin{equation*}
\int \frac{d \underline{\ell}_{k-1} }{ \underline{\ell}_{k-1} }
\,\mathfrak{f}_{-Y}( \underline{\ell}_{k-1} ; \underline{\widetilde{L}}_k ) f(\underline{\ell}_{k-1},L_{k,1})
\end{equation*}
by virtue of Proposition \ref{prop:skew_SW_uniform_conv}. On the other hand, the term inside the parentheses in \eqref{eq:weak_unif_conv_2} is uniformly convergent to 
\begin{equation*}
	\frac{L_{k,1}^{-Y}}{\Gamma(S-Y)} \left( 1 - \widetilde{L}_{k,1}/L_{k,1} \right)^{S-Y-1},
\end{equation*}
when $L_{k,1}$ is kept away from $\widetilde{L}_{k,1}$.
Moreover, 
the term inside the parentheses
is absolutely bounded by 
$\mathrm{const} \times ( 1 -
\widetilde{L}_{k,1} /L_{k,1} )^{S-Y-1}$ when $L_{k,1}$ approaches
$\widetilde{L}_{k,1}$,
thanks to 
\Cref{lemma:bound_ratio_pochhammer}. 
Since the resulting term after the $q\to1$ limit coincides with the expression
\eqref{eq:dual_branch_sW_2} for the dual branching function
$\mathfrak{g}_Y (\underline{\widetilde{L}}_N ; \underline{L}_N)$,
we are done.
\end{proof}
    
Similarly to \Cref{cor:skew_SW_unif_conv},
we can let the test function $f$ depend on $q$:
    
\begin{corollary} \label{cor:weak_unif_conv_sW}
	Let $f^{(q)}(\underline{L}_N)$ converge,
	as $q\to1$,
	to a compactly supported continuous function $f(\underline{L}_N)$,
	uniformly on $\mathpzc{W}_N$.
Then
\begin{equation*}
		\lim_{q \to 1} 
		\int \mathfrak{g}^{(q)}_Y (\underline{\widetilde{L}}_N ; \underline{L}_N) f^{(q)} (\underline{L}_N) \frac{d \underline{L}_N}{\underline{L}_N}
		=
		\int \mathfrak{g}_Y (\underline{\widetilde{L}}_N ; \underline{L}_N) f (\underline{L}_N) \frac{d \underline{L}_N}{\underline{L}_N}
\end{equation*}
and the convergence is uniform with respect to $\underline{\widetilde{L}}_N$.
\end{corollary}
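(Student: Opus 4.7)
The plan is to reduce \Cref{cor:weak_unif_conv_sW} to the theorem just proven by the same approximation argument used in the proof of \Cref{cor:skew_SW_unif_conv}. First I would split
\[
\int \mathfrak{g}^{(q)}_Y(\underline{\widetilde L}_N;\underline L_N)\,f^{(q)}(\underline L_N)\,\frac{d\underline L_N}{\underline L_N}
=\int \mathfrak{g}^{(q)}_Y\,f\,\frac{d\underline L_N}{\underline L_N}
+\int \mathfrak{g}^{(q)}_Y\,(f^{(q)}-f)\,\frac{d\underline L_N}{\underline L_N}.
\]
By the preceding theorem the first summand converges, uniformly in $\underline{\widetilde L}_N$, to $\int \mathfrak{g}_Y\,f\,\frac{d\underline L_N}{\underline L_N}$, which is the desired right-hand side. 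The remaining task is to show that the second summand tends to zero, uniformly in $\underline{\widetilde L}_N$.

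For this I would bound
\[
\biggl|\int \mathfrak{g}^{(q)}_Y\,(f^{(q)}-f)\,\frac{d\underline L_N}{\underline L_N}\biggr|
\le \|f^{(q)}-f\|_{\infty}\cdot \int_K \bigl|\mathfrak{g}^{(q)}_Y(\underline{\widetilde L}_N;\underline L_N)\bigr|\,\frac{d\underline L_N}{\underline L_N},
\]
where $K\subset \mathpzc{W}_N$ is a compact set containing the support of $f$ together with the supports of all $f^{(q)}$ for $q$ close enough to $1$ (such a $K$ exists by the hypothesized uniform convergence). Since $\|f^{(q)}-f\|_{\infty}\to 0$, it suffices to establish the uniform bound
\[
\sup_{q\text{ near }1}\;\sup_{\underline{\widetilde L}_N\in K'}\int_K \bigl|\mathfrak{g}^{(q)}_Y(\underline{\widetilde L}_N;\underline L_N)\bigr|\,\frac{d\underline L_N}{\underline L_N}<\infty,
\]
for any compact $K'\subset \mathpzc{W}_N$ (with uniformity actually extending to all of $\mathpzc{W}_N$, since values of $\underline{\widetilde L}_N$ outside a neighbourhood of $K$ contribute $0$ by the interlacing indicator in $\mathfrak{g}^{(q)}_Y$).

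To prove this uniform $L^1$-bound, I would reuse the decomposition employed in the proof of the preceding theorem, writing
\[
\mathfrak{g}^{(q)}_Y(\underline{\widetilde L}_N;\underline L_N)
=\Biggl(q^{Y\lambda^N_1}\frac{(q^{S-Y};q)_{\lambda^N_1-\widetilde\lambda^N_1}}{(q;q)_{\lambda^N_1-\widetilde\lambda^N_1}}\frac{(1-q)^{S-Y}}{\Delta_q(\widetilde L_{N,1},L_{N,1},\infty)}\Biggr)\,\mathfrak{f}^{(q)}_{-Y}(\underline\ell_{N-1};\underline{\widetilde L}_N).
\]
The parenthesized prefactor is controlled, uniformly in $q$ near $1$, by $\mathrm{const}\cdot(1-\widetilde L_{N,1}/L_{N,1})^{S-Y-1}$ via the ratio-of-$q$-Pochhammer estimates of \Cref{lemma:bound_ratio_pochhammer}, and this bound is locally integrable on $K$ because $S-Y>0$. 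The integral in $\underline\ell_{N-1}$ of $\mathfrak{f}^{(q)}_{-Y}(\underline\ell_{N-1};\underline{\widetilde L}_N)$ against the indicator $\mathbf{1}_{(\underline\ell_{N-1},L_{N,1})\in K}$ is bounded uniformly in $q$ and $\underline{\widetilde L}_N$ by the same uniform control that underlies \Cref{prop:skew_SW_uniform_conv} (applied to the constant-one test function, which is continuous, then localized to $K$).

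The main obstacle is precisely this uniform integrable domination of $\mathfrak{g}^{(q)}_Y$; once it is in hand, the decomposition above combined with $\|f^{(q)}-f\|_\infty\to 0$ yields the claimed uniform convergence. I emphasize that this domination is not a new estimate: it is exactly the pointwise-with-integrable-majorant bound that made the dominated convergence step go through in the proof of the preceding theorem, now observed to be uniform in $\underline{\widetilde L}_N$ over any compact subset of $\mathpzc{W}_N$.
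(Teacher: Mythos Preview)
Your proposal is correct and follows the natural approach implied by the paper, which gives no detailed proof for this corollary beyond the remark ``Similarly to \Cref{cor:skew_SW_unif_conv}, we can let the test function $f$ depend on $q$.'' Your splitting argument, together with the uniform $L^1$ bound on $\mathfrak{g}^{(q)}_Y$ over the compact support (extracted from the estimates already used in the proof of the preceding theorem, namely the $q$-Pochhammer bound of \Cref{lemma:bound_ratio_pochhammer} for the $L_{N,1}$-factor and the uniform control from \Cref{prop:skew_SW_uniform_conv} for the $\mathfrak{f}^{(q)}_{-Y}$-factor), is exactly what is needed to make the one-line reduction rigorous.
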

    
\subsection{Properties of the spin Whittaker functions}
\label{sub:sW_properties}

In this subsection we describe the properties
of the spin Whittaker functions
which follow in the $q\to1$ limit
from the corresponding properties
of the spin $q$-Whittaker functions. 

\begin{proposition}[Symmetry and shifting]
	\label{prop:symmetry_and_shifting}
	The spin Whittaker function 
	$\mathfrak{f}_{X_1,\ldots,X_N }(\underline{L}_N)$
	is symmetric in the $X_i$'s for all $\underline{L}_N\in \mathpzc{W}_N$.
	They also satisfy the shifting property:
	\begin{equation*}
		\mathfrak{f}_{X_1,\ldots,X_N }(a \underline{L}_N)=
		a^{-X_1-\ldots -X_N}
		\mathfrak{f}_{X_1,\ldots,X_N }(\underline{L}_N),\qquad 
		a>1.
	\end{equation*}
\end{proposition}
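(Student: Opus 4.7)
The plan is to derive both properties from the corresponding properties of the spin $q$-Whittaker polynomials $\mathbb{F}_\lambda$ (symmetry: \Cref{prop:sqW_are_symmetric}; shifting: \Cref{prop:sqW_shifting}), using two slightly different mechanisms.

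First, I would observe that the rescaled discrete objects of \Cref{sub:conv_sqW} literally reproduce the sqW polynomials: iterating the branching rule for $\mathfrak{f}^{(q)}_{X_1,\ldots,X_N}$ and applying \Cref{lemma:discrete_sums_Delta_q} to convert each integration $\int \frac{1}{\Delta_q(\cdots)}\frac{dL_{k,i}}{L_{k,i}}$ into a summation over $\lambda^k_i\in\mathbb{Z}$ turns the iterated integral into the branching expansion \eqref{eq:branching_GT_full_expansion_for_sqW}. Thus
\begin{equation*}
	\mathfrak{f}^{(q)}_{X_1,\ldots,X_N}(\underline{L}_N)
	= \mathbb{F}_{\lambda^N}(q^{X_1},\ldots,q^{X_N}),
	\qquad
	\lambda^N_j = \floor{\log_q(1/L_{N,j})}.
\end{equation*}
By \Cref{prop:sqW_are_symmetric} the right-hand side is symmetric in the $x_i=q^{X_i}$, hence in the $X_i$'s. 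By \Cref{thm:sqW_to_sW}, $\mathfrak{f}^{(q)}\to\mathfrak{f}$ as $q\to 1$ uniformly on compact subsets of $\mathpzc{W}_N$, and the symmetry persists in the limit.

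For the shifting property, the cleanest route is the Givental-type representation \eqref{eq:sW_Givental}. Given $a>1$, apply the change of variables $L_{k,i}\mapsto a L_{k,i}$ to all integration variables $L_{k,i}$ with $1\le k\le N-1$. The interlacing constraints defining $\mathpzc{GT}_{\hspace{-0.5ex}N-1}$ (and the interlacing with $a\underline{L}_N$ at the top) pull back to the original interlacing with $\underline{L}_N$; the measure $\prod \frac{dL_{k,i}}{L_{k,i}}$ is scale-invariant; and each factor $\mathcal{A}_{S,X_{k+1}}(L_{k+1,i+1},L_{k,i},L_{k+1,i})$ depends only on the ratios of its three arguments (see \eqref{eq:A}), hence is invariant. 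The only factor that scales is the prefactor $\prod_{k=1}^N\bigl(\prod_{i<k}L_{k-1,i}^{X_k}\bigr)\bigl/\bigl(\prod_{i\le k}L_{k,i}^{X_k}\bigr)$, which picks up the overall power $a^{\sum_k X_k(k-1)-\sum_k X_k\cdot k}=a^{-X_1-\cdots-X_N}$, as required.

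The only subtle point is that one has to ensure $a\underline{L}_N$ still lies in $\mathpzc{W}_N$, which is immediate since $a>1$ and $L_{N,N}\ge 1$ imply $aL_{N,N}\ge 1$, and the strict inequalities among the $L_{N,i}$'s are preserved by scaling. No genuine obstacle arises: symmetry is passed through the $q\to 1$ limit (justified by \Cref{thm:sqW_to_sW}), and shifting reduces to a homogeneity check on the Givental integrand. The main piece of bookkeeping is verifying the iterated-integral-to-sum reduction that identifies $\mathfrak{f}^{(q)}$ with $\mathbb{F}_{\lambda^N}$, which is a direct consequence of \Cref{lemma:discrete_sums_Delta_q}.
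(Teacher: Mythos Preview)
Your proposal is correct and essentially follows the paper's approach. For symmetry, both you and the paper derive it from the symmetry of the sqW polynomials via the $q\to 1$ limit of \Cref{thm:sqW_to_sW}; your explicit identification $\mathfrak{f}^{(q)}_{X_1,\ldots,X_N}(\underline{L}_N)=\mathbb{F}_{\lambda^N}(q^{X_1},\ldots,q^{X_N})$ via \Cref{lemma:discrete_sums_Delta_q} is exactly the mechanism behind the paper's one-line remark. For the shifting property, the paper mentions two routes: either pass \Cref{prop:sqW_shifting} through the limit, or observe directly that the branching function satisfies $\mathfrak{f}_X(a\underline{L}_{k};a\underline{L}_{k+1})=a^{-X}\mathfrak{f}_X(\underline{L}_{k};\underline{L}_{k+1})$. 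Your Givental change-of-variables argument is precisely the second route, carried out on the full iterated integral rather than level by level; the key observation that $\mathcal{A}_{S,X}$ depends only on ratios is the same in both. One minor point worth noting (which you implicitly use): the lower bound $L'_{k,i}\ge 1$ after rescaling is automatic from interlacing and $L_{N,N}\ge 1$, since $L_{k,k}\ge L_{k+1,k+1}\ge\cdots\ge L_{N,N}$.
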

\begin{proof}
	The symmetry follows from the 
	corresponding symmetry of the sqW  polynomial
	$\mathbb{F}_\lambda(x_1,\ldots,x_N )$,
	which ultimately is a consequence of the Yang--Baxter equation.
	The shifting 
	property can either be deduced from
	\Cref{prop:sqW_shifting},
	or obtained in a similar way by noting that the 
	branching spin Whittaker functions themselves satisfy
	$\mathfrak{f}_X(a\underline{L}_{k};a\underline{L}_k)=a^{-X}\mathfrak{f}_X(\underline{L}_{k};\underline{L}_k)$.
\end{proof}

We now turn to Cauchy type identities for the spin Whittaker functions.

\begin{theorem}[Skew Cauchy type identity] 
	\label{thm:skew_Cauchy_sW}
	Assume $|X|,|Y|<S$ and $X+Y>0$.
	Then, for any $\underline{L}_{N-1},\underline{\widetilde{L}}_N$ we have
	\begin{multline} \label{eq:skew_Cauchy_sW}
		\int \mathfrak{f}_X (\underline{L}_{N-1};\underline{L}_N)
		\mathfrak{g}_Y (\underline{\widetilde{L}}_N;\underline{L}_N)
		\frac{d \underline{L}_N}{\underline{L}_N}
		\\=
		\frac{\Gamma(X+Y) \Gamma(2 S)}{\Gamma(S+X) \Gamma(S+Y)} 
		\int \mathfrak{f}_X (\underline{\widetilde{L}}_{N-1} ; \underline{\widetilde{L}}_N)
		\mathfrak{g}_Y(\underline{\widetilde{L}}_{N-1} ; \underline{L}_{N-1}) 
		\,
		\frac{d \underline{\widetilde{L}}_{N-1} }{\underline{\widetilde{L}}_{N-1}}
	\end{multline}
	and, when $N=1$ we have
	\begin{equation} \label{eq:skew_Cauchy_sW_N=1}
			\int \mathfrak{f}_X (L_{1,1}) \mathfrak{g}_Y (\widetilde{L}_{1,1};L_{1,1}) \frac{d L_{1,1}}{L_{1,1}}
			=
			\frac{\Gamma(X+Y)}{\Gamma(S+X)} \mathfrak{f}_X(\widetilde{L}_{1,1}). 
	\end{equation}
\end{theorem}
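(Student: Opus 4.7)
The plan is to obtain both identities as the $q\nearrow 1$ scaling limit of the skew Cauchy identities for spin $q$-Whittaker polynomials, Proposition \ref{prop:skew_Cauchy_ID_sqW_sqW}. Specifically, apply \eqref{eq:skew_Cauchy_ID_sqW_sqW} with the sqW index replaced by $N-1$, so that $\mu\in\mathrm{Sign}_{N-1}$, $\lambda\in\mathrm{Sign}_N$, and the two sums run over $\nu\in\mathrm{Sign}_N$ and $\varkappa\in\mathrm{Sign}_{N-1}$. Under the correspondence $\mu\leftrightarrow\underline{L}_{N-1}$, $\nu\leftrightarrow\underline{L}_N$, $\lambda\leftrightarrow\underline{\widetilde L}_N$, $\varkappa\leftrightarrow\underline{\widetilde L}_{N-1}$, the discrete interlacings $\mu\prec\nu$, $\lambda\prec\nu$, $\varkappa\prec\lambda$, $\varkappa\prec\mu$ translate precisely into the continuous interlacing conditions under which the factors $\mathfrak{f}_X$ and $\mathfrak{g}_Y$ are supported. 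The base case $N=1$ starts instead from \eqref{eq:skew_Cauchy_ID_sqW_sqW_first_row}.

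After applying the scalings of \Cref{def:scaling} together with \eqref{eq:scaling_y_variables}, rewrite $\mathbb{F}_{\nu/\mu}(x)$ and $\mathbb{F}^*_{\nu/\lambda}(y)$ in terms of the rescaled branching functions $\mathfrak{f}^{(q)}_X$ and $\mathfrak{g}^{(q)}_Y$ from \Cref{sub:conv_sqW,sub:conv_dual_sqW}; this introduces explicit factors of $\Delta_q$ from \eqref{eq:Delta_q} and the normalizing prefactor $(1-q)^{S-Y}$. Applying \Cref{lemma:discrete_sums_Delta_q} recursively, each discrete sum $\sum_{\nu_i}$ converts to an integral $\int dL_{N,i}/L_{N,i}$, absorbing exactly one $\Delta_q$ per variable; the remaining $\Delta_q$ and $(1-q)$ factors balance, by a direct power count, against the $q\to 1$ asymptotics of the prefactor in \eqref{eq:skew_Cauchy_ID_sqW_sqW}. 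Using the standard $q$-Gamma asymptotic $(q^a;q)_\infty\sim(1-q)^{1-a}/\Gamma(a)$, one obtains
\[
\frac{(-sx;q)_\infty(-sy;q)_\infty}{(s^2;q)_\infty(xy;q)_\infty}\ \longrightarrow\ \frac{\Gamma(2S)\,\Gamma(X+Y)}{\Gamma(S+X)\,\Gamma(S+Y)},
\]
and the hypothesis $X+Y>0$ ensures both $|xy|<1$ (required for the pre-limit identity) and convergence of the limit integral at $L_{N,1}\to\infty$.

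The main obstacle will be rigorously justifying the exchange of the $q\to 1$ limit with the summations on both sides. The plan is to first restrict integration to a compact subset of the interlacing region and invoke \Cref{prop:skew_SW_uniform_conv} for the uniform convergence of $\mathfrak{f}^{(q)}_X\to\mathfrak{f}_X$, combined with \Cref{cor:weak_unif_conv_sW} for the $\mathfrak{g}^{(q)}_Y$ factor, in which the rest of the integrand plays the role of a uniformly convergent compactly supported test function. The tails outside the compact set will be controlled by the explicit product of $q$-Pochhammer symbols in $\mathbb{F}_{\nu/\mu}$ and $\mathbb{F}^*_{\nu/\lambda}$ (which admit $q$-independent upper bounds of beta-kernel type via \Cref{lemma:bound_ratio_pochhammer}) and by the polynomial decay of $\mathfrak{f}_X$ in the top variable $L_{N,1}$, integrable thanks to $X+Y>0$; a Dominated Convergence argument then completes the limit. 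For the base case \eqref{eq:skew_Cauchy_sW_N=1} the same argument applies starting from \eqref{eq:skew_Cauchy_ID_sqW_sqW_first_row}, and alternatively one can verify it directly: the substitution $u=1-\widetilde L_{1,1}/L_{1,1}$ reduces the left-hand integral to a classical Beta integral $\mathrm{B}(X+Y,S-Y)/\Gamma(S-Y) = \Gamma(X+Y)/\Gamma(S+X)$ times $\widetilde L_{1,1}^{-X}=\mathfrak{f}_X(\widetilde L_{1,1})$, matching the right-hand side.
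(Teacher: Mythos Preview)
Your overall strategy—pass to the $q\to 1$ limit in the sqW/sqW skew Cauchy identity \eqref{eq:skew_Cauchy_ID_sqW_sqW}, using the rescaled branching kernels $\mathfrak{f}^{(q)}_X,\mathfrak{g}^{(q)}_Y$, and identify the prefactor via $q$-Gamma asymptotics—is exactly the paper's approach, and your treatment of the $N=1$ case matches the paper's (the Beta integral computation). The technical justification of the limit, however, differs in one important way.

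The paper does not attempt to feed $\mathfrak{f}^{(q)}_X(\underline{L}_{N-1};\,\cdot\,)$ into \Cref{cor:weak_unif_conv_sW} as the test function. Instead it \emph{integrates both sides against an arbitrary compactly supported continuous test function} $\phi(\underline{L}_{N-1})$ before taking $q\to 1$. This converts $\mathfrak{f}^{(q)}_X$ into $\Phi^{(q)}_{\mathfrak{f}}(\underline{L}_N)=\int \phi(\underline{L}_{N-1})\,\mathfrak{f}^{(q)}_X(\underline{L}_{N-1};\underline{L}_N)\,d\underline{L}_{N-1}/\underline{L}_{N-1}$, which by \Cref{prop:skew_SW_uniform_conv} converges \emph{uniformly} to a continuous $\Phi_{\mathfrak{f}}$, and likewise $\phi$ turns $\mathfrak{g}^{(q)}_Y$ into a compactly supported continuous $\Phi^{(q)}_{\mathfrak{g}}$ via \Cref{prop:integral_dual_branch_sW}. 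Now both convergence results apply as stated, and since $\phi$ is arbitrary the identity follows pointwise in $\underline{L}_{N-1}$.

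Your plan, as written, has a gap at this step. You propose to restrict to a compact set and treat $\mathfrak{f}^{(q)}_X(\underline{L}_{N-1};\,\cdot\,)$ itself as the ``uniformly convergent compactly supported test function'' in \Cref{cor:weak_unif_conv_sW}. But for fixed $\underline{L}_{N-1}$ the limit $\mathfrak{f}_X(\underline{L}_{N-1};\underline{L}_N)$ is in general \emph{not continuous} in $\underline{L}_N$: the factors $(1-L_{N-1,i}/L_{N,i})^{S-X-1}$ and $(1-L_{N,i+1}/L_{N-1,i})^{S+X-1}$ in $\mathcal{A}_{S,X}$ blow up along the interlacing boundary whenever $S\pm X<1$, so the hypotheses of \Cref{cor:weak_unif_conv_sW} are not met. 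Relatedly, \Cref{prop:skew_SW_uniform_conv} gives uniform convergence of \emph{integrals over $\underline{L}_{N-1}$}, not pointwise uniform convergence of $\mathfrak{f}^{(q)}_X$ as a function of $\underline{L}_N$; invoking it ``for the uniform convergence of $\mathfrak{f}^{(q)}_X\to\mathfrak{f}_X$'' conflates the two. A direct dominated-convergence route (pointwise a.e.\ convergence plus the beta-kernel bounds from \Cref{lemma:bound_ratio_pochhammer}) could in principle be pushed through, but then the ``tails'' you must control are not only at $L_{N,1}\to\infty$ but also along every face of the interlacing boundary, and this requires separate estimates you have not outlined. The paper's test-function smoothing is precisely the device that sidesteps these boundary singularities.
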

    
\begin{proof}
We first observe that \eqref{eq:skew_Cauchy_sW_N=1} is equivalent to
the integral representation of $\mathrm{B}(S-Y,X+Y)$. 

In order to prove the general case
\eqref{eq:skew_Cauchy_sW} we use
\Cref{cor:skew_SW_unif_conv,cor:weak_unif_conv_sW}. 
Take a compactly supported continuous test function
$\phi(\underline{L}_{N-1})$, and set
\begin{equation*}
		\Phi_{\mathfrak{f}}(\underline{L}_N)
		\coloneqq
		\int \phi(\underline{L}_{N-1}) \mathfrak{f}_X(\underline{L}_{N-1};\underline{L}_N) \frac{d \underline{L}_{N-1}}{\underline{L}_{N-1}},
		\quad
		\Phi_{\mathfrak{g}}(\underline{\widetilde{L}}_{N-1})
		\coloneqq
		\int
		\mathfrak{g}_Y(\underline{\widetilde{L}}_{N-1};\underline{L}_{N-1})
		\phi(\underline{L}_{N-1})  \frac{d \underline{L}_{N-1}}{\underline{L}_{N-1}}.
\end{equation*}
Analogously define $\Phi_{\mathfrak{f}}^{(q)}$ and
$\Phi_{\mathfrak{g}}^{(q)}$ by substituting respectively
$\mathfrak{f}_X$ and $\mathfrak{g}_Y$ with $\mathfrak{f}^{(q)}_X$ and
$\mathfrak{g}^{(q)}_Y$ in the above formulas.
It follows from the skew Cauchy Identity for sqW functions
(\Cref{prop:skew_Cauchy_ID_sqW_sqW}) that
\begin{equation} \label{eq:weak_Cauchy_id}
		\int \Phi_{\mathfrak{f}}^{(q)}(\underline{L}_N) \mathfrak{g}^{(q)}_Y (\underline{\widetilde{L}}_N;\underline{L}_N)
		\frac{d \underline{L}_N}{\underline{L}_N}
		=
		\frac{\Gamma_q(X+Y) \Gamma_q(2 S)}{\Gamma_q(S+X) \Gamma_q(S+Y)} 
		\int \mathfrak{f}^{(q)}_X (\underline{\widetilde{L}}_{N-1} ; \underline{\widetilde{L}}_N)
		\Phi_{\mathfrak{g}}^{(q)}(\underline{\widetilde{L}}_{N-1})
		\frac{d \underline{\widetilde{L}}_{N-1} }{\underline{\widetilde{L}}_{N-1}}.
\end{equation}
By \Cref{cor:weak_unif_conv_sW} we have
$\Phi_{\mathfrak{g}}^{(q)} \to \Phi_{\mathfrak{g}}$ uniformly, and
further $\Phi_{\mathfrak{g}}$ is compactly supported and continuous by
\Cref{prop:integral_dual_branch_sW}. 
This implies, by
\Cref{cor:skew_SW_unif_conv}, that the right-hand side of
\eqref{eq:weak_Cauchy_id} converges to
\begin{equation*}
		\frac{\Gamma(X+Y) \Gamma(2 S)}{\Gamma(S+X) \Gamma(S+Y)} 
		\int \mathfrak{f}_X (\underline{\widetilde{L}}_{N-1} ; \underline{\widetilde{L}}_N)
		\Phi_{\mathfrak{g}}(\underline{\widetilde{L}}_{N-1})
		\frac{d \underline{\widetilde{L}}_{N-1} }{\underline{\widetilde{L}}_{N-1}}.
\end{equation*}
The integral in the left-hand side of \eqref{eq:weak_Cauchy_id} is
absolutely convergent when $X+Y>0$. Since
$\Phi_{\mathfrak{f}}^{(q)} \to \Phi_{\mathfrak{f}}$ uniformly by
\Cref{prop:skew_SW_uniform_conv}, 
\Cref{cor:weak_unif_conv_sW} implies that the left-hand side of \eqref{eq:weak_Cauchy_id} 
converges to
\begin{equation*}
		\int \Phi_{\mathfrak{f}}(\underline{L}_N) \mathfrak{g}_Y (\underline{\widetilde{L}}_N;\underline{L}_N)
		\frac{d \underline{L}_N}{\underline{L}_N}.
\end{equation*}
Since the function $\phi$ was arbitrary, equality \eqref{eq:skew_Cauchy_sW} follows.
\end{proof}
    
\begin{corollary}[Full Cauchy type identity]
	\label{cor:full_Cauchy}
	Let $N \le M$ and $|X_i|,|Y_j|<S$,
	$X_i+Y_j>0$ for all $i,j$. We have
		\begin{equation} \label{eq:Cauchy_id_sW}
				\int
				\mathfrak{f}_{X_1,\dots,X_N}(\underline{L}_N)\,
				\mathfrak{g}_{Y_1,\dots,Y_M}(\underline{L}_N)\,
				\frac{d \underline{L}_N}{\underline{L}_N}
				=
				\prod_{j=1}^M \frac{\Gamma(X_1+Y_j)}{\Gamma(S+X_1)} 
				\Bigg(\prod_{i=2}^N \frac{\Gamma(X_i+Y_j) \Gamma(2S)}{\Gamma(S+X_i) \Gamma(S+Y_j)}\Bigg)
				.
		\end{equation}
\end{corollary}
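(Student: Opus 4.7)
I plan to prove the Corollary by induction on $N$, iterating the skew Cauchy identity from \Cref{thm:skew_Cauchy_sW}. The base case $N=1$ (with $M\ge 1$ arbitrary) follows by iterating the single-variable form \eqref{eq:skew_Cauchy_sW_N=1}: expanding $\mathfrak{g}_{Y_1,\dots,Y_M}(L_{1,1})$ via its branching rule \eqref{eq:g_branching_sW} and applying \eqref{eq:skew_Cauchy_sW_N=1} gives the recursion $I_{1,M}=\frac{\Gamma(X_1+Y_M)}{\Gamma(S+X_1)}\,I_{1,M-1}$, which after $M$ iterations produces the stated formula in the $N=1$ specialization.

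For the inductive step $N\ge 2$, I would expand $\mathfrak{f}_{X_1,\dots,X_N}(\underline{L}_N)$ via \eqref{eq:sW}, isolating the factor $\mathfrak{f}_{X_N}(\underline{L}_{N-1};\underline{L}_N)$. The problem then reduces to the single-branch sub-identity
\[
\widetilde{J}_M(\underline{L}_{N-1})\coloneqq
\int \mathfrak{f}_{X_N}(\underline{L}_{N-1};\underline{L}_N)\,
\mathfrak{g}_{Y_1,\dots,Y_M}(\underline{L}_N)\,\frac{d\underline{L}_N}{\underline{L}_N}
=
\Biggl(\prod_{j=1}^M \frac{\Gamma(X_N+Y_j)\Gamma(2S)}{\Gamma(S+X_N)\Gamma(S+Y_j)}\Biggr)
\mathfrak{g}_{Y_1,\dots,Y_M}(\underline{L}_{N-1}),
\]
valid for all $M\ge N$. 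Once this is established, the remaining integration against $\mathfrak{f}_{X_1,\dots,X_{N-1}}(\underline{L}_{N-1})$ equals $I_{N-1,M}$, which is handled by the inductive hypothesis, and multiplying the two prefactors reproduces \eqref{eq:Cauchy_id_sW}.

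I plan to prove the sub-identity by induction on $M\ge N$. For the inductive step $M>N$, I expand $\mathfrak{g}_{Y_1,\dots,Y_M}(\underline{L}_N)$ via the $N<M$ branch of \eqref{eq:g_branching_sW}, apply \eqref{eq:skew_Cauchy_sW} to the inner $\underline{L}_N$-integral against $\mathfrak{g}_{Y_M}(\underline{\widetilde L}_N;\underline{L}_N)$ to peel off $Y_M$, and then reassemble at the $(N-1)$-variable level by recognizing the remaining $\underline{\widetilde L}_{N-1}$-integral as the branching of $\mathfrak{g}_{Y_1,\dots,Y_M}(\underline{L}_{N-1})$ (valid since $M\ge N > N-1$). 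This produces the clean recursion $\widetilde{J}_M=\frac{\Gamma(X_N+Y_M)\Gamma(2S)}{\Gamma(S+X_N)\Gamma(S+Y_M)}\widetilde{J}_{M-1}$, which closes the induction on $M$.

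The principal obstacle is the base case $M=N$ of the sub-identity, where the expansion of $\mathfrak{g}_{Y_1,\dots,Y_N}(\underline{L}_N)$ must use the $N=M$ branch of \eqref{eq:g_branching_sW} with the appended-1 convention $\mathfrak{g}_{Y_N}(\underline{\widetilde L}_{N-1};\underline{L}_N)=\mathfrak{g}_{Y_N}((1,\underline{\widetilde L}_{N-1});\underline{L}_N)$. Applying \eqref{eq:skew_Cauchy_sW} with $\underline{\widetilde L}_N=(1,\underline{\widetilde L}_{N-1})$ then yields the nonstandard factor $\mathfrak{f}_{X_N}(\underline{\widetilde L}'_{N-1};(1,\underline{\widetilde L}_{N-1}))$ whose interlacing condition reduces to same-length interlacing $\underline{\widetilde L}'_{N-1}\prec\underline{\widetilde L}_{N-1}$ but whose explicit hypergeometric form from \eqref{eq:A} does not directly match any branching function defined in the paper. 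I would address this either by direct manipulation of \eqref{eq:A} to match against $\mathfrak{g}$, or alternatively by establishing the base case via the $q\to 1$ limit of the discrete Cauchy identity \eqref{eq:Cauchy_Id_sqW_sqW}: under the scaling \eqref{eq:scaling}--\eqref{eq:scaling_y_variables}, the identity $(q^a;q)_\infty=\Gamma_q(a)^{-1}(q;q)_\infty(1-q)^{1-a}$ makes the divergent $(1-q)^{Y_j-S}$ factors on the right-hand side cancel exactly those absorbed into $\mathfrak{g}^{(q)}$ through \eqref{eq:dual_branch_sW_q_def}, after which \Cref{thm:sqW_to_sW}, \Cref{cor:weak_unif_conv_sW}, and a dominated convergence argument analogous to those of \Cref{app:proof_prop} validate the interchange of limit and sum.
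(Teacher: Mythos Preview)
Your overall strategy—expand via branching and iterate the skew Cauchy identity of \Cref{thm:skew_Cauchy_sW}—is exactly the paper's one-line proof, just with the details spelled out. Two comments.

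First, a minor correction to your inductive step for $\widetilde{J}_M$. After applying \eqref{eq:skew_Cauchy_sW} and swapping the order of integration you do \emph{not} get the pointwise identity $\widetilde{J}_M=c\,\widetilde{J}_{M-1}$; what actually emerges is
\[
\widetilde{J}_M(\underline{L}_{N-1})
=\frac{\Gamma(X_N+Y_M)\Gamma(2S)}{\Gamma(S+X_N)\Gamma(S+Y_M)}
\int \mathfrak{g}_{Y_M}(\underline{\widetilde L}_{N-1};\underline{L}_{N-1})\,
\widetilde{J}_{M-1}(\underline{\widetilde L}_{N-1})\,
\frac{d\underline{\widetilde L}_{N-1}}{\underline{\widetilde L}_{N-1}}.
\]
This closes only \emph{after} invoking the inductive hypothesis $\widetilde{J}_{M-1}=C_{M-1}\,\mathfrak{g}_{Y_1,\dots,Y_{M-1}}$ and then recognizing the remaining integral as the branching \eqref{eq:g_branching_sW} of $\mathfrak{g}_{Y_1,\dots,Y_M}(\underline{L}_{N-1})$. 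You gesture at this with ``reassemble'', so the argument is ultimately fine, but the phrase ``clean recursion $\widetilde{J}_M=c\,\widetilde{J}_{M-1}$'' is misleading as written.

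Second, the obstacle you flag at the base case $M=N$ is real and is not addressed by the paper's terse proof. Applying \eqref{eq:skew_Cauchy_sW} with $\underline{\widetilde L}_N=(1,\underline{\widetilde L}_{N-1})$ produces the factor $\mathfrak{f}_{X_N}(\underline{\widetilde L}'_{N-1};(1,\underline{\widetilde L}_{N-1}))$, and this does \emph{not} coincide with any $\mathfrak{g}$-branching kernel (the $\mathcal{A}$-arguments are shifted: $\mathfrak{f}_X$ has $\mathcal{A}_{S,X}(L_{N,i+1},L'_{N-1,i},L_{N,i})$ while $\mathfrak{g}_{-X}$ has $\mathcal{A}_{S,X}(L'_{N-1,i},L_{N-1,i},L'_{N-1,i-1})$). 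So your fix \#1 via direct matching does not go through without substantial further work. Your fix \#2—taking the $q\to1$ limit of the discrete identity \eqref{eq:Cauchy_Id_sqW_sqW} using the machinery of \Cref{cor:skew_SW_unif_conv} and \Cref{cor:weak_unif_conv_sW}, exactly as in the proof of \Cref{thm:skew_Cauchy_sW}—is the sound route, and is presumably what the authors have in mind. In fact, once you are willing to invoke that limit, you might as well derive the full Cauchy identity directly from \eqref{eq:Cauchy_Id_sqW_sqW} and bypass the sub-identity $\widetilde{J}_M$ altogether.
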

\begin{proof}
	Immediately follows from
	\Cref{thm:skew_Cauchy_sW} and 
	the branching rules for the functions $\mathfrak{f},\mathfrak{g}$.
\end{proof}
    
We also have an identity involving a single spin Whittaker function:
\begin{proposition}
		Let $|X_i|<S$. Then we have
		\begin{equation*}
				\int_{L_{N,N}=1} \mathfrak{f}_{X_1,\dots,X_N} ( \underline{L}_N ) \prod_{j=1}^{N-1} \left( 1- \frac{ L_{N,j+1} }{ L_{N,j} } \right)^{2S-1} \frac{ d L_{N,j} }{ L_{N,j}^{1+S} } = \frac{ \Gamma(S+X_1) \cdots \Gamma(S+X_N) }{ \Gamma( SN + X_1 +\cdots +X_N) }.
		\end{equation*}
\end{proposition}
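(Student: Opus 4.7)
My plan is to prove the identity by induction on $N$. The base case $N=1$ is immediate: the product over $j$ is empty and $\mathfrak{f}_{X_1}(1)=1$, so both sides equal $1$.

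For the inductive step, denote the left-hand side by $J_N(1)$, and more generally define $J_N(\ell)$ by replacing $L_{N,N}=1$ with $L_{N,N}=\ell\ge 1$ in the integration domain. The shifting property from \Cref{prop:symmetry_and_shifting}, combined with the rescaling $L_{N,j}=\ell M_j$ in the integral, yields the homogeneity relation
\begin{equation*}
	J_N(\ell) = \ell^{-\sum_{i=1}^N X_i-(N-1)S}J_N(1).
\end{equation*}
Next, I would expand $\mathfrak{f}_{X_1,\dots,X_N}(\underline{L}_N)$ using the branching rule \eqref{eq:sW} and interchange the order of integration so that the inner integral is
\begin{equation*}
	K(\underline{L}_{N-1}) \coloneqq \int_{L_{N,N}=1} \mathfrak{f}_{X_N}(\underline{L}_{N-1};\underline{L}_N)\prod_{j=1}^{N-1}\left(1-\frac{L_{N,j+1}}{L_{N,j}}\right)^{2S-1}\frac{dL_{N,j}}{L_{N,j}^{1+S}}.
\end{equation*}

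The crucial observation is that the factor $(1-L_{N,j+1}/L_{N,j})^{2S-1}$ in the measure exactly cancels the factor $(1-L_{N,i+1}/L_{N,i})^{1-2S}$ appearing in each $\mathcal{A}_{S,X_N}(L_{N,i+1},L_{N-1,i},L_{N,i})$ inside $\mathfrak{f}_{X_N}$. Once this cancellation is applied, the integrand factors over the variables $L_{N,1},\dots,L_{N,N-1}$, so each $L_{N,i}$ may be integrated independently over $[L_{N-1,i},L_{N-1,i-1}]$ (with the convention $L_{N-1,0}=\infty$ for the outermost variable). A substitution and Euler's integral representation of ${}_2F_1$, together with the identity ${}_2F_1(2S,S+X_N;2S|z)=(1-z)^{-(S+X_N)}$, turn each such one-dimensional integral into $B(S+X_N,S-X_N)$ times an algebraic factor; the $(N-1)$-fold product of Beta functions cancels the prefactor $B(S+X_N,S-X_N)^{-(N-1)}$ coming from the $\mathcal{A}_{S,X_N}$ denominators.

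Careful bookkeeping of the residual powers of $L_{N-1,i}$ should then yield
\begin{equation*}
	K(\underline{L}_{N-1})\prod_{k=1}^{N-1}\frac{dL_{N-1,k}}{L_{N-1,k}} = L_{N-1,N-1}^{-2S-X_N}(L_{N-1,N-1}-1)^{S+X_N-1}dL_{N-1,N-1}\prod_{j=1}^{N-2}\left(1-\frac{L_{N-1,j+1}}{L_{N-1,j}}\right)^{2S-1}\frac{dL_{N-1,j}}{L_{N-1,j}^{1+S}}.
\end{equation*}
The inner integration over $L_{N-1,1},\dots,L_{N-1,N-2}$ against $\mathfrak{f}_{X_1,\dots,X_{N-1}}(\underline{L}_{N-1})$ with this measure produces precisely $J_{N-1}(L_{N-1,N-1})$. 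Applying the homogeneity relation for $J_{N-1}$ reduces the final integral over $L_{N-1,N-1}$ to $\int_1^\infty L_{N-1,N-1}^{-NS-\sum X_i}(L_{N-1,N-1}-1)^{S+X_N-1}\,dL_{N-1,N-1}$. The substitution $L_{N-1,N-1}=1+t$ converts this into $B(S+X_N,(N-1)S+\sum_{i<N}X_i)$, which combined with the inductive hypothesis gives the claimed ratio of Gamma functions.

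The main obstacle is the algebraic bookkeeping in the step that produces $K(\underline{L}_{N-1})$: one must verify both the cancellation of the powers of each $L_{N,i}$ and that the leftover measure on $L_{N-1,1},\dots,L_{N-1,N-2}$ exactly matches the form required for the inductive hypothesis $J_{N-1}(L_{N-1,N-1})$. This is a mechanical but delicate computation. Absolute convergence of all the intermediate integrals is ensured by the standing assumption $|X_i|<S$, which keeps every exponent in the Beta integrands above $-1$.
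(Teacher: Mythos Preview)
Your proof is correct and takes a genuinely different route from the paper. The paper's proof is a one-liner: it deduces the identity as the $q\to 1$ scaling limit of the quasi-Cauchy identity for the spin $q$-Whittaker polynomials (\Cref{prop:quasi_Cauchy_identity}), leaning on the convergence results of \Cref{sub:conv_sqW}. By contrast, you argue directly at the continuous level by induction on $N$, using the branching rule and the shifting property to reduce the outer integration to a single Beta integral. In effect your argument is the continuous shadow of the paper's inductive proof of \Cref{prop:quasi_Cauchy_identity} itself (which peels off a branching layer and applies $q$-summation identities), bypassing the $q$-deformed precursor and the limit machinery altogether. What you gain is a self-contained proof that does not depend on \Cref{thm:sqW_to_sW} or the estimates in \Cref{app:proof_prop}; what the paper's approach buys is brevity, since all the work is already done at the $q$-level.

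Two small remarks. First, your detour through Euler's representation of ${}_2F_1$ is unnecessary: for $2\le j\le N-1$, the affine substitution $L_{N,j}=L_{N-1,j}+(1-u)(L_{N-1,j-1}-L_{N-1,j})$ collapses the one-dimensional integral directly to $(1-L_{N-1,j}/L_{N-1,j-1})^{2S-1}L_{N-1,j}^{-S-X_N}\,\mathrm{B}(S+X_N,S-X_N)$, and the $j=1$ integral is a straightforward Beta integral. Second, your claimed form of $K(\underline{L}_{N-1})\prod_k dL_{N-1,k}/L_{N-1,k}$ checks out exactly, so the ``delicate bookkeeping'' you flag is in fact routine once the one-dimensional integrals are evaluated this way.
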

\begin{proof}
		This is a scaling limit of \Cref{prop:quasi_Cauchy_identity}.
\end{proof}
    
We now consider the scaling limits of eigenrelations 
for the sqW functions
stated in 
\Cref{thm:eigenrelation_sqW,thm:eigenrelation_sqW_conj}. 
This produces two operators acting in the $X_i$ variables
which are diagonal in the spin Whittaker functions.
For the next definition we use the
shift operator
\begin{equation} \label{eq:shift}
		\mathcal{T}_{X} f(X) \coloneqq f(X + 1).
\end{equation}
\begin{definition}
	\label{def:sW_eigenoperators}
		For any $N \ge 1$ set
		\begin{equation*}
				\mathscr{D}_1 \coloneqq \sum_{i=1}^N \prod_{\substack{j=1\\j\neq i}}^N \frac{X_i + S}{X_i - X_j} 
				\,
				\mathcal{T}_{X_i}
				,
				\qquad
				\overline{\mathscr{D}}_1 \coloneqq \sum_{i=1}^N \prod_{\substack{j=1\\j\neq i}}^N \frac{X_i - S}{X_i - X_j} 
				\,
				\mathcal{T}_{X_i}^{-1}.
		\end{equation*}
\end{definition}

The next proposition represents a partial generalization of eigenrelations satisfied by Whittaker functions
(e.g., see \cite{Kharchev_2001}). 
\begin{proposition}[Eigenrelations for spin Whittaker functions]
	\label{prop:eigenrelation_sW}
		We have
		\begin{gather*}
			\mathscr{D}_1 \mathfrak{f}_{X_1,\dots,X_N} (\underline{L}_N) = L_{N,N}^{-1}\, \mathfrak{f}_{X_1,\dots,X_N} (\underline{L}_N),
				\\
				\overline{\mathscr{D}}_1 \mathfrak{f}_{X_1,\dots,X_N} (\underline{L}_N) = L_{N,1}\, \mathfrak{f}_{X_1,\dots,X_N} (\underline{L}_N).
		\end{gather*}
\end{proposition}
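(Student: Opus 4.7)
The plan is to obtain both eigenrelations as $q\nearrow 1$ scaling limits of \Cref{thm:eigenrelation_sqW,thm:eigenrelation_sqW_conj}, using the convergence result \Cref{thm:sqW_to_sW}.

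First I would recast the sqW eigenrelations in rescaled variables. Under the scaling of \Cref{def:scaling} with $x_i=q^{X_i}$ and $s=-q^{S}$, the shift $T_{q,x_i}$ is identified with $\mathcal{T}_{X_i}$, since both multiply $x_i^{a}=q^{aX_i}$ by $q^{a}$. Using \Cref{lemma:discrete_sums_Delta_q} together with an induction on $N$ via the branching rule \eqref{eq:sqW_many_variables_branching}, one checks that $\mathfrak{f}^{(q)}_{X_1,\ldots,X_N}(\underline{L}_N)$ is piecewise constant in $\underline{L}_N$ and equals $\mathbb{F}_{\lambda}(x_1,\ldots,x_N)$ on each cell with $\lambda_i=\lfloor\log_q(1/L_{N,i})\rfloor$. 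Thus \Cref{thm:eigenrelation_sqW} translates to
\begin{equation*}
  \mathfrak{D}_1^{(q)}\,\mathfrak{f}^{(q)}_{X_1,\ldots,X_N}(\underline{L}_N)
  = q^{\lfloor\log_q(1/L_{N,N})\rfloor}\,\mathfrak{f}^{(q)}_{X_1,\ldots,X_N}(\underline{L}_N),
\end{equation*}
with $\mathfrak{D}_1^{(q)} \coloneqq \sum_{i=1}^{N} \prod_{j\ne i} \frac{1-q^{S+X_i}}{1-q^{X_i-X_j}}\,\mathcal{T}_{X_i}$, and similarly \Cref{thm:eigenrelation_sqW_conj} becomes the analogous relation with eigenvalue $q^{-\lfloor\log_q(1/L_{N,1})\rfloor}$ and operator $\overline{\mathfrak{D}}_1^{(q)} \coloneqq \sum_{i=1}^{N} \prod_{j\ne i} \frac{1-q^{S-X_i}}{1-q^{X_j-X_i}}\,\mathcal{T}_{X_i}^{-1}$.

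Next I would pass to the limit $q\nearrow 1$ term by term. The coefficients $\frac{1-q^{S+X_i}}{1-q^{X_i-X_j}}$ tend to $\frac{S+X_i}{X_i-X_j}$, so $\mathfrak{D}_1^{(q)}$ converges to $\mathscr{D}_1$ coefficient by coefficient; likewise $\frac{1-q^{S-X_i}}{1-q^{X_j-X_i}}$ tends to $\frac{S-X_i}{X_j-X_i}=\frac{X_i-S}{X_i-X_j}$, matching $\overline{\mathscr{D}}_1$. The eigenvalues $q^{\pm\lfloor\log_q(1/L)\rfloor}$ converge to $L^{\mp 1}$ uniformly on compact subsets of $\mathpzc{W}_N$ via the standard estimate $q^{n|\log q|}\in [L^{-1},L^{-1}e^{|\log q|}]$ when $n=\lfloor\log L/|\log q|\rfloor$. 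By \Cref{thm:sqW_to_sW}, for each fixed $i$ the shifted function $\mathfrak{f}^{(q)}_{X_1,\ldots,X_i\pm1,\ldots,X_N}(\underline{L}_N)$ converges uniformly on compact subsets of $\mathpzc{W}_N$ to its undeformed counterpart. Since $\mathfrak{D}_1^{(q)}$ and $\overline{\mathfrak{D}}_1^{(q)}$ are finite sums of shifted evaluations with continuous coefficients in $X$, the left-hand sides converge to $\mathscr{D}_1\,\mathfrak{f}_{\underline{X}_N}(\underline{L}_N)$ and $\overline{\mathscr{D}}_1\,\mathfrak{f}_{\underline{X}_N}(\underline{L}_N)$ respectively, proving both eigenrelations.

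The main obstacle will be a bookkeeping issue at the boundary of the admissible parameter range: the shifted function $\mathfrak{f}_{X_1,\ldots,X_i\pm 1,\ldots,X_N}$ is defined by the Givental recursion of \Cref{def:sW} only for $|X_i\pm 1|<S$. I would circumvent this by first establishing the eigenrelations under the stronger assumption $|X_i|+1<S$ for all $i$ (which keeps every shifted tuple inside the admissible range), and then extending by analytic continuation in the $X_i$'s. This is legitimate because, for fixed $\underline{L}_N$ in the interior $\mathring{\mathpzc{W}}_N$, the integrand in the Givental representation \eqref{eq:sW_Givental} is holomorphic in the $X_i$'s on a neighborhood of the real strip $|\operatorname{Re}X_i|<S$, yielding holomorphy of both sides of the eigenrelations on this neighborhood, whence the identity propagates to the full admissible range.
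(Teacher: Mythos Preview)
Your proposal is correct and follows essentially the same approach as the paper's proof: both derive the eigenrelations by passing the discrete eigenrelations of \Cref{thm:eigenrelation_sqW,thm:eigenrelation_sqW_conj} through the $q\to 1$ scaling of \Cref{def:scaling}, using the convergence \Cref{thm:sqW_to_sW} for the functions and the elementary limit $\frac{1-q^{a}}{1-q^{b}}\to\frac{a}{b}$ for the operator coefficients. Your treatment is in fact more careful than the paper's, which does not discuss the admissible-range issue for the shifted parameters $X_i\pm 1$; your proposed fix (establish the identity for $|X_i|+1<S$ and propagate by analyticity in the $X_i$'s from the Givental integral) is a clean way to close that gap.
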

\begin{proof}
	We easily see that operators
	$\mathscr{D}_1,\overline{\mathscr{D}}_1$ are limiting forms of
	$\mathfrak{D}_1,\overline{\mathfrak{D}}_1$ 
	(\Cref{def:sqW_operators})
	under the scaling
	\eqref{eq:scaling}. At the same time we have $q^{\lambda_N} \to
	L_{N,N}^{-1}$ and $q^{-\lambda_1} \to L_{N,1}$ under the same
	scaling. Therefore, \eqref{eq:eigenrelation_sqW},
	\eqref{eq:eigenrelation_sqW_lambda_1} and convergence
	\eqref{eq:sqW_to_sW} imply the claimed eigenrelation.
\end{proof}

\subsection{Formal reduction to the usual Whittaker functions} \label{sub:sW_to_W}
    
Just like the sqW polynomials reduce to the $q$-Whittaker
polynomials setting $s=0$, it should be possible to prove that, under the
correct scaling, our spin Whittaker functions converge 
to the Whittaker functions. 
An evidence for
this is suggested by the following computation. 

Set
\begin{equation} \label{eq:scaling:sW_to_W}
L_{k,i} = S^{k + 1 - 2 i} e^{u_{k,i}}, \qquad  X_k= -\mathrm{i}\lambda_k,
\end{equation}
then, in the limit $S\to \infty$ we have
\begin{equation} \label{eq:limit_sW_W}
\renewcommand{\arraystretch}{2}
		\begin{tabular}{rcl}
		$\displaystyle \left( \frac{ L_{k,k} \cdots L_{k,1} }{ L_{k+1,k+1} \cdots L_{k+1,1} } \right)^{X_{k+1}}$ 
		& 
		$\xrightarrow{\hspace*{1.5cm}}$
		& 
		$\displaystyle \exp \left\{ \mathrm{i} \lambda_k \left( \sum_{i=1}^{k+1} u_{k+1,i} -\sum_{i=1}^k u_{k,i} \right) \right\};$
		\\[1.5ex]
		$\displaystyle \left( 1-\frac{L_{k,i}}{L_{k+1,i}} \right)^{S-X_{k+1}-1}$ 
		&
		$\xrightarrow{\hspace*{1.5cm}}$ 
		&
		$\displaystyle \exp \left\{ -e^{u_{k,i} - u_{k+1,i}}  \right\};$ 
		\\[1.5ex]
		$\displaystyle \left( 1-\frac{ L_{k+1,i+1} }{ L_{k,i} } \right)^{S+X_{k+1}-1}$ 
		&
		$\xrightarrow{\hspace*{1.5cm}}$ 
		&
		$\displaystyle \exp \left\{ -e^{u_{k+1,i+1} - u_{k,i}}  \right\};$
		\\[1.5ex]
		$\displaystyle \left( 1-\frac{ L_{k+1,i+1} }{ L_{k+1,i} } \right)^{1-2S}$
		&
		$\xrightarrow{\hspace*{1.5cm}}$
		&    
		1;
		\\[1.5ex]
		$\displaystyle 
		4^S S^{\frac12}\,
		\mathrm{B}(S+X,S-X)$
		&
		$\xrightarrow{\hspace*{1.5cm}}$
		&    
		$\displaystyle  2\sqrt{ \pi }$.
\end{tabular}
\end{equation}
All the limits in \eqref{eq:limit_sW_W} are straightforward
(note that the last one requires the Stirling approximation).
Thus, the
branching function $\mathfrak{f}$, rescaled by a factor depending
solely on $S$, converges locally uniformly to 
the Baxter $Q$-operator
$Q^{N \to N-1}$ 
\eqref{eq:Baxter_Q} for the usual Whittaker functions:
\begin{equation*}
	\left( \frac{ 4 \pi }{S 16^{S}} \right)^{\frac{N-1}{2}}
	\mathfrak{f}_{X_N}(\underline{L}_{N-1}; \underline{L}_N) 
	\xrightarrow[S\to \infty]{\mathrm{scaling \,}\eqref{eq:scaling:sW_to_W}}
		Q_{\lambda_N}^{N \to N-1}(\underline{u}_N,\underline{u}_{N-1}). 
\end{equation*}
These computations
suggest that the same type of convergence 
should hold for the full functions.
Namely, under \eqref{eq:scaling:sW_to_W} and as 
$S\to+\infty$,
the spin Whittaker functions $\mathfrak{f}_{\underline{X}_N}(\underline{L}_N)$ 
rescaled by $(4S^{-1}\pi/16^{S})^{\frac{N(N-1)}{4}}$ 
should converge 
to the usual
Whittaker functions $\psi_{\underline{\lambda}_N}(\underline{u}_N)$. 
A proof of this convergence
would require a finer analysis to justify the exchange of the $S\to+\infty$
limit and integration,
and goes beyond the scope of this
paper.

\section{Spin Whittaker Processes and beta polymers} \label{sec:sW_processes}

In this section define \emph{spin Whittaker processes}, and establish
their connection with two beta polymer type models 
introduced in \cite{CorwinBarraquand2015Beta} and
\cite{CMP_qHahn_Push}, respectively.

\subsection{Spin Whittaker processes}

The definition of spin Whittaker processes is 
a straightforward analogy of the discrete level
$\mathfrak{F}/\mathfrak{G}$ processes (\Cref{def:F_G_process}).
The key role is played by the Cauchy type identities (established for spin Whittaker functions
in \Cref{sub:sW_properties}).

\begin{definition} \label{def:sW_processes}
		Set $\mathbf{X}=(X_1,\dots,X_N)$ and $\mathbf{Y} =
		(Y_1,\dots,Y_T)$, with $|X_i|,|Y_j|<S$ and $X_i+Y_j>0$ for all
		$i,j$. The (ascending) \emph{spin Whittaker process} is the probability
		measure on interlacing sequences
		$\doubleunderline{L}_N(T) = (L_{k,i}(T))_{1\le i \le k \le N}$
		(that is, on the Gelfand-Tsetlin cone $\mathpzc{GT}_N$ \eqref{eq:cont_GT_cone})
		with the following density with respect 
		to the measure 
		$\frac{d
		\doubleunderline{L}_N}{\doubleunderline{L}_N} = \prod_{1\le i \le j
		\le N} \frac{d L_{j,i}}{L_{j,i}}$:
		\begin{equation}
			\label{eq:ascending_sW}
				\mathfrak{P}_{\mathbf{X};\mathbf{Y}}(\doubleunderline{L}_N) =
				\frac{ \mathfrak{f}_{X_1}(\underline{L}_1)
				\mathfrak{f}_{X_2}(\underline{L}_1;\underline{L}_2) \cdots
				\mathfrak{f}_{X_N}(\underline{L}_{N-1};\underline{L}_N)
			\mathfrak{g}_{\mathbf{Y}}(\underline{L}_N)}{\Pi(\mathbf{X};\mathbf{Y})}.
		\end{equation}
		The
		normalizing constant in
		\eqref{eq:ascending_sW}
		follows from the Cauchy identity
		of \Cref{cor:full_Cauchy}: 
		\begin{equation*}
				\Pi(\mathbf{X};\mathbf{Y})
				=
				\prod_{j=1}^T 
				\frac{\Gamma (X_1+Y_j)}{\Gamma (S+X_1)} 
				\Biggl(
					\prod_{i=2}^N \frac{\Gamma (X_i+Y_j) \Gamma (2S)}
					{\Gamma (S+X_i) \Gamma (S+Y_j)}
				\Biggr).
		\end{equation*}
\end{definition}

For the next result we denote the ascending sqW/sqW
process, subject to the rescaling \eqref{eq:scaling}, \eqref{eq:scaling_y_variables},
by
\begin{equation*}
		\mathfrak{P}^{(q)}_{\mathbf{X};\mathbf{Y}}(\doubleunderline{L}_N) = \frac{ \mathfrak{f}^{(q)}_{X_1}(\underline{L}_1) \mathfrak{f}^{(q)}_{X_2}(\underline{L}_1;\underline{L}_2) \cdots \mathfrak{f}^{(q)}_{X_N}(\underline{L}_{N-1};\underline{L}_N) \mathfrak{g}^{(q)}_{\mathbf{Y}}(\underline{L}_N) }{\Pi^{(q)}(\mathbf{X};\mathbf{Y})},
\end{equation*}
where the normalization constant is
(cf. \eqref{eq:weak_Cauchy_id})
\begin{equation*}
		\Pi^{(q)}(\mathbf{X};\mathbf{Y})
		=
		\prod_{j=1}^T \frac{\Gamma_q(X_1+Y_j)}{\Gamma_q(S+X_1)} 
		\Biggl(
			\prod_{i=2}^N \frac{\Gamma_q(X_i+Y_j) \Gamma_q(2S)}{\Gamma_q(S+X_i) \Gamma_q(S+Y_j)}
		\Biggr).
\end{equation*}

\begin{theorem} \label{thm:sqW_process_to_sW_process}
	Under the scaling \eqref{eq:scaling}, \eqref{eq:scaling_y_variables}, 
	the ascending sqW/sqW process converges weakly to the spin Whittaker process
	\begin{equation}
			\mathfrak{P}_{\mathbf{X};\mathbf{Y}}^{(q)} \xrightarrow[q \to 1]{} \mathfrak{P}_{\mathbf{X};\mathbf{Y}}.
	\end{equation}
\end{theorem}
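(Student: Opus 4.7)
The strategy is to reduce weak convergence of probability measures to convergence of integrals against compactly supported continuous test functions, and then peel off the integrations one level at a time, exploiting the uniform convergence results already established in Sections \ref{sub:conv_sqW} and \ref{sub:conv_dual_sqW}. The plan splits naturally into two parts: convergence of the normalization constants, and convergence of the (un-normalized) densities in the weak sense.

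Normalization convergence $\Pi^{(q)}(\mathbf{X};\mathbf{Y}) \to \Pi(\mathbf{X};\mathbf{Y})$ is immediate from the standard limit $\lim_{q\to 1}\Gamma_q(z)=\Gamma(z)$ applied factor by factor, and the hypotheses $|X_i|,|Y_j|<S$ and $X_i+Y_j>0$ ensure every $\Gamma_q$ argument is positive. For the density part, pick a compactly supported continuous test function $\phi$ on $\mathpzc{GT}_{\hspace{-0.5ex}N}$, and consider
\begin{equation*}
\int_{\mathpzc{GT}_{\hspace{-0.5ex}N}} \phi(\doubleunderline{L}_N)\, \mathfrak{f}^{(q)}_{X_1}(\underline{L}_1)\prod_{k=1}^{N-1}\mathfrak{f}^{(q)}_{X_{k+1}}(\underline{L}_k;\underline{L}_{k+1})\, \mathfrak{g}^{(q)}_{\mathbf{Y}}(\underline{L}_N)\, \frac{d\doubleunderline{L}_N}{\doubleunderline{L}_N}.
\end{equation*}
We need to show this converges to the analogous integral without the superscripts.

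The first stage is to integrate out $\underline{L}_1,\dots,\underline{L}_{N-1}$. Starting from the base case $\mathfrak{f}^{(q)}_{X_1}(L_{1,1})=q^{X_1\floor{\log_q(1/L_{1,1})}}\to L_{1,1}^{-X_1}$ uniformly on compacts of $\mathbb{R}_{\ge 1}$, an inductive application of Corollary \ref{cor:skew_SW_unif_conv} produces a family $\Phi^{(q)}(\underline{L}_N)$ converging uniformly on compact subsets of $\mathpzc{W}_N$ to a continuous function $\Phi(\underline{L}_N)$. The compact support of $\phi$ on $\mathpzc{GT}_{\hspace{-0.5ex}N}$, combined with the interlacing constraints, forces $\Phi^{(q)}$ (and hence $\Phi$) to be supported in a common compact subset of $\mathpzc{W}_N$ independent of $q$. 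Continuity of $\Phi$ is ensured by Lemma \ref{lemma:sW_branching_general}.

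The second stage is to unwind $\mathfrak{g}^{(q)}_{\mathbf{Y}}(\underline{L}_N)$ via its defining branching recursion into a nested iterated integral of the $M$ branching functions $\mathfrak{g}^{(q)}_{Y_i}$. Substituting into $\int \Phi^{(q)}(\underline{L}_N)\,\mathfrak{g}^{(q)}_{\mathbf{Y}}(\underline{L}_N)\,\frac{d\underline{L}_N}{\underline{L}_N}$ and applying Fubini (absolute convergence is guaranteed by the uniform compact support of $\Phi^{(q)}$), we iteratively apply Corollary \ref{cor:weak_unif_conv_sW} working outward from the innermost integration. Proposition \ref{prop:integral_dual_branch_sW} guarantees that at each step the intermediate function remains compactly supported and continuous, so the hypotheses of Corollary \ref{cor:weak_unif_conv_sW} are preserved and uniform convergence carries through. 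The limit equals $\int \Phi(\underline{L}_N)\,\mathfrak{g}_{\mathbf{Y}}(\underline{L}_N)\,\frac{d\underline{L}_N}{\underline{L}_N}$. Combining with normalization convergence yields the weak convergence $\mathfrak{P}^{(q)}_{\mathbf{X};\mathbf{Y}}\to \mathfrak{P}_{\mathbf{X};\mathbf{Y}}$.

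The main obstacle is the bookkeeping: at every inductive step of both stages, one must simultaneously verify uniform convergence on compacts \emph{and} uniform (in $q$) compact support of the intermediate function, in order that the two corollaries apply with their full hypotheses. A minor subtlety arises in the recursion for $\mathfrak{g}^{(q)}_{\mathbf{Y}}$ when the auxiliary sequence length drops from $N$ to $N-1$ (the $N=M$ case), but this is handled by the convention $\mathfrak{g}_Y(\underline{L}_{k-1};\underline{L}_k)=\mathfrak{g}_Y((1,\underline{L}_{k-1});\underline{L}_k)$ and poses no substantive difficulty.
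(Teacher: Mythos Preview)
Your proof is correct and follows essentially the same approach as the paper's: write the expectation of a test function as an iterated integral, peel off the $\mathfrak{f}^{(q)}$ layers using Corollary~\ref{cor:skew_SW_unif_conv} inductively, then peel off the $\mathfrak{g}^{(q)}$ layers using Corollary~\ref{cor:weak_unif_conv_sW} inductively, with Proposition~\ref{prop:integral_dual_branch_sW} maintaining the compact-support-plus-continuity invariant at each step. Your version is in fact more careful than the paper's terse proof in one respect: by working with compactly supported test functions rather than merely bounded continuous ones, you match exactly the hypotheses of Corollary~\ref{cor:weak_unif_conv_sW}, and since both $\mathfrak{P}^{(q)}_{\mathbf{X};\mathbf{Y}}$ and $\mathfrak{P}_{\mathbf{X};\mathbf{Y}}$ are genuine probability measures this still suffices for weak convergence.
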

\begin{proof}
For any continuous bounded test function $\phi(\doubleunderline{L}_N)$ 
on $\mathpzc{GT}_N$
we have
\begin{equation}
		\mathbb{E}^{(q)}(\phi)
		=
		\frac{1}{\Pi^{(q)}(\mathbf{X};\mathbf{Y})}
		\int \frac{d \underline{L}_N}{\underline{L}_N} \mathfrak{g}^{(q)}_{\mathbf{Y}}(\underline{L}_N)
		\int
		\frac{ d \doubleunderline{L}_{N-1} }{ \doubleunderline{L}_{N-1} }
		\mathfrak{f}^{(q)}_{X_1}(\underline{L}_1) \mathfrak{f}^{(q)}_{X_2}(\underline{L}_1;\underline{L}_2) \cdots \mathfrak{f}^{(q)}_{X_N}(\underline{L}_{N-1};\underline{L}_N)
		\phi(\doubleunderline{L}_N).
\end{equation}
The integral is absolutely convergent and as a consequence of 
\Cref{cor:skew_SW_unif_conv,cor:weak_unif_conv_sW} 
it converges to the average $\mathbb{E}(\phi)$
with respect to the spin Whittaker process.
\end{proof}

\begin{remark} \label{rem:sW_process_to_W_process}
	Developing the argument sketched in \Cref{sub:sW_to_W} 
	it should be possible to 
	show that the spin Whittaker process
	of \Cref{def:sW_processes}
	converges 
	to the $\alpha$-Whittaker process from
	\cite{COSZ2011}, \cite{BorodinCorwin2011Macdonald}. 
	In this case the correct way to rescale the random
	variables $L_{k,i}(T)$ is
	\begin{equation} \label{eq:scaling_sW_process_to_sW_process}
			L_{k,i}(T) = S^{T+k+1-2i} e^{u_{k,i}(T)}.
	\end{equation}
	In the limit $S\to \infty$ the
	process $\{u_{k,i}(T)\colon 1 \le i \le k \le N\}$ should be then described by the density
	\begin{equation} \label{eq:W_process}
			\frac{ Q^{1 \to 0}_{\mathrm{i} X_1}(\underline{u}_1,0) Q^{2 \to 1}_{\mathrm{i} X_2}(\underline{u}_2,\underline{u}_1) \cdots Q^{N \to N-1}_{\mathrm{i} X_N}(\underline{u}_{N},\underline{u}_{N-1}) \theta_{\mathbf{Y}}(\underline{u}_N) }{ \prod_{i=1}^N \prod_{j=1}^T \Gamma(X_i + Y_j) },
	\end{equation}
	where $Q^{k+1 \to k}$
	and 
	$\theta_{\mathbf{Y}}$
	are given in 
	\eqref{eq:Baxter_Q} and \eqref{eq:dual_usual_Whittaker_function}, respectively.
\end{remark}

\subsection{Strict-weak beta polymer model}

We will now recall the strict-weak beta polymer
introduced in \cite{CorwinBarraquand2015Beta}.

\begin{definition}
	\label{def:strict_weak_beta}
	Let $B_{i,j} \sim \mathpzc{B}(X_i+Y_j, S-Y_j)$ be a family of
	independent beta random variables. The
	\emph{strict-weak beta polymer model} partition function $Z(i,j)$, $i\ge1$, $j\ge0$, is
	the random function satisfying the recurrence
	\begin{equation*}
		\begin{dcases}
			Z(i,j) = Z(i,j-1) B_{i,j} + Z(i-1,j-1) (1-B_{i,j}) \qquad &\text{for } 1 < i \le j;
			\\
			Z(1,j)= Z(1,j-1)B_{1,j} \qquad &\text{for } j>0;
			\\
			Z(i,0)= 1 & \text{for $i>0$}.
		\end{dcases}
	\end{equation*}
\end{definition}
Note that all the partition functions $Z(i,j)$
belong to $(0,1]$.
In particular, the probability distribution 
of the strict-weak beta polymer
is completely determined by the joint moments.

\begin{proposition} \label{prop:q_Hahn_to_Beta_polymer}
Recall the $q$-Hahn vertex model height function
$\mathcal{H}_{q\textnormal{-Hahn}}^{\mathrm{ur}}$
(\Cref{sub:sqw_sqw_last}). 
Define
$Z^{(q)}(i,j)=q^{\mathcal{H}_{q\textnormal{-Hahn}}^{\mathrm{ur}}
(i,j)}$. Then, under the scaling \eqref{eq:scaling}, $Z^{(q)}$
converges weakly to the strict-weak beta polymer partition function:
\begin{equation*}
		Z^{(q)}(i,j) \xrightarrow[q \to 1]{} Z(i,j). 
\end{equation*}
\end{proposition}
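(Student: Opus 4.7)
The plan is to translate the dynamics of the $q$-Hahn vertex model into a random recursion for $Z^{(q)}$ and then apply the $q \to 1$ scaling limit of the $q$-beta-binomial distribution, following the strategy of \cite[Theorem 1.4]{CorwinBarraquand2015Beta}. First, by the dictionary of \Cref{rem:connection_TASEP}, the height function satisfies $\mathcal{H}^{\mathrm{ur}}_{q\textnormal{-Hahn}}(n, t) = \mathsf{x}_n(t) + n$, where $\mathsf{x}_n(t)$ are the positions of the inhomogeneous $q$-Hahn TASEP with jump parameters read off from \eqref{eq:q_Hahn_vertex_weight}. At each time step the jump $J_n(t) = \mathsf{x}_n(t) - \mathsf{x}_n(t-1)$ is $\varphi_{q, x_n y_t, -s x_n}$-distributed conditioned on the gap $\mathsf{x}_{n-1}(t-1) - \mathsf{x}_n(t-1) - 1$. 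In the rescaled variables this recursion reads
\begin{equation*}
  Z^{(q)}(n, t) = q^{J_n(t)}\, Z^{(q)}(n, t-1), \qquad q^{\mathrm{gap}} = \frac{Z^{(q)}(n-1, t-1)}{Z^{(q)}(n, t-1)}.
\end{equation*}

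Second, I invoke the scaling limit of the $q$-beta-binomial, which follows directly from \eqref{eq:BC_limit} and appears as \cite[Lemma 2.1]{CorwinBarraquand2015Beta}: if $J \sim \varphi_{q, q^a, q^b}(\,\cdot\mid N)$ with $q^N \to \ell \in [0,1]$ as $q \to 1$, then $q^J$ converges in distribution to $1 - (1-\ell)\,B$, where $B \sim \mathpzc{B}(a, b-a)$. Specialized to $a = X_n + Y_t$ and $b - a = S - Y_t$, with $\ell = Z(n-1, t-1)/Z(n, t-1)$ supplied by the induction hypothesis, multiplying through by $Z(n, t-1)$ gives
\begin{equation*}
  Z(n, t) = Z(n, t-1)\,(1 - B_{n,t}) + Z(n-1, t-1)\, B_{n,t},
\end{equation*}
which after the reflection $B \leftrightarrow 1 - B$ (swapping the two beta parameters) matches the strict-weak recursion of \Cref{def:strict_weak_beta}. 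The boundary cases are treated analogously: $Z^{(q)}(i, 0) = 1$ because the step-stationary initial data has $b_i^{\mathrm{h}} = 0$, while the left boundary input $b_j^{\mathrm{v}} \sim \varphi_{q, x_1 y_j, -s x_1}(\,\cdot\mid \infty)$ produces the rule $Z(1, j) = Z(1, j-1)\,B_{1, j}$ via the same limit with $\ell = 0$.

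The proof is completed by induction on $t$: assuming that the family $\{Z^{(q)}(i, s)\}_{i\ge 1,\, s \le t-1}$ converges jointly in distribution to $\{Z(i, s)\}_{i\ge 1,\, s \le t-1}$, we couple the $q$-beta-binomial variables $\{J_i(t)\}_{i\ge 1}$ (which, conditional on the previous time slice, are independent across $i$) with their beta limits $\{B_{i,t}\}_{i\ge 1}$ via the Skorokhod representation, and use joint continuity of the recursion $(z, z', B) \mapsto z(1-B) + z' B$ to propagate the convergence to time $t$. The main technical obstacle is maintaining the full joint distributional convergence across the lattice together with the independence structure of the $B_{i,j}$'s demanded by \Cref{def:strict_weak_beta}; this is handled by the conditional independence of the $J_i(t)$'s in the pre-limit $q$-Hahn dynamics, which ensures that the limiting $B_{i,t}$'s form an independent family indexed by vertices.
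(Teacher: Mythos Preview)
Your proposal is correct and follows the same route the paper indicates: the paper's proof is a one-line citation to \cite[Proposition 2.1]{CorwinBarraquand2015Beta}, noting that the argument there carries over verbatim to the inhomogeneous parameters, and what you have written is precisely a careful unpacking of that argument (recursion for $Z^{(q)}$ via the height-to-TASEP dictionary, the $q\to1$ limit of the $q$-beta-binomial law, and inductive propagation of joint weak convergence). One small caution: double-check the orientation of the beta parameters in your limit statement---depending on the convention, the limiting variable may come out as $\mathpzc{B}(S-Y_t,X_n+Y_t)$ rather than $\mathpzc{B}(X_n+Y_t,S-Y_t)$, so that the reflection $B\leftrightarrow 1-B$ you invoke is actually what produces the parameters in \Cref{def:strict_weak_beta}, not an extra step on top of them.
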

\begin{proof}
	This result is equivalent to Proposition 2.1 of
	\cite{CorwinBarraquand2015Beta} in the homogeneous case $X_i=X$,
	$Y_j=Y$ for all $i,j$. One can easily check that the proof given
	there also works in our inhomogeneous setting.
\end{proof}
    
\begin{theorem}
	\label{thm:strict_weak_beta_polymer}
	The marginal process $\{L_{k,k}(T)^{-1}\colon k=1,\dots N,\,T \ge N\}$ of the spin
	Whittaker process $\mathfrak{P}_{\mathbf{X},\mathbf{Y}}$ is equivalent 
	in distribution to
	the strict-weak beta polymer partition functions model
	$\{Z(k,T)\colon k=1,\dots, N,\,T \ge N\}$. 
\end{theorem}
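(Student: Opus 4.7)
The proof would proceed by a commutative-diagram argument at the pre-limit level $q \in (0,1)$ and then pass to the limit $q \to 1$ under the scaling \eqref{eq:scaling}, \eqref{eq:scaling_y_variables}.

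First I would apply \Cref{thm:sqW_sqW_last_row} to the sqW/sqW field with parameters $x_i=q^{X_i}$, $y_j=q^{Y_j}$, $s=-q^S$. This asserts that for every $q\in(0,1)$, the joint distribution of the last-diagonal marginals $\{\lambda^{(k,T)}_k\}_{1\le k\le N,\ T\ge N}$ coincides with that of the height function $\{\mathcal{H}^{\mathrm{ur}}_{q\text{-Hahn}}(k,T)\}_{1\le k\le N,\ T\ge N}$ of the up-right $q$-Hahn vertex model. Since $n\mapsto q^n$ is injective, this also gives the equality in joint distribution
\begin{equation*}
    \{q^{\lambda^{(k,T)}_k}\}_{k,T}\ \stackrel{d}{=}\ \{q^{\mathcal{H}^{\mathrm{ur}}_{q\text{-Hahn}}(k,T)}\}_{k,T}.
\end{equation*}

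Next I would identify the weak limits of both sides as $q\to 1$. On the right-hand side, \Cref{prop:q_Hahn_to_Beta_polymer} directly yields $q^{\mathcal{H}^{\mathrm{ur}}_{q\text{-Hahn}}(k,T)}\to Z(k,T)$ weakly (jointly in $(k,T)$, since the beta-polymer recursion naturally couples the different partition functions through the shared beta weights $B_{i,j}$). On the left-hand side, by the scaling $\lambda^k_j=\lfloor\log_q(1/L_{k,j})\rfloor$ we have $q^{\lambda^{(k,T)}_k}=q^{\lfloor\log_q(1/L_{k,k}(T))\rfloor}\to L_{k,k}(T)^{-1}$. Combined with \Cref{thm:sqW_process_to_sW_process}, this identifies the limiting distribution of $\{q^{\lambda^{(k,T)}_k}\}_{1\le k\le N}$ for each fixed $T$ as the distribution of $\{L_{k,k}(T)^{-1}\}_{1\le k\le N}$ under the spin Whittaker process $\mathfrak{P}_{\mathbf{X};\mathbf{Y}}$.

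The main obstacle is that \Cref{thm:sqW_process_to_sW_process} is stated for a single ascending process (fixed $T$), whereas the theorem requires joint convergence of the last-diagonal marginals across all $T\ge N$. To resolve this, I would extend the argument of \Cref{thm:sqW_process_to_sW_process} inductively in $T$: the sqW/sqW field couples processes with different numbers of $y$-variables through the dual branching rule \eqref{eq:sqW_many_variables_branching} for $\mathbb{F}^{*}$, so adding one more $y_T$-variable corresponds to integrating against $\mathfrak{g}^{(q)}_{Y_T}$. Applying \Cref{cor:weak_unif_conv_sW} level by level along the $T$ direction promotes the single-$T$ weak convergence to joint convergence, effectively constructing a continuous spin Whittaker field of which the $\mathfrak{P}_{\mathbf{X};\mathbf{Y}}$ for varying $T$ are compatible horizontal-slice marginals. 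Once joint weak convergence on both sides is established, equality in distribution passes to the limit and yields $\{L_{k,k}(T)^{-1}\}_{k,T}\stackrel{d}{=}\{Z(k,T)\}_{k,T}$, as required.
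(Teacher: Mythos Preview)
Your approach is essentially the same as the paper's: invoke \Cref{thm:sqW_sqW_last_row} to match the last-diagonal marginals of the sqW/sqW field with the $q$-Hahn height function at the pre-limit level, then pass to $q\to 1$ using \Cref{prop:q_Hahn_to_Beta_polymer} on one side and \Cref{thm:sqW_process_to_sW_process} on the other. The paper's proof is a one-sentence assertion that the result follows from exactly these three ingredients; your write-up is a faithful expansion of that sentence. The subtlety you flag about joint convergence across $T$ is real but is glossed over in the paper as well, and your proposed resolution via the dual branching structure and \Cref{cor:weak_unif_conv_sW} is the natural one.
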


Note that since $L_{k,k}(T)\in \mathbb{R}_{\ge1}$, 
we have $L_{k,k}(T)^{-1}\in(0,1]$, which agrees with the range
of the beta polymer partition functions.
    
\begin{proof}[Proof of \Cref{thm:strict_weak_beta_polymer}]
	This is a direct consequence of 
	\Cref{thm:sqW_sqW_last_row,thm:sqW_process_to_sW_process}
	and \Cref{prop:q_Hahn_to_Beta_polymer}. 
	Indeed, the last row marginal of the sqW/sqW
	process is matched to the 
	$q$-Hahn vertex model height function,
	and in the
	limit $q\to 1$ this implies that the last coordinate marginal of the
	spin Whittaker process is matched to the beta polymer $Z(i,j)$.
\end{proof}

Let us make two remarks on this result.

\begin{remark}
	A weaker version of \Cref{thm:strict_weak_beta_polymer} that
	matches $L_{k_i,k_i}(T)^{-1}$ and $Z(k_i,T)$ for each single time $T$
	can alternatively be proved using moment formulas. Namely, the
	eigenoperators of \Cref{def:sW_eigenoperators}
	may be used to extract multiple integral formulas for the 
	joint moments
	of $L_{k_i,k_i}(T)^{-1}$
	under the spin Whittaker processes.
	These formulas can then be matched to the
	ones for the joint moments of the 
	beta polymer.
	The latter in the homogeneous case are obtained in 
	\cite[Proposition 3.4]{CorwinBarraquand2015Beta},
	and their inhomogeneous generalization
	is rather straightforward, cf. \cite[Proposition 6.1]{petrov2019qhahn}.
\end{remark}
	
\begin{remark}
	\label{rmk:strict_weak_beta_to_gamma}
	It was noticed in \cite[Remark 1.5]{CorwinBarraquand2015Beta} that
	under the scaling $Z(i,T)=S^{T-i+1} z(i,T)$ the process $z(i,T)$
	converges, when $S \to \infty$, to the strict-weak gamma polymer
	model introduced by Sepp{\"a}l{\"a}inen in an unpublished note and
	studied in
	\cite{OConnellOrtmann2014}, \cite{CorwinSeppalainenShen2014}. This
	scaling of polymers corresponds to the scaling
	\eqref{eq:scaling_sW_process_to_sW_process} for the full spin
	Whittaker process.
	As $S$ goes to infinity,
	\Cref{thm:strict_weak_beta_polymer}
	turns into the 
	matching between strict weak gamma polymer model and
	$\alpha$-Whittaker process that was originally discovered in
	\cite{OConnellOrtmann2014}.
	This observation is another piece of evidence supporting 
	the formal $S\to+\infty$
	scaling described in \Cref{sub:sW_to_W}.
\end{remark}

\subsection{Another beta polymer type model}

Let us now recall the beta polymer type model which was
introduced in \cite{CMP_qHahn_Push}. We employ notation from 
\Cref{app:sw_special}.
    
\begin{definition} \label{def:polymer_like}
The random function $\widetilde{Z}(i,j)$, for $i,j\in \mathbb{Z}_{\ge 0}$ is defined by the recurrence
\begin{equation} \label{eq:recurrence_polymer_like}
		\widetilde{Z}(i,j)=
		\begin{dcases}
				1 \qquad & \text{for } j=0,
				\\
				\widetilde{Z}(1,j-1) \widetilde{B}_{1,j} \qquad & \text{for } i=1,
				\\
				W_{i,j}^{>} \widetilde{Z}(i,j-1) + (1-W_{i,j}^{>}) \widetilde{Z}(i-1,j)
				\qquad & \text{if } \widetilde{Z}(i,j-1)>\widetilde{Z}(i-1,j),
				\\
				(1-W_{i,j}^{<}) \widetilde{Z}(i,j-1) + W_{i,j}^{<} \widetilde{Z}(i-1,j)
				\qquad & \text{if } \widetilde{Z}(i,j-1) < \widetilde{Z}(i-1,j),
		\end{dcases}
\end{equation}
where $\widetilde{B}_{1,j} \sim \mathpzc{B}^{-1}(X_1+Y_j,S-Y_j)$ 
are independent inverse beta random variables, and
\begin{equation} \label{eq:random_var_W}
		\begin{split}
				&
				W_{i,j}^> \sim \mathpzc{NBB}^{-1}\left( 2S -1, \frac{\widetilde{Z}(i-1,j)-\widetilde{Z}(i-1,j-1)}{\widetilde{Z}(i,j-1)-\widetilde{Z}(i-1,j-1)}, X_i + Y_j, S-Y_j \right),
				\\
				&
				W_{i,j}^< \sim \mathpzc{NBB}^{-1}\left( 2S -1, \frac{\widetilde{Z}(i,j-1)-\widetilde{Z}(i-1,j-1)}{\widetilde{Z}(i-1,j)-\widetilde{Z}(i-1,j-1)}, X_i + Y_j, S - X_i \right).
		\end{split}
\end{equation}
For $i>1$, $j>0$, we have 
$ \widetilde{Z}(i,j-1) \ne \widetilde{Z}(i-1,j)$
with probability one.
\end{definition}

\begin{proposition}
	Recall the ${}_4\phi_3$ vertex model height function
	$\mathcal{H}^{\mathrm{ul}}_{\phi}(i,j)$
	(\Cref{sub:sqw_sqw_first}). Define
	$\widetilde{Z}^{(q)}(i,j)=q^{-\mathcal{H}^{\mathrm{ul}}_{\phi}(i,j)}$.
	Then, under the scaling \eqref{eq:scaling}, $Z^{(q)}$ converges
	weakly to the process $\widetilde{Z}$:
	\begin{equation*}
		\widetilde{Z}^{(q)}(i,j) \xrightarrow[q \to 1]{} \widetilde{Z}(i,j).
	\end{equation*}
\end{proposition}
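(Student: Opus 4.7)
The plan is to leverage \Cref{thm:sqW_sqW_first_row}, which identifies the law of the first rows $\{\lambda^{(i,j)}_1\}$ in the sqW/sqW field with the ${}_4\phi_3$ height function. By \Cref{prop:marginal_markov,prop:general_marginal_YBfield}, the first row is a Markov field whose local update at vertex $(i,j)$ is governed by the stochastic weight $\mathbb{L}^{\mathrm{ul}}_{x_i,y_j}$ acting on the incoming occupation numbers
$\alpha_1=\mathcal{H}^{\mathrm{ul}}_{\phi}(i,j-1)-\mathcal{H}^{\mathrm{ul}}_{\phi}(i-1,j-1)$ and $\beta_1=\mathcal{H}^{\mathrm{ul}}_{\phi}(i-1,j)-\mathcal{H}^{\mathrm{ul}}_{\phi}(i-1,j-1)$, and producing the outgoing occupations that determine $\mathcal{H}^{\mathrm{ul}}_{\phi}(i,j)$. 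Setting $\widetilde Z^{(q)}(i,j)=q^{-\mathcal{H}^{\mathrm{ul}}_{\phi}(i,j)}$, this becomes a random recursion expressing $\widetilde Z^{(q)}(i,j)$ in terms of $\widetilde Z^{(q)}$ at the three neighbouring sites, and the goal is to check that this recursion degenerates to \eqref{eq:recurrence_polymer_like} under the scaling \eqref{eq:scaling}.

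The key tool is the alternative expression \eqref{eq:4phi3_vertex_model_alternative}, which realizes $\mathbb{L}^{\mathrm{ul}}_{x,y}$ as the composition of a $q$-beta-binomial sampling (the variable $k$) and a $q$-hypergeometric sampling (the variable $L-k$). Under the scaling $q=e^{-\varepsilon}$, $x_i=q^{X_i}$, $y_j=q^{Y_j}$, $s=-q^{S}$, together with the assumption that the rescaled heights $\varepsilon\,\mathcal{H}^{\mathrm{ul}}_{\phi}(i',j')$ have non-degenerate limits for $(i',j')\in\{(i-1,j-1),(i,j-1),(i-1,j)\}$, the parameters of these two $q$-distributions converge to parameters for which \Cref{lemma:BC_lemma_uniform} and its counterparts for the $q$-Gamma and regularized $q$-hypergeometric functions yield: the $q$-beta-binomial converges to a beta distribution on $(0,1)$, while the $q$-hypergeometric becomes an inverse negative beta binomial on $(1,\infty)$ in the sense of \Cref{app:sw_special}. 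The sign of $\widetilde Z^{(q)}(i,j-1)-\widetilde Z^{(q)}(i-1,j)$ is equivalent to the sign of $\beta_1-\alpha_1$ for $q$ close to $1$, and one of the two asymptotic regimes of \eqref{eq:4phi3_vertex_model_alternative} activates in each case; a direct parameter comparison will show that these two regimes match precisely the $W^{>}$ and $W^{<}$ branches of \eqref{eq:random_var_W}. The boundary line $i=1$ is handled separately: the step-stationary initial condition \eqref{eq:q_Hahn_step_stationary} samples $b_j^{\mathrm{v}}$ from $\varphi_{q,x_1y_j,-sx_1}(\bullet\mid\infty)$, and $q^{-b_j^{\mathrm{v}}}$ converges to an inverse beta random variable with parameters $(X_1+Y_j,S-Y_j)$, giving the $i=1$ line of \eqref{eq:recurrence_polymer_like}. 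The base case $\widetilde Z^{(q)}(i,0)=1$ is immediate from $\mathcal{H}^{\mathrm{ul}}_{\phi}(i,0)=0$.

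Putting these ingredients together, a diagonal induction on $i+j$ yields joint weak convergence of the finite-dimensional distributions: at each step one conditions on $\widetilde Z^{(q)}$ along the down-right path passing through $(i-1,j-1),(i,j-1),(i-1,j)$, uses the continuous mapping theorem together with the independence of the vertex $(i,j)$ update (inherited from the Markov structure of the sqW/sqW field in \Cref{prop:marginal_markov}), and invokes Slutsky's theorem to combine convergence of the conditioning sites with convergence of the transition kernel. The main technical obstacle I anticipate is the careful asymptotic analysis of the sum in \eqref{eq:4phi3_vertex_model_alternative}, since the summand is dominated by different ranges of $k$ in the two regimes $H(i,j-1)\gtrless H(i-1,j)$; the analysis must show that the rescaled sum concentrates on the appropriate range and reproduces the exact beta / inverse negative beta binomial mixture in \eqref{eq:random_var_W}. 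The uniformity and parameter identification required here is analogous in spirit to the argument given for the strict-weak beta polymer in \cite{CorwinBarraquand2015Beta}, which was the basis for \Cref{prop:q_Hahn_to_Beta_polymer}, but with the extra complication of the two-variable regularized ${}_4\phi_3$ weight rather than a single $\varphi$ factor.
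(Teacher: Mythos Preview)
Your approach is essentially the one the paper points to: start from the alternative form \eqref{eq:4phi3_vertex_model_alternative} of the ${}_4\phi_3$ weight, analyze the $q\to 1$ limit of the $q$-beta-binomial and $q$-hypergeometric pieces, identify the two regimes governed by the sign of $\widetilde Z^{(q)}(i,j-1)-\widetilde Z^{(q)}(i-1,j)$, and propagate the convergence by induction on $i+j$. The paper does not carry out these computations; it cites \cite{CMP_qHahn_Push} for the homogeneous case $X_i=0$, $Y_j=Y$ and asserts that the same argument (Section~4.3 there, again starting from \eqref{eq:4phi3_vertex_model_alternative}) handles the inhomogeneous parameters, with the only novelty being the way the $X_i$'s enter $W^{<}_{i,j},W^{>}_{i,j}$.

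One small point: your opening step of invoking \Cref{thm:sqW_sqW_first_row} and \Cref{prop:marginal_markov,prop:general_marginal_YBfield} is unnecessary. This proposition concerns the ${}_4\phi_3$ vertex model intrinsically, and the recursion for $\mathcal{H}^{\mathrm{ul}}_{\phi}$ in terms of the incoming occupation numbers follows directly from \Cref{def:ul_sVM,def:4phi3_vertex_model}; you do not need to pass through the sqW/sqW field. Dropping that detour leaves exactly the argument the paper gestures at.
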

\begin{proof}
	In the homogeneous case $X_i=0$, $Y_j=Y$ for all $i,j$, this was
	proven in \cite{CMP_qHahn_Push}. The same argument also essentially applies to the
	inhomogeneous case, and we will not repeat the computations here.
	The non-trivial part of the proof is to understand how the $X_i$
	parameters appear in the definition of $W_{i,j}^<,W_{i,j}^>$. The
	interested reader can check the validity of our statements
	starting from \eqref{eq:4phi3_vertex_model_alternative} and
	reproducing the computations of Section 4.3 of
	\cite{CMP_qHahn_Push}.
\end{proof}
    
\begin{theorem} \label{thm:sW_process_first_row}
	The marginal process $\{L_{k,1}(T)\colon k=1\dots N,\, T \ge N \}$ of the spin
	Whittaker process $\mathfrak{P}_{\mathbf{X},\mathbf{Y}}$ is
	equivalent in distribution
	to the process $\{ \widetilde{Z}(k,T)\colon k=1,\dots N,\,T \ge N\}$.
\end{theorem}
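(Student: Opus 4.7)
The plan is to mirror exactly the strategy used for \Cref{thm:strict_weak_beta_polymer}, combining a discrete-level distributional identity with the already-established $q\to 1$ convergences. All three ingredients we need are in place in the preceding text, so the proof should be almost formal.

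First I would recall \Cref{thm:sqW_sqW_first_row}, which identifies the first-row marginal $\{\lambda^{(i,j)}_1\}$ of the sqW/sqW field with the height function $\{\mathcal{H}_\phi^{\mathrm{ul}}(i,j)\}$ of the ${}_4\phi_3$ vertex model. Restricting this marginal identification to a down-right path of the form $(0,T)\to(N,T)\to(N,0)$ yields that, at the level of the ascending sqW/sqW process $\mathfrak{P}^{(q)}_{\mathbf{X};\mathbf{Y}}$, the first-coordinate sequence $\{\lambda^{(k,T)}_1\colon k=1,\dots,N,\ T\ge N\}$ has the same joint distribution as $\{\mathcal{H}_\phi^{\mathrm{ul}}(k,T)\}$. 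Equivalently, $q^{-\lambda^{(k,T)}_1}$ has the same joint distribution as $\widetilde{Z}^{(q)}(k,T)=q^{-\mathcal{H}_\phi^{\mathrm{ul}}(k,T)}$.

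Next I would pass to the $q\to 1$ limit along the scaling \eqref{eq:scaling}, \eqref{eq:scaling_y_variables}. On the spin Whittaker side, \Cref{thm:sqW_process_to_sW_process} provides weak convergence $\mathfrak{P}^{(q)}_{\mathbf{X};\mathbf{Y}}\Rightarrow \mathfrak{P}_{\mathbf{X};\mathbf{Y}}$; since $q^{-\lambda^{(k,T)}_1}\to L_{k,1}(T)$ under the scaling, the continuous mapping theorem gives weak convergence of the full joint law
\begin{equation*}
	\{q^{-\lambda^{(k,T)}_1}\colon k=1,\dots,N,\ T\ge N\}\ \Longrightarrow\ \{L_{k,1}(T)\colon k=1,\dots,N,\ T\ge N\}.
\end{equation*}
On the vertex model side, the proposition immediately preceding the theorem gives $\widetilde{Z}^{(q)}(k,T)\Rightarrow \widetilde{Z}(k,T)$ jointly in $(k,T)$. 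Since the pre-limit random vectors coincide in distribution by the previous paragraph, their weak limits must agree, which is the claim.

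The only step that is not completely automatic is upgrading each convergence from single-time (single $(k,T)$) to joint convergence over $\{(k,T)\colon 1\le k\le N,\ T\ge N\}$. For the sqW-side convergence this is covered by \Cref{thm:sqW_process_to_sW_process}, which concerns the full process on the Gelfand--Tsetlin cone, so evaluating at the finite family of coordinates $(k,1)$ for $k=1,\dots,N$ and appending the $T$-marginals is harmless. For the polymer-side convergence one has to check that the recursion \eqref{eq:recurrence_polymer_like} is the distributional limit of the ${}_4\phi_3$ vertex-model transitions jointly; this is precisely what the cited proof in \cite{CMP_qHahn_Push} (together with its inhomogeneous extension through \eqref{eq:4phi3_vertex_model_alternative}) accomplishes, so no new work is needed. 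I therefore do not expect a serious obstacle: the theorem is a distributional matching that reduces to three already-proven facts, and writing out the proof amounts to invoking them in the correct order.
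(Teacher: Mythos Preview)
Your proposal is correct and follows exactly the paper's approach: the paper's proof is a one-sentence invocation of \Cref{thm:sqW_sqW_first_row} together with the $q\to1$ scaling limits (\Cref{thm:sqW_process_to_sW_process} and the proposition on $\widetilde{Z}^{(q)}\Rightarrow\widetilde{Z}$), just as you outlined. Your version is simply a more explicit write-up of the same argument.
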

\begin{proof}
	This is established similarly to 
	\Cref{thm:strict_weak_beta_polymer}
	by combining the matching of \Cref{thm:sqW_sqW_first_row}
	with the $q\to1$ scaling limits. 
\end{proof}
    
\subsection{Reduction to log-gamma polymer}

Here we show that the model $\widetilde Z(i,j)$
of \Cref{def:polymer_like}
reduces, as $S\to+\infty$, to the well-known 
log-gamma polymer model introduced in \cite{Seppalainen2012}.
This proof is more involved than the rather straightforward 
observation for the strict-weak beta polymer (\Cref{rmk:strict_weak_beta_to_gamma}).

\begin{definition}[Log-gamma polymer]
	Let $\{g_{i,j} \colon i,j \in \mathbb{Z}_{\ge 1} \}$ be a sequence
	of independent inverse gamma random variables, $g_{i,j} \sim
	\mathrm{Gamma}^{-1}(X_i+Y_j)$ with density \eqref{eq:inverse_gamma_density}.
	The random function $\widetilde{z}$
	defined by the recurrence
	\begin{equation} \label{eq:log_Gamma_polymer}
		\widetilde{z}(i,j)
		=
		\begin{cases}
		g_{i,j} \left( \widetilde{z}(i-1,j) + \widetilde{z}(i,j-1) \right) 
		\qquad &\text{if $i,j \ge 1$ and $i+j\ge3$};
		\\
		g_{1,1}
		\qquad &\text{if } i=j=1;
		\\
		0
		\qquad &\text{else},
		\end{cases}
	\end{equation}
	is the \emph{point-to-point log-gamma polymer partition function}.
\end{definition}

The log-gamma polymer model was introduced 
(with a proof of its exact solvability)
by Sepp{{\"a}}l{{\"a}}inen
in \cite{Seppalainen2012}. 
One can view
$\widetilde z(i,j)$ as a partition function of 
up-right directed paths from $(1,1)$ to $(i,j)$, 
where the weight of each path equals the product of the 
quantities $g_{i',j'}$ along the path.
In \cite{COSZ2011} the log-gamma polymer
model was given a powerful combinatorial interpretation using
Kirillov's geometric RSK (Robinson--Schensted--Knuth) algorithm.
This showed the distributional matching 
of the log-gamma polymer with a
marginal of the Whittaker process~\eqref{eq:W_process}.

The next statement shows that the log-gamma polymer model 
can be obtained in a $S\to+\infty$ scaling limit 
from the beta polymer like model of
\Cref{def:polymer_like}. Modulo
\Cref{rem:sW_process_to_W_process},
this together with 
\Cref{thm:sW_process_first_row} 
produces an alternative derivation of the results of 
\cite{COSZ2011}.

\begin{proposition}
	\label{prop:log_gamma_reduction}
	Consider the scaling $\widetilde{Z}(i,j)=S^{j+i-1}
	\widetilde{\mathpzc{z}}^{(S)}(i,j)$ of the process from
	\Cref{def:polymer_like}. Then the rescaled process
	$\widetilde{\mathpzc{z}}^{(S)}$ converges weakly to
	the log-gamma polymer:
	\begin{equation*}
		\widetilde{\mathpzc{z}}^{(S)}(i,j) \xrightarrow[S \to \infty]{}
		\widetilde{z}(i,j).
	\end{equation*}
\end{proposition}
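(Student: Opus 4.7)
The plan is to prove the claim by induction on $n := i+j$, establishing joint weak convergence of $\{\widetilde{\mathpzc{z}}^{(S)}(i',j')\}_{1\le i',\, 0 \le j',\, i'+j'\le n}$ to the log-gamma array $\{\widetilde{z}(i',j')\}$ of the same size. The boundary is handled directly. For $j=0$, $i\ge 2$ one has $\widetilde{\mathpzc{z}}^{(S)}(i,0)=S^{1-i}\to 0=\widetilde{z}(i,0)$. For $i=1$, the recurrence is multiplicative: $\widetilde{\mathpzc{z}}^{(S)}(1,j)=\widetilde{\mathpzc{z}}^{(S)}(1,j-1)\cdot \widetilde{B}_{1,j}/S$. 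The classical beta--gamma scaling limit $S\cdot \mathpzc{B}(X_1+Y_j,\,S-Y_j)\Rightarrow \mathrm{Gamma}(X_1+Y_j)$ gives $\widetilde{B}_{1,j}/S\Rightarrow g_{1,j}$, independently across $j$, and induction on $j$ yields $\widetilde{\mathpzc{z}}^{(S)}(1,j)\Rightarrow g_{1,1}\cdots g_{1,j}=\widetilde{z}(1,j)$.

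For the inductive step with $i\ge 2$ and $j\ge 1$, condition on the $\sigma$-algebra $\mathcal{F}_{<n}$ generated by $\{\widetilde{Z}(i',j')\colon i'+j'<n\}$, on which the induction hypothesis supplies joint convergence of the rescaled variables. The parameter
\begin{equation*}
R^{(S)}=\frac{\widetilde{Z}(i-1,j)-\widetilde{Z}(i-1,j-1)}{\widetilde{Z}(i,j-1)-\widetilde{Z}(i-1,j-1)}
\end{equation*}
driving the distribution of $W^>_{i,j}$ or $W^<_{i,j}$ satisfies $R^{(S)}\to R:=\widetilde{z}(i-1,j)/\widetilde{z}(i,j-1)\in[0,\infty]$, since the $\widetilde{Z}(i-1,j-1)$ terms are of lower order $S^{i+j-3}$ against $S^{i+j-2}$. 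Dividing \eqref{eq:recurrence_polymer_like} by $S^{i+j-1}$ rewrites the recurrence as
\begin{equation*}
\widetilde{\mathpzc{z}}^{(S)}(i,j)=\frac{W^>_{i,j}}{S}\bigl(\widetilde{\mathpzc{z}}^{(S)}(i,j-1)-\widetilde{\mathpzc{z}}^{(S)}(i-1,j)\bigr)+\frac{1}{S}\widetilde{\mathpzc{z}}^{(S)}(i-1,j)
\end{equation*}
in the case $R<1$, and symmetrically with $W^<_{i,j}$ and the opposite difference when $R>1$. The step is then closed by the conditional scaling limits
\begin{equation*}
\frac{W^>_{i,j}}{S}\,\bigg|\,\mathcal{F}_{<n}\;\Rightarrow\; g_{i,j}\cdot\frac{1+R}{1-R},\qquad \frac{W^<_{i,j}}{S}\,\bigg|\,\mathcal{F}_{<n}\;\Rightarrow\; g_{i,j}\cdot\frac{R+1}{R-1},
\end{equation*}
with $g_{i,j}\sim \mathrm{Gamma}^{-1}(X_i+Y_j)$ independent of $\mathcal{F}_{<n}$. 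Substituting into the recurrence and invoking the continuous mapping theorem, both cases produce $\widetilde{\mathpzc{z}}^{(S)}(i,j)\Rightarrow g_{i,j}(\widetilde{z}(i,j-1)+\widetilde{z}(i-1,j))=\widetilde{z}(i,j)$.

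The main obstacle is the conditional scaling limit of $\mathpzc{NBB}^{-1}(2S-1,\,R^{(S)},\,X_i+Y_j,\,S-Y_j)$ (and its $W^<$ analogue), where one parameter is $S$-dependent and another tends to the nontrivial limit $R$. The argument starts from the explicit density and uses Stirling's formula applied only to the $S$-dependent gamma factors; the parameters $X_i,Y_j$ appear in bounded arguments, so the homogeneous calculation carried out in \cite{CMP_qHahn_Push} extends verbatim. The emergence of the prefactors $(1+R)/(1-R)$ and $(R+1)/(R-1)$ is the core computational step. Boundary values of $R$ (either $0$ or $+\infty$) arise when some $\widetilde{z}(i',j')$ vanishes; they are handled by noting that the products $(W^{>}/S)(\widetilde{z}(i,j-1)-\widetilde{z}(i-1,j))$ and $(W^{<}/S)(\widetilde{z}(i-1,j)-\widetilde{z}(i,j-1))$, which are the quantities that actually feed into the recurrence, stay continuous in $R\in[0,\infty]$ and always converge to $g_{i,j}(\widetilde{z}(i,j-1)+\widetilde{z}(i-1,j))$. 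Joint convergence across multiple vertices of the same generation follows from the conditional independence of the $W_{i,j}^{\gtrless}$ given $\mathcal{F}_{<n}$, inherent in the definition of the recursion.
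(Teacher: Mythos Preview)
Your proof follows essentially the same inductive strategy as the paper: induct on $i+j$, and at each step show that $S^{-1}W_{i,j}^{\gtrless}$ converges to $g_{i,j}\cdot(1+R)/|1-R|$ with $g_{i,j}\sim\mathrm{Gamma}^{-1}(X_i+Y_j)$, so that the rescaled recurrence collapses to the log-gamma recurrence. Your framing of the conditioning and the handling of the edge cases $R\in\{0,\infty\}$ are in fact a bit more careful than the paper's write-up.

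The one point to flag is the key density computation itself. You attribute it to ``Stirling's formula applied only to the $S$-dependent gamma factors'' and say the homogeneous calculation in \cite{CMP_qHahn_Push} extends verbatim. Two issues: first, \cite{CMP_qHahn_Push} carries out the $q\to 1$ limit producing the $\mathpzc{NBB}^{-1}$ recursion, not the $S\to\infty$ limit of $\mathpzc{NBB}^{-1}$ to inverse gamma; that latter computation is new here. Second, Stirling handles the prefactor $\Gamma(S+X_i)/(S^{X_i+Y_j}\Gamma(S-Y_j))\to 1$, but the ${}_2F_1$ factor in the $\mathpzc{NBB}$ density has all three parameters $2S-1,\,S+X_i,\,S-Y_j$ blowing up while the argument $p_S(1-1/(Sx))$ stays in $(0,1)$; a direct series limit is not available. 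The paper's device is the Euler transformation
\[
{}_2F_1\!\left(\begin{smallmatrix}a,\,b\\c\end{smallmatrix}\,\Big|\,z\right)=(1-z)^{c-a-b}\,{}_2F_1\!\left(\begin{smallmatrix}c-a,\,c-b\\c\end{smallmatrix}\,\Big|\,z\right),
\]
after which one of the upper parameters becomes the bounded $-X_i-Y_j$ and the remaining large-parameter ratios $(c-a)_k/(c)_k\to(-1)^k$ reduce the series to ${}_1F_0(-X_i-Y_j;-p)=(1+p)^{X_i+Y_j}$. That, combined with $(1-p_S)^{2S-1}/(1-p_S+p_S/(Sx))^{2S-1}\to e^{-2p/((1-p)x)}$, is what produces the factor $(1+R)/(1-R)$ you quote. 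So your outline is correct, but the actual mechanism for the hypergeometric limit is the Euler transformation rather than Stirling, and it is not borrowed from the cited reference.
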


\begin{proof}
	We argue by induction. 
	When $i=j=1$, then $\widetilde{\mathpzc{z}}^{(S)}(1,1) 
	= S^{-1}
	\mathpzc{B}^{-1}(X_1+Y_1,S-Y_1)$.
	In the large $S$ limit this
	converges to $\mathrm{Gamma}^{-1}(X_1+Y_1)$, which is precisely
	$\widetilde{z}(1,1)$.

	Fix $i,j$ and assume that for all $i',j'$ such that
	$i'+j'<i+j$ the convergence $\widetilde{\mathpzc{z}}^{(S)}(i',j')
	\to \widetilde{\mathpzc{z}}(i',j')$ holds. 
	Let us compute the
	densities of random variables $S^{-1}W_{i,j}^>$ and $S^{-1}
	W_{i,j}^<$, that are rescalings of \eqref{eq:random_var_W}, in the
	large $S$ limit. 
	We show the computations only for
	$W_{i,j}^>$ since the other case is very similar. 
	The density of 
	$S^{-1} W_{i,j}^>$ (depending on the variable $x\in(0,1)$) is, from
	\eqref{eq:random_var_W} and \eqref{eq:NBB},
	equal to 
	\begin{multline} \label{eq:rescaled_NBB}
		\frac{ \left( \frac{1}{x} \right)^{X_i+Y_j+1} \left( 1 - \frac{1}{Sx} \right)^{S-Y_j-1} }{\Gamma(X_i + Y_j)}
		\frac{\Gamma(S+X_i)}{S^{X_i+Y_j} \Gamma(S-Y_j)} 
		\\\times
		(1-p_S)^{2S-1}
		{}_2F_1 
		\left(\begin{minipage}{2.8cm}
				\center{$2S -1,S+X_i$}
		\\
		\center{$S-Y_j$}
		\end{minipage} \Big\vert\,
		p_S\left(1- \frac{1}{Sx} \right)\right),
	\end{multline}
	where
	\begin{equation*}
		p_S=
		\frac{\widetilde{Z}(i-1,j)-\widetilde{Z}(i-1,j-1)}
		{\widetilde{Z}(i,j-1)-\widetilde{Z}(i-1,j-1)}
		=
		\frac{
			S\widetilde{\mathpzc z}^{(S)}(i-1,j)
			-
			\widetilde{\mathpzc z}^{(S)}(i-1,j-1)
		}
		{
			S\widetilde{\mathpzc z}^{(S)}(i,j-1)
			-
			\widetilde{\mathpzc z}^{(S)}(i-1,j-1)
		}
		\sim 
		p\coloneqq
		\frac{\widetilde{z}(i-1,j)}{\widetilde{z}(i,j-1)}
	\end{equation*}
	is smaller than $1$.

	The limit of the first few factors in
	\eqref{eq:rescaled_NBB} is straightforward:
	\begin{equation*}
			\frac{ \left( \frac{1}{x} \right)^{X_i+Y_j+1} 
			\left( 1 - \frac{1}{Sx} \right)^{S-Y_j-1} }
			{\Gamma(X_i + Y_j)}
			\frac{\Gamma(S+X_i)}{S^{X_i+Y_j} \Gamma(S-Y_j)} 
			\xrightarrow[ \hspace{5pt} S\to \infty \hspace{5pt} ]{} 
			\frac{ \left( \frac{1}{x} \right)^{X_i+Y_j+1} e^{-\frac{1}{x}} }{\Gamma(X_i + Y_j)}.
	\end{equation*}
	To compute the limit of the Gaussian
	hypergeometric function, we use the Euler transformation 
	\begin{equation*}
		{}_2F_1 
		\left(\begin{minipage}{1cm}
		\center{$a\,,\,b$}
		\\
		\center{$c$}
		\end{minipage} \Big\vert\,
		z\right) = (1-z)^{c-a-b} {}_2F_1 
		\left(\begin{minipage}{2.1cm}
		\center{$c-a\,,\,c-b$}
		\\
		\center{$c$}
		\end{minipage} \Big\vert\,
		z\right),
	\end{equation*}
	so that the remaining terms in \eqref{eq:rescaled_NBB} become
	\begin{equation*}
		\left( \frac{1-p_S}{1-p_S+\frac{p_S}{Sx}} \right)^{2S-1} \left(1-p_S+\frac{p_S}{Sx} \right)^{-X_i-Y_j} 
		{}_2F_1 
		\left(\begin{minipage}{4cm}
		\center{$-S-Y_j+1,-X_i-Y_j$}
		\\
		\center{$S-Y_j$}
		\end{minipage} \Big\vert\,
		p_S\left(1- \frac{1}{Sx} \right)\right).
	\end{equation*}
	We have
	\begin{equation*}
		\left( \frac{1-p_S}{1-p_S+\frac{p_S}{Sx}} \right)^{2S-1} \left(1-p_S+\frac{p_S}{Sx} \right)^{-X_i-Y_j} 
			\xrightarrow[ \hspace{5pt} S\to \infty \hspace{5pt} ]{}
			e^{-\frac{2p}{(1-p)x}} (1-p)^{-X_i-Y_j},
	\end{equation*}
	whereas
	\begin{equation*}
		\begin{split}
			{}_2F_1 \left(\begin{minipage}{4cm}
			\center{$-S-Y_j+1,-X_i-Y_j$}
			\\
			\center{$S-Y_j$}
			\end{minipage} \Big\vert\,
			p_S\left(1- \frac{1}{Sx} \right)\right)
			\xrightarrow[ \hspace{5pt} S\to \infty \hspace{5pt} ]{}
			{}_1F_0 \left(\begin{minipage}{1.8cm}
			\center{$-X_i-Y_j$}
			\\
			\center{$-$}
			\end{minipage} \Big\vert\,
			-p \right)=(1+p)^{X_i+Y_j}.
		\end{split}
	\end{equation*}

	Our computations imply that 
	\begin{equation*}
		S^{-1} W_{i,j}^> \xrightarrow[ \hspace{5pt} S\to \infty
		\hspace{5pt} ]{} \frac{\widetilde{\mathpzc{z}}(i-1,j) +
		\widetilde{\mathpzc{z} }(i,j-1) }{ \widetilde{\mathpzc{z} }(i,j-1)
		- \widetilde{\mathpzc{z} }(i-1,j) } \,
		\mathrm{Gamma}^{-1}(X_i+Y_j).
	\end{equation*}
	Essentially repeating the computations for 
	$S^{-1} W_{i,j}^<$, we obtain
	\begin{equation*}
		S^{-1} W_{i,j}^< \xrightarrow[ \hspace{5pt} S\to \infty
		\hspace{5pt} ]{} \frac{\widetilde{\mathpzc{z}}(i-1,j) +
		\widetilde{\mathpzc{z} }(i,j-1) }{ \widetilde{\mathpzc{z} }(i-1,j)
		- \widetilde{\mathpzc{z} }(i,j-1) } \,
		\mathrm{Gamma}^{-1}(X_i+Y_j).
	\end{equation*}
	Thus, we see that in the scaling limit as $S\to+\infty$,
	the beta polymer like model 
	recurrence relation \eqref{eq:recurrence_polymer_like}
	becomes \eqref{eq:log_Gamma_polymer}, the 
	recurrence for the log-gamma polymer partition functions.
	This completes the proof.
\end{proof}
    
\section{Deformed quantum Toda Hamiltonian} 
\label{sec:Toda}
    
In this section we consider the scaling limit of the Pieri rule
\eqref{eq:second_Pieri_rule}
which states that the spin $q$-Whittaker polynomials
$\mathbb{F}_\lambda(x_1,\ldots,x_N )$
are eigenfunctions of an operator
acting on the \emph{label} $\lambda$.
This scaling limit leads to an eigenoperator
for the spin Whittaker functions. This operator acts
as a second order differential operator
in the 
(additive versions of the) variables
$\underline{L}_N$. 
We call is the \emph{$S$-deformed quantum Toda Hamiltonian}.
Our scaling of the Pieri rules are
inspired by \cite{GerasimovLebedevOblezin2011} where the Pieri rule for the
$q$-Whittaker polynomials was understood as a discretization of the (undeformed)
quantum Toda Hamiltonian.
    
\subsection{Refined Pieri operators}

We start by refining the Pieri operator
$
(\mathfrak{H}^{\mathrm{sHL}} f)(\mu) = \sum_\lambda f(\lambda) \,\mathsf{F}^{*}_{\lambda' / \mu'} (v)
$
introduced in 
\eqref{eq:pieri_op}, by considering its expansion in powers of $v$. 
Recall identity 
\eqref{eq:second_Pieri_rule} which states that
\begin{equation*}
	(\mathfrak{H}^{\mathrm{sHL}}f)(\lambda)=
	\left(\left(\frac{1}{1-sv} \right)^{N-1} \prod_{i=i}^N (1+x_i v)\right)
	f(\lambda),\qquad 
	f(\lambda)=\mathbb{F}_{\lambda}(x_1,\ldots,x_N ).
\end{equation*}
Defining $\mathfrak{H}_k$ by
\begin{equation} \label{eq:pieri_op_expansion}
		(1-vs)^{N-1} \mathfrak{H}^{\mathrm{sHL}} = \sum_{k=0}^N  v^k \mathfrak{H}_k,
\end{equation}
we see that
\begin{equation} \label{eq:refined_pieri_rules}
		\mathfrak{H}_k \mathbb{F}_\lambda = e_k(x_1,\dots , x_N) \mathbb{F}_\lambda.
\end{equation}

The action of the $\mathfrak{H}_k$'s
on functions $f(\lambda)$
can in principle
be recovered using the vertex weights $w^*$ in
\Cref{fig:table_w_star} (without the denominator $1-vs$)
which compose the sHL functions 
$(1-vs)^{N-1}\mathsf{F}^{*}_{\lambda' / \mu'}$.
In the simplest
cases $k=0$ or $N$ one can verify that
\begin{equation*}
	\mathfrak{H}_0 = \mathrm{Id}, 
	\qquad 
	\mathfrak{H}_N= \mathcal{T}_{\mu_1} \cdots \mathcal{T}_{\mu_N},
\end{equation*}
where $\mathcal{T}$ is the shift operator
\begin{equation*}
	(\mathcal{T}_{\mu_i}f)(\mu)
	=
	\begin{cases}
		f(\mu+\mathrm{e}_i),&\text{if $\mu_i<\mu_{i-1}$ or $i=1$};
		\\
		0,&\text{otherwise}.
	\end{cases}
\end{equation*}
Indeed, $\mathfrak{H}_0$ requires no vertical arrows to change from 
$\mu$ to $\lambda$, and $\mathfrak{H}_N$
corresponds to adding a full horizontal path
starting at zero and ending at $N$, so that 
the arrow configuration corresponding to $\lambda$ 
is obtained from the one for $\mu$ by adding one vertical arrow at 
location $N$.

When $1\le k \le N-1$, explicit formulas for $\mathfrak{H}_k$ look significantly more involved.
We need only the one for $k=1$, and will not discuss the other operators 
$\mathfrak{H}_k$.\footnote{Appropriate scaling limits of 
	the higher operators $\mathfrak{H}_k$ could potentially lead
	to higher order differential operators 
	commuting with the deformed quantum Toda Hamiltonian $\mathscr{H}_2$
	introduced below in this section.
	We leave this investigation to a future work.}
In the next statement, by agreement, we set 
$\mu_0=+\infty$, $\mu_{N+1}=-\infty$.

%
%
%

\begin{proposition}
		We have
		\begin{equation*}
			\mathfrak{H}_1 = h_{0,0} \, \mathrm{Id} + \sum_{0\le k < \ell \le N} 
			h_{k,\ell} \mathcal{T}_{\mu_{k+1}} \cdots \mathcal{T}_{\mu_\ell},
		\end{equation*}
		with
		\begin{align*}
				h_{0,0} &= -s \sum_{j=1}^{N-1} q^{\mu_j - \mu_{j+1}},
				\\
				h_{k,\ell} &= (1-q^{\mu_k - \mu_{k+1}}) (-s)^{\ell-k-1} q^{\mu_{k+1} - \mu_\ell} (1-s^2 q^{\mu_\ell - \mu_{\ell+1}} ).
		\end{align*}
\end{proposition}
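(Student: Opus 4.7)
The plan is to compute the kernel of $\mathfrak{H}_1$ directly from the defining relation $(1-sv)^{N-1}\mathfrak{H}^{\mathrm{sHL}} = \sum_k v^k \mathfrak{H}_k$ by expanding the polynomial $(1-sv)^{N-1}\mathsf{F}^*_{\lambda'/\mu'}(v)$ in $v$ and extracting the coefficient of $v^1$. Using the vertex-model representation of the one-variable skew dual sHL function (\Cref{def:sHL_func}), arrow preservation forces the horizontal occupations to be $j_r = \lambda_{r+1} - \mu_{r+1} \in \{0,1\}$, so there is a unique path configuration for each pair $(\mu,\lambda)$ with $\mu \prec' \lambda$. I would read off the four possible bulk weights $w^{*,\bulk}_{v,s}$ from \Cref{fig:table_w_star} (empty, turn-right, pass-through, turn-down), multiply each by $(1-sv)$ to clear denominators, and display each $(1-sv)\,w^{*,\bulk}_{v,s}$ as an affine function of $v$; together with the left-wall factor $v^{j_0}$ and the trivial right-wall indicator, this realizes $(1-sv)^{N-1}\mathsf{F}^*_{\lambda'/\mu'}(v)$ as $v^{j_0}$ times a product of $N-1$ affine factors in $v$, one per bulk column.

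The next step is to classify which pairs $(\mu,\lambda)$ contribute to $[v^1]$. Decomposing the binary string $(j_0,\ldots,j_{N-1})$ into maximal runs of $1$'s, each run corresponds to a horizontal segment whose endpoints and pass-through interior contribute a minimum $v$-degree depending on whether the endpoints are walls or turns. A total $v$-degree of $1$ is achieved only by the empty string (giving $\lambda = \mu$) or by a single contiguous run (giving $\lambda = \mu + \mathrm{e}_{k+1} + \cdots + \mathrm{e}_\ell$ for some $0 \le k < \ell \le N$); any other configuration produces $v$-degree $\ge 2$ and thus contributes only to the higher $\mathfrak{H}_j$ with $j \ge 2$.

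For $\lambda = \mu$ the polynomial reduces to $\prod_{r=1}^{N-1}(1 - svq^{\mu_r-\mu_{r+1}})$, whose linear-in-$v$ coefficient is $-s\sum_{r=1}^{N-1}q^{\mu_r-\mu_{r+1}} = h_{0,0}$. For a contiguous strip, exactly one of the affine factors contributes its linear term while the remaining factors contribute constants; working through the four subcases ($1 \le k$ with $\ell \le N-1$; $k=0$; $\ell=N$; and $k=0$ with $\ell=N$), the constants $-sq^{\mu_r-\mu_{r+1}}$ arising from the $\ell-k-1$ pass-through columns telescope to produce $(-s)^{\ell-k-1}q^{\mu_{k+1}-\mu_\ell}$, while the turn-right and turn-down factors (or their boundary replacements via the left-wall $v^{j_0}$ and the right-wall indicator) contribute $(1-q^{\mu_k-\mu_{k+1}})$ and $(1-s^2q^{\mu_\ell-\mu_{\ell+1}})$ respectively, with the boundary conventions $\mu_0 = +\infty$ and $\mu_{N+1} = -\infty$ absorbing the corner cases uniformly. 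The main obstacle is bookkeeping: correctly identifying from the table which of the bulk weights carry the $v$-linear contribution and which only contribute constants, and then pairing the turn weights with their boundary replacements; once these are settled, the match with the stated $h_{k,\ell}$ is a direct computation.
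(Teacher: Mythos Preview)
Your proposal is correct and follows essentially the same approach as the paper: both arguments use the vertex-model representation of $(1-sv)^{N-1}\mathsf{F}^*_{\lambda'/\mu'}(v)$, observe that after clearing the $(1-sv)$ denominators each bulk weight is affine in $v$, classify the configurations contributing to $[v^1]$ as either the empty one or a single contiguous horizontal segment from column $k$ to column $\ell$, and then read off $h_{0,0}$ and $h_{k,\ell}$ from the product of weights. Your version is somewhat more explicit about the boundary subcases and the telescoping, but the underlying argument is identical.
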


\begin{proof}
		Express the action of $\mathfrak{H}_1$ as 
		\begin{equation*}
				\mathfrak{H}_1 f(\mu) = \sum_{\mu\prec \lambda} H(\mu; \lambda) f(\lambda),
		\end{equation*}
		where the term $H(\mu; \lambda)$ corresponds to the
		weight of a row of vertices having a configuration $\lambda$ 
		at the top and $\mu$ at the bottom.
		Recall that we are using down-right directed paths as in 
		\Cref{fig:table_w_star}, and all the 
		individual vertex weights are multiplied by $(1-vs)$.
		
		Observe that each vertex 
		\tikz[baseline=-0.1cm]{\draw[dotted]
		(-0.3,0)--(0,0); \draw[red] (0,0)--(0.3,0); \draw[red]
		(0.03,0.3)--(0.03,-0.3); \draw[red] (-0.03,0.3)--(-0.03,-0.3);
		\draw[fill=black] (0,0) circle(0.04cm);} 
		somewhere in the bulk
		comes with the weight
		$v(1-q^{\mu_i-\mu_{i+1}})$.
		Therefore, 
		terms proportional to $v$ can only come
		from configurations with no horizontal arrows
		\tikz[baseline=-0.1cm]{\foreach \n in {0,1,2,3,4} {\draw[dotted]
		(0.6*\n-0.3,0)--(0.6*\n,0); \draw[dotted] (0.6*\n,0)--(0.6*\n+0.3,0);
		\draw[red] (0.6*\n+0.03,+0.3)--(0.6*\n+0.03,-0.3); \draw[red]
		(0.6*\n-0.03,0.3)--(0.6*\n-0.03,-0.3); \draw[fill=black] (0.6*\n,0)
		circle(0.04cm);}}
		or with a single sequence of horizontal arrows
		\tikz[baseline=-0.1cm]{
		\draw[red] (0.6,0)--(4*0.6,0);
		\foreach \n in {0,2,3} {
		\draw[red] 
		(0.6*\n+0.03,+0.3)--(0.6*\n+0.03,-0.3); 
		\draw[red] (0.6*\n-0.03,0.3)--(0.6*\n-0.03,-0.3); 
		\draw[fill=black] (0.6*\n,0) circle(0.04cm);}
		\draw[red] 
		(0.6*1+0.03,+0.3)--(0.6*1+0.03,-0); 
		\draw[red] (0.6*1-0.03,0.3)--(0.6*1-0.03,-0.3); 
		\draw[fill=black] (0.6*1,0) circle(0.04cm);
		\draw[red] 
		(0.6*4+0.04,+0.3)--(0.6*4+0.04,-0.3); 
		\draw[red] (0.6*4-0.04,0.3)--(0.6*4-0.04,-0.3); 
		\draw[red] (0.6*4,0)--(0.6*4,-0.3);
		\draw[fill=black] (0.6*4,0) circle(0.04cm);
		\draw[dotted] (-0.3,0)--(0.6,0);
		\draw[dotted] (2.4,0)--(2.7,0);
		}. 
		The first case, corresponding to $\lambda=\mu$, provides us with
		the term $h_{0,0}$. For the latter case, let $k$ be the column of
		the leftmost vertex emanating an horizontal arrow, and let $\ell$
		be the column where the single horizontal path stops. Such
		configuration corresponds to a partition
		$\lambda=\mu+\mathbf{e}_{k+1}+\cdots+\mathbf{e}_{\ell}$.
		Isolating the coefficient of $v$ in the expansion of product of
		vertex weights, we recover $h_{k,\ell}$.
\end{proof}

\subsection{Scaling of the Pieri operators}

Introduce the differential operators acting in the variables 
$u_1,\ldots,u_N$:
\begin{align}
	\label{eq:Toda_1}
	\mathscr{H}_1 &
	\coloneqq \sum_{i=1}^N \partial_{u_i};
	\\
	\label{eq:Toda_2_main_operator}
	\mathscr{H}_2 &
	\coloneqq 
	-\frac{1}{2} \sum_{i=1}^N \partial^2_{u_i} 
	+
	\sum_{1\le i<j \le N} 
	S^{-2(j-i)} e^{u_j - u_i} 
	( 
		S - \partial_{u_i} 
	) 
	( 
		S+ \partial_{u_j} 
	).
\end{align}
In the second operator, 
the product is understood in the usual way as
\begin{equation*}
	(S-\partial_{u_i})(S+\partial_{u_j})
	=
	S^2
	\,\mathrm{Id}+S(\partial_{u_j}-\partial_{u_i})-
	\partial_{u_i}\partial_{u_j}
	.
\end{equation*}

For the next result we define the rescaling 
\begin{equation} \label{eq:scaling_toda}
	q=e^{-\varepsilon}, \qquad q^{\lambda_i}=\frac{e^{-u_i}}{S^{N+1-2i}}, \qquad s=-e^{-\varepsilon S}.
\end{equation}
\begin{proposition} \label{prop:Pieri_to_Toda}
	Under the scaling \eqref{eq:scaling_toda}, we have
	\begin{align*}
			\mathscr{H}_1 &= \lim_{\varepsilon \to 0} \frac{1}{\varepsilon} \left( \mathfrak{H}_1 - N \right),
			\\
			\mathscr{H}_2 &= - \lim_{\varepsilon \to 0} \frac{1}{\varepsilon^2} \left( \mathfrak{H}_1 - N + \frac{1}{2} \mathfrak{H}_N^2 - 2 \, \mathfrak{H}_N + \frac{3}{2} \right).
	\end{align*}
\end{proposition}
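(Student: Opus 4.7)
The plan is to reduce the two assertions to a single Taylor expansion of $\mathfrak{H}_1$, and to verify that expansion by exploiting one simple telescoping identity. Under the scaling \eqref{eq:scaling_toda}, the shift $\mathcal{T}_{\mu_i}$ acts as $e^{\varepsilon \partial_{u_i}}$, so $\mathfrak{H}_N = \mathcal{T}_{\mu_1}\cdots \mathcal{T}_{\mu_N}$ is exactly $e^{\varepsilon \mathscr{H}_1}$. The elementary Taylor identity $\tfrac{1}{2}e^{2x} - 2 e^{x} + \tfrac{3}{2} = -x + O(x^3)$ (note the vanishing of the $x^2$ coefficient!) then immediately gives
\begin{equation*}
\tfrac{1}{2}\mathfrak{H}_N^{\,2} - 2\mathfrak{H}_N + \tfrac{3}{2} = -\varepsilon\, \mathscr{H}_1 + O(\varepsilon^3).
\end{equation*}
Consequently, both statements of the proposition are equivalent to the single expansion
\begin{equation*}
\mathfrak{H}_1 = N + \varepsilon\, \mathscr{H}_1 - \varepsilon^2\, \mathscr{H}_2 + O(\varepsilon^3),
\end{equation*}
which I will establish directly.

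To expand $\mathfrak{H}_1 = h_{0,0} + \sum_{0 \le k < \ell \le N} h_{k,\ell}\, \mathcal{T}_{\mu_{k+1}}\cdots \mathcal{T}_{\mu_\ell}$, set $z_j \coloneqq q^{\mu_j - \mu_{j+1}} = S^{-2}e^{u_{j+1}-u_j}$, with the boundary conventions $z_0 = z_N = 0$ (corresponding to $\mu_0 = +\infty$, $\mu_{N+1} = -\infty$). Under the scaling the $z_j$ are independent of $\varepsilon$, and all $\varepsilon$-dependence of $h_{k,\ell}$ is packaged in the factor $e^{-\varepsilon S(\ell-k-1)}\bigl(1 - e^{-2\varepsilon S} z_\ell\bigr)$. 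Expanding this together with $\mathcal{T}_{\mu_{k+1}}\cdots \mathcal{T}_{\mu_\ell} = 1 + \varepsilon D_{k+1,\ell} + \tfrac{\varepsilon^2}{2}D_{k+1,\ell}^{\,2} + O(\varepsilon^3)$ for $D_{k+1,\ell} = \sum_{j=k+1}^{\ell}\partial_{u_j}$ produces, at each order in $\varepsilon$, a sum over $(k,\ell)$ with $z$-weights. The combinatorial engine is the telescoping identity
\begin{equation*}
\sum_{k=0}^{\ell-1}(1-z_k)\, z_{k+1}\cdots z_{\ell-1} = 1 \qquad (\ell \ge 1),
\end{equation*}
proved by an immediate induction on $\ell$, whose iterated version yields
\begin{equation*}
\sum_{\substack{0 \le k \le i-1 \\ j \le \ell \le N}} (1-z_k) \prod_{m=k+1}^{\ell-1} z_m\, (1-z_\ell) \;=\; z_i\, z_{i+1} \cdots z_{j-1} \;=\; S^{-2(j-i)}e^{u_j - u_i}
\end{equation*}
for every $1 \le i \le j \le N$ (empty product $=1$ when $i=j$).

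Applied at zeroth order this gives $\mathfrak{H}_1\big|_{\varepsilon = 0} = N$. At order $\varepsilon$, the coefficient of $\partial_{u_i}$ coming from $\sum h^{(0)}_{k,\ell}D_{k+1,\ell}$ equals $\sum_{k+1 \le i \le \ell} h^{(0)}_{k,\ell} = 1$, producing $\mathscr{H}_1$; a separate telescoping cancels the scalar $\partial_\varepsilon h_{0,0}\big|_0 + \sum h^{(1)}_{k,\ell}$. At order $\varepsilon^2$, the iterated identity directly yields the coefficient $S^{-2(j-i)}e^{u_j-u_i}$ of $\partial_{u_i}\partial_{u_j}$ ($i<j$) and the coefficient $\tfrac{1}{2}$ of each $\partial_{u_i}^{\,2}$ from $\tfrac{1}{2}\sum h^{(0)}_{k,\ell}D_{k+1,\ell}^{\,2}$, matching the second-derivative part of $-\mathscr{H}_2$. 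The remaining first-order derivative and scalar contributions at order $\varepsilon^2$ are computed via two further weighted telescoping identities, namely $\sum (\ell-k-1)\, h^{(0)}_{k,\ell} = \sum_j z_j$ and $\sum (\ell-k-1)^2\, h^{(0)}_{k,\ell} = 2\sum_{i<j}z_i\cdots z_{j-1} - \sum_j z_j$, which follow from the basic identity by decomposing $(\ell-k-1) = \sum_m \mathbf{1}_{k < m < \ell}$ and applying telescoping independently in $k$ and in $\ell$. These reorganize the $\partial_{u_i}$ coefficient into $S\bigl(\sum_{j>i}z_i\cdots z_{j-1} - \sum_{j<i}z_j\cdots z_{i-1}\bigr)$ and the constant into $-S^2\sum_{i<j}z_i\cdots z_{j-1}$, precisely matching the first-order and scalar parts of the factored expression $-\sum_{i<j}S^{-2(j-i)}e^{u_j-u_i}(S-\partial_{u_i})(S+\partial_{u_j})$ in $-\mathscr{H}_2$. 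The main obstacle is this last step: three distinct order-$\varepsilon^2$ sources --- $h^{(2)}_{k,\ell}$, $h^{(1)}_{k,\ell}D_{k+1,\ell}$, and $\tfrac{1}{2}h^{(0)}_{k,\ell}D_{k+1,\ell}^{\,2}$ --- must be reorganized term by term into the factorization $(S-\partial_{u_i})(S+\partial_{u_j})$, and although each reorganization is routine, the bookkeeping to see that every $u$-dependent coefficient matches is the delicate part of the argument.
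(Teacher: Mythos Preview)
Your argument is correct and follows essentially the same route as the paper: expand $\mathfrak{H}_N$ to see that the quadratic combination contributes only $-\varepsilon\mathscr{H}_1$ through order $\varepsilon^2$, then Taylor-expand $\mathfrak{H}_1$ in $\varepsilon$ and evaluate the resulting triangular sums to match $\mathscr{H}_1$ and $\mathscr{H}_2$. Your $z_j$ are exactly the paper's $a_{j,j+1}$, and the products $z_i\cdots z_{j-1}$ are the paper's $a_{i,j}$; the six summation identities the paper relegates to an appendix are precisely the telescoping identities you isolate.

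Two presentational differences are worth noting. First, you treat $\mathfrak{H}_N=e^{\varepsilon\mathscr{H}_1}$ exactly and invoke $\tfrac12 e^{2x}-2e^x+\tfrac32=-x+O(x^3)$, which is slightly cleaner than the paper's term-by-term Taylor expansion of the product of shifts. Second, your single telescoping identity $\sum_{k=0}^{\ell-1}(1-z_k)z_{k+1}\cdots z_{\ell-1}=1$ and its iterated form unify what the paper states as six separate lemmas; this makes the source of each coefficient (especially the $S^{-2(j-i)}e^{u_j-u_i}$ factor) more transparent. On the other hand, the paper actually carries out the order-$\varepsilon^2$ bookkeeping in full, whereas you only sketch how the three contributions $h^{(2)}$, $h^{(1)}D$, $\tfrac12 h^{(0)}D^2$ reassemble into $(S-\partial_{u_i})(S+\partial_{u_j})$; filling in those lines would complete your write-up.
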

We remark that the combinations
of the
refined Pieri 
operators leading to $\mathscr{H}_1$ and $\mathscr{H}_2$
are the same as in the $q$-Whittaker case
\cite[Proposition 2.1]{GerasimovLebedevOblezin2011}, and correspond
to the scaling of eigenvalues in the proof of 
\Cref{thm:deformed_Toda_eigen} below.
The scaling \eqref{eq:scaling_toda} of the variables, however, 
is different.
\begin{proof}[Proof of \Cref{prop:Pieri_to_Toda}]
	First, expand the shift operator as $\varepsilon\to0$.
	From \eqref{eq:scaling_toda} we see that the
	increment by $1$ in $\mu_i$ corresponds to 
	the increment by $\log(q^{-1})=\varepsilon$
	in the scaled variable $u_i$. Therefore, 
	\begin{equation*}
		\mathcal{T}_{\mu_k}=\mathrm{Id}+\varepsilon \partial_{u_k} +\frac{\varepsilon^2}{2} \partial_{u_k}^2 +
		\smallO(\varepsilon^2),
	\end{equation*}
		and hence
		\begin{equation*}
				\mathcal{T}_{\mu_{k+1}}\cdots \mathcal{T}_{\mu_\ell}=
				\mathrm{Id} + \varepsilon \sum_{\alpha=k+1}^\ell
				\partial_{u_\alpha} + \frac{\varepsilon^2}{2} \sum_{k+1\le
				\alpha , \beta \le \ell} \partial_{u_\alpha,u_\beta} +
				\smallO(\varepsilon^2).
		\end{equation*}
		This implies that
		\begin{equation} \label{eq:expansion_H_N}
			\frac{1}{2}\mathfrak{H}_N^2 - 2 \mathfrak{H}_N + \frac{3}{2} =
			- \varepsilon \sum_{i=1}^N \partial_{u_i}  +
			\smallO(\varepsilon^2).
		\end{equation}
		
		Next we address the scaling of $\mathfrak{H}_1$. Set
		\begin{equation*}
				a_{k,\ell} = \begin{dcases}
				 S^{-2(\ell-k)}e^{u_{\ell} - u_k} \qquad &\text{if } 1\le k \le \ell \le N,
				\\
				0 \qquad & \text{else}.
				\end{dcases}
		\end{equation*}
		Expanding the coefficients $h_{k,\ell}$, we have
		\begin{equation*}
				h_{0,0}=\left(1-\varepsilon S +\frac{\varepsilon^2}{2}S^2\right)\sum_{j=1}^{N-1} a_{j,j+1} + \smallO(\varepsilon^2),
		\end{equation*}
		and (recall that we assume $k<\ell$)
		\begin{equation*}
				\begin{split}
						h_{k,\ell} 
						=
						&
						(a_{k+1,\ell}-a_{k,\ell}-a_{k+1,\ell+1}+a_{k,\ell+1})
						\\
						&
						+\varepsilon S \bigg\{ -( \ell-k-1 ) (a_{k+1,\ell}-a_{k,\ell}) + ( \ell-k+1 )  (a_{k+1,\ell+1} - a_{k,\ell+1}) \bigg\}
						\\
						&
						+ \frac{\varepsilon^2}{2} S^2 \bigg\{ (\ell-k-1)^2(a_{k+1,\ell}-a_{k,\ell})
						- (\ell-k+1)^2  (a_{k+1,\ell+1}-a_{k,\ell+1})
						\bigg\}+ \smallO(\varepsilon^2).
				\end{split}
		\end{equation*}

		Together with the action of the shifts 
		$\mathcal{T}$, we see that $h_{k,\ell} \mathcal{T}_{k+1}\cdots\mathcal{T}_{\ell}$ expands as
		\begin{align*}
					&(a_{k+1,\ell}-a_{k,\ell}-a_{k+1,\ell+1}+a_{k,\ell+1})\,\mathrm{Id}
						\\
						&
						+\varepsilon \bigg\{ -S( \ell-k-1 ) (a_{k+1,\ell}-a_{k,\ell})
							\,\mathrm{Id}
							+ S ( \ell-k+1 )  (a_{k+1,\ell+1} - a_{k,\ell+1})
							\,\mathrm{Id}
						\\
						&
						\qquad\qquad
						+(a_{k+1,\ell}-a_{k,\ell}-a_{k+1,\ell+1}+a_{k,\ell+1}) \sum_{\alpha=k+1}^\ell \partial_{u_\alpha} \bigg\}
						\\
						&
						+\frac{\varepsilon^2}{2}
						\bigg\{
							S^2 (\ell-k-1)^2(a_{k+1,\ell}-a_{k,\ell})\,\mathrm{Id}
							- S^2 (\ell-k+1)^2  (a_{k+1,\ell+1}-a_{k,\ell+1})\,\mathrm{Id}
						\\
						&
						\qquad \qquad
						+(a_{k+1,\ell}-a_{k,\ell}-a_{k+1,\ell+1}+a_{k,\ell+1}) \sum_{k+1\le \alpha , \beta \le \ell} \partial^2_{u_\alpha, u_\beta}
						\\
						&
						\qquad
						+2 S \big(
						-( \ell-k-1 ) (a_{k+1,\ell}-a_{k,\ell}) + ( \ell-k+1 )  (a_{k+1,\ell+1} - a_{k,\ell+1})
						\big) \sum_{\alpha=k+1}^\ell \partial_{u_\alpha}
						\bigg\}+ \smallO(\varepsilon^2).
		\end{align*}
		To evaluate the summation $\sum_{0\le k < \ell \le N} h_{k,\ell}
		\mathcal{T}_{\mu_{k+1}} \cdots \mathcal{T}_{\mu_\ell},$ we use the
		identities in 
		\Cref{prop:summation_identities}. We obtain
		\begin{equation*}
				\begin{split}
						&
						h_{0,0}+\sum_{0\le k < \ell \le N} h_{k,\ell} \mathcal{T}_{\mu_{k+1}} \cdots \mathcal{T}_{\mu_\ell}
						\\
						&
						\qquad
						=
						N + \varepsilon \sum_{i=1}^N \partial_{u_i} 
						+ \varepsilon^2 \left\{ \frac{1}{2}\sum_{i=1}^N \partial_{u_i}^2 - \sum_{1\le i < j \le N} S^{2(i-j)} e^{u_j - u_i} \left(S-\partial_{u_i}\right)\left(S+\partial_{u_j}\right) \right\} +\smallO(\varepsilon^2).
				\end{split}
		\end{equation*}
		Together with \eqref{eq:expansion_H_N} this yields the proof.
\end{proof}

For the next result we employ the 
spin Whittaker functions in the \emph{additive parameters}
$u_i$, where the multiplicative parameters $\underline{L}_N=(L_{N,N},\ldots,L_{N,1} )$ 
are expressed through the $u_i$'s as 
\begin{equation}
	\label{eq:sW_additive_variables}
	L_{N,i}=S^{N+1-2i}e^{u_i}.
\end{equation}
Denote 
the spin Whittaker function 
$\mathfrak{f}_{\underline{X}}(\underline{L}_N)$
in the additive parameters 
by $\mathfrak{f}^{\,add}_{\underline{X}}(u_1,\ldots,u_N )$. 
Here $\underline{X}=(X_1,\ldots,X_N )$ are such that $|X_i|<S$ for all $i$.

\begin{theorem}
	\label{thm:deformed_Toda_eigen}
		The spin Whittaker functions
		$\mathfrak{f}^{\,add}_{\underline{X}}(u_1,\ldots,u_N )$ 
		in the additive variables 
		\eqref{eq:sW_additive_variables}
		are eigenfunctions of
		the differential operators 
		$\mathscr{H}_1$ \eqref{eq:Toda_1}
		and $\mathscr{H}_2$ \eqref{eq:Toda_2_main_operator}. 
		In
		particular, we have
		\begin{align*}
				\mathscr{H}_1 \mathfrak{f}^{\,add}_{\underline{X}} (u_1,\dots,u_N) &= -\left( X_1 + \cdots + X_N \right) \mathfrak{f}^{\,add}_{\underline{X}} (u_1,\dots,u_N),
				\\
				\mathscr{H}_2 \mathfrak{f}^{\,add}_{\underline{X}} (u_1,\dots,u_N) &= -\frac{1}{2}\left( X_1^2 + \cdots + X_N^2 \right) \mathfrak{f}^{\,add}_{\underline{X}} (u_1,\dots,u_N).
		\end{align*}
\end{theorem}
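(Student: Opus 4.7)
My plan is to derive the two eigenrelations as $q\to1$ limits of the refined Pieri rule \eqref{eq:refined_pieri_rules}, combined with \Cref{prop:Pieri_to_Toda} which expresses $\mathscr{H}_1, \mathscr{H}_2$ as limits of specific combinations of the refined Pieri operators $\mathfrak{H}_1,\mathfrak{H}_N$. Throughout I would adopt the scaling \eqref{eq:scaling_toda} for $q,\lambda_i,s$, together with $x_i=q^{X_i}$, which converts the sqW polynomial into the spin Whittaker function in multiplicative variables via \Cref{thm:sqW_to_sW}, and then passes to the additive variables $u_i$ through \eqref{eq:sW_additive_variables}.

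First I would compute the eigenvalues in the limit. Since $\mathfrak{H}_1 \mathbb{F}_\lambda = e_1(q^{X_1},\ldots,q^{X_N})\mathbb{F}_\lambda$ and $\mathfrak{H}_N \mathbb{F}_\lambda = q^{X_1+\cdots+X_N}\mathbb{F}_\lambda$, a direct Taylor expansion in $\varepsilon=-\log q$ gives
\begin{align*}
e_1(q^{X_1},\ldots,q^{X_N})-N &= -\varepsilon\sum_{i=1}^N X_i + \tfrac{\varepsilon^2}{2}\sum_{i=1}^N X_i^2 + \mathsf{o}(\varepsilon^2),\\
\tfrac{1}{2}q^{2\sum X_i}-2q^{\sum X_i}+\tfrac{3}{2} &= \varepsilon \sum_{i=1}^N X_i -\tfrac{\varepsilon^2}{2}\Bigl(\sum_{i=1}^N X_i\Bigr)^2+\mathsf{o}(\varepsilon^2).
\end{align*}
Adding these two expansions, the linear-in-$\varepsilon$ and $\varepsilon^2(\sum X_i)^2$ terms cancel precisely, leaving the eigenvalue of $\mathfrak{H}_1 - N + \tfrac12 \mathfrak{H}_N^2 - 2\mathfrak{H}_N + \tfrac32$ on $\mathbb{F}_\lambda$ equal to $\tfrac{\varepsilon^2}{2}\sum_i X_i^2 + \mathsf{o}(\varepsilon^2)$. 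Dividing by $-\varepsilon^2$ yields the desired eigenvalue $-\tfrac12\sum X_i^2$ for $\mathscr{H}_2$, and dividing $(\mathfrak{H}_1-N)$ by $\varepsilon$ yields $-\sum X_i$ for $\mathscr{H}_1$.

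Next I would transfer the eigenrelation from $\mathbb{F}_\lambda$ to $\mathfrak{f}^{\,add}_{\underline X}$. The rescaled sqW polynomial $\mathbb{F}_\lambda(q^{X_1},\ldots,q^{X_N})$ converges to $\mathfrak{f}_{\underline X}(\underline L_N)$ (and hence to $\mathfrak{f}^{\,add}_{\underline X}(u_1,\ldots,u_N)$) by \Cref{thm:sqW_to_sW}. Under the scaling \eqref{eq:scaling_toda}, a unit shift of $\mu_i$ corresponds to an increment of $u_i$ by $\varepsilon$, so the discrete shift operator satisfies $\mathcal{T}_{\mu_i} = \mathrm{Id} + \varepsilon \partial_{u_i} + \tfrac{\varepsilon^2}{2}\partial_{u_i}^2 + \mathsf{o}(\varepsilon^2)$, and \Cref{prop:Pieri_to_Toda} then asserts the identities
\begin{equation*}
\lim_{\varepsilon\to 0}\tfrac{1}{\varepsilon}(\mathfrak{H}_1 - N) = \mathscr{H}_1,\qquad
\lim_{\varepsilon\to 0}\tfrac{-1}{\varepsilon^2}\bigl(\mathfrak{H}_1 - N + \tfrac12\mathfrak{H}_N^2 - 2\mathfrak{H}_N + \tfrac32\bigr)= \mathscr{H}_2,
\end{equation*}
acting on smooth functions of $u_1,\ldots,u_N$. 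Combining with the eigenvalue expansions above gives both claimed eigenrelations for $\mathfrak{f}^{\,add}_{\underline X}$.

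The main obstacle will be justifying the interchange of the limits $\varepsilon\to 0$ and the action of the discrete operators on $\mathbb{F}_\lambda$ with the action of $\mathscr{H}_2$ on $\mathfrak{f}^{\,add}_{\underline X}$. In principle the sharp cancellations in the eigenvalue expansion (the linear and $(\sum X_i)^2$ terms dying) mirror the cancellations in \Cref{prop:Pieri_to_Toda} at the operator level, but to turn this into a rigorous argument one needs either (i) uniform-in-$\varepsilon$ control on second discrete differences of the rescaled sqW polynomials, together with equicontinuity of these discrete derivatives, to pass to the continuous limit inside the differential operator; or (ii) a softer argument using the fact that the limit function $\mathfrak{f}^{\,add}_{\underline X}$ admits the explicit Givental representation \eqref{eq:sW_Givental} and is $C^2$ in $u_i$ away from the singular locus, which allows one to evaluate $\mathscr{H}_2 \mathfrak{f}^{\,add}_{\underline X}$ classically and compare with the limit of the rescaled discrete eigenrelation tested against compactly supported smooth test functions in $u$. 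The second route seems more robust, and I expect it to succeed because the bounded-eigenvalue limit already identifies the answer unambiguously.
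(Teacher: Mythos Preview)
Your approach is exactly the one the paper takes: combine the refined Pieri eigenrelations \eqref{eq:refined_pieri_rules}, the operator convergence of \Cref{prop:Pieri_to_Toda}, and the function convergence of \Cref{thm:sqW_to_sW}, and then expand the eigenvalues in $\varepsilon$. The paper's own proof is in fact no more detailed than yours on the interchange-of-limits issue you flag; it simply records the eigenvalue expansions and concludes.

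One small correction to your eigenvalue computation: with $y=q^{\sum_i X_i}$ one has $\tfrac12 y^2 - 2y + \tfrac32 = \tfrac12(y-1)(y-3)$, and since $y-1 = -\varepsilon\sum_i X_i + \mathsf{O}(\varepsilon^2)$ this equals $\varepsilon\sum_i X_i + \mathsf{O}(\varepsilon^3)$, with \emph{no} $\varepsilon^2$ contribution. So there is no $-\tfrac{\varepsilon^2}{2}(\sum X_i)^2$ term and nothing to cancel against; the sum with $e_1-N$ directly gives $\tfrac{\varepsilon^2}{2}\sum_i X_i^2 + \mathsf{o}(\varepsilon^2)$, matching the paper's computation.
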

\begin{proof}
	This result is a combination of 
	the refined Pieri rules
	\eqref{eq:refined_pieri_rules} 
	viewed as eigenrelations
	for the spin $q$-Whittaker functions,
	and the convergence of the 
	functions 
	(\Cref{thm:sqW_to_sW})
	and the operators
	(\Cref{prop:Pieri_to_Toda}).
	More precisely, 
	under the scaling $x_i=e^{-\varepsilon X_i}$ 
	for the eigenvalues $e_k(x_1,\ldots,x_N )$ 
	we have
	\begin{equation*}
		\frac{1}{\varepsilon}\big(e_1(x_1,\dots,x_N)-N \big) \xrightarrow[\varepsilon \to 0]{} -X_1 -\cdots -X_N,
	\end{equation*}
	and
	\begin{equation*}
			\frac{1}{\varepsilon^2}\left( e_1(x_1,\dots,x_N) - N +
			\frac{1}{2} e_N(x_1,\dots,x_N)^2 - 2 e_N(x_1,\dots,x_N)
		+\frac{3}{2}  \right)\xrightarrow[\varepsilon \to 0]{}
		\frac{1}{2}( X_1^2 +\cdots X_N^2).
	\end{equation*}
	This leads to the desired results.
\end{proof}    

\subsection{Reduction to the quantum Toda Hamiltonian}

It is natural to call the second order differential 
operator $\mathscr{H}_2$ \eqref{eq:Toda_2_main_operator}
the \emph{deformed quantum Toda Hamiltonian}.
Namely, it is diagonal 
in the spin Whittaker functions
which (formally) reduce, as $S\to+\infty$, 
to the classical $\mathfrak{gl}_n$ Whittaker functions
(\Cref{sub:sW_to_W}).
Further justification to this
name comes from the fact that
the operator 
$\mathscr{H}_2$ itself degenerates as $S\to+\infty$ to the
usual quantum Toda Hamiltonian
\begin{equation}
	\label{eq:quantum_usual_Toda}
	\mathscr{H}^{\mathrm{Toda}}_2\coloneqq
	-\frac{1}{2}  \sum_{i=1}^{N} \partial_{u_i}^2 + \sum_{i=1}^{N-1} e^{ u_{i+1} - u_i }.
\end{equation}
\begin{proposition}
	\label{prop:S_Toda_to_Toda}
	As $S\to+\infty$, the operator $\mathscr{H}_2$ \eqref{eq:Toda_2_main_operator}
	converges to the quantum Toda Hamiltonian
	$\mathscr{H}^{\mathrm{Toda}}_2$
	\eqref{eq:quantum_usual_Toda}.
\end{proposition}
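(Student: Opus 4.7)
The plan is to split the interaction sum in $\mathscr{H}_2$ according to whether the pair $(i,j)$ is a nearest neighbor ($j=i+1$) or not ($j\ge i+2$), and to show that only the nearest-neighbor contributions survive in the limit $S\to+\infty$, each producing exactly one summand $e^{u_{i+1}-u_i}$ of $\mathscr{H}_2^{\mathrm{Toda}}$. The kinetic term $-\frac12\sum_i\partial_{u_i}^2$ appears already in $\mathscr{H}_2$ and is $S$-independent, so it carries over to the limit without any work.

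First I would expand the factored coefficient as
\begin{equation*}
S^{-2(j-i)}\,(S-\partial_{u_i})(S+\partial_{u_j})
=S^{-2(j-i)+2}\,\mathrm{Id}+S^{-2(j-i)+1}(\partial_{u_j}-\partial_{u_i})-S^{-2(j-i)}\,\partial_{u_i}\partial_{u_j}.
\end{equation*}
For the nearest-neighbor case $j=i+1$ the exponents are $0,-1,-2$, so as $S\to+\infty$ the coefficient converges, as a differential operator acting on any smooth function, to the multiplication operator $\mathrm{Id}$. Multiplying by $e^{u_{i+1}-u_i}$ and summing over $i=1,\ldots,N-1$ yields precisely the Toda potential $\sum_{i=1}^{N-1}e^{u_{i+1}-u_i}$.

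For the long-range case $j\ge i+2$, every exponent above is at most $-2$, so each of the three terms vanishes pointwise as $S\to+\infty$ when applied to a fixed smooth function. Multiplication by the bounded factor $e^{u_j-u_i}$ (on any fixed compact set in $(u_1,\ldots,u_N)$-space) does not affect the convergence, and since the double sum over $i<j$ has finitely many summands, the total contribution of these long-range terms tends to zero. This gives convergence $\mathscr{H}_2\to\mathscr{H}_2^{\mathrm{Toda}}$ on, say, $C^\infty(\mathbb{R}^N)$, uniformly on compact sets together with all derivatives.

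There is no real obstacle here: the statement is essentially an exercise in reading off the leading asymptotics of explicit polynomial coefficients in $S^{-1}$. The only minor point to be explicit about is the mode of convergence (operators on smooth functions, uniform on compacta), since $\mathscr{H}_2$ and $\mathscr{H}_2^{\mathrm{Toda}}$ are unbounded and one should not claim any norm convergence on an $L^2$-type space without further analysis. This is already foreshadowed in the Introduction after \eqref{eq:intro_Whit_eigenrelations}, where the same computation is sketched informally.
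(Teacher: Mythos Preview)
Your proof is correct and follows essentially the same approach as the paper: both split the interaction sum into nearest-neighbor ($j=i+1$) and long-range ($j\ge i+2$) terms, observe that the long-range terms carry negative powers of $S$ and vanish, and check that each nearest-neighbor term $S^{-2}(S-\partial_{u_i})(S+\partial_{u_{i+1}})\to\mathrm{Id}$. You are slightly more explicit about the mode of convergence, but the argument is the same.
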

\begin{proof}
	The factors $S^{-2(j-i)}$, $1\le i\le j\le N$, 
	in the sum in \eqref{eq:Toda_2_main_operator}
	decay at least as fast as $S^{-2}$ as $S\to+\infty$.
	Therefore, the only surviving contribution 
	in the limit $S\to+\infty$
	comes from the terms with $j=i+1$, for which we have
	\begin{equation*}
		S^{-2} e^{u_{i+1} - u_i} 
		( 
			S - \partial_{u_i} 
		) 
		( 
			S+ \partial_{u_{i+1}} 
		)
		\to
		e^{u_{i+1}-u_i},\qquad S\to+\infty.
	\end{equation*}
	This completes the proof.
\end{proof}

\section{Desired properties and conjectures}
\label{sec:concluding_remarks}

This paper developed the 
spin $q$-Whittaker polynomials and spin Whittaker functions,
and established many of their 
properties
which are 
one-parameter generalizations of the
corresponding facts about the $q$-Whittaker 
polynomials and $\mathfrak{gl}_n$ Whittaker functions.
In this final section we briefly discuss further desired properties and
conjectures corresponding to our deformed situation.

\subsection{Orthogonality and spectral theory for spin $q$-Whittaker polynomials}

The $q$-Whittaker polynomials satisfy orthogonality
relations coming from (the $t=0$ degeneration of) the Macdonald
torus scalar product \cite[Ch. VI.9]{Macdonald1995}.
This relation states that the $s=0$ versions of
$\mathbb{F}_\lambda(z_1,\ldots,z_N )$
and $\mathbb{F}_\mu(1/z_1,\ldots,1/z_N )$
are orthogonal to each other when $\mu\ne \lambda$
with respect to a certain weight
on the $N$-dimensional torus 
$\mathbb{T}^N=\{|z_i|=1,\, i=1,\ldots,N \}$.
\begin{remark}
	Under the generalization with a spin parameter,
	the spin Hall--Littlewood polynomials also
	satisfy a version of the torus orthogonality
	(called spatial orthogonality in 
	\cite[Corollary 3.10]{BCPS2014_arXiv_v4},
	see also
	\cite{Borodin2014vertex},
	\cite[Proposition 8.6]{BufetovMucciconiPetrov2018}),
	as well as another biorthogonality
	involving the summation 
	of over $\lambda$ instead of integration over $z$.
	Here we discuss only the former conjectural orthogonality
	of the spin $q$-Whittaker polynomials.
\end{remark}

Define 
\begin{equation*}
		m_{q,s}^N(z_1,\dots , z_N) \coloneqq 
		\frac{1}{N!} \prod_{1\le i \ne j \le N} 
		\frac{(s^2,z_i / z_j ; q)_\infty}{(-s z_i, -s / z_i ; q)_\infty} 
		\prod_{i=1}^N \frac{1}{ 2 \pi \mathrm{i} z_i},
		\qquad (z_1,\ldots,z_N )\in \mathbb{T}^N.
\end{equation*}
When $s=0$, $m_{q,s}^{N}$
reduces to the 
orthogonality measure of the $q$-Whittaker polynomials
on $\mathbb{T}^N$, which is a $t=0$ degeneration of the Macdonald's torus 
scalar product $\Delta(z_1,\ldots,z_N;q,t )$, cf. \cite[VI.(9.2)]{Macdonald1995}.
\begin{lemma}
	\label{lemma:self_adjoint}
	Both eigenoperators $\mathfrak{D}_1$, $\overline{\mathfrak{D}}_1$
	\eqref{eq:D_1}, \eqref{eq:D_1_bar}
	for the spin $q$-Whittaker polynomials
	are self-adjoint with respect to the scalar product
	\begin{equation*}
		\langle f,g \rangle_{q,s} \coloneqq
		\int_{\mathbb{T}^N}
		f(z_1,\ldots,z_N )
		\,
		\overline{g(z_1,\ldots,z_N )}
		\,m_{q,s}^{N} (z_1,\ldots,z_N )\,dz_1\ldots dz_N ,
	\end{equation*}
	where $f,g$ are Laurent polynomials with 
	coefficients in $\mathbb{R}(q,s)$.
\end{lemma}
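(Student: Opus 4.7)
My plan is to reduce the self-adjointness of $\mathfrak{D}_1$ and $\overline{\mathfrak{D}}_1$ to the classical self-adjointness of the first $q$-Whittaker difference operators $W_N^1$ and $\tilde{W}_N^1$ via the conjugation formulas stated in \Cref{sub:qW_conjugation}.

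The first step is to rewrite the spin weight $m_{q,s}^N$ in terms of the $s=0$ weight $m_{q,0}^N$ (which is the $t=0$ specialization of the Macdonald torus weight). For $q,s \in \mathbb{R}$ and $z \in \mathbb{T}^N$, we have $\bar{z}_i = 1/z_i$, and so
\begin{equation*}
\overline{(-sz_i;q)_\infty} = (-s/z_i;q)_\infty,
\qquad
\overline{\mathbb{U}_N(z)} = \mathbb{V}_N(z),
\qquad
\mathbb{U}_N(z)\mathbb{V}_N(z) = |\mathbb{U}_N(z)|^2.
\end{equation*}
Since the product $\prod_{i \ne j}[(-sz_i;q)_\infty (-s/z_i;q)_\infty]^{-1}$ equals $|\mathbb{U}_N(z)|^2 = |\mathbb{V}_N(z)|^2$, a direct comparison gives
\begin{equation*}
m_{q,s}^N(z) = (s^2;q)_\infty^{N(N-1)} \cdot |\mathbb{U}_N(z)|^2 \cdot m_{q,0}^N(z).
\end{equation*}
This produces the key isometry
\begin{equation*}
\langle f,g \rangle_{q,s} = (s^2;q)_\infty^{N(N-1)} \langle \mathbb{U}_N f, \mathbb{U}_N g \rangle_{q,0} = (s^2;q)_\infty^{N(N-1)} \langle \mathbb{V}_N f, \mathbb{V}_N g \rangle_{q,0},
\end{equation*}
where in both cases the multiplication factor pairs with its complex conjugate on the torus.

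Second, by the conjugation identities $\mathfrak{D}_1 = \mathbb{U}_N^{-1} W_N^1 \mathbb{U}_N$ and $\overline{\mathfrak{D}}_1 = \mathbb{V}_N^{-1} \tilde{W}_N^1 \mathbb{V}_N$ from \Cref{sub:qW_conjugation}, the isometry above immediately implies
\begin{equation*}
\langle \mathfrak{D}_1 f, g \rangle_{q,s} = (s^2;q)_\infty^{N(N-1)} \langle W_N^1 (\mathbb{U}_N f), \mathbb{U}_N g \rangle_{q,0}
\end{equation*}
and analogously for $\overline{\mathfrak{D}}_1$ with $\tilde W_N^1$ and $\mathbb{V}_N$. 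Hence the lemma reduces to the classical statement that $W_N^1$ and $\tilde W_N^1$ are self-adjoint with respect to $\langle \cdot,\cdot \rangle_{q,0}$, which is the $t=0$ degeneration of the well-known self-adjointness of the Macdonald $q$-difference operators on the torus scalar product \cite[Ch.~VI.9]{Macdonald1995}. Applied once inside the right-hand side and then conjugating back yields $\langle \mathfrak{D}_1 f, g\rangle_{q,s} = \langle f,\mathfrak{D}_1 g\rangle_{q,s}$, and symmetrically for $\overline{\mathfrak{D}}_1$.

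The main obstacle is that $\mathbb{U}_N f$ and $\mathbb{U}_N g$ are not Laurent polynomials but only holomorphic functions on a neighborhood of the torus, so one must verify the classical Macdonald self-adjointness in the slightly larger class. This is done by the standard contour deformation argument: the action $T_{q,x_i}$ inside $W_N^1$ replaces the torus contour in $z_i$ by a shifted one, and since $|s|<1$ the $q$-Pochhammers $(-sz_i;q)_\infty$ are holomorphic and nonvanishing on the closed annulus $1 \le |z_i| \le q^{-1}$, the conjugating factor $\mathbb{U}_N$ has no poles between the original and shifted contours. The residues at the diagonal poles $z_i = qz_j$ produced by the factors $(z_i/z_j;q)_\infty$ are precisely those cancelled by $W_N^1$'s own denominators, recovering exactly the classical computation; after swapping $f \leftrightarrow g$ and running the argument backward, the symmetry $\langle W_N^1 F,G\rangle_{q,0} = \langle F, W_N^1 G\rangle_{q,0}$ follows.
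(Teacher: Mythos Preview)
Your argument is correct. The paper's own proof is just the single phrase ``A direct verification,'' so your route via the conjugation identities $\mathfrak{D}_1=\mathbb{U}_N^{-1}W_N^1\mathbb{U}_N$ and $\overline{\mathfrak{D}}_1=\mathbb{V}_N^{-1}\tilde W_N^1\mathbb{V}_N$ together with the factorization $m_{q,s}^N=(s^2;q)_\infty^{N(N-1)}\,\mathbb{U}_N\mathbb{V}_N\,m_{q,0}^N$ is a genuinely more structural argument than what the authors likely have in mind (namely, writing out the formal adjoint term by term and checking that the weight transforms correctly under each shift $T_{q^{\pm1},z_i}$). Your method has the virtue of explaining \emph{why} $m_{q,s}^N$ is the right weight: it is precisely the weight that makes the conjugation by $\mathbb{U}_N$ (or $\mathbb{V}_N$) an isometry into the $t=0$ Macdonald torus space. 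The price is the small analytic check you flag at the end, that the classical self-adjointness of $W_N^1,\tilde W_N^1$ extends from Laurent polynomials to functions of the form $\mathbb{U}_N f$, $\mathbb{V}_N f$; your observation that $\mathbb{U}_N$ is holomorphic and nonvanishing on $\{|z_i|\le 1\}$ and $\mathbb{V}_N$ on $\{|z_i|\ge 1\}$ (since $|s|<1$) is exactly what is needed for the contour shifts. A brute-force ``direct verification'' avoids this detour but obscures the structure.
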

\begin{proof}
	A direct verification.
\end{proof}

\begin{conjecture}\label{conj:sW_orthogonality}
	We have for all signatures $\lambda,\mu$:
	\begin{equation} \label{eq:conj_orthogonality_sqW}
			\int_{\mathbb{T}^N}
			\mathbb{F}_{\lambda} (z_1, \dots, z_N)\,
			\mathbb{F}_{\mu}(1/z_1, \dots, 1/z_N)\,
			m_{q,s}^N(z_1,\dots , z_N)\,
			d z_1 \cdots d z_N = c_\lambda 
			\mathbf{1}_{\lambda=\mu},
	\end{equation}
	where
	\begin{equation}
		\label{eq:c_lambda_s_q}
			c_\lambda = \prod_{i=1}^{N-1} \frac{ (s^2;q)_\infty }{(q ; q)_\infty} \frac{ (q ; q)_{\lambda_i - \lambda_{i+1}} }{ (s^2 ; q)_{\lambda_i - \lambda_{i+1}} }.
	\end{equation}
\end{conjecture}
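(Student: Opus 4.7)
\medskip

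My proof plan is spectral: build up enough commuting self-adjoint operators in the space of Laurent polynomials in $z_1,\ldots,z_N$ that act diagonally on $\mathbb{F}_\lambda$, separate distinct $\lambda$'s, and are self-adjoint with respect to $\langle\cdot,\cdot\rangle_{q,s}$; the orthogonality of distinct eigenvectors is then automatic, and the constant $c_\lambda$ is pinned down separately.

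First, \Cref{lemma:self_adjoint}, combined with \Cref{thm:eigenrelation_sqW,thm:eigenrelation_sqW_conj}, gives immediate vanishing of $\langle \mathbb{F}_\lambda,\mathbb{F}_\mu\rangle_{q,s}$ whenever $\lambda_N\neq\mu_N$ or $\lambda_1\neq \mu_1$, by the standard commutation argument: $q^{\lambda_N}\langle\mathbb{F}_\lambda,\mathbb{F}_\mu\rangle_{q,s}=\langle \mathfrak{D}_1\mathbb{F}_\lambda,\mathbb{F}_\mu\rangle_{q,s}=\langle \mathbb{F}_\lambda,\mathfrak{D}_1\mathbb{F}_\mu\rangle_{q,s}=q^{\mu_N}\langle\mathbb{F}_\lambda,\mathbb{F}_\mu\rangle_{q,s}$, and similarly for $\overline{\mathfrak{D}}_1$. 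This already settles \Cref{conj:sW_orthogonality} for $N=2$.

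For $N\ge 3$ the main obstacle, as flagged by the authors at the end of \Cref{sub:qW_conjugation}, is that we do not know sqW-diagonal higher-order $q$-difference operators. I would try two routes in parallel. Route (a) is constructive: search for combinations of the conjugated Macdonald operators $\mathbb{U}_N^{-1}W_N^r\mathbb{U}_N$ and $\mathbb{V}_N^{-1}\widetilde W_N^r\mathbb{V}_N$ (and perhaps mixed products) which restore diagonalizability on $\mathbb{F}_\lambda$; self-adjointness w.r.t.\ $\langle\cdot,\cdot\rangle_{q,s}$ should then be checked directly by the same contour manipulation as in the proof of \Cref{prop:sqW_operators_commute}. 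Route (b) is indirect, via Cauchy duality: integrate the product of two sqW/sHL Cauchy identities (\Cref{cor:sHL_sqW_Cauchy_2}) with dual sets of sHL parameters $v,w$ against $m_{q,s}^N$, namely
\begin{equation*}
\int_{\mathbb{T}^N}\Pi(z;v)\,\Pi(z^{-1};w)\,m_{q,s}^N(z)\,dz
=\sum_{\lambda,\mu}\mathsf{F}^*_{\lambda'}(v)\,\mathsf{F}^*_{\mu'}(w)\,\langle\mathbb{F}_\lambda,\mathbb{F}_\mu\rangle_{q,s},
\end{equation*}
compute the left-hand side via residues (the integrand is a product of explicit rational functions), and then use the sHL orthogonality of \Cref{prop:sHL_orthogonality} applied in $v$ and $w$ to extract individual matrix elements $\langle\mathbb{F}_\lambda,\mathbb{F}_\mu\rangle_{q,s}$. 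This requires showing that the family $\{\mathsf{F}^*_{\lambda'}\}$ is sufficiently rich to separate matrix elements, which in turn reduces to a completeness statement for sHL expansions — plausible given \Cref{prop:sHL_orthogonality} but needing technical justification of contour deformations when $m=N$.

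Once orthogonality is established, computing $c_\lambda$ is a direct calculation: one can either (i) specialize $\mu=\lambda$ in Route (b) and match coefficients of $\mathsf{F}^*_{\lambda'}(v)\mathsf{F}^*_{\lambda'}(w)$; or (ii) take $\lambda=\varnothing$ where both sides are explicit constants and then propagate to general $\lambda$ using the Pieri rule \Cref{prop:Pieri2} (which is self-adjoint up to a change of variables $v\leftrightarrow -v$), building up $\lambda$ by adding columns of length $i$. A useful sanity check is that at $s=0$ the expected constant $c_\lambda$ reduces to $\prod_{i=1}^{N-1}(q;q)_{\lambda_i-\lambda_{i+1}}^{-1}\cdot(q;q)_\infty^{1-N}$-type expression matching the $t=0$ Macdonald torus norm \cite[VI.(9.2)]{Macdonald1995}, a limit which one should verify explicitly. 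The hard step is Route (a) or the completeness issue in Route (b); if neither succeeds outright, one could try to prove orthogonality first as a formal power series in $s$ around $s=0$ (where the $t=0$ Macdonald case gives the starting point) and then promote it to an analytic identity, though controlling the torus integral under this expansion is delicate.
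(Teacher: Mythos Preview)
This statement is a \emph{Conjecture} in the paper, not a theorem: the paper does not prove it. There is therefore no ``paper's own proof'' to compare against. What the paper does offer is partial evidence: (i) the $N\le 2$ case via \Cref{lemma:self_adjoint} and the two eigenrelations, exactly as you reproduce in your first paragraph; and (ii) a consistency check coming from matching the known $q$-Hahn TASEP one-point probability formula \eqref{eq:q_Hahn_TASEP_IMS} against what the conjectured orthogonality would predict when combined with the Cauchy identity and \Cref{prop:quasi_Cauchy_identity}. The paper explicitly flags that for $N\ge 3$ the two operators $\mathfrak{D}_1,\overline{\mathfrak{D}}_1$ are insufficient, which is the same obstacle you identify.

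Your proposal is not a proof but a research plan, and you are candid about this: Route~(a) asks for higher diagonal operators whose existence the paper itself leaves open (end of \Cref{sub:qW_conjugation}), and Route~(b) requires a completeness/separation statement for the sHL family that you do not establish. Both are reasonable directions but neither closes the gap. So the honest summary is: you have correctly reproduced the paper's $N\le 2$ argument and correctly located the obstruction for $N\ge 3$; your Routes~(a) and~(b) are plausible attack lines that go beyond what the paper attempts, but they remain incomplete, and the conjecture stays open.
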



Note that for $N\le 2$ the statement (up to the 
concrete formula for the norm $c_\lambda$)
follows from \Cref{lemma:self_adjoint}
and the eigenrelations of \Cref{thm:eigenrelation_sqW,thm:eigenrelation_sqW_conj}.
However, for $N\ge 3$ the two operators 
$\mathfrak{D}_1$, $\overline{\mathfrak{D}}_1$
are not sufficient to conclude orthogonality.
\begin{remark}
When $s=0$, the constant $c_\lambda$
\eqref{eq:c_lambda_s_q}
coincides with the $t=0$ degeneration of the torus scalar product
norm of a Macdonald polynomial \cite[Ch. VI.9, Example 1]{Macdonald1995}.
\end{remark}

Let us present one further argument in favor of 
\Cref{conj:sW_orthogonality}.
It was proven in \cite[Proposition 4.10]{imamura2019stationary} that the
probability mass function of a tagged particle in the homogeneous
$q$-Hahn Tasep with parameters $\nu=s^2$ and $\mu=- y s$ is
\begin{equation} \label{eq:q_Hahn_TASEP_IMS}
		\begin{split}
				\mathbb{P}(\mathcal{H}(N,t)=\ell) = \left( \frac{(q;q)_\infty}{(s^2;q)_\infty} \right)^{N-1} \int_{\mathbb{T}^N}
				&
				m^N_{q,s}(z_1,\dots,z_N) \prod_{j=1}^N \left( \frac{\Pi(z_j;y)}{\Pi(-s;y)} \right)^t 
				\\
				&
				\times
				\left( \frac{ (-s)^N }{z_1 \cdots z_N} \right)^\ell 
				\frac{(\frac{(-s)^N}{z_1\cdots z_N};q)_\infty (s^2;q)_\infty^{N-1}}
				{(-sz_1,\dots,-sz_N;q)_\infty}
				dz_1\cdots dz_N,
		\end{split}
\end{equation}
where
$\Pi (z;y)= \dfrac{( - s z;q)_\infty}{(-sy;q)_\infty}$.
The same probability can be expressed as
\begin{equation*}
		\sum_{\substack{\lambda \in \mathrm{Sign}_N \\ \lambda_N=\ell }} 
		\mathbb{F}_\lambda(-s,\dots,-s)\,
		\frac{\mathbb{F}^*_\lambda(y,\dots,y)}{\Pi(-s;y)},
\end{equation*}
Assuming \Cref{conj:sW_orthogonality},
this sum becomes
\begin{equation*}
		\int_{\mathbb{T}^N} m_{q,s}^N(z_1,\dots,z_N)
		\prod_{j=1}^N \left( \frac{\Pi(z_j;y)}{\Pi(-s;y)} \right)^t \left( \frac{ (-s)^N }{z_1 \cdots z_N} \right)^\ell \sum_{\substack{\lambda \in \mathrm{Sign}_N \\ \lambda_N=0 }} \frac{(-s)^{|\lambda|}}{c_\lambda} \,\mathbb{F}_\lambda(1/z_1,\dots,1/z_N)\, dz_1\cdots dz_N.
\end{equation*}
Here we used the Cauchy Identity and the torus scalar product to express
the dual function $\mathbb{F}_\lambda^*$, and the shifting rule of
Proposition \ref{prop:sqW_shifting} to take out the monomial of degree
$\ell$. Evaluating the sum inside the integral as in
\eqref{eq:quasi_Cauchy_identity}, we recover exactly 
\eqref{eq:q_Hahn_TASEP_IMS}.

\subsection{Accessing spin $q$-Whittaker polynomials via free field}

In this paper we did not focus on Fredholm determinantal structures for
marginals of spin $q$-Whittaker processes. These aspects have been
covered quite extensively in literature for specific models in the
last few years \cite{Corwin2014qmunu}, \cite{CorwinPetrov2015},
\cite{imamura2017fluctuations},
\cite{imamura2019stationary},
\cite{BufetovMucciconiPetrov2018}.
Techniques to access such Fredholm
determinantal formulas usually rely on manipulations
with integral representations of $q$-moments (as in
\cite{BorodinCorwin2011Macdonald}, \cite{BorodinCorwinSasamoto2012}).

In the realm of Macdonald processes there exists an alternative approach to expose
the determinantal nature of specific observables.
This is done via
a free
field realization of Macdonald functions and Macdonald operators
\cite{FeiginEtAl_Macdonald}, \cite{koshidaMac}, see also
\cite{BCGS2013} and, e.g., \cite{foda2009hall} for the Hall--Littlewood
case. In the yet simpler case of the Schur processes this
reduces to the infinite wedge representation of Schur functions
\cite{okounkov2001infinite}, \cite{okounkov2003correlation}.
It would be of great interest to
understand to what extent our spin $q$-Whittaker functions, operators,
and processes admit a description in terms of Fock type representations of
a hypothetical $(q,s)$-deformed Heisenberg algebra. It is worth mentioning that an example where symmetric functions coming from solvable vertex models have been incorporated in the language of Fock space representation can be found in \cite{Brubaker_metaplectic}.

\subsection{Sampling and RSK like constructions}

The sqW/sHL and sqW/sqW random
fields of signatures (described in \Cref{sec:dynamics_on_arrays}) 
can be sampled
using the bijectivization
of the corresponding Yang--Baxter equations. 
While these sampling algorithms are well-adapted to 
particle system 
marginals, there could be other randomized procedures
to sample the whole signatures (and resulting in potentially different random fields). 

In particular, there could exist distinguished ``least random''
(i.e., using the least possible
number of random variables)
sampling procedures
resembling the classical Robinson--Schensted--Knuth (RSK)
insertion algorithms.
At $s=0$, such (rather complicated)
RSK-like algorithms were developed in \cite{MatveevPetrov2014}
for sampling $q$-Whittaker processes. 
Further setting $q=0$ recovers the classical RSK algorithm 
related to Schur processes
(which we 
rederive directly
from the Yang--Baxter
equation
in 
\Cref{sec:rsk_from_YB}).
We refer to 
\cite[Section 2.6]{BufetovMucciconiPetrov2018}
for a detailed historical discussion 
of sampling random Young diagrams / signatures
whose probability weights are expressed through various families of symmetric functions.
It would be very interesting to extend RSK-like 
sampling algorithms to the spin $q$-Whittaker level.

In the scaling limit as $q\to1$, the 
RSK-like sampling algorithms of \cite{MatveevPetrov2014}
degenerate into the well-known geometric RSK algorithms introduced and studied in
\cite{Kirillov2000_Tropical}, \cite{NoumiYamada2004}.
The geometric RSK's are naturally 
associated
with Brownian and log-gamma polymer models and $\mathfrak{gl}_n$ Whittaker functions
\cite{Oconnell2009_Toda}, \cite{COSZ2011}.
It would be very interesting to lift geometric RSK's to the 
spin Whittaker processes / beta polymer level developed in the 
present paper. This beta polymer 
version of the RSK could
arise in the corresponding scaling limit of the spin $q$-Whittaker 
RSK.

\subsection{Higher polymer interpretations and random walks}

The strict-weak log-gamma polymer model 
is matched in distribution
to the last row marginals of the Whittaker process (cf. \Cref{rmk:strict_weak_beta_to_gamma}).
Moreover, this connection 
extends (via the geometric RSK)
to the so-called higher polymer partition
functions, i.e., partition functions
of $k$-tuples of noncrossing paths
(in the same log-gamma environment), $k=1,2,\ldots $,
where
$k=1$ corresponds to the original strict-weak log-gamma polymer \cite{NoumiYamada2004}, 
\cite{COSZ2011}.
The higher log-gamma partition functions are matched
with joint distributions of several components of the Whittaker
process.
It is very interesting to find similar higher polymer like
interpretations of joint distributions
of multiple components in the spin Whittaker process introduced in \Cref{sec:sW_processes}.

The strict-weak beta polymer partition function
admits an alternative  
description 
as a certain probability
for the random walk in beta random environment 
\cite{CorwinBarraquand2015Beta}. 
Could multiple components in the spin Whittaker process
be matched to certain probabilities of interacting
random walks in beta random environment?

\subsection{Further properties of spin Whittaker functions}

Whittaker functions have a number of
important properties whose generalization to the 
spin Whittaker level seems potentially very interesting.
This includes connections to representation theory \cite{Kostant1977Whitt},
Mellin-Barnes integral representation \cite{Gerasimov_et_al_OnAGaussGivental}, and 
orthogonality relations
\cite{semenov1994quantization}.
We only make a conjecture about the latter
which is in effect a scaling limit
of \Cref{conj:sW_orthogonality}.

\begin{conjecture}
	For all $\underline{L}_N, \underline{L}_N'\in \mathpzc{W}_N$ we have
	\begin{equation}
		\label{eq:S_Sklyanin_orthogonality}
			\int_{(\mathrm{i}\mathbb{R})^N}
			\mathfrak{f}_{\underline{Z}}(\underline{L}_N) \, \mathfrak{f}_{-
			\underline{Z}}(\underline{L}_N') \,
			\mathfrak{M}^N_S(\underline{Z}) \, dZ_1 \dots dZ_N =
			\prod_{i=1}^{N-1} \left( 1-\frac{L_{N,i+1}}{L_{N,i}}
			\right)^{1-2S} \, \delta_{\underline{L}_N -\underline{L}_N'},    
	\end{equation}
	where $\mathfrak{M}^N_S$ is the $S$-deformation of the Sklyanin measure
	\begin{equation*}
			\mathfrak{M}^N_S (\underline{Z}) = \frac{1}{N! ( 2 \pi
			\mathrm{i})^N } \prod_{1\le i \neq j \le N} \frac{\Gamma(S+Z_i)
			\Gamma(S-Z_i)}{ \Gamma(2S) \Gamma(Z_i - Z_j) },
	\end{equation*}
	and $\delta_{\underline{L}_N -\underline{L}_N'}$ is a delta function.
\end{conjecture}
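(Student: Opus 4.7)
The natural plan is to obtain \eqref{eq:S_Sklyanin_orthogonality} as a $q \to 1$ scaling limit of Conjecture \ref{conj:sW_orthogonality}. Extending the rescaling of Definition \ref{def:scaling} to the spectral variables via $z_i = q^{Z_i}$, the torus $\mathbb{T}^N$ deforms into $(\mathrm{i}\mathbb{R})^N$ and the measure $dz_i/(2\pi \mathrm{i} z_i)$ picks up a factor of $\epsilon = -\log q$, while ratios of $q$-Pochhammer symbols collapse to ratios of Gamma functions through the standard $q \to 1$ asymptotics of $\Gamma_q$. The four groups of factors in $m_{q,s}^N$ convert as follows: $(s^2;q)_\infty = (q^{2S};q)_\infty$ contributes $1/\Gamma(2S)$; $(-sz_i;q)_\infty = (q^{S-Z_i};q)_\infty$ and $(-s/z_i;q)_\infty = (q^{S+Z_i};q)_\infty$ give $\Gamma(S \pm Z_i)$ in the numerator; and $(z_i/z_j;q)_\infty = (q^{Z_j-Z_i};q)_\infty$ contributes $1/\Gamma(Z_j-Z_i)$. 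Once the powers of $\epsilon$ are balanced carefully against the rescaled torus measure, this reassembles to $\mathfrak{M}_S^N(\underline Z)$. On the right-hand side of \eqref{eq:conj_orthogonality_sqW}, $\mathbf{1}_{\lambda=\mu}$ rescales to $\delta_{\underline L_N - \underline L_N'}$, and the normalization $c_\lambda$ of \eqref{eq:c_lambda_s_q} produces the boundary factor $\prod_{i=1}^{N-1}(1 - L_{N,i+1}/L_{N,i})^{1-2S}$ via the limit of $(q;q)_{\lambda_i - \lambda_{i+1}}/(s^2;q)_{\lambda_i - \lambda_{i+1}}$.

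Because Conjecture \ref{conj:sW_orthogonality} is itself open, a direct spectral-theoretic attack is needed as a parallel approach. The first step would be to establish that the spatial operators of Proposition \ref{prop:eigenrelation_sW}, with eigenvalues $L_{N,N}^{-1}$ and $L_{N,1}$, are formally symmetric with respect to the weighted measure $d\mu(\underline L_N) = \prod_{i=1}^{N-1}(1 - L_{N,i+1}/L_{N,i})^{2S-1}\, d\underline L_N/\underline L_N$; this reduces to an integration-by-parts computation through the iterated Givental representation \eqref{eq:sW_Givental}, where the specific exponents in $\mathcal{A}_{S,X}$ are arranged so as to produce boundary cancellations at the interlacing walls $L_{k,i} = L_{k+1,i}$ and $L_{k,i} = L_{k+1,i+1}$. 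With symmetry in hand, the kernel on the left of \eqref{eq:S_Sklyanin_orthogonality} must be supported on $\underline L_N = \underline L_N'$. To fix the normalization I would pair both sides with a compactly supported test function $\varphi(\underline L_N')$ and use the Cauchy identity \eqref{eq:Cauchy_id_sW} together with a Mellin-Barnes representation of the dual function $\mathfrak{g}_{\underline Y}$ in terms of $\mathfrak{f}_{-\underline Z}$, reducing the statement to a Mellin inversion for $\varphi$ in the variables $L_{N,i}$.

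The main obstacle will be the completeness (Plancherel) part. Already in the classical $S\to\infty$ case this is the nontrivial spectral theorem for the quantum Toda chain due to Kharchev--Lebedev and Semenov-Tian-Shansky, and the deformed situation additionally requires uniform analytic bounds on $\mathfrak{f}_{\underline Z}(\underline L_N)$ for $\underline Z \in (\mathrm{i}\mathbb{R})^N$, together with a careful justification of contour-shift arguments through the iterated Givental integral. These estimates go beyond the techniques developed elsewhere in this paper and will likely demand a finer asymptotic analysis of the branching function $\mathcal{A}_{S,X}$ in the regime of large imaginary $X$, as well as a refined understanding of the analytic structure of $\mathfrak{f}_{\underline Z}$ near the boundary $\underline L_N \in \partial \mathpzc{W}_N$ where the weight $(1 - L_{N,i+1}/L_{N,i})^{2S-1}$ degenerates.
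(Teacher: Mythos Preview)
This statement is a \emph{conjecture} in the paper, not a theorem, and the paper offers no proof. The only supporting evidence the paper gives is that the shift operators $\mathscr{D}_1$ and $\overline{\mathscr{D}}_1$ of Definition~\ref{def:sW_eigenoperators} are self-adjoint with respect to the $S$-deformed Sklyanin measure $\mathfrak{M}^N_S$; since these operators have eigenvalues $L_{N,N}^{-1}$ and $L_{N,1}$ on $\mathfrak{f}_{\underline X}$, this self-adjointness yields the orthogonality part of \eqref{eq:S_Sklyanin_orthogonality} for $N\le 2$, up to the normalization constant on the right-hand side.

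Your proposal correctly recognizes that the statement is open and that a full proof would require a Plancherel-type completeness argument beyond the paper's techniques. Your scaling heuristic from Conjecture~\ref{conj:sW_orthogonality} is reasonable as motivation but, as you note, cannot constitute a proof since that conjecture is itself open.

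There is, however, a concrete confusion in your direct approach. You propose to show that the operators of Proposition~\ref{prop:eigenrelation_sW} (with eigenvalues $L_{N,N}^{-1}$ and $L_{N,1}$) are symmetric with respect to the \emph{spatial} measure $\prod_{i}(1-L_{N,i+1}/L_{N,i})^{2S-1}\,d\underline L_N/\underline L_N$. But these operators act in the spectral variables $X_i$ (they are built from the shifts $\mathcal{T}_{X_i}^{\pm1}$), not in $\underline L_N$. The relevant self-adjointness --- and the one the paper actually invokes --- is with respect to the integration over $\underline Z\in(\mathrm{i}\mathbb{R})^N$ against $\mathfrak{M}^N_S(\underline Z)$. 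If instead you want to argue via operators acting in $\underline L_N$, you would need the deformed Toda operators $\mathscr{H}_1,\mathscr{H}_2$ of Section~\ref{sec:Toda}, whose eigenvalues are symmetric functions of the $X_i$'s; their symmetry with respect to the spatial weight is a separate computation, and in any case two commuting operators are again insufficient to separate points of $\mathpzc{W}_N$ for $N\ge 3$.
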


In support of this conjecture we note that
the eigenoperators 
$\mathscr{D}_1$ and $\overline{\mathscr{D}}_1$
for the spin Whittaker functions (\Cref{def:sW_eigenoperators})
are self-adjoint with respect
to the scalar product defined
by the 
$S$-deformed Sklyanin measure 
$\mathfrak{M}^N_S$ (this can also be checked directly).
This implies the desired statement 
for $N\le 2$, up to the concrete expression for the norm
in the right-hand side of~\eqref{eq:S_Sklyanin_orthogonality}.

\medskip

The theory 
quantum Toda Hamiltonians 
and Whittaker functions extends from $\mathfrak{gl}_n$
to other classical Lie groups 
\cite{Kostant1977Whitt},
\cite{gerasimov2012new}.
It would be interesting to extend our deformation
\eqref{eq:Toda_2_main_operator}
of the $\mathfrak{gl}_n$ quantum Toda Hamiltonian
to other symmetry (Killing-Cartan) types.


\appendix

\section{Special functions and probability distributions}
\label{app:special}

We use the $q$-Pochhammer symbol notation
\eqref{eq:q_Pochhammer}.

\subsection{q-beta binomial distribution}

Recall the definition of
the $q$-deformed beta-binomial distribution
$\varphi_{q,\mu,\nu}$ from \cite{Povolotsky2013}, \cite{Corwin2014qmunu}.

\begin{definition}
	\label{def:phi_distribution}
	For $m\in \mathbb{Z}_{\ge0}$, consider the following distribution on 
	$\left\{ 0,1,\ldots,m  \right\}$:
	\begin{equation}
	\label{eq:phi_def}
		\varphi_{q,\mu,\nu}(j\mid m)=
		\mu^j\,\frac{(\nu/\mu;q)_j(\mu;q)_{m-j}}{(\nu;q)_m}
		\frac{(q;q)_m}{(q;q)_j(q;q)_{m-j}}
		,
		\qquad 
		0\le j\le m.
	\end{equation}
	When $m=+\infty$, extend the definition as
	\begin{equation}
	\label{eq:phi_def_infty}
		\varphi_{q,\mu,\nu}(j\mid \infty)=
		\mu^j\frac{(\nu/\mu;q)_j}{(q;q)_j}\frac{(\mu;q)_\infty}{(\nu;q)_\infty},
		\qquad 
		j\in \mathbb{Z}_{\ge0}.
	\end{equation}
	The distribution depends on $q$ 
	and two other parameters~$\mu,\nu$.
\end{definition}

When $0\le \mu\le 1$ and $\nu\le \mu$, the weights $\varphi_{q,\mu,\nu}(j\mid m)$
are nonnegative.\footnote{These conditions do not
exhaust the full range of $(q,\mu,\nu)$
for which the weights are nonnegative.
See, e.g., \cite[Section 6.6.1]{BorodinPetrov2016inhom}
for additional families of parameters leading to nonnegative weights.}
They also sum to one:
\begin{equation*}
\sum_{j=0}^{m}\varphi_{q,\mu,\nu}(j\mid m)=1,\qquad m\in\left\{ 0,1,\ldots  \right\}
\cup\left\{ +\infty \right\}.
\end{equation*}

\subsection{$q$-hypergeometric function and related quantities}

The unilateral basic hypergeometric series $_{k+1}\phi_{k}$ is defined via
\begin{equation}\label{q_hyp_defn}
	_{k+1}\phi_{k} \left(\begin{array}{ccc} a_{1} & \ldots & a_{k+1} \\ b_{1} & \ldots & b_{k} \end{array}; q, z \right) 
	= \sum_{n=0}^{\infty}
	\frac{(a_{1}, \ldots, a_{k+1}; q)_{n}}{(b_{1}, \ldots, b_{k}, q; q)_{n}} z^{n}.
\end{equation}
If one of $a_{j}$ is $q^{-y}$ for a positive integer $y$, then this series is
terminating. Otherwise we assume  $|q|, |z| < 1$ for the sum to be convergent.
In the terminating case, we also define the regularized version by
\begin{equation}
	\label{eq:phi_regularized}
	\begin{split}
	{}_{k+1} \overline{ \phi}_k
	\left(\begin{array}{cccc} q^{-n}&a_{1} & \ldots & a_{k} \\ &b_{1} & \ldots & b_{k} \end{array}; q, z \right)
	&:=
	(b_1,\ldots,b_k;q )_n\cdot\,
	_{k+1}\phi_{k} \left(\begin{array}{cccc}q^{-n}& a_{1} & \ldots & a_{k+1} \\ b_1&b_{2} & \ldots & b_{k} \end{array}; q, z \right) 
	\\&=
	\sum_{j=0}^{n}
	z^j\,\frac{(q^{-n};q)_j}{(q;q)_j}
	\,
	(a_1,\ldots,a_k ;q)_j
	\,
	(q^jb_1,\ldots,q^jb_k;q) _{n-j}.
	\end{split}
\end{equation}

The $q$-gamma and the $q$-beta functions are
\begin{equation} \label{eq:q_Gamma_q_Beta}
    \Gamma_q (X) = \frac{(q;q)_\infty}{(q^X;q)_\infty} (1-q)^{1-X}, \qquad \mathrm{B}_q(X,Y) = \frac{\Gamma_q (X) \Gamma_q (Y)}{\Gamma_q (X+Y)}, \qquad \text{for }X,Y>0.
\end{equation}

The $q$-hypergeometric distribution is
\begin{equation} \label{eq:q_hypergeom_distr}
	\psi_{q,a,b,c}(n)=\left( \frac{c}{a c} \right)^n \frac{(a,b;q)_n}{(c,q;q)_n} \frac{(c,c /(ab);q)_\infty}{(c/a,c/b;q)_\infty}.
\end{equation}
The fact that the weights \eqref{eq:q_hypergeom_distr} sum to one over $n\in \mathbb{Z}_{\ge0}$
follows from the Heine summation formula 
\cite[(II.8)]{GasperRahman}:
\begin{equation*}
	_{2}\phi_{1} \left(\begin{array}{cc} a & b \\ \multicolumn{2}{c}{c} \end{array}; q, c/(ab) \right) 
	=
	\frac{(c/a,c/b;q)_\infty}{(c,c / (ab);q)_\infty}.
\end{equation*}

\subsection{Spin Whittaker level quantities}
\label{app:sw_special}

It is well-known that $\Gamma_q(X)$ converges to $\Gamma(X)$ as $q \to 1$ uniformly for $X>0$,
where $\Gamma$ is the usual gamma function
$\Gamma(z)=\int_0^\infty e^{-t}t^{z-1}dt$, $z>0$
(e.g., see \cite{andrews1986q}). 
Hence $\mathrm{B}_q(X,Y) \to \mathrm{B}(X,Y)$ uniformly for $X,Y>0$,
where
$\mathrm{B}$ is the beta function
\begin{equation} \label{eq:Beta}
	\mathrm{B}(x,y) = \frac{\Gamma(x)\Gamma(y)}{\Gamma(x+y)} = \int_{0}^1 t^{x-1} (1-t)^{y-1} dt,
	\qquad  x,y >0.
\end{equation}

The \emph{inverse gamma distribution} 
$\Gamma^{-1}(\alpha)$
on $(0,+\infty)$ with a parameter $\alpha>0$
is 
\begin{equation}
	\label{eq:inverse_gamma_density}
	\Gamma^{-1}(\alpha)[x]=
	\frac{x^{-1-\alpha}e^{-1/x}}{\Gamma(\alpha)}.
\end{equation}

The \emph{beta distribution} on $(0,1)$ with (real) parameters $m,n>0$ has density
\begin{equation*}
    \mathpzc{B}(m,n)[x] = \frac{x^{m-1} (1-x)^{n-1} }{\mathrm{B}(n,m)} \qquad \text{for } x \in (0,1).
\end{equation*}
We also recall that a random variable with 
\emph{negative binomial} distribution has probability mass function
\begin{equation*}
    \mathpzc{NB}(r,p)[k] 
    =
    p^k (1-p)^r \binom{k+r-1}{k},
    \qquad
    \text{for } k\in \mathbb{Z}_{\ge 0}
\end{equation*}
and $r>0$, $0 \le p \le 1$.
Sampling $x$ in the interval $(0,1)$ with $\mathpzc{B}(m,n+k)$ law, where $k$ is a $\mathpzc{NB}(r,p)$ independent random variable generates the \emph{negative beta binomial} distribution on $(0,1)$. It has the probability density
\begin{equation} \label{eq:NBB}
    \mathpzc{NBB} (r,p,m,n)[x]
    =
		\frac{(1-p)^r x^{m-1} (1-x)^{n-1} }{\mathrm{B}(n,m)}\,
    {}_2F_1 
    \left(\begin{minipage}{1.55cm}
	\center{$r,n+m$}
	\\
	\center{$n$}
	\end{minipage} \Big\vert\, p(1-x)\right),
\end{equation}
where
we used the Gauss hypergeometric function
\begin{equation}
	\label{eq:2F1}
    {}_2F_1 
    \left(\begin{minipage}{0.85cm}
	\center{$a \, , \, b$}
	\\
	\center{$c$}
	\end{minipage} \Big\vert\, z\right)
	=
	\sum_{k \ge 0} \frac{(a)_k (b)_k}{(c)_k}
	\frac{z^k}{k!},
\end{equation}
and $(r)_k=r (r+1) \cdots (r+k-1)$ is the Pochhammer symbol.
Note that the inverse gamma, beta, and the negative beta binomial are continuous distributions,
while the negative binomial is a discrete distribution.


\section{Yang--Baxter equations}
\label{app:YBE}

In this section we list the Yang--Baxter equations 
used throughout the paper.
We employ the special function notation from \Cref{app:special}.

\subsection{sqW/sqW Yang--Baxter equation}

Let us introduce the cross vertex weight
\begin{equation}
    R_{x,y}(i_1,j_1;i_2,j_2) \coloneqq
    \mathbf{1}_{i_1 + j_1 = i_2 + j_2} \, \mathbf{1}_{i_1 \geq j_2}\,  (y/x)^{j_2}\, 
	\frac{(- s/y;q)_{j_2} (y/ x ; q )_{i_1 - j_2} (q;q)_{i_1} }{(q;q)_{j_2} (q;q)_{i_1 - j_2} (-s/x;q)_{i_1} }.
	\label{eq:R_matrix_WW}
\end{equation}

\begin{figure}[htbp]%
    \centering
    \subfloat[]{{\includegraphics[width=7cm]{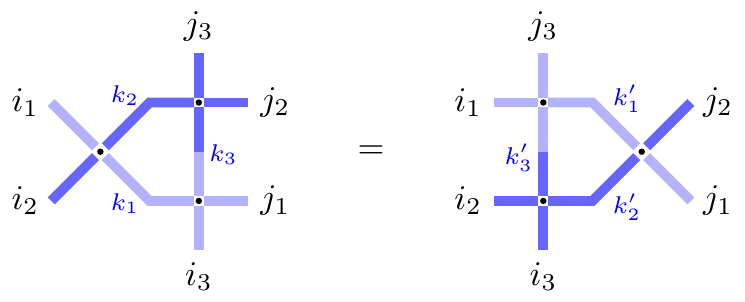} }}%
    \hspace{60pt}
    \subfloat[]{{\includegraphics[width=6cm]{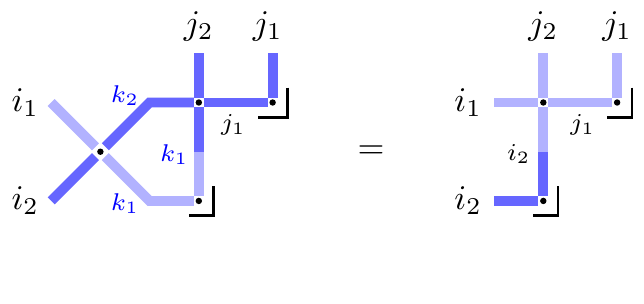}
    }}%
    \caption{Yang--Baxter equations
		\eqref{eq:YBE_RWW}, \eqref{eq:YBE_RWW_wall}
		correspond to local changes in the lattice illustrated by
		(a) and (b), respectively.}%
    \label{fig:YBE_W_W}%
\end{figure}

This cross vertex weight is involved in the following Yang--Baxter equations:
\begin{proposition}
\label{prop:YBE_bulk_W}
For any $i_1,i_2,i_3,j_1,j_2,j_3 \in \mathbb{Z}_{\ge 0}$, we have
\begin{equation} 
	\label{eq:YBE_RWW}
	\begin{split}
    \sum_{k_1,k_2,k_3 \ge 0}
		&R_{x,y}(i_2,i_1;k_2,k_1) W^{\bulk}_{y,s}(i_3,k_1;k_3,j_1)
		W^{\bulk}_{x,s}(k_3,k_2;j_3,j_2) 
		\\&\hspace{50pt}
    =
    \sum_{k_1',k_2',k_3' \ge 0}
    W^{\bulk}_{x,s}(i_3,i_2;k_3',k_2')
		W^{\bulk}_{y,s}(k_3',i_1;j_3,k_1') R_{x,y}(k_2',k_1';j_2,j_1),
	\end{split}
\end{equation}
where 
$W^{\bulk}$ are the bulk weights defined by 
\eqref{eq:Whit_W}.
See \Cref{fig:YBE_W_W}\,{\rm{}(a)} for a graphical interpretation.
\end{proposition}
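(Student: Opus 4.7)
The plan is to deduce this Yang--Baxter equation by vertical fusion from the basic Yang--Baxter equation for the $U_q(\widehat{\mathfrak{sl}_2})$ six-vertex model, which is a classical and elementary computation (six local configurations at a triangle of three vertices on each side). The weights $W^{\bulk}_{x,s}$ and $R_{x,y}$ defined by \eqref{eq:Whit_W}, \eqref{eq:R_matrix_WW} are the standard fused higher-spin vertex weights, so this identity is essentially the one established in \cite{BorodinWheelerSpinq} (up to a conjugation that does not affect any of the summations). Thus the cleanest route is to invoke the fusion procedure for the six-vertex Yang--Baxter equation.

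Concretely, I would proceed as follows. First, at the unfused level, fix a triangle of three R-vertices carrying spectral parameters and spin $\tfrac12$ representations on each of three lines; write out the six-vertex Yang--Baxter equation $R_{12}(x,y)R_{13}(x,z)R_{23}(y,z)=R_{23}(y,z)R_{13}(x,z)R_{12}(x,y)$. Then introduce a ``vertical thickening'': replace each of the two vertical lines (the second and third) by a bundle of $n$ parallel strands of spectral parameters in geometric progression $z, zq, zq^{2}, \ldots$, and symmetrize by projecting onto the symmetric tensor representation. By the standard argument (see, e.g., \cite{BorodinPetrov2016inhom}, \cite{BorodinWheelerSpinq}), this projection commutes with the R-matrix because of the Yang--Baxter equation itself, so the fused triangle identity holds automatically. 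The resulting fused vertex weights coincide, after sending $n\to\infty$ and relabelling $-sx$ for the accumulated spin parameter, with $W^{\bulk}_{x,s}$ on the lines involving two fused bundles and with $R_{x,y}$ on the line mixing the two fused bundles (on the third line one still gets $W^{\bulk}$ with parameter $y$).

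The main obstacle is the bookkeeping of the normalizations: after fusion one obtains vertex weights that differ from $W^{\bulk}_{x,s}$ and $R_{x,y}$ by scalar factors depending on the incoming edge occupations, and one must verify that these factors cancel consistently across the three vertices on each side of \eqref{eq:YBE_RWW}. This is a routine $q$-Pochhammer computation but it is where all the explicit coefficients in \eqref{eq:Whit_W}--\eqref{eq:R_matrix_WW} get fixed. An alternative, entirely direct verification is possible: since arrow preservation forces each sum in \eqref{eq:YBE_RWW} to be parametrized by a single summation index (say $k_1$ or $k_1'$), both sides become terminating ${}_4\phi_3$ or ${}_3\phi_2$ sums in $q^{k_1}$, and the equality reduces to a Sears--Watson type transformation of terminating balanced $q$-hypergeometric series \cite{GasperRahman}. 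If checking the full statement by fusion is too cumbersome, one can fall back on this identification.

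Finally, once \eqref{eq:YBE_RWW} is established for generic $i_a,j_a\in\mathbb{Z}_{\ge 0}$, the boundary case needed in the proof of \Cref{prop:sqW_are_symmetric} --- namely the equation at the left wall with an infinite vertical bundle --- follows by sending $i_3\to\infty$ in the preceding limit and using \eqref{eq:boundary_W_infinite_lines}, which reduces $W^{\leftwall}$ to $W^{\bulk}$ up to a global constant that cancels on both sides.
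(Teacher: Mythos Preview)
Your proposal is correct and takes essentially the same approach as the paper: both obtain \eqref{eq:YBE_RWW} via fusion from the elementary $U_q(\widehat{\mathfrak{sl}_2})$ Yang--Baxter equation, citing \cite{BorodinWheelerSpinq} (the paper's proof is in fact just a one-line reference to \cite[Corollary 4.3]{BorodinWheelerSpinq}, together with a remark that the spectral parameters there are swapped relative to here). Your final paragraph about the left-wall limit is extraneous to this proposition---that step belongs to the proof of \Cref{prop:sqW_are_symmetric}, not here---but the core argument matches.
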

\begin{proof}
	This is obtained in \cite[Corollary 4.3]{BorodinWheelerSpinq} via fusion 
	from the elementary Yang--Baxter equation for the 
	higher spin $\mathfrak{sl}_2$ vertex model.
	Note that the claim of 
	\cite[Corollary 4.3]{BorodinWheelerSpinq} contains a typo: the spectral parameters
	$x,y$ in the definition of the cross vertex weight should be swapped.
	This is corrected here by defining $R_{x,y}$ 
	in 
	\eqref{eq:R_matrix_WW}
	with parameters already swapped.
\end{proof}

\begin{proposition}
	\label{prop:YBE_corner_W}
For any $i_1,i_2,j_1,j_2 \in \mathbb{Z}_{\ge 0}$, we have
\begin{equation} \label{eq:YBE_RWW_wall}
	\begin{split}
		&
    \sum_{k_1,k_2 \ge 0}
    R_{x,y}(i_2,i_1;k_2,k_1) W_{y,s}^{\rightcorner}(k_1)
		W^{\bulk}_{x,s}(k_1,k_2;j_2,j_1) W_{x,s}^{\rightcorner}(j_1)
		\\&\hspace{200pt}
    =
    W_{x,s}^{\rightcorner}(i_2)
		W^{\bulk}_{y,s}(i_2,i_1;j_2,j_1) W_{y,s}^{\rightcorner}(j_1),
	\end{split}
\end{equation}
where $W^{\rightcorner}$ are the right corner weight
defined by \eqref{eq:W_corner}.
See \Cref{fig:YBE_W_W}\,{\rm{}(b)} for an illustration.
\end{proposition}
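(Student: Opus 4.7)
The plan is to establish \eqref{eq:YBE_RWW_wall} by direct algebraic computation, exploiting a structural feature visible upon inspection of the vertex weights: the right corner weight $W^{\rightcorner}_{x,s}(j) = (q;q)_j/(-s/x;q)_j$ is precisely the reciprocal of the ``exiting horizontal'' factor $(-s/x;q)_{j_2}/(q;q)_{j_2}$ that appears in the bulk weight $W^{\bulk}_{x,s}(i_1,j_1;i_2,j_2)$. Thus each application of a corner weight trivializes part of an adjacent bulk weight, collapsing both sides of \eqref{eq:YBE_RWW_wall} to much simpler expressions.

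First I would use arrow preservation to reduce each side to a single summation. Global conservation through the cross and both bulk vertices on the LHS, together with the corner reflections, forces $i_1+i_2 = j_1+j_2$; this is also the constraint imposed by the RHS via $W^{\bulk}_{y,s}(i_2,i_1;j_2,j_1)$. Since $R_{x,y}$ fixes $k_2 = i_1 + i_2 - k_1$ and $W^{\bulk}_{x,s}(k_1,k_2;j_2,j_1)$ then determines all remaining occupation numbers, the LHS becomes a sum over the single index $k_1$ (with the additional range constraint $k_1 \ge j_1$ coming from $\mathbf{1}_{i_1 \ge j_2}$ inside $W^{\bulk}$).

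Next I would absorb the corners into the bulks. A direct calculation gives
\begin{equation*}
W^{\rightcorner}_{x,s}(j_1)\,W^{\bulk}_{x,s}(k_1,k_2;j_2,j_1)
= \mathbf{1}_{k_1+k_2=j_2+j_1}\,\mathbf{1}_{k_1\ge j_1}\, x^{j_1}\,
\frac{(-sx;q)_{k_1-j_1}\,(q;q)_{j_2}}{(q;q)_{k_1-j_1}\,(s^2;q)_{j_2}},
\end{equation*}
and analogously for $W^{\rightcorner}_{x,s}(i_2)\,W^{\bulk}_{y,s}(i_2,i_1;j_2,j_1)$ on the RHS and $W^{\rightcorner}_{y,s}(k_1)$ on the LHS. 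After these cancellations, inserting the explicit form of $R_{x,y}$ from \eqref{eq:R_matrix_WW}, the identity \eqref{eq:YBE_RWW_wall} reduces to a single-variable terminating $q$-hypergeometric sum in $k_1$, whose summand is a product of $q$-Pochhammer symbols in $k_1 - j_1$, $i_2 - (i_1+i_2-k_1) = k_1 - i_1$ (or similar), and powers of $x,y,-s$.

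Finally, I would recognize the resulting sum as an instance of the $q$-Chu--Vandermonde identity (equivalently \eqref{eq:pfaff_saalschutz}), after a suitable change of summation variable (likely $m = k_1 - j_1$) that makes the ranges match the standard form. The main obstacle I anticipate is bookkeeping: the product contains $q$-Pochhammers in $y/x$, $-s/y$, $-sx$, etc., coming from $R_{x,y}$ and the two bulk vertices with different spectral parameters, and grouping these into the form $\sum_k a^k (b;q)_k(a;q)_{n-k}/((q;q)_k(q;q)_{n-k})$ requires carefully separating $x$- and $y$-dependences. An alternative avenue, should direct verification become unwieldy, is to derive \eqref{eq:YBE_RWW_wall} from the bulk YBE \eqref{eq:YBE_RWW} by specializing the remaining free index (say $i_3 = \infty$ via the stable limit analogous to \eqref{eq:boundary_W_infinite_lines}) so that the corner configuration emerges as a boundary degeneration of the bulk crossing.
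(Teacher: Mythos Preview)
Your approach is essentially the same as the paper's: both expand the weights, exploit the cancellation of the corner factor $(q;q)_j/(-s/x;q)_j$ against the exiting-horizontal factor in the adjacent bulk weight, reduce to a single sum over $k_1$, and recognize the result as a $q$-Chu--Vandermonde identity. The paper's reduced identity is exactly
\[
\sum_{k=j_1}^{i_2} (y/x)^{k-j_1}\,\frac{(y/x;q)_{i_2-k}}{(q;q)_{i_2-k}}\,\frac{(-sx;q)_{k-j_1}}{(q;q)_{k-j_1}}=\frac{(-sy;q)_{i_2-j_1}}{(q;q)_{i_2-j_1}},
\]
which after your substitution $m=k_1-j_1$ is precisely \eqref{eq:pfaff_saalschutz} with $a=y/x$, $b=-sx$.
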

\begin{proof}
Expanding both right and left-hand side of \eqref{eq:YBE_RWW_wall} and simplifying common factors we end up with the identity
\begin{equation*}
    \sum_{k=j_1}^{i_2} (y/x)^{k-j_1} \frac{(y/x;q)_{i_2-k}}{(q;q)_{i_2-k}} \frac{(-s x;q)_{k-j_1}}{(q;q)_{k-j_1}}=\frac{(-s y ; q)_{i_2-j_1}}{(q;q)_{i_2-j_1}},
\end{equation*}
which follows from the $q$-Chu--Vandermonde identity
(e.g., \cite[(II.6)]{GasperRahman}).
\end{proof}

\begin{figure}[htbp]
    \centering
		\includegraphics[width=\textwidth]{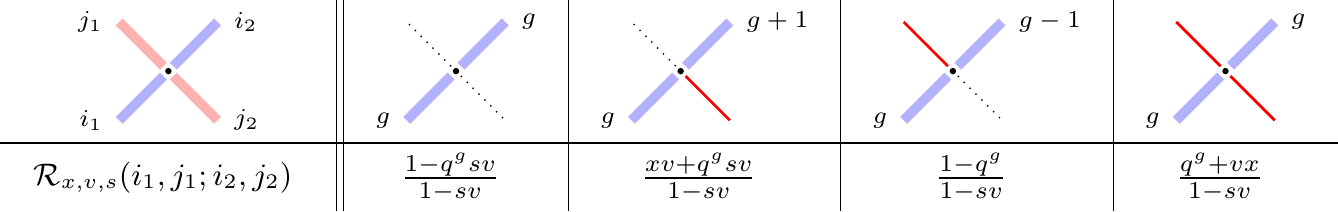}
    \caption{The cross vertex weights involved in the Yang--Baxter equations
		for the sHL and sqW vertex weights.
		Note that these weights vanish unless 
		$i_1+j_2=j_1+i_2$.}
    \label{fig:table_R_cal}
\end{figure}

\subsection{Yang--Baxter equations with dual weights}

Our additional Yang--Baxter equations
involve the dual sHL weights 
$w^*_{v,s}$ which are given in 
\Cref{fig:table_w_star} in the text
and the dual sqW weights 
\eqref{eq:Wstar_leftwall}--\eqref{eq:Wstar_bulk}.
We use the 
cross vertex weights
$\mathcal{R}_{x,v,s}$ given in \Cref{fig:table_R_cal}
and the 
following cross vertex weights:
\begin{equation}\label{eq:Whittaker_cross_weight}
    \begin{split}
        \mathbb{R}_{x,y,s}(i_1,j_1;i_2,j_2)
		& \coloneqq
		\mathbf{1}_{i_2 + j_1 = i_1 + j_2 }\,
		\frac{ q^{ i_2 i_1 +\frac{1}{2}j_2(j_2 -1) }(s x)^{j_2} (q;q)_{j_1}  }
		{ (s^2;q)_{j_1 + i_2} (q;q)_{j_2} (q;q)_{i_2} (-q/(s x );q)_{i_1 -j_1} } 
		\\
		& \hspace{100pt} \times 
		{}_4 \overline{ \phi}_3
		\left(\begin{minipage}{5.2cm}
		\center{$q^{-i_2}; q^{-i_1}, -s y,  -q/(s x)$}
		\\
		\center{$-s/ x,q^{1+j_2-i_2}, -y q^{1-i_1-j_2}/s$}
		\end{minipage} \Big\vert\, q,q\right).
	\end{split}
\end{equation}
Here
${}_4 \overline{ \phi}_3$
is the regularized
$q$-hypergeometric function
\eqref{eq:phi_regularized}.
We remark that one of the first 
${}_4 \overline{ \phi}_3$
type formulas for vertex weights of the fused
six vertex model appeared in \cite{Mangazeev2014}.
See also \cite{CorwinPetrov2015}, \cite{BorodinPetrov2016inhom}
for a probabilistic explanation of the fusion procedure
which goes back to \cite{KulishReshSkl1981yang}.

Next we list Yang--Baxter equations involving 
a usual and a dual family of vertex weights. There are
two instances of these Yang--Baxter equations,
one with sqW/sHL weights, and another with sqW/sqW weights.
Moreover, each of these has two different forms, in the bulk and 
at the boundary. In total there are four statements.
The bulk statements are available
from 
\cite{BufetovMucciconiPetrov2018}
(and also can be found in \cite{borodin_wheeler2018coloured}),
and the statements on the boundary need to be proven.

\begin{figure}[htbp]
    \centering
    \subfloat[]{{\includegraphics[width=7cm]{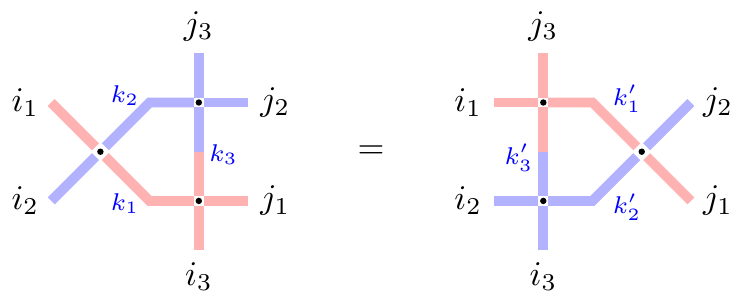} }}%
    \hspace{60pt}
    \subfloat[]{{\includegraphics[width=6cm]{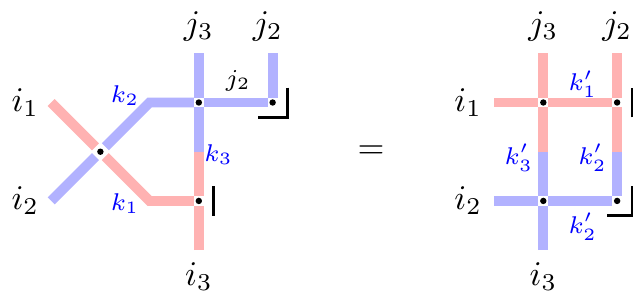} }}%
    \caption{Graphical representation of the Yang--Baxter equation for dual weights.}%
    \label{fig:YBE_W_Wstar}%
\end{figure}

\begin{proposition}
	\label{prop:sHL_sqW_YBE_bulk}
For any $i_1,j_1 \in \{0,1\}$ and $i_2,i_3,j_2,j_3 \in \mathbb{Z}_{\ge 0}$ we have
\begin{equation} \label{eq:YBE_RWw_star}
	\begin{split}
		&
    \sum_{k_1,k_2,k_3 \ge 0}
    \mathcal{R}_{x,v,s}(i_2,i_1;k_2,k_1)\, 
		w^{*,\bulk}_{v,s}(i_3,k_1;k_3,j_1)\,
		W^{\bulk}_{x,s}(k_3,k_2;j_3,j_2) 
		\\
		&\hspace{80pt}
    =
    \sum_{k_1',k_2',k_3' \ge 0}
		W^{\bulk}_{x,s}(i_3,i_2;k_3',k_2')\,
		w^{*,\bulk}_{v,s}(k_3',i_1;j_3,k_1')\, 
		\mathcal{R}_{x,v,s}(k_2',k_1';j_2,j_1).
	\end{split}
\end{equation}
See \Cref{fig:YBE_W_Wstar}\,{\rm{(a)}} for a graphical interpretation.
\end{proposition}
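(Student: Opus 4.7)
The plan is to derive this Yang--Baxter equation by fusion from the elementary $U_q(\widehat{\mathfrak{sl}_2})$ R-matrix, treating it as the RLL=LLR relation for fused weights. In the language of representation theory, the sqW bulk weight $W^\bulk_{x,s}$ corresponds to the intertwiner between two infinite-dimensional (Verma-like) evaluation modules, the sHL dual bulk weight $w^{*,\bulk}_{v,s}$ is the intertwiner between a two-dimensional auxiliary space (reflecting the constraint $i_1,j_1\in\{0,1\}$) and an infinite-dimensional module, and $\mathcal{R}_{x,v,s}$ is the corresponding intertwiner between the two auxiliary spaces used as cross parameters. The classical Yang--Baxter equation for the basic six-vertex R-matrix is well known; the general principle that fusion preserves the Yang--Baxter equation (applied independently along each line) then delivers \eqref{eq:YBE_RWw_star} up to normalization.

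First, I would record the precise normalizations of our three weight families. Since our $w^{*,\bulk}_{v,s}$ differs from the stochastic sHL weights of \cite{BorodinPetrov2016inhom} by the diagonal conjugation $(s^2;q)_{i_1}(q;q)_{i_2}/((q;q)_{i_1}(s^2;q)_{i_2})$ (the same conjugation that appears in the proof of \Cref{prop:dual_sHL_symmetric_sum}), and our $W^\bulk_{x,s}$ matches the fused weights used in \cite{BorodinWheelerSpinq}, I would verify that the cross weight $\mathcal{R}_{x,v,s}$ defined in \Cref{fig:table_R_cal} is exactly the intertwiner demanded by the combined normalization. This bookkeeping step reduces the problem to a known bulk Yang--Baxter equation of type appearing in \cite[Section 5]{BufetovMucciconiPetrov2018}; indeed, the identity was already obtained there (this is acknowledged in the remark preceding the statement), so citing that reference suffices once the conjugation has been matched.

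If a self-contained derivation is preferred, a direct verification is feasible and probably more illuminating. Arrow preservation reduces each side of \eqref{eq:YBE_RWw_star} to a one-parameter sum: once $i_1,i_2,i_3,j_1,j_2,j_3$ are fixed, the triple $(k_1,k_2,k_3)$ in the left-hand side is determined by any one of its components, and similarly on the right. Since $i_1,j_1\in\{0,1\}$, there are only four essentially different boundary patterns to check, and in each case the identity collapses to a balanced terminating $q$-hypergeometric transformation. These transformations can be handled by the $q$-Pfaff--Saalschütz summation \eqref{eq:pfaff_saalschutz} together with $q$-Chu--Vandermonde, in the same spirit as the proofs of \Cref{prop:YBE_corner_W} and \Cref{prop:quasi_Cauchy_identity}.

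The main obstacle is purely bookkeeping: reconciling the diagonal conjugation between our normalization of $w^{*,\bulk}_{v,s}$ and the normalization under which the Yang--Baxter equation holds "off the shelf" after fusion. Once this is in place, the rest is either a citation or a short verification using standard $q$-series identities, with no conceptual difficulty. I expect no convergence issues, since all sums are finite by arrow preservation.
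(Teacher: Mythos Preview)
Your proposal is correct and subsumes the paper's approach: the paper's entire proof is the one-line citation ``This is \cite[(A.11)]{BufetovMucciconiPetrov2018},'' which is exactly the option you identify in your second paragraph. Your additional discussion of fusion and of a direct $q$-hypergeometric verification goes beyond what the paper does; note in particular that no conjugation-matching is actually performed in the paper, so either the normalizations agree on the nose with those in \cite{BufetovMucciconiPetrov2018} or this step is being silently absorbed into the citation.
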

\begin{proof}
	This is \cite[(A.11)]{BufetovMucciconiPetrov2018}.
\end{proof}

\begin{proposition}
	\label{prop:sHL_sqW_YBE_border}
	For any $i_1\in\left\{ 0,1 \right\}$ and $i_2,i_3,j_2,j_3 \in \mathbb{Z}_{\ge 0}$, we have
\begin{equation} \label{eq:YBE_RWw_star_wall}
	\begin{split}
    &
		\sum_{k_1,k_2,k_3 \ge 0}
    \mathcal{R}_{x,v,s}(i_2,i_1;k_2,k_1)
		\,
		W^{*,\rightwall}(i_3,k_1;k_3)\,
		W^{\bulk}_{x,s}(k_3,k_2;j_3,j_2)\,
		W_{x,s}^{\rightcorner}(j_2)
		\\&\hspace{50pt}=
    \sum_{k_1',k_2',k_3' \ge 0}
		W^{\bulk}_{x,s}(i_3,i_2;k_3',k_2')\,
		W_{x,s}^{\rightcorner}(k_2')\,
    w^*_{v,s}(k_3',i_1;j_3,k_1')
		\,
		W^{*,\rightwall}(k_2',k_1';j_2).
	\end{split}
\end{equation}
See \Cref{fig:YBE_W_Wstar}\,{\rm{(b)}} for a graphical interpretation.
\end{proposition}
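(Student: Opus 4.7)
\medskip

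\noindent\textbf{Proof proposal.} The plan is to deduce \eqref{eq:YBE_RWw_star_wall} from the two Yang--Baxter equations that have already been established in the paper: the bulk sqW/sHL equation \eqref{eq:YBE_RWw_star} from \Cref{prop:sHL_sqW_YBE_bulk}, and the sqW corner equation \eqref{eq:YBE_RWW_wall} from \Cref{prop:YBE_corner_W}. The underlying geometric idea is the one illustrated in \Cref{fig:YBE_W_Wstar}\,(b): the two-column picture at the right wall is obtained from the two-column bulk picture by ``absorbing'' an extra pair of bulk vertices at a fictitious column immediately to the right, at a specialized value of the spectral parameter for which the dual sHL bulk weight degenerates to $W^{*,\rightwall}$ and the sqW bulk weight combines with $W^{\rightcorner}$.

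First I would exploit the indicator structure to simplify both sides. In the left-hand side, the factor $W^{*,\rightwall}(i_3,k_1;k_3)=\mathbf{1}_{i_3=k_1+k_3}$ forces $k_3=i_3-k_1$, leaving a sum over $k_1,k_2$ only; in the right-hand side, $W^{*,\rightwall}(k_2',k_1';j_2)=\mathbf{1}_{k_2'=k_1'+j_2}$ forces $k_2'=k_1'+j_2$, leaving a sum over $k_1',k_3'$. With these reductions, arrow preservation at the remaining vertices determines all other occupation numbers in terms of the summation indices, and both sides become explicit finite sums of products of $q$-Pochhammer symbols (one of them involving a regularized $_4\overline{\phi}_3$ coming from $\mathcal{R}_{x,v,s}$, see \Cref{fig:table_R_cal}).

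Next I would insert the identification ``$W^{*,\rightwall}$ is the $l\to\infty$ (or $\mathsf{s}_N\to 0$) stable limit of $w^{*,\bulk}$'' --- the right-wall analogue of \eqref{eq:boundary_W_infinite_lines} --- which should be verifiable directly from the explicit formulas in \Cref{fig:table_w_star}. Together with the corresponding stable reduction of $W^{\bulk}_{x,s} W^{\rightcorner}_{x,s}$, coming out of \eqref{eq:YBE_RWW_wall}, this expresses each side of \eqref{eq:YBE_RWw_star_wall} as a stable limit of the corresponding side of the already-proved bulk identity \eqref{eq:YBE_RWw_star} applied to a three-column configuration. Taking the limit on both sides then yields the desired boundary Yang--Baxter equation.

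The main obstacle is identifying the correct stabilization relating $W^{*,\rightwall}$ to a limit of $w^{*,\bulk}$: unlike the left wall, where \eqref{eq:boundary_W_infinite_lines} is a direct ``$l\to\infty$'' statement, the right wall collapses horizontal occupations to the condition $i_1=j_1+i_2$, and one must check that the combined limit of $w^{*,\bulk}$ (in the spectral parameter $v$ or in an auxiliary inhomogeneity) together with the corresponding limit of the sqW column correctly reproduces the pair $(W^{*,\rightwall},W^{\rightcorner})$. If this stabilization turns out to be awkward, a back-up plan is a purely algebraic verification: after the indicator-induced collapse above, the identity should reduce to a terminating $q$-hypergeometric summation that can be resolved by the $q$-Chu--Vandermonde identity (as in the proof of \Cref{prop:YBE_corner_W}) combined with a Heine--Sears type transformation for $_4\overline{\phi}_3$.
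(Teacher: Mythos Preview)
Your primary approach --- deducing the boundary identity from the two already-proved Yang--Baxter equations \eqref{eq:YBE_RWw_star} and \eqref{eq:YBE_RWW_wall} via a stable limit --- runs into a structural mismatch: the corner equation \eqref{eq:YBE_RWW_wall} involves the sqW/sqW cross weight $R_{x,y}$, whereas the identity you want involves the sqW/sHL cross $\mathcal{R}_{x,v,s}$. These are different $R$-matrices, so there is no way to paste the two relations together to get \eqref{eq:YBE_RWw_star_wall}. A genuine ``$s_N\to 0$'' degeneration of $w^{*,\bulk}$ to $W^{*,\rightwall}$ exists only in the inhomogeneous model (cf.\ the proof of \Cref{prop:dual_sHL_symmetric_sum}), not in the homogeneous setting of this appendix, so the stabilization you are looking for is not available here.

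Your backup plan --- direct verification --- is exactly what the paper does, but you have significantly overestimated its difficulty because you appear to have confused $\mathcal{R}_{x,v,s}$ with $\mathbb{R}_{x,y,s}$. The cross weight $\mathcal{R}_{x,v,s}$ of \Cref{fig:table_R_cal} is \emph{not} a ${}_4\overline{\phi}_3$; it is a six-vertex-type $R$-matrix with $i_1,j_1\in\{0,1\}$. Consequently, for each fixed $i_1\in\{0,1\}$ and fixed boundary data, both $\mathcal{R}_{x,v,s}(i_2,i_1;\,\cdot\,,\,\cdot\,)$ on the left and $w^{*}_{v,s}(\,\cdot\,,i_1;\,\cdot\,,\,\cdot\,)$ on the right have at most two nonzero configurations. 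After the indicator collapse you describe, each side of \eqref{eq:YBE_RWw_star_wall} is a sum of at most two explicit terms, and the identity reduces to a one-line polynomial equality in $x,v,s$ and a few powers of $q$ --- no $q$-Chu--Vandermonde, no Heine--Sears. The paper simply splits into the cases $i_1=0$ and $i_1=1$ and checks each by hand.
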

\begin{proof}
Consider separately the cases $i_1=0$ and $i_1=1$. 
Start with $i_1=0$. 
We see that \eqref{eq:YBE_RWw_star_wall} is nontrivial only when 
$i_2+i_3 = j_2+j_3$ 
(say $j_2=i_2+i_3-j_3$) and $j_3 \ge i_2$.
Under these conditions we have
\begin{equation*}
    \begin{split}
    &
    \mathcal{R}_{x,v,s}(i_2,0;i_2,0) W^{\bulk}_{x,s}(i_3,i_2;j_3,i_2+i_3-j_3) W_{x,s}^{\rightcorner}(i_2+i_3-j_3) \\
    &
    \hspace{25pt}
    +
    \mathcal{R}_{x,v,s}(i_2,0;i_2+1,1) W^{\bulk}_{x,s}(i_3-1,i_2+1;j_3,i_2+i_3-j_3) W_{x,s}^{\rightcorner}(i_2+i_3-j_3)
    \\
    &
    =
    w^{*}_{v,s}(j_3,0;j_3,0) W^{\bulk}_{x,s}(i_3,i_2;j_3,i_2+i_3-j_3) W_{x,s}^{\rightcorner}(i_2+i_3-j_3) \\
    &
    \hspace{25pt}
    +
    w^{*}_{v,s}(j_3-1,0;j_3,1) W^{\bulk}_{x,s}(i_3,i_2;j_3-1,i_2+i_3-j_3+1) W_{x,s}^{\rightcorner}(i_2+i_3-j_3+1).
    \end{split}
\end{equation*}
After the required simplifications, the previous relation reduces to 
\begin{multline*}
    (1-q^{i_2}s v)(1+s x q^{j_3-i_2-1}) + (xv+s v q^{i_2})(1 - q^{j_3-i_2}) 
    \\
    = 
		(1-sv q^{j_3})(1+s x q^{j_3-i_2-1}) + xv (1- q^{j_3-i_2})(1 - s^2 q^{j_3-1}),
\end{multline*}
that can be checked directly.

When $i_1=1$, as in the $i_1=0$ case, \eqref{eq:YBE_RWw_star_wall} is an equality between sums of at most two terms that after simplification reduces to
\begin{multline*}
    (1-q^{i_2})(1+s x q^{j_3-i_2}) + (q^{i_2} + xv)(1 - q^{j_3-i_2+1}) 
    \\
		= (1-q^{j_3+1})(1+s x q^{j_3-i_2}) + x (1- q^{j_3-i_2+1})(v - s q^{j_3}),
\end{multline*}
which is again checked directly.
\end{proof}

\begin{proposition}
	\label{prop:YBE_sqW_sqW}
For any $i_1,j_1,i_2,i_3,j_2,j_3 \in \mathbb{Z}_{\ge 0}$ we have
\begin{equation} \label{eq:YBE_RWW_star}
	\begin{split}
		&
    \sum_{k_1,k_2,k_3 \ge 0}
		\mathbb{R}_{x,y,s}(i_2,i_1;k_2,k_1)\, W^{*,\bulk}_{y,s}(i_3,k_1;k_3,j_1)\,
		W^{\bulk}_{x,s}(k_3,k_2;j_3,j_2) 
    \\
		&\hspace{50pt}=
    \sum_{k_1',k_2',k_3' \ge 0}
		W^{\bulk}_{x,s}(i_3,i_2;k_3',k_2')\,
		W^{*,\bulk}_{y,s}(k_3',i_1;j_3,k_1')\, \mathbb{R}_{x,y,s}(k_2',k_1';j_2,j_1).
	\end{split}
\end{equation}
See \Cref{fig:YBE_W_Wstar}\,{\rm{(a)}} for a graphical interpretation.
\end{proposition}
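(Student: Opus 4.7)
The Yang--Baxter equation \eqref{eq:YBE_RWW_star} relates the sqW bulk weight $W^{\bulk}_{x,s}$, its dual version $W^{*,\bulk}_{y,s}$, and the cross weight $\mathbb{R}_{x,y,s}$ of \eqref{eq:Whittaker_cross_weight}. My plan is to obtain it from the already-established YBE \eqref{eq:YBE_RWW} for the $W,W,R$ triple, together with the conjugation relation \eqref{eq:Wstar_bulk} which expresses $W^{*,\bulk}_{y,s}(i_1,j_1;i_2,j_2)$ as a diagonal conjugate of $W^{\bulk}_{y,s}(i_2,j_1;i_1,j_2)$, i.e.\ as the usual bulk weight with its two vertical arguments swapped, modulo a scalar factor $(s^2;q)_{i_1}(q;q)_{i_2}/(q;q)_{i_1}(s^2;q)_{i_2}$.

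Concretely, I would substitute \eqref{eq:Wstar_bulk} into both sides of \eqref{eq:YBE_RWW_star} to convert all occurrences of $W^{*,\bulk}_{y,s}$ into $W^{\bulk}_{y,s}$ with reversed vertical arguments and an explicit $q$-Pochhammer prefactor. After absorbing these prefactors together with $\mathbb{R}_{x,y,s}$ into a modified cross weight $\widetilde{R}_{x,y,s}$, the claim reduces to an equation of the same diagrammatic shape as \eqref{eq:YBE_RWW}, but in which the $y$-line is traversed in the ``reversed'' direction. Such a reversal corresponds to replacing the bulk $\mathfrak{sl}_2$ vertex weight by its image under the natural reflection symmetry of the fused six-vertex $R$-matrix, and this produces a cross weight that must coincide with $\widetilde{R}_{x,y,s}$.

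The main obstacle will be the explicit identification of this reflected cross weight with the $_4\overline{\phi}_3$ formula \eqref{eq:Whittaker_cross_weight}: since $R_{x,y}$ is elementary while $\mathbb{R}_{x,y,s}$ is a regularized $q$-hypergeometric series, the matching cannot be term by term and must proceed via a Sears-type $_4\phi_3$ transformation combined with standard manipulations of $q$-Pochhammer symbols. If a direct algebraic check proves cumbersome, an alternative and conceptually cleaner route is to prove \eqref{eq:YBE_RWW_star} directly by fusion, starting from the elementary $U_q(\widehat{\mathfrak{sl}}_2)$ Yang--Baxter equation applied to two infinite-dimensional evaluation representations of opposite (highest-weight vs.\ lowest-weight) type --- the lowest-weight factor being responsible for producing $W^{*,\bulk}$ --- in analogy with the derivation of \eqref{eq:YBE_RWW} in \cite[Corollary 4.3]{BorodinWheelerSpinq}. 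In this fusion picture the $_4\overline{\phi}_3$ structure of $\mathbb{R}_{x,y,s}$ would emerge automatically as the fused crossing weight between the two types of representations.

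As a final sanity check before writing up, I would verify the equation in a small explicit case, e.g.\ $i_1=j_1=0$ with small values of $i_2,i_3,j_2,j_3$, both to guard against indexing/sign conventions and to pin down the correct normalization factors arising from \eqref{eq:Wstar_bulk}; this check should also confirm that the finite-sum truncation of the $_4\overline{\phi}_3$ series in \eqref{eq:Whittaker_cross_weight} is compatible with the arrow preservation built into the definitions of $W^{\bulk}$ and $W^{*,\bulk}$.
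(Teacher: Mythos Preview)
The paper does not give a self-contained argument here; it simply records that the identity is \cite[(A.13)]{BufetovMucciconiPetrov2018}. In that reference (and in the surrounding literature on fused $\mathfrak{sl}_2$ models) the equation is obtained exactly by your second route: one fuses the elementary Yang--Baxter equation with the $y$-line carrying the dual evaluation module, and the $_4\overline{\phi}_3$ formula \eqref{eq:Whittaker_cross_weight} for $\mathbb{R}_{x,y,s}$ is what the fused crossing weight between those two types of representations turns out to be. So your ``alternative and conceptually cleaner route'' is in fact the actual proof.

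Your primary route, by contrast, has a real gap. Substituting \eqref{eq:Wstar_bulk} into \eqref{eq:YBE_RWW_star} does replace $W^{*,\bulk}_{y,s}$ by $W^{\bulk}_{y,s}$ with the two vertical occupations swapped, but the resulting identity is \emph{not} an instance of \eqref{eq:YBE_RWW}: the arrow-preservation constraint at the $y$-vertex changes from $i_1+j_1=i_2+j_2$ to $i_1+j_2=i_2+j_1$, and correspondingly the cross weight changes its conservation law as well (compare $R_{x,y}$, which enforces $i_1+j_1=i_2+j_2$, with $\mathbb{R}_{x,y,s}$, which enforces $i_2+j_1=i_1+j_2$). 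So after the substitution you land on a genuinely different Yang--Baxter equation, one with the $y$-line reversed, and there is no ``natural reflection symmetry of the fused six-vertex $R$-matrix'' that hands you this reversed equation for free from \eqref{eq:YBE_RWW}. What is actually needed to pass between the two is the crossing relation (antipode on one tensor factor), and computing the crossed $R$-matrix for these infinite-dimensional fused representations is precisely the content of the proposition --- in particular, no Sears-type transformation of the elementary weight $R_{x,y}$ (a single $q$-binomial term, supported on $i_1\ge j_2$) will produce the terminating $_4\overline{\phi}_3$ sum $\mathbb{R}_{x,y,s}$. If you pursue this line you will find yourself redoing the fusion computation rather than bypassing it.
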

\begin{proof}
	This is \cite[(A.13)]{BufetovMucciconiPetrov2018}.
\end{proof}

\begin{proposition}
	\label{prop:YBE_sqW_sqW_corner}
For any $i_1,i_2,i_3,j_1,j_2,j_3 \in \mathbb{Z}_{\ge 0}$, we have
\begin{equation} \label{eq:YBE_RWW_star_wall}
	\begin{split}
		&
    \sum_{k_1,k_2,k_3 \ge 0}
    \mathbb{R}_{x,y,s}(i_2,i_1;k_2,k_1) W^{*,\rightwall}(i_3,k_1;k_3)
    W^{\bulk}_{x,s}(k_3,k_2;j_3,j_2) W_{x,s}^{\rightcorner}(j_2)\\
		&\hspace{30pt}=
	\frac{(-sy;q)_\infty}{(s^2;q)_\infty}
    \sum_{k_1',k_2',k_3' \ge 0}
    W^{\bulk}_{x,s}(i_3,i_2;k_3',k_2')
    W_{x,s}^{\rightcorner}(k_2')
		W^{*,\bulk}_{y,s}(k_3',i_1;j_3,k_1') W^{*,\rightwall}(k_2',k_1';j_2).
	\end{split}
\end{equation}
See \Cref{fig:YBE_W_Wstar}\,{\rm{(b)}} for a graphical interpretation.
\end{proposition}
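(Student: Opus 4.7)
My plan is to prove this boundary Yang--Baxter equation by the same cross-dragging strategy that powered the bulk identity \eqref{eq:YBE_RWW_star} (\Cref{prop:YBE_sqW_sqW}) and the proof of the sqW/sqW Cauchy identity in \Cref{prop:skew_Cauchy_ID_sqW_sqW}. First, I would exploit arrow preservation to cut down the formal triple sums. Because $W^{*,\rightwall}(i_1,j_1;i_2)=\mathbf{1}_{i_1=j_1+i_2}$ and $W^{\bulk}_{x,s}$ carries the indicator $\mathbf{1}_{i_1+j_1=i_2+j_2}$, on the left-hand side the indices $k_2,k_3$ are determined by a single free parameter (say $k_1$), and similarly on the right the unknowns reduce to the single variable $k_1'$. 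The global constraint $i_1+i_2+i_3=j_1+j_2+j_3$ emerges automatically, otherwise both sides vanish.

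Second, I would derive the boundary identity from the bulk one by ``pushing the cross through the wall.'' Concretely, I would embed the configuration into a bulk YBE of \Cref{prop:YBE_sqW_sqW} by appending an auxiliary bulk column to the right of the wall, with its rightmost incoming occupation numbers frozen. The weights $W^{\bulk}_{x,s}(k_3,k_2;j_3,j_2)W^{\rightcorner}_{x,s}(j_2)$ on the left (respectively $W^{\bulk}_{x,s}(i_3,i_2;k_3',k_2')W^{\rightcorner}_{x,s}(k_2')$ on the right) amount to a standard sqW branching coefficient at a corner; dually, $W^{*,\bulk}_{y,s}$ combined with $W^{*,\rightwall}$ collapses to the corresponding dual object. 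The factor $(-sy;q)_\infty/(s^2;q)_\infty$ should come out precisely from the ``cross sum-to-constant'' identity \eqref{eq:sum_R_matrix} for $\mathbb{R}_{x,y,s}$ when the cross is extracted past the wall, exactly as in the $N=0$ case \eqref{eq:skew_Cauchy_ID_sqW_sqW_first_row} of the sqW/sqW skew Cauchy identity (which rests on the $q$-binomial theorem). In other words, the present proposition is the local, not-yet-summed, avatar of that boundary Cauchy identity.

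As a fallback, I would prove the identity by direct evaluation. After the reductions above, both sides are single $q$-sums: the left-hand side is a sum over $k_1$ of $\mathbb{R}_{x,y,s}\cdot W^{\bulk}_{x,s}\cdot W^{\rightcorner}_{x,s}$, and the right-hand side is a sum over $k_1'$ of $W^{\bulk}_{x,s}\cdot W^{\rightcorner}_{x,s}\cdot W^{*,\bulk}_{y,s}$. After cancellation of common $q$-Pochhammer factors (and using \eqref{eq:sqW_shifting_one_variable}-type shifts where convenient), the equality reduces to a ${}_4\overline{\phi}_3$ transformation. I would then match it to a known hypergeometric summation, most likely a specialization of the $q$-Saalschütz identity or one of the Bailey $_4\phi_3$ contiguous relations in \cite[Appendix III]{GasperRahman}.

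The main obstacle is the last paragraph: the presence of two distinct ${}_4\overline{\phi}_3$ functions (one hidden inside the cross $\mathbb{R}_{x,y,s}$ on the LHS, one inside $W^{*,\bulk}_{y,s}$ on the RHS) makes the brute-force check unwieldy. I therefore expect the cleanest route to be the conceptual one: isolating the precise way in which the cross-sum identity \eqref{eq:sum_R_matrix} interacts with the corner weight $W^{\rightcorner}_{x,s}$, so that the boundary relation follows from \eqref{eq:YBE_RWW_star} without any new $q$-series computation. If this conceptual reduction fails, the technical obstacle will be to identify the specific ${}_4\phi_3$ transformation that yields the factor $(-sy;q)_\infty/(s^2;q)_\infty$ in closed form.
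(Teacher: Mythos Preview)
Your plan misses the idea the paper actually uses. The paper does not derive the boundary identity \eqref{eq:YBE_RWW_star_wall} from the sqW/sqW \emph{bulk} relation \eqref{eq:YBE_RWW_star}; it derives it from the sqW/sHL \emph{boundary} relation \eqref{eq:YBE_RWw_star_wall} of \Cref{prop:sHL_sqW_YBE_border} by \emph{fusion} in the spectral parameter of the sHL row. Both the cross $\mathbb{R}_{x,y,s}$ and the dual bulk weight $W^{*,\bulk}_{y,s}$ are fused versions of $\mathcal{R}_{x,v,s}$ and $w^{*,\bulk}_{v,s}$, so applying fusion termwise to the already-established identity \eqref{eq:YBE_RWw_star_wall} gives \eqref{eq:YBE_RWW_star_wall} with no new $q$-series work. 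The prefactor $(-sy;q)_\infty/(s^2;q)_\infty$ is exactly the fusion normalization picked up by the stack of $w^{*,\bulk}$ vertices on the right-hand side of \eqref{eq:YBE_RWw_star_wall}; it does not cancel because the left-hand side of that identity has only the right-wall indicator $W^{*,\rightwall}$ and no $w^{*,\bulk}$.

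By contrast, your proposed route---push the cross from the bulk identity \eqref{eq:YBE_RWW_star} through the wall and extract the constant from \eqref{eq:sum_R_matrix}---is not clearly a proof. The corner weight $W^{\rightcorner}_{x,s}$ and the right-wall indicator $W^{*,\rightwall}$ are not specializations or limits of the bulk weights $W^{\bulk}_{x,s}$ and $W^{*,\bulk}_{y,s}$, so ``appending an auxiliary bulk column with frozen boundary'' does not obviously reproduce the boundary configuration; the bulk YBE alone contains no information about how the cross interacts with those boundary objects. Your observation that $(-sy;q)_\infty/(s^2;q)_\infty$ is the ratio of the $N>0$ and $N=0$ Cauchy constants is correct but heuristic: the boundary Cauchy identities are \emph{consequences} of \eqref{eq:YBE_RWW_star_wall}, not a substitute for it. Your fallback of a direct ${}_4\overline{\phi}_3$ computation is in principle feasible but, as you anticipate, quite heavy; the fusion argument bypasses it entirely because the underlying sHL boundary identity \eqref{eq:YBE_RWw_star_wall} involves only spin-$\tfrac12$ horizontal edges and was verified by a two-term check.
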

\begin{proof}
	This follows from the analogous relation
	\eqref{eq:YBE_RWw_star_wall}. In fact both the R-matrix $\mathbb{R}$
	and the vertex weight $W^{*,\bulk}$ can be constructed respectively
	from $\mathcal{R}$ and $w^{*,\bulk}$ via fusion with respect to the 
	spectral parameter $v$ (see
	\cite{BorodinWheelerSpinq}, \cite{BufetovMucciconiPetrov2018} for details). The coefficient $\frac{(-sy;q)_\infty}{(s^2;q)_\infty}$ arises from fusion of $w^{*,\bulk}$ and does not simplify since in the left hand side of \eqref{eq:YBE_RWw_star_wall} the bulk weight $w^{*,\bulk}$ is missing. One
	can check that the fusion procedure preserves identity
	\eqref{eq:YBE_RWw_star_wall} and hence \eqref{eq:YBE_RWW_star_wall}
	holds.
\end{proof}

We also need a useful summation identity that was stated
in a slightly a more general form in 
Proposition A.5 of \cite{BufetovMucciconiPetrov2018}:
\begin{proposition}
For $|xy|<1$ and under the usual conditions
$0<q<1$ and $-1<s<0$, we have
\begin{equation} \label{eq:sum_R_matrix}
	\sum_{i,j \ge 0} \mathbb{R}_{x,y,s}(0,0;i,j)
	=
	\sum_{j=0}^{\infty}
	(xy)^{j}\,
	\frac{(-s/x;q)_j(-s/y;q)_j}{(s^2;q)_j(q;q)_j}
	=
	\frac{(-s x;q)_{\infty} (-s y ;q)_\infty}
	{(s^2;q)_{\infty} (x y ;q)_\infty}.
\end{equation}
\end{proposition}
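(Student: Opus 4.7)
The plan is to verify both equalities in \eqref{eq:sum_R_matrix} by direct computation, reducing the identity to the $q$-Gauss summation formula. First I would substitute $i_1=j_1=0$ into the definition \eqref{eq:Whittaker_cross_weight} of $\mathbb{R}_{x,y,s}$. The arrow preservation indicator $\mathbf{1}_{i_2+j_1=i_1+j_2}$ forces $i_2=j_2$, so the double sum collapses to a single sum indexed by $n\coloneqq i_2=j_2\ge 0$.

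Next I would analyze the regularized $q$-hypergeometric factor. With $i_1=0$, the second numerator parameter in ${}_4\overline{\phi}_3$ becomes $q^{-i_1}=1$. Using the definition \eqref{eq:phi_regularized}, every term with summation index $j\ge 1$ contains a factor $(1;q)_j=0$, so only the $j=0$ term survives and
\begin{equation*}
{}_4\overline{\phi}_3\left(\begin{array}{c} q^{-n};1,-sy,-q/(sx)\\ -s/x,\, q,\, -yq^{1-n}/s\end{array}\Big\vert\, q,q\right)=(-s/x;q)_n\,(q;q)_n\,(-yq^{1-n}/s;q)_n.
\end{equation*}
Plugging in and simplifying the prefactor gives
\begin{equation*}
\mathbb{R}_{x,y,s}(0,0;n,n)=\frac{q^{n(n-1)/2}(sx)^n\,(-s/x;q)_n\,(-yq^{1-n}/s;q)_n}{(s^2;q)_n\,(q;q)_n}.
\end{equation*}
I would then apply the elementary reversal identity $(a;q)_n=(-a)^n q^{\binom{n}{2}}(q^{1-n}/a;q)_n$ with $a=-yq^{1-n}/s$, which yields $(-yq^{1-n}/s;q)_n=(y/s)^n q^{-n(n-1)/2}(-s/y;q)_n$. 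Substituting this cancels the $q^{n(n-1)/2}$ and the powers of $s$, leaving $\mathbb{R}_{x,y,s}(0,0;n,n)=(xy)^n(-s/x;q)_n(-s/y;q)_n/\bigl((s^2;q)_n(q;q)_n\bigr)$, which is exactly the first equality of \eqref{eq:sum_R_matrix}.

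For the second equality, I would recognize the resulting sum as a specialization of the $q$-Gauss ($_2\phi_1$) summation formula \cite[(II.8)]{GasperRahman}:
\begin{equation*}
{}_2\phi_1\!\left(\begin{array}{c} a,\, b\\ c\end{array}\Big\vert\, q,\tfrac{c}{ab}\right)=\frac{(c/a,\,c/b;q)_\infty}{(c,\,c/(ab);q)_\infty}.
\end{equation*}
Setting $a=-s/x$, $b=-s/y$, $c=s^2$ gives $c/(ab)=xy$, $c/a=-sx$, and $c/b=-sy$, producing precisely the right-hand side of \eqref{eq:sum_R_matrix}. No delicate step is involved; the main bookkeeping challenge is managing the $q$-Pochhammer symbols through the reversal identity, and the ``obstacle'' is really just recognizing the correct parameter specialization of the $q$-Gauss sum. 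Convergence of all series is guaranteed by the assumption $|xy|<1$ together with $|s|<1$.
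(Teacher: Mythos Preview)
Your proof is correct. The paper does not actually prove this proposition; it only cites Proposition~A.5 of \cite{BufetovMucciconiPetrov2018} as the source of the identity. Your argument is therefore a self-contained verification: you correctly observe that $i_1=j_1=0$ forces $i_2=j_2$ via arrow preservation, that the numerator parameter $q^{-i_1}=1$ kills all but the $j=0$ term in the regularized ${}_4\overline{\phi}_3$, and then apply the Pochhammer reversal identity to obtain the middle expression. The final step is exactly the Heine ($q$-Gauss) summation formula, which the paper itself records just above \eqref{eq:sum_R_matrix} in \Cref{app:special}. Nothing is missing.
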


The R-matrix $\mathbb{R}_{x,y,s}$ is positive if we assume its parameters to be in a specific range:
\begin{proposition}[{\cite[Proposition A.8]{BufetovMucciconiPetrov2018},
	see also \cite[Proposition 3.1]{CMP_qHahn_Push}}] \label{prop:positivity_R}
    Let us take the parameters
		$q\in (0,1)$, $s\in (-\sqrt{q},0)$ and $x,y \in [-s,-s^{-1}]$.
		Then $\mathbb{R}_{x,y,s}(i_1,j_1;i_2,j_2) \ge 0$ for all $i_1,i_2,j_1,j_2 \ge 0$.
\end{proposition}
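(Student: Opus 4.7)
The plan is to deduce positivity of $\mathbb{R}_{x,y,s}$ from the alternative expression (\ref{eq:4phi3_vertex_model_alternative}) for the stochastic weight $\mathbb{L}_{x,y}^{\mathrm{ul}}$, by observing that $\mathbb{L}^{\mathrm{ul}}$ and $\mathbb{R}$ differ only by a prefactor which is manifestly positive under the constraints on $(q,s,x,y)$. Thus showing $\mathbb{R}\ge0$ reduces to showing that each summand in (\ref{eq:4phi3_vertex_model_alternative}) is nonnegative.

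First, I would track through the derivation of $\mathbb{L}^{\mathrm{ul}}_{x,y}$ as a stochastic one-row transition in the Yang--Baxter field (cf.\ the proof of \Cref{thm:sqW_sqW_first_row} together with (\ref{eq:general_stochastic_L}) and the sum-to-one identity (\ref{eq:sum_R_matrix})). This yields $\mathbb{L}^{\mathrm{ul}}_{x,y}(\alpha_1,\beta_1;\alpha_2,\beta_2)$ as $\mathbb{R}_{x,y,s}(\alpha_2,\beta_2;\beta_1,\alpha_1)$ times a product of factors of the form
\[
\frac{(-s/x;q)_{k}}{(q;q)_{k}},\qquad \frac{(-s/y;q)_{k}}{(q;q)_{k}},\qquad \frac{(s^2;q)_\infty (xy;q)_\infty}{(-sx;q)_\infty(-sy;q)_\infty}.
\]
Under the assumptions $s\in(-\sqrt{q},0)$ and $x,y\in[-s,-s^{-1}]$, one has $-s/x,-s/y,-sx,-sy\in[s^2,1]$ and $s^2<q<1$, so every $q$-Pochhammer factor appearing here has all terms in $(0,1]$ and hence is strictly positive. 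Consequently $\mathbb{R}\ge0$ is equivalent to $\mathbb{L}^{\mathrm{ul}}\ge0$.

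Second, I would verify that each summand in (\ref{eq:4phi3_vertex_model_alternative}) is nonnegative. For the $q$-hypergeometric factor $\psi_{q,-q^k s/y,-q^g s/x,s^2q^{g+k}}(L-k)$, formula (\ref{eq:q_hypergeom_distr}) gives a prefactor $(xy)^{L-k}\ge 0$ (as $x,y>0$ when $s<0$) times a ratio of $q$-Pochhammer symbols of the types $(-q^i\cdot\text{positive};q)_n$ (positive because $|sx|,|sy|\le 1$) and $(s^2q^j;q)_n$ (positive because $s^2<q<1$), so $\psi\ge 0$. For the $q$-beta-binomial factor $\varphi_{q^{-1},q^g,-syq^{g-1}}(k|\ell)$, one applies the base-inversion identity $(a;q^{-1})_n=(-a)^n q^{-\binom{n}{2}}(a^{-1};q)_n$ to rewrite it in terms of $q$-Pochhammers with base $q\in(0,1)$; the signs of the resulting products $(-sy)^k$, $(q^{1-g}/(sy);q)_k$, $(q^{1-g};q)_{\ell-k}$ etc.\ must then be tracked carefully.

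The main obstacle is exactly this sign analysis in the $\varphi_{q^{-1},\ldots}$ factor. After base inversion many $q$-Pochhammer factors involve arguments $q^{-m}$ which individually produce negative terms like $1-q^{-m}<0$; one must show that all these signs combine with the overall monomial $(-sy)^k$ and with the $(sx)^{j_2}$ factor appearing in the definition (\ref{eq:Whittaker_cross_weight}) of $\mathbb{R}$ into a single nonnegative product. The restriction $s\in(-\sqrt{q},0)$, rather than only $s\in(-1,0)$, enters precisely here: it guarantees $s^2<q$, hence $(s^2;q)_\infty>0$ and $(s^2q^m;q)_n>0$ for all admissible $m,n$, preventing spurious vanishings or sign flips in the denominators. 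Once this sign bookkeeping is complete, (\ref{eq:4phi3_vertex_model_alternative}) exhibits $\mathbb{L}^{\mathrm{ul}}$, and therefore $\mathbb{R}$, as a sum of nonnegative quantities, completing the proof.
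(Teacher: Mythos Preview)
The paper does not give its own proof; it simply cites \cite[Proposition A.8]{BufetovMucciconiPetrov2018}. So there is no in-paper argument to compare against, and your proposal must stand on its own.

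Your approach has a genuine gap. The prefactor relating $\mathbb{L}^{\mathrm{ul}}_{x,y}$ to $\mathbb{R}_{x,y,s}$ is not positive in the full stated range: tracing through the construction in \Cref{prop:general_marginal_YBfield} and the proof of \Cref{thm:sqW_sqW_first_row}, the normalizing constant involves the sum \eqref{eq:sum_R_matrix}, hence the factor $(xy;q)_\infty$. The hypotheses $x,y\in[-s,-s^{-1}]$ allow $xy$ anywhere in $[s^2,s^{-2}]$, and for $xy>1$ one has $(xy;q)_\infty<0$. So your assertion that ``every $q$-Pochhammer factor appearing here has all terms in $(0,1]$'' is false for this factor.

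The same issue contaminates your second step. The factor $\psi_{q,a,b,c}$ in \eqref{eq:4phi3_vertex_model_alternative} carries $(c/(ab);q)_\infty=(xy;q)_\infty$ in its normalization, so it is negative whenever $xy>1$; and $\varphi_{q^{-1},q^g,-syq^{g-1}}(k\mid\ell)$ is not a bona fide probability (base $q^{-1}>1$) and takes negative values. Concretely, take $q=\tfrac12$, $s=-\tfrac12$, $x=1$, $y=1.8$, $g=\ell=L=1$: then $\varphi(0\mid1)=5$, $\varphi(1\mid1)=-4$, and both $\psi$-values contain the negative factor $(1.8;\tfrac12)_\infty$, so the two summands in \eqref{eq:4phi3_vertex_model_alternative} have opposite signs. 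Term-by-term nonnegativity fails.

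Note also that the paper itself, in the remark following \eqref{eq:4phi3_vertex_model_alternative}, deduces positivity of $\mathbb{L}^{\mathrm{ul}}$ \emph{from} \Cref{prop:positivity_R}, not the other way around. The actual argument in \cite{BufetovMucciconiPetrov2018} proceeds differently (via a decomposition of $\mathbb{R}$ directly, not via \eqref{eq:4phi3_vertex_model_alternative}); you would need to consult that reference rather than try to reverse the implication here.
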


\section{Proof of Proposition \ref{prop:skew_SW_uniform_conv}} \label{app:proof_prop}

    \begin{lemma}[\cite{CorwinBarraquand2015Beta}, Lemma 2.2] \label{lemma:BC_lemma_uniform}
    Let $A,B>0$. Then 
    \begin{equation}
			\label{eq:lemma_2_2_qPochhammer}
        \lim_{q \to 1}
        \frac{(\ell q^{A};q)_\infty}{(\ell q^{B};q)_\infty} = (1-\ell)^{B-A},
    \end{equation}
    uniformly in $\ell$ belonging to any compact subset of $(0,1)$.
    \end{lemma}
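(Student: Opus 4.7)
The plan is to prove the lemma by taking logarithms of the product, recognizing the resulting series as a Riemann sum for an explicit integral, and then upgrading pointwise convergence to uniform convergence on compact subsets of $(0,1)$ by a dominated-convergence type estimate.

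First I would write, for $\varepsilon \coloneqq -\log q > 0$,
\begin{equation*}
	\log\frac{(\ell q^A;q)_\infty}{(\ell q^B;q)_\infty}
	=\sum_{k=0}^{\infty}\bigl[\log(1-\ell q^{A+k})-\log(1-\ell q^{B+k})\bigr]
	=\varepsilon\int_{B}^{A}\Biggl(\sum_{k=0}^{\infty}\frac{\ell q^{s+k}}{1-\ell q^{s+k}}\Biggr)ds,
\end{equation*}
where I used the fundamental theorem of calculus with respect to the parameter $s$ (note $\partial_s \log(1-\ell q^{s+k})=\varepsilon\,\ell q^{s+k}/(1-\ell q^{s+k})$). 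The inner sum, after multiplication by $\varepsilon$, is the left Riemann sum with spacing $\varepsilon$ for the convergent improper integral
\begin{equation*}
	\int_{0}^{\infty}\frac{\ell e^{-\varepsilon s}e^{-u}}{1-\ell e^{-\varepsilon s}e^{-u}}\,du=-\log(1-\ell e^{-\varepsilon s}),
\end{equation*}
obtained by the substitution $v=\ell e^{-\varepsilon s}e^{-u}$.

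Next I would show that this Riemann sum converges to its integral uniformly in the parameter $s\in[\min(A,B),\max(A,B)]$ and in $\ell$ ranging over any fixed compact subset $K\subset(0,1)$. Since $\ell e^{-\varepsilon s}$ stays bounded away from $1$ when $\ell\in K$ and $\varepsilon$ is small, the integrand $u\mapsto \ell e^{-\varepsilon s}e^{-u}/(1-\ell e^{-\varepsilon s}e^{-u})$ is monotone decreasing in $u$ and has uniformly integrable tail, so the standard Riemann-sum error bound gives $\varepsilon\sum_{k\ge 0}(\cdots)=-\log(1-\ell e^{-\varepsilon s})+o(1)$ uniformly. Plugging back in and using the continuity $\log(1-\ell e^{-\varepsilon s})\to\log(1-\ell)$ uniformly on $K\times[\min(A,B),\max(A,B)]$, the integral over $s\in[B,A]$ yields
\begin{equation*}
	\log\frac{(\ell q^A;q)_\infty}{(\ell q^B;q)_\infty}\;\xrightarrow[q\to 1]{}\;(A-B)\cdot\bigl(-\log(1-\ell)\bigr)=(B-A)\log(1-\ell),
\end{equation*}
uniformly in $\ell\in K$. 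Exponentiation then gives the desired limit $(1-\ell)^{B-A}$ with uniform convergence.

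The only real obstacle is the quantitative control on the Riemann-sum approximation near $u=0$, where the integrand is largest. But since on any compact $K\subset(0,1)$ we have $\sup_{\ell\in K}\ell/(1-\ell)<\infty$, the derivative of the integrand in $u$ is uniformly bounded in the relevant region, and the standard error estimate $|\varepsilon\sum f(u_k)-\int f\,du|=\mathrm{O}(\varepsilon)$ applies with constants depending only on $K$, $A$, $B$. This makes the convergence uniform as claimed, completing the proof.
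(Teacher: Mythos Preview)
Your argument is correct and self-contained. The paper does not actually prove this lemma; it simply cites \cite{CorwinBarraquand2015Beta} for the pointwise statement and remarks that the uniformity in $\ell$ ``easily follows from the uniformity of all Taylor expansions involved in the proof in the cited paper (which we do not reproduce).'' So you have supplied a full proof where the paper only gives a pointer.

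Your route---write the logarithm as $\int_B^A \varepsilon\sum_{k\ge 0}\frac{\ell q^{s+k}}{1-\ell q^{s+k}}\,ds$ and recognise the inner sum as a Riemann sum for $-\log(1-\ell e^{-\varepsilon s})$---is somewhat different from the Taylor-expansion approach alluded to in the paper. The Riemann-sum viewpoint is arguably cleaner for extracting uniformity: since the summand is monotone decreasing in $k$ with value $\le \ell/(1-\ell)$ at $k=0$ and decaying to $0$, the error between the left Riemann sum and the integral is at most $\varepsilon\cdot\sup_K \ell/(1-\ell)$, which immediately gives the uniform $O(\varepsilon)$ bound over compact $K\subset(0,1)$ without tracking constants through a Taylor expansion. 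Your final paragraph phrases the error control via a bounded-derivative argument, which on an infinite interval needs the tail estimate you mention; the monotonicity bound just stated is the most direct way to close that step.
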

		Note that the uniformity in $\ell$
		in \eqref{eq:lemma_2_2_qPochhammer}
		is not claimed in 
		\cite{CorwinBarraquand2015Beta}
		but easily follows from the uniformity of 
		all Taylor expansions involved in the proof in the cited paper
		(which we do not reproduce).
    
    \begin{lemma} \label{lemma:bound_ratio_pochhammer}
        Let $A,B>0$. Then, for all $n \in \mathbb{Z}_{\ge 1}$ and all $q \in (\frac{1}{2},1)$, we have
        \begin{equation} \label{eq:bound_ratio_pochhammer}
            \frac{(q^{A+n};q)_\infty}{(q^{B+n};q)_\infty} \le c \, (1-q^n)^{B-A},
        \end{equation}
        where $c$ is a constant independent of $q,n$.
    \end{lemma}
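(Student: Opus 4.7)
The plan is to pass to logarithms and bound the resulting series termwise via a single-variable estimate. The starting identity is
\begin{equation*}
	\log\frac{(q^{A+n};q)_\infty}{(q^{B+n};q)_\infty} = -\sum_{j\ge 1}\frac{q^{jn}(q^{jA}-q^{jB})}{j(1-q^j)},
\end{equation*}
obtained by applying $-\log(1-y) = \sum_{j\ge 1}y^j/j$ to each factor of the product defining the $q$-Pochhammer symbols and switching the order of summation.  Combined with $-\log(1-q^n) = \sum_{j\ge 1}q^{jn}/j$, the inequality \eqref{eq:bound_ratio_pochhammer} becomes equivalent to the uniform (over $n\ge 1$ and $q\in(1/2,1)$) bound
\begin{equation*}
	S(n,q) \coloneqq \sum_{j\ge 1}\frac{q^{jn}}{j}\,\mathcal{B}(j,q) \le \log c, \qquad
	\mathcal{B}(j,q) \coloneqq (B-A) + \frac{q^{jB}-q^{jA}}{1-q^j},
\end{equation*}
which is what I aim to prove.

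The central observation is that the summand $\mathcal{B}(j,q)$, viewed as a function of the combined variable $x \coloneqq j|\log q|$ through $q^{ja} = e^{-ax}$, satisfies the pointwise estimate
\begin{equation*}
	|\mathcal{B}(j,q)| \le C\,\min\bigl(j|\log q|,\,1\bigr), \qquad j\ge 1,\; q\in(0,1),
\end{equation*}
with a constant $C$ depending only on $A$ and $B$. Indeed, a Taylor expansion at the origin gives $\mathcal{B}(0)=0$ with $\mathcal{B}(x) = (A-B)(1-A-B)x/2 + O(x^2)$, so $\mathcal{B}$ vanishes linearly at $x=0$; and the rewriting
\begin{equation*}
	\mathcal{B}(j,q) = (B-A) + \operatorname{sgn}(A-B)\,e^{-\min(A,B)\,x} \cdot \frac{1-e^{-|A-B|\,x}}{1-e^{-x}},
\end{equation*}
combined with the elementary bound $(1-e^{-dx})/(1-e^{-x}) \le \max(1,d)$ (a consequence of the concavity of $y \mapsto 1-e^{-y}$) and $e^{-\min(A,B)x} \le 1$, shows that $|\mathcal{B}|$ is uniformly bounded on $[0,\infty)$. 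Interpolating the linear-at-origin and global-boundedness estimates yields the $\min(j|\log q|,1)$ bound.

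Given this pointwise estimate, the conclusion follows by a routine splitting of the series at $j_0 \coloneqq \lceil 1/|\log q|\rceil$. For $j\le j_0$, using $\min(j|\log q|,1) = j|\log q|$ gives a contribution at most $C|\log q|\cdot q^n/(1-q^n)$; for $j > j_0$, the bound $\min = 1$ together with $q^{j_0}\le e^{-1}$ and $1/j\le |\log q|$ yields at most $C|\log q|\,e^{-n}/(1-q^n)$.  Both factors $|\log q|/(1-q^n)$ are uniformly bounded by $2$ for $q\in(1/2,1)$ and $n\ge 1$, thanks to $|\log q|\le 2(1-q)$ and $[n]_q = (1-q^n)/(1-q)\ge 1$.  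This yields $|S(n,q)| \le 4C$, whence \eqref{eq:bound_ratio_pochhammer} with $c = e^{4C}$.

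The main obstacle is the uniform single-variable estimate on $\mathcal{B}$: the Taylor expansion alone is local, and the crude global bound does not capture the vanishing at the origin.  Combining both requires packaging $\mathcal{B}$ as a continuous function of the single variable $x = j|\log q|$ with the damping factor $e^{-\min(A,B)x}$ controlling the behaviour at infinity symmetrically in the two cases $B > A$ and $A > B$, and keeping track of the sign in the second term.  Once the single-variable estimate is in hand, the splitting argument for the sum is immediate.
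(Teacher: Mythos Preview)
Your proof is correct and takes a genuinely different route from the paper's.  The paper works directly with the infinite-product definition of the $q$-Pochhammer symbol: it writes the log of the left-hand side as $\sum_{k\ge 0}\log\frac{1-q^{A+n+k}}{1-q^{B+n+k}}$, applies the Lagrange mean value theorem to each term to get $(A-B)\,\varepsilon/(e^{\varepsilon(\tilde t_k+n+k)}-1)$ with $q=e^{-\varepsilon}$, and then shows that the sum $\sum_{k\ge 0}\varepsilon/(e^{\varepsilon(t_k+n+k)}-1)$ tracks $-\log(1-q^n)$ up to a bounded error by splitting at $k\approx \delta/\varepsilon$ and comparing the small-$k$ part to an integral.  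You instead expand $\log(1-y)$ as a power series, collapse the double sum into a Lambert-type series $\sum_{j\ge 1}q^{jn}\mathcal{B}(j,q)/j$, and reduce everything to the single-variable estimate $|\mathcal{B}(x)|\le C\min(x,1)$ for $x=j|\log q|$; the splitting at $j_0\approx 1/|\log q|$ then finishes cleanly without any integral comparison.

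Your approach is arguably more transparent: the whole difficulty is packaged into the behavior of one smooth function $\mathcal{B}$ on $[0,\infty)$, and the dyadic split of the series is immediate once the $\min(x,1)$ bound is in hand.  The paper's approach is closer to the product structure and avoids the Lambert series, at the cost of a somewhat heavier integral estimate.  Both arguments in fact yield a two-sided bound (you bound $|S(n,q)|$, the paper bounds the remainder \eqref{eq:_remainder_ratio_pochhammer} from both sides), which is slightly more than the lemma states.  One minor comment: your ``interpolation'' of the linear-at-origin and global bounds is most cleanly justified by observing that $\mathcal{B}$ extends to a $C^1$ function on $[0,\infty)$ with $\mathcal{B}(0)=0$, so the mean value theorem gives $|\mathcal{B}(x)|\le\sup_{[0,1]}|\mathcal{B}'|\cdot x$ on $[0,1]$ directly; this is implicit in what you wrote but worth making explicit.
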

    \begin{proof}
    Set $q=e^{-\varepsilon}$. The result of the Lemma is restated, taking the logarithm of both sides of \eqref{eq:bound_ratio_pochhammer}, as
    \begin{equation} \label{eq:bound_log_pochhammer}
        \sum_{k \ge 0} \log \frac{(1 - e^{- \varepsilon( A+n+k )})}{(1 - e^{- \varepsilon (B+n+k)})} - (B-A) \log (1-e^{- \varepsilon n}) \le c',
    \end{equation}
		for all $\varepsilon \in (0,-\log 2)$ and a constant $c'$
		independent of $\varepsilon, n$. Using Lagrange mean value theorem, we
		can rewrite the generic term of the infinite sum as
    \begin{equation*}
        \log \frac{(1 - e^{-\varepsilon(A+n+k)})}{(1 - e^{- \varepsilon (B+n+k) })} = (A-B)\, \frac{\varepsilon}{e^{\varepsilon (\widetilde{t}_k + n + k)} -1},
    \end{equation*}
    where numbers $\widetilde{t}_k$ belong to the interval $(\min(A,B), \max(A,B))$. We show that for any positive bounded sequence $\{t_k\}_k \subset (0,M)$, with $M$ fixed, the quantity 
    \begin{equation} \label{eq:_remainder_ratio_pochhammer}
        \sum_{k\ge 0} \frac{\varepsilon}{e^{ \varepsilon (t_k + n +k )} -1} + \log ( 1 - e^{-\varepsilon n})
    \end{equation}
		is absolutely bounded uniformly in $\varepsilon$ and $n$ and this would prove \eqref{eq:bound_log_pochhammer} and hence \eqref{eq:bound_ratio_pochhammer}. To evaluate the infinite sum over $k$ we fix a positive constant $\delta$ and distinguish two cases.

\smallskip
\noindent\textbf{Case 1, $k\ge \delta/\varepsilon$}. 
We use the estimate
        \begin{equation*}
            \frac{\varepsilon}{e^{\varepsilon (t_k + n + k)} -1} \le e^{-\varepsilon k} \frac{\varepsilon}{1- e^{-\delta} },
        \end{equation*}
        that implies, summing over $k$,
        \begin{equation} \label{eq:bound_k_large}
            \sum_{ k \ge \delta / \varepsilon } \frac{\varepsilon}{e^{\varepsilon (t_k + n + k)} -1} \le \frac{1}{ e^{\delta} -1 } \frac{\varepsilon}{1-e^{-\varepsilon}} \le \frac{2 \log 2}{ e^{\delta} -1 }.
        \end{equation}

\smallskip
\noindent\textbf{Case 2, $k < \delta/\varepsilon$}. In this case we use again Lagrange
				mean value theorem to express the denominator of the generic term of the summation of \eqref{eq:_remainder_ratio_pochhammer} as
        \begin{equation*}
            \frac{\varepsilon}{e^{\varepsilon (t_k + n + k)} -1} = \frac{e^{-\varepsilon \xi_{k,n} }}{t_k + n + k }, \qquad \text{for some } \xi_{k,n} \in (0,t_k+n+k).
        \end{equation*}
        This implies the bounds
        \begin{equation} \label{eq:lower_upper_bound}
            \frac{e^{-\varepsilon (M+n+k)}}{M+n+k} \le \frac{\varepsilon}{e^{\varepsilon (t_k + n + k)} -1} \le \frac{1}{n+k}.
        \end{equation}
        We focus first on the lower bound given by the first inequality in \eqref{eq:lower_upper_bound}. Summing over $k$ we find
        \begin{equation*}
            \sum_{k=0}^{\delta/\varepsilon}
            \frac{e^{-\varepsilon (M+n+k)}}{M+n+k}
            \ge
            \int_{0}^{\delta/\varepsilon}
            \frac{e^{-\varepsilon (M+n+k)}}{M+n+k} dk 
            = \int_{\varepsilon(M+n)}^{\delta+\varepsilon(M+n)} \frac{e^{-k'}}{k'} dk' \ge
            \int_{\varepsilon(M+n)}^{\delta+\varepsilon(M+n)} \left(\frac{1}{k'} -1 \right) dk',
        \end{equation*}
        which gives
        \begin{equation} \label{eq:bound_k_small_1}
            \sum_{k=0}^{\delta/\varepsilon} \frac{\varepsilon}{e^{\varepsilon (t_k + n + k)} -1} \ge \log\left( 1+ \frac{\delta}{\varepsilon(M+n)} \right) - \delta.
        \end{equation}
        We turn now our attention to the second inequality in \eqref{eq:lower_upper_bound} and, since
        \begin{equation*}
            \sum_{k=0}^{\delta/\varepsilon} \frac{1}{n+k} \le \int_n^{\delta/\varepsilon+n} \frac{dk}{k-1/2},
        \end{equation*}
        we obtain
        \begin{equation} \label{eq:bound_k_small_2}
            \sum_{k=0}^{\delta/\varepsilon} \frac{\varepsilon}{e^{\varepsilon (t_k + n + k)} -1} \le \log \left( 1+ \frac{\delta}{\varepsilon (n - 1/2)} \right).
        \end{equation}

				\smallskip
    Combining results obtained from the analysis of cases $k\ge \delta/\varepsilon$ and $k <  \delta/\varepsilon$ in \eqref{eq:bound_k_large}, \eqref{eq:bound_k_small_1} \eqref{eq:bound_k_small_2} 
    we can finally write
    \begin{equation*}
        \log\left( (1 + \frac{\delta}{\varepsilon(M+n)}) (1-e^{-\varepsilon n}) \right) + \bigO(\delta) \le \eqref{eq:_remainder_ratio_pochhammer} \le \log\left( (1 + \frac{\delta}{\varepsilon(n-1/2)}) (1-e^{-\varepsilon n}) \right) + \bigO(\delta).
    \end{equation*}
    This concludes our proof since the arguments of the logarithms in the left and right-hand side are bounded functions for $\varepsilon \in (0, \log 2)$ and $n\ge 1 $.
    \end{proof}
    
    For the next lemma we define the quantity
    \begin{equation*}
        \mathcal{A}_{S,X}^{(q)}(\ell_3,\ell_2,\ell_1) = \frac{1}{\Delta_q (\ell_3,\ell_2,\ell_1) } \frac{(q^{S-X};q)_{n_1-n_2}}{(q;q)_{n_1-n_2}}
        \frac{(q^{S+X};q)_{n_2-n_3}}{(q;q)_{n_2-n_3}}
        \frac{(q;q)_{n_1-n_3}}{(q^{2S};q)_{n_1-n_3}},
    \end{equation*}
		where we assumed $1\le \ell_3 \le \ell_2 \le \ell_1$ and $n_i=\floor{\log_q (1/\ell_i)}$. Here  
		$\Delta_q$ is defined in \eqref{eq:Delta_q}. 
		We think of $\mathcal{A}_{S,X}^{(q)}$ as a $q$-deformation of
		$\mathcal{A}_{S,X}$ \eqref{eq:A}.
    \begin{lemma} \label{lemma:A_integral_unif_conv}
        For any continuous function $f(\ell_2)$ we have
        \begin{equation} \label{eq:A_integral_unif_conv}
            \lim_{q \to 1}
            \int_{\ell_3}^{\ell_1} f(\ell_2) \mathcal{A}_{S,X}^{(q)}(\ell_3,\ell_2,\ell_1) \frac{d \ell_2}{\ell_2}
            =
            \int_{\ell_3}^{\ell_1} f(\ell_2) \mathcal{A}_{S,X}(\ell_3,\ell_2,\ell_1) \frac{d \ell_2}{\ell_2},
        \end{equation}
        uniformly for any $\ell_3 \le \ell_1$ bounded away from $\infty$.
    \end{lemma}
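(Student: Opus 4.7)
The strategy is to rewrite $\mathcal{A}_{S,X}^{(q)}$ in a form that separates universal prefactors from ratios of $q$-Pochhammer symbols at infinity, then apply Lemma~\ref{lemma:BC_lemma_uniform} for pointwise convergence and Lemma~\ref{lemma:bound_ratio_pochhammer} to justify a passage to the limit under the integral. I will assume $n_3<n_1$ throughout; the degenerate case $\ell_3=\ell_1$ is trivial.

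First, I would apply identity \eqref{eq:ratio_q_pochhammer} to each of the three $q$-Pochhammer ratios appearing in $\mathcal{A}_{S,X}^{(q)}$. This gives
\begin{equation*}
\mathcal{A}_{S,X}^{(q)}(\ell_3,\ell_2,\ell_1)
=\frac{1-q}{\Delta_q(\ell_3,\ell_2,\ell_1)}\,
\frac{1}{\mathrm{B}_q(S+X,S-X)}\,
\mathcal{R}^{(q)}(\ell_3,\ell_2,\ell_1),
\end{equation*}
where $\mathcal{R}^{(q)}$ is the product of three ratios of $q$-Pochhammer symbols at infinity, and the exponents of $(1-q)$ produced by \eqref{eq:ratio_q_pochhammer} combine as $(1-(S-X))+(1-(S+X))+(2S-1)=1$. (For $\ell_2$ with $n_2\in\{n_1,n_3\}$ a boundary correction appears, addressed separately below.)

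Second, I would prove pointwise convergence on the open interval $(\ell_3,\ell_1)$. For such $\ell_2$ and for $q$ sufficiently close to $1$ we have $n_3<n_2<n_1$, hence $\Delta_q=-\log q\sim 1-q$; thus $(1-q)/\Delta_q\to 1$ and $\mathrm{B}_q\to\mathrm{B}$. The three Pochhammer ratios in $\mathcal{R}^{(q)}$ are of the form $(q^{A+n};q)_\infty/(q^{B+n};q)_\infty$ with $q^n$ converging to $\ell_2/\ell_1$, $\ell_3/\ell_2$, and $\ell_3/\ell_1$ respectively; by Lemma~\ref{lemma:BC_lemma_uniform} they converge uniformly on compact subsets of $(\ell_3,\ell_1)$ to $(1-\ell_2/\ell_1)^{S-X-1}$, $(1-\ell_3/\ell_2)^{S+X-1}$, $(1-\ell_3/\ell_1)^{1-2S}$, which assembles into $\mathcal{A}_{S,X}(\ell_3,\ell_2,\ell_1)$.

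Third, to pass to the limit in the integral I would exhibit a $q$-uniform integrable majorant. Applying Lemma~\ref{lemma:bound_ratio_pochhammer} to each of the three ratios in $\mathcal{R}^{(q)}$ yields
\begin{equation*}
\mathcal{R}^{(q)}(\ell_3,\ell_2,\ell_1)\le C\,(1-q^{n_1-n_2})^{S-X-1}(1-q^{n_2-n_3})^{S+X-1}(1-q^{n_1-n_3})^{1-2S},
\end{equation*}
with $C$ independent of $q\in(\tfrac12,1)$. Combined with the elementary comparison $1-q^{n_1-n_2}\asymp 1-\ell_2/\ell_1$, uniform in $q$ close to $1$ (and similarly for the other two factors), and with $(1-q)/\Delta_q$ bounded above, this dominates $f(\ell_2)\mathcal{A}_{S,X}^{(q)}$ by a constant times $|f(\ell_2)|(1-\ell_2/\ell_1)^{S-X-1}(1-\ell_3/\ell_2)^{S+X-1}$, which is integrable on $(\ell_3,\ell_1)$ because $|X|<S$. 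Dominated convergence then gives \eqref{eq:A_integral_unif_conv} pointwise in $(\ell_3,\ell_1)$.

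Finally, for the claimed uniformity in $(\ell_3,\ell_1)$ ranging over a compact set, I would note three things: (i) the dominating function produced above depends only on $S$, $X$, and $\|f\|_\infty$ on a fixed compact set; (ii) the convergence in Lemma~\ref{lemma:BC_lemma_uniform} is uniform on compact subsets of $(0,1)$, which handles the interior of the integration interval; (iii) the two boundary sub-intervals of $\ell_2$ on which $n_2\in\{n_1,n_3\}$ have logarithmic length at most $|\log(q^{n_1}\ell_1)|+|\log(q^{n_3+1}\ell_3)|=O(1-q)$ and on these intervals $\mathcal{A}_{S,X}^{(q)}$ is controlled by the same majorant, so their contribution vanishes uniformly.

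The main obstacle will be step three: verifying the $q$-uniform comparison $1-q^{n_1-n_2}\asymp 1-\ell_2/\ell_1$ (and its siblings) carefully near the endpoints, where $\ell_2/\ell_1\to 1$ and one leaves the compact domain in which Lemma~\ref{lemma:BC_lemma_uniform} is stated. This is precisely the regime for which Lemma~\ref{lemma:bound_ratio_pochhammer} is designed; the comparison itself follows from the defining sandwich $\ell_i\le q^{-n_i}<\ell_i/q$, but keeping the constants independent of $(\ell_3,\ell_1)$ in a compact set requires a short bookkeeping argument.
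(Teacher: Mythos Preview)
Your overall strategy (apply Lemma~\ref{lemma:BC_lemma_uniform} for pointwise convergence, Lemma~\ref{lemma:bound_ratio_pochhammer} for control) matches the paper's, but the execution via a single dominated-convergence argument has two genuine gaps.

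\textbf{Boundary bins.} Your step~(iii) asserts that on the sub-intervals where $n_2\in\{n_1,n_3\}$ the integrand is ``controlled by the same majorant''. It is not. When $n_2=n_1$ the first Pochhammer ratio equals~$1$, so
\[
\mathcal{A}_{S,X}^{(q)}(\ell_3,\ell_2,\ell_1)=\frac{1}{\Delta_q}\,\frac{(q^{S+X};q)_{n_1-n_3}}{(q^{2S};q)_{n_1-n_3}},
\qquad \Delta_q=\log(q^{n_1}\ell_1)\in\bigl[0,-\log q\bigr).
\]
For fixed $\ell_1$, the quantity $\Delta_q$ can be made arbitrarily small by choosing $q$ suitably (whenever $q^{n_1}\ell_1$ is near~$1$), so $\mathcal{A}^{(q)}$ is not bounded pointwise by any $q$-independent function of~$\ell_2$. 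Your decomposition via \eqref{eq:ratio_q_pochhammer} also breaks down here, since that identity has a separate $\mathbf{1}_{n=0}$ branch. The paper circumvents this by estimating the \emph{integral} over the boundary bin directly: its logarithmic length is exactly $\Delta_q$, which cancels the $1/\Delta_q$, leaving a contribution of order $((1-q)/\delta)^{S\pm X}$ once $\ell_1-\ell_3>\delta$. The remaining portion of the $\delta$-neighbourhood (where $n_3<n_2<n_1$) \emph{is} controlled by $C\,\mathcal{A}_{S,X}$ and contributes $O(\delta)$.

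\textbf{Uniformity as $\ell_1-\ell_3\to 0$.} You dismiss $\ell_3=\ell_1$ as trivial, but the uniformity statement requires control when $\ell_1-\ell_3$ is small and positive. In that regime your majorant carries a factor $(1-\ell_3/\ell_1)^{1-2S}$ which blows up when $2S>1$, and moreover the two boundary bins may cover the entire range so the interior argument is vacuous. The paper handles this separately (its Case~2) by recognising the $\ell_2$-sum as a full $q$-beta-binomial distribution,
\[
\sum_{n_2=n_3}^{n_1}\varphi_{q,q^{S+X},q^{2S}}(n_1-n_2\mid n_1-n_3)=1,
\]
and using continuity of $f$ to conclude the integral equals $f(\ell_1)+o(1)$. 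This identity is not invoked anywhere in your plan and is the missing ingredient.

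In short: replace the global dominated-convergence scheme by the paper's two-level $\delta$-splitting (first on $\ell_1-\ell_3$, then on the integration range), and treat the boundary bins and the near-diagonal case with the direct arguments above.
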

    \begin{proof}
    Fix small positive $\delta$.
		We will prove our claim distinguishing two cases, based on the
		distance between $\ell_3$ and $\ell_1$.
    
		\smallskip
		\noindent\textbf{Case 1, $\ell_1 - \ell_3 > \delta$}. The integral in the left-hand side of \eqref{eq:A_integral_unif_conv} can be decomposed as 
        \begin{equation*}
            \int_{\ell_3}^{\ell_1}=
            \int_{\ell_3 + \delta /2}^{\ell_1-\delta/2}
            + 
            \int_{\ell_3}^{\ell_3 + \delta /2}
            +
            \int_{\ell_1-\delta/2}^{\ell_1}.
        \end{equation*}
        When $\ell_3+\delta/2 \le \ell_2 \le \ell_1 - \delta/2$ , by virtue of Lemma \ref{lemma:BC_lemma_uniform}, we have
        \begin{equation*}
            \int_{\ell_3 + \delta/2}^{\ell_1 - \delta/2} f(\ell_2) \mathcal{A}^{(q)}_{S,X}(\ell_3, \ell_2, \ell_1) \frac{d \ell_2}{\ell_2} \xrightarrow[q \to 1]{}
            \int_{\ell_3 + \delta/2}^{\ell_1 - \delta/2} f(\ell_2) \mathcal{A}_{S,X}(\ell_3, \ell_2, \ell_1) \frac{d \ell_2}{\ell_2},
        \end{equation*}
        uniformly. 
        
        On the other hand, when $\ell_3 \le \ell_2 < \ell_3 + \delta/2$ we use estimates
        \begin{equation*}
            \mathcal{A}^{(q)}_{S,X}(\ell_3, \ell_2, \ell_1)
            \le
            \begin{dcases}
            C \mathcal{A}_{S,X}(\ell_3, \ell_2, \ell_1) \qquad & \text{if } n_3 < n_2,
            \\
            \frac{C}{\Delta_q (\ell_3,\ell_2,\ell_1)} \left( \frac{1-q}{1-\ell_3/\ell_1} \right)^{S+X}
            \qquad & \text{if } n_3 = n_2.
            \end{dcases},
        \end{equation*}
				valid for some constant $C$ independent of $q$ of $\ell_2$ and
				that can be deduced using \Cref{lemma:bound_ratio_pochhammer}
				and identity \eqref{eq:ratio_q_pochhammer}. This implies that
        \begin{equation*}
            \int_{\ell_3}^{\ell_3 + \delta/2} f(\ell_2) \mathcal{A}^{(q)}_{S,X}(\ell_3, \ell_2, \ell_1) \frac{d \ell_2}{\ell_2}
            =
            \bigO(\delta) + \bigO\left( \frac{1-q}{\delta} \right)^{S+X}.
        \end{equation*}
        In an analogous fashion one can also show that
        \begin{equation*}
            \int^{\ell_1}_{\ell_1 - \delta/2} f(\ell_2) \mathcal{A}^{(q)}_{S,X}(\ell_3, \ell_2, \ell_1) \frac{d \ell_2}{\ell_2}
            =
            \bigO(\delta) + \bigO\left( \frac{1-q}{\delta} \right)^{S-X}.
        \end{equation*}
        This concludes the proof of \eqref{eq:A_integral_unif_conv} when $\ell_1-\ell_3>\delta$.

\smallskip
\noindent\textbf{Case 2, $\ell_1 - \ell_3 \le \delta$}.
				Assuming $\delta$ is very small, for any $\ell_2 \in[\ell_3, \ell_1]$, we can write, by continuity, $f(\ell_2)= f(\ell_1) + \smallO (1)$, where $\smallO (1)$ tends to 0 as $\delta \to 0$. Thus, we have
        \begin{equation*}
            \begin{split}
                \int_{\ell_3}^{\ell_1} f(\ell_2) \mathcal{A}^{(q)}_{S,X}(\ell_3, \ell_2, \ell_1) \frac{d \ell_2}{\ell_2}
                &
                =
                \sum_{n_2 = n_3}^{n_1} \left( f(\ell_1) + \smallO (1) \right) q^{-(S+X)(n_1-n_2)} \varphi_{q,q^{S+X},q^{2S}}(n_1-n_2| n_1-n_3) 
                \\
                &
                = f(\ell_1) + \smallO (1),
            \end{split}
        \end{equation*}
        and by Lemma \ref{eq:delta_function} this concludes the analysis of the case $\ell_1 - \ell_3 \le \delta$.
    
				\smallskip

				Since all the estimates we provided are controlled as functions of $\delta$,
				the convergence in \eqref{eq:A_integral_unif_conv} 
				is uniform provided that $\ell_1$ stays bounded.
    \end{proof}
    \begin{proof}[Proof of Proposition \ref{prop:skew_SW_uniform_conv}] The integral in the left-hand side of \eqref{eq:skew_SW_uniform_conv} is equal to
    \begin{equation*}
    \begin{split}
        &
        \int_{L_{N,N}}^{L_{N,N-1}} \frac{dL_{N-1,N-1}}{L_{N-1,N-1}} \mathcal{A}^{(q)}_{S,X} (L_{N,N}, L_{N-1,N-1}, L_{N,N-1})\cdots
        \\
        &
        \cdots
        \int_{L_{N,2}}^{L_{N,1}} \frac{dL_{N-1,1}}{L_{N-1,1}} \mathcal{A}^{(q)}_{S,X} (L_{N,2}, L_{N-1,1}, L_{N,1}) \left( \frac{ L_{N-1,N-1} \cdots L_{N-1,1} }{L_{N,N} \cdots L_{N,1} } \right)^X
        f(\underline{L}_{N-1})
    \end{split}
    \end{equation*}
    and we can 
		take the $q\to1$ limit in
		each of the $N-1$ integrals using \Cref{lemma:A_integral_unif_conv}. 
		This establishes the convergence 
		to the right-hand side of 
		\eqref{eq:skew_SW_uniform_conv} as $q \to 1$,
		uniformly on any compact subset of~$\mathpzc{W}_N$. 
    \end{proof}
    
\section{Triangular Sums}
\label{app:triangular_sums}

Here we write down a number of identities of summations of certain symbols $a_{k,\ell}, b_\alpha$ used in
the proof of 
\Cref{prop:Pieri_to_Toda}.
Fix a positive integer $N$, and 
assume that the symbols $b_\alpha$, $\alpha=1,\ldots,N $ commute with each other.
Let $a_{k,\ell}$ be
\begin{equation*}
	a_{k,\ell}
	=
	\begin{dcases}
		0 \qquad & \text{if $0=k$, or $\ell=N+1$};
		\\
		1 \qquad & \text{if } 0\le k = \ell \le N;
		\\
		\in \mathbb{R} \qquad & \text{else.} 
	\end{dcases}
\end{equation*}
\begin{proposition} \label{prop:summation_identities}
For any $N\ge 1$, the following identities hold
\begin{align*}
	&\sum_{0\le k < \ell \le N} a_{k+1,\ell} - a_{k,\ell} -a_{k+1,\ell+1} + a_{k,\ell+1} 
	= 
	N -\sum_{j=1}^{N-1} a_{j,j+1};
	\\
	&\sum_{0\le k < \ell \le N} (\ell-k+1)(a_{k+1,\ell+1} - a_{k,\ell+1}) - (\ell - k -1)(a_{k+1,\ell} - a_{k,\ell} ) 
	= 
	\sum_{j=1}^{N-1} a_{j,j+1};
	\\
	&\sum_{0\le k < \ell \le N} (a_{k+1,\ell} - a_{k,\ell} 
	- a_{k+1,\ell+1} + a_{k,\ell+1} )\sum_{\alpha=k+1}^\ell b_{\alpha} 
	= 
	\sum_{\alpha=1}^{N}  b_{\alpha};
	\\
	&\sum_{0\le k < \ell \le N} (\ell - k -1)^2(a_{k+1,\ell} - a_{k,\ell} ) -(\ell - k +1)^2 ( a_{k+1,\ell+1} - a_{k,\ell+1}) 
	\\&\hspace{180pt}
	= -\sum_{j=1}^{N-1} a_{j,j+1} - 2 \sum_{1\le k < \ell \le N} a_{k,\ell};
	\\
	&\sum_{0\le k < \ell \le N} (a_{k+1,\ell} - a_{k,\ell} - a_{k+1,\ell+1} + a_{k,\ell+1} )\sum_{k+1 \le \alpha, \beta \le l } b_{\alpha} b_{\beta}
	=
	\sum_{\alpha=1}^N b_\alpha^2 + 2 \sum_{1\le k < \ell \le N} a_{k,\ell} b_{k}b_{\ell};
	\\
	&\sum_{0\le k < \ell \le N} \Big( (\ell-k+1)(a_{k+1,\ell+1} - a_{k,\ell+1}) - (\ell - k -1)(a_{k+1,\ell} - a_{k,\ell} ) \Big)\sum_{\alpha=k+1}^\ell b_{\alpha} 
	\\&\hspace{180pt}= 
	\sum_{1\le k<\ell \le N}^{N} a_{k,\ell} (b_{k}-b_{\ell}).
\end{align*}
\end{proposition}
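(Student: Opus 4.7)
The six identities are combinatorial statements about triangular sums of discrete second differences of the array $a_{k,\ell}$, made non-trivial by the boundary conditions $a_{0,\ell}=a_{k,N+1}=0$ and $a_{k,k}=1$. Everything reduces to Abel summation and telescoping once the correct reorganization is found, so the plan is to treat the six identities as variants of two prototypes and exploit the shared boundary behaviour.

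I would first dispatch identities (1) and (3), which involve only the ``box difference'' $D_{k,\ell}\coloneqq a_{k+1,\ell}-a_{k,\ell}-a_{k+1,\ell+1}+a_{k,\ell+1}$. Writing $D_{k,\ell}=e_{k+1,\ell}-e_{k,\ell}$ with $e_{k,\ell}\coloneqq a_{k,\ell}-a_{k,\ell+1}$ exhibits a first-order telescope in $k$. Summing over $0\le k<\ell$, the boundary value $a_{0,\ell}=0$ collapses the inner sum to $1-a_{\ell,\ell+1}$, and then summing over $1\le\ell\le N$ using $a_{N,N+1}=0$ gives identity (1) with the correct signs. For identity (3), I would first swap the order of summation so that $\alpha$ runs outside and $(k,\ell)$ over the rectangle $\{0\le k<\alpha,\ \alpha\le\ell\le N\}$; the same telescope then yields $a_{\alpha,\alpha}-a_{\alpha,N+1}=1$ as the coefficient of each $b_\alpha$. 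Identity (5) is the bilinear extension of (3): split $\sum_{k+1\le\alpha,\beta\le\ell}b_\alpha b_\beta$ into its diagonal and off-diagonal parts. The diagonal reproduces (3), while for $\alpha<\beta$ the same telescoping argument on the rectangle $\{0\le k<\alpha,\ \beta\le\ell\le N\}$ returns exactly $a_{\alpha,\beta}$ as the coefficient of $b_\alpha b_\beta$.

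For identities (2), (4), and (6), the summands carry extra weights $\ell-k\pm1$ or $(\ell-k\pm1)^2$, so a plain box-telescope no longer applies. Here I would use Abel summation in $k$: since the weights are polynomial in $k$ of degree at most $2$, the first (resp.\ second) successive differences of the weights are constants, so the Abel remainder collapses to a single or double sum of $a_{k,\ell}$ values with the boundary terms $a_{0,\ell}=0$, $a_{\ell,\ell}=1$ absorbed explicitly. One Abel pass handles (2) and (6), two handle (4); in each case the resulting $\ell$-sum telescopes by the second-variable boundary $a_{k,N+1}=0$. Identity (6) additionally uses the $\alpha$-interchange from (3) before the Abel step.

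The real nuisance, and the one place where sign errors are easy to make, is the bookkeeping at the corners of the triangle: the constant $N$ in (1), the term $-\sum_{j}a_{j,j+1}$ in (4), and the appearance of $b_k-b_\ell$ in (6) all come from the delicate balance between the interior contributions $a_{\ell,\ell}=1$ and the vanishing boundary pieces. Since each identity is linear in $a$, I would cross-check by isolating the coefficient of a generic $a_{k_0,\ell_0}$ on both sides (an elementary finite computation using the telescoping formulas above), which fixes any stray signs and confirms the stated right-hand sides without further guesswork. No single step is hard; the plan is essentially to carry out the bookkeeping cleanly and uniformly across all six cases.
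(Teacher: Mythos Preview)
Your approach is correct and more detailed than what the paper actually does: the paper's entire proof is the single sentence ``All these identities are elementary and can be proven by induction in a straightforward way.'' So there is essentially nothing to compare against on the paper's side.

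Your route via telescoping the box difference $D_{k,\ell}=e_{k+1,\ell}-e_{k,\ell}$ for (1), (3), (5), combined with Abel summation in $k$ for the weighted cases (2), (4), (6), is a clean direct argument and works as you describe; in particular the rectangle telescoping for (3) and (5) gives the coefficients $1$ and $a_{\alpha,\beta}$ exactly as you claim, and the coefficient-extraction cross-check you propose is a reliable way to catch the corner sign bookkeeping. Compared to the paper's bare induction on $N$, your method has the advantage of explaining \emph{why} the particular right-hand sides appear (they are precisely the boundary residues of the telescoping), whereas induction would verify them without giving that insight. Either way the identities are routine; your plan just spells out the mechanism rather than deferring to ``straightforward.''
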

\begin{proof}
	All these identities
	are elementary and can be 
	proven by induction in a straightforward way.
\end{proof}

\bibliographystyle{alpha}
\bibliography{bib}

\newcommand{\etalchar}[1]{$^{#1}$}
\begin{thebibliography}{GdGW17}

\bibitem[Agg18]{Amol2016Stationary}
A.~Aggarwal.
\newblock {Current Fluctuations of the Stationary ASEP and Six-Vertex Model}.
\newblock {\em {Duke Math J.}}, 167(2):269--384, 2018.
\newblock arXiv:1608.04726 [math.PR].

\bibitem[And86]{andrews1986q}
G.~Andrews.
\newblock {\em $ q $-Series: Their Development and Application in Analysis,
  Number Theory, Combinatorics, Physics and Computer Algebra}.
\newblock Number~66. AMS, 1986.

\bibitem[Bax07]{baxter2007exactly}
R.~Baxter.
\newblock {\em Exactly solved models in statistical mechanics}.
\newblock Courier Dover Publications, 2007.

\bibitem[BBBG18]{Brubaker_metaplectic}
B.~Brubaker, V.~Buciumas, D.~Bump, and H.~Gustafsson.
\newblock Vertex operators, solvable lattice models and metaplectic whittaker
  functions.
\newblock {\em arXiv preprint}, 2018.
\newblock arXiv:1806.07776.

\bibitem[BC14]{BorodinCorwin2011Macdonald}
A.~Borodin and I.~Corwin.
\newblock Macdonald processes.
\newblock {\em Probab. Theory Relat. Fields}, 158:225--400, 2014.
\newblock arXiv:1111.4408 [math.PR].

\bibitem[BC16a]{CorwinBarraquand2015Beta}
G.~Barraquand and I.~Corwin.
\newblock {Random-walk in Beta-distributed random environment}.
\newblock {\em Probab. Theory Relat. Fields}, 167(3-4):1057--1116, 2016.
\newblock arXiv:1503.04117 [math.PR].

\bibitem[BC16b]{barraquand2015q}
G.~Barraquand and I.~Corwin.
\newblock {The $q$-Hahn asymmetric exclusion process}.
\newblock {\em Annals of Applied Probability}, 26(4):2304--2356, 2016.
\newblock arXiv:1501.03445 [math.PR].

\bibitem[BCG16]{BCG6V}
A.~Borodin, I.~Corwin, and V.~Gorin.
\newblock Stochastic six-vertex model.
\newblock {\em Duke J. Math.}, 165(3):563--624, 2016.
\newblock arXiv:1407.6729 [math.PR].

\bibitem[BCGS16]{BCGS2013}
A.~Borodin, I.~Corwin, V.~Gorin, and S.~Shakirov.
\newblock {Observables of Macdonald processes}.
\newblock {\em Trans. AMS}, 368(3):1517--1558, 2016.
\newblock arXiv:1306.0659 [math.PR].

\bibitem[BCPS15]{BCPS2014_arXiv_v4}
A.~Borodin, I.~Corwin, L.~Petrov, and T.~Sasamoto.
\newblock Spectral theory for interacting particle systems solvable by
  coordinate bethe ansatz.
\newblock {\em Commun. Math. Phys.}, 339(3):1167--1245, 2015.
\newblock Updated version including erratum. Available at
  \url{https://arxiv.org/abs/1407.8534v4}.

\bibitem[BCS14]{BorodinCorwinSasamoto2012}
A.~Borodin, I.~Corwin, and T.~Sasamoto.
\newblock {From duality to determinants for q-TASEP and ASEP}.
\newblock {\em Ann. Probab.}, 42(6):2314--2382, 2014.
\newblock arXiv:1207.5035 [math.PR].

\bibitem[BDJ99]{baik1999distribution}
J.~Baik, P.~Deift, and K.~Johansson.
\newblock {On the distribution of the length of the longest increasing
  subsequence of random permutations}.
\newblock {\em Jour. AMS}, 12(4):1119--1178, 1999.
\newblock arXiv:math/9810105 [math.CO].

\bibitem[BF14]{BorFerr2008DF}
A.~Borodin and P.~Ferrari.
\newblock Anisotropic growth of random surfaces in 2+1 dimensions.
\newblock {\em Commun. Math. Phys.}, 325:603--684, 2014.
\newblock arXiv:0804.3035 [math-ph].

\bibitem[BMP19]{BufetovMucciconiPetrov2018}
A.~Bufetov, M.~Mucciconi, and L.~Petrov.
\newblock Yang-baxter random fields and stochastic vertex models.
\newblock {\em arXiv preprint}, 2019.
\newblock arXiv:1905.06815 [math.PR]. To appear in Adv. Math.

\bibitem[Bor17]{Borodin2014vertex}
A.~Borodin.
\newblock On a family of symmetric rational functions.
\newblock {\em Adv. Math.}, 306:973--1018, 2017.
\newblock arXiv:1410.0976 [math.CO].

\bibitem[BP18]{BorodinPetrov2016inhom}
A.~Borodin and L.~Petrov.
\newblock Higher spin six vertex model and symmetric rational functions.
\newblock {\em Selecta Math.}, 24(2):751--874, 2018.
\newblock arXiv:1601.05770 [math.PR].

\bibitem[BP19]{BufetovPetrovYB2017}
A.~Bufetov and L.~Petrov.
\newblock {Yang-Baxter field for spin Hall-Littlewood symmetric functions}.
\newblock {\em Forum Math. Sigma}, 7:e39, 2019.
\newblock arXiv:1712.04584 [math.PR].

\bibitem[Bum89]{Bump1989}
D.~Bump.
\newblock {The Rankin-Selberg method: a Survey}.
\newblock In K.E. Aubert, E.~Bombieri, and D.~Goldfeld, editors, {\em Number
  Theory, Trace Formulas, and Discrete Groups}. Academic Press, 1989.

\bibitem[BW17]{BorodinWheelerSpinq}
A.~Borodin and M.~Wheeler.
\newblock Spin $q$-whittaker polynomials.
\newblock {\em arXiv preprint}, 2017.
\newblock arXiv:1701.06292 [math.CO].

\bibitem[BW18]{borodin_wheeler2018coloured}
A.~Borodin and M.~Wheeler.
\newblock Coloured stochastic vertex models and their spectral theory.
\newblock {\em arXiv preprint}, 2018.
\newblock arXiv:1808.01866 [math.PR].

\bibitem[CMP19]{CMP_qHahn_Push}
I.~Corwin, K.~Matveev, and L.~Petrov.
\newblock {The q-Hahn PushTASEP}.
\newblock {\em Intern. Math. Research Notices}, rnz106, 2019.
\newblock arXiv:1811.06475 [math.PR].

\bibitem[Cor14]{Corwin2014qmunu}
I.~Corwin.
\newblock {The $q$-Hahn Boson process and $q$-Hahn TASEP}.
\newblock {\em Int. Math. Res. Notices}, (rnu094), 2014.
\newblock arXiv:1401.3321 [math.PR].

\bibitem[Cor16]{Corwin2016Notices}
I.~Corwin.
\newblock {Kardar-Parisi-Zhang Universality}.
\newblock {\em Notices of the AMS}, 63(3):230--239, 2016.

\bibitem[COSZ14]{COSZ2011}
I.~Corwin, N.~O'Connell, T.~Sepp{\"a}l{\"a}inen, and N.~Zygouras.
\newblock {Tropical Combinatorics and Whittaker functions}.
\newblock {\em Duke J. Math.}, 163(3):513--563, 2014.
\newblock arXiv:1110.3489 [math.PR].

\bibitem[CP16]{CorwinPetrov2015}
I.~Corwin and L.~Petrov.
\newblock Stochastic higher spin vertex models on the line.
\newblock {\em Commun. Math. Phys.}, 343(2):651--700, 2016.
\newblock arXiv:1502.07374 [math.PR].

\bibitem[CSS15]{CorwinSeppalainenShen2014}
I.~Corwin, T.~Sepp\"al\"ainen, and H.~Shen.
\newblock {The strict-weak lattice polymer}.
\newblock {\em J. Stat. Phys}, 160(4):1027--1053, 2015.
\newblock arXiv:1409.1794 [math.PR].

\bibitem[CT15]{CorwinTsai2015KPZ}
I.~Corwin and L.-C. Tsai.
\newblock {KPZ equation limit of higher-spin exclusion processes}.
\newblock {\em arXiv preprint}, 2015.
\newblock arXiv:1505.04158 [math.PR].

\bibitem[DF90]{DiaconisFill1990}
P.~Diaconis and J.A. Fill.
\newblock Strong stationary times via a new form of duality.
\newblock {\em Ann. Probab.}, 18:1483--1522, 1990.

\bibitem[Eti99]{Etingof1999}
P.~Etingof.
\newblock {Whittaker functions on quantum groups and q-deformed Toda
  operators}.
\newblock {\em Amer. Math. Soc. Transl. Ser. 2}, 194:9--25, 1999.
\newblock arXiv:math/9901053 [math.QA].

\bibitem[FHH{\etalchar{+}}09]{FeiginEtAl_Macdonald}
B.~Feigin, K.~Hashizume, A.~Hoshino, J.~Shiraishi, and S.~Yanagida.
\newblock A commutative algebra on degenerate cp1 and macdonald polynomials.
\newblock {\em Journal of Mathematical Physics}, 50(9):095215, 2009.
\newblock arXiv:0904.2291 [math.CO].

\bibitem[Fom95]{fomin1995schensted}
S.~Fomin.
\newblock {Schensted algorithms for dual graded graphs}.
\newblock {\em J. Algebr. Comb.}, 4(1):5--45, 1995.

\bibitem[Ful97]{fulton1997young}
W.~Fulton.
\newblock {\em {Young Tableaux with Applications to Representation Theory and
  Geometry}}.
\newblock Cambridge University Press, 1997.

\bibitem[FW09]{foda2009hall}
O.~Foda and M.~Wheeler.
\newblock {Hall-Littlewood Plane Partitions and KP}.
\newblock {\em Int. Math. Res. Notices}, 2009(14):2597, 2009.
\newblock arXiv:0809.2138 [math-ph].

\bibitem[GdGW17]{deGierWheeler2016}
A.~Garbali, J.~de~Gier, and M.~Wheeler.
\newblock {A new generalisation of Macdonald polynomials}.
\newblock {\em Comm. Math. Phys}, 352(2):773--804, 2017.
\newblock arXiv:1605.07200 [math-ph].

\bibitem[Giv97]{givental1996stationary}
A.~Givental.
\newblock {Stationary phase integrals, quantum Toda lattices, flag manifolds
  and the mirror conjecture}.
\newblock In {\em Topics in singularity theory}, American Mathematical Society
  Translations Ser 2. AMS, 1997.
\newblock arXiv:alg-geom/9612001.

\bibitem[GKLO06]{Gerasimov_et_al_OnAGaussGivental}
A.~Gerasimov, S.~Kharchev, D.~Lebedev, and S.~Oblezin.
\newblock {On a Gauss-Givental representation of quantum Toda chain wave
  function}.
\newblock {\em Intern. Math. Research Notices}, 2006, 01 2006.
\newblock O96489.

\bibitem[GLO08]{gerasimov2008baxter}
A.~Gerasimov, D.~Lebedev, and S.~Oblezin.
\newblock Baxter operator and archimedean hecke algebra.
\newblock {\em Commun. Math. Phys.}, 284(3):867, 2008.
\newblock arXiv:0706.3476 [math.RT].

\bibitem[GLO10]{GLO2010}
A.~Gerasimov, D.~Lebedev, and S.~Oblezin.
\newblock {On $q$-deformed $\mathfrak{gl}_{\ell+1}$ Whittaker functions I, II,
  III}.
\newblock {\em Comm. Math. Phys}, 294:97--119, 121--143, 2010.

\bibitem[GLO12a]{gerasimov2012new}
A.~Gerasimov, D.~Lebedev, and S.~Oblezin.
\newblock New integral representations of whittaker functions for classical lie
  groups.
\newblock {\em Russian Mathematical Surveys}, 67(1):1--92, 2012.
\newblock arXiv:0705.2886 [math.RT].

\bibitem[GLO12b]{GerasimovLebedevOblezin2011}
A.~Gerasimov, D.~Lebedev, and S.~Oblezin.
\newblock {On a classical limit of $q$-deformed Whittaker functions}.
\newblock {\em {Letters in Mathematical Physics}}, 100(3):279--290, 2012.
\newblock arXiv:1101.4567 [math.AG].

\bibitem[GR04]{GasperRahman}
G.~Gasper and M.~Rahman.
\newblock {\em {Basic hypergeometric series}}.
\newblock Cambridge University Press, 2004.

\bibitem[GS92]{GwaSpohn1992}
L.-H. Gwa and H.~Spohn.
\newblock Six-vertex model, roughened surfaces, and an asymmetric spin
  {H}amiltonian.
\newblock {\em Phys. Rev. Lett.}, 68(6):725--728, 1992.

\bibitem[Hop14]{hopkins2014rsk}
S.~Hopkins.
\newblock {RSK via local transformations}.
\newblock {\em Unpublished notes}, 2014.
\newblock \url{http://www-users.math.umn.edu/~shopkins/docs/rsk.pdf}.

\bibitem[IMS19]{imamura2019stationary}
T.~Imamura, M.~Mucciconi, and T.~Sasamoto.
\newblock {Stationary Higher Spin Six Vertex Model and $ q $-Whittaker
  measure}.
\newblock {\em arXiv preprint}, 2019.
\newblock arXiv:1901.08381 [math-ph].

\bibitem[IS19]{imamura2017fluctuations}
T.~Imamura and T.~Sasamoto.
\newblock {Fluctuations for stationary q-TASEP}.
\newblock {\em Probab. Theory Relat. Fields}, 174:647--730, 2019.
\newblock arXiv:1701.05991 [math-ph].

\bibitem[Jac67]{Jacquet1967}
H.~Jacquet.
\newblock {Fonctions de Whittaker associ\'ees aux groupes de Chevalley}.
\newblock {\em Bulletin de la Soci\'et\'e Math\'ematique de France},
  95:243--309, 1967.

\bibitem[Joh00]{johansson2000shape}
K.~Johansson.
\newblock {Shape fluctuations and random matrices}.
\newblock {\em Commun. Math. Phys.}, 209(2):437--476, 2000.
\newblock arXiv:math/9903134 [math.CO].

\bibitem[KB95]{kirillov1995groups}
A.N. Kirillov and A.D. Berenstein.
\newblock {Groups generated by involutions, Gelfand--Tsetlin patterns, and
  combinatorics of Young tableaux}.
\newblock {\em Algebra i Analiz}, 7(1):92--152, 1995.

\bibitem[Kir01]{Kirillov2000_Tropical}
A.N. Kirillov.
\newblock {Introduction to tropical combinatorics}.
\newblock In A.N. Kirillov and N.~Liskova, editors, {\em Physics and
  Combinatorics, Proceedings of the Nagoya 2000 International Workshop}, pages
  82--150, Singapore, 2001. World Scientific.

\bibitem[KL01]{Kharchev_2001}
S.~Kharchev and D.~Lebedev.
\newblock Integral representations for the eigenfunctions of quantum open and
  periodic toda chains from the {QISM} formalism.
\newblock {\em Journal of Physics A: Mathematical and General},
  34(11):2247--2258, 03 2001.

\bibitem[Knu70]{Knuth1970}
Donald Knuth.
\newblock {Permutations, matrices, and generalized Young tableaux}.
\newblock {\em Pacific J. Math.}, 34(3):709--727, 1970.

\bibitem[Kos78]{Kostant_Whittaker}
B.~Kostant.
\newblock {On Whittaker vectors and representation theory}.
\newblock {\em Inventiones mathematicae}, 48(2):101--184, 1978.

\bibitem[Kos80]{Kostant1977Whitt}
B.~Kostant.
\newblock {Quantization and representation theory}.
\newblock In {\em Representation Theory of Lie Groups}, volume~34 of {\em {LMS
  Lecture Notes}}, pages 287--316. Cambridge Univ. Press, 1980.

\bibitem[Kos19]{koshidaMac}
S~Koshida.
\newblock {Free field approach to the Macdonald Processes}.
\newblock {\em arXiv preprint}, 2019.
\newblock arXiv:1905.07087v2 [math-QA].

\bibitem[KRS81]{KulishReshSkl1981yang}
P.~Kulish, N.~Reshetikhin, and E.~Sklyanin.
\newblock {Yang-Baxter equation and representation theory: I}.
\newblock {\em Letters in Mathematical Physics}, 5(5):393--403, 1981.

\bibitem[Lin20]{lin2020kpz}
Y.~Lin.
\newblock {KPZ equation limit of stochastic higher spin six vertex model}.
\newblock {\em Mathematical Physics, Analysis and Geometry}, 23(1):1, 2020.
\newblock arXiv:1905.11155 [math.PR].

\bibitem[Mac95]{Macdonald1995}
I.G. Macdonald.
\newblock {\em Symmetric functions and {H}all polynomials}.
\newblock Oxford University Press, 2nd edition, 1995.

\bibitem[Man14]{Mangazeev2014}
V.~Mangazeev.
\newblock {On the Yang--Baxter equation for the six-vertex model}.
\newblock {\em Nuclear Physics B}, 882:70--96, 2014.
\newblock arXiv:1401.6494 [math-ph].

\bibitem[MP17]{MatveevPetrov2014}
K.~Matveev and L.~Petrov.
\newblock {$q$-randomized Robinson--Schensted--Knuth correspondences and random
  polymers}.
\newblock {\em Annales de l'IHP D}, 4(1):1--123, 2017.
\newblock arXiv:1504.00666 [math.PR].

\bibitem[NY04]{NoumiYamada2004}
M.~Noumi and Y.~Yamada.
\newblock {Tropical Robinson-Schensted-Knuth correspondence and birational Weyl
  group actions}.
\newblock In {\em Representation theory of algebraic groups and quantum
  groups}, volume~40 of {\em Adv. Stud. Pure Math.}, pages 371--442. Math. Soc.
  Japan, Tokyo, 2004.
\newblock arXiv:math-ph/0203030.

\bibitem[O'C12]{Oconnell2009_Toda}
N.~O'Connell.
\newblock {Directed polymers and the quantum Toda lattice}.
\newblock {\em Ann. Probab.}, 40(2):437--458, 2012.
\newblock arXiv:0910.0069 [math.PR].

\bibitem[Oko01]{okounkov2001infinite}
A.~Okounkov.
\newblock {Infinite wedge and random partitions}.
\newblock {\em Selecta Math.}, 7(1):57--81, 2001.
\newblock arXiv:math/9907127 [math.RT].

\bibitem[OO15]{OConnellOrtmann2014}
N.~O'Connell and J.~Ortmann.
\newblock {Tracy-Widom asymptotics for a random polymer model with
  gamma-distributed weights.}
\newblock {\em Electron. J. Probab.}, 20(25):1--18, 2015.
\newblock arXiv:1408.5326 [math.PR].

\bibitem[OP17]{OrrPetrov2016}
D.~Orr and L.~Petrov.
\newblock {Stochastic Higher Spin Six Vertex Model and $q$-TASEPs}.
\newblock {\em Adv. Math.}, 317:473--525, 2017.

\bibitem[OR03]{okounkov2003correlation}
A.~Okounkov and N.~Reshetikhin.
\newblock {Correlation function of Schur process with application to local
  geometry of a random 3-dimensional Young diagram}.
\newblock {\em Jour. AMS}, 16(3):581--603, 2003.
\newblock arXiv:math/0107056 [math.CO].

\bibitem[OSZ14]{OSZ2012}
N.~O'Connell, T.~Sepp{\"a}l{\"a}inen, and N.~Zygouras.
\newblock {Geometric RSK correspondence, Whittaker functions and symmetrized
  random polymers}.
\newblock {\em Invent. Math.}, 197:361--416, 2014.
\newblock arXiv:1110.3489 [math.PR].

\bibitem[Pak01]{Pak2001hook}
I.~Pak.
\newblock Hook length formula and geometric combinatorics.
\newblock {\em S{\'e}m. Lothar. Combin}, 46(B46f):13, 2001.

\bibitem[Pet19]{petrov2019qhahn}
L.~Petrov.
\newblock Parameter permutation symmetry in particle systems and random
  polymers.
\newblock {\em arXiv preprint}, 2019.
\newblock arXiv:1912.06067 [math.PR].

\bibitem[Pov13]{Povolotsky2013}
A.~Povolotsky.
\newblock {On integrability of zero-range chipping models with factorized
  steady state}.
\newblock {\em J. Phys. A}, 46:465205, 2013.
\newblock arXiv:1308.3250 [math-ph].

\bibitem[Sep12]{Seppalainen2012}
T.~Sepp{\"a}l{\"a}inen.
\newblock {Scaling for a one-dimensional directed polymer with boundary
  conditions}.
\newblock {\em {Ann. Probab.}}, 40(1):19--73, 2012.
\newblock arXiv:0911.2446 [math.PR].

\bibitem[Sta01]{Stanley1999}
R.~Stanley.
\newblock {\em Enumerative {C}ombinatorics. {V}ol. 2}.
\newblock Cambridge University Press, Cambridge, 2001.
\newblock With a foreword by Gian-Carlo Rota and appendix 1 by Sergey Fomin.

\bibitem[Sta02]{Stade2002}
E.~Stade.
\newblock {Archimedean $L$-factors on $GL(n) \times GL(n)$ and generalized
  Barnes integrals}.
\newblock {\em Israel J. Math.}, 127:201--219, 2002.

\bibitem[STS94]{semenov1994quantization}
M.~Semenov-Tian-Shansky.
\newblock Quantization of open toda lattices.
\newblock In V.~Arnold and S.~Novikov, editors, {\em Dynamical Systems VII:
  Integrable Systems, Nonholonomic Dynamical Systems}, volume~16 of {\em
  Encyclopaedia of Mathematical Sciences}, pages 226--259. Springer, 1994.

\bibitem[TW08]{TW_ASEP1}
C.~Tracy and H.~Widom.
\newblock {Integral formulas for the asymmetric simple exclusion process}.
\newblock {\em Comm. Math. Phys}, 279:815--844, 2008.
\newblock arXiv:0704.2633 [math.PR]. Erratum: Commun. Math. Phys.,
  304:875--878, 2011.

\bibitem[TW09]{TW_ASEP2}
C.~Tracy and H.~Widom.
\newblock {Asymptotics in ASEP with step initial condition}.
\newblock {\em Commun. Math. Phys.}, 290:129--154, 2009.
\newblock arXiv:0807.1713 [math.PR].

\end{thebibliography}

\end{document}